\def\printnotation{{%
\def\indexname{Index of notation}
\begin{theindex}
\@input{\jobname.ntn}
\end{theindex}
}}
\begin{document}

\bibliographystyle{alpha}
\newcommand{\cn}[1]{\overline{#1}}
\newcommand{\e}[0]{\epsilon}
\newcommand{\EE}{\ensuremath{\mathbb{E}}}
\newcommand{\qq}[1]{(q;q)_{#1}}
\newcommand{\A}{\ensuremath{\mathcal{A}}}
\newcommand{\GT}{\ensuremath{\mathbb{GT}}}
\newcommand{\link}{\ensuremath{Q}}
\newcommand{\PP}{\ensuremath{\mathbb{P}}}
\newcommand{\frakP}{\ensuremath{\mathfrak{P}}}
\newcommand{\frakQ}{\ensuremath{\mathfrak{Q}}}
\newcommand{\frakq}{\ensuremath{\mathfrak{q}}}
\newcommand{\R}{\ensuremath{\mathbb{R}}}
\newcommand{\Rplus}{\ensuremath{\mathbb{R}_{+}}}
\newcommand{\C}{\ensuremath{\mathbb{C}}}
\newcommand{\Z}{\ensuremath{\mathbb{Z}}}
\newcommand{\Weyl}[1]{\ensuremath{\mathbb{W}}^{#1}}
\newcommand{\Zgzero}{\ensuremath{\mathbb{Z}_{>0}}}
\newcommand{\Zgeqzero}{\ensuremath{\mathbb{Z}_{\geq 0}}}
\newcommand{\Zleqzero}{\ensuremath{\mathbb{Z}_{\leq 0}}}
\newcommand{\Q}{\ensuremath{\mathbb{Q}}}
\newcommand{\T}{\ensuremath{\mathbb{T}}}
\newcommand{\Y}{\ensuremath{\mathbb{Y}}}
\newcommand{\M}{\ensuremath{\mathbf{M}}}
\newcommand{\MM}{\ensuremath{\mathbf{MM}}}
\newcommand{\W}[1]{\ensuremath{\mathbf{W}}_{(#1)}}
\newcommand{\WM}[1]{\ensuremath{\mathbf{WM}}_{(#1)}}
\newcommand{\Zsd}{\ensuremath{\mathbf{Z}}}
\newcommand{\Fsd}{\ensuremath{\mathbf{F}}}
\newcommand{\Gsd}{\ensuremath{\mathbf{G}}}
\newcommand{\symBM}{\ensuremath{\mathbf{W}}}
\newcommand{\symFE}{\ensuremath{\mathbf{S}}}
\newcommand{\qWhitP}{\ensuremath{\Upsilon}}
\newcommand{\qWhitQ}{\ensuremath{\Psi}}
\newcommand{\Real}{\ensuremath{\mathrm{Re}}}
\newcommand{\Imag}{\ensuremath{\mathrm{Im}}}
\newcommand{\re}{\ensuremath{\mathrm{Re}}}

\newcommand{\bfk}{\ensuremath{\bar{f}_{\kappa}}}
\newcommand{\btk}{\ensuremath{\bar{t}_{\kappa}}}
\newcommand{\bgk}{\ensuremath{\bar{g}_{\kappa}}}

\newcommand{\Sym}{\ensuremath{\mathrm{Sym}}}

\newcommand{\bfone}{\ensuremath{\mathbf{1}}}

\newcommand{\whitenoise}{\ensuremath{\mathscr{\dot{W}}}}
\newcommand{\alphaW}[1]{\ensuremath{\mathbf{\alpha W}}_{(#1)}}
\newcommand{\alphaWM}[1]{\ensuremath{\mathbf{\alpha WM}}_{(#1)}}
\newcommand{\malpha}{\ensuremath{\hat{\alpha}}}
\newcommand{\walpha}{\ensuremath{\alpha}}
\newcommand{\edge}{\textrm{edge}}
\newcommand{\dist}{\textrm{dist}}

\newcommand{\OO}[0]{\Omega}
\newcommand{\F}[0]{\mathfrak{F}}
\newcommand{\poly}[0]{R}
\def \Ai {{\rm Ai}}
\def \SS {\mathcal{S}}
\newcommand{\poles}{\mathbb{A}}
\def \ss {\mathcal{X}}
\newcommand{\var}{{\rm var}}
\newcommand\Pker[2]{P^{#1}_{#2}}  
\newcommand\thetar{\hat\theta}  
\newcommand\thetac{\theta}  
\newcommand\thetacv{\theta}  
\newcommand\thetarc{\gamma}  

\newcommand\BaxKer[5]{Q^{#1\to #2}_{#3}(#4,#5)}
\newcommand\BaxKerfour[4]{Q^{#1\to #2}_{#3}(#4)}
\newcommand\BaxOp[3]{Q^{#1\to #2}_{#3}}

\newcommand\DBaxKer[5]{\hat{Q}^{#1\to #2}_{#3}(#4,#5)}
\newcommand\DBaxOp[3]{\hat{Q}^{#1\to #2}_{#3}}

\newcommand{\Res}[1]{\underset{{#1}}{\mathrm{Res}}}
\newcommand{\Resfrac}[1]{\mathrm{Res}_{{#1}}}

\newcommand{\ul}[2]{\underline{#1}_{#2}}
\newcommand{\qhat}[1]{\widehat{#1}^{q}}
\newcommand{\G}[0]{\mathfrak{G}}
\newcommand{\La}[0]{\Lambda}
\newcommand{\la}[0]{\lambda}
\newcommand{\ta}[0]{\theta}
\newcommand{\w}[0]{\omega}
\newcommand{\ra}[0]{\rightarrow}
\newcommand{\vectoro}{\overline}
\newtheorem{theorem}{Theorem}[section]
\newtheorem{partialtheorem}{Partial Theorem}[section]
\newtheorem{conj}[theorem]{Conjecture}
\newtheorem{lemma}[theorem]{Lemma}
\newtheorem{proposition}[theorem]{Proposition}
\newtheorem{corollary}[theorem]{Corollary}
\newtheorem{claim}[theorem]{Claim}
\newtheorem{formal}[theorem]{Critical point derivation}
\newtheorem{experiment}[theorem]{Experimental Result}
\newtheorem{prop}{Proposition}

\def\todo#1{\marginpar{\raggedright\footnotesize #1}}
\def\change#1{{\color{green}\todo{change}#1}}
\def\note#1{\textup{\textsf{\color{blue}(#1)}}}

\theoremstyle{definition}
\newtheorem{remark}[theorem]{Remark}

\theoremstyle{definition}
\newtheorem{example}[theorem]{Example}

\theoremstyle{definition}
\newtheorem{definition}[theorem]{Definition}

\theoremstyle{definition}
\newtheorem{definitions}[theorem]{Definitions}


\title{Macdonald processes}
 \author{Alexei Borodin, Ivan Corwin}
 \maketitle




\begin{abstract}

Macdonald processes are probability measures on sequences of partitions defined in terms of nonnegative specializations of the Macdonald symmetric functions and two Macdonald parameters $q,t \in [0,1)$. We prove several results about these processes, which include the following.

(1) We explicitly evaluate expectations of a rich family of observables for these processes. (2) In the case $t=0$, we find a Fredholm determinant formula for a $q$-Laplace transform of the distribution of the last part of the Macdonald-random partition. (3) We introduce Markov dynamics that preserve the class of Macdonald processes and lead to new ``integrable'' 2d and 1d interacting particle systems. (4) In a large time limit transition, and as $q$ goes to 1, the particles of these systems crystallize on a lattice, and fluctuations around the lattice converge to O'Connell's Whittaker process that describe semi-discrete Brownian directed polymers. (5) This yields a Fredholm determinant for the Laplace transform of the polymer partition function, and taking its asymptotics we prove KPZ universality for the polymer (free energy fluctuation exponent $1/3$ and Tracy-Widom GUE limit law). (6) Under intermediate disorder scaling, we recover the Laplace transform of the solution of the KPZ equation with narrow wedge initial data. (7) We provide contour integral formulas for a wide array of polymer moments. (8) This results in a new ansatz for solving quantum many body systems such as the delta Bose gas.
\end{abstract}

\setcounter{tocdepth}{4}
\tableofcontents
\hypersetup{linktocpage}


\chapter{Introduction}


The principal goal of the present paper is to develop a formalism of Macdonald processes -- a class of probability measures on Gelfand-Tsetlin patterns -- and to apply it to studying interacting particle systems and directed random polymers.

Our inspiration comes from three sides: The well-known success of the determinantal point processes in general, and the Schur processes in particular, in analyzing totally asymmetric simple exclusion processes (TASEPs) and last passage percolation problems (in one space dimension); the recent breakthrough \cite{ACQ,SaSp} in finding exact solutions to the Kardar-Parisi-Zhang stochastic nonlinear PDE (KPZ equation, for short), based either on Tracy-Widom's integrability theory for the partially asymmetric simple exclusion process (PASEP) or on the replica trick coupled with Bethe ansatz solutions of the quantum delta Bose gas; and O'Connell's description of a certain random directed polymers in terms of eigenfunctions of the quantum Toda lattice known as class one Whittaker functions (for root systems of type A).

The present work ties these developments together.

The Macdonald processes are defined in terms of the Macdonald symmetric functions, a remarkable two-parameter family of symmetric functions discovered by Macdonald in the late 1980s \cite{M}. The parameters are traditionally denoted by $q,t$. On the diagonal $q=t$, the Macdonald symmetric functions turn into the Schur functions, and the Macdonald processes turn into the Schur processes of \cite{OkResh,OkWedge}. The key feature that makes the Schur processes very useful is the determinantal structure of their correlation functions. This feature is not present in the general Macdonald case, which is one reason why Macdonald processes have not been extensively studied yet (see, however, \cite{ForRains, Vuletic}). We find a way of utilizing the Macdonald operators -- a sequence of difference operators that are diagonalized by Macdonald symmetric functions -- for explicit evaluations of averages for a rich class of observables of Macdonald processes.

In the case $t=0$, these averages lead us to a Fredholm determinant representation of a $q$-Laplace transform of the distribution of the last part of a Macdonald-random partition.
The appearance of the Fredholm determinant seems remarkable. As to our best knowledge, the Macdonald symmetric functions (either general ones or those for $t=0$) do not carry natural determinantal structures (as opposed to the Schur functions).

In a different direction, we exhibit a family of Markov chains that preserve the class of Macdonald processes. The construction is parallel to that of \cite{BF, Bor-Sch-Dyn} in the Schur case, and it is based on a much earlier idea of \cite{DiaconisFill}. In the case $t=0$, the continuous time version of these stochastic dynamics can be viewed as an interacting particle system in two space dimensions (whose Schur degeneration was studied in \cite{BF}). A certain one dimensional Markovian subsystem has a particularly simple description -- it is a $q$-deformation of TASEP where a particle's jump rate equals $1-q^{gap}$, with `gap' being the number of open spaces to the right of the particle; we call it $q$-TASEP. The Macdonald formalism thus provides explicit expressions for averages of many observables of these interacting particle systems started from certain special initial conditions. In particular, we obtain a Fredholm determinant formula for the $q$-Laplace transform of the distribution of any particle in $q$-TASEP started from the so-called step initial condition.

We then focus on asymptotic behavior of certain specializations of the Macdonald processes. In what follows, the parameter $t$ is assumed to be 0.

We find that as $q\to 1$, in the so-called Plancherel specialization, the particles in the Macdonald-random Gelfand-Tsetlin pattern crystalize onto a perfect lattice (law of large
numbers behavior). The scaled fluctuations of particles around this limiting lattice converge to the Whittaker process introduced by O'Connell \cite{OCon} as the image of the O'Connell-Yor semi-discrete directed polymer under a continuous version of the tropical RSK correspondence. The process is supported on triangular arrays of real numbers. (This degeneration is parallel to the recently discovered degeneration of the Macdonald symmetric functions to the class one Whittaker function, see \cite{GLOqlim}.) The Markov dynamics on Macdonald processes converge to dynamics on these triangular arrays which are not the same as those induced by the tropical RSK correspondence, but which were also introduced in \cite{OCon} as a geometric version of Warren's process \cite{Warren}.
For a different choice of specialization, the fluctuations above converge to another version of the Whittaker process introduced in \cite{COSZ} as the image of Sepp\"{a}l\"{a}inen's log-gamma discrete directed polymer under the tropical RSK correspondence of A.N. Kirillov \cite{Kir}.

Under the scalings to Whittaker processes, the Fredholm determinant formulas converge to similar formulas for the Laplace transform of the partition functions for these polymers (hence characterizing the free energy fluctuations). As one application of these new Fredholm determinant formulas we prove (via steepest descent analysis) that as time $\tau$ goes to infinity, these polymers are in the KPZ
universality class, i.e., their free energy fluctuates like $\tau^{1/3}$ with limiting law given by the Tracy-Widom GUE distribution. Under a different scaling, known as intermediate disorder scaling, we show how to recover the Laplace transform for the partition function of the point-to-point continuum directed random polymer, i.e., the solution to the stochastic heat equation with delta initial data, or the exponential of the solution to the KPZ equation with narrow wedge initial data. Those familiar with the non-rigorous replica approach of \cite{Dot} or \cite{CDR} may observe that the unjustifiable analytic continuations and rearrangements necessary to sum the divergent moment series in those works (a.k.a. the replica trick), can be seen as shadows of the rigorous manipulations and calculations performed at the higher level of the Macdonald processes.

From the point of view of the KPZ equation, we observe that the $q$-TASEP may now be used as its `integrable discretization' in a similar way as PASEP has been used, and we expect to present more results in this direction in future works.

It has been known for a while that moments of exactly solvable polymer models solve certain quantum many body systems with delta interactions. This fact, together with Bethe ansatz computations, has been widely used in physics literature to analyze the polymers. The formalism of the Macdonald processes results in certain explicit integral formulas for those moments, that are thus solutions of the quantum many-body systems.

Those contour integral solutions provide a new and seemingly robust ansatz for solving such systems without appealing to the Bethe ansatz. As one application we provide new formulas for the solution to the delta Bose gas, with the difference between attractive and repulsive interactions only coming in as a sign change in the denominator of our contour integral integrand -- a striking contrast with the Bethe ansatz approach, where the two cases lead to very different sets of eigenfunctions. The range of potential applications of these new formulas remains to be investigated.

The introduction below provides a more detailed description of our results. We would like to note that on many occasions, because of space considerations, the introduction will only give a sample statement, and for the full scale of the result the reader is encouraged to look into the body of the paper (we provide exact references to relevant statements throughout the introduction).

\subsection{Outline}

This paper is split into six chapters:
\begin{itemize}
\item Chapter one is the introduction. We have attempted to make this fairly self-contained (at the cost of repeating certain definitions, discussions and results again in the main parts of the paper).
\item Chapter two contains the definition of Macdonald processes, the calculation of certain observables as contour integrals, and the development of dynamics which preserve these processes. Additionally, a review of properties of Macdonald symmetric functions is provided in this part for the readers' convenience.
\item Chapter three takes the Macdonald parameter $t=0$ (resulting in $q$-Whittaker processes) and contains the basic result that $q$-Laplace transforms of certain elements of these processes are Fredholm determinants. Specializing the $q$-Whittaker functions and focusing on a certain projection of the dynamics on the processes results in $q$-TASEP whose one-particle transition probabilities are characterized by a Fredholm determinant.
\item Chapter four includes the weak (in the sense of distribution) limit $q\to 1$ of $q$-Whittaker processes. Two $q$-Whittaker specializations (called Plancherel and pure alpha) are focused on. They lead to Whittaker processes which were previously discovered by O'Connell \cite{OCon} and Corwin, O'Connell, Sepp\"{a}l\"{a}inen and Zygouras \cite{COSZ}. Laplace transforms for certain elements of those are now given by Fredholm determinants, and exponential moments are given by contour integrals of a particularly simple form. Using the Fredholm determinant, a limit theorem is proved showing Tracy-Widom GUE fluctuations.
\item Chapter five uses the connection developed in \cite{OCon,COSZ} to relate the above mentioned formulas to the study of directed polymers in random media and via another limiting procedure, to the continuum directed random polymer and the KPZ equation.
\item Chapter six shows how the contour integral formulas developed here provide a new ansatz for solving a certain class of integrable quantum many body systems such as the delta Bose gas.
\end{itemize}

\subsection{Acknowledgements}

The authors would like to thank P.L. Ferrari, V. Gorin, A. Okounkov, G. Olshanski, J. Quastel, E. Rains, T. Sasamoto, T. Sepp\"{a}l\"{a}inen and H. Spohn for helpful discussions pertaining to this paper. We are particularly grateful to N. O'Connell for generously sharing his insights on this subject with us through this project. Additional thanks go out to the participants of the American Institute of Mathematics workshop on the ``Kardar-Parisi-Zhang equation and universality class'' where AB first spoke on these results. AB was partially supported by the NSF grant DMS-1056390. IC was partially supported by the NSF through PIRE grant OISE-07-30136 as well as by Microsoft Research through the Schramm Memorial Fellowship.

\section{Macdonald processes}

The {\it ascending Macdonald process} $\M_{asc}(a_1,\dots,a_N;\rho)$ is the probability distribution on sequences of partitions
\begin{equation}\label{ascseqnintro}
\varnothing \prec \lambda^{(1)}\prec \lambda^{(2)}\prec\cdots \prec\lambda^{(N)}
\end{equation}
(equivalently Gelfand-Tsetlin patterns, or column-strict Young tableaux) indexed by positive variables $a_1,\ldots, a_N$ and a single Macdonald nonnegative specialization $\rho$ of the algebra of symmetric functions, with
\begin{equation*}
\M_{asc}(a_1,\dots,a_N;\rho)(\la^{(1)},\dots,\la^{(N)})= \frac{P_{\la^{(1)}}(a_1)P_{\la^{(2)}/\la^{(1)}}(a_2)\cdots P_{\la^{(N)}/\la^{(N-1)}}(a_N) Q_{\la^{(N)}}(\rho)}{\Pi(a_1,\dots,a_N;\rho)}\,.
\end{equation*}

Here by a partition $\lambda$ we mean an integer sequence $\lambda=(\lambda_1\geq \lambda_2\geq \ldots \geq 0)$ with finitely many nonzero entries $\ell(\lambda)$, and we say that $\mu \prec \lambda$ if the two partitions interlace: $\mu_i\leq \lambda_i \leq \mu_{i-1}$ for all meaningful $i$'s. In standard terminology, $\mu\prec\lambda$ is equivalent to saying that the skew partition $\lambda/\mu$ is a horizontal strip.


The functions $P$ and $Q$ are Macdonald symmetric functions which are indexed by (skew) partitions and implicitly depend on two parameters $q,t\in (0,1)$. Their remarkable properties are developed in Macdonald's book \cite[Section VI]{M} and reviewed in Section~\ref{MacDefSEC} below. The evaluation of a Macdonald symmetric function on a positive variable $a$ (as in $P_{\lambda/\mu}(a)$) means to restrict the function to a single variable and then substitute the value $a$ in for that variable. This is a special case of a Macdonald nonnegative specialization $\rho$ which is an algebra homomorphism of the algebra of symmetric functions $\Sym$ to $\C$ that takes skew Macdonald symmetric functions to nonnegative real numbers (notation: $P_{\lambda/\mu}(\rho)\geq 0$ for any partitions $\lambda$ and $\mu$). Restricting the Macdonald symmetric functions to a finite number of variables (i.e., considering Macdonald polynomials) and then substituting nonnegative numbers for these variables constitutes such a specialization.

We will work with a more general class which can be thought of as unions of limits of such finite length specializations as well as limits of finite length {\it dual} specializations. Let $\{\alpha_i\}_{i\ge 1}$, $\{\beta_i\}_{i\ge 1}$, and $\gamma$ be nonnegative numbers, and $\sum_{i=1}^\infty(\alpha_i+\beta_i)<\infty$. Let $\rho$ be a specialization of $\Sym$ defined by
\begin{equation}\label{specdefeqn}
\sum_{n\ge 0} g_n(\rho) u^n= \exp(\gamma u) \prod_{i\ge 1} \frac{(t\alpha_iu;q)_\infty}{(\alpha_i u;q)_\infty}\,(1+\beta_i u)=: \Pi(u;\rho).
\end{equation}
Here $u$ is a formal variable and $g_n=Q_{(n)}$ is the $(q,t)$-analog of the complete homogeneous symmetric function $h_n$. Since $g_n$ forms a $\Q[q,t]$ algebraic basis of $\Sym$, this uniquely defines the specialization $\rho$. This $\rho$ is a Macdonald nonnegative specialization (see Section~\ref{NNspecSEC} for more details).

Finally, the normalization for the ascending Macdonald process is given by
\begin{equation*}
\Pi(a_1,\dots,a_N;\rho) = \prod_{i=1}^{N} \Pi(a_i;\rho),
\end{equation*}
as follows from a generalization of Cauchy's identity for Schur functions. It is not hard to see that the condition of the partition function $\Pi(a_1,\dots,a_N;\rho)$ being finite is equivalent to $a_i\alpha_j<1$ for all $i,j$, and hence we will always assume this holds.

The projection of $\M_{asc}$ to a single partition $\lambda^{(k)}$, $k=1,\dots,N$, is the {\it Macdonald measure}
\begin{equation*}
\MM(a_1,\ldots,a_k;\rho)(\la^{(k)})= \frac{P_{\lambda^{(k)}}(a_1,\ldots,a_k) Q_{\lambda^{(k)}}(\rho)} {\Pi(a_1,\ldots,a_k;\rho)}\,.
\end{equation*}

Macdonald processes (which receive a complete treatment in Section~\ref{MacProcessSec}) were first introduced and studied by Vuletic \cite{Vuletic} in relation to a generalization of MacMahon's formula for plane partitions (see also the earlier work of Forrester and Rains \cite{ForRains} in which a Macdonald measure is introduced). Setting $q=t$, both $P$ and $Q$ become Schur functions and the processes become the well-known Schur processes introduced in \cite{OkResh,OkWedge}. One may similarly degenerate the Macdonald symmetric functions to Hall-Littlewood and Jack functions (and similarly for processes). In what follows, we will focus on a less well-known degeneration to Whittaker functions.

\section{Computing Macdonald process observables}
Assume we have a linear operator $\mathcal{D}$ in the space of functions in $N$ variables whose restriction to the space of symmetric polynomials diagonalizes in the basis of Macdonald polynomials: $\mathcal{D} P_\la=d_\lambda P_\la$ for any partition $\la$ with $\ell(\la)\le N$. Then we can apply $\mathcal{D}$ to both sides of the identity
\begin{equation*}
\sum_{\la:\ell(\la)\le N} P_{\lambda}(a_1,\dots,a_N) Q_{\lambda}(\rho)=\Pi(a_1,\dots,a_N;\rho).
\end{equation*}
Dividing the result by $\Pi(a_1,\dots,a_N;\rho)$ we obtain
\begin{equation}\label{tag8intro}
\langle d_\lambda \rangle_{\MM(a_1,\dots,a_N;\rho)}=\frac{\mathcal{D}\Pi(a_1,\dots,a_N;\rho)} {\Pi(a_1,\dots,a_N;\rho)}\,,
\end{equation}
where $\langle \cdot\rangle_{\MM(a_1,\dots,a_N;\rho)}$ represents averaging $\cdot$ over the specified Macdonald measure.
If we apply $\mathcal{D}$ several times we obtain
\begin{equation*}
\langle d_\lambda^k \rangle_{\MM(a_1,\dots,a_N;\rho)}=\frac{\mathcal{D}^k \Pi(a_1,\dots,a_N;\rho)} {\Pi(a_1,\dots,a_N;\rho)}\,.
\end{equation*}
If we have several possibilities for $\mathcal{D}$ we can obtain formulas for averages of the observables equal to products of powers of the corresponding eigenvalues. One of the remarkable features of Macdonald polynomials is that there exists a large family of such operators for which they form the eigenbasis (and this fact can be used to define the polynomials). These are the Macdonald difference operators.

In what follows we fix the number of independent variables to be $N\in \Zgzero$. For any $u\in\R$ and $1\le i\le N$, define the {\it shift operator} $T_{u,x_i}$ by
\begin{equation*}
(T_{u,x_i}F)(x_1,\dots,x_N)=F(x_1,\dots,ux_i,\dots,x_N).
\end{equation*}
For any subset $I\subset\{1,\dots,N\}$, define
\begin{equation*}
A_I(x;t)=t^{\frac{r(r-1)}2}\prod_{i\in I,\,j\notin I}\frac{tx_i-x_j}{x_i-x_j}\,.
\end{equation*}

Finally, for any $r=1,2,\ldots,n$, define the {\it Macdonald difference operators}
\begin{equation*}
D_N^r=\sum_{\substack{I\subset\{1,\ldots,N\}\\|I|=r}} A_I(x;t)\prod_{i\in I} T_{q,x_i}.
\end{equation*}

\begin{proposition}\cite[VI,(4.15)]{M}\label{prop5intro}
For any partition $\lambda=(\lambda_1\geq \lambda_2\geq \cdots)$ with $\lambda_m=0$ for $m>N$
\begin{equation*}
D_N^r P_\lambda(x_1,\dots,x_N)=e_r(q^{\la_1}t^{N-1},q^{\la_2}t^{N-2},\dots,q^{\la_N}) P_\lambda(x_1,\dots,x_N).
\end{equation*}
Here $e_r$ is the elementary symmetric function, $e_r(x_1,\ldots,x_N) = \sum_{1\leq i_1<\cdots<i_r\leq N} x_{i_1}\cdots x_{i_r}$.
\end{proposition}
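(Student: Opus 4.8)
The plan is to prove this by exhibiting the action of $D_N^r$ on Macdonald polynomials through a careful analysis of the leading term, using the triangularity of $P_\lambda$ with respect to the dominance order on monomials together with the self-adjointness/eigenfunction characterization of the $P_\lambda$. Recall that $P_\lambda(x_1,\dots,x_N) = m_\lambda + \sum_{\mu < \lambda} c_{\lambda\mu} m_\mu$, where the sum is over partitions $\mu$ strictly dominated by $\lambda$ and $m_\mu$ is the monomial symmetric function. First I would verify that each $D_N^r$ maps symmetric polynomials to symmetric polynomials (the rational prefactors $A_I(x;t)$ conspire with the shifts so that the poles at $x_i = x_j$ cancel; this is the content of checking that $D_N^r F$ has no singularities and is $S_N$-invariant when $F$ is symmetric), and that $D_N^r$ preserves the degree filtration, in fact preserves each homogeneous component's span of $m_\mu$ with $\mu$ in a given dominance interval. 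Granting this, $D_N^r P_\lambda$ is again symmetric of the same degree, so it is a linear combination of $P_\mu$ with $|\mu| = |\lambda|$ and — using triangularity — only $\mu \le \lambda$ appear.

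The key step is then to extract the coefficient of the top monomial $m_\lambda$ (equivalently, of $x_1^{\lambda_1}\cdots x_N^{\lambda_N}$) in $D_N^r P_\lambda$. Applying $\prod_{i\in I} T_{q,x_i}$ to $x^\lambda$ scales it by $\prod_{i\in I} q^{\lambda_i}$, and the leading behavior of $A_I(x;t)$ as one tracks the dominant monomial is governed by the limit in which the variables indexed by $I$ dominate: there $A_I(x;t) \to t^{r(r-1)/2}\prod_{i\in I,\,j\notin I} t = t^{r(r-1)/2} t^{r(N-r)}$ only in a crude bound, but more precisely one must sum over \emph{which} coordinates of $x^\lambda$ get shifted. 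The cleanest route is to evaluate $D_N^r$ on the ``spectral vector'': substitute $x_i = q^{\lambda_i} t^{N-i}$ formally, or rather observe that after symmetrization the coefficient of $m_\lambda$ in $D_N^r P_\lambda$ equals $\sum_{|I|=r} \big(\prod_{i\in I} q^{\lambda_i}\big) \cdot (\text{value of } A_I \text{ on the staircase-weighted leading term})$, which after the standard manipulation with the $A_I$ collapses to $e_r(q^{\lambda_1}t^{N-1}, q^{\lambda_2}t^{N-2},\dots,q^{\lambda_N}t^0)$. Since the eigenvalue on the leading monomial already determines the eigenvalue (the operator is triangular and the diagonal entry is this symmetric expression, which is distinct for distinct $\lambda$ of the same size — one would note that the map $\lambda \mapsto (q^{\lambda_i}t^{N-i})_i$ is injective and these power sums/elementary symmetric values separate partitions), the triangular system forces $D_N^r P_\lambda = d_\lambda^{(r)} P_\lambda$ with $d_\lambda^{(r)} = e_r(q^{\lambda_1}t^{N-1},\dots,q^{\lambda_N})$, with no lower terms.

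The main obstacle I anticipate is the genuinely finicky combinatorial identity for the diagonal entry: showing that the sum $\sum_{|I|=r} \prod_{i\in I} q^{\lambda_i} \cdot [\,\text{leading coefficient of } A_I\,]$ telescopes exactly to $e_r(q^{\lambda_1}t^{N-1},\dots,q^{\lambda_N})$. This requires pinning down precisely what ``leading coefficient of $A_I$'' means — one orders the variables so that $x_1 \gg x_2 \gg \cdots \gg x_N$ (consistent with $\lambda_1 \ge \lambda_2 \ge \cdots$) and computes the limit of $t^{r(r-1)/2}\prod_{i\in I,\,j\notin I}\frac{tx_i-x_j}{x_i - x_j}$ in that regime, getting a product of $t$'s and $1$'s depending on the relative order of indices in $I$ versus its complement; reassembling gives the $t^{N-i}$ weights. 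Rather than grind through this, the efficient alternative — and the one I would actually adopt — is to cite that the operators $D_N^r$ are known to commute and to be self-adjoint with respect to the Macdonald torus inner product, hence simultaneously diagonalizable, with $P_\lambda$ (characterized by triangularity plus orthogonality) the common eigenbasis; then one only needs the eigenvalue, which is computed by the single leading-monomial calculation above. The delta-with-$m_\lambda$ bookkeeping is the crux; everything else is structural.
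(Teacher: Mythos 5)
The paper does not contain a proof of this statement; it is cited directly from Macdonald's book, \cite[VI,(4.15)]{M}, so there is no in-paper argument to compare against.

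Your sketch has the right shape---it is essentially Macdonald's own argument---and the leading-coefficient computation you outline is correct: in the regime $x_1 \gg x_2 \gg \cdots \gg x_N$, the factor $A_I(x;t) = t^{r(r-1)/2}\prod_{i\in I,\,j\notin I}\frac{tx_i-x_j}{x_i-x_j}$ tends to $\prod_{i\in I}t^{N-i}$, so the coefficient of $m_\lambda$ in $D_N^r m_\lambda$ is $\sum_{|I|=r}\prod_{i\in I}q^{\lambda_i}t^{N-i}=e_r(q^{\lambda_1}t^{N-1},\ldots,q^{\lambda_N})$, as required.

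Two pieces, however, are not merely ``structural,'' and one step in your middle paragraph is a genuine gap. Triangularity of $D_N^r$ on the monomial basis (that $D_N^r m_\lambda\in\mathrm{span}\{m_\mu:\mu\le\lambda\}$) is \cite[VI,(4.11)]{M}; it requires a real argument about which monomials survive the symmetrization of $A_I(x;t)\prod_{i\in I}T_{q,x_i}$ and does not follow from degree preservation. More to the point, your middle paragraph asserts that triangularity together with distinct diagonal entries ``forces $D_N^r P_\lambda = d_\lambda^{(r)}P_\lambda$.'' That inference fails on its own. Triangularity with distinct eigenvalues produces a unique upper-triangular eigenbasis $v_\lambda = m_\lambda + \sum_{\mu<\lambda}a_{\lambda\mu}m_\mu$ of $D_N^r$, but it does not identify $v_\lambda$ with $P_\lambda$: the latter are characterized by triangularity \emph{and} orthogonality under $\langle\cdot,\cdot\rangle_{q,t}$, and matching the two bases requires either self-adjointness of $D_N^r$ with respect to this pairing, or commutativity of $D_N^r$ with $D_N^1$ together with the defining theorem that $\{P_\lambda\}$ is the triangular eigenbasis of $D_N^1$. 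You correctly invoke self-adjointness in your last paragraph, but you present it as an ``efficient alternative'' to the triangular-system argument; it is in fact the indispensable bridge from the $m$-basis computation to the conclusion about $P_\lambda$, and it, together with the triangularity lemma, constitutes the real content of the proof that your proposal defers.
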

Although the operators $D_N^r$ do not look particularly simple, they can be represented by contour integrals by properly encoding the shifts in terms of residues (see Section~\ref{difopSEC} for proofs and more general results and discussion).
\begin{proposition}\label{prop6intro}
Assume that $F(u_1,\dots,u_N)=f(u_1)\cdots f(u_N)$. Take $x_1,\ldots,x_N>0$ and assume that $f(u)$ is holomorphic and nonzero in a complex neighborhood of an interval in $\R$ that contains $\{x_j,qx_j\}_{j=1}^N$. Then for $r=1,2,\dots,N$
\begin{equation}\label{tag9intro}
(D_N^r F)(x)= F(x)\cdot \frac {1}{(2\pi \iota)^r r!}\oint\cdots\oint
\det\left[\frac 1{tz_k-z_\ell}\right]_{k,\ell=1}^r
\prod_{j=1}^r  \left(\prod_{m=1}^N \frac{tz_j-x_m}{z_j-x_m}\right)
\frac{f(qz_j)}{f(z_j)}\,dz_j,
\end{equation}
where each of $r$ integrals is over positively oriented contour encircling $\{x_1,\ldots,x_N\}$ and no other singularities of the integrand.
\end{proposition}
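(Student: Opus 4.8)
The plan is to verify \eqref{tag9intro} by computing the right-hand side through an iterated residue expansion and matching it term-by-term with the definition of $D_N^r$. The key observation is that the only singularities of the integrand inside the chosen contours are simple poles at $z_j \in \{x_1,\dots,x_N\}$: the factor $\prod_{m}\frac{tz_j-x_m}{z_j-x_m}$ contributes the poles $z_j=x_m$, the entries $\frac{1}{tz_k-z_\ell}$ of the determinant are regular there because the contours can be taken close enough to $\{x_1,\dots,x_N\}$ that $t z_k$ never reaches $z_\ell$ (since $t\in(0,1)$, one arranges the contours to be small circles, or nested, so that $tz_k$ stays strictly inside the innermost contour while $z_\ell$ stays on or outside it), and $f(qz_j)/f(z_j)$ is holomorphic and nonzero in the prescribed neighborhood. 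So each of the $r$ integrals, evaluated by residues, picks up a sum over choices $z_j \mapsto x_{i_j}$.

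First I would expand the determinant $\det\bigl[\tfrac{1}{tz_k-z_\ell}\bigr]_{k,\ell=1}^r$ by its Leibniz formula, $\sum_{\sigma\in S_r}\operatorname{sgn}(\sigma)\prod_k \tfrac{1}{tz_k-z_{\sigma(k)}}$, and then evaluate the $r$-fold contour integral. One has to be careful about the order of integration: integrating out $z_r$ first, the only residues are at $z_r=x_m$, and one must track which factors $\tfrac{1}{tz_k-z_r}$ or $\tfrac{1}{tz_r-z_\ell}$ are regular versus produce poles — here the nesting convention on the contours guarantees that the only poles in $z_r$ are the $x_m$'s, not the points $z_\ell/t$ or $t z_k$. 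Iterating, the integral collapses to a sum over all functions $j\mapsto i_j$ from $\{1,\dots,r\}$ to $\{1,\dots,N\}$; the diagonal determinant structure together with the Cauchy-type identity $\det\bigl[\tfrac{1}{tx_{i_k}-x_{i_\ell}}\bigr]$ forces the $i_j$ to be distinct (repeated indices kill the residue, because a repeated column makes the determinant vanish or the corresponding residue structure degenerate), so effectively the sum is over $r$-element subsets $I=\{i_1<\dots<i_r\}\subset\{1,\dots,N\}$, each appearing $r!$ times, cancelling the $1/r!$. The residue at $z_j=x_{i_j}$ of $\frac{tz_j-x_{i_j}}{z_j-x_{i_j}}$ contributes $tx_{i_j}-x_{i_j}$... more precisely $\operatorname{Res}_{z_j=x_{i_j}}\frac{1}{z_j-x_{i_j}}\bigl(\text{regular part}\bigr)$ gives a factor $(t-1)x_{i_j}$ absorbed appropriately; collecting the leftover $\prod_{m\neq i_j}\frac{tx_{i_j}-x_m}{x_{i_j}-x_m}$ over $j\in I$, together with the surviving determinant $\det\bigl[\tfrac{1}{tx_{i_k}-x_{i_\ell}}\bigr]_{k,\ell}$ restricted to $I$, should reassemble into exactly $A_I(x;t)=t^{\binom r2}\prod_{i\in I, j\notin I}\frac{tx_i-x_j}{x_i-x_j}$. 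This is the combinatorial heart of the argument: one needs the Cauchy determinant evaluation $\det\bigl[\tfrac{1}{tx_{i}-x_{j}}\bigr]_{i,j\in I}$ to produce both the prefactor $t^{\binom r2}$ and the "internal" product $\prod_{i<j\in I}\frac{(tx_i-x_j)(tx_j-x_i)}{(x_i-x_j)(tx_j-tx_i)}$-type terms, which combine with the residue factors to give the full $A_I$. Finally $\prod_{j\in I}\frac{f(qx_{i_j})}{f(x_{i_j})} = \prod_{i\in I}\frac{f(qx_i)}{f(x_i)}$, and since $F(x)=\prod_m f(x_m)$ we have $F(x)\prod_{i\in I}\frac{f(qx_i)}{f(x_i)} = F(x_1,\dots,qx_i,\dots) = \bigl(\prod_{i\in I}T_{q,x_i}F\bigr)(x)$, so the right-hand side becomes $\sum_{|I|=r} A_I(x;t)\prod_{i\in I}T_{q,x_i}F = D_N^r F$ as desired.

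The main obstacle I anticipate is the bookkeeping in the iterated residue computation — specifically, justifying that the contours can be chosen so that the $\tfrac{1}{tz_k-z_\ell}$ factors genuinely contribute no poles, and then correctly accounting for signs and for the vanishing of repeated-index terms when the determinant is expanded and residues are taken in sequence (the order of residue extraction interacts with the Leibniz permutation $\sigma$). A clean way to organize this is to take the contours to be concentric circles of radii $R_1 > R_2 > \dots > R_r$ all enclosing $\{x_1,\dots,x_N\}$, with ratios chosen so that $t R_k < R_{k+1}$ for all $k$; then when integrating $z_1$ (outermost) first, no factor $\tfrac{1}{tz_1 - z_\ell}$ has a pole inside, and $\tfrac{1}{tz_k-z_1}$ has its pole $z_1 = t z_k$ outside the $z_1$-contour, so only the $x_m$ poles survive — and one proceeds inward. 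The Cauchy-determinant identity needed at the end is classical, so once the residue extraction is set up correctly the remaining algebra is routine; the detailed version and the general-$\rho$ extension are deferred to Section~\ref{difopSEC}.
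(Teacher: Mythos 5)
Your overall strategy (evaluate the right side by iterated residues, use the determinant/Vandermonde structure to kill contributions with repeated indices, and reassemble $A_I$ and the shift operators) matches the paper's proof, which leads directly with the Cauchy determinant identity $\det\bigl[\tfrac{1}{tz_k-z_\ell}\bigr]=\tfrac{t^{r(r-1)/2}\prod_{k<\ell}(z_k-z_\ell)(z_\ell-z_k)}{\prod_{k,\ell}(tz_k-z_\ell)}$ and then takes residues; the paper's ordering makes both the vanishing for repeated indices and the cancellation of your stray $(t-1)x_{i_j}$ factors immediate (they cancel against the diagonal entries of $\prod_{k,\ell}(tz_k-z_\ell)$). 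That part is fine, just presented in a different order.

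However, your ``clean way to organize'' the contours is incorrect. You propose concentric circles of radii $R_1>\cdots>R_r$ around the origin with $tR_k<R_{k+1}$, and claim that when you integrate $z_1$ out first the pole of $\tfrac{1}{tz_k-z_1}$ at $z_1=tz_k$ lies \emph{outside} the $z_1$-contour. That is false: with $z_k$ on a circle of radius $R_k\le R_1$ and $t\in(0,1)$, the point $tz_k$ has modulus $tR_k<R_1$, so it lies \emph{inside} the outermost circle. By the Cauchy determinant identity this is a genuine simple pole of the integrand (the Vandermonde in the numerator does not vanish at $z_1=tz_k$), so your residue expansion would pick up unwanted contributions; the same difficulty recurs for the poles of $\tfrac{1}{tz_1-z_\ell}$ at $z_1=z_\ell/t$, since your chain of inequalities $R_\ell>tR_{\ell-1}>\cdots$ gives only $R_\ell>t^{\ell-1}R_1$, not $R_\ell>tR_1$. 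The proposition's hypothesis is that the contour encircles $\{x_1,\dots,x_N\}$ and \emph{no other} singularities of the integrand, and the paper's Remark \ref{firstlambdaremark} (preceding the proof in Section \ref{difopSEC}) explicitly flags that such contours exist only under extra conditions, namely $t$ small and the ratios $x_i/x_j$ close to $1$. In that regime one takes contours tightly hugging the cluster $\{x_1,\dots,x_N\}$; then $tz_k$ lives near $t\cdot x_m$, which is far from that cluster, so no extraneous poles are enclosed. Replacing your concentric-circle construction with this one (and invoking the stated parameter restrictions) closes the gap; the rest of your residue bookkeeping is sound.
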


Observe that $\Pi(a_1,\dots,a_N;\rho)=\prod_{i=1}^N \Pi(a_i;\rho)$. Hence, Proposition~\ref{prop6intro} is suitable for evaluating the right-hand side of equation (\ref{tag8intro}). Observe also that for any fixed $z_1,\dots,z_r$, the right-hand side of equation (\ref{tag9intro}) is a multiplicative function of $x_j$'s. This makes it possible to iterate the procedure. The simplest case is below (the fully general case can be found in Proposition~\ref{prop8FULL}).

\begin{proposition}\label{prop8intro}
Fix $k\ge 1$. Assume that $F(u_1,\dots,u_N)=f(u_1)\cdots f(u_N)$. Take $x_1,\dots,x_N >0$ and assume that $f(u)$ is holomorphic and nonzero in a complex neighborhood of an interval in $\R$ that contains $\{q^ix_j\mid i=0,\ldots,k,j=1\ldots,N\}$.
Then
\begin{equation*}
\frac{\bigl({(D_n^1)}^k F\bigr)(x)}{F(x)}=\frac {(t-1)^{-k}}{(2\pi \iota)^k} \oint\cdots\oint \prod_{1\le a<b\le k} \frac{(tz_a-qz_b)(z_a-z_b)}{(z_a-qz_b)(tz_a-z_b)} \prod_{c=1}^k\left(\prod_{m=1}^N \frac{tz_c-x_m}{z_c-x_m}\right)
\frac{f(qz_c)}{f(z_c)}\,\frac{dz_c}{z_c}\,,
\end{equation*}
where the $z_c$-contour contains $\{qz_{c+1},\ldots,qz_k,x_1,\ldots,x_N\}$ and no other singularities for $c=1,\dots,k$.
\end{proposition}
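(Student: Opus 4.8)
The plan is to iterate Proposition~\ref{prop6intro} in the case $r=1$ exactly $k$ times, carefully tracking how the contours must be deformed at each step so that the nested residue structure is faithfully encoded. Setting $r=1$ in \eqref{tag9intro} gives
\begin{equation*}
(D_N^1 F)(x) = F(x)\cdot \frac{1}{2\pi\iota}\oint \frac{1}{(t-1)z}\prod_{m=1}^N\frac{tz-x_m}{z-x_m}\,\frac{f(qz)}{f(z)}\,dz,
\end{equation*}
with the contour encircling $\{x_1,\dots,x_N\}$ only (here $\det[1/(tz-z)]_{1\times 1} = 1/((t-1)z)$). The crucial observation, already flagged in the text after \eqref{tag9intro}, is that for fixed $z$ the right-hand side is again a \emph{multiplicative} function of the $x_j$'s: it equals $\prod_m g(x_m)$ times $F(x) = \prod_m f(x_m)$, where the effect of the factor $\tfrac{tz-x_m}{z-x_m}$ can be absorbed by replacing $f$ with a new function carrying poles at $z$ and $z/t$ — but, more simply, one just re-applies Proposition~\ref{prop6intro} with the integrand treated as a new multiplicative $F$. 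So the strategy is a clean induction on $k$.

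First I would set up the induction: applying $D_N^1$ to $(D_N^1)^{k-1}F$, I use Proposition~\ref{prop6intro} with the multiplicative function being the integrand of the $(k-1)$-fold formula (viewed as a function of $x$ with the inner integration variables $z_2,\dots,z_k$ frozen). The new factor $\tfrac{f(qz_1)}{f(z_1)}\prod_m\tfrac{tz_1-x_m}{z_1-x_m}$ combines with the frozen ones; the subtle point is that the "$f$" being fed into Proposition~\ref{prop6intro} at step $k$ is not the original $f$ but $f(u)\prod_{c=2}^k\tfrac{tz_c-u}{z_c-u}$, so $\tfrac{f(qz_1)}{f(z_1)}$ must be replaced by $\tfrac{f(qz_1)}{f(z_1)}\prod_{c=2}^k\tfrac{tz_c-qz_1}{z_c-qz_1}\cdot\tfrac{z_c-z_1}{tz_c-z_1}$. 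Collecting these cross-terms over all pairs $1\le a<b\le k$ produces exactly the factor $\prod_{a<b}\tfrac{(tz_a-qz_b)(z_a-z_b)}{(z_a-qz_b)(tz_a-z_b)}$ in the claimed formula; the diagonal $1/((t-1)z_c)$ pieces produce $\tfrac{(t-1)^{-k}}{z_1\cdots z_k}$. This bookkeeping is routine once the pattern is identified.

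The main obstacle — and the step deserving genuine care — is the contour geometry. When I apply Proposition~\ref{prop6intro} at the $k$-th step, the hypothesis requires the $f$ fed in to be holomorphic and nonzero near the $x_j$ and $qx_j$; but that modified $f$ now has poles at $z_c$ and $z_c/t$ for $c\ge 2$. The resolution is the one indicated in the statement: the $z_1$-contour must be taken to enclose $\{qz_2,\dots,qz_k,x_1,\dots,x_N\}$ and nothing else, so that when the residue-encoding of the shift $T_{q,x_1}$ is performed, the relevant pole $qz_1 = x_m$ is captured while the spurious poles at $z_1 = z_c$ (coming from $z_c - z_1$ in the numerator — these are not poles) and $z_1 = z_c/q$ are handled by insisting the contours be \emph{nested}: $z_c$ encircles $qz_{c+1},\dots,qz_k$ together with the $x_j$'s. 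One must check that such nested contours exist (they do: shrink them toward the $x_j$'s and then fatten $z_k$ slightly, $z_{k-1}$ a bit more, etc., using that $0<q<1$ so $qz_{c+1}$ can be made close to the $x_j$ cluster) and that no residues at $z_a = z_b$ or $tz_a = z_b$ are erroneously picked up — the factor $(z_a - z_b)$ in the numerator kills the first, and the contours are chosen small enough (close to the positive $x_j$'s, hence bounded away from $0$) that $tz_a = z_b$ and $z_c = 0$ stay outside. I would also record the analyticity hypothesis on $f$ near $\{q^i x_j\}$ as exactly what guarantees the iteration stays valid: at the $i$-th application the shift $T_{q,x_j}$ moves $x_j$ to $q^i x_j$, so holomorphy and nonvanishing must persist along that whole cluster. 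Finally, dividing by $F(x)$ throughout gives the stated identity, and I would remark that the fully general $D_N^r$ version is Proposition~\ref{prop8FULL}.
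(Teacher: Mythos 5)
Your overall strategy, iterating Proposition~\ref{prop6intro} and reading off the accumulated cross-factors from $\tilde{f}(qz)/\tilde{f}(z)$, is exactly the paper's, and the one algebraic identity you highlight is correct. But the variable bookkeeping has a genuine inconsistency that yields the \emph{wrong} cross-term. You peel off the outer $D_N^1$, so the modified $\tilde{f}(u) = f(u)\prod_{c\ge 2}\tfrac{tz_c - u}{z_c - u}$ carries all $k-1$ earlier factors, and the contour that Proposition~\ref{prop6intro} then hands you for the new variable encircles $\{x_m\}$ while avoiding the poles of $\tilde{f}(qz)$ at $\{z_c/q\}$ and the zeros of $\tilde{f}(z)$ at $\{tz_c\}$. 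This is the \emph{innermost} contour, and the new variable should therefore be labelled $z_k$. Relabelled $z_k$, your cross-term $\prod_{c<k}\tfrac{(tz_c - qz_k)(z_c - z_k)}{(z_c - qz_k)(tz_c - z_k)}$ supplies exactly the $b=k$ factors of $\prod_{a<b}$. Keeping the label $z_1$ as you do, the factor you obtain is $\prod_{c\ge 2}\tfrac{(tz_c - qz_1)(z_c - z_1)}{(z_c - qz_1)(tz_c - z_1)}$, which has $t$ on the larger index and $q$ on the smaller; that is the $\prod_{a>b}$ transpose of the product in the statement, not equal to it. The accompanying claim that this new $z_1$-contour ``must be taken to enclose $\{qz_2,\dots,qz_k,x_1,\dots,x_N\}$'' is the wrong requirement for the variable produced last by Proposition~\ref{prop6intro}, which only encircles $\{x_m\}$.

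To obtain the nesting and the indices exactly as the proposition states them, run the iteration the way the paper does: write $(D_N^1)^k F = (D_N^1)^{k-1}\bigl[D_N^1 F\bigr]$, peel off the \emph{first} operator, enlarge that one contour so it contains $\{x_m, qx_m\}$ (the explicit deformation step in the paper's proof), and feed the single-factor modification $\tilde{f}(u) = f(u)\tfrac{tz_1-u}{z_1-u}$ into the $(k-1)$-fold inductive hypothesis. Then $\tilde{f}(qz_c)/\tilde{f}(z_c) = \tfrac{f(qz_c)}{f(z_c)}\cdot\tfrac{(tz_1-qz_c)(z_1-z_c)}{(z_1-qz_c)(tz_1-z_c)}$ supplies the $a=1$ factors of $\prod_{a<b}$, and $z_1$ legitimately carries the outermost contour. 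Your write-up mixes the two orderings: it uses the $(k-1)$-factor $\tilde{f}$ of the ``outer-first'' direction together with the $z_1$-is-largest geometry of the ``inner-first'' direction, and these are not compatible. One further slip in the same vein: $\tilde{f}(u) = f(u)\prod_c\tfrac{tz_c-u}{z_c-u}$ has poles at $u=z_c$ and zeros at $u=tz_c$, not at $z_c/t$; the zeros are what the ``nonzero'' half of the Proposition~\ref{prop6intro} hypothesis, and hence the contour, must avoid.
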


In Proposition~\ref{prop10} we describe another family of difference operators that are diagonalized by Macdonald polynomials.

\section{The emergence of a Fredholm determinant}
Macdonald polynomials in $N$ variables with $t=0$ are also known of as {\it $q$-deformed $\mathfrak{gl}_{N}$ Whittaker functions} \cite{GLOqlim}. We now denote the ascending Macdonald process with $t=0$ as $\M_{asc,t=0}(a_1,\dots,a_N;\rho)$ and the Macdonald measure as $\MM_{t=0}(a_1,\dots,a_N;\rho)$. In the paper we refer to these as {\it $q$-Whittaker processes} and {\it $q$-Whittaker measures}, respectively.


The partition function for the corresponding $q$-Whittaker measure $\MM_{t=0}(a_1,\dots,a_N;\rho)$ simplifies as
\begin{equation*}
\sum_{\lambda\in \Y(N)} P_\la(a_1,\dots,a_N)Q_\la(\rho)=\Pi(a_1,\dots,a_N;\rho)=\prod_{j=1}^N \exp(\gamma a_j) \prod_{i\ge 1} \frac{1+\beta_i a_j}{(\alpha_i a_j;q)_\infty}\,,
\end{equation*}
where $\rho$ is determined by $\{\alpha_j\}$, $\{\beta_j\}$, and $\gamma$ as before, and we assume $\alpha_i a_j<1$ for all $i,j$ so that the series converge.

Let us take the limit $t\to 0$ of Proposition~\ref{prop8intro}. For the sake of concreteness let us focus on a particular {\it Plancherel} specialization in which the parameter $\gamma>0$ but all $\alpha_i\equiv \beta_i\equiv 0$ for $i\geq 1$. Let us also assume that all $a_i\equiv 1$. For compactness of notation, in this case we will replace $\MM_{t=0}(a_1,\ldots,a_N;\rho)$ with $\MM_{Pl}$. Similar results for general specializations and values of $a_i$, as well as the exact statements and proofs of the results given below can be found in Section~\ref{qWhitSec}.

Write $\mu_k=\left\langle q^{k\lambda_N}\right\rangle_{\MM_{Pl}}$. Then
\begin{equation}\label{mukdefintro}
\mu_k= \frac{(-1)^k q^{\frac{k(k-1)}{2}}}{(2\pi \iota)^k} \oint \cdots \oint \prod_{1\leq A<B\leq k} \frac{z_A-z_B}{z_A-qz_B} \frac{f(z_1)\cdots f(z_k)}{z_1\cdots z_k} dz_1\cdots dz_k,
\end{equation}
where $f(z) = (1-z)^{-N} \exp\{(q-1)\gamma z\}$ and where the $z_j$-contours contain $\{q z_{j+1},\ldots, q z_k, 1\}$ and not 0. For example when $k=2$, $z_2$ is integrated along a small contours around 1, and $z_1$ is integrated around a slightly larger contour which includes 1 and the image of $q$ times the $z_2$ contour.

We may combine these $q$-moments into a generating function
\begin{equation*}
\sum_{k\geq 0} \frac{(\zeta/(1-q))^k}{k_q!} \left\langle q^{k\lambda_N}\right\rangle_{\MM_{Pl}} = \left\langle \frac{1}{\left(\zeta q^{\lambda_N};q\right)_{\infty}}\right\rangle_{\MM_{Pl}},
\end{equation*}
where $k_q!= (q;q)_n/(1-q)^n$ and $(a;q)_k=(1-a)\cdots (1-aq^{k-1})$ (when $k=\infty$ the product is infinity, though convergent since $|q|<1$). This generator function is, via the $q$-Binomial theorem, a $q$-Laplace transform in terms of the $e_q$ $q$-exponential function which can be inverted to give the distribution of $\lambda_N$ (see Proposition~\ref{qlaplaceinverse}). As long as $|\zeta|<1$ the convergence of the right-hand side above follows readily, and justifies the interchanging of the summation and the expectation.

\begin{remark}
It is this sort interchange of summation and expectation which can not be justified when trying to recover the Laplace transform of the partition function of the continuum directed random polymer from its moments. In what follows we show how this generating function can be written as a Fredholm determinant. The non-rigorous replica approach manipulations of Dotsenko \cite{Dot} and Calbrese, Le Doussal and Rosso \cite{CDR} can be seen as a shadow of the rigorous calculations we now detail (in this way, some of what follows can be thought of as a rigorous version of the replica approach).
\end{remark}

We can now describe how to turn the above formulas for $q$-moments of $\lambda_N$ into a Fredholm determinant for the $q$-Laplace transform. To our knowledge there is no a priori reason as to why one should expect to find a nice Fredholm determinant for the transform (and all of its subsequent degenerations). In Section~\ref{qWhitSecFormulas} we provide similar statements for general specializations and values of $a_i$, as well as the exact statements and proofs of the results given below.

In order to turn the series of contour integrals (\ref{mukdefintro}) into a Fredholm determinant, it would be nice if all contours were the same. This can be achieved by either shrinking all contours to a single small circle around 1, or blowing all contours up to a single large circle containing 0 and 1. We focus here on the small contour formulas and resulting Fredholm determinant (in Section~\ref{largecontourformulas} we provide a large contour Fredholm determinant as well, which has a very familiar form for those readers familiar with Tracy and Widom's ASEP formulas \cite{TW3}).

It follows by careful residue book-keeping (see Proposition~\ref{mukprop}) that
\begin{equation}\label{mukintro}
\frac{\mu_k}{k_q!}=  \sum_{\substack{\lambda\vdash k\\ \lambda=1^{m_1}2^{m_{2}}\cdots}} \frac{1}{m_1!m_2!\cdots} \, \frac{(1-q)^{k}}{(2\pi \iota)^{\ell(\lambda)}} \oint \cdots \oint \det\left[\frac{1}{w_i q^{\lambda_i}-w_j}\right]_{i,j=1}^{\ell(\lambda)} \prod_{j=1}^{\ell(\lambda)}  f(w_j)f(qw_j)\cdots f(q^{\lambda_j-1}w_j) dw_j,
\end{equation}
where the $w_j$ are all integrated over the same small circle $C_w$ around 1 (not including 0 and small enough so as no to include $qC_w$). The notation $\lambda \vdash k$ means that $\lambda$ partitions $k$ (i.e., if $\lambda=(\lambda_1,\lambda_2,\ldots)$ then $k=\sum \lambda_i$), and the notation $\lambda = 1^{m_1}2^{m_2}\cdots$ means that $i$ shows up $m_i$ times in the partition $\lambda$.

Before stating our first Fredholm determinant, recall its definition. Fix a Hilbert space $L^2(X,\mu)$ where $X$ is a measure space and $\mu$ is a measure on $X$. When $X=\Gamma$, a simple (anticlockwise oriented) smooth contour in $\C$, we write $L^2(\Gamma)$ where for $z\in \Gamma$, $d\mu(z)$ is understood to be $\tfrac{dz}{2\pi \iota}$. When $X$ is the product of a discrete set $D$ and a contour $\Gamma$, $d\mu$ is understood to be the product of the counting measure on $D$ and $\frac{dz}{2\pi \iota}$ on $\Gamma$. Let $K$ be an {\it integral operator} acting on $f(\cdot)\in L^2(X,\mu)$ by $Kf(x) = \int_{X} K(x,y)f(y) d\mu(y)$. $K(x,y)$ is called the {\it kernel} of $K$. One way of defining the Fredholm determinant of $K$, for trace class operators $K$, is via the series
\begin{equation*}
\det(I+K)_{L^2(X)} = 1+\sum_{n=1}^{\infty} \frac{1}{n!} \int_{X} \cdots \int_{X} \det\left[K(x_i,x_j)\right]_{i,j=1}^{n} \prod_{i=1}^{n} d\mu(x_i).
\end{equation*}

By utilizing the small contour formula for the $\mu_k$ given in (\ref{mukintro}) we can evaluate the generating function as a Fredholm determinant (see Proposition~\ref{gendetprop} for a more general statement). Specifically, for all $|\zeta|$ close enough to zero (and by analytic continuation for $\zeta\in \C\setminus q^{\Zleqzero}$),
\begin{equation*}
\left\langle \frac{1}{\left(\zeta q^{\lambda_N};q\right)_{\infty}}\right\rangle_{\MM_{Pl}} = \det(I+K)_{L^2(\Zgzero\times C_{w})}
\end{equation*}
where $K$ is defined in terms of its integral kernel
\begin{equation}\label{abgKernelintro}
K(n_1,w_1;n_2;w_2) = \frac{\zeta^{n_1} f(w_1)f(qw_1)\cdots f(q^{n_1-1}w_1)}{q^{n_1} w_1 - w_2}
\end{equation}
with $f(w) = (1-w)^{-N} \exp\{(q-1)\gamma w\}$ and $C_{w}$ a small circle around 1 (as above).

By using a Mellin-Barnes type integral representation (Lemma~\ref{gammasumlemma}) we can reduce our Fredholm determinant to that of an operator acting on a single contour. The above developments all lead to the following result (Theorem~\ref{PlancherelfredThm}).

\begin{theorem}\label{PlancherelfredThmintro}
Fix $\rho$ a Plancherel nonnegative specialization of the Macdonald symmetric functions (i.e., $\rho$ determined by (\ref{specdefeqn}) with $\gamma>0$ but all $\alpha_i\equiv \beta_i\equiv 0$ for $i\geq 1$). Then for all $\zeta\in \C\setminus\Rplus$
\begin{equation}\label{thmlaplaceeqnintro}
\left\langle \frac{1}{\left(\zeta q^{\lambda_N};q\right)_{\infty}}\right\rangle_{\MM_{t=0}(1,\ldots, 1;\rho)} = \det(I+K_{\zeta})_{L^2(C_{w})}
\end{equation}
where $C_w$ is a positively oriented small circle around 1 and the operator $K_{\zeta}$ is defined in terms of its integral kernel
\begin{equation*}
K_{\zeta}(w,w') = \frac{1}{2\pi \iota}\int_{-\iota \infty + 1/2}^{\iota \infty +1/2} \Gamma(-s)\Gamma(1+s)(-\zeta)^s g_{w,w'}(q^s)ds
\end{equation*}
where
\begin{equation}\label{gwwprimeeqnintro}
g_{w,w'}(q^s) = \frac{1}{q^s w - w'} \left(\frac{(q^{s}w;q)_{\infty}}{(w;q)_{\infty}}\right)^N \exp\big(\gamma w(q^{s}-1)\big).
\end{equation}
The operator $K_{\zeta}$ is trace-class for all $\zeta\in \C\setminus\Rplus$.
\end{theorem}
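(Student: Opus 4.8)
The plan is to assemble the theorem from the chain of formulas already laid out in the excerpt, the only genuinely new analytic content being (i) the passage from the series of contour integrals for $\mu_k$ to the Fredholm determinant $\det(I+K)_{L^2(\Zgzero\times C_w)}$ with kernel (\ref{abgKernelintro}), (ii) the Mellin--Barnes resummation over $n$ that collapses the discrete variable, and (iii) the trace-class and analyticity bookkeeping needed to make the generating-function identity valid on all of $\C\setminus\Rplus$. First I would record the starting point: by Proposition~\ref{prop8intro} with $t\to 0$ one has the $q$-moments $\mu_k=\langle q^{k\lambda_N}\rangle_{\MM_{Pl}}$ in the form (\ref{mukdefintro}), and after careful residue bookkeeping (Proposition~\ref{mukprop}) the rewritten form (\ref{mukintro}) in which all $w_j$ live on one small circle $C_w$ around $1$. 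The combinatorial identity I want is that $\sum_{k\ge 0}\frac{(\zeta/(1-q))^k}{k_q!}\mu_k$, expanded using (\ref{mukintro}), reorganizes exactly into the Fredholm series of $I+K$ on $L^2(\Zgzero\times C_w)$: the sum over partitions $\lambda\vdash k$ with multiplicities $1^{m_1}2^{m_2}\cdots$ and the factor $1/(m_1!m_2!\cdots)$ is precisely what one gets from the $\frac1{n!}$ in the Fredholm expansion after grouping the $n$ integration variables $(n_i,w_i)$ by the value of $n_i$; the determinant $\det[1/(w_iq^{\lambda_i}-w_j)]$ matches $\det[K(n_i,w_i;n_j,w_j)]$ once one checks that the prefactors $\zeta^{n_1}f(w_1)\cdots f(q^{n_1-1}w_1)$ distribute correctly along rows and the powers of $q^{\frac{k(k-1)}2}$, $(1-q)^k$, $k_q!$ all reconcile. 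This is a bookkeeping step of the Andr\'eief/Cauchy-determinant type; I would do it by comparing coefficients of a fixed monomial $\zeta^k$ on both sides.

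Next I would carry out the reduction to a single-contour operator. The point is that $K(n_1,w_1;n_2,w_2)$ depends on $n_1$ only through $\zeta^{n_1}f(w_1)\cdots f(q^{n_1-1}w_1)$ and on the denominator $q^{n_1}w_1-w_2$; writing $f(w)f(qw)\cdots f(q^{n-1}w)=\big((w;q)_\infty/(q^nw;q)_\infty\big)^{-N}$ times the Plancherel exponential, the sum over $n_1\in\Zgzero$ of $\zeta^{n_1}\times(\text{function of }q^{n_1})$ is handled by the Mellin--Barnes representation of Lemma~\ref{gammasumlemma}, i.e. $\sum_{n\ge1}\zeta^n h(q^n)=\frac{1}{2\pi\iota}\int_{1/2-\iota\infty}^{1/2+\iota\infty}\Gamma(-s)\Gamma(1+s)(-\zeta)^s h(q^s)\,ds$ (valid because $\Gamma(-s)\Gamma(1+s)=-\pi/\sin(\pi s)$ has poles at the nonnegative integers with the right residues, and the contour separates these from the poles of nothing else while the integrand decays). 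Applying this with $h(q^s)=g_{w,w'}(q^s)$ as in (\ref{gwwprimeeqnintro}) turns $K$ on $L^2(\Zgzero\times C_w)$ into $K_\zeta$ on $L^2(C_w)$ with the stated kernel; the equality of Fredholm determinants follows since conjugation/Fubini on the kernel level preserves the determinant, provided one justifies interchanging the $s$-integral with the $w$-contour integrals and with the Fredholm series — which is where a dominated-convergence estimate using the Gamma-function decay along the vertical line is needed.

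Finally I would handle the analytic-continuation and trace-class claims. For $|\zeta|$ small the left side $\langle 1/(\zeta q^{\lambda_N};q)_\infty\rangle_{\MM_{Pl}}$ is an absolutely convergent power series in $\zeta$ (shown in the text via the $q$-binomial theorem), so the identity holds there; to extend it to $\zeta\in\C\setminus\Rplus$ I would show both sides are analytic in that region. For the left side, $(\zeta q^{\lambda_N};q)_\infty$ vanishes only when $\zeta\in q^{\Zleqzero}\subset\Rplus$, and one needs a uniform-integrability bound on $1/(\zeta q^{\lambda_N};q)_\infty$ under $\MM_{Pl}$ for $\zeta$ in compact subsets of $\C\setminus\Rplus$ — this uses that $\prod_{j\ge0}|1-\zeta q^{\lambda_N+j}|$ is bounded below away from $\Rplus$. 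For the right side, analyticity and the trace-class property both come from the same estimate on $K_\zeta$: on the compact contour $C_w$ the kernel $K_\zeta(w,w')$ is jointly continuous (the denominator $q^sw-w'$ does not vanish for $w,w'\in C_w$ when $C_w$ is small and $\mathrm{Re}(s)=1/2$, since then $|q^s|=q^{1/2}<1$ keeps $q^sw$ strictly inside $C_w$), hence $K_\zeta$ is trace class, and the $s$-integral converges locally uniformly in $\zeta\in\C\setminus\Rplus$ because $|(-\zeta)^s|=|{-\zeta}|^{1/2}e^{-\mathrm{arg}(-\zeta)\,\mathrm{Im}(s)}$ is controlled by the $\Gamma(-s)\Gamma(1+s)$ decay exactly when $-\zeta\notin\Rplus_{\le0}$, i.e. $\zeta\notin\Rplus$. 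I expect the main obstacle to be precisely this last point — pinning down the region of $\zeta$ for which the Mellin--Barnes contour can be taken vertical (and the tails converge) and simultaneously the original $q$-Laplace transform is finite, and checking these two regions agree with $\C\setminus\Rplus$; the residue bookkeeping in step (i) and the Fubini justification in step (ii) are laborious but essentially mechanical given Propositions~\ref{mukprop} and Lemma~\ref{gammasumlemma}.
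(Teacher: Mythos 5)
Your proposal follows the same three-stage route as the paper: (i) establish the moment/Fredholm-series identity for $|\zeta|$ small (your step (i) is exactly Proposition~\ref{gendetprop}, and the $L^2(\Zgzero\times C_w)$ determinant it produces is Corollary~\ref{abgCor}); (ii) collapse the discrete index via the Mellin--Barnes resummation of Lemma~\ref{gammasumlemma}; (iii) continue analytically to $\zeta\in\C\setminus\Rplus$ after checking analyticity and trace-class bounds on both sides. Two points need tightening, though. Your geometric justification for why $q^sw-w'$ cannot vanish --- ``$|q^s|=q^{1/2}<1$ keeps $q^sw$ strictly inside $C_w$'' --- is off: for $w$ on a small circle about $1$ and $\mathrm{Re}(s)=1/2$, the point $q^sw$ has modulus close to $q^{1/2}$ and therefore lies well \emph{outside} the disk bounded by $C_w$. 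What actually forbids the pole is a modulus separation: $|q^sw|\le q^{1/2}(1+d)$ while $|w'|\ge 1-d$, and the contour radius $d$ must satisfy $q^{1/2}(1+d)<1-d$, i.e.\ $d<(1-q^{1/2})/(1+q^{1/2})$, which is the explicit hypothesis imposed in Theorem~\ref{PlancherelfredThm}. More substantively, your assertion that joint continuity of $K_\zeta(w,w')$ on the compact contour $C_w$ implies $K_\zeta$ is trace class is false as stated --- continuity only gives a Hilbert--Schmidt operator. The paper instead applies Lemma~\ref{traceclasscrit}, which in addition requires $\partial_{w'}K_\zeta(w,w')$ to be continuous; that extra regularity does hold here because the uniformly, exponentially convergent $s$-integral may be differentiated under the integral sign, but the conclusion must be drawn from that smoothness, not from continuity alone. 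Neither issue is structural, but both must be stated correctly for the argument to close.
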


Note also the following (cf. Proposition~\ref{qlaplaceinverse}).
\begin{corollary}\label{lambdadistcorintro}
We also have that
\begin{equation*}
\PP_{\MM_{t=0}(1,\ldots,1;\rho)}(\lambda_N = n) = \frac{-q^n}{2\pi \iota} \int_{C_{n,q}} (q^n \zeta; q)_{\infty} \det(I+K_{\zeta})_{L^2(C_{w})} d\zeta
\end{equation*}
where $C_{n,q}$ is any simple positively oriented contour which encloses the poles $\zeta=q^{-M}$ for $0\leq M\leq n$ and which only intersects $\Rplus$ in finitely many points.
\end{corollary}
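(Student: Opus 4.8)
\emph{Proof proposal.} The plan is to obtain the corollary by combining Theorem~\ref{PlancherelfredThmintro} with a general inversion formula for the $q$-Laplace transform; the latter is Proposition~\ref{qlaplaceinverse}, whose proof I sketch below. Write $p_n=\PP_{\MM_{t=0}(1,\ldots,1;\rho)}(\lambda_N=n)$. Since $\sum_{n\ge 0}p_n=1$ and $1/(\zeta q^n;q)_\infty\to 1$ as $n\to\infty$, locally uniformly on $\C\setminus q^{\Zleqzero}$ where $q^{\Zleqzero}=\{q^{-M}:M\ge 0\}$, the series $S(\zeta):=\sum_{n\ge 0}p_n/(\zeta q^n;q)_\infty$ converges locally uniformly on $\C\setminus q^{\Zleqzero}$ and defines there a meromorphic function with simple poles exactly at the points $q^{-M}$. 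On $\C\setminus\Rplus$ one has $S(\zeta)=\bigl\langle 1/(\zeta q^{\lambda_N};q)_\infty\bigr\rangle_{\MM_{t=0}(1,\ldots,1;\rho)}$, which by Theorem~\ref{PlancherelfredThmintro} equals $\det(I+K_\zeta)_{L^2(C_w)}$. Hence $\zeta\mapsto\det(I+K_\zeta)_{L^2(C_w)}$ continues meromorphically to $\C\setminus q^{\Zleqzero}$, the continuation being $S$; and since $C_{n,q}$ meets $\Rplus$ in only finitely many points, none of them in $q^{\Zleqzero}$, the integrand in the corollary is in fact analytic along $C_{n,q}$ and the stated integral is of this continuation.

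It then suffices to prove the inversion identity $p_n=\frac{-q^n}{2\pi\iota}\oint_{C_{n,q}}(q^{n+1}\zeta;q)_\infty\,S(\zeta)\,d\zeta$ (this is the content of Proposition~\ref{qlaplaceinverse}; see there for the precise $q$-Pochhammer normalization), where $C_{n,q}$ is positively oriented and encloses precisely $q^{-M}$ for $0\le M\le n$. First interchange $\sum_m$ and $\oint_{C_{n,q}}$: on the fixed compact contour $C_{n,q}$ the functions $1/(\zeta q^m;q)_\infty$ are uniformly bounded in $m$ (they converge to $1$ uniformly on $C_{n,q}$, which stays a fixed positive distance from $\{q^{-M}:M>n\}$), so $\sum_m p_m/(\zeta q^m;q)_\infty$ converges uniformly on $C_{n,q}$ and the interchange is legitimate. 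Next, for each $m$ the function $(q^{n+1}\zeta;q)_\infty/(\zeta q^m;q)_\infty$ is rational: it is a polynomial when $m>n$ (so contributes $0$), and equals $\prod_{i=m}^{n}(1-\zeta q^i)^{-1}$ when $m\le n$. For $m<n$ this rational function decays like $\zeta^{-(n-m+1)}$ at infinity and all of its poles $q^{-i}$, $m\le i\le n$, are enclosed by $C_{n,q}$, so $\frac{1}{2\pi\iota}\oint_{C_{n,q}}$ of it equals the sum of all its residues, which is $0$. Only $m=n$ survives, with $\frac{1}{2\pi\iota}\oint_{C_{n,q}}(1-\zeta q^n)^{-1}d\zeta=-q^{-n}$; multiplying by $-q^n p_n$ gives $p_n$. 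Substituting the identity $\det(I+K_\zeta)_{L^2(C_w)}=S(\zeta)$ (extended as above) from Theorem~\ref{PlancherelfredThmintro} then yields the corollary.

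I do not expect a substantive obstacle here: once Theorem~\ref{PlancherelfredThmintro} is granted, the corollary is precisely the assertion that the $q$-Laplace transform is invertible, and the inversion is the elementary residue computation above. The only points that need genuine (but routine) care are the interchange of $\sum_m$ with the contour integral — handled by the uniform bound on $1/(\zeta q^m;q)_\infty$ over $C_{n,q}$ — and the identification of $\det(I+K_\zeta)_{L^2(C_w)}$ with the convergent series $S(\zeta)$ across the finitely many points where $C_{n,q}$ crosses $\Rplus$, which rests on the analyticity of $\zeta\mapsto\det(I+K_\zeta)_{L^2(C_w)}$ on $\C\setminus\Rplus$ guaranteed by the trace-class statement in Theorem~\ref{PlancherelfredThmintro}.
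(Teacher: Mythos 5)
Your proof is correct and follows the same overall strategy as the paper: use the analyticity of $\zeta\mapsto\langle 1/(\zeta q^{\lambda_N};q)_\infty\rangle$ on $\C\setminus q^{\Zleqzero}$ to give meaning to the integral across the finitely many crossings of $\Rplus$, and then invoke the $q$-Laplace inversion (Proposition~\ref{qlaplaceinverse}). Where you depart from the paper is in how you prove the inversion itself: you interchange $\sum_m$ with $\oint_{C_{n,q}}$ and evaluate the contour integral of each rational function $(q^{n+1}\zeta;q)_\infty/(\zeta q^m;q)_\infty$ term by term, observing that all but the $m=n$ term vanish (for $m>n$ the ratio is a polynomial, for $m<n$ it decays at least as $\zeta^{-2}$ and all its poles lie inside $C_{n,q}$). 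The paper instead computes the residues of $\qhat f(z)$ at each $z=q^{-M}$, encodes them as the entries of an explicit upper-triangular infinite matrix $A$ acting on the vector $\bigl(f(0),f(1),\dots\bigr)$, and inverts $A$ by hand. The two arguments are mathematically equivalent --- both are residue calculus in disguise --- but yours is more direct and avoids writing down the $q$-binomial-type matrix inverse, so it is arguably preferable as a free-standing proof.

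One further observation, which you implicitly flag by deferring to "the precise $q$-Pochhammer normalization" in Proposition~\ref{qlaplaceinverse}: the corollary as printed has $(q^n\zeta;q)_\infty$ in the integrand, whereas the inversion formula (and your residue computation) requires $(q^{n+1}\zeta;q)_\infty$. A direct check confirms the latter is correct: take $p_0=1$ and $p_m=0$ for $m\ge 1$, so $S(\zeta)=1/(\zeta;q)_\infty$; with $(q^{n+1}\zeta;q)_\infty$ the integrand is $1/\prod_{j=0}^{n}(1-\zeta q^j)$, giving $1$ for $n=0$ and $0$ for $n\ge 1$ as it should, but with $(q^n\zeta;q)_\infty$ the $n=0$ integrand is constant and the integral vanishes, contradicting $p_0=1$. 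So the displayed formula in the corollary contains a typo (the exponent should be $n+1$), and your computation uses the correct version.
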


A similar result can be found when $\rho$ is given by a pure alpha specialization ($\gamma=0$ and $\beta_i\equiv 0$ for all $i\geq 1$) in the form of Theorem~\ref{purealphafredThm}.

\section{New ``integrable'' interacting particle systems}

The Macdonald processes can be seen as fixed time measures on Gelfand-Tsetlin patterns evolving according to a certain class of dynamics. Discrete and continuous versions of these dynamics are constructed (see Section~\ref{dynamicsSec}) for general parameters $t,q\in (0,1)$ via an approach of Borodin and Ferrari \cite{BF}, that in its turn was based on an idea of Diaconis and Fill \cite{DiaconisFill}. Other examples of such dynamics can be found in \cite{Bor-Gorin,Bor-Sch-Dyn, BorGor, BorodinOlshanski, Bor-Duits, Bor-Kuan, Betea}. Presently we will focus on the continuous time Markov dynamics in the case $t=0$ as this degeneration results in simple, local updates.

The {\it $q$-Whittaker 2d-growth model} is a continuous time Markov process on the space of Gelfand-Tsetlin patterns defined by (\ref{ascseqnintro}). Each of the coordinates $\lambda_k^{(m)}$ has its own independent exponential clock with rate
\begin{equation*}
a_m\frac{(1-q^{\lambda_{k-1}^{(m-1)}-\lambda_k^{(m)}})(1-q^{\lambda_{k}^{(m)}-\lambda_{k+1}^{(m)}+1})}{(1-q^{\lambda_{k}^{(m)}-\lambda_k^{(m-1)}+1})}
\end{equation*}
(the factors above for which the subscript exceeds the superscript, or either is zero must be omitted). When the $\lambda_k^{(m)}$-clock rings we find the longest string $\lambda_k^{(m)}=\lambda_k^{(m+1)}=\dots=\lambda_k^{(m+n)}$ and move all the coordinates in this string to the right by one. Observe that if $\lambda_k^{(m)}=\la_{k-1}^{(m-1)}$ then the jump rate automatically vanishes.

\begin{figure}
\begin{center}
\includegraphics{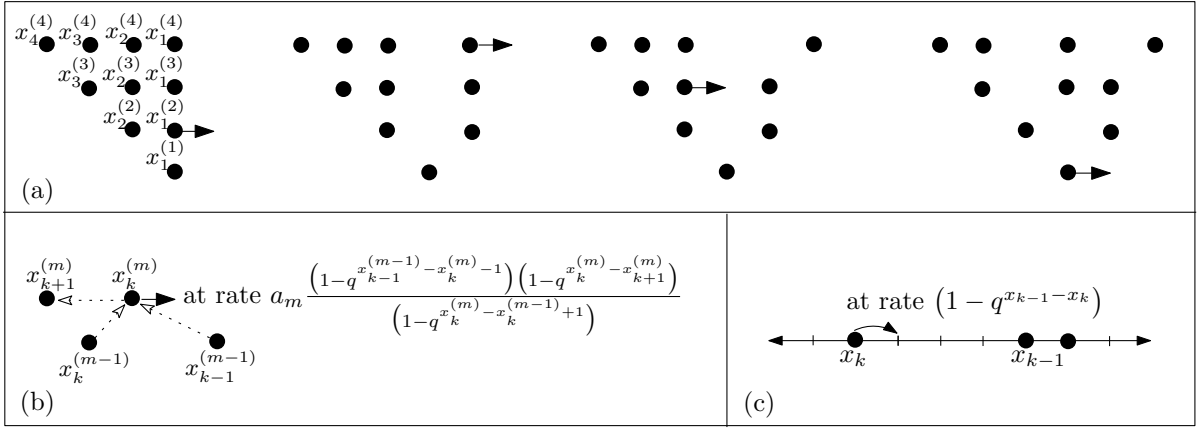}
\end{center}
\caption{(a) First few steps of $q$-Whittaker 2d-growth model written in variables $x_{k}^{(m)} = \lambda_k^{(m)}-k$. (b) The rate at which $x_k^{(m)}$ moves is influenced by three neighbors. The dotted arrows indicate whether the neighbor's influence increases (arrow to the right) or decreases (arrow to the left) the jumping rate. (c) Projecting onto the left edge leads to $q$-TASEP in which particle jump rates are only affected by the number of consecutive empty sites to their right.}\label{qWhitfig}
\end{figure}

A natural initial condition for these dynamics is the zero pattern in which $\lambda_k^{(m)}\equiv 0$. When started from the zero pattern, and run for time $\gamma$ (with $a_1=a_2=\cdots=1$), the marginal distribution of the entire GT pattern is given by the Plancherel specialization of the $t=0$ Macdonald (a.k.a. $q$-Whittaker) process $\M_{Pl}$ (this follows from the more general result of Proposition~\ref{prop16}).

When $k=m$, the rates given above simplify to $(1-q^{\lambda_{k-1}^{(k-1)}-\lambda_k^{(k)}})$. This implies that the edge of the GT pattern $\{\lambda_k^{(k)}\}_{1\leq k\leq N}$ evolves marginally as a Markov process. If we set $x_k=\la_k^{(k)}-k$ for $k=1,2,\ldots$, then the state space consists of ordered sequences of integers $x_1>x_2>x_3>\cdots$. The time evolution is given by a variant on the classical totally asymmetric simple exclusion process (TASEP) which we call {\it $q$-TASEP} (note that $q=0$ reduces $q$-TASEP to TASEP). Now each particle $x_i$ jumps to the right by 1 independently of the others according to an exponential clock with rate $1-q^{x_{i-1}-x_i-1}$ (here $x_{i-1}-x_i-1$ can be interpreted at the number of empty spaces before the next particle to the right). The zero GT pattern corresponds to the step initial condition where $x_n(0)=-n$, $n=1,2,\dots$ (see Section~\ref{qtaseprop} for an interacting particle systems perspective on $q$-TASEP). The gaps of $q$-TASEP evolve according to a certain totally asymmetric zero range process which we will call $q$-TAZRP. The $q$-TAZRP and its quantum version under the name ``$q$-bosons'' were introduced in \cite{SasWad} where their integrability was also noted. A stationary version of $q$-TAZRP was studied in \cite{BKS} and a cube root fluctuation result was shown.

The moment and Fredholm determinant formulas apply to give statistics of the 2d-growth model and $q$-TASEP, and in this sense these models are integrable.
For example, Theorem~\ref{PlancherelfredThmintro} should enable a proof that $q$-TASEP, for $q$ fixed, is in the KPZ universality class. For TASEP this was shown by Johansson \cite{KJ} for step initial condition, and for ASEP by Tracy and Widom \cite{TW3}. Additionally, for ASEP, there is a weakly asymmetric scaling limit which converges to the KPZ equation with narrow wedge initial data \cite{ACQ,BG} -- a connection which enables the calculation of one-point exact statistics for its solution \cite{ACQ,SaSp}. For $q$-TASEP there should be such a limit as $q\to 1$ in which one sees the KPZ equation (see Section~\ref{CDRPqTASEP} for a heuristic argument providing scalings). Such direct asymptotics of these models will be pursued elsewhere. Instead, we make an intermediate scaling limit which leads us first to the Whittaker processes and directed polymers, and then to the KPZ equation and universality class.

\section{Crystallization and Whittaker process fluctuations}
The {\it class-one $\mathfrak{gl}_{N}$-Whittaker functions} are basic objects of representation theory and integrable systems \cite{Kostant,Et}. One of their properties is that they are eigenfunctions for the quantum $\mathfrak{gl}_{N}$-Toda chain (see Section~\ref{Whittakerfunctiondefs} for various other properties including orthogonality and completeness relations). As showed by Givental \cite{Giv}, they can also be defined via the following integral representation, which is a degeneration of (\ref{135}) for $q$-Whittaker functions or the combinatorial formula (\ref{7.13},\ref{7.13'}) for Macdonald polynomials:
\begin{equation}\label{giventaldefintro}
\psi_{\lambda}(x_{N,1},\ldots,x_{N,N})=\int_{\R^{N(N-1)/2}} e^{\mathcal{F}_\lambda(x)} \prod_{k=1}^{N-1}\prod_{i=1}^k dx_{k,i},
\end{equation}
where $\lambda=(\lambda_1,\ldots,\lambda_N)$ and
\begin{equation*}
\mathcal{F}_{\lambda}(x)=\iota\sum_{k=1}^{N} \lambda_k\left(\sum_{i=1}^k x_{k,i}-\sum_{i=1}^{k-1} x_{k-1,i}\right)-\sum_{k=1}^{N-1}\sum_{i=1}^k \left(e^{x_{k,i}-x_{k+1,i}}+e^{x_{k+1,i+1}-x_{k,i}}\right).
\end{equation*}

It was observed in \cite{GLOqlim} and proved below in Theorem~\ref{qwhitconvTHM} (along with uniform tail estimates) that Macdonald symmetric functions with $t=0$ (called here $q$-Whittaker functions), when restricted to $N$ variables, converge to class-one $\mathfrak{gl}_{N}$-Whittaker functions as $q\to 1$ (and the various other parameters at play are properly scaled).

It is not just Macdonald symmetric functions which converge to Whittaker functions. The $q$-Whittaker process (Macdonald process as $t=0$) converges to the following measure on $\R^{\frac{N(N+1)}2}$ which was first introduced and studied by O'Connell \cite{OCon} (see Section~\ref{weakconvofan} for a more general definition).

For any $\tau>0$ set
\begin{equation}\label{thetaformulaintro}
\theta_{\tau}(x_1,\ldots,x_N)=\int_{\R^N} \psi_{\nu}(x_1,\ldots,x_{N}) e^{-\tau\sum_{j=1}^N\nu_j^2/2} m_N(\nu)\prod_{j=1}^{N} d\nu_j
\end{equation}
with the Skylanin measure
\begin{equation*}
m_{N}(\nu)=\frac1{(2\pi)^{N} (N)!}\prod_{j\ne k} \frac 1{\Gamma(\iota \nu_k-\iota \nu_j)}\,,
\end{equation*}
and define the following {\it Whittaker process} as a measure on $\R^{\frac{N(N+1)}2}$ with density function (with respect to the Lebesgue measure) given by
\begin{equation}\label{Wdefintro}
\W{\tau}\big(\{T_{k,i}\}_{1\le i\le k\le N}\big)=\exp({\mathcal{F}_{0}(T)})\,{\theta_{\tau}(T_{N,1},\dots,T_{N,N})}.
\end{equation}

The nonnegativity of the density follows from definitions. Integrating over the variables $T_{k,i}$ with $k<N$ yields the following {\it Whittaker measure} with density function given by
\begin{equation}\label{WMdefintro}
\WM{\tau}\big(\{T_{N,i}\}_{1\le i\le N}\big)=\psi_{0}(T_{N,1},\dots,T_{N,N})\,{\theta_{\tau}(T_{N,1},\dots,T_{N,N})}.
\end{equation}
The fact that this measure integrates to one follows from the orthogonality relation for Whittaker functions given in Section~\ref{Whittakerfunctiondefs}. Note that the particles under this measure are no longer restricted to form GT patterns (i.e., lie on $\Zgeqzero$ and interlace).

Let us return to the $q$-Whittaker process. Introduce a scaling parameter $\e$ and set $q=e^{-\e}$. Then for time $\gamma= \e^{-2} \tau$ one finds that as $\e\to 0$ (hence $q\to 1$ and $\gamma\to \infty$) the $q$-Whittaker process on GT patterns crystalizes onto a regular lattice. The fluctuations of the pattern around this limiting lattice converge under appropriate scaling to the above defined Whittaker process (see Section~\ref{weakconvstateproofSec} for a more general statement and proof of this result).

\begin{theorem}\label{theorem26intro}
Fix $\rho$ a Plancherel nonnegative specialization of the Macdonald symmetric functions (i.e., $\rho$ determined by (\ref{specdefeqn}) with $\gamma>0$ but all $\alpha_i\equiv \beta_i\equiv 0$ for $i\geq 1$) and write $\M_{Pl}$ for the $q$-Whittaker process $\M_{asc,t=0}(1,\dots,1;\rho)$. In the limit regime
\begin{equation}\label{scalingsintro}
q=e^{-\epsilon},\qquad \gamma=\tau \epsilon^{-2}, \qquad \lambda^{(k)}_j=\tau\epsilon^{-2}-(k+1-2j)\epsilon^{-1}\log\epsilon+T_{k,j}\epsilon^{-1}, \quad 1\le j\le k\le N,
\end{equation}
$\M_{Pl}$ converges weakly, as $\e\to 0$, to the Whittaker process $\W{\tau}\bigl(\{T_{k,i}\}_{1\le i\le k\le N}\bigr)$.
\end{theorem}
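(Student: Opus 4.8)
The plan is to prove the weak convergence by showing convergence of the density functions, combined with a uniform integrability/tightness argument to upgrade pointwise convergence of densities to weak convergence of measures. The $q$-Whittaker process $\M_{Pl}$ assigns to the GT pattern $(\la^{(1)}\prec\cdots\prec\la^{(N)})$ the mass
\begin{equation*}
\frac{P_{\la^{(1)}}(1)P_{\la^{(2)}/\la^{(1)}}(1)\cdots P_{\la^{(N)}/\la^{(N-1)}}(1)\,Q_{\la^{(N)}}(\rho)}{\Pi(1,\dots,1;\rho)},
\end{equation*}
so under the substitution $\lambda^{(k)}_j = \tau\e^{-2} - (k+1-2j)\e^{-1}\log\e + T_{k,j}\e^{-1}$ I must track the asymptotics of each factor separately. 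First I would invoke Theorem~\ref{qwhitconvTHM} (the $q\to 1$ degeneration of $q$-Whittaker functions to class-one $\mathfrak{gl}_N$ Whittaker functions, with its uniform tail estimates): this handles $P_{\la^{(N)}}$-type factors and, when combined with the Givental integral representation (\ref{giventaldefintro}), tells me that the product $P_{\la^{(1)}}(1)\cdots P_{\la^{(N)}/\la^{(N-1)}}(1)$ — which telescopes through the GT pattern — converges after rescaling to $\exp(\mathcal{F}_0(T))$ times appropriate Jacobian/normalization factors. This is where the peculiar shift $-(k+1-2j)\e^{-1}\log\e$ comes from: it is exactly the shift needed so that the discrete combinatorial weights $P_{\la^{(k)}/\la^{(k-1)}}(1)$ (which involve $q$-Pochhammer symbols in the gaps $\la^{(k-1)}_{j-1}-\la^{(k)}_j$ and $\la^{(k)}_j-\la^{(k)}_{j+1}$) concentrate and produce, via $(q;q)_\infty$-type asymptotics and Stirling, the exponential-of-potential form with the right power of $\e$.

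Second, I would handle the $Q_{\la^{(N)}}(\rho)$ factor for the Plancherel specialization: here $Q_{\la}(\rho)$ is explicitly $\gamma^{|\la|}/$ (a product of $q$-hook-type factors), and under $\gamma = \tau\e^{-2}$ and the scaling of $\la^{(N)}$ this should converge — after extracting the deterministic leading exponential $\exp(N\gamma)$ which cancels against $\Pi(1,\dots,1;\rho)=\exp(N\gamma)$ — to the Gaussian-type factor that, upon the spectral decomposition of the Whittaker function, becomes $\theta_\tau(T_{N,1},\dots,T_{N,N})$ as in (\ref{thetaformulaintro}). Equivalently, and perhaps more cleanly, I would first pass to the Whittaker \emph{measure} level: by Theorem~\ref{qwhitconvTHM} the Macdonald measure $\MM_{Pl}(\la^{(N)})= P_{\la^{(N)}}(1,\dots,1)Q_{\la^{(N)}}(\rho)/\Pi$ converges to the Whittaker measure (\ref{WMdefintro}) — this is essentially O'Connell's result restated — and then separately show the conditional law of the lower rows $\{T_{k,j}\}_{k<N}$ given the top row converges to the conditional density $\exp(\mathcal{F}_0(T))/\psi_0(T_{N,\cdot})$, which is purely a statement about $q$-deformed Gelfand-Tsetlin branching converging to the Givental integrand. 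Multiplying the two gives (\ref{Wdefintro}).

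The main obstacle I anticipate is controlling the convergence \emph{uniformly enough} to conclude weak convergence rather than merely formal/pointwise convergence of densities. Pointwise convergence of densities plus convergence of total mass (both are $1$) does not by itself give weak convergence on a non-compact space; I need either Scheffé-type $L^1$ control or tightness. This requires the uniform tail estimates promised in Theorem~\ref{qwhitconvTHM} — bounds of the form $|P_{\la}(\cdot)| \le C\exp(-c\sum e^{(\text{gap})})$ valid uniformly in $\e$ — to dominate the discrete sums defining $\M_{Pl}$-probabilities of large-$|T|$ events by convergent integrals, so that no mass escapes to infinity. A secondary technical point is the passage from the sum over integer-valued $\la^{(k)}_j$ to the Lebesgue integral over $T_{k,j}\in\R$: the lattice spacing is $\e$, so each discrete sum $\sum_{\la} (\cdots)$ becomes $\e^{-N(N+1)/2}\sum (\cdots)$, i.e.\ a Riemann sum with mesh $\e$, and I must check the integrand's modulus of continuity is controlled uniformly — again a consequence of the tail and local-limit estimates — so that the Riemann sums converge to $\int \W{\tau}(T)\,\prod dT_{k,j}$. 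Modulo these uniform estimates, which are the real work and are deferred to Section~\ref{weakconvstateproofSec}, the identification of the limit is a direct (if lengthy) computation combining the Givental representation, the $q$-Pochhammer asymptotics $(q;q)_\infty \sim \sqrt{2\pi/\e}\,e^{-\pi^2/(6\e)}$ (and its finite truncations), and the spectral/orthogonality relations for Whittaker functions.
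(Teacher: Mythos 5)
Your three-factor decomposition of $\M_{Pl}$ — the skew product $P_{\la^{(1)}}(1)\cdots P_{\la^{(N)}/\la^{(N-1)}}(1)$, the factor $Q_{\la^{(N)}}(\rho)$, and the normalization $\Pi(1,\dots,1;\rho)$ — is exactly the paper's decomposition, and your treatment of the skew product via the branching coefficients $P_{\lambda/\mu}(1)=\psi_{\lambda/\mu}$ and $q$-Pochhammer asymptotics (Corollary~\ref{qfactCor}) is on the right track. But you leave a real gap at the heart of the argument: you never say how to extract $\theta_\tau$ from $Q_{\la^{(N)}}(\rho)$. Your first proposal (a $q$-hook-type product formula for $Q_\lambda$ under the Plancherel specialization, followed by Stirling) does not obviously reorganize itself into the spectral integral $\int \psi_\nu\, e^{-\tau\sum\nu_j^2/2}\, m_N(\nu)\,d\nu$ — the Skylanin measure and the Whittaker functions have to come from somewhere, and there is no discrete product formula that visibly limits to them. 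Your second proposal (``pass to the Whittaker measure level; this is essentially O'Connell's result restated'') just defers exactly the step you need to prove; O'Connell's identification of the Whittaker measure comes from polymer/tropical-RSK arguments, not from a $q\to1$ limit, so citing it here is circular. The key idea you are missing is the torus scalar product representation
\begin{equation*}
Q_{\la}(\rho) = \frac{1}{\langle P_\la,P_\la\rangle'_N}\,\langle \Pi(z_1,\ldots,z_N;\rho),\,P_{\la}(z_1,\ldots,z_N)\rangle'_N,
\end{equation*}
followed by the substitution $z_j=e^{\iota\e\nu_j}$. Under the scalings, $\langle P_\la,P_\la\rangle'_N\to 1$, the torus measure $m_N^q(z)\prod dz_j/z_j$ produces the Skylanin measure $m_N(\nu)\prod d\nu_j$ (via $\Gamma_q\to\Gamma$), $\Pi(z;\rho)$ produces the Gaussian $e^{-\tau\sum\nu_j^2/2}$, and $\overline{P_\la(z)}$ produces $\overline{\psi_\nu(T_{N,\cdot})}$ by Theorem~\ref{qwhitconvTHM} — giving $\theta_\tau$. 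It is precisely here, controlling the torus integral as $\nu_j$ ranges over $[-\pi/\e,\pi/\e]$, that the uniform tail bound (part 1 of Theorem~\ref{qwhitconvTHM}) is used, together with a Gaussian bound on $\Pi$ and a bound on $1/\Gamma_q$.

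Your tightness worry, on the other hand, is not where the difficulty lies. Since each $\M_{Pl}$ is a probability measure, and the limiting density $\W{\tau}$ is nonnegative and integrates to $1$ (this is Proposition~\ref{equiv1}, a separate analytic fact about Whittaker functions), locally uniform convergence of the (rescaled, Jacobian-corrected) discrete densities to $\W{\tau}$ already pins down the compact-set masses, and conservation of total mass then prevents escape to infinity — the standard Scheff\'e-type argument goes through. The paper dispatches this in two sentences at the start of the proof; the uniform decay estimates are not needed for tightness, only for the torus-integral convergence described above. So the hard uniform estimates you flag are real, but they are doing a different job than you anticipate.
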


The dynamics of the $q$-Whittaker 2d-growth model also have a limit as the following Whittaker 2d-growth model (see Section~\ref{contlimit2dgrowth} for a general definition). This is a continuous time ($\tau\geq 0$) Markov diffusion process $T(\tau)=\{T_{k,j}(\tau)\}_{1\leq j\leq k\leq N}$ with state space $\R^{\frac{N(N+1)}2}$ which is given by the following system of stochastic (ordinary) differential equations: Let $\{W_{k,j}\}_{1\leq j\leq k \leq N}$ be a collection of independent standard one-dimensional Brownian motions. The evolution of $T$ is defined recursively by $dT_{1,1} = dW_{1,1}$ and for $k=2,\ldots, N$,
\begin{eqnarray*}
dT_{k,1} &=& d W_{k,1} + \left(e^{T_{k-1,1}-T_{k,1}}\right)d\tau\\
dT_{k,2} &=& dW_{k,2} + \left(e^{T_{k-1,2}-T_{k,2}} - e^{T_{k,2}-T_{k-1,1}}\right)d\tau\\
&\vdots&\\
dT_{k,k-1} &=& dW_{k,k-1} + \left(e^{T_{k-1,k-1}-T_{k,k-1}} - e^{T_{k,k-1}-T_{k-1,k-2}}\right)d\tau\\
dT_{k,k} &=& dW_{k,k} - \left(e^{T_{k,k}-T_{k-1,k-1}}\right)d\tau.
\end{eqnarray*}

It follows from Theorem~\ref{theorem26intro} and standard methods of stochastic analysis that the above $q$-Whittaker 2d-growth model initialized with zero initial GT pattern converges (under scalings as in (\ref{scalingsintro})) to the Whittaker 2d-growth model with entrance law for $\{T_{k,j}(\delta)\}_{1\le j\le k\le N}$ given by the density $\W{\delta}\left(\{T_{k,j}\}_{1\le j\le k\le N}\right)$ for $\delta>0$. Let us briefly describe this limit result for the $N=2$ dynamics. Under the specified $q$-Whittaker dynamics, the particle $\lambda_1^{(1)}$ evolves as a rate one Poisson jump process. In time $\tau \e^{-2}$, $\lambda_1^{(1)}$ is macroscopically at $\tau \e^{-2}$. In an $\e^{-1}$ scale, the particle's dynamics converge (as $\e\to 0$) to that of a standard Brownian motion $W_{1,1}$. Turning to $\lambda_2^{(2)}$, the entrance law provided by Theorem~\ref{theorem26intro} shows that
\begin{equation*}
\lambda_1^{(1)}(\e^{-2}\tau)-\lambda_2^{(2)}(\e^{-2}\tau) = \e^{-1} \log \e^{-1} + \left[T_{1,1}(\tau)-T_{2,2}(\tau)\right]\e^{-1}.
\end{equation*}
Thus the jump rate for $\lambda_2^{(2)}$ is given by
\begin{equation*}
1-q^{\lambda_1^{(1)}(\tau)-\lambda_2^{(2)}(\tau)} = 1 -\e e^{T_{2,2}(\tau) - T_{1,1}(\tau)}.
\end{equation*}
In the time scale $\e^{-2}$, $T_{2,2}$ behaves like a Brownian motion $W_{2,2}$ plus a drift due to this perturbation of $-\e e^{T_{2,2}(\tau) - T_{1,1}(\tau)}$ -- exactly as given by the Whittaker 2d-growth model. The argument for $\lambda_2^{(1)}$ is similar.

O'Connell proved (\cite{OCon} Section 9) that the projection of the Whittaker 2d-growth model onto $\{T_{N,j}(\tau)\}_{1\leq j\leq N}$ is itself a Markov diffusion process with respect to its own filtration with entrance law given by density $\WM{\tau}\left(\{T_{N,j}\}_{1\leq j\leq N}\right)$ and infinitesimal generator given by
\begin{equation}\label{semigpW2dintro}
\mathcal{L} = \tfrac{1}{2} \psi_{0}^{-1}H\psi_{0},
\end{equation}
where $H$ is the quantum $\mathfrak{gl}_{N}$-Toda lattice Hamiltonian
\begin{equation*}
H= \Delta - 2 \sum_{j=1}^{N-1} e^{x_{i+1}-x_{i}}.
\end{equation*}

The $q$-Laplace transform generating function and Fredholm determinant of Theorem~\ref{PlancherelfredThmintro} has a limit under the $q\to 1$ scalings (see Section~\ref{WhitFredDetSec} for a more general statement and proof)

\begin{theorem}\label{NeilPolymerFredDetThmintro}
For any $u\geq 0$,
\begin{equation}\label{doubleExpintro}
\left\langle e^{-u e^{-T_{N,N}}}  \right\rangle_{\WM{\tau}} = \det(I+ K_{u})_{L^2(C_\delta)}
\end{equation}
where $C_\delta$ is a positively oriented contour containing the origin of radius less than $\delta/2$ with any $0< \delta<1$. The operator $K_u$ is defined in terms of its integral kernel
\begin{equation*}
K_{u}(v,v') = \frac{1}{2\pi \iota}\int_{-\iota \infty + \delta}^{\iota \infty +\delta}ds \Gamma(-s)\Gamma(1+s) \left(\frac{\Gamma(v-1)}{\Gamma(s+v-1)}\right)^N \frac{ u^s e^{v\tau s+\tau s^2/2}}{v+s-v'}.
\end{equation*}
\end{theorem}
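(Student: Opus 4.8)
\medskip
\noindent\textit{Proof idea.} The plan is to obtain Theorem~\ref{NeilPolymerFredDetThmintro} as the $q\to1$ degeneration of Theorem~\ref{PlancherelfredThmintro}, run inside the scaling regime (\ref{scalingsintro}) of Theorem~\ref{theorem26intro}: put $q=e^{-\epsilon}$, $\gamma=\tau\epsilon^{-2}$, so that $\lambda_N=\lambda_N^{(N)}=\tau\epsilon^{-2}+(N-1)\epsilon^{-1}\log\epsilon+T_{N,N}\epsilon^{-1}$. The single free parameter in Theorem~\ref{PlancherelfredThmintro} is $\zeta$; fixing $u\ge0$, we choose $\zeta=\zeta_\epsilon$ so that $\zeta_\epsilon q^{\lambda_N}/(1-q)\to-u\,e^{-T_{N,N}}$, which forces $\zeta_\epsilon=-u(1-q)\epsilon^{N-1}e^{\tau/\epsilon}\sim-u\,\epsilon^{N}e^{\tau/\epsilon}$. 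For $u>0$ this is negative, hence lies in $\C\setminus\Rplus$ for every $\epsilon>0$, so Theorem~\ref{PlancherelfredThmintro} applies with $\zeta=\zeta_\epsilon$; the case $u=0$ is trivial (both sides of (\ref{doubleExpintro}) equal $1$). It then remains to pass to the limit on the two sides of (\ref{thmlaplaceeqnintro}) separately.

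\noindent\textbf{The left-hand side.} Writing $(\zeta q^{\lambda_N};q)_\infty^{-1}=e_q\!\big(\zeta q^{\lambda_N}/(1-q)\big)$ with the $q$-exponential $e_q(y)=\big((1-q)y;q\big)_\infty^{-1}\to e^{y}$ (uniformly for $y$ in compact subsets of $(-\infty,0]$), the choice of $\zeta_\epsilon$ makes this equal to $e_q\!\big(-u\,e^{-T^{(\epsilon)}_{N,N}}\big)$, where $T^{(\epsilon)}_{N,N}:=\epsilon\lambda_N-\tau\epsilon^{-1}-(N-1)\log\epsilon$ is precisely the rescaled coordinate that, by Theorem~\ref{theorem26intro}, converges weakly under $\M_{Pl}$ to $T_{N,N}$ with marginal law $\WM{\tau}$. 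Since $0<e_q(-u\,e^{-t})\le1$ for all $t$ (because $\zeta_\epsilon<0$), and since Theorem~\ref{qwhitconvTHM} furnishes uniform tail bounds making $\{T^{(\epsilon)}_{N,N}\}_\epsilon$ tight, a routine truncation argument upgrades weak convergence to convergence of expectations,
\begin{equation*}
\Big\langle\big(\zeta_\epsilon q^{\lambda_N};q\big)_\infty^{-1}\Big\rangle_{\M_{Pl}}\;\xrightarrow{\;\epsilon\to0\;}\;\big\langle e^{-u\,e^{-T_{N,N}}}\big\rangle_{\WM{\tau}},
\end{equation*}
the left-hand side of (\ref{doubleExpintro}).

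\noindent\textbf{The right-hand side.} We must show $\det(I+K_{\zeta_\epsilon})_{L^2(C_w)}\to\det(I+K_u)_{L^2(C_\delta)}$. First rescale the contour: the substitution $w=q^{v-1}$ identifies the small circle $C_w$ around $1$ with a small circle $C_\delta$ of radius $<\delta/2$ about the origin (the constraint $0<\delta<1$ being exactly what keeps $C_w$ disjoint from $qC_w$), with Jacobian $dw=(\log q)\,q^{v-1}\,dv\sim-\epsilon\,dv$; fix the Mellin--Barnes contour of (\ref{gwwprimeeqnintro}) at $\re(s)=\delta$, legitimate since $\Gamma(-s)\Gamma(1+s)=-\pi/\sin\pi s$ has poles only at integers. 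On the rescaled contour the kernel converges pointwise: the $q$-Gamma asymptotics $\Gamma_q(z)=(1-q)^{1-z}(q;q)_\infty/(q^z;q)_\infty\to\Gamma(z)$ give $\big((q^{s}w;q)_\infty/(w;q)_\infty\big)^{N}\sim(1-q)^{-Ns}\big(\Gamma(v-1)/\Gamma(s+v-1)\big)^{N}$, while Taylor-expanding $\exp\!\big(\gamma w(q^{s}-1)\big)$ yields the Gaussian factor $e^{\tau s^{2}/2}$, a linear-in-$s$ drift, and a divergent prefactor $e^{-\tau s/\epsilon}$. The prefactor cancels against the $e^{\tau s/\epsilon}$ inside $(-\zeta_\epsilon)^{s}$; the $\epsilon$-powers from $(-\zeta_\epsilon)^{s}$ and from $(1-q)^{-Ns}$ cancel each other (combining into $(\epsilon/(1-q))^{(N-1)s}\to1$); and the $1/\epsilon$ from $1/(q^{s}w-w')\sim-1/\!\big(\epsilon(v+s-v')\big)$ cancels against the Jacobian. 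What survives is exactly $K_u$ (with $(-\zeta_\epsilon)^{s}$ leaving behind the factor $u^{s}$). To promote pointwise kernel convergence to convergence of the Fredholm determinants, use dominated convergence in the defining series: Hadamard's inequality bounds the $n$-th term by $n^{n/2}(M\,|C_\delta|)^{n}/n!$ once one has a bound $|K_{\zeta_\epsilon}(v,v')|\le M$ uniform in $\epsilon$ and in $v,v'\in C_\delta$, and each Mellin--Barnes integral is dominated using $|\Gamma(-s)\Gamma(1+s)|\sim e^{-\pi|\Imag(s)|}$ at finite $q$ together with the Gaussian decay $|e^{\tau s^{2}/2}|\sim e^{-\tau\Imag(s)^{2}/2}$ in the limit --- the latter also being what makes $K_u$ a smooth, hence trace-class, kernel on the compact contour $C_\delta$ for every $\tau>0$ (for large $N$ one needs it to overcome the $e^{N\pi|\Imag(s)|/2}$ growth of $\Gamma(s+v-1)^{-N}$).

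\noindent\textbf{Main obstacle.} The crux is the bound $|K_{\zeta_\epsilon}(v,v')|\le M$ \emph{uniform in $\epsilon$}: one must carry out the cancellations of the factors that blow up or vanish as $\epsilon\to0$ --- the $e^{\pm\tau s/\epsilon}$, the powers $(1-q)^{\mp Ns}$ and $\epsilon^{(N-1)s}$, the $1/\epsilon$ from $(q^{s}w-w')^{-1}$ and the compensating $\epsilon$ from the Jacobian, and the super-exponentially small factors $(w;q)_\infty$ occurring for $w$ near $1$ --- \emph{before} estimating, and then verify the resulting $O(1)$ bound uniformly both in $v,v'$ on the contour and along the entire Mellin--Barnes line, where at finite $q$ the integrand decays only like $e^{-\pi|\Imag(s)|}$ (the Gaussian decay being purely a limiting phenomenon). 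A secondary point is the contour bookkeeping: one should check that the rescaling $w=q^{v-1}$ does not sweep across poles of $K_{\zeta_\epsilon}$ --- notably the order-$N$ pole of $(q^{s}w;q)_\infty/(w;q)_\infty$ at $w=1$ --- so that the Fredholm determinant is genuinely unchanged, and that the Mellin--Barnes reduction (Lemma~\ref{gammasumlemma}) underlying Theorem~\ref{PlancherelfredThmintro} commutes with the $q\to1$ limit.
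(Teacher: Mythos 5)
Your overall strategy is exactly the one the paper follows for the body version (Theorem~\ref{NeilPolymerFredDetThm}): choose $\zeta_\epsilon$ so that the $q$-Laplace transform becomes $\langle e_q(-u e^{-T^{(\epsilon)}_{N,N}})\rangle$, handle the left side by combining weak convergence (Theorem~\ref{theorem26intro}) with the uniform convergence $e_q\to\exp$, and handle the right side by rescaling contours, proving pointwise convergence of the kernel, and then upgrading to convergence of Fredholm determinants through Hadamard and a uniform-in-$\epsilon$ bound. You also correctly name the crux --- the uniform kernel bound along the entire Mellin--Barnes line --- which is precisely what the paper's Lemmas~\ref{uniflemma}, \ref{tailboundslemma}, \ref{fredtailemma} supply.

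There is, however, a concrete inconsistency in your contour bookkeeping. You claim that $w=q^{v-1}$ identifies the circle $C_w$ around $w=1$ with a circle $C_\delta$ around $v=0$. It does not: $w=q^{v-1}=1$ forces $v=1$, so $C_w$ around $1$ pulls back to a $v$-contour around $1$, not around the origin. If instead $v$ runs over a circle of radius $<\delta/2$ about $0$, then $w=q^{v-1}=e^{\epsilon(1-v)}$ lands on a circle of radius $\approx\epsilon|v|<\epsilon\delta/2$ centered at $e^\epsilon\approx1+\epsilon$, which does \emph{not} enclose the pole of $f$ at $w=1$ (the center is at distance $\approx\epsilon$ from $1$ but the radius is $<\epsilon\delta/2<\epsilon$). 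The consistent substitution --- and the one the paper actually uses in the proof of Theorem~\ref{NeilPolymerFredDetThm} --- is $w=q^{v}$: a circle $|w-1|=d$ with $d<(1-q^\delta)/(1+q^\delta)\sim\delta\epsilon/2$ pulls back to $|v|<\delta/2$, and the $q$-Gamma computation then yields the ratio $\Gamma(v)/\Gamma(s+v)$, not $\Gamma(v-1)/\Gamma(s+v-1)$. (The $\Gamma(v-1)$ in the introductory statement appears to be a slip relative to the body's Theorem~\ref{NeilPolymerFredDetThm} specialized to $a_m\equiv0$; your computation is self-consistent only if you also place the $v$-contour around $1$, which contradicts the stated $C_\delta$.)

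Two smaller remarks. For the left-hand side, tightness (and hence appeal to Theorem~\ref{qwhitconvTHM}) is not needed: the paper's Lemma~\ref{problemma2} shows that uniform convergence of the bounded family $f_\epsilon(t)=e_q(-u e^{-t})\in[0,1]$ together with weak convergence of $T^{(\epsilon)}_{N,N}$ already gives convergence of the expectations. For the kernel bound, saying the Gaussian decay is ``purely a limiting phenomenon'' slightly misstates the mechanism: even at fixed $q$ the exponent $\gamma q^v(q^s-1)$ has $\Re\le -c\,\Im(s)^2$ on the fundamental strip $|\Im(s)|\le\pi\epsilon^{-1}$ (via $\cos x-1\le -x^2/6$), with a $2\pi\epsilon^{-1}$-periodic continuation outside; the paper's Lemma~\ref{tailboundslemma} leans on exactly this structure, and the $e^{-\pi|\Im s|}$ decay from $\Gamma(-s)\Gamma(1+s)$ then ensures integrability. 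Once you fix the contour change, your outline reduces to the paper's argument.
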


\section{Tracy-Widom asymptotics for polymer free energy}

O'Connell \cite{OCon} introduced the Whittaker process to describe the free energy of a semi-discrete directed polymer in a random media (see Section~\ref{genback} for general background on directed polymers or Section~\ref{OConmodel} for more on this particular model). We refer to this as the {\it O'Connell-Yor semi-discrete directed polymer}, as it was introduced in \cite{OCon-Yor}. Define an {\it up/right path} in $\R\times \Z$ as an increasing path which either proceeds to the right (along a copy of $\R$) or jumps up (in $\Z$) by one unit. For each sequence $0<s_1<\cdots<s_{N-1}<\tau$ we can associate an up/right path $\phi$ from $(0,1)$ to $(\tau,N)$ which jumps between the points $(s_i,i)$ and $(s_{i},i+1)$, for $i=1,\ldots, N-1$, and is continuous otherwise. The polymer paths will be such up/right paths, and the random environment will be a collection of independent standard Brownian motions $B(s) = (B_1(s),\ldots, B_N(s))$ for $s\geq 0$ (see Figure \ref{semidiscrete} for an illustration of this polymer). The energy of a path $\phi$ is given by
\begin{equation*}
E(\phi) = B_1(s_1)+\left(B_2(s_2)-B_2(s_1)\right)+ \cdots + \left(B_N(t) - B_{N}(s_{N-1})\right).
\end{equation*}
The (quenched) {\it partition function} $\Zsd^{N}(\tau)$ is given by averaging over the possible paths:
\begin{equation*}
\Zsd^{N}(\tau) = \int e^{E(\phi)} d\phi,
\end{equation*}
where the integral is with respect to Lebesgue measure on the Euclidean set of all up/right paths $\phi$ (i.e., the simplex of jumping times $0<s_1<\cdots<s_{N-1}<\tau$). One can similarly introduce a {\it hierarchy of partition functions} $\Zsd^{N}_{n}(\tau)$ for $0\leq n\leq N$ by setting $\Zsd^{N}_{0}(\tau)=1$ and for $n\geq 1$,
\begin{equation}\label{partfuncthierIntro}
\Zsd^{N}_{n}(\tau) = \int_{D_{n}(\tau)} e^{\sum_{i=1}^{n} E(\phi_i)} d\phi_1 \cdots d\phi_n,
\end{equation}
where the integral is now with respect to the Lebesgue measure on the Euclidean set $D_n(\tau)$ of all $n$-tuples of non-intersecting (disjoint) up/right paths with initial points $(0,1),\ldots, (0,n)$ and endpoints $(\tau,N-n+1),\ldots, (\tau,N)$.

\begin{figure}
\begin{center}
\includegraphics[scale=.6]{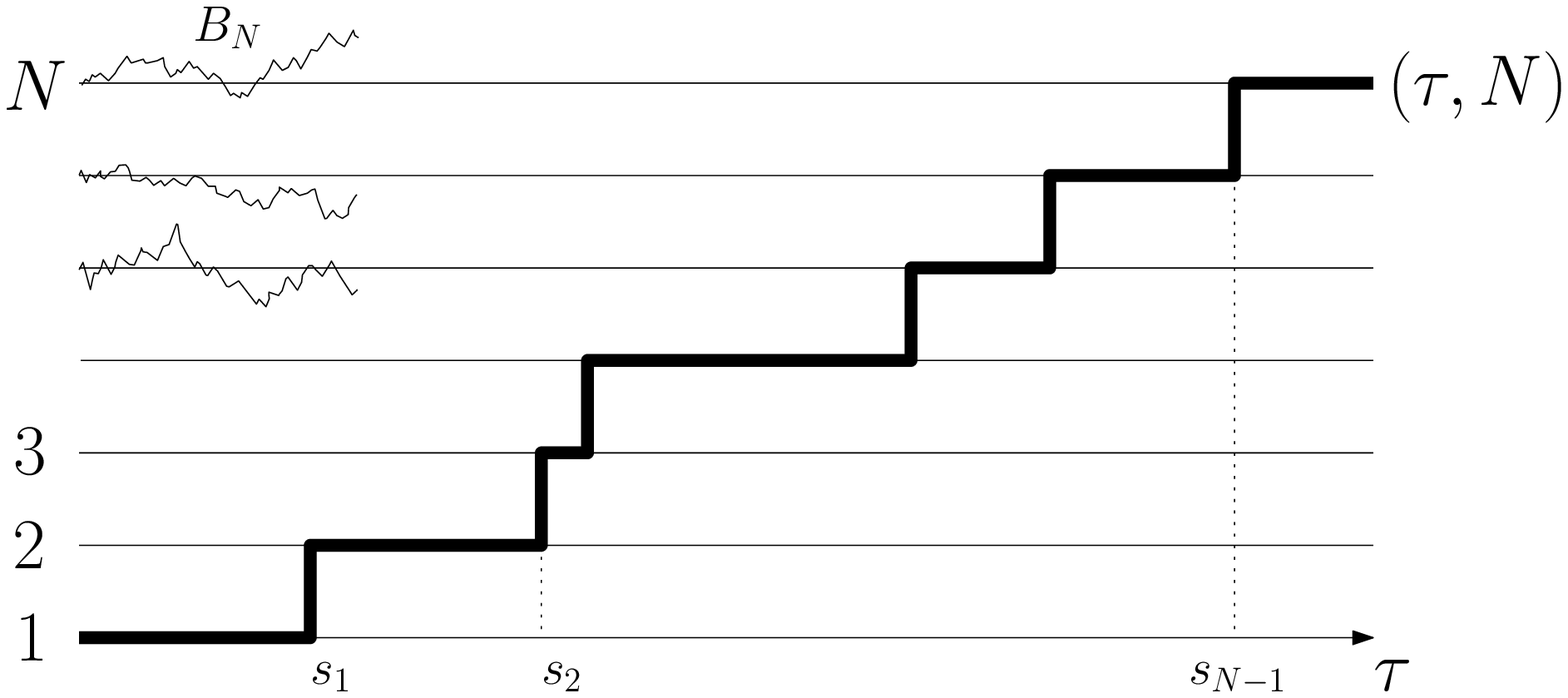}
\end{center}
\caption{The O'Connell-Yor semi-discrete directed polymer is a reweighting of Poisson jump process space-time trajectories (in bold) with respect to Boltzmann weights given by exponentiating a Hamiltonian givne by the path integral along the trajectory through a field of independent 1-dimensional white noises. The (quenched) partition function is the normalizing constant to make this reweighting into a probability measure.}\label{semidiscrete}
\end{figure}

The {\it hierarchy of free energies} $\Fsd^{N}_{n}(\tau)$ for $1\leq n\leq N$ if defined as
\begin{equation*}
\Fsd^{N}_{n}(\tau) = \log\left(\frac{ \Zsd^{N}_{n}(t)}{ \Zsd^{N}_{n-1}(\tau)}\right).
\end{equation*}
Among these, $\Fsd^{N}_{1}(\tau)$ is the {\it directed polymer free energy}.

It is shown in \cite{OCon} Theorem 3.1 that as a process of $\tau$, $(-\Fsd^{N}_{N}(\tau),\ldots, -\Fsd^{N}_{1}(\tau))$ is given by the Markov diffusion process with entrance law given by density $\WM{\tau}\left(-\Fsd^{N}_{N}(\tau),\ldots, -\Fsd^{N}_{1}(\tau)\right)$ and infinitesimal generator given by (\ref{semigpW2dintro}). Note that this means that $\Fsd_{1}^{N}(\tau)$ and $-T_{N,N}(\tau)$ are equal in law and thus we may apply Theorem~\ref{NeilPolymerFredDetThmintro} to characterize the distribution of the free energy $\Fsd_{1}^{N}(\tau)$ of the O'Connell-Yor polymer. The form of the Fredholm determinant is such that we may also calculate a limit theorem for the free energy fluctuations as $\tau$ and $N$ go to infinity.

Before taking these asymptotics, let us motivate the limit theorem we will prove. Since the noise is Brownian, scaling time is the same as introducing a prefactor $\beta$ in front of $E(\phi)$ above. Generally, $\beta$ is called the {\it inverse temperature}, and taking time to infinity is like taking $\beta$ to infinity (or temperature to zero). The limit of the free energy hierarchy (divided by $\beta$) as $\beta$ goes to infinity and time is fixed, is described by a coupled set of maximization problems. In particular, regarding $\Fsd^{N}_{1}(\tau)$,
\begin{eqnarray*}
M^N_1(\tau) &:=& \lim_{\beta\to \infty} \frac{1}{\beta} \log \int e^{\beta E(\phi)} d\phi\\
&=& \max_{0<s_1<\cdots<s_{N-1}<\tau} B_1(s_1)+\left(B_2(s_2)-B_2(s_1)\right)+ \cdots + \left(B_N(\tau) - B_{N}(s_{N-1})\right).
\end{eqnarray*}

As recorded in \cite{OCon} Theorem 1.1 (see the references stated therein), $M^N_1(1)$ is distributed as the largest eigenvalue of an $N\times N$ GUE random matrix (in fact the limit of the entire free energy hierarchy has the law of the standard Dyson Brownian motion). It follows from the original work of Tracy and Widom \cite{TW} and also \cite{For,NagWad} that
\begin{equation}\label{TWGUEpolygroundintro}
\lim_{N \to \infty} \PP\left( \frac{ M^N_1(N) - 2N}{N^{1/3}}\leq r\right) = F_{{\rm GUE}}(r).
\end{equation}

The polymer universality belief (see \cite{CQ2}) holds that all polymers in one space and one time dimension (with homogeneous and light-tailed noise) should have the same asymptotic scaling exponent ($1/3$) and limit law ($F_{{\rm GUE}})$.

In order to state our result here, recall the Digamma function $\Psi(z) = [\log \Gamma]'(z)$. Define for $\kappa>0$
\begin{equation*}
\bfk = \inf_{t>0} (\kappa t - \Psi(t)),
\end{equation*}
and let $\btk$ denote the unique value of $t$ at which the minimum is achieved. Finally, define the positive number (scaling parameter) $\bgk= -\Psi''(\btk)$.

Then by taking rigorous asymptotics of the Fredholm determinant of Theorem~\ref{NeilPolymerFredDetThmintro} (for a proof see Theorem~\ref{TWasymptoticskappaTHM}) we have
\begin{theorem}\label{Thmkappalarge}
For all $N$, set $t=N\kappa$. For $\kappa>0$ large enough,
\begin{equation*}
\lim_{N\to \infty} \PP\left( \frac{ \Fsd^{N}_{1}(t) - N \bfk}{N^{1/3}}\leq r\right) = F_{{\rm GUE}}\left((\bgk / 2)^{-1/3}r\right).
\end{equation*}
\end{theorem}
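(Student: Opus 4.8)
\section*{Proof proposal for Theorem~\ref{Thmkappalarge}}

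The plan is to start from the Fredholm determinant formula of Theorem~\ref{NeilPolymerFredDetThmintro}, which (using the identification in law $\Fsd_1^N(\tau) = -T_{N,N}(\tau)$) gives
\[
\EE\left[ e^{-u e^{-T_{N,N}(\tau)}} \right] = \det(I + K_u)_{L^2(C_\delta)},
\]
and to read off the distribution of $\Fsd_1^N(\tau)$ by choosing $u$ of the form $u = e^{N \bfk - N^{1/3}(\bgk/2)^{1/3} r}$ and sending $N\to\infty$ with $\tau = t = N\kappa$. The exponential factor $e^{-u e^{-T_{N,N}}}$ is a smooth approximation to the indicator $\mathbf{1}\{-T_{N,N} \ge \log u\}$, so the left-hand side converges to $\PP(\Fsd_1^N(t) \le N\bfk - N^{1/3}(\bgk/2)^{1/3} r + o(N^{1/3}))$ provided the free energy fluctuations are genuinely of order $N^{1/3}$ (so that the smoothing width, which is $O(1)$ in the $T_{N,N}$ variable, is negligible on the fluctuation scale); I would make this precise with a standard squeeze argument comparing $e^{-u e^{-x}}$ to indicators of $x$ above/below $\log u \mp M$ and letting $M\to\infty$ after $N\to\infty$.

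Next I would perform a steepest-descent analysis of the kernel $K_u$. Writing the kernel as
\[
K_u(v,v') = \frac{1}{2\pi\iota}\int_{\delta - \iota\infty}^{\delta + \iota\infty} \Gamma(-s)\Gamma(1+s)\, \frac{u^s\, e^{v\tau s + \tau s^2/2}}{v + s - v'}\left(\frac{\Gamma(v-1)}{\Gamma(s+v-1)}\right)^N ds,
\]
I substitute $\tau = N\kappa$ and $u = e^{N\bfk - N^{1/3}(\bgk/2)^{1/3} r}$, and collect the $N$-dependence into $\exp(N G(v,s))$ with
\[
G(v,s) = \bfk\, s + \kappa\left(v s + \tfrac{s^2}{2}\right) + \log\Gamma(v-1) - \log\Gamma(s + v - 1).
\]
The factor $u^s$ also contributes the subleading term $-N^{1/3}(\bgk/2)^{1/3} r s$, which after rescaling near the critical point produces the Airy-type shift by $r$. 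The heart of the argument is to locate a common critical point: I expect $\partial_v G = \partial_s G = 0$ to be solved by $v = v_c$, $s = 0$ (the contour $C_\delta$ for $v$ and the vertical line for $s$ should be deformable to pass through this point), with $v_c$ determined by $\kappa v_c + \Psi(v_c - 1) = \kappa v_c$ adjusted to match $\bfk = \inf_t(\kappa t - \Psi(t))$; concretely $\btk$ should equal $v_c - 1$. One then checks that $G$ and its first derivatives vanish there while the relevant second derivatives are controlled by $\Psi''(\btk) = -\bgk$, and that the \emph{third}-order term is nondegenerate, which is exactly the Airy scaling $N^{1/3}$ with amplitude set by $\bgk$. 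After rescaling $v - v_c, s \sim N^{-1/3}$, the kernel converges to the Airy kernel (in its contour-integral form), and hence $\det(I+K_u) \to F_{\mathrm{GUE}}((\bgk/2)^{-1/3} r)$, matching (\ref{TWGUEpolygroundintro}) with the stated scaling constant.

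To make the convergence of Fredholm determinants rigorous I would (i) justify deforming the $v$- and $s$-contours to steepest-descent paths through $(v_c, 0)$ without crossing the pole at $v + s = v'$ or the poles of $\Gamma(-s)$ and $\Gamma(s+v-1)^{-N}$ — here the trace-class assertion at the end of Theorem~\ref{NeilPolymerFredDetThmintro} and the Gaussian decay $e^{\tau s^2/2}$ along vertical lines (note $\Real(s^2) \to -\infty$ on the deformed contour, which is what makes the Gaussian a \emph{decaying} factor) give the needed integrability; (ii) obtain a uniform exponential bound $|K_u(v,v')| \le C e^{-c N^{1/3}(|v| + |v'|)}$ off a $N^{-1/3}$-neighborhood of $v_c$, so the tails of the Fredholm series are negligible; and (iii) invoke the standard fact that kernel convergence plus a dominating bound implies convergence of the Fredholm determinants. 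The main obstacle, as usual in this type of argument, is step (i)–(ii): arranging a single deformation of contours that is simultaneously a descent path for the $v$-integral and the $s$-integral, that avoids all poles (in particular the moving pole at $v' = v + s$, which typically forces a local modification of the contour near the critical point), and along which one has the uniform decay estimates needed to control the determinant expansion. The constraint $\kappa > 0$ ``large enough'' in the statement is what guarantees that the critical point $v_c$ lies in a region where $\Gamma(v-1)$ is analytic and the descent contours can be chosen inside the domain of validity, and I would track exactly how large $\kappa$ must be through the location of $v_c$ relative to the singularity of $\log\Gamma(v-1)$ at $v = 1$.
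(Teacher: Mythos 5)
Your overall route matches the paper's proof (given as Theorem~\ref{TWasymptoticskappaTHM}): identify $\Fsd^N_1(\tau)=-T_{N,N}(\tau)$ in law, use Theorem~\ref{NeilPolymerFredDetThm} to write $\bigl\langle e^{-ue^{-T_{N,N}}}\bigr\rangle$ as a Fredholm determinant, treat the double exponential as a smoothed indicator (the paper's Lemma~\ref{problemma1} is exactly your squeeze argument), and perform steepest descent past a degenerate critical point to obtain the Airy kernel. However, there is a sign error in your choice of $u$ that would spoil the criticality. For $e^{-ue^{-T_{N,N}}}$ to approximate $\mathbf{1}\{-T_{N,N}\le N\bfk+rN^{1/3}\}$ you need $u\,e^{N\bfk+rN^{1/3}}=O(1)$, i.e.\ $u=e^{-N\bfk-rN^{1/3}}$ (with a \emph{negative} $N\bfk$), which is what the paper takes; your $u=e^{N\bfk-\cdots}$ produces $+\bfk s$ in the exponent instead of $-\bfk s$, and then $\btk$ is no longer a critical point. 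Also, the correct $\Gamma$-ratio in the kernel is $\bigl(\Gamma(v)/\Gamma(s+v)\bigr)^N$ (the introduction carries a shift misprint; compare (\ref{Kunrkappa}) in the body with $a_m\equiv 0$), so the critical point is $v_c=\btk$ rather than $\btk+1$. With the corrected $u$ and ratio, the substitution $\zeta=s+v$ turns the $N$-dependence into $N\bigl(G(v)-G(\zeta)\bigr)$ with $G(z)=\log\Gamma(z)-\kappa z^2/2+\bfk z$, and the Airy scaling is driven by the coincidence $G'(\btk)=G''(\btk)=0$, $G'''(\btk)=-\bgk$; this single-function change of variables is cleaner than your two-variable picture, where $\partial_v G$ vanishes identically at $s=0$ and so gives no information.

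Your attribution of the ``$\kappa$ large enough'' restriction is also off. Since $\btk>0$ for every $\kappa>0$ (the argmin of $\kappa t-\Psi(t)$ is interior), the critical point never sits on a pole of $\Gamma(v)$, so pole avoidance is not the obstruction. Remark~\ref{remkappa} states explicitly that the critical-point calculation works for all $\kappa>0$ and that the largeness of $\kappa$ is needed only for the tail estimates showing negligibility of the contour integrals away from $\btk$ — i.e.\ your step (ii), not the pole-avoidance part of your step (i). Concretely, the paper rescales $z=\kappa^{-1/2}\tilde z$ and approximates $G$ by an explicit $\kappa$-free profile $f(\tilde z)$ with error $O(\kappa^{-1})$ (estimate (\ref{festimate})); the steep-descent contour is built from a descent contour $C_f$ for $f$, and it is only for $\kappa$ large that the $O(\kappa^{-1})$ error is dominated and the required global decay bounds hold. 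That technical limitation is removed for all $\kappa>0$ in \cite{BorCorFer}.
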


The condition on $\kappa$ being large is artificial and presently comes out of technicalities in performing the steepest descent analysis of the Fredholm determinant of Theorem~\ref{NeilPolymerFredDetThmintro}. In \cite{BorCorFer} the above technical issues are overcome and the theorem is extended to all $\kappa>0$. Note that the double exponential on the left-hand side of (\ref{doubleExpintro}) becomes an indicator function (and hence its expectation becomes a probability) since $e^{-e^{c(x-a)}}\to {\bf 1}_{x<a}$ as $c\to \infty$. The law of large numbers with the constant $\bfk$ was conjectured in \cite{OCon-Yor} and proved in \cite{OConnellMoriarty}. A tight one-sided bound on the fluctuation exponent for $\Fsd^{N}_1(t)$ was determined in \cite{SeppValko}. Taking $\kappa\to \infty$ one recovers (\ref{TWGUEpolygroundintro}). Spohn \cite{SpohnPolymer} has recently described how the above theorem fits into a general physical KPZ scaling theory.

There exists a similar story to that presented above when the O'Connell-Yor polymer is replaced by the {\it log-gamma discrete directed polymer} introduced by Sepp\"{a}l\"{a}inen \cite{SeppLog}. Using A.N. Kirillov's \cite{Kir,NY} {\it tropical RSK correspondence}, Corwin, O'Connell, Sepp\"{a}l\"{a}inen and Zygouras \cite{COSZ} introduce a different Whittaker process to describe the free energy hierarchy for this polymer (see Section~\ref{logsec} for more on this). Presently we call this the {\it $\alpha$-Whittaker process} (see Section~\ref{alphawhitprocessSec}) and it arises as the limit of the Macdonald process when $\rho$ is a {\it pure alpha specialization} (some $\alpha_i>0$, $\beta_i=0$ for all $i\geq 1$ and $\gamma=0$).

\section{Solvability of the KPZ equation}

The {\it continuum directed random polymer} (CDRP) is the universal scaling limit for discrete and semi-discrete polymers when the inverse-temperature $\beta$ scales to zero in a critical manner (called {\it intermediate disorder scaling}) as the system size goes to infinity (this was observed independently by Calabrese, Le Doussal and Rosso \cite{CDR} and by Alberts, Khanin and Quastel \cite{AKQ} and is proved in \cite{AKQ2} for discrete polymers and in \cite{QM} for the O'Connell-Yor semi-discrete polymer). In the CDRP, the polymer path measure is the law of a Brownian bridge and the random media is given by Gaussian space time white noise.

The CDRP partition function is written as (see \cite{ACQ,ICreview})
\begin{equation*}
\mathcal{Z}(T,X)= p(T,X) \EE\left[:{\rm exp}:\, \left\{ \int_{0}^{T} \whitenoise(t,b(t)) dt \right\} \right]
\end{equation*}
where $\EE$ is the expectation of the law of the Brownian bridge $b(\cdot)$ starting at $X$ at time 0 and ending at $0$ at time $T$. The Gaussian heat kernel is written as $p(T,X)=(2\pi)^{-1/2}e^{-X^2/T}$. The expression $:{\rm exp}:$ is the {\it Wick exponential} and can be defined either through a limiting smoothing procedure as in \cite{BC} or via Weiner-It\^{o} chaos series \cite{ACQ}. Via a version of the Feynman-Kac formula, $\mathcal{Z}(T,X)$ solves the well-posed {\it stochastic heat equation} with multiplicative noise and delta function initial data:
\begin{equation*}
\partial_T \mathcal{Z}= \tfrac{1}{2}\partial_X^2 \mathcal{Z} -\mathcal{Z}\dot{\mathscr{W}}, \qquad \mathcal{Z}(0,X)=\delta_{X=0}.
\end{equation*}
Due to a result of Mueller \cite{Mueller}, almost surely $\mathcal{Z}(T,X)$ is positive for all $T>0$ and $X\in \R$, hence we can take its logarithm:
\begin{equation*}
\mathcal{H}(T,X) = \log \mathcal{Z}(T,X).
\end{equation*}
This is called the Hopf-Cole solution to the {\it Kardar-Parisi-Zhang} (KPZ) equation with {\it narrow wedge} initial data \cite{KPZ,BG,ACQ}. Formally (though it is ill-posed due to the non-linearity) the KPZ equation is written as
\begin{equation*}
\partial_T \mathcal{H} = \tfrac{1}{2}\partial_X^2 \mathcal{H} + \tfrac{1}{2}(\partial_X \mathcal{H})^2 +\whitenoise.
\end{equation*}

We may perform an intermediate disorder scaling limit of the Fredholm determinant of Theorem~\ref{NeilPolymerFredDetThmintro}. Under this scaling, the double exponential in the left-hand side of (\ref{doubleExpintro}) is preserved, giving us a Laplace transform of $\mathcal{Z}$. For $u=s \exp\{-N-\tfrac{1}{2}\sqrt{TN}-\tfrac{1}{2}N\log(T/N)\}$,
\begin{equation}\label{LMRintro}
\langle e^{-s \mathcal{Z}(T,0)} \rangle = \lim_{N\to\infty} \langle e^{-u\Zsd^{N}_{1}(\sqrt{TN})}\rangle  =\det(I- K_{se^{-T/24}})_{L^2(0,\infty)}
\end{equation}
where the operator $K_{s}$ is defined in terms of its integral kernel
\begin{equation*}
K_s(r,r') = \int_{-\infty}^{\infty} \frac{s}{s+e^{-\kappa_T t}} \Ai(t+r) \Ai(t+r') dt
\end{equation*}
where $\kappa_T= 2^{-1/3}T^{1/3}$ and $\Ai$ is the Airy function.

This is shown in Section~\ref{CDRPfreddetSec} by expanding around the critical point of the kernel in Theorem~\ref{NeilPolymerFredDetThmintro} (under the above scalings). A rigorous proof of this result (which requires more than just a critical point analysis) is provided in \cite{BorCorFer}.

The equality of the left-hand and right-hand sides of (\ref{LMRintro}) is already known. It follows from the exact formulas for the probability distribution for the solution to the KPZ equation which was simultaneously and independently discovered in \cite{ACQ,SaSp} and proved rigorously in \cite{ACQ}. Those works took asymptotics of Tracy and Widom's ASEP formulas \cite{TW1,TW2,TW3} and rely on the fact that weakly scaled ASEP converges to the KPZ equation \cite{BG,ACQ}. It has been a challenge to expand upon the solvability of ASEP as necessary for asymptotics (see \cite{TW4} for the one such extension to half-Bernoulli initial conditions and \cite{CQ} for the resulting formula for the KPZ equation with half-Brownian initial data). The many parameters (the $a_i$'s, $\alpha_i$'s, and $\beta_i$',s) we have suppressed in this introduction can be used to access statistics for the KPZ equation with other types of initial data, which will be a subject of subsequent work (cf. \cite{BorCorFer}).

Soon after, Calabrese, Le Doussal and Rosso \cite{CDR} and Dotsenko \cite{Dot} derived the above Laplace transform through a non-rigorous replica approach. There, the Laplace transform is written as a generating function in the moments of $\mathcal{Z}(T,0)$. The $N$-th such moment grows like $e^{cN^{3}}$ and hence this generating function is widely divergent. By rearranging terms and analytically continuing certain functions off integers, this divergent series is summed and the Laplace transform formula results. These manipulations can actually be seen as shadows of the rigorous manipulations performed at the Macdonald process level.

\section{Moment formulas for polymers}

We previously saw that due to the large family of difference operators diagonalized by the Macdonald symmetric functions, we can derive contour integral formulas for a rich family of observables for Macdonald processes (in this introduction, Propositions \ref{prop5intro} and \ref{prop6intro} as well as more general statements in Sections \ref{difopSEC} and \ref{expmomform26}). The limit which takes Macdonald processes (and $q$-Whittaker processes) to Whittaker processes turns these observables into moments of the partition function hierarchy (e.g., (\ref{partfuncthierIntro})).

Let us focus on Proposition~\ref{prop6intro} and its limit. This proposition calculated the Macdonald process expectation of $q^{k\lambda_N^N}$. In the scalings of Theorem~\ref{theorem26intro}, this becomes the Whittaker process expectation of $e^{-k T_{N,N}}$ (see Section~\ref{whitintform} for general precise statements of this sort). It follows from the above discussed connection between Whittaker processes and the O'Connell-Yor semi-discrete polymer that (see Section~\ref{restatedIntForm} for more general statements and a proof of this result which appears as Proposition~\ref{Proposition28OCon})
\begin{proposition}
For any $N\geq 1$, $k\geq 1$ and $\tau\geq 0$,
\begin{equation*}
\left\langle  \left(\Zsd^{N}_{1}(\tau)\right)^k \right\rangle = \frac{e^{k\tau/2}}{(2\pi \iota)^k} \oint\cdots\oint \prod_{1\le A<B\le k} \frac{w_A-w_B}{w_A-w_B-1}\prod_{j=1}^k e^{\tau w_j}\frac{dw_j}{w_j}\,,
\end{equation*}
where the $w_j$-contour contains $\{w_{j+1}+1,\cdots,w_k+1,0\}$ and no other singularities for $j=1,\dots,k$.
\end{proposition}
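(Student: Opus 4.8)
The plan is to derive this moment formula from Proposition~\ref{prop6intro} (equivalently, from the iterated version, Proposition~\ref{prop8intro}) applied to the $q$-Whittaker process, and then push the limit of Theorem~\ref{theorem26intro} through the contour-integral formula. Concretely, the starting point is the formula for $\mu_k = \langle q^{k\lambda_N}\rangle_{\MM_{Pl}}$ recorded in (\ref{mukdefintro}), where $f(z) = (1-z)^{-N}\exp\{(q-1)\gamma z\}$ and the nested contours enclose $\{qz_{j+1},\ldots,qz_k,1\}$ but not $0$. The crux is that, under the scaling $q = e^{-\e}$, $\gamma = \tau\e^{-2}$, $\lambda_N = \tau\e^{-2} - (N+1-2N)\e^{-1}\log\e + T_{N,N}\e^{-1}$ of (\ref{scalingsintro}), one has $q^{k\lambda_N} \to$ (a deterministic prefactor) $\times\, e^{-kT_{N,N}}$, so that after extracting the deterministic part we obtain $\langle e^{-kT_{N,N}}\rangle_{\WM{\tau}}$ on the left; and since $\Fsd^N_1(\tau)$ and $-T_{N,N}(\tau)$ are equal in law (as explained just before this Proposition in the excerpt), the left side equals $\langle (\Zsd^N_1(\tau))^k\rangle$.

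First I would change variables in (\ref{mukdefintro}) to localize the contours near the relevant point. With $q = e^{-\e}$, writing $z_j = e^{-\e w_j}$ (so $z_j$ near $1$ corresponds to $w_j$ near $0$, and the $z$-contour around $1$ becomes a $w$-contour around $0$), one computes the asymptotics of each factor: the cross term $\frac{z_A - z_B}{z_A - q z_B}$ becomes $\frac{w_A - w_B}{w_A - w_B - 1}$ in the limit (since $z_A - z_B \sim -\e(w_A - w_B)z$ and $z_A - qz_B \sim -\e(w_A - w_B - 1)z$); the factor $f(z_j)/z_j\, dz_j$ contributes the $(1-z_j)^{-N}$ piece, which under $z_j = e^{-\e w_j}$ behaves like $(\e w_j)^{-N}$ up to corrections, producing the $\e^{-Nk}$ that must be matched against the $\e^{-1}\log\e$ shift in $\lambda_N$, together with a $\prod_j w_j^{-1}\,dw_j$ and an $\prod_j e^{\tau w_j}$ coming from $\exp\{(q-1)\gamma z_j\} = \exp\{-\e \cdot \tau\e^{-2}\cdot e^{-\e w_j}\} \approx \exp\{-\tau\e^{-1}\} \exp\{\tau w_j\}$ after the deterministic $\exp\{-\tau\e^{-1}\}$ shift is absorbed. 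Bookkeeping of the deterministic prefactors is the step that requires care: the powers of $\e$, the $\log\e$ terms, and the $e^{-\tau\e^{-1}}$ factors must all cancel exactly against the $q^{k\lambda_N}$ normalization implied by (\ref{scalingsintro}), leaving precisely the $e^{k\tau/2}$ (the $1/2$ arises from the quadratic correction $e^{-\e w_j} = 1 - \e w_j + \tfrac12 \e^2 w_j^2 - \cdots$ in the exponent, i.e.\ from the $\tfrac12\e^2 w_j^2$ term multiplied by $\tau\e^{-2}$, summed over $j$, but one must be careful — in the limit this contributes $\sum_j \tfrac{\tau}{2}w_j^2$ rather than a constant, so in fact the $e^{k\tau/2}$ has a different origin, namely the Jacobian/normalization of $\Zsd$; I would track this via the known relation $\Fsd^N_1 \overset{d}{=} -T_{N,N}$ and the definition of $\theta_\tau$ in (\ref{thetaformulaintro}), which carries the $e^{-\tau\nu^2/2}$ Gaussian weight). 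The cleanest route is to not recompute this by hand but to invoke the already-established Theorem~\ref{theorem26intro} for the convergence of the process, combined with uniform tail/moment bounds (the "uniform tail estimates" accompanying Theorem~\ref{qwhitconvTHM} mentioned in the excerpt), to justify that $\langle q^{k\lambda_N}\rangle_{\MM_{Pl}}$, suitably renormalized, converges to $\langle e^{-kT_{N,N}}\rangle_{\WM\tau}$, and then separately take the limit of the right-hand contour integral.

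The main obstacle is the justification of the interchange of limit and integration: one must show that the contour integrand in (\ref{mukdefintro}), after the substitution $z_j = e^{-\e w_j}$ and extraction of prefactors, converges uniformly on the (rescaled) contours and is dominated by an integrable function, so that $\lim_{\e\to 0}$ commutes with $\oint\cdots\oint$. This requires choosing the $w_j$-contours to be fixed small circles around $0$ (nested so that the $w_j$-contour contains $\{w_{j+1}+1,\ldots,w_k+1,0\}$ and no other poles — note the pole of $\frac{w_A-w_B}{w_A-w_B-1}$ at $w_A = w_B + 1$ is exactly the image of the pole at $z_A = qz_B$), independent of $\e$, and then checking that the $\e$-dependent pieces of the integrand stay bounded there (the only delicate point being the behavior of $(1 - e^{-\e w_j})^{-N}$ near $w_j = 0$, which is handled because the contour stays a fixed distance from $0$). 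A secondary obstacle is confirming that the residue structure is preserved in the limit: the nested-contour prescription "$w_j$-contour contains $\{w_{j+1}+1,\ldots,w_k+1\}$" must emerge correctly from "$z_j$-contour contains $\{qz_{j+1},\ldots,qz_k\}$" under $z = e^{-\e w}$, which it does since $q z_{j'} = e^{-\e(w_{j'}+1)}$. Once these are in place, matching constants gives the stated formula with the $e^{k\tau/2}$ prefactor and the $\prod e^{\tau w_j}\,dw_j/w_j$ integrand.
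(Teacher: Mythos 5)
Your route---starting from the nested-contour $q$-moment formula $\mu_k=\langle q^{k\lambda_N}\rangle_{\MM_{t=0}}$ of Proposition~\ref{prop8tzero}, substituting $z_j=q^{w_j}$, and taking $\e\to 0$ while appealing to Theorem~\ref{theorem26} for convergence of the left-hand side---is genuinely different from what the paper does, and it runs into exactly the obstacle the paper flags. In the remark following Proposition~\ref{Proposition28FULLtzero} the authors state that ``the above formulas can not be proved from the weak convergence result of Theorem~\ref{theorem26},'' and they instead prove Proposition~\ref{Proposition28} (hence Proposition~\ref{Proposition28OCon}) by working directly in the Whittaker world: the spectral (Baxter-kernel) representation of the Whittaker-process marginals, the difference-operator eigenrelation~(\ref{Whitdiffeqn}), the super-exponential decay bound for $\theta_\tau$ of Proposition~\ref{supexpthetadecay} to justify the $x$-integrations in Section~\ref{sec7.5}, and finally O'Connell's identification Theorem~\ref{OConthm} to pass to polymer moments. (The paper also gives an entirely independent verification via the semi-discrete delta Bose gas, Proposition~\ref{discDBG}.)

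The gap in your argument is the step you tuck behind ``invoke the already-established Theorem~\ref{theorem26intro} combined with uniform tail/moment bounds.'' Weak convergence of the rescaled $q$-Whittaker measure to $\WM{\tau}$ does not give convergence of $\langle q^{k\lambda_N}\rangle$ (after renormalization) to $\langle e^{-kT_{N,N}}\rangle_{\WM{\tau}}$, because the renormalized observable $e^{-kT_{N,N}}$ is unbounded as $T_{N,N}\to-\infty$; one needs uniform-in-$\e$ integrability. The estimate you cite, Theorem~\ref{qwhitconvTHM}, bounds only the $q$-Whittaker function $\psi^\e_\nu(x)$---the $P$-half of the density---and only by polynomial growth in $T$. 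It says nothing about the $Q$-half, whose scaling limit is $\theta_\tau$; the strong decay of $\theta_\tau$ is established only in the limit (Proposition~\ref{supexpthetadecay}), and no uniform pre-limit analogue is proven in the paper. Without control of that half of the density, the interchange of $\e\to 0$ and expectation is not justified, which is precisely why the authors take the direct route. One could imagine closing the gap, e.g.\ by proving uniform boundedness of the renormalized $(2k)$-th $q$-moment via the same contour integrals, but you would have to actually carry that out; nothing in the paper you cite hands it to you.

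Two smaller points. Your worry about the origin of $e^{k\tau/2}$ is a red herring: $(q-1)\gamma=(e^{-\e}-1)\tau\e^{-2}=-\tau\e^{-1}+\tfrac{\tau}{2}+O(\e)$, so each factor $\exp\{(q-1)\gamma z_j\}$ contributes $e^{-\tau\e^{-1}}e^{\tau w_j}e^{\tau/2}$ in the limit. The quadratic piece you identify (from expanding $e^{-\e w_j}$) comes multiplied by $\e$ and vanishes; there is no extra Jacobian. Also, the integrand in the stated proposition should read $e^{\tau w_j}\,dw_j/w_j^N$ for general $N$ (as in the body's Proposition~\ref{Proposition28OCon} with $a_m\equiv 0$ and in Proposition~\ref{discDBG}); the $dw_j/w_j$ in the introductory version is a typo. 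Your limit computation correctly produces $(\e w_j)^{-N}$ from $(1-z_j)^{-N}$, so you should land on $w_j^{-N}$, not $w_j^{-1}$.
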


An intermediate disorder scaling limit of this result leads to contour integral formulas for moments of the CDRP (see Section~\ref{CDRPintSec} for more general statements).
\begin{proposition}\label{Zkformpropintro}
For $k\geq 1$ and $T>0$,
\begin{equation}\label{Zkformintro}
\langle \mathcal{Z}(T,0)^k\rangle =
\frac{1}{(2\pi \iota)^k} \oint\cdots\oint \prod_{1\le A<B\le k} \frac{z_A-z_B}{z_A-z_B-1}\prod_{j=1}^{k} e^{\frac{T}{2}z_j^2}dz_j,
\end{equation}
where the $z_A$-contour is along $C_A+\iota \R$ for any $C_1>C_2+1>C_3+2>\cdots >C_k+(k-1)$.
\end{proposition}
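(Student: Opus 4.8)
The plan is to obtain the contour integral formula for the moments $\langle \mathcal{Z}(T,0)^k\rangle$ of the continuum directed random polymer as a limit of the corresponding formula for the O'Connell--Yor semi-discrete polymer, which is the preceding (unnumbered) Proposition. That proposition reads
\begin{equation*}
\left\langle \left(\Zsd^{N}_{1}(\tau)\right)^k \right\rangle = \frac{e^{k\tau/2}}{(2\pi \iota)^k} \oint\cdots\oint \prod_{1\le A<B\le k} \frac{w_A-w_B}{w_A-w_B-1}\prod_{j=1}^k e^{\tau w_j}\frac{dw_j}{w_j},
\end{equation*}
with the nested-contour prescription described there. The first step is to recall the intermediate disorder result of Quastel--Moreno (cited in the excerpt as \cite{QM}): under the scaling $\tau=\sqrt{TN}$, and after the deterministic renormalization by $\exp\{N+\tfrac12\sqrt{TN}+\tfrac12 N\log(T/N)\}$ appearing in formula (\ref{LMRintro}), one has $\Zsd^{N}_{1}(\sqrt{TN})$ (suitably normalized) converging in distribution to $\mathcal{Z}(T,0)$. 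So it suffices to prove convergence of the moments, which I would do directly at the level of the contour integrals rather than invoking uniform integrability.

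The second step is the change of variables. In the integral for $\langle(\Zsd^N_1(\sqrt{TN}))^k\rangle$ set $w_j = N^{1/2}T^{-1/2}\, z_j + c_N$ for an appropriate center $c_N$ (roughly $c_N \sim -\sqrt{N/T}$, chosen so that $1/w_j$ and $e^{\tau w_j}$ together with the multiplicative renormalization conspire to produce $e^{(T/2)z_j^2}$ in the limit). Concretely: the factor $\prod_j e^{\tau w_j}\frac{dw_j}{w_j}$ and the prefactor $e^{k\tau/2}$, together with the $k$-th power of the renormalization constant from (\ref{LMRintro}), should, after Taylor-expanding $\log$-type terms to second order around $c_N$, yield $\prod_j e^{\frac{T}{2}z_j^2}\,dz_j$. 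Meanwhile the pairwise factor is scale-covariant in the right way: $\frac{w_A-w_B}{w_A-w_B-1} = \frac{z_A-z_B}{z_A-z_B - N^{-1/2}T^{1/2}} \to \frac{z_A-z_B}{z_A-z_B}$ formally, but one must be careful — the correct statement is that after also rescaling we should get $\frac{z_A-z_B}{z_A-z_B-1}$, so in fact the change of variables must be $w_A - w_B = z_A - z_B$ up to the right constant, i.e. the rescaling is chosen precisely so that the shift ``$1$'' in the denominator is preserved. This forces the scaling $w_j = z_j + c_N$ with an $N$-independent slope on the differences but an $N$-dependent shift; the Gaussian $e^{(T/2)z_j^2}$ then emerges from expanding $\tau w_j = \sqrt{TN}(z_j + c_N)$ to the relevant order together with the $\log$ from the renormalization — getting these bookkeeping constants to match is the one genuinely fiddly computation.

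The third step is to control the contours: the nested contours $\{w_{j+1}+1,\dots,w_k+1,0\}$ for the semi-discrete formula must be shown to deform, in the limit, to the vertical lines $C_A + \iota\R$ with $C_1 > C_2 + 1 > \cdots > C_k + (k-1)$, which is exactly the structure that survives the shift-preserving change of variables; one checks the integrand decays along the vertical directions (the $e^{(T/2)z_j^2}$ factor decays like a Gaussian in the imaginary direction since $\re(z_j^2) = C_j^2 - (\Imag z_j)^2 \to -\infty$) so the integrals converge absolutely and dominated convergence applies. Finally, one verifies the limit integrand is exactly the right-hand side of (\ref{Zkformintro}) and invokes the convergence-in-distribution plus moment-convergence to conclude $\langle \mathcal{Z}(T,0)^k\rangle$ equals this value. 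The main obstacle I anticipate is not any single step but the combination: justifying the interchange of limit and integral (absolute convergence along the rescaled contours, uniformly in $N$) while simultaneously tracking the renormalization constants so that the Gaussian factor and the ``$-1$'' shift in the denominator come out precisely right; an alternative, cleaner route that sidesteps the analytic estimates is to observe that both sides of (\ref{Zkformintro}) are known to solve the same system (the $\delta$-Bose gas / attractive quantum many-body system with delta interaction, per the last section of the paper) with matching initial data, and cite uniqueness — but the limit-transition argument is more in the spirit of this excerpt and is what I would present.
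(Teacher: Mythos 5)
Your primary route---taking the $N\to\infty$ limit of the O'Connell--Yor moment formula---has a gap that the paper itself flags explicitly. You write that since the renormalized semi-discrete partition function converges in distribution to $\mathcal{Z}(T,0)$, it ``suffices to prove convergence of the moments, which I would do directly at the level of the contour integrals rather than invoking uniform integrability.'' But proving that the sequence of contour integrals converges to a limit establishes only $\lim_{N}\bigl\langle(\Zsd^N_1/C)^k\bigr\rangle = (\text{contour integral})$; it does not establish $\langle\mathcal{Z}(T,0)^k\rangle = (\text{contour integral})$. Convergence in distribution together with convergence of the $k$-th moments of the pre-limiting objects does \emph{not} imply that the moment of the limit equals the limit of the moments---that is precisely what uniform integrability (or some direct moment identification for $\mathcal{Z}$ itself) is needed for, and computing the asymptotics of the contour integral cannot substitute for it. The paper says this in so many words at the start of Section~\ref{CDRPintSec}: ``This convergence does not imply convergence of moments\dots\ the limits\dots\ strongly suggest\dots\ formulas for the moments\dots\ By using replica methods we can confirm this fact for some of these formulas.'' So the limit transition in Section~\ref{CDRPintSec} is offered as motivation, not as proof, of (\ref{Zkformintro}).

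The route you offer as an ``alternative, cleaner'' one---both sides solve the attractive delta Bose gas with the same delta initial data, and one concludes by uniqueness---is in fact the paper's actual argument (Section~\ref{CDRPreplica}). Proposition~\ref{ADBGprop} proves rigorously that the right-hand side of (\ref{Zkformintro}) solves the delta Bose gas with $\kappa=1$ and delta initial data, and Remark~\ref{CDRPDBG} identifies $\langle\prod_i\mathcal{Z}_1(T,x_i)\rangle$ as solving the same system. So the roles are reversed relative to your presentation: the contour-integral limit is heuristic and the replica identification carries the weight. You should also be aware that the paper itself flags the second half of this argument---that the $k$-th CDRP moment solves the delta Bose gas for general $k$---as not rigorously established in the literature (it is verified only for $k\le 2$, cf.\ Remark~\ref{BCrem}), so neither route is fully watertight, but the replica route is what the paper commits to.

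Two smaller points. First, the semi-discrete moment formula you quote from the introduction ends in $dw_j/w_j$, but the body version (Proposition~\ref{Proposition28OCon} with all $a_m=0$, or Proposition~\ref{discDBG}) has $dw_j/w_j^{N}$; the $N$-dependence in that factor is what produces the Gaussian after the saddle-point change of variables $w_j=w_c+z_j$ with $w_c=T^{-1/2}N^{1/2}$, so if you start from $dw_j/w_j$ the bookkeeping you describe would not close. Second, your change of variables starts as an $N$-dependent rescaling plus shift before you correct yourself to a pure shift; only the pure shift preserves the $-1$ in the denominators, and this is indeed what the paper does in the proof of Proposition~\ref{prop28CDRP}.
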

Note that since $\mathcal{Z}(T,X)/p(T,X)$ is a stationary process in $X$ \cite{ACQ}, this likewise gives formulas for all values of $X\in \R$.

Let us work out the $k=1$ and $k=2$ formulas explicitly. For $k=1$, the above formula gives $\langle \mathcal{Z}_1(T,X)^k\rangle = (2\pi T)^{-1/2}$ which matches $p(T,0)$ as one expects. When $k=2$ we have (see Remark \ref{BCrem})
\begin{equation*}
\langle \mathcal{Z}_1(T,0)^2\rangle = \frac{1}{2\pi T}\left(1+ \sqrt{\pi T} e^{\frac{T}{4}} \Phi(\sqrt{T/2})\right),
\end{equation*}
where
\begin{equation*}
\Phi(s) = \frac{1}{\sqrt{2\pi}} \int_{-\infty}^{s} e^{-t^2/2}dt.
\end{equation*}
This formula for $k=2$ matches formula (2.27) of \cite{BC} where it was rigorously derived via local time calculations.

\section{A contour integral ansatz for some quantum many body systems}

Moments of the exactly solvable polymer models studied above solve certain quantum many body systems with delta interactions. This fact is the basis for the replica approach employed in this area since the work of Kardar \cite{K}. Let us focus on the moments of the CDRP (details and discussion are in Section~\ref{CDRPreplica}, whereas Section~\ref{replicaSemidisc} deals with the O'Connell-Yor semi-discrete polymer).

Let $\Weyl{N}=\{X_1<X_2<\cdots<X_N\}$ be the Weyl chamber. We say that a function $u:\Weyl{N}\times \Rplus\to \R$ solves the delta Bose gas with coupling constant $\kappa\in \R$ and delta initial data if:
\begin{itemize}
\item For $X\in  \Weyl{N}$,
\begin{equation*}
\partial_T u = \tfrac{1}{2}\Delta u,
\end{equation*}
\item On the boundary of $\Weyl{N}$,
\begin{equation*}
(\partial_{X_{i}}-\partial_{X_{i+1}}-\kappa)u \big\vert_{X_{i+1}=X_{i}+0} = 0,
\end{equation*}
\item and for any $f\in L^{2}(\Weyl{N})\cap C_b(\overline{\Weyl{N}})$, as $t\to 0$
\begin{equation*}
N! \int_{\Weyl{N}} f(x) u(X;t) dX \to f(0).
\end{equation*}
\end{itemize}
When $\kappa>0$ this is called the {\it attractive} case, whereas when $\kappa<0$ this is the {\it repulsive} case. 

Note that the boundary condition is often included in PDE so as to appear as
\begin{equation*}
\partial_T u = \tfrac{1}{2}\Delta u + \tfrac{1}{2} \kappa \sum_{i\neq j} \delta(X_i-X_j)u.
\end{equation*}

The relevance of the delta Bose gas for the CDRP is that
\begin{equation*}
u(X;T) = \left\langle \prod_{i=1}^{N} \mathcal{Z}(T,X_i) \right\rangle
\end{equation*}
solves the attractive delta Bose gas with coupling constant $\kappa=1$ (attractive) and delta initial data.

Inspired by the simplicity of Proposition~\ref{Zkformpropintro} which gives $u(X;T)$ when $X_i\equiv 0$, we propose and verify the following contour integral ansatz for the solution to this many body problem.
\begin{proposition}\label{ADBGpropintro}
Fix $N\geq 1$. The solution to the delta Bose gas with coupling constant $\kappa\in \R$ and delta initial data can be written as
\begin{equation}\label{ADBGeqnintro}
u(X;T) = \frac{1}{(2\pi \iota)^N} \oint\cdots\oint \prod_{1\leq A<B\leq N} \frac{z_A-z_B}{z_A-z_B-\kappa}e^{\frac{T}{2}\sum_{j=1}^{N} z_j^2 + \sum_{j=1}^{N}X_j z_j}\prod_{j=1}^{N} dz_j,
\end{equation}
where the $z_j$-contour is along $\alpha_j+\iota \R$ for any $\alpha_1>\alpha_2+\kappa>\alpha_3+2\kappa>\cdots >\alpha_N+(N-1)\kappa$.
\end{proposition}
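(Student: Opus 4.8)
The plan is to verify directly that the right-hand side of \eqref{ADBGeqnintro} satisfies the three defining properties of the delta Bose gas, rather than attempting to re-derive it from the polymer moment formulas. Write $u(X;T)$ for the contour integral. That $u$ solves the free heat equation $\partial_T u = \tfrac12 \Delta u$ for $X$ in the interior of $\Weyl{N}$ is immediate from differentiation under the integral sign: each application of $\partial_{X_j}$ brings down $z_j$, each $\partial_T$ brings down $\tfrac12 \sum z_j^2$, and since the contours $\alpha_j + \iota \R$ are fixed away from the poles $z_A - z_B = \kappa$, these manipulations are justified by absolute convergence of the Gaussian factor $e^{\frac T2 \sum z_j^2}$ along vertical lines (note $\Real(z_j^2) = \alpha_j^2 - (\Imag z_j)^2 \to -\infty$).

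The heart of the argument is the boundary condition. I would check $(\partial_{X_i} - \partial_{X_{i+1}} - \kappa) u\big\vert_{X_{i+1} = X_i + 0} = 0$ by exploiting the structure of the kernel. Applying the operator $\partial_{X_i} - \partial_{X_{i+1}} - \kappa$ pulls out the factor $(z_i - z_{i+1} - \kappa)$, which exactly cancels the denominator $z_i - z_{i+1} - \kappa$ appearing in the product $\prod_{A<B}\tfrac{z_A - z_B}{z_A - z_B - \kappa}$ for the pair $(A,B) = (i,i+1)$. After this cancellation the integrand is regular in the $z_i$--$z_{i+1}$ variables across that wall, so on the locus $X_{i+1} = X_i$ one may freely deform the $z_i$ and $z_{i+1}$ contours to coincide (the only obstruction would have been the pole at $z_i = z_{i+1} + \kappa$, now absent); the remaining integrand is then antisymmetric under $z_i \leftrightarrow z_{i+1}$ because the surviving factor $(z_i - z_{i+1})$ from the numerator is odd while the rest of the product, the Gaussian, and $e^{X_i(z_i + z_{i+1})}$ are all symmetric. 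An integral of an odd function over a symmetric pair of identical contours vanishes, giving the boundary identity. Here one must be slightly careful about which contour is deformed onto which and in which order the limit $X_{i+1}\downarrow X_i$ is taken relative to the contour merge; the ordering condition $\alpha_1 > \alpha_2 + \kappa > \cdots$ is precisely what guarantees the intermediate deformations do not cross other poles $z_A - z_B = \kappa$.

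For the initial data, the strategy is to show that $N! \int_{\Weyl{N}} f(X) u(X;T)\, dX \to f(0)$ as $T \to 0$ for $f \in L^2(\Weyl{N}) \cap C_b(\overline{\Weyl{N}})$. I would symmetrize: extend the test against $\tfrac{1}{N!}\sum_{\sigma} f(X_{\sigma})$ over all of $\R^N$, noting that the symmetrization of $u$ (summing the contour integral over permutations of the $z_j$ and collapsing all contours to a common vertical line, picking up residues at the walls $z_A - z_B = \kappa$ — this is a standard Bethe-type contour-shift computation) produces a Gaussian-type kernel whose $T \to 0$ limit is a delta mass at the origin. Equivalently, one can observe that $e^{\frac T2 \sum z_j^2}$ is, up to Fourier transform on the vertical lines, the heat kernel, so that $u(X;T)$ is an explicit (bounded-below-by-a-Gaussian) approximate identity; the cross terms from the $\prod \tfrac{z_A-z_B}{z_A-z_B-\kappa}$ factors contribute lower-order corrections in $T$. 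The main obstacle I anticipate is exactly this initial-data verification: making the contour collapse and the attendant residue bookkeeping rigorous, and controlling the error terms uniformly enough to extract the clean limit $f(0)$, is more delicate than the heat equation and boundary checks, both of which are essentially algebraic. One clean route, if available from earlier in the paper, is to invoke uniqueness for the delta Bose gas with delta initial data in the relevant function class, so that matching the two algebraic properties plus a crude a priori bound on $u$ suffices; absent that, the residue expansion is the way to go.
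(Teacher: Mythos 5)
Your verification of the heat equation and the boundary condition is exactly the paper's argument: differentiation under the integral sign gives the heat equation, and applying $(\partial_{X_i}-\partial_{X_{i+1}}-\kappa)$ produces the factor $(z_i-z_{i+1}-\kappa)$ that kills the pole, after which the contours for $z_i$ and $z_{i+1}$ can be merged and the remaining antisymmetry in $z_i\leftrightarrow z_{i+1}$ forces the integral to vanish; the paper packages this cancellation-plus-antisymmetry step as a separate lemma (Lemma~\ref{switchcontours}) and simply cites it.

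For the initial condition, though, you have a gap, and you acknowledge it. The route you sketch---symmetrize, collapse all contours to a common vertical line, and track residues at the walls $z_A-z_B=\kappa$---would re-derive the full Bethe-ansatz eigenfunction expansion, which is both harder than necessary and precisely the machinery this proposition was designed to sidestep. You also note that a uniqueness theorem for the delta Bose gas would let you skip this check, but the paper does not invoke one; it verifies the initial data directly. The paper's decomposition is more elementary. Write
\begin{equation*}
\prod_{1\le A<B\le N}\frac{z_A-z_B}{z_A-z_B-\kappa}=\prod_{1\le A<B\le N}\left(1+\frac{\kappa}{z_A-z_B-\kappa}\right)
\end{equation*}
and expand the product additively into $2^{\binom{N}{2}}$ terms $u^K$ indexed by subsets $K\subseteq\{(A,B):A<B\}$. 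The $K=\emptyset$ term has no poles at all, so its contours deform freely to $\iota\R$; it is the free Gaussian semigroup, and Fourier inversion gives $f(0)$ in the limit $T\to 0$. For each nonempty $K$ with $k=|K|\ge 1$, one shifts the $z_j$-contour to $-T^{-1/2}v_j+\iota\R$ with $v=X/\|X\|$ (permissible for small $T$), substitutes $w_j=T^{1/2}z_j$, and reads off the bound $|u^K(X;T)|\le C\,T^{k/2}\,T^{-N/2}e^{-T^{-1/2}\|X\|}$. Integrating against a bounded $f$ over the Weyl chamber then yields $O(T^{k/2})\to 0$ after the obvious change of variables $y=T^{-1/2}X$ absorbs the $T^{-N/2}$. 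No contour collapse, no residue bookkeeping: only the trivial $K=\emptyset$ term survives. I'd recommend replacing your sketch with this additive decomposition; it closes the gap cleanly and is shorter than what you were attempting.
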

The proof of this result (given in Section~\ref{CDRPreplica}) is straightforward. In particular, it is easy to check that as $T\to 0$ this provides the correct delta initial data.

One should note that the above proposition deals with both the $\kappa>0$ (attractive) and $\kappa<0$ (repulsive) delta Bose gas. Looking in the arguments of Section 4 of \cite{HeckOp} (where they were proving the Plancherel theory for the delta Bose gas) it is possible to extract the formula of the above proposition. This type of formula reveals a symmetry between the two cases which is not present in the eigenfunctions.

An alternative and much earlier taken approach to solving the delta Bose gas is by demonstrating a complete basis of eigenfunctions (and normalizations) which diagonalize the Hamiltonian and respect the boundary condition. The eigenfunctions were written down in 1963 for the repulsive delta interaction by Lieb and Liniger \cite{LL} by Bethe ansatz. Completeness was proved by Dorlas \cite{Dorlas} on $[0,1]$ and by Heckman and Opdam \cite{HeckOp} and then recently by Prolhac and Spohn (using formulas of Tracy and Widom \cite{TWBose}) on $\R$ (as we are considering presently). For the attractive case, McGuire \cite{McGuire} wrote the eigenfunctions in terms of {\it string states} in 1964. As opposed to the repulsive case, the attractive case eigenfunctions are much more involved and are not limited to bound state eigenfunctions (hence a lack of symmetry with respect to the eigenfunctions). The norms of these states were not derived until 2007 in \cite{CalCaux} using ideas of algebraic Bethe ansatz. Dotsenko \cite{Dot} later worked these norms out very explicitly through combinatorial means. Completeness in the attractive case was shown by Oxford \cite{Oxford}, and then by Heckman and Opdam \cite{HeckOp}, and recently by Prolhac and Spohn \cite{ProSpoComp}.

The work \cite{LL,TWBose,Dot,CDR,ProSpoComp} provide formulas for the propagators (i.e., transition probabilities) for the delta Bose gas with general initial data. These formulas involve either summations over eigenstates or over the permutation group. In the repulsive case it is fairly easy to see how the formula of Proposition~\ref{ADBGpropintro} is recovered from these formulas.

For the attractive case we can use a degeneration of the identity given in (\ref{mukintro}) to turn the moment formulas of Proposition~\ref{ADBGpropintro} into the formulas given explicitly in Dotsenko's work \cite{Dot}. The reason why the symmetry, which is apparent in Proposition~\ref{ADBGpropintro}, is lost in the eigenfunction expansion is due to the constraint on the contours. In the repulsive case $\kappa<0$ and the contours are given by having the $z_j$-contour along $\alpha_j+\iota \R$ for any $\alpha_1>\alpha_2+\kappa>\alpha_3+2\kappa>\cdots >\alpha_N+(N-1)\kappa$. The contours, therefore, can be taken to be all the same. It is an easy calculation to turn the Bethe ansatz eigenfunction expansion into the contour integral formula we provide. The attractive case leads to contours which are not the same. In making the contours coincide we encounter a sizable number of poles which introduces many new terms which agrees with the fact that there are many other eigenfunctions coming from the Bethe ansatz in this case.

The ansatz applies in greater generality (revealing the role of each part of the contour integrals, see Remark \ref{endansatzcomment}) and is useful in solving many body systems which arise in the study of other polymers (e.g., for semi-discrete polymers such as the O'Connell-Yor or discrete parabolic Anderson models see Section~\ref{replicaSemidisc}).

\begin{figure}
\begin{center}
\includegraphics{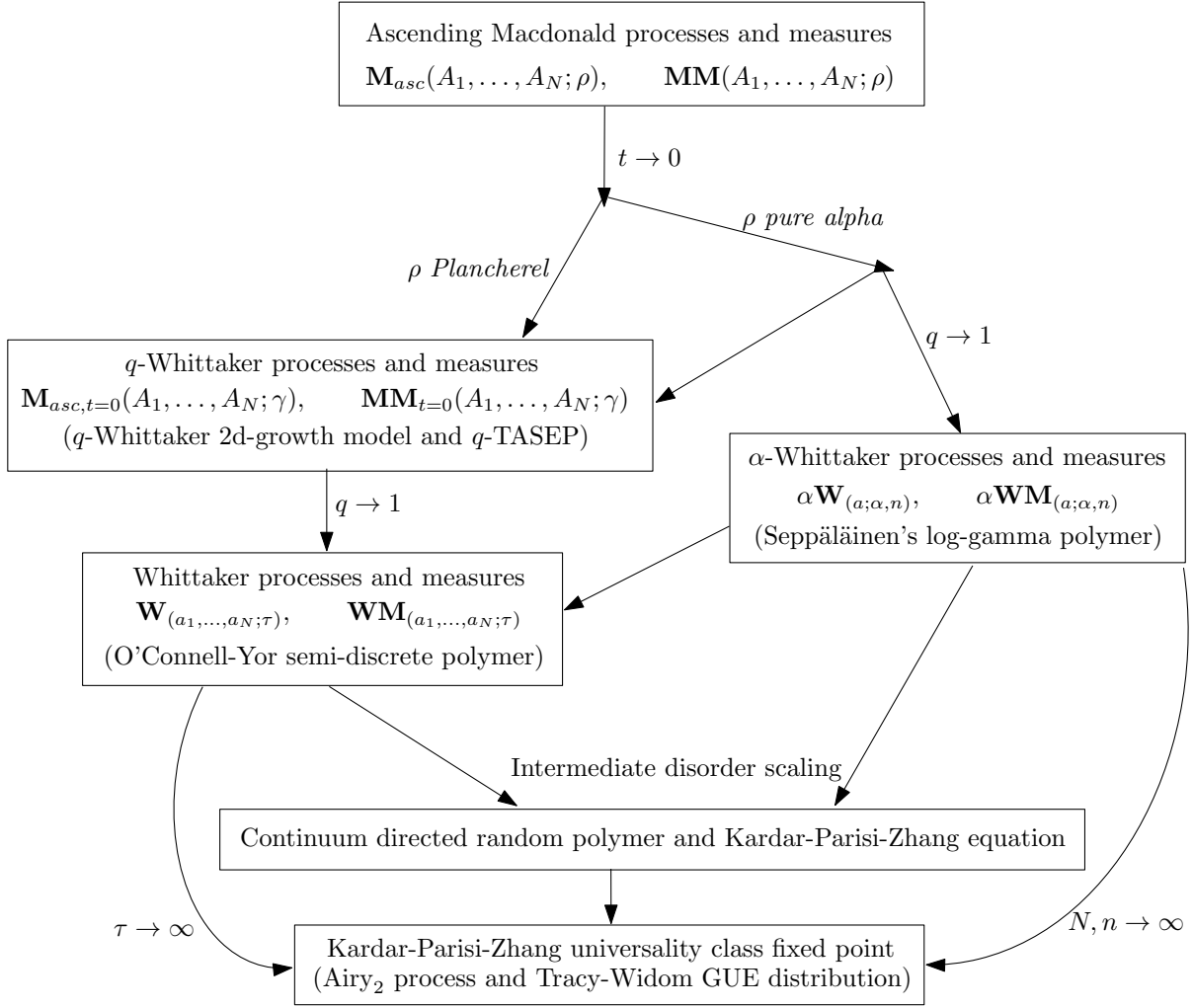}
\end{center}
\caption{A flowchart for Macdonald processes and some of their specializations and limits. Taking $t\to 0$ we focus on two specializations: {\it Plancherel} (primarily) and {\it pure alpha}. The pure alpha case degenerates to the Plancherel as the number of alpha variables grows to infinity. In the Plancherel case we define $q$-Whittaker process and measures. Natural dynamics on Gelfand Tsetlin patterns which preserve these processes are given by the $q$-Whittaker 2d-growth model. A marginal of these dynamics is the $q$-TASEP. Taking $q\to 0$ yields the Whittaker processes (from the Plancherel specialization) and the $\alpha$-Whittaker processes (from the pure alpha specialization). Again, taking $n\to \infty$ takes the $\alpha$-Whittaker processes to the Whittaker processes. These processes now encode the partition functions of directed polymers -- the O'Connell-Yor semi-discrete polymer and Sepp\"{a}l\"{a}inen's log-gamma polymer (respectively). There are two natural scaling limits for these polymers. The first is the intermediate disorder scaling in which the polymer inverse temperature is taken to zero as the other parameters go to infinity. In both cases, the polymer converge to the continuum directed random polymer, whose free energy is the solution to the Kardar-Parisi-Zhang stochastic PDE. The other scaling limit is the strong disorder scaling in which inverse temperature is fixed and positive and the other parameters are taken to infinity. Now the polymer free energy converges to the Tracy-Widom GUE statistics of the Kardar-Parisi-Zhang universality class fixed point.}\label{flowchart}
\end{figure}

\section{Further developments}
Since this work was posted we have developed a few of the directions of research coming from the theory of Macdonald processes. We briefly recount the main points of these developments below.

In joint work with Patrik Ferrari \cite{BorCorFer} we consider two models for directed polymers in space-time independent random media (the O'Connell-Yor semi-discrete directed polymer and the continuum directed random polymer) at positive temperature and prove their KPZ universality via asymptotic analysis of exact Fredholm determinant formulas for the Laplace transform of their partition functions. In particular, we show that for large time $\tau$, the probability distributions for the free energy fluctuations, when rescaled by $\tau^{1/3}$, converges to the GUE Tracy-Widom distribution. This completes Theorem \ref{TWasymptoticskappaTHM} to all $\kappa>0$.

We also consider the effect of boundary perturbations to the quenched random media on the limiting free energy statistics. For the semi-discrete directed polymer, when the drifts of a finite number of the Brownian motions forming the quenched random media are critically tuned, the statistics are instead governed by the limiting Baik-Ben Arous-P\'{e}ch\'{e} distributions \cite{BBP} from spiked random matrix theory. For the continuum polymer, the boundary perturbations correspond to choosing the initial data for the stochastic heat equation from a particular class, and likewise for its logarithm -- the KPZ equation. The Laplace transform formula we prove can be inverted to give the one-point probability distribution of the solution to these stochastic PDEs for the class of initial data.

In joint work with Patrik Ferrari and Balint Veto \cite{BorCorFerVet} we further extend the class of initial data for which we can compute exact statistics so as to include the equilibrium initial data for the KPZ equation. That is to say, we are able to exactly characterize the distribution of the solution to the KPZ equation when started with a two-sided Brownian motion and prove that as $t$ goes to infinity and under $t^{1/3}$ scaling, the one-point probability distribution converges to the $F_0$ distribution of Baik and Rains. The $t^{1/3}$ scaling for fluctuations associated with this initial data was previously shown in \cite{BQS}.

In joint work with Daniel Remenik \cite{BorCorRem} we prove that under $n^{1/3}$ scaling, the limiting distribution as $n\to \infty$ of the free energy of Sepp\"{a}l\"{a}inen's log-Gamma discrete directed polymer is GUE Tracy-Widom. The main technical innovation we provide is a general identity between a class of $n$-fold contour integrals and a class of Fredholm determinants. Applying this identity to the integral formula proved in \cite{COSZ} for the Laplace transform of the log-Gamma polymer partition function, we arrive at a Fredholm determinant which lends itself to asymptotic analysis (and thus yields the free energy limit theorem). The Fredholm determinant was anticipated via the formalism of Macdonald processes yet its rigorous proof was so far lacking because of the nontriviality of certain decay estimates required by that approach (see Theorem \ref{alphalimitthm}).

In joint work with Tomohiro Sasamoto \cite{BorCorSas} we prove duality relations for two interacting particle systems: $q$-TASEP and ASEP. Expectations of the duality functionals correspond to certain joint moments of particle locations or integrated currents, respectively. Duality implies that they solve systems of ODEs. These systems are integrable and for particular step and half stationary initial data we use a nested contour integral ansatz to provide explicit formulas for the systems' solutions and hence also the moments.

We form Laplace transform like generating functions of these moments and via residue calculus we compute two different types of Fredholm determinant formulas for such generating functions. For ASEP, the first type of formula is new and readily lends itself to asymptotic analysis (as necessary to reprove GUE Tracy-Widom distribution fluctuations for ASEP), while the second type of formula is recognizable as closely related to Tracy and Widom's ASEP formula \cite{TW1}-\cite{TW4}. For $q$-TASEP both formulas coincide with those computed via the theory of Macdonald processes (in particular see Section \ref{qWhitSecFormulas} and \ref{qTASEPandthe} below).

Both $q$-TASEP and ASEP have limit transitions to the free energy of the continuum directed polymer, the logarithm of the solution of the stochastic heat equation, or the Hopf-Cole solution to the Kardar-Parisi-Zhang equation. Thus, $q$-TASEP and ASEP are integrable discretizations of these continuum objects; the systems of ODEs associated to their dualities are deformed discrete quantum delta Bose gases; and the procedure through which we pass from expectations of their duality functionals to characterizing generating functions is a rigorous version of the replica trick in physics.

In joint work with Vadim Gorin and Shamil Shakirov \cite{BorCorGorSha} we extend many of the algebraic results developed here. In particular, using a certain operator diagonalized by the Macdonald polynomials, we generalize Theorem \ref{PlancherelfredThmintro} by showing that even when $t\neq 0$, the expectation of a certain observable of the Macdonald process is written in terms of a similar type of Fredholm determinant formula. We also extend the moment computation of Section \ref{difopSEC} to multilevel moment formulas. As a consequence we can compute joint moments for $\{q^{\lambda^{N_i}_{N_i}}\}$ as $i$ varies (when the Macdonald parameter $t=0$). Via the connection to $q$-TASEP, this provides a purely algebraic derivation of the multipoint formulas for $q$-TASEP found in \cite{BorCorSas} via duality.

\chapter{Macdonald processes}

\section{Definition and properties of Macdonald symmetric functions}\label{MacDefSEC}

So as to make our work self-contained we provide a brief review of all of the properties and concepts related to Macdonald symmetric functions which we will be utilizing. Our main reference for this material is the book of Macdonald \cite{M}.

\subsection{Partitions, Young diagrams and tableaux}
A {\it partition} \index{partition} is a sequence $\lambda = (\lambda_1,\lambda_2,\ldots)$ \glossary{$\lambda$} of nonnegative integers such that $\lambda_1\geq~\lambda_2\geq~\cdots$. The {\it length} \index{partition!length} $\ell(\lambda)$ is the number of non-zero $\lambda_i$ and the {\it weight} \index{partition!weight} $|\lambda|=\lambda_1+\lambda_2+\cdots$. If $|\lambda|=n$ then {\it $\lambda$ partitions $n$}. An alternative notation is $\lambda = 1^{m_1}2^{m_2}\cdots$ where $m_i$ represents the multiplicity \index{partition!multiplicity} of $i$ in the partition $\lambda$. The natural (partial) ordering on the space of partitions is called the {\it dominance order} \index{partition!partial ordering} and is given by $\lambda\geq \mu$ if and only if
\begin{equation*}
\lambda_1+\cdots +\lambda_i \geq \mu_1+\cdots +\mu_i\qquad \textrm{ for all } i\geq 1.
\end{equation*}

A partition $\lambda$ can be graphically represented as a {\it Young diagram} \index{Young diagram} with $\lambda_1$ left justified boxes in the top row, $\lambda_2$ in the second row, and so on. Thus $m_i$ represents the number of rows with exactly $i$ boxes. Define $\Y$ \glossary{$\Y$} to be the set of all partitions. The transpose of a Young diagram is denoted $\lambda'$ and defined by the property $\lambda_i' = |\{j:\lambda_j\geq i\}|$. Some of these concepts are illustrated in Figure \ref{diagramfig}.

Given two diagrams $\lambda$ and $\mu$ such that $\lambda\supset \mu$ (as a set of boxes), we call the set-difference $\theta = \lambda -\mu$ a {\it skew Young diagram}\index{Young diagram!skew}. A skew Young diagram $\theta$ is a {\it horizontal $m$ strip}\index{Young diagram!horizontal $m$ strip} if $|\theta|=m$ and if in each column, $\theta$ has at most one box.
A {\it column-strict (skew) Young tableaux}\index{Young tableaux!column-strict} $T$ is a sequence of partitions $\lambda^{(i)}$:
\begin{equation}\label{tableaux}
\mu = \lambda^{(0)} \subset \lambda^{(1)} \subset \cdots \subset \lambda^{(r)} = \lambda
\end{equation}
such that $\theta^{(i)} = \lambda^{(i)}-\lambda^{(i-1)}$ are all horizontal strips (For such a pair of partitions we also write $\lambda^{(i-1)}\prec \lambda^{(i)}$ \glossary{$\prec$}). If $\mu=\varnothing$ then this is also known of as a {\it semi-standard Young tableaux}\index{Young tableaux!semi-standard} and if one fills each skew diagram $\theta^{(i)}$ with the label $i$, then these numbers must be strictly increasing in each column and weakly increasing in each row. The {\it shape}\index{Young tableaux!shape} of $T$ is $\lambda-\mu$, and $(|\theta^{(1)}|,|\theta^{(2)}|,\ldots, |\theta^{(r)}|)$ is the {\it weight}\index{Young tableaux!weight} of $T$. One may likewise define a {\it vertical $m$ strip}\index{Young diagram!vertical $m$ strip} and {\it row-strict (skew) Young tableaux}\index{Young tableaux!row-strict} by transposing the role of rows and columns.

We occasionally will use the notation $\lambda \cup \square_k$ \glossary{$\lambda \cup \square_k$} which is the Young diagram formed by appending an additional square to row $k$ of $\lambda$.

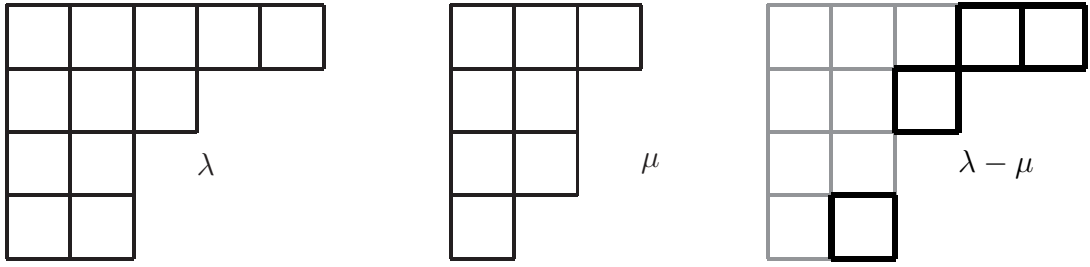
\begin{figure}

\setlength{\unitlength}{1.2pt}
\begin{picture}(200,100)(-30,0)
\linethickness{1pt}

\put(0,10){\line(1,0){40}}
\put(0,30){\line(1,0){40}}
\put(0,50){\line(1,0){60}}
\put(0,70){\line(1,0){100}}
\put(0,90){\line(1,0){100}}

\put(0,10){\line(0,1){80}}
\put(20,10){\line(0,1){80}}
\put(40,10){\line(0,1){80}}
\put(60,50){\line(0,1){40}}
\put(80,70){\line(0,1){20}}
\put(100,70){\line(0,1){20}}

\put(60,40){\makebox(0,0)[l]{$\lambda$}}

\put(140,10){\line(1,0){20}}
\put(140,30){\line(1,0){40}}
\put(140,50){\line(1,0){40}}
\put(140,70){\line(1,0){60}}
\put(140,90){\line(1,0){60}}

\put(140,10){\line(0,1){80}}
\put(160,10){\line(0,1){80}}
\put(180,30){\line(0,1){60}}
\put(200,70){\line(0,1){20}}

\put(200,40){\makebox(0,0)[l]{$\mu$}}

\color{Gray}
\linethickness{1pt}
\put(240,10){\line(1,0){40}}
\put(240,30){\line(1,0){40}}
\put(240,50){\line(1,0){60}}
\put(240,70){\line(1,0){100}}
\put(240,90){\line(1,0){100}}

\put(240,10){\line(0,1){80}}
\put(260,10){\line(0,1){80}}
\put(280,10){\line(0,1){80}}
\put(300,50){\line(0,1){40}}
\put(320,70){\line(0,1){20}}
\put(340,70){\line(0,1){20}}

\color{black}
\linethickness{2pt}
\put(260,10){\line(1,0){20}}
\put(260,10){\line(0,1){20}}
\put(260,30){\line(1,0){20}}
\put(280,10){\line(0,1){20}}

\put(280,50){\line(1,0){20}}
\put(280,50){\line(0,1){20}}
\put(280,70){\line(1,0){20}}
\put(300,50){\line(0,1){20}}

\put(300,70){\line(1,0){40}}
\put(300,70){\line(0,1){20}}
\put(300,90){\line(1,0){40}}
\put(320,70){\line(0,1){20}}
\put(340,70){\line(0,1){20}}

\put(300,40){\makebox(0,0)[l]{$\lambda-\mu$}}

\end{picture}
\caption{The Young diagram $\lambda=(5,3,2,2)$ and its transpose (not shown) $\lambda'=(4,4,2,1,1)$. The length $\ell(\lambda)=4$ and weight $|\lambda|=12$. The Young diagram $\mu=(3,2,2,1)$ is such that $\lambda\supset \mu$. The skew Young diagram $\lambda - \mu$ is shown in black thick lines and is a horizontal 4-strip.}\label{diagramfig}
\end{figure}

\subsection{Symmetric functions}
The ring of polynomials in $n$ independent variables with rational coefficients is denoted $\Q[x_1,\ldots, x_n]$. The symmetric group $S_n$ \glossary{$S_n$} acts on such polynomials by permuting the variable labels. A {\it symmetric polynomial} \index{symmetric polynomial} is any polynomial in $\Q[x_1,\ldots, x_n]$ which is invariant under this action. The symmetric polynomials form a subring and additionally have the structure of a graded ring
\begin{equation*}
\Sym_n = \Q[x_1,\ldots, x_n]^{S_n} = \bigoplus_{k\geq 0} \Sym_n^k
\end{equation*}
where $\Sym_n^k$ consists of all homogeneous symmetric polynomials of degree $k$.

For $\alpha = (\alpha_1,\ldots,\alpha_n)\in \Zgeqzero^n$ we write $x^{\alpha} = x_1^{\alpha_1}\cdots x_n^{\alpha_n}$. A permutation $\pi\in S_n$ acts on a vector $\alpha$ by permuting its indices. For any partition $\lambda$ such that $\ell(\lambda)\leq n$, the {\it monomial symmetric polynomial}\index{symmetric polynomial!monomial} $m_{\lambda}$\glossary{$m_{\lambda}$} is defined as
\begin{equation*}
m_{\lambda}(x_1,\ldots, x_n) = \sum x^{\pi (\lambda)},
\end{equation*}
where the summation is over all $\pi\in S_n$ yielding unique monomials $x^{\pi(\lambda)}$. The collection of $m_{\lambda}$, for $\lambda$ running over all partitions of length less than or equal to $n$, form a $\Q$-basis for $\Sym_n$. Restricting to $\lambda$ such that $|\lambda|=k$, the $m_{\lambda}$ form a $\Q$-basis of $\Sym_n^k$.

It is often more convenient to work with an infinite number of independent variables. This is achieved \cite[I.2]{M} in the following way: for $m\geq n$, $\rho_{m,n}$ is a homomorphism from $\Sym_m$ to $\Sym_n$ (here $m\geq n$) defined by taking a symmetric polynomial $f(x_1,\ldots,x_n,\ldots, x_m)$ to $f(x_1,\ldots,x_n,0,\ldots,0)$. Restricted to the monomial symmetric polynomial basis, $\rho_{m,n}$ takes $m_{\lambda}(x_1,\ldots,x_m)$ to $m_{\lambda}(x_1,\ldots,x_n)$ if $\ell(\lambda)\leq n$ and to 0 otherwise. The homomorphism is surjective, as is its restriction $\rho^{k}_{m,n}$ from $\Sym^k_m$ to $\Sym^k_{n}$.

We form the inverse limit $\Sym^k = \lim_{\leftarrow} \Sym^k_n$ of the $\Q$-modules $\Sym^k_n$ relative to the homomorphisms $\rho^k_{m,n}$. $\Sym^k$ has a $\Q$-basis consisting of the {\it monomial symmetric functions} $m_{\lambda}$ (for all $\lambda$ such that $|\lambda|=k$) defined by $\rho^k_{n}(m_{\lambda}) = m_{\lambda}(x_1,\ldots,x_n)$ for all $n\geq k$.

We define $\Sym = \bigoplus_{k\geq 0} \Sym^k$\glossary{$\Sym$} to be the {\it ring of symmetric functions}\index{symmetric functions!ring} in countably many independent variables $x_1,x_2,\ldots$. There exist surjective homomorphisms $\rho_n$ from $\Sym$ to $\Sym_n$ which act by restricting $m_{\lambda}$ to $m_{\lambda}(x_1,\ldots, x_n)$ if $|\lambda|\leq n$, or otherwise zero. These are ring homomorphisms and thus give $\Sym$ the structure of a graded ring. We will use the term {\it symmetric function} when dealing with elements of $\Sym$ (as they are not, in fact, polynomials but rather formal infinite sums of monomials) and {\it polynomial} when dealing with elements of $\Sym_n$ for some $n$.

Besides the monomial symmetric functions \index{symmetric functions!monomial}, there exist a number of other bases for $\Sym$. The most simple among these are:
\begin{itemize}
\item The {\it elementary symmetric functions}\index{symmetric functions!elementary} $e_r:= m_{1^{r}}$ (recall that $1^r$ represents the partition $(1,1,\ldots, 1)$ with $r$ entries). Then $\Sym=\textrm{span}(e_{\lambda}|\lambda\in \Y)$ with $e_{\lambda} = e_{\lambda_1}e_{\lambda_2} \cdots$\glossary{$e_\lambda$}.
\item The {\it complete homogeneous symmetric functions}\index{symmetric functions!complete homogeneous} $h_r:=\sum_{|\lambda|=r} m_{\lambda}$. Then setting $h_{\lambda} = h_{\lambda_1}h_{\lambda_2} \cdots$ \glossary{$h_\lambda$}, $\Sym=\textrm{span}(h_{\lambda}|\lambda\in \Y)$.
\item The {\it power sum symmetric functions}\index{symmetric functions!power sum} $p_r:=m_{r}$. Since $r$ represents the partition $\lambda=(r)$ this translates formally into $p_r=\sum x_i^r$. From power sums we form $p_{\lambda}=\prod_{i=1}^{\ell(\lambda)} p_{\lambda_i}$\glossary{$p_\lambda$}. Then $\Sym=\textrm{span}(p_{\lambda}|\lambda\in \Y)$.
\end{itemize}

Less obvious symmetric functions include those of {\it Schur}\index{symmetric functions!Schur}, {\it Hall-Littlewood}\index{symmetric functions!Hall-Littlewood}, and {\it Jack}\index{symmetric functions!Jack}. They are all indexed by partitions and share the characteristic that they can be uniquely defined (via Gram-Schmidt -- although the existence of the needed basis is nontrivial as we use partial order) from the following two properties: (a) They can be expressed in terms of the monomial symmetric functions via a strictly upper unitriangular transition matrix (for instance the Schur functions $s_{\lambda}=m_{\lambda} + \sum_{\mu<\lambda \in\Y} K_{\lambda \mu}m_{\mu}$ where $K_{\lambda \mu}$ are the Kostka numbers); (b) They are pairwise orthogonal with respect to a scalar product under which the power sum symmetric functions are orthogonal. The specific scalar product differs between the functions. For the Schur functions it is defined by $\langle p_{\lambda}, p_{\mu}\rangle = \delta_{\lambda\mu} z_{\lambda}$ where $z_{\lambda} = \prod_{i\geq 1}i^{m_{i}}(m_i)!$ (here $\lambda = 1^{m_1}2^{m_2}\cdots$) and where $\delta_{\lambda \mu}$ is the indicator  function that $\lambda=\mu$. For the Hall-Littlewood functions $z_{\lambda}$ is replaced by $z_{\lambda}(t) = z_{\lambda} \prod_{i=1}^{\ell(\lambda)} (1-t^{\lambda_i-1})$, where $t\in (0,1)$ -- and now the coefficients are in $\Q[t]$. For Jack's symmetric functions $z_{\lambda}$ is replaced by $z_{\lambda}\alpha^{\ell(\lambda)}$ for $\alpha>0$ -- and now the coefficients are in $\Q[\alpha]$.

In fact, the symmetric functions of Schur, Hall-Littlewood, and Jack are all generalized by the symmetric functions of Macdonald \cite[Section VI]{M}.

\subsection{Macdonald symmetric functions}\label{MacSymFnSec}

We now introduce the Macdonald symmetric functions\index{Macdonald symmetric functions}\index{symmetric functions!Macdonald} and identify a number of relevant properties which we will appeal to in our development of the Macdonald process. Macdonald symmetric functions are indexed by partitions and denoted $P_{\lambda}(x;q,t)$ where $q,t\in (0,1)$. The coefficients for the symmetric functions are now in $\Q[q,t]$. We will generally suppress the $q$ and $t$ and write $P_{\lambda}(x)$\glossary{$P_{\lambda}(x)$, $P_{\lambda}$} or $P_{\lambda}$ unless their presence is pertinent. $P_{\lambda}$ can be uniquely defined from the following two properties \cite[VI,(4.7)]{M}: (a) They can be expressed in terms of the monomial symmetric functions via a strictly upper unitriangular transition matrix:
\begin{equation*}
P_{\lambda}=m_{\lambda} + \sum_{\mu<\lambda \in \Y} R_{\lambda \mu}P_{\mu},
\end{equation*}
where $R_{\lambda \mu}$ are functions of $q,t$. (b) They are pairwise orthogonal with respect to a scalar product which can be defined on the power sum symmetric functions (since they form a linear basis for the symmetric functions) via
\begin{equation*}
\langle p_{\lambda},p_{\mu}\rangle = \langle p_{\lambda},p_{\mu}\rangle_{q,t} = \delta_{\lambda \mu}z_{\lambda}(q,t),\qquad z_{\lambda}(q,t)= z_{\lambda} \prod_{i=1}^{\ell(\lambda)}\frac{1-q^{\lambda_i}}{1-t^{\lambda_i}}, \qquad z_{\lambda} = \prod_{i\geq 1}i^{m_{i}}(m_i)!,
\end{equation*}
\index{Macdonald symmetric functions!scalar product}
for $\lambda=1^{m_1}2^{m_2}\cdots$.
Along with $P_{\lambda}$ one defines
\begin{equation*}
Q_{\lambda} = \frac{P_{\lambda}}{\langle P_{\lambda},P_{\lambda}\rangle},
\end{equation*}
\glossary{$Q_{\lambda}$}so that $P_{\lambda}$ and $Q_{\mu}$ are orthonormal.

Specializing $q=t$ recovers the Schur symmetric functions, $q=0$ recovers the Hall-Littlewood symmetric functions with parameter $t$, and taking $q=t^{\alpha}$ with $t\rightarrow 1$ recovers the Jack symmetric functions with parameter $\alpha$.

The complete homogeneous symmetric function $h_{r}$ has a $(q,t)$-analog\index{symmetric functions!complete homogeneous $(q,t)$ analog} which is denoted $g_r=Q_{(r)}$\glossary{$g_r$} and can be expressed as
$g_r = \sum_{|\lambda|=r} z_{\lambda}(q,t)^{-1}p_{\lambda}$ (this is analogous in the sense that $h_{r} = s_{(r)}$). These also form an algebraic basis for $\Sym$ \cite[VI,(2.19)]{M}.

The Macdonald symmetric polynomial\index{Macdonald symmetric functions!polynomial} is defined as the restriction of $P_{\lambda}$ onto a finite number of variables $x_1,\ldots, x_m$. Formally, since $P_{\lambda}\in \Sym^{|\lambda|}$, for any $m\geq 0$, the Macdonald polynomial in $m$ variables is written as $P_{\lambda}(x_1,\ldots, x_m)$ and is the projection of $P_{\lambda}$ into $\Sym^{|\lambda|}_{m}$. If $m<\ell(\lambda)$ then $P_{\lambda}(x_1,\ldots, x_m)=0$.

Macdonald symmetric polynomials demonstrate the following index shift property \cite[VI,(4.17)]{M}:
\begin{equation}\label{macLaurent}
P_{\lambda}(x_1,\ldots, x_{\ell(\lambda)}) = x_1\cdots x_n P_{\mu}(x_1,\ldots,x_{\ell(\lambda)}), \qquad \mu=(\lambda_1-1,\cdots \lambda_{\ell(\lambda)}-1).
\end{equation}
Owing to this property we can extend Macdonald symmetric polynomials so as to be defined for arbitrary integer values of $\lambda_1\geq \lambda_2\geq \cdots \lambda_\ell(\lambda)$ via iterating the above relation. For negative values of $\lambda_j$'s, $P_{\lambda}$ becomes a Laurent polynomial in the $x$'s.

For $u,v\in(0,1)$ define a $\Q[q,t]$-algebra endomorphism  $\omega_{u,v}$\glossary{$\omega_{u,v}$} on $\Sym$ in terms of its action on power sums \cite[VI,(2.14)]{M}:
\begin{equation}\label{omegainv}
\omega_{u,v}(p_r) = (-1)^{r-1} \frac{1-u^r}{1-v^r} p_{r}.
\end{equation}
Clearly $\omega_{v,u}w_{u,v}$ is the identity \index{Macdonald symmetric functions!endomorphism} on $\Sym$. Moreover, $\omega_{u,v}$ acts on Macdonald symmetric functions as \cite[VI,(5.1)]{M}
\begin{equation}\label{omegaonPQ}
\omega_{q,t} P_{\lambda}(x;q,t) = Q_{\lambda'}(x;t,q), \qquad \omega_{q,t} Q_{\lambda}(x;q,t) = P_{\lambda'}(x;t,q).
\end{equation}
This endomorphism takes $g_r$ to $e_r$, the (usual) elementary symmetric functions.

\subsection{Cauchy identity}\label{PiSec}
\index{Macdonald symmetric functions!Cauchy identity}
For any two sequences of independent variables $x_1,x_2,\ldots$ and $y_1,y_2,\ldots$ define
\begin{equation}\label{PiDef1}
\Pi(x;y)=\sum_{\lambda\in \Y} P_{\lambda}(x) Q_{\lambda}(y).
\end{equation}
\glossary{$\Pi(x;y)$}
Then \cite[VI,(2.5)]{M},
\begin{equation}\label{eqn12}
\Pi(x;y)= \prod_{i,j} \frac{(tx_i y_j;q)_{\infty}}{(x_i y_j;q)_{\infty}},
\end{equation}
where $(a;q)_{\infty}=(1-a)(1-aq)(1-aq^2)\cdots$ is the $q$-Pochhammer symbol (see Section~\ref{qSec}). Additionally,
\begin{equation*}
\Pi(x;y)= \exp\left(\sum_{n\geq 1} \frac{1}{n} \frac{1-t^n}{1-q^n} p_n(x)p_n(y)\right).
\end{equation*}
If the $P_{\lambda}$ and $Q_{\lambda}$ are considered as symmetric functions variables $x_1,x_2,\ldots$ and $y_1,y_2,\ldots$ (respectively) then the above identities are as formal powers.  If both sides can be evaluated as absolutely convergent series, then the identities are as numeric equalities. In particular if all but a finite number of the variables are finite (i.e., the case of Macdonald symmetric polynomials), the identities are necessarily numeric equalities. For symmetric functions there exists an extension to the concept of evaluating at an infinite sequence of variables. This is called {\it specializing} the functions and in Section~\ref{NNspecSEC} we will show how the Cauchy identity extends to this context. This will necessitate extending the notation $\Pi(x;y)$ to $\Pi(\rho_1,\rho_2)$ where $\rho_1$ and $\rho_2$ are specializations. When the specialization reduces to evaluation as here, the definition likewise reduces. Thus there should be no ambiguity in our usage of the symbol $\Pi$ in this context.

Note that applying the endomorphism $\omega_{q,t}$ to $\Pi(x;y)$ gives
\begin{equation}\label{invPI}
\omega_{q,t} \Pi(x;y) = \prod_{i,j\geq 1} (1+x_i y_i).
\end{equation}

\subsection{Torus scalar product}\label{torusSec}
\index{Macdonald symmetric functions!torus scalar product}
The Macdonald symmetric polynomials in $N$ independent variables are orthogonal under another scalar product which is called the {\it torus scalar product} and denoted by $\langle \cdot,\cdot \rangle_N'$. It is defined \cite[VI,(9.10)]{M} as
\begin{equation*}
\langle f,g\rangle_N' := \frac{1}{(2\pi \iota)^N N!} \int_{\T^N} f(z)\overline{g(z)} \prod_{i\neq j=1}^{N} \frac{(z_i z_j^{-1};q)_{\infty}}{(tz_i z_j^{-1};q)_{\infty}} \prod_{i=1}^{N}\frac{dz_i}{z_i},
\end{equation*}
\glossary{$\langle f,g\rangle_N'$}
where $\T^N$ is the $N$-fold product of the torus $\{z=e^{2\pi \iota \theta}\}$.

Under this scalar product we can compute $\langle P_{\lambda}, P_{\lambda} \rangle_N'$ where we interpret $P_{\lambda}$ as the Macdonald symmetric polynomial with $N$ variables \cite[VI,(9, Ex 1d)]{M}
\begin{equation}\label{9ex1d}
\langle P_{\lambda}, P_{\lambda} \rangle_N' = \prod_{1\leq i<j} \frac{ (q^{\lambda_i-\lambda_j}t^{j-i};q)_{\infty} (q^{\lambda_i-\lambda_j+1}t^{j-i};q)_{\infty}}{ (q^{\lambda_i-\lambda_j}t^{j-i+1};q)_{\infty} (q^{\lambda_i-\lambda_j+1}t^{j-i-1};q)_{\infty}}.
\end{equation}

It follows from equation (\ref{PiDef1}) that
\begin{equation*}
Q_{\lambda}(x) = \frac{1}{\langle P_{\lambda},P_{\lambda}\rangle_N'} \langle \Pi(\cdot;x),P_{\lambda}(\cdot)\rangle_N'
\end{equation*}
where the dot represents that the inner product is applied to the function which takes $z\mapsto \Pi(z;x)$. Note that above $z$ represents a $N$-vector $(z_1,\ldots,z_N)$.

\subsection{Pieri formulas for Macdonald symmetric functions}\label{pieresec}

Recall that $g_r=Q_{(r)}$ is the $(q,t)$-analog of the complete homogeneous symmetric function, while $e_r$ is the (usual) elementary symmetric function. Since the Macdonald symmetric functions form a basis for $\Sym$ it follows that for each $\mu$ and $r$ there exist constants (depending on $q,t$) $\varphi_{\lambda/\mu}$, $\psi_{\lambda/\mu}$, $\varphi_{\lambda/\mu}'$ and $\psi_{\lambda/\mu}'$ such that:\index{Macdonald symmetric functions!Piere formulas}

\begin{equation}\label{piereEqn}
P_{\mu}g_r =\sum_{\lambda\in \Y} \varphi_{\lambda/\mu}P_{\lambda},\quad
Q_{\mu}g_r =\sum_{\lambda\in \Y} \psi_{\lambda/\mu}Q_{\lambda},\quad
Q_{\mu}e_r =\sum_{\lambda\in \Y} \varphi_{\lambda/\mu}'Q_{\lambda},\quad
P_{\mu}e_r =\sum_{\lambda\in \Y} \psi_{\lambda/\mu}'P_{\lambda}.
\end{equation}

Define $f(u) = (tu;q)_{\infty}/(qu;q)_{\infty}$. The coefficients above have exact formulas as follows \cite[VI,(6.24)]{M}: If $\lambda - \mu$ is a horizontal $r$-strip then
\begin{eqnarray}
\label{piereFormPhi}\varphi_{\lambda/\mu}&=&\prod_{1\le i\le j\le \ell(\lambda)}\frac{f(q^{\lambda_i-\lambda_j}t^{j-i})f(q^{\mu_i-\mu_{j+1}}t^{j-i})}
{f(q^{\lambda_i-\mu_j}t^{j-i})f(q^{\mu_i-\lambda_{j+1}}t^{j-i})},\\
\label{piereFormPsi}\psi_{\lambda/\mu}&=&\prod_{1\le i\le j\le \ell(\mu)}
\frac{f(q^{\mu_i-\mu_j}t^{j-i})f(q^{\lambda_i-\lambda_{j+1}}t^{j-i})}
{f(q^{\lambda_i-\mu_j}t^{j-i})f(q^{\mu_i-\lambda_{j+1}}t^{j-i})},
\end{eqnarray}
\glossary{$\varphi_{\lambda/\mu}$}\glossary{$\psi_{\lambda/\mu}$}\glossary{$\varphi'_{\lambda/\mu}$}\glossary{$\psi'_{\lambda/\mu}$}
otherwise the coefficient is zero; Applying the endomorphism $\omega_{q,t}$ implies that the coefficients $\varphi_{\lambda/\mu}'(q,t) = \varphi_{\lambda/\mu}(t,q)$ and $\psi_{\lambda/\mu}'(q,t) = \psi_{\lambda/\mu}(t,q)$. These coefficients are zero unless $\lambda-\mu$ is a vertical $r$-strip.

The expressions above can be reduced significantly. For example, given $\lambda-\mu$ a vertical $r$-strip,
\begin{equation}\label{piereFormPrime}
\psi'_{\lambda/\mu}=\prod_{\substack{i<j\\ \lambda_i=\mu_i,\lambda_j=\mu_{j}+1}}
\frac{(1-q^{\mu_i-\mu_j}t^{j-i-1})(1-q^{\lambda_i-\lambda_j}t^{j-i+1})}
{(1-q^{\mu_i-\mu_j} t^{j-i})(1-q^{\lambda_i-\lambda_j}t^{j-i})}.
\end{equation}

\subsection{Skew Macdonald symmetric functions}

Similar to equation (\ref{piereEqn}), for two partitions $\mu,\nu$ one can expand the product $P_{\mu}P_{\nu} = \sum_{\lambda\in \Y} f_{\mu\nu}^{\lambda} P_{\lambda}$. By consideration of degree, $f_{\mu\nu}^{\lambda}$ may only be non-zero if $|\lambda|=|\mu|-|\nu|$, $\lambda\supset \mu$ and $\lambda\supset \nu$. When $q=t$ these are called Littlewood-Richardson coefficients\index{Littlewood-Richardson coefficients}; when $q=0$ these are Hall polynomials \index{Hall polynomials} in $t$ \cite[VI,(7.2)]{M}. Alternatively, one can extract these coefficients via $f_{\mu\nu}^{\lambda} = \langle Q_{\lambda},P_{\mu}P_{\nu}\rangle$.

A {\it skew Macdonald symmetric function}\index{Macdonald symmetric functions!skew} is defined as \cite[VI,(7.5,7.6)]{M}
\begin{equation*}
Q_{\lambda/\mu} := \sum_{\nu\in \Y} f_{\mu\nu}^{\lambda} Q_{\nu}, \quad \textrm{so that } \langle Q_{\lambda/\mu},P_{\nu}\rangle = \langle Q_{\lambda},P_{\mu}P_{\nu}\rangle.
\end{equation*}
\glossary{$Q_{\lambda/\mu}$}
By linearity this implies that $\langle Q_{\lambda/\mu},f\rangle = \langle Q_{\lambda},P_{\mu} f\rangle$ for all $f\in \Sym$. These functions are zero unless $\lambda \supset \mu$, in which case $Q_{\lambda/\mu}$ is homogeneous of degree $|\lambda|-|\mu|$.

Likewise, we can define $P_{\lambda/\mu}$ so that $\langle P_{\lambda/\mu},Q_{\nu}\rangle = \langle P_{\lambda},Q_{\mu}Q_{\nu}\rangle$. Owing to the relationship $Q_{\lambda}=\langle~P_{\lambda},P_{\lambda}\rangle^{-1}P_{\lambda}$ it follows that
\begin{equation}\label{PQSKEW}
P_{\lambda/\mu} = \frac{\langle P_{\lambda},P_{\lambda}\rangle}{\langle P_{\mu},P_{\mu}\rangle} Q_{\lambda/\mu}.
\end{equation}
\glossary{$P_{\lambda/\mu}$}
It is possible to express skew Macdonald symmetric functions in terms of the $\varphi$ and $\psi$ of equations (\ref{piereFormPhi}) and (\ref{piereFormPsi}). Specifically let $T$ represent a column-strict (skew) tableaux of shape $\lambda-\mu$ given by a sequence of $\lambda^{(i)}$ as in equation (\ref{tableaux}). Let $\alpha$ be the weight of $T$ (i.e., $\alpha_i = |\lambda^{(i)}-\lambda^{(i-1)}|$) and define $x^T$ as $x^{\alpha}= x_1^{\alpha_1}x_2^{\alpha_2}\cdots$. Then we have the following {\it combinatorial formula}\index{Macdonald symmetric functions!combinatorial formula} expansion \cite[VI,(7.13)]{M}
\begin{equation}\label{7.13}
Q_{\lambda/\mu}(x) = \sum_{T} \varphi_T x^T,\qquad \textrm{where } \varphi_T = \prod_{i\geq 1} \varphi_{\lambda^{(i)}/\lambda^{(i-1)}},
\end{equation}
and the summation is over all $T$ which are column-strict (skew) tableaux of shape $\lambda-\mu$.

Likewise
\begin{equation}\label{7.13'}
P_{\lambda/\mu}(x) = \sum_{T} \psi_T x^T,\qquad \textrm{where } \psi_T = \prod_{i\geq 1} \psi_{\lambda^{(i)}/\lambda^{(i-1)}},
\end{equation}
and the summation is over all $T$ which are column-strict (skew) tableaux of shape $\lambda-\mu$.

If we restrict the Macdonald symmetric functions to Macdonald polynomials in a single variable $x_1$, then the above expansions imply
\begin{equation}\label{7.14}
Q_{\lambda/\mu}(x_1) = \varphi_{\lambda/\mu} x_1^{|\lambda-\mu|},
\end{equation}
if $\lambda-\mu$ is a horizontal strip, and zero otherwise. Likewise
\begin{equation}\label{7.14'}
P_{\lambda/\mu}(x_1) = \psi_{\lambda/\mu} x_1^{|\lambda-\mu|},
\end{equation}
if $\lambda-\mu$ is a horizontal strip, and zero otherwise.

Finally, we recount a few pertinent formulas involving these skew functions \cite[VI.7]{M}:
\begin{eqnarray}
\label{skewformulas1}\sum_{\lambda\in \Y} Q_{\lambda/\mu}(x) P_{\lambda}(y) &=& P_{\mu}(y) \Pi(x;y),\\
\label{skewformulas2}\sum_{\kappa\in \Y}P_{\kappa/\nu}(x)Q_{\kappa/\hat{\nu}}(y)  &=& \Pi(x;y) \sum_{\tau\in \Y} Q_{\nu/\tau}(y)P_{\hat{\nu}/\tau}(x),\\
\label{skewformulas3}\sum_{\nu\in \Y} Q_{\kappa/\nu}(x)Q_{\nu/\tau}(y) &=& Q_{\kappa/\tau}(x,y),\\
\label{skewformulas4}\sum_{\nu\in \Y} P_{\kappa/\nu}(x)P_{\nu/\tau}(y) &=& P_{\kappa/\tau}(x,y).
\end{eqnarray}
With regards to the last equations, the argument $(x,y)$ simply means the union of the two sets of variables. The function therefore is symmetric with respect to any permutation of the variables in this union. A consequence of this is that to evaluate a symmetric function $f$ at $(x,y)$ one can alternatively expand $f$ in terms of power sum symmetric functions and set $p_n(x,y) = p_n(x)+p_n(y)$. In Section~\ref{NNspecSEC} below we consider general specializations of symmetric functions for which one must take this power sum equality as the definition of the union of two specializations.

Similar to equation (\ref{omegaonPQ}), the endomorphism $\omega_{q,t}$ acts on skew Macdonald symmetric functions as \cite[VI,(7.16)]{M}
\begin{equation}\label{omegaSkew}
\omega_{q,t}P_{\lambda/\mu}(x;q,t) = Q_{\lambda'/\mu'}(x;t,q),\qquad \omega_{q,t}Q_{\lambda/\mu}(x;q,t) = P_{\lambda'/\mu'}(x;t,q).
\end{equation}

\section{The Macdonald processes}

\subsection{Macdonald nonnegative specializations of symmetric functions}\label{NNspecSEC}

A {\it specialization}\index{specialization} $\rho$ of $\Sym$ is an algebra homomorphism of $\Sym$ to $\C$. We denote the application of $\rho$ to $f\in\Sym$ as $f(\rho)$. The {\it trivial} specialization $\varnothing$\index{specialization!trivial} takes value 1 at the constant function $1\in\Sym$ and takes value $0$ at any homogeneous $f\in\Sym$ of degree $\ge 1$. For two specializations $\rho_1$ and $\rho_2$ we define their union $\rho=(\rho_1,\rho_2)$ as the specialization defined on power sum symmetric functions via
\begin{equation*}
p_n(\rho_1,\rho_2)=p_n(\rho_1)+p_n(\rho_2), \qquad n\ge 1,
\end{equation*}
and extended to $\Sym$ by linearity. Also, for $a>0$ define $a\cdot \rho$ as the specialization which takes homogeneous functions $f\in \Sym$ to $a^{{\rm degree}(f)} f(\rho)$, which we write as $f(a\cdot \rho)$.

An example of a specialization is the homomorphism which can be written as $f(x_1,\ldots, x_n)$. This represents restricting $f$ to $\Sym_n$ and then evaluating the resulting polynomial at the values $x_1,\ldots ,x_n$. We call this a {\it finite length} specialization\index{specialization!finite length}. Not all specializations are finite length. The class with which we work will contain more general specializations which can be thought of as unions of limits of such finite length specializations as well as limits of finite length {\it dual} specializations\index{specialization!dual}. A finite length dual specialization is obtained by a finite length specialization composed with the endomorphism $\omega_{q,t}$. All of the formulas involving finite length specializations from Section~\ref{MacDefSEC} likewise hold for general specializations.

Let $t$ and $q$ be two parameters in $(0,1)$, and let $\{P_\lambda\}$ be the corresponding Macdonald symmetric functions (see Section~\ref{MacSymFnSec}).

\begin{definition}
We say that a specialization $\rho$ of $\Sym$ is {\it Macdonald nonnegative} (or just `nonnegative') if it takes nonnegative values on the skew Macdonald symmetric functions: $P_{\lambda/\mu}(\rho)\ge 0$ for any partitions $\la$ and $\mu$.\index{specialization!Macdonald nonnegative}\index{specialization!nonnegative}
\end{definition}

There is no known classification of the nonnegative specializations. The classification is known in the case of nonnegative specializations of Jack's symmetric functions \cite{KOO} and in the subcase of Schur symmetric functions this is a classical statement known as ``Thoma's theorem'' (see \cite{Ker} and references therein). In the Macdonald case, however, it is not hard to come up with a class of examples. In fact, Kerov conjectured that the following class completely classifies all nonnegative specializations (\cite{Ker}, section II.9) -- though this has not been proved.

Let $\{\alpha_i\}_{i\ge 1}$, $\{\beta_i\}_{i\ge 1}$, and $\gamma$ be nonnegative numbers, and $\sum_{i=1}^\infty(\alpha_i+\beta_i)<\infty$. Let $\rho$ be a specialization of $\Sym$ defined by
\begin{equation}\label{tag1}
\sum_{n\ge 0} g_n(\rho) u^n= \exp(\gamma u) \prod_{i\ge 1} \frac{(t\alpha_iu;q)_\infty}{(\alpha_i u;q)_\infty}\,(1+\beta_i u)=: \Pi(u;\rho).
\end{equation}
\glossary{$\Pi(u;\rho)$}
Here $u$ is a formal variable and $g_n=Q_{(n)}$ is the $(q,t)$-analog of the complete homogeneous symmetric function $h_n$. Since $g_n$ forms a $\Q[q,t]$ basis of $\Sym$, this uniquely defines the specialization $\rho$. The expression in equation (\ref{tag1}) which we write as $\Pi(u;\rho)$ is a special case of equation (\ref{PIeqn}) for $\rho_1$ equal to the finite length specialization to a single variable $u$.

Notice also that if $\rho$ is specified by nonnegative numbers $\{\alpha_i\}_{i\ge 1}$, $\{\beta_i\}_{i\ge 1}$, and $\gamma$, \glossary{$\alpha_i$} \glossary{$\beta_i$}\glossary{$\gamma$} then $a\cdot \rho$ is likewise specified by $\{a\alpha_i\}_{i\ge 1}$, $\{a\beta_i\}_{i\ge 1}$, and $a\gamma$ as follows from the calculation
\begin{equation*}
\sum_{n\geq 0} g_{n}(a\cdot \rho) u^n = \sum_{n\geq 0} g_{n}(\rho) (au)^n.
\end{equation*}

\begin{proposition}
For any nonnegative $\{\alpha_i\}_{i\ge 1}$, $\{\beta_i\}_{i\ge 1}$, and $\gamma$  such that $\sum_{i=1}^\infty(\alpha_i+\beta_i)~<~\infty$, the specialization $\rho$ is Macdonald nonnegative.
\end{proposition}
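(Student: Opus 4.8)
The plan is to exhibit $\rho$ as an iterated union of a few elementary specializations, each of which is Macdonald nonnegative for a transparent reason, and then to check that nonnegativity is inherited under the two operations used in the assembly: unions and limits.

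I would first record that the following three specializations are nonnegative. (i) Evaluation at a single positive number $a$: by (\ref{7.14'}), $P_{\lambda/\mu}(a)=\psi_{\lambda/\mu}\,a^{|\lambda-\mu|}$, which vanishes unless $\lambda-\mu$ is a horizontal strip, and then $\psi_{\lambda/\mu}$ is, by (\ref{piereFormPsi}), a product of values of $f(u)=(tu;q)_\infty/(qu;q)_\infty$ at arguments of the form $q^{c}t^{d}$ with integers $c,d\ge 0$; since $0<q,t<1$, each such $f(u)$ is a quotient of convergent products of strictly positive factors, hence $>0$, so $\psi_{\lambda/\mu}\ge 0$. Evaluation at finitely many positive numbers $a_1,\dots,a_n$ is then handled by (\ref{skewformulas4}), which writes $P_{\lambda/\mu}(a_1,\dots,a_n)$ as a finite sum over interlacing chains of products of single-variable skew values, hence $\ge 0$. (ii) The dual $\tau_b=\sigma_b\circ\omega_{q,t}$, $b>0$, where $\sigma_b$ is evaluation at $b$: by (\ref{omegaSkew}), $P_{\lambda/\mu}(\tau_b)=\sigma_b\bigl(\omega_{q,t}P_{\lambda/\mu}(\,\cdot\,;q,t)\bigr)=Q_{\lambda'/\mu'}(b;t,q)$, and by (\ref{7.14}) with $(q,t)$ interchanged this equals $\varphi_{\lambda'/\mu'}(t,q)\,b^{|\lambda'-\mu'|}$, nonnegative by the same positivity argument applied to (\ref{piereFormPhi}). (iii) The pure Plancherel specialization $\rho^{\mathrm{Pl}}_\gamma$ (all $\alpha_i=\beta_i=0$): from the exponential form $\Pi(u;\rho)=\exp\bigl(\sum_{n\ge 1}\frac1n\frac{1-t^n}{1-q^n}p_n(\rho)u^n\bigr)$ recalled in Section~\ref{PiSec}, comparison with (\ref{tag1}) gives $p_1(\rho^{\mathrm{Pl}}_\gamma)=\gamma(1-q)/(1-t)$ and $p_n(\rho^{\mathrm{Pl}}_\gamma)=0$ for $n\ge 2$. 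Put $a=\gamma(1-q)/(1-t)$. The evaluation at $n$ equal variables $a/n$ has $p_1=a$ and $p_k=a^k n^{1-k}\to 0$ for $k\ge 2$ as $n\to\infty$; since a homogeneous symmetric function of degree $d=|\lambda|-|\mu|$ is a fixed polynomial in $p_1,\dots,p_d$, we get $P_{\lambda/\mu}(a/n,\dots,a/n)\to P_{\lambda/\mu}(\rho^{\mathrm{Pl}}_\gamma)$, and each term on the left is $\ge 0$ by (i), so the limit is $\ge 0$.

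Next I would assemble $\rho$. Since $p_n$ is additive under unions and $\Pi(u;\,\cdot\,)$ is the exponential of a linear functional of the $p_n$, one has $\Pi(u;\rho_1,\rho_2)=\Pi(u;\rho_1)\,\Pi(u;\rho_2)$; comparing this with the product (\ref{tag1}) shows that, when only finitely many $\alpha_i,\beta_i$ are nonzero, $\rho=\rho^{\mathrm{Pl}}_\gamma\cup\bigcup_i\sigma_{\alpha_i}\cup\bigcup_i\tau_{\beta_i}$, a finite union of the blocks from the previous paragraph. Unions preserve nonnegativity by the coproduct identity coming from (\ref{skewformulas4}),
\begin{equation*}
P_{\lambda/\mu}(\rho_1,\rho_2)=\sum_{\mu\subseteq\nu\subseteq\lambda}P_{\lambda/\nu}(\rho_1)\,P_{\nu/\mu}(\rho_2),
\end{equation*}
which is a finite sum of products of nonnegative terms; an induction on the number of blocks finishes the finite case. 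For infinitely many $\alpha_i,\beta_i$ with $\sum_i(\alpha_i+\beta_i)<\infty$, let $\rho^{(m)}$ retain only $\alpha_1,\dots,\alpha_m,\beta_1,\dots,\beta_m$ and $\gamma$. From the convergent infinite product in (\ref{tag1}) one has $g_n(\rho^{(m)})\to g_n(\rho)$ for every $n$; since $\{g_\nu\}$ is a basis of $\Sym$ and $P_{\lambda/\mu}$ is a fixed finite combination of the $g_\nu$ with $|\nu|=d$, this gives $P_{\lambda/\mu}(\rho^{(m)})\to P_{\lambda/\mu}(\rho)$, which is therefore $\ge 0$ by the finite case.

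The bookkeeping above is routine; I expect the two mildly delicate points to be: (a) checking that every argument appearing in the product formulas (\ref{piereFormPsi}) and (\ref{piereFormPhi}) really is of the form $q^{c}t^{d}$ with $c,d\ge 0$ — this uses the horizontal/vertical-strip hypothesis together with the interlacing inequalities $\lambda_i\ge\mu_i\ge\lambda_{i+1}$ (resp.\ their transposes) to see that all the relevant differences of parts are $\ge 0$ — so that positivity of the $f$-values, and hence of $\psi_{\lambda/\mu}$ and $\varphi_{\lambda'/\mu'}$, is clean; and (b) justifying the two passages to the limit, where the point (used in both) is that a skew Macdonald function of a fixed degree lies in a finite-dimensional subspace of $\Sym$, so that coefficientwise convergence of the generating functions $\Pi(u;\,\cdot\,)$ — equivalently, convergence of finitely many $p_n$- or $g_n$-values — suffices to pass $P_{\lambda/\mu}(\,\cdot\,)$ to the limit.
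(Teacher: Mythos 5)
Your proposal is correct and follows essentially the same route as the paper's proof: reduce to finitely many nonzero parameters via degree-wise limits, separate the $\alpha$- and $\beta$-blocks using the branching rule (\ref{tag1a})/(\ref{skewformulas4}), convert the $\beta$-block to an $\alpha$-block via $\omega_{q,t}$, and conclude from the tableau expansion (\ref{7.13'}) with nonnegative $\psi$-coefficients. The only cosmetic difference is how you produce the Plancherel factor (limit of $n$ equal variables rather than of $M$ dual variables equal to $\gamma/M$), and that you spell out the positivity of $\psi_{\lambda/\mu}$ from (\ref{piereFormPsi}) rather than citing it — both are the same mechanism.
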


\begin{proof}It suffices to verify the statement for finitely many nonzero $\alpha_i$'s and $\beta_i$'s. One then obtains infinitely many ones by a limit transition, and also produces a nontrivial $\gamma$ by taking $M$ of the $\beta_i$'s equal to
$\gamma/M$ and sending $M$ to infinity.

By using the fact that
\begin{equation}\label{tag1a}
P_{\lambda/\mu}(\rho_1,\rho_2)=\sum_{\nu\in \Y}
P_{\lambda/\nu}(\rho_1)P_{\nu/\mu}(\rho_2),
\end{equation}
which follows from equation (\ref{skewformulas4}), we reduce the statement to $\rho$'s with finitely many nontrivial $\alpha_i$'s or finitely many nontrivial $\beta_i$'s. Applying the endomorphism $\omega_{q,t}$ given in equation (\ref{omegainv}) we see that by virtue of equations (\ref{invPI}), (\ref{PQSKEW}) and (\ref{omegaSkew}), we only need to consider one of these two cases.

On the other hand, if we only have finitely many positive $\alpha_j$'s, $P_{\lambda/\mu}$ are simply skew Macdonald polynomials in variables $\alpha_1,\alpha_2,\dots$. The tableaux expansion of equation (\ref{7.13'}) then allows us to conclude the proof.
\end{proof}

We name a few particularly useful specializations:
\begin{definition}\label{specializationtype}
A Macdonald nonnegative specialization $\rho$ is called
\begin{itemize}
\item {\it Plancherel}\index{specialization!Plancherel} if $\alpha_i=\beta_i=0$ for all $i$ and $\gamma>0$;
\item {\it Pure alpha}\index{specialization!pure alpha} if $\beta_i=0$ for all $i$, $\gamma=0$ and at least one $\alpha_i>0$;
\item {\it Pure beta}\index{specialization!pure beta} if $\alpha_i=0$ for all $i$, $\gamma=0$  and at least one $\beta_i>0$.
\end{itemize}
\end{definition}

For a nonnegative specialization $\rho$, denote by $\Y(\rho)$\glossary{$\Y(\rho)$} the set of partitions (or Young diagrams) $\lambda$ such that
$P_\lambda(\rho)>0$. We also call $\Y(\rho)$ the {\it support}\index{specialization!support} of  $\rho$ (recall the set of all partitions is denoted as $\Y$).

Using the combinatorial formula for the Macdonald symmetric functions in equation (\ref{7.13}) and the endomorphism $\omega_{q,t}$, it is not hard to show that if a nonnegative specialization $\rho$ is defined as in equation (\ref{tag1}) with $\gamma=0$, $p<\infty$ nonzero $\alpha_j$'s and $q<\infty$ nonzero $\beta_j$'s, then $\Y(\rho)$
consists of the Young diagrams that fit into the $\Gamma$-shaped figure with $p$ rows and $q$ columns. Otherwise it is easy to see that $\Y(\rho)=\Y$.

In particular, if in equation (\ref{tag1}) all $\beta_j$'s and $\gamma$ vanish, and there are $p$ nonzero $\alpha_j$'s, then $\Y(\rho)$ consists of Young diagrams with no more than $p$ rows. Such a finite length specialization consists in assigning values $\alpha_j$ to $p$ of the symmetric variables used to define $\Sym$, and $0$'s to all the other symmetric variables.

\subsection{Definition of Macdonald processes}\label{MacProcessSec}
Fix a natural number $N$ and nonnegative specializations $\rho_0^+,\dots,\rho_{N-1}^+$, $\rho_1^-,\dots,\rho_N^-$ of $\Sym$.
For any sequences $\la=(\la^{(1)},\dots,\la^{(N)})$ and $\mu=(\mu^{(1)},\dots,\mu^{(N-1)})$ of partitions satisfying
\begin{equation}\label{tag4}
\varnothing\subset \la^{(1)}\supset \mu^{(1)}\subset
\la^{(2)}\supset \mu^{(2)} \subset \dots \supset \mu^{(N-1)}\subset
\la^{(N)}\supset \varnothing
\end{equation}
define their {\it weight} as \glossary{$\mathcal{W}(\la,\mu)$}
\begin{equation}\label{tag5}
\mathcal{W}(\la,\mu):=P_{\la^{(1)}}(\rho_0^+)\,Q_{\la^{(1)}/\mu^{(1)}}(\rho_1^-)
P_{\la^{(2)}/\mu^{(1)}}(\rho_1^+)\,\cdots
P_{\la^{(N)}/\mu^{(N-1)}}(\rho_{N-1}^+)\, Q_{\la^{(N)}}(\rho_N^-).
\end{equation}

There is one factor for any two neighboring partitions in equation (\ref{tag4}). The fact that all the specializations are nonnegative implies that all the weights are nonnegative.

For any two specializations $\rho_1,\rho_2$ set
\begin{equation}\label{PIeqn}
\Pi(\rho_1;\rho_2)=\sum_{\la\in\Y}P_\la(\rho_1)Q_\la(\rho_2)= \exp\left(\sum_{n\ge 1}\frac{1}{n}\,\frac{1-t^n}{1-q^n}\,p_n(\rho_1)p_n(\rho_2)\right)
\end{equation}
\glossary{$\Pi(\rho_1;\rho_2)$}
provided that the series converge. This extends the definition of $\Pi(x;y)$ given in Section~\ref{PiSec} and of $\Pi(u;\rho)$ in Section~\ref{NNspecSEC}.

\begin{proposition}
Assuming $\Pi(\rho_i^+;\rho_j^-)<\infty$ for all $i,j$, we have
\begin{equation*}
\sum_{\la,\mu\in \Y} \mathcal{W}(\la,\mu)=\prod_{0\le i<j\le N}\Pi(\rho_i^+;\rho_j^-).
\end{equation*}
\end{proposition}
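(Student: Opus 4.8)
The plan is to prove the identity by induction on $N$, resumming at each step the two leftmost partitions $\lambda^{(1)}$ and $\mu^{(1)}$ of the chain (\ref{tag4}) by means of the skew Cauchy-type identities of Section~\ref{MacDefSEC}. Two preliminary remarks streamline the bookkeeping. First, since $P_{\lambda/\mu}$ and $Q_{\lambda/\mu}$ vanish unless $\mu\subset\lambda$, the weight $\mathcal{W}(\lambda,\mu)$ is zero whenever the interlacing (\ref{tag4}) fails, so the sum may be taken freely over all sequences $\lambda^{(1)},\dots,\lambda^{(N)}$, $\mu^{(1)},\dots,\mu^{(N-1)}\in\Y$ with no constraints. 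Second, all the specializations being Macdonald nonnegative, every term of the sum is $\ge 0$; hence by Tonelli the multiple sum equals any iterated sum, the order of summation is free to choose, and the only analytic point is to check that the value obtained is finite, which it is as soon as all the relevant $\Pi(\rho_i^+;\rho_j^-)$ are finite. I will also use repeatedly that $\Pi$ is multiplicative in unions of specializations, $\Pi((\sigma_1,\sigma_2);\tau)=\Pi(\sigma_1;\tau)\,\Pi(\sigma_2;\tau)$; this is immediate from the power-sum formula in (\ref{PIeqn}) together with $p_n(\sigma_1,\sigma_2)=p_n(\sigma_1)+p_n(\sigma_2)$, or from (\ref{skewformulas4}) with $\tau=\varnothing$ combined with (\ref{skewformulas1}).

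The base case $N=1$ is exactly the Cauchy identity (\ref{PIeqn}): the chain reads $\varnothing\subset\lambda^{(1)}\supset\varnothing$, $\mathcal{W}(\lambda^{(1)})=P_{\lambda^{(1)}}(\rho_0^+)Q_{\lambda^{(1)}}(\rho_1^-)$, and summing over $\lambda^{(1)}$ gives $\Pi(\rho_0^+;\rho_1^-)=\prod_{0\le i<j\le 1}\Pi(\rho_i^+;\rho_j^-)$.

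For the inductive step I first sum over $\lambda^{(1)}$: the only factors of $\mathcal{W}$ containing it are $P_{\lambda^{(1)}/\varnothing}(\rho_0^+)$ and $Q_{\lambda^{(1)}/\mu^{(1)}}(\rho_1^-)$, so by (\ref{skewformulas1}) (with the two variable sets there taken to be $\rho_1^-$ and $\rho_0^+$, and $\Pi(\rho_1^-;\rho_0^+)=\Pi(\rho_0^+;\rho_1^-)$) this partial sum equals $\Pi(\rho_0^+;\rho_1^-)\,P_{\mu^{(1)}}(\rho_0^+)$. So a factor $\Pi(\rho_0^+;\rho_1^-)$ comes out and $P_{\lambda^{(1)}}(\rho_0^+)$ is replaced by $P_{\mu^{(1)}}(\rho_0^+)$. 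I then sum over $\mu^{(1)}$: the factors now containing it are $P_{\mu^{(1)}}(\rho_0^+)$ and $P_{\lambda^{(2)}/\mu^{(1)}}(\rho_1^+)$, and by (\ref{skewformulas4}) with $\tau=\varnothing$ their sum over $\mu^{(1)}$ equals $P_{\lambda^{(2)}}(\rho_0^+,\rho_1^+)$, evaluated at the union specialization. What remains is $\Pi(\rho_0^+;\rho_1^-)$ times the sum defining a Macdonald process of length $N-1$ on the chain $\varnothing\subset\lambda^{(2)}\supset\mu^{(2)}\subset\cdots\subset\lambda^{(N)}\supset\varnothing$, with $+$-specializations $(\rho_0^+,\rho_1^+),\rho_2^+,\dots,\rho_{N-1}^+$ and $-$-specializations $\rho_2^-,\dots,\rho_N^-$; its finiteness hypotheses hold since $\Pi((\rho_0^+,\rho_1^+);\rho_j^-)=\Pi(\rho_0^+;\rho_j^-)\Pi(\rho_1^+;\rho_j^-)<\infty$. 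Applying the inductive hypothesis, then splitting every factor $\Pi((\rho_0^+,\rho_1^+);\rho_j^-)$ back into $\Pi(\rho_0^+;\rho_j^-)\Pi(\rho_1^+;\rho_j^-)$ and recombining with the extracted $\Pi(\rho_0^+;\rho_1^-)$, a routine re-indexing of the product gives $\prod_{0\le i<j\le N}\Pi(\rho_i^+;\rho_j^-)$, which completes the induction.

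The argument is essentially bookkeeping, so no single step is a true obstacle; the places that warrant care are (i) recognizing the remainder after the two resummations as a genuine length-$(N-1)$ Macdonald process — which rests on $P_{\lambda^{(2)}}(\rho_0^+,\rho_1^+)$ being precisely the opening factor of such a process (a $P_{\lambda/\varnothing}$ at the leading $+$-specialization) and on the surviving specializations lining up with the correct indices — and (ii) the off-by-one arithmetic in the final re-indexing of $\prod_{0\le i<j\le N}$. Writing out the $N=2$ case explicitly is a worthwhile sanity check before trusting the general induction. The only analytic subtlety, interchanging the order of the infinite sums, is disposed of once and for all by the nonnegativity of every term.
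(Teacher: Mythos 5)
Your proof is correct and is a clean, fully spelled-out realization of the "repeated use" argument that the paper merely cites: you contract the chain from the left end by alternating (\ref{skewformulas1}) (the $\nu=\varnothing$ case of (\ref{tag6})) with (\ref{tag1a}), extracting one $\Pi$-factor per step and absorbing $\rho_1^+$ into the leading $+$-specialization. The paper also lists (\ref{tag7}), which would be the tool for contracting from the right end instead; your left-to-right variant avoids (\ref{tag7}) entirely and the bookkeeping at the end of your induction checks out.
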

\begin{proof}
The proposition follows from repeated use of the identity in equation (\ref{tag1a}) and
\begin{eqnarray}
\sum_{\kappa\in\Y} P_{\kappa/\nu}(\rho_1)Q_{\kappa/\hat\nu}(\rho_2) &=&\Pi(\rho_1;\rho_2)\sum_{\tau\in\Y}Q_{\nu/\tau}(\rho_2)P_{\hat\nu/\tau}(\rho_1),\label{tag6}\\
\sum_{\nu\in\Y}Q_{\kappa/\nu}(\rho_1)Q_{\nu/\tau}(\rho_2) &=& Q_{\kappa/\tau}(\rho_1
,\rho_2),\label{tag7}
\end{eqnarray}
which extend equations (\ref{skewformulas2}) and (\ref{skewformulas3}) to general specializations.
\end{proof}

\begin{definition}\label{macprocessdef}
The {\it Macdonald process} \index{Macdonald process}
$\M(\rho_0^+,\dots,\rho_{N-1}^+;\rho_1^-,\dots,\rho_N^-)$ \glossary{$\M(\rho_0^+,\dots,\rho_{N-1}^+;\rho_1^-,\dots,\rho_N^-)$} is the probability measure on sequences $(\la,\mu)$ as in equation (\ref{tag4}) with
\begin{equation*}
\M(\rho_0^+,\dots,\rho_{N-1}^+;\rho_1^-,\dots,\rho_N^-)(\la,\mu)=\frac{\mathcal{W}(\la,\mu)}
{\prod_{0\le i<j\le N}\Pi(\rho_i^+;\rho_j^-)}\,.
\end{equation*}
The Macdonald process with $N=1$ is called the {\it Macdonald measure}\index{Macdonald measure} and written as $\MM(\rho^+;\rho^-)$ \glossary{$\MM(\rho^+;\rho^-)$}.

We write the probability distribution and expectation with respect to the Macdonald process (measure) as $\PP_{\M(\rho_0^+,\dots,\rho_{N-1}^+;\rho_1^-,\dots,\rho_N^-)}$ \glossary{$\PP_{\M(\rho_0^+,\dots,\rho_{N-1}^+;\rho_1^-,\dots,\rho_N^-)}$}, $\EE_{\M(\rho_0^+,\dots,\rho_{N-1}^+;\rho_1^-,\dots,\rho_N^-)}$\glossary{$\EE_{\M(\rho_0^+,\dots,\rho_{N-1}^+;\rho_1^-,\dots,\rho_N^-)}$} or $\langle \cdot \rangle_{\M(\rho_0^+,\dots,\rho_{N-1}^+;\rho_1^-,\dots,\rho_N^-)}$\glossary{$\langle \cdot \rangle_{\M(\rho_0^+,\dots,\rho_{N-1}^+;\rho_1^-,\dots,\rho_N^-)}$} ($\PP_{\MM(\rho^+;\rho^-)}$\glossary{$\PP_{\MM(\rho^+;\rho^-)}$}, $\EE_{\MM(\rho^+;\rho^-)}$\glossary{$\EE_{\MM(\rho^+;\rho^-)}$} or $\langle \cdot \rangle_{\MM(\rho^+;\rho^-)}$\glossary{$\langle \cdot \rangle_{\MM(\rho^+;\rho^-)}$}).
\end{definition}

Using equations (\ref{tag1a}), (\ref{tag6}) and (\ref{tag7}) it is not difficult to show that a projection of the Macdonald process to any subsequences of $(\la,\mu)$ is a also a Macdonald process. In particular, the projection of $\M\bigl(\rho_0^+,\dots,\rho_{N-1}^+;\rho_1^-,\dots,\rho_N^-\bigr)$ to $\lambda^{(j)}$ is the Macdonald measure
$\MM\bigl(\rho^+_{[0,j-1]};\rho^-_{[j,N]}\bigr)$, and its projection to $\mu^{(k)}$ is a slightly different Macdonald measure
$\MM\bigl(\rho^+_{[0,k-1]};\rho^-_{[k+1,N]}\bigr)$. Here we used the notation $\rho^\pm_{[a,b]}$ to denote the union of specializations $\rho^\pm_m$, $m=a,\dots,b$.

\begin{proposition}\label{sumsprop}
Under the probability distribution given by the Macdonald process, the random variables $|\lambda^{(1)}|, |\lambda^{(2)}-\mu^{(1)}|, \ldots, |\lambda^{(N)}-\mu^{(N-1)}|$ are independent. Likewise $|\lambda^{(1)}-\mu^{(1)}|,|\lambda^{(2)}-\mu^{(2)}|,\ldots, |\lambda^{(N)}|$ are independent. Moreover these random variables have generating functions given by
\begin{eqnarray*}
\langle u^{|\lambda^{(k)}-\mu^{(k-1)}|} \rangle_{\M(\rho_0^+,\dots,\rho_{N-1}^+;\rho_1^-,\dots,\rho_N^-)} &=& \prod_{j>k} \frac{\Pi(u\rho_k^+;\rho_j^{-})}{\Pi(\rho_k^+;\rho_j^{-})},\\
\langle u^{|\lambda^{(k)}-\mu^{(k)}|} \rangle_{\M(\rho_0^+,\dots,\rho_{N-1}^+;\rho_1^-,\dots,\rho_N^-)} &=& \prod_{i<k} \frac{\Pi(\rho_i^+;u\rho_k^{-})}{\Pi(\rho_i^+;\rho_k^{-})}.\\
\end{eqnarray*}
\end{proposition}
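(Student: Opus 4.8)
The plan is to read off both assertions from a single computation: rescale the input specializations by bookkeeping parameters and compare the normalization constant of the resulting measure with the original one.

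The only structural input needed is homogeneity. The factor of the weight (\ref{tag5}) attached to the $k$-th ``plus'' specialization is a skew function $P_{\la^{(k)}/\mu^{(k-1)}}$, which lies in $\Sym^{|\la^{(k)}|-|\mu^{(k-1)}|}$ (for $k=1$, with $\mu^{(0)}=\varnothing$, this is $P_{\la^{(1)}}$; homogeneity of the skew $P$'s follows from that of the skew $Q$'s and (\ref{PQSKEW})). By the very definition of the rescaled specialization $a\cdot\rho$, replacing the plus-specialization carried by that factor with its $u_k$-rescaling multiplies precisely that factor by $u_k^{|\la^{(k)}-\mu^{(k-1)}|}$ and leaves every other factor of $\mathcal W$ untouched. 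Hence, writing $\mathcal W_{\underline u}$ for the weight in which each plus-specialization has been rescaled by the corresponding $u_k$,
\begin{equation*}
\Big\langle \prod_{k=1}^{N} u_k^{|\la^{(k)}-\mu^{(k-1)}|}\Big\rangle_{\M}
=\frac{\sum_{\la,\mu\in\Y}\mathcal W_{\underline u}(\la,\mu)}{\sum_{\la,\mu\in\Y}\mathcal W(\la,\mu)} .
\end{equation*}
Since, as recorded after (\ref{tag1}), $a\cdot\rho$ has all of its defining parameters $\alpha_i,\beta_i,\gamma$ multiplied by $a$, for $u_k\in[0,1]$ the rescaled specializations still satisfy the finiteness hypothesis $\Pi(\rho_i^+;\rho_j^-)<\infty$; so every series in sight converges absolutely and the interchange of summation with the (algebra-homomorphism) specialization is legitimate.

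Next I would apply the partition-function identity $\sum_{\la,\mu\in\Y}\mathcal W(\la,\mu)=\prod_{0\le i<j\le N}\Pi(\rho_i^+;\rho_j^-)$ to numerator and denominator. Each individual plus-specialization enters that product only through the factors $\Pi(\rho_i^+;\rho_j^-)$ with its index $i$ fixed (and $j>i$); rescaling the $k$-th plus-specialization therefore changes only those factors, and the ratio above collapses to a product over $k$ of expressions each depending on a single $u_k$ — explicitly, the $k$-th such expression is the product over the relevant $j$ of $\Pi(u_k\cdot\rho_k^+;\rho_j^-)/\Pi(\rho_k^+;\rho_j^-)$, which is exactly the right-hand side displayed on the first line of the proposition. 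Because the joint probability generating function of the nonnegative-integer-valued quantities $|\la^{(1)}|,|\la^{(2)}-\mu^{(1)}|,\dots,|\la^{(N)}-\mu^{(N-1)}|$ factors on $[0,1]^N$, these quantities are independent, and setting all but one $u_k$ equal to $1$ leaves the asserted single-variable generating function.

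The second family is treated identically, rescaling the ``minus'' specializations $\rho_1^-,\dots,\rho_N^-$ instead: the factor $Q_{\la^{(k)}/\mu^{(k)}}$ (with $\mu^{(N)}=\varnothing$, so the last one is $Q_{\la^{(N)}}$) is homogeneous of degree $|\la^{(k)}-\mu^{(k)}|$, rescaling $\rho_k^-$ touches only the factors $\Pi(\rho_i^+;\rho_k^-)$ with $i<k$, and the same factorization of the joint generating function yields independence of $|\la^{(1)}-\mu^{(1)}|,\dots,|\la^{(N)}|$ together with the stated formulas. No step here is genuinely hard; the only points requiring attention are the indexing bookkeeping of which $\Pi$-factors are affected by each rescaling, the elementary convergence check that keeps the argument inside the domain where the normalization identity holds, and the (standard) passage from factorization of the joint generating function to independence.
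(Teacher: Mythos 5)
Your argument is correct and follows essentially the same route as the paper's: introduce bookkeeping variables, use homogeneity of the skew Macdonald functions to absorb them into rescaled specializations, apply the partition-function identity $\sum_{\la,\mu}\mathcal W(\la,\mu)=\prod_{i<j}\Pi(\rho_i^+;\rho_j^-)$, and read off the factorization. The only cosmetic difference is that the paper does both families simultaneously with a single joint computation (with variables $a_i^+$ and $a_j^-$ all at once), whereas you treat the plus- and minus-families separately; you also spell out the convergence bookkeeping a little more explicitly, which the paper leaves implicit.
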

\begin{proof}
Observe that for $a_0^+,a_1^+,\ldots, a_{N-1}^+$ and $a_1^{-},\ldots,a_N^{-}$ positive numbers, (suppressing the Macdonald process subscript in the expectation), $\EE\left[(a_{0}^+)^{|\lambda^{(1)}|}(a_1^{-})^{|\lambda^{(1)}-\mu^{(1)}|} (a_1^+)^{|\lambda^{(2)}-\mu^{(1)}|}\cdots \right]$ is given by
\begin{eqnarray*}
 \frac{1}{Z} \sum_{\lambda,\mu} (a_{0}^+)^{|\lambda^{(1)}|}P_{\lambda^{(1)}}(\rho_0^+)(a_1^{-})^{|\lambda^{(1)}-\mu^{(1)}|}Q_{\lambda^{(1)}/\mu^{(1)}}(\rho_1^-) (a_1^+)^{|\lambda^{(2)}-\mu^{(1)}|}P_{\lambda^{(2)}/\mu^{(1)}}(\rho_1^+)\cdots \\
=\frac{1}{Z} \sum_{\lambda,\mu} P_{\lambda^{(1)}}(a_{0}^+\rho_0^+)Q_{\lambda^{(1)}/\mu^{(1)}}(a_1^{-}\rho_1^-) P_{\lambda^{(2)}/\mu^{(1)}}(a_1^+\rho_1^+)\cdots = \frac{\prod_{0\le i<j\le N}\Pi(a_i^+\rho_i^+;a_j^-\rho_j^-)}{\prod_{0\le i<j\le N}\Pi(\rho_i^+;\rho_j^-)},
\end{eqnarray*}
where $Z=\prod_{0\le i<j\le N}\Pi(\rho_i^+;\rho_j^-)$. Setting $a_i^+\equiv 1$ or $a_j^-\equiv 1$ and observing the way the generating function factors gives the desired independence, and likewise setting all variables to 1 except one gives the generation function formula.
\end{proof}

We will mainly focus on a special case of the Macdonald process.

\begin{definition}
The {\it ascending Macdonald process}\index{Macdonald process!ascending} $\M_{asc}(a_1,\dots,a_N;\rho)$\glossary{$\M_{asc}(a_1,\dots,a_N;\rho)$} is the probability measure on sequences
\begin{equation}\label{ascseqn}
\varnothing \prec \lambda^{(1)}\prec \lambda^{(2)}\prec\cdots \prec\lambda^{(N)}
\end{equation}
(or equivalently column-strict Young tableaux) indexed by positive variables $a_1,\ldots, a_N$ and a single nonnegative specialization $\rho$, with
\begin{equation}\label{ascmeasure}
\M_{asc}(a_1,\dots,a_N;\rho)(\la^{(1)},\dots,\la^{(N)})=
\frac{P_{\la^{(1)}}(a_1)P_{\la^{(2)}/\la^{(1)}}(a_2)\cdots
P_{\la^{(N)}/\la^{(N-1)}}(a_N)
Q_{\la^{(N)}}(\rho)}{\Pi(a_1,\dots,a_N;\rho)}\,.
\end{equation}
Here $\Pi(a_1,\dots,a_N;\rho) = \Pi(a_1;\rho)\cdots \Pi(a_N;\rho)$ with the terms $\Pi(u;\rho)$ as defined in (\ref{tag1}).
\end{definition}

The ascending Macdonald process is a special case of Macdonald processes with $\rho_j^+$ being specializations into a single positive variable $a_{j+1}$, $j=1,\dots,N$,
the specializations $\rho_1^-,\dots,\rho_{N-1}^-$ being trivial, and the only remaining free specialization $\rho_N^-$ being written simply as $\rho$. Then $\mu^{(k)}=\lambda^{(k)}$ for all $k$, and the process lives on sequences as in equation (\ref{ascseqn}) and is given by equation (\ref{ascmeasure}).

If $\rho$ corresponds to parameters $\{\alpha_i\}$, $\{\beta_i\}$, $\gamma$ as in the previous section, it is not hard to see that the condition of the partition function $\Pi(a_1,\dots,a_N;\rho)$ being finite is equivalent to $a_i\alpha_j<1$ for all
$i,j$.

Observe that the projection of $\M_{asc}$ to $\lambda^{(k)}$, $k=1,\dots,N$, is the Macdonald measure
\begin{equation*}
\MM(a_1,\ldots,a_k;\rho)(\la^{(k)})= \frac{P_{\lambda^{(k)}}(a_1,\ldots,a_k) Q_{\lambda^{(k)}}(\rho)} {\Pi(a_1,\ldots,a_k;\rho)}\,.
\end{equation*}

\subsection{Difference operators and integral formulas}\label{difopSEC}

The relevance of this section to the study of Macdonald processes is explained by the following observation. Assume we have a linear operator $\mathcal{D}$ in the space of functions in $n$ variables whose restriction to the space of symmetric polynomials diagonalizes in the basis of Macdonald polynomials: $\mathcal{D} P_\la=d_\lambda P_\la$ for any partition $\la$ with $\ell(\la)\le n$. Then we can apply $\mathcal{D}$ to both sides of the identity
\begin{equation*}
\sum_{\la:\ell(\la)\le n} P_{\lambda}(a_1,\dots,a_n) Q_{\lambda}(\rho)=\Pi(a_1,\dots,a_n;\rho).
\end{equation*}
Dividing the result by $\Pi(a_1,\dots,a_n;\rho)$ we obtain
\begin{equation}\label{tag8}
\langle d_\lambda \rangle_{\MM(a_1,\dots,a_n;\rho)}=\frac{\mathcal{D}\Pi(a_1,\dots,a_n;\rho)} {\Pi(a_1,\dots,a_n;\rho)}\,,
\end{equation}
where $\langle \cdot\rangle_{\MM(a_1,\dots,a_n;\rho)}$ represents averaging $\cdot$ over the specified Macdonald measure.
If we apply $\mathcal{D}$ several times we obtain
\begin{equation*}
\langle d_\lambda^k \rangle_{\MM(a_1,\dots,a_n;\rho)}=\frac{\mathcal{D}^k \Pi(a_1,\dots,a_n;\rho)} {\Pi(a_1,\dots,a_n;\rho)}\,.
\end{equation*}
If we have several possibilities for $\mathcal{D}$ we can obtain formulas for averages of the observables equal to products of powers of the corresponding eigenvalues. The goal of this section is to provide a few variants for such operators.

In what follows we fix the number of independent variables to be $n\in \Zgzero$.

\begin{definition}
For any $u\in\R$ and $1\le i\le n$, define the {\it shift operator}\index{difference operator!shift} $T_{u,x_i}$ by
\begin{equation*}
(T_{u,x_i}F)(x_1,\dots,x_n)=F(x_1,\dots,ux_i,\dots,x_n).
\end{equation*}
For any subset $I\subset\{1,\dots,n\}$, define
\begin{equation*}
A_I(x;t)=t^{\frac{r(r-1)}2}\prod_{i\in I,\,j\notin I}\frac{tx_i-x_j}{x_i-x_j}\,.
\end{equation*}

Finally, for any $r=1,2,\ldots,n$, define the {\it Macdonald difference operator}\index{difference operator!Macdonald}\index{Macdonald symmetric functions!difference operator}
\begin{equation*}
D_n^r=\sum_{\substack{I\subset\{1,\ldots,n\}\\|I|=r}} A_I(x;t)\prod_{i\in I} T_{q,x_i}.
\end{equation*}
\glossary{$D_n^r$}
\end{definition}

\begin{proposition}\cite[VI(,4.15)]{M}\label{prop5}
For any partition $\la$ with $\ell(\la)\le n$
\begin{equation*}
D_n^r P_\lambda(x_1,\dots,x_n)=e_r(q^{\la_1}t^{n-1},q^{\la_2}t^{n-2},\dots,q^{\la_n}) P_\lambda(x_1,\dots,x_n).
\end{equation*}
Here $e_r$ is the elementary symmetric function, $e_r(x_1,\ldots,x_n) = \sum_{1\leq i_1<\cdots<i_r\leq n} x_{i_1}\cdots x_{i_r}$.
\end{proposition}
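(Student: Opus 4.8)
The plan is the classical route of Macdonald: show that each $D_n^r$ maps the ring $\Sym_n$ of symmetric polynomials in $x_1,\dots,x_n$ to itself, acts triangularly on the monomial basis $\{m_\lambda\}$ with respect to the dominance order, is self-adjoint for a scalar product under which the $P_\lambda$ are orthogonal, and then conclude that $P_\lambda$ must be an eigenfunction with eigenvalue equal to the (already computed) diagonal entry. First I would check that $D_n^r$ preserves $\Sym_n$. Since $A_I$ is homogeneous of degree $0$ and $T_{q,x_i}$ preserves degree, $D_n^r$ preserves each $\Sym_n^k$; symmetry is automatic, because conjugating $A_I\prod_{i\in I}T_{q,x_i}$ by $\sigma\in S_n$ produces $A_{\sigma I}\prod_{i\in\sigma I}T_{q,x_i}$, so $D_n^r$ commutes with the $S_n$-action. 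The only real point is polynomiality: the apparent poles sit at $x_a=x_b$, and for a fixed pair $a<b$ one groups the subsets $I$ with $a\in I,\ b\notin I$ against $I'=(I\setminus\{a\})\cup\{b\}$. On $\{x_a=x_b\}$ the two prefactors (with the singular factor removed) coincide, and — using that $F$ is symmetric in $x_a,x_b$ — so do the two shifted functions $\prod_{i\in I}T_{q,x_i}F$ and $\prod_{i\in I'}T_{q,x_i}F$; since the residues of $\tfrac{tx_a-x_b}{x_a-x_b}$ and $\tfrac{tx_b-x_a}{x_b-x_a}$ are opposite, the residue of the sum vanishes, so $D_n^rF$ has no pole.

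Next, triangularity. Write $m_\lambda=\sum_w x^{w\lambda}$ and expand each factor $\tfrac{tx_i-x_j}{x_i-x_j}$ as a power series in the cone $|x_1|\gg\cdots\gg|x_n|>0$: its leading term is $t$ when $i<j$ and $1$ when $i>j$, and every remaining exponent that occurs is a nonzero integer vector all of whose left-to-right partial sums are $\le 0$. Consequently every monomial $x^\gamma$ produced by $D_n^r m_\lambda$ has all partial sums bounded by those of $\lambda$, so only $m_\nu$ with $\nu\le\lambda$ in dominance can appear, and the coefficient of $x^\lambda$ receives a contribution only from the leading terms of the $A_I$'s: it equals $\sum_{|I|=r} q^{\sum_{i\in I}\lambda_i}\,t^{r(r-1)/2+c(I)}$ with $c(I)=\#\{(i,j):i\in I,\ j\notin I,\ i<j\}$. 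The elementary identity $r(r-1)/2+c(I)=\sum_k (n-i_k)$ for $I=\{i_1<\cdots<i_r\}$ rewrites this as $\sum_{i_1<\cdots<i_r}\prod_k q^{\lambda_{i_k}}t^{\,n-i_k}=e_r(q^{\lambda_1}t^{n-1},\dots,q^{\lambda_n}t^{0})$, the claimed diagonal entry.

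The remaining input is self-adjointness of $D_n^r$ for the torus scalar product $\langle\cdot,\cdot\rangle_n'$ of Section~\ref{torusSec}, under which the $P_\lambda$ are orthogonal; this comes from substituting $z_i\mapsto qz_i$, $i\in I$, in the integral defining the pairing and using that the factors $A_I$ are exactly the quasi-periods of the Macdonald weight $\prod_{i\ne j}(z_iz_j^{-1};q)_\infty/(tz_iz_j^{-1};q)_\infty$ under that shift, no poles being crossed because $q,t\in(0,1)$. Granting this, fix $\lambda$ and put $V_{<\lambda}=\mathrm{span}\{m_\nu:|\nu|=|\lambda|,\ \nu<\lambda\}$ and $V_{\le\lambda}=V_{<\lambda}\oplus\C m_\lambda$. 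Both are $D_n^r$-invariant by Step 2. By the unitriangularity of the $P$'s one has $V_{<\lambda}=\mathrm{span}\{P_\nu:\nu<\lambda\}$, and by their orthogonality $P_\lambda$ spans the orthogonal complement of $V_{<\lambda}$ inside $V_{\le\lambda}$. A self-adjoint operator preserving $V_{<\lambda}$ preserves this one-dimensional complement, so $D_n^rP_\lambda\in\C P_\lambda$, and the scalar is the diagonal entry of Step 2, namely $e_r(q^{\lambda_1}t^{n-1},\dots,q^{\lambda_n})$.

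The main obstacle is this self-adjointness step: Steps 1 and 2 are, respectively, a finite residue cancellation and a finite combinatorial identity, whereas promoting the difference operator to a symmetric bilinear form requires the contour deformation against the Macdonald weight together with a careful check that it meets no singularities. An alternative that avoids re-deriving self-adjointness is to show directly that the $D_n^r$ commute and that $D_n^1$ has simple spectrum for generic $(q,t)$; its unitriangular eigenbasis is then forced to be $\{P_\lambda\}$, one reads the $D_n^r$-eigenvalues off their leading coefficients, and passes to all $(q,t)\in(0,1)^2$ using that both sides are rational in $(q,t)$ — trading the analytic input for a commutation check.
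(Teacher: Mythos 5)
The paper does not give its own proof of Proposition~\ref{prop5}; it cites Macdonald's book directly as \cite[VI,(4.15)]{M}, so the relevant comparison is with Macdonald's argument rather than with anything in the present text. Your proof reproduces the standard route of that reference: polynomiality of $D_n^rF$ via residue cancellation between the subsets $I$ and $I'=(I\setminus\{a\})\cup\{b\}$, dominance triangularity on the monomial basis (the partial-sum bookkeeping is correct), the identity $\tfrac{r(r-1)}{2}+c(I)=\sum_k(n-i_k)$ giving the eigenvalue $e_r(q^{\lambda_1}t^{n-1},\dots,q^{\lambda_n})$, and self-adjointness to force $D_n^rP_\lambda\in\C P_\lambda$. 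This is essentially the same approach.

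The one substantive deviation is the scalar product used for self-adjointness. Macdonald proves that $D_n$ is self-adjoint for the algebraic pairing $\langle\cdot,\cdot\rangle_n$ on $\Sym_n$ (the finite-variable form of the power-sum product), which is a combinatorial computation valid over $\Q(q,t)$; you instead invoke the torus pairing $\langle\cdot,\cdot\rangle_n'$. That route is also viable, but the remark that "no poles are crossed because $q,t\in(0,1)$" is faster than the facts warrant: when you deform one circle $|z_i|=1$ to radius $q^{\pm1}$ while the others sit at radius $1$, the ratio $|z_j/z_i|$ sweeps through the interval between $1$ and $q^{-1}$, and the weight $\prod_{i\ne j}(z_iz_j^{-1};q)_\infty/(tz_iz_j^{-1};q)_\infty$ has poles on $|z_j/z_i|=t^{-1}q^{-k}$, which lands inside that interval whenever $q<t<1$ (take $k=0$). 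So one either needs an argument special to the regime $t<q$ followed by analytic continuation, or more careful pole accounting. Your proposed alternative — commutativity of the $D_n^r$, simple spectrum of $D_n^1$ for generic parameters, and rationality in $(q,t)$ — sidesteps the issue entirely and is in fact close to how Macdonald passes from $r=1$ to general $r$, so it is the cleaner way to close the argument. The combinatorial steps (Steps~1 and 2) are correct as written.
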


Although the operators $D_n^r$ do not look particularly simple, they can be represented by contour integrals, which will later be helpful for evaluating the right-hand side of equation (\ref{tag8}).

\begin{remark}
In the below propositions the desired contours always exist under the hypothesis that the parameter $t$ is sufficiently small and the ratios $x_i/x_j$ are sufficiently close to 1. This will suffice for our purposes. Extending validity of these formulas to the full set of parameters may require more work.
\end{remark}

\begin{proposition}\label{prop6}
Assume that $F(u_1,\dots,u_n)=f(u_1)\cdots f(u_n)$. Take $x_1,\ldots,x_n>0$ and assume that $f(u)$ is holomorphic and nonzero in a complex neighborhood of an interval in $\R$ that contains $\{x_j,qx_j\}_{j=1}^n$. Then for $r=1,2,\dots,n$
\begin{equation} \label{tag9}
(D_n^r F)(x)= F(x)\cdot \frac {1}{(2\pi \iota)^r r!}\oint\cdots\oint
\det\left[\frac 1{tz_k-z_\ell}\right]_{k,\ell=1}^r
\prod_{j=1}^r  \left(\prod_{m=1}^n \frac{tz_j-x_m}{z_j-x_m}\right)
\frac{f(qz_j)}{f(z_j)}\,dz_j,
\end{equation}
where each of $r$ integrals is over positively oriented contour encircling $\{x_1,\ldots,x_n\}$ and no other singularities of the integrand.
\end{proposition}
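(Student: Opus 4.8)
The plan is to reduce the identity to the elementary action of the shift operators on the product $F$ and then to recognize the right-hand side of~(\ref{tag9}) as the iterated-residue expansion of that action. Since $F(u_1,\dots,u_n)=f(u_1)\cdots f(u_n)$ and $T_{q,x_i}$ only rescales the $i$-th argument, we have $\bigl(\prod_{i\in I}T_{q,x_i}F\bigr)(x)=F(x)\prod_{i\in I}\frac{f(qx_i)}{f(x_i)}$, so from the definition of $D_n^r$,
\begin{equation*}
(D_n^r F)(x)=F(x)\sum_{\substack{I\subset\{1,\dots,n\}\\|I|=r}}A_I(x;t)\prod_{i\in I}\frac{f(qx_i)}{f(x_i)},\qquad A_I(x;t)=t^{r(r-1)/2}\prod_{i\in I,\,j\notin I}\frac{tx_i-x_j}{x_i-x_j}.
\end{equation*}
It therefore suffices to prove that the $r$-fold contour integral in~(\ref{tag9}), divided by $F(x)$, equals this sum, and I would do so by evaluating the integral as an iterated residue, integrating out $z_1,z_2,\dots,z_r$ in turn.

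First I would determine which singularities lie inside the contour. Viewed as a function of $z_j$ (with the remaining variables fixed, and after the earlier residues have been taken), the integrand has simple poles at $z_j=x_m$, $1\le m\le n$, produced by the factor $\prod_{m}\frac{tz_j-x_m}{z_j-x_m}$; the Cauchy-type determinant $\det\bigl[\,1/(tz_k-z_\ell)\,\bigr]$ contributes poles only where $tz_k=z_\ell$ ($k\neq\ell$) or, through its diagonal entries, where $z_j=0$; and each residue previously extracted at $z_k=x_{m_k}$ creates at worst new poles at $z_j=t^{\pm1}x_{m_k}$. Under the standing hypothesis of the Remark ($t$ small and the ratios $x_i/x_j$ close to $1$) all of these extra points, together with $0$, remain outside a small positively oriented circle about $\{x_1,\dots,x_n\}$, while $f$ is holomorphic and nonvanishing and $f(q\,\cdot)$ is holomorphic in a neighbourhood of that circle by assumption. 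Hence the only residues picked up are those at the points $z_j=x_{m_j}$.

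Next comes the residue bookkeeping. Because $\mathrm{Res}_{z_j=x_{m_j}}\prod_{m}\frac{tz_j-x_m}{z_j-x_m}=(t-1)x_{m_j}\prod_{m\ne m_j}\frac{tx_{m_j}-x_m}{x_{m_j}-x_m}$ and every other factor is holomorphic at $z_j=x_{m_j}$, taking successive residues at $z_1=x_{m_1},\dots,z_r=x_{m_r}$ yields a sum over tuples $(m_1,\dots,m_r)$. If two of the indices coincide, say $m_j=m_k$, then at $z_j=z_k=x_{m_j}$ the columns $j$ and $k$ of the matrix $\bigl[\,1/(tz_a-z_b)\,\bigr]$ become equal, so its determinant vanishes there and the corresponding residue is zero; thus only tuples of pairwise distinct indices survive. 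Since this determinant is invariant under the simultaneous permutation of its rows and columns, i.e.\ it is a symmetric function of $z_1,\dots,z_r$, summing over ordered distinct tuples and dividing by $r!$ regroups the answer as a sum over $r$-element subsets $I$, with summand
\begin{equation*}
\prod_{i\in I}(t-1)x_i\;\prod_{i\in I}\prod_{m\notin I}\frac{tx_i-x_m}{x_i-x_m}\;\prod_{\substack{i,i'\in I\\ i\ne i'}}\frac{tx_i-x_{i'}}{x_i-x_{i'}}\;\det\!\left[\frac{1}{tx_i-x_{i'}}\right]_{i,i'\in I}\;\prod_{i\in I}\frac{f(qx_i)}{f(x_i)}.
\end{equation*}
Applying the Cauchy determinant identity $\det\bigl[\,1/(u_a-v_b)\,\bigr]=\frac{\prod_{a<b}(u_a-u_b)(v_b-v_a)}{\prod_{a,b}(u_a-v_b)}$ with $u_a=tx_{i_a}$ and $v_b=x_{i_b}$ collapses the product of the first, third and fourth factors above to $t^{r(r-1)/2}$, leaving precisely $A_I(x;t)\prod_{i\in I}\frac{f(qx_i)}{f(x_i)}$; summing over $I$ then matches the expression for $(D_n^rF)(x)/F(x)$ found in the first paragraph.

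The one genuinely delicate point is the second paragraph: checking that the auxiliary singularities of the integrand — those of the Cauchy determinant and those created by the residues already extracted — indeed stay off and outside the chosen contour, so that the iterated residue evaluation is legitimate. This is exactly where the smallness assumptions on $t$ and on the $x_i/x_j$ from the Remark enter, and a fully careful treatment would track the contours explicitly (for instance taking them to be concentric circles of slightly different radii to separate the locus $tz_k=z_\ell$ cleanly). Everything else is a routine residue computation combined with the Cauchy determinant formula.
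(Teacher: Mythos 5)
Your argument follows essentially the same route as the paper's: reduce to evaluating the right-hand side by iterated residues at $z_j=x_{m_j}$, observe that coincident indices give a vanishing contribution (via the Vandermonde/Cauchy-determinant structure), and then apply the Cauchy determinant identity to match each surviving term with the summand $A_I(x;t)\prod_{i\in I}f(qx_i)/f(x_i)$ of $D_n^r$, with the $r!$ cancelling the over-count from ordering the elements of $I$. The one step the paper makes explicit that you omit is the preliminary reduction to pairwise distinct $x_i$ (settled afterward by continuity in the $x$ variables), which is what guarantees the poles at $z_j=x_{m_j}$ are simple and makes the residue bookkeeping you describe literally valid; your computation implicitly assumes this, so it is worth stating.
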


\begin{proof}
First note that since $t$ and $q$ are assumed to be in $(0,1)$ it is always possible to find contours of integration as desired by the statement of the proposition we are proving. We assume now that the $x_i$ are pairwise disjoint. By continuity of both sides of the formula in the $x$ variables, this suffices to prove the general result. Now use the Cauchy determinant identity
\begin{equation*}
\det\left[\frac 1{tz_k-z_\ell}\right]_{k,\ell=1}^r=\frac{t^{\frac{r(r-1)}2}
\prod_{1\le k<\ell\le r}(z_k-z_\ell)(z_\ell-z_k)}{\prod_{k,\ell=1}^r(tz_k-z_\ell)}
\end{equation*}
and compute the residues at $z_j=x_{m_j}$. Thanks to the Vandermonde determinants in the numerator, one gets zero contributions if $m_j$'s are not pairwise distinct, and if they are distinct one easily verifies that the contribution corresponds to the summand in $D_n^r$ with $I=\{x_{m_1},\ldots,x_{m_r}\}$. The $r!$ factor is responsible for permutations of the elements of $I$.
\end{proof}


\begin{remark}\label{firstlambdaremark}
We define a useful analog to the Macdonald operator $D_n^r$ to be the operator
\begin{equation*}
\tilde{D}_n^r = t^{-\frac{n(n-1)}{2}} D_n^{n-r} T_{q^{-1}}
\end{equation*}
\glossary{$\tilde{D}_n^r$}
where the operator $T_{q^{-1}}$ multiplies all variables by $q^{-1}$. Observe that
\begin{equation*}
T_{q^{-1}}P_{\lambda}(x_1,\ldots, x_n) = q^{-|\lambda|} P_{\lambda}(x_1,\ldots,x_n)
\end{equation*}
and
\begin{eqnarray*}
\tilde{D}_n^r P_{\lambda}(x_1,\ldots,x_n) &=& t^{-\frac{n(n-1)}{2}} q^{-|\lambda|} e_{n-r}(q^{\lambda_1}t^{n-1},\ldots, q^{\lambda_n})P_{\lambda}(x_1,\ldots,x_n)\\
 &=& e_r(q^{-\lambda_1}t^{1-n},q^{-\lambda_2}t^{2-n},\ldots,q^{-\lambda_n})P_{\lambda}(x_1,\ldots, x_n).
\end{eqnarray*}

Observe that
\begin{eqnarray*}
t^{-\frac{n(n-1)}{2}} D_{n}^{n-r} T_{q^{-1}} &=& t^{-\frac{n(n-1)}{2}} \sum_{\substack{I\subset \{1,\ldots, n\}\\|I|=n-r}} \prod_{i\in I, j\notin I} \frac{tx_i-x_j}{x_i-x_j} t^{\frac{(n-r)(n-r-1)}{2}}\prod_{i\in I} T_{q,x_i}T_{q^{-1}}\\
&=& t^{\frac{r(r+1)}{2}-rn} \sum_{\substack{J\subset \{1,\ldots, n\}\\|J|=r}} \prod_{j\in J,i\notin J} \frac{ x_j-tx_i}{x_j-x_i} \prod_{j\in J} T_{q^{-1},x_j}.
\end{eqnarray*}
Applying this to a function $F(u_1,\ldots, u_n)=f(u_1)\cdots f(u_n)$ as in Proposition~\ref{prop6} we obtain an analogous integral formula
\begin{equation*}
(\tilde{D}_n^r F)(x) = F(x)\cdot \frac{t^{r(1-n)}}{(2\pi \iota)^r r!}\oint\cdots\oint
\det\left[\frac{1}{z_\ell-tz_k}\right]_{k,\ell=1}^r
\prod_{j=1}^r  \left(\prod_{m=1}^n \frac{z_j-tx_m}{z_j-x_m}\right)
\frac{f(q^{-1}z_j)}{f(z_j)}\,dz_j,
\end{equation*}
where the contours include the poles $z_j=x_m$ and no other singularities. Under the change of integration variables $z_j=1/w_j$ for $j=1,\ldots, r$ we have
\begin{equation*}
(\tilde{D}_n^r F)(x) = F(x)\cdot \frac{t^{r(1-n)}}{(2\pi \iota)^r r!}\oint\cdots\oint
\det\left[\frac{1}{tw_k-w_\ell}\right]_{k,\ell=1}^r
\prod_{j=1}^r  \left(\prod_{m=1}^n \frac{tw_j-x_m^{-1}}{w_j-x_m^{-1}}\right)
\frac{f((qw_j)^{-1})}{f\big(w_j^{-1}\big)}\,dw_j,
\end{equation*}
where the contours now include the poles $w_j=x_m^{-1}$ and no other singularities. The formula is exactly as in Proposition~\ref{prop6} with an extra factor of $t^{r(n-1)}$, the $x_m$'s replaced by their inverses, and $f(u)$ replaced by $f(u^{-1})$. Iterating these operators will lead to very similar formulas to the ones we now obtain for powers of $D_n^r$.
\end{remark}

\begin{remark}
Recall that $\Pi(a_1,\dots,a_n;\rho)=\prod_{i=1}^n \Pi(a_i;\rho)$. Hence, Proposition~\ref{prop6} is suitable for evaluating the right-hand side of equation (\ref{tag8}). We will not rewrite these formulas as they are immediate.
\end{remark}

Observe also that for any fixed $z_1,\dots,z_r$, the right-hand side of equation (\ref{tag9}) is a multiplicative function of $x_j$'s. This makes it possible to iterate the procedure. Let us do that in the simplest case first, then the full case below in Proposition~\ref{prop8FULL}.

\begin{proposition}\label{prop8}
Fix $k\ge 1$. Assume that $F(u_1,\dots,u_n)=f(u_1)\cdots f(u_n)$. Take $x_1,\dots,x_n >0$ and assume that $f(u)$ is holomorphic and nonzero in a complex neighborhood of an interval in $\R$ that contains $\{q^ix_j\mid i=0,\ldots,k,j=1\ldots,n\}$.
Then
\begin{equation*}
\frac{\bigl({(D_n^1)}^k F\bigr)(x)}{F(x)}=\frac {(t-1)^{-k}}{(2\pi \iota)^k} \oint\cdots\oint \prod_{1\le a<b\le k} \frac{(tz_a-qz_b)(z_a-z_b)}{(z_a-qz_b)(tz_a-z_b)} \prod_{c=1}^k\left(\prod_{m=1}^n \frac{tz_c-x_m}{z_c-x_m}\right)
\frac{f(qz_c)}{f(z_c)}\,\frac{dz_c}{z_c}\,,
\end{equation*}
where the $z_c$-contour contains $\{qz_{c+1},\ldots,qz_k,x_1,\ldots,x_n\}$ and no other singularities for $c=1,\dots,k$.
\end{proposition}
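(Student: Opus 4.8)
\emph{Proof proposal.} The plan is to prove the identity by induction on $k$, with Proposition~\ref{prop6} in the case $r=1$ serving both as the base case and as the engine of the inductive step. Write $h_k(x):=\bigl((D_n^1)^k F\bigr)(x)/F(x)$. For $k=1$ the claim is exactly Proposition~\ref{prop6} with $r=1$: there $\det[1/(tz_1-z_1)]=1/((t-1)z_1)$, which accounts for the prefactor $(t-1)^{-1}$ and the $dz_1/z_1$, and the single contour is the one supplied by that proposition, encircling $\{x_1,\dots,x_n\}$.

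For the inductive step I would exploit the multiplicativity already noted after Proposition~\ref{prop6}: the inductive hypothesis exhibits $(D_n^1)^k F$ as a $k$-fold contour integral whose integrand, for each fixed $(z_1,\dots,z_k)$, is a factor independent of $x$ times $\prod_{m=1}^n \tilde f_z(x_m)$, where $\tilde f_z(x):=f(x)\prod_{c=1}^k \frac{tz_c-x}{z_c-x}$. Since $D_n^1=\sum_{i=1}^n A_{\{i\}}(x;t)\,T_{q,x_i}$ is a finite linear combination of shift operators with $x$-rational coefficients, it passes inside the $z$-integrations, so $(D_n^1)^{k+1}F=D_n^1\bigl[(D_n^1)^k F\bigr]$ is computed by applying $D_n^1$ to each slice $\prod_m \tilde f_z(x_m)$. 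Invoking Proposition~\ref{prop6} with $r=1$ (with $\tilde f_z$ in the role of $f$) introduces a new integration variable, which I name $z_{k+1}$, and produces
\[
\frac{\tilde f_z(qz_{k+1})}{\tilde f_z(z_{k+1})}=\frac{f(qz_{k+1})}{f(z_{k+1})}\prod_{c=1}^k \frac{(tz_c-qz_{k+1})(z_c-z_{k+1})}{(z_c-qz_{k+1})(tz_c-z_{k+1})},
\]
together with $\prod_m\frac{tz_{k+1}-x_m}{z_{k+1}-x_m}$, the constant $(t-1)^{-1}/(2\pi\iota)$, and $dz_{k+1}/z_{k+1}$. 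The product over $c$ above is exactly the collection of pair-factors $(a,b)$ with $b=k+1$, so $\prod_{1\le a<b\le k}$ is promoted to $\prod_{1\le a<b\le k+1}$; the factor $\prod_m\tilde f_z(x_m)=F(x)\prod_{c=1}^k\prod_m\frac{tz_c-x_m}{z_c-x_m}$ restores $F(x)$ and the first $k$ copies of $\prod_m\frac{tz_c-x_m}{z_c-x_m}$, and the new integral contributes the $c=k+1$ copy of $\bigl(\prod_m\frac{tz_c-x_m}{z_c-x_m}\bigr)\frac{f(qz_c)}{f(z_c)}\frac{1}{z_c}$. Collecting the prefactors $(t-1)^{-k}\!\cdot\!(t-1)^{-1}$ and $(2\pi\iota)^{-k}\!\cdot\!(2\pi\iota)^{-1}$ then gives precisely the claimed $(k+1)$-fold formula; this recombination of rational factors is the routine bookkeeping I would not spell out in full.

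The one genuinely delicate point, and the main obstacle, is the simultaneous choice of contours through the iteration. Proposition~\ref{prop6} hands back a $z_{k+1}$-contour encircling only $\{x_1,\dots,x_n\}$ and avoiding every other singularity of its integrand — in particular $z_{k+1}=0$, $z_{k+1}=tz_c$, and $z_{k+1}=z_c/q$ for $c\le k$. As flagged in the Remark preceding Proposition~\ref{prop6}, I would work in the regime where $t$ is small and the ratios $x_i/x_j$ are close to $1$ (with $q\in(0,1)$ fixed), so that the clusters $\{x_m\},\{qx_m\},\dots$ and the $t$-scaled points are mutually well-separated and all such contours genuinely exist; I may moreover shrink the $z_{k+1}$-contour so that $qz_{k+1}$ lies inside each of the previous contours $C_1,\dots,C_k$, which is exactly the nesting ``$z_c$-contour contains $\{qz_{c+1},\dots,qz_{k+1},x_1,\dots,x_n\}$'' required by the statement at level $k+1$. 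The same separation ensures that when $D_n^1$ replaces $x_i$ by $qx_i$ the points $qx_i$ sit strictly inside (hence off) the contours $C_c$, so the slices $\prod_m\tilde f_z(qx_i)$ are well defined and the interchange of $D_n^1$ with the $z$-integrals is legitimate. Thus the difficulty is entirely in propagating a consistent family of nested contours that simultaneously meets the hypotheses of Proposition~\ref{prop6} at each stage; for the statement valid for all $t,q$ and general parameters one is referred to Proposition~\ref{prop8FULL}.
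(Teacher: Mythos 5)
Your proof is correct and follows essentially the same route as the paper: both arguments iterate Proposition~\ref{prop6} with $r=1$, absorbing the accumulated factors $\prod_{c}\frac{tz_c-u}{z_c-u}$ into a modified $\tilde f_z$, and both handle the contour nesting by arranging (in the regime of small $t$ and $x_i/x_j$ close to $1$) that the existing contours enclose $\{x_j,qx_j\}$ while the new, innermost contour tightly encircles $\{x_j\}$. The paper phrases the contour step as deforming the existing $z$-contours outward, whereas you phrase it as shrinking the new one inward — these are equivalent given that the existing contours are first enlarged to contain $\{qx_j\}$, which you implicitly assume when asserting that $qx_i$ sits strictly inside each $C_c$.
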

\begin{proof}
This follows from  sequential application of Proposition~\ref{prop6}. For $k=1$ the statement coincides with Proposition~\ref{prop6}. We call the integration variable $z_1$ and deform the integration contour to contain $\{x_j,qx_j\}_{j=1}^n$. Then
the $x$-dependent part of the integrand satisfies the assumption of Proposition~\ref{prop6} with $\tilde{f}(u)=f(u)(tz_1-u)/(z_1-u)$, and we apply Proposition~\ref{prop6} to the integrand. Call the new integration variable $z_2$ and iterate the procedure.
\end{proof}
\begin{remark}\label{rem9}
In the Schur measure case, one can apply products of operators $D_n^1$ with {\it different} values of $q=t$. This would allow to derive a formula for the correlation functions of the Schur measure. Specifically observe that when $q=t$
\begin{equation*}
D_n^1 P_{\lambda} = e_1(q) P_{\lambda}, \qquad e_1 = q^{\lambda_1+n-1} + q^{\lambda_2+n-2}+\cdots q^{\lambda_n}.
\end{equation*}
Calling $(\lambda_1+n-1,\ldots, \lambda_n) = (\ell_1,\ldots, \ell_n)$ we thus have
\begin{equation*}
\langle e_1 \rangle  = \sum_{\ell\in \Zgeqzero}\rho_1(\ell) q^\ell
\end{equation*}
where $\rho_1$ is the first correlation function. We likewise find that
\begin{equation*}
\langle e_1(q_1)e_1(q_2)\cdots e_1(q_s)\rangle = \left\langle \prod_{j=1}^{s}(q_j^{\ell_1}+q_j^{\ell_2}+\cdots +q_j^{\ell_n})\right\rangle,
\end{equation*}
is easily expressible via the first $s$ correlation. From this one might recover all of the correlation functions of the Schur measure.
\end{remark}

\begin{proposition}\label{prop8FULL}
Fix $k\ge 1$ and $r_{\alpha}\geq 1$ for $1\leq \alpha\leq k$. Assume that $F(u_1,\dots,u_n)=f(u_1)\cdots f(u_n)$. Take $x_1,\dots,x_n >0$ and assume that $f(u)$ is holomorphic and nonzero in a complex neighborhood of an interval in $\R$ that contains $\{q^i x_j\mid i=0,\ldots,k;\,j=1\ldots,n\}$. Then
\begin{eqnarray*}
\left(\prod_{\alpha=1}^{k}D_n^{r_{\alpha}} F\right)(x) &=& F(x) \cdot \prod_{\alpha=1}^{k} \frac{ (t-1)^{-r_{\alpha}}}{(2\pi \iota)^{r_{\alpha}} r_{\alpha}!} \oint\cdots \oint \prod_{1\leq \alpha<\beta\leq k} \left(\prod_{i=1}^{r_{\alpha}}\prod_{j=1}^{r_{\beta}} \frac{(tz_{\alpha,i}-q z_{\beta,j})(z_{\alpha,i}-z_{\beta,j})}{(z_{\alpha,i}-qz_{\beta,j})(tz_{\alpha,i}-z_{\beta,j})}\right)\\
& & \times  \prod_{\alpha=1}^{k}\left( \prod_{i\neq j =1}^{r_\alpha} \frac{z_{\alpha,i}-z_{\alpha,j}}{tz_{\alpha,i}-z_{\alpha,j}} \left(\prod_{j=1}^{r_{\alpha}}\frac{(tz_{\alpha,j}-x_1)\cdots(tz_{\alpha,j}-x_{n})}{(z_{\alpha,j}-x_1)\cdots(z_{\alpha,j}-x_{n})} \,\frac{f(qz_{\alpha,j})}{f(z_{\alpha,j})}\,\frac{dz_{\alpha,j}}{z_{\alpha,j}}\right)\right)
\end{eqnarray*}
where the $z_{\alpha,j}$-contour contains $\{qz_{\beta,i}\}$ for all $i\in\{1,\ldots, r_{\beta}\}$ and $\beta>\alpha$, as well as $\{x_1,\ldots,x_n\}$ and no other singularities.
\end{proposition}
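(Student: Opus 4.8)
The plan is to prove the identity by iterating Proposition~\ref{prop6}, exactly as in the proof of Proposition~\ref{prop8} but with the orders $r_\alpha$ kept general (the case $r_\alpha\equiv 1$ being Proposition~\ref{prop8} itself). The structural fact that makes the iteration go is that, for any fixed values of the integration variables, the right-hand side of~(\ref{tag9}) is $F(x)$ times a \emph{multiplicative} function of $x_1,\dots,x_n$; hence $(D_n^{r}F)(x)$ is again of the form $\prod_{m=1}^{n}\tilde f(x_m)$ (times the integral) and Proposition~\ref{prop6} applies to it a second time. For the base case $k=1$ one only rewrites Proposition~\ref{prop6} in the product form appearing in the statement: by the Cauchy determinant identity together with the factorization $\prod_{k,\ell=1}^{r}(tz_k-z_\ell)=(t-1)^{r}\big(\prod_k z_k\big)\prod_{k\ne\ell}(tz_k-z_\ell)$, the determinant $\det[(tz_k-z_\ell)^{-1}]$ becomes $\prod_{i\ne j}\frac{z_i-z_j}{tz_i-z_j}$ times the scalar $(t-1)^{-r}/\prod_k z_k$ (and the power of $t$ collected from $A_I(x;t)$), which is exactly the $k=1$ instance of the claimed formula.

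For the inductive step, assume the formula holds for $D_n^{r_1},\dots,D_n^{r_{k-1}}$ and apply it to $F$. The result is $F(x)$ times an $(r_1+\dots+r_{k-1})$-fold contour integral whose integrand, with the variables $\{z_{\alpha,i}\}_{1\le\alpha\le k-1}$ held fixed, is $\prod_{m=1}^{n}h(x_m)$ with $h(u)=f(u)\prod_{\alpha=1}^{k-1}\prod_{i=1}^{r_\alpha}\frac{tz_{\alpha,i}-u}{z_{\alpha,i}-u}$. Applying Proposition~\ref{prop6} with $r=r_k$ and $f$ replaced by $h$ produces: the prefactor $\prod_m h(x_m)=F(x)\prod_{\alpha=1}^{k-1}\prod_{i=1}^{r_\alpha}\prod_{m=1}^{n}\frac{tz_{\alpha,i}-x_m}{z_{\alpha,i}-x_m}$, which carries the $x$-dependence of the first $k-1$ groups; a fresh integral over $z_{k,\cdot}$ carrying the $x$-dependence and the intra-group Cauchy factor of the $k$-th group; and, from the ratio $h(qz_{k,j})/h(z_{k,j})$, precisely the factor $\frac{f(qz_{k,j})}{f(z_{k,j})}\prod_{\alpha=1}^{k-1}\prod_{i=1}^{r_\alpha}\frac{(tz_{\alpha,i}-qz_{k,j})(z_{\alpha,i}-z_{k,j})}{(z_{\alpha,i}-qz_{k,j})(tz_{\alpha,i}-z_{k,j})}$, which supplies all cross terms with $\beta=k$. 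Reindexing $z_{k,\cdot}$ and collecting constants gives the asserted formula for $k$ operators.

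The main obstacle is the contour bookkeeping, not the algebra. At each step Proposition~\ref{prop6} requires the function being fed in (the $h$ above, which has poles at the previously introduced $z_{\alpha,i}$ and at $z_{\alpha,i}/t$) to be holomorphic and nonzero near an interval containing $\{x_m,qx_m\}$, and it returns $z_{k,j}$-contours around $\{x_1,\dots,x_n\}$ enclosing no other singularity of the new integrand; to set up the next step one must then deform those contours outward so as also to enclose $\{qx_m\}$, checking that the spurious poles at $z_{\alpha,i}$, $z_{\alpha,i}/t$, $z_{\alpha,i}/q$ are not crossed, so that the final nested configuration --- each $z_{\alpha,j}$-contour containing $\{qz_{\beta,i}:\beta>\alpha\}$ and $\{x_1,\dots,x_n\}$ and nothing else --- is actually realizable. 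Because $t,q\in(0,1)$ those poles sit strictly on the outer side, and, per the Remark preceding Proposition~\ref{prop6}, for $t$ sufficiently small and the ratios $x_i/x_j$ sufficiently close to $1$ (so that the clusters $\{q^ix_m\}_{i=0}^{k}$ are genuinely separated) the required family of contours exists; this existence is to be verified in parallel with the algebraic induction, exactly as in the proof of Proposition~\ref{prop8}. The remaining bookkeeping of the overall constant --- the $(t-1)^{-r_\alpha}$, the $1/r_\alpha!$ from symmetrizing over the $r_\alpha$-element index sets $I$, and the powers of $t$ from $A_I(x;t)$ --- is routine from the Cauchy determinant evaluation.
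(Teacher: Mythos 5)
Your approach --- iterating Proposition~\ref{prop6} exactly as in the proof of Proposition~\ref{prop8}, using the multiplicativity in the $x$-variables to feed back in $h(u)=f(u)\prod_{\alpha,i}\frac{tz_{\alpha,i}-u}{z_{\alpha,i}-u}$, and tracking the contours needed at each stage --- is precisely what the paper intends, since its proof consists solely of the remark that the argument is similar to those of Propositions~\ref{prop6} and~\ref{prop8}. One point to make explicit rather than parenthetical: the Cauchy determinant expansion of $\det[(tz_k-z_\ell)^{-1}]$ produces a factor $t^{r(r-1)/2}$ in each block (nontrivial once some $r_\alpha\ge 2$; check $r=2$ directly), so a careful residue evaluation gives the stated formula multiplied by $\prod_\alpha t^{r_\alpha(r_\alpha-1)/2}$ --- a constant-bookkeeping discrepancy with the printed statement (which disappears after the $t\to 0$ renormalization used for Proposition~\ref{prop8FULLtzero}) that does not affect the soundness of your induction, but which you should carry through rather than assert an exact match.
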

The proof is similar to those of Propositions \ref{prop6} and \ref{prop8}.

Let us describe another family of difference operators that are diagonalized by Macdonald polynomials.

\begin{proposition}\label{prop10}
With the notation $y_m=x_mq^{\eta_m}$ for $m=1,\ldots,n$, $(a)_{\infty} = (a;q)_{\infty}$ and $x=(x_1,\ldots, x_n)$, $y=(y_1,\ldots, y_n)$ we have
\begin{eqnarray*}
\nonumber&&\sum_{\eta_1,\ldots,\eta_n=0}^{\infty} \prod_{i=1}^n \left(z^{\eta_i}t^{(i-1)\eta_i}\frac{(t)_{\infty}(q^{\eta_i+1})_{\infty}}{(tq^{\eta_i})_{\infty}(q)_{\infty}}\right) \prod_{1\le j<k\le n}\frac{({y_j}y_k^{-1})_\infty}{({x_j}y_k^{-1})_{\infty}}\frac{(\tfrac{q}{t}\,{x_j}y_k^{-1})_\infty}{(\tfrac{q}{t}\,{x_j}x_k^{-1})_{\infty}}
\frac{(t{x_j}x_k^{-1})_\infty}{(t{y_j}x_k^{-1})_{\infty}}\frac{(q{y_j}x_k^{-1})_\infty}{(q{y_j}y_k^{-1})_{\infty}}P_\lambda(y)\\
\nonumber&&=\prod_{i=1}^n \frac{(q^{\lambda_i}t^{n-i+1}z;q)_\infty}{(q^{\lambda_i}t^{n-i}z;q)_\infty}\, P_\lambda(x)\,.
\end{eqnarray*}
Here $z$ is a formal variable, and the formula can be viewed as an identity of formal power series in $z$.
\end{proposition}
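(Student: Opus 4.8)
The plan begins by reading off what the eigenvalue on the right-hand side actually is. By the Cauchy identity (\ref{eqn12}), applied with the first alphabet specialized to the $n$ numbers $q^{\lambda_1}t^{n-1},\dots,q^{\lambda_n}$ and the second alphabet a single variable $z$,
\[
\prod_{i=1}^{n}\frac{(q^{\lambda_i}t^{n-i+1}z;q)_\infty}{(q^{\lambda_i}t^{n-i}z;q)_\infty}
=\Pi\bigl(q^{\lambda_1}t^{n-1},\dots,q^{\lambda_n};z\bigr)
=\sum_{r\ge 0}g_r\bigl(q^{\lambda_1}t^{n-1},\dots,q^{\lambda_n}\bigr)\,z^{r},
\]
where the last equality uses $Q_{(r)}=g_r$ and that $Q_\mu$ of a single variable vanishes unless $\mu=(r)$. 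So Proposition~\ref{prop10} is the statement that, order by order in $z$, the operator on the left is a $q$-difference operator diagonalized by Macdonald polynomials with eigenvalue $g_r(q^{\lambda_1}t^{n-1},\dots,q^{\lambda_n})$ on $P_\lambda$ — the "$g_r$-analogue'' of the operators $D_n^{r}$ of Proposition~\ref{prop5}, whose eigenvalue is $e_r$ of the same arguments.

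That such operators exist and have this eigenvalue is essentially forced. Writing $D_n(X):=\sum_{r\ge 0}X^{r}D_n^{r}$, Proposition~\ref{prop5} gives $D_n(X)P_\lambda=\prod_{i}(1+Xq^{\lambda_i}t^{n-i})P_\lambda$, and the $q$-binomial theorem rewrites the target eigenvalue as $\prod_{k\ge 0}\prod_{i}\bigl(1-q^{\lambda_i}t^{n-i+1}zq^{k}\bigr)\big/\bigl(1-q^{\lambda_i}t^{n-i}zq^{k}\bigr)$. Hence the operator
\[
\mathcal{E}_n(z):=\Bigl(\prod_{k\ge 0}D_n(-tzq^{k})\Bigr)\circ\Bigl(\prod_{k\ge 0}D_n(-zq^{k})\Bigr)^{-1},
\]
read as a formal power series in $z$ (each factor is $I+O(z)$, the $D_n^{r}$ commute, and at each order in $z$ only finitely many of them enter), preserves $\Sym_n$ and satisfies $\mathcal{E}_n(z)P_\lambda=\prod_{i}\tfrac{(q^{\lambda_i}t^{n-i+1}z;q)_\infty}{(q^{\lambda_i}t^{n-i}z;q)_\infty}\,P_\lambda$. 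The remaining — and genuine — content of the proposition is therefore to identify $\mathcal{E}_n(z)$ with the explicit difference operator $\widehat{\mathcal{E}}_n(z)$ given by the displayed sum over $\eta\in\Zgeqzero^{n}$, i.e. to show directly that $\widehat{\mathcal{E}}_n(z)P_\lambda$ equals $c_\lambda(z)\,P_\lambda$.

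For this I would argue by induction on $n$. The base case $n=1$ has no pairwise factors and is exactly the $q$-binomial theorem: $P_{(\lambda_1)}(x_1q^{\eta_1})=x_1^{\lambda_1}q^{\lambda_1\eta_1}$ and $\sum_{\eta_1\ge 0}\tfrac{(t;q)_{\eta_1}}{(q;q)_{\eta_1}}(zq^{\lambda_1})^{\eta_1}=\tfrac{(tzq^{\lambda_1};q)_\infty}{(zq^{\lambda_1};q)_\infty}$. For the inductive step, peel off the last variable via the one-variable Pieri expansion $P_\lambda(y_1,\dots,y_n)=\sum_{\mu\prec\lambda}\psi_{\lambda/\mu}\,y_n^{|\lambda-\mu|}P_\mu(y_1,\dots,y_{n-1})$ from (\ref{7.14'}), perform the $\eta_n$-summation together with the factors that couple $y_n$ to $y_1,\dots,y_{n-1}$ (a Vandermonde/$q$-binomial type summation), recognize the surviving operator in $y_1,\dots,y_{n-1}$ as the $(n-1)$-variable instance, apply the inductive hypothesis to $P_\mu(x_1,\dots,x_{n-1})$, and finally resum over $\mu$ using the Pieri rule (\ref{piereEqn})–(\ref{piereFormPsi}) for multiplication by $g_r$. (An alternative route avoids induction: check that $\widehat{\mathcal{E}}_n(z)$ — which at each order in $z$ has rational coefficients, since the infinite $q$-Pochhammer products in the $x_j/x_k$ telescope — commutes with $D_n^{1}$; simplicity of the spectrum of $D_n^{1}$ on each $\Sym_n^{k}$ for generic $q,t$ then forces diagonality, and the eigenvalue is read off the dominant monomial $x^{\lambda}$ in $P_\lambda=m_\lambda+(\text{lower})$, where the shifts contribute $q^{\sum_i\lambda_i\eta_i}$ and the prefactor its constant term in the dominant chamber, reducing again to the $q$-binomial theorem; the general case follows by continuity in $q,t$.)

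The main obstacle in either route is the same bookkeeping. The pairwise factors $\prod_{1\le j<k\le n}$ are precisely what redistribute the powers of $t$ — note the coefficient carries $t^{(i-1)\eta_i}$, whereas a naïve expansion of the right-hand side would produce $t^{(n-i)\eta_i}$ — so the identity is not a term-by-term match but a cascade of $q$-Vandermonde/$q$-binomial summations, and carrying them out (using the Cauchy determinant manipulation exactly as in the proof of Proposition~\ref{prop6} to turn residues into the summands indexed by $\eta$) is the delicate part. Everything is done at the level of formal power series in $z$, so convergence is never at issue: at each order $z^{r}$ one only ever meets shifts with $\sum_i\eta_i=r$ and finitely many Macdonald operators.
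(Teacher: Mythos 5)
Your setup is clean and correct as far as it goes: the right-hand eigenvalue is indeed $\sum_{r}g_r(q^{\lambda_1}t^{n-1},\dots,q^{\lambda_n})z^r$, the abstract operator $\mathcal{E}_n(z)$ built out of the $D_n^r$'s has exactly that spectrum, and the real content of the proposition is identifying the explicit sum over $\eta$ with that operator. But at that point the proposal stops: it sketches two possible strategies (branching-rule induction, or commutation with $D_n^1$ plus generic simplicity of the spectrum) and then explicitly defers the core computation — the ``cascade of $q$-Vandermonde/$q$-binomial summations'' — to ``bookkeeping.'' That bookkeeping \emph{is} the proof; neither route is carried out, and it is not clear the inductive summation over $\eta_n$ against the coupling factors actually collapses to a single $q$-hypergeometric identity without substantial extra structure.

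The paper's proof takes a genuinely different and much shorter path, which you should compare with. It applies the evaluation homomorphism $u_\mu:x_i\mapsto q^{\mu_i}t^{n-i}$ to both sides and invokes index--variable duality for Macdonald polynomials, $u_\mu(P_\lambda)=\tfrac{u_0(P_\lambda)}{u_0(P_\mu)}u_\lambda(P_\mu)$ (Macdonald VI.(6.6)). Combined with the Pieri formula $g_m P_\mu=\sum_{\nu}\varphi_{\nu/\mu}P_\nu$, this turns the eigenvalue equation into the single explicit identity
\[
t^{n(\eta)}\prod_{i=1}^n\frac{(t)_\infty(q^{\eta_i+1})_\infty}{(tq^{\eta_i})_\infty(q)_\infty}\;
u_\mu\!\Bigl(\text{pairwise factors}\Bigr)=\frac{u_0(P_\nu)}{u_0(P_\mu)}\,\varphi_{\nu/\mu},
\qquad \eta_i=\nu_i-\mu_i,
\]
which is verified by rewriting $u_0(P_\nu)/u_0(P_\mu)$ via the $\Delta^+$ formula (Macdonald VI.(6.11)) and massaging the explicit product formula (\ref{piereFormPhi}) for $\varphi_{\nu/\mu}$ into the $u_\mu$-specialization of the kernel. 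This completely sidesteps any operator manipulation: no induction on $n$, no commutator, no steepest-manipulation of contours. The duality trick is standard for establishing difference-operator identities for Macdonald polynomials, and it is worth internalizing, because it reduces a kernel identity to a one-line product-formula check. If you want to salvage your second route, the missing step is to actually prove $[\widehat{\mathcal{E}}_n(z),D_n^1]=0$, which is itself a non-trivial contour/residue computation — likely comparable in effort to, but less transparent than, the duality argument.
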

The coefficient of $z^r$ in the right-hand side is the polynomial $g_r=Q_{(r)}$ evaluated at the variables $\left\{q^{\lambda_i}t^{n-i}\right\}_{i=1}^n$. The formula is dual to the first Pieri formula of equation (\ref{piereEqn}) in the same way as Proposition~\ref{prop5} is dual to the last Pieri formula of equation (\ref{piereEqn}) (see Section~\ref{pieresec} or \cite[VI,(6.7)]{M} for detailed explanations). While the left-hand side can be viewed as an application of a $q$-integral operator to $P_\la$, the coefficient of $z^r$ constitutes a difference operator of order $r$.

\begin{remark}
We were unable to find this formula in the literature. It is similar to Theorem I in \cite{OkShifted}, however, that theorem increases the number of variables in the polynomial. It is plausible that one can recover Proposition~\ref{prop10} from Okounkov's raising operator, but we chose to give a direct proof.
\end{remark}

\begin{proof}[Proof of Proposition~\ref{prop10}]
Following \cite[VI.6]{M} for each partition $\mu$ of length less than or equal to $n$, define a homomorphism (i.e. specialization)
\begin{equation*}
u_{\mu}:\C[x_1,\ldots, x_n] \to \C, \qquad \textrm{by} \quad u_{\mu}(x_i) = q^{\mu_i} t^{n-i},\quad 1\leq i\leq n.
\end{equation*}
We extend $u_\mu$ to rational functions in $x_1,\ldots, x_n$ for which the specialized denominator does not vanish. It follows from equation (\ref{eqn12}) that
\begin{equation*}
\prod_{i=1}^{n} \frac{(q^{\lambda_i}t^{n-i+1}z)_{\infty}}{(q^{\lambda_i}t^{n-i}z)_{\infty}} = u_{\lambda}\left(\sum_{m\geq 0} g_m(x;q,t)z^m\right),
\end{equation*}
hence the coefficient of $z^m$ on the right-hand side of our desired identity is $u_{\lambda}(g_m)P_{\lambda}(x_1,\ldots, x_n)$.

It suffices to prove the identity under application of $u_{\mu}$ to both sides for arbitrary $\mu$. For the right-hand side we obtain $u_{\lambda}(g_m)u_{\mu}(P_{\lambda})$ as the coefficient of $z^m$.

Using \cite[VI,(6.6)]{M} we have the index-variable duality relation \index{Macdonald symmetric functions!index-variable duality}
\begin{equation*}
u_{\mu}(P_{\lambda}) = \frac{u_0(P_{\lambda})}{u_0(P_{\mu})}u_{\lambda}(P_{\mu}).
\end{equation*}
Hence using the first of the Pieri formulas (\ref{skewformulas1})
\begin{equation*}
u_{\lambda}(g_m)u_{\mu}(P_{\lambda}) = \frac{u_0(P_{\lambda})}{u_0(P_{\mu})}u_{\lambda}(g_m P_{\mu}) =  \frac{u_0(P_{\lambda})}{u_0(P_{\mu})} \sum_{\nu\succ\mu: |\nu/\mu|=m} \varphi_{\nu/\mu} u_{\lambda}(P_{\nu}) = \sum_{\nu\succ\mu: |\nu/\mu|=m} \frac{u_0(P_{\nu})}{u_0(P_{\mu})}\varphi_{\nu/\mu} u_{\nu}(P_{\lambda}).
\end{equation*}
The sum over $\nu$ can be restricted to those for which $\nu/\mu$ is a horizontal strip.

Set $n(\lambda) = \sum_{i=1}^{\ell(\lambda)} (i-1)\lambda_i$. What remains then is to show that for $\eta_i = \nu_i-\mu_i$ we have the following identity
\begin{equation*}
t^{n(\eta)}\prod_{i=1}^{n}\frac{(t)_{\infty}(q^{\eta_i+1})_{\infty}}{(tq^{\eta_i})_{\infty}(q)_{\infty}} u_{\mu}\left(\prod_{1\le j<k\le n}\frac{({y_j}y_k^{-1})_\infty}{({x_j}y_k^{-1})_{\infty}}\frac{(\tfrac{q}{t}\,{x_j}y_k^{-1})_\infty}{(\tfrac{q}{t}\,{x_j}x_k^{-1})_{\infty}}
\frac{(t{x_j}x_k^{-1})_\infty}{(t{y_j}x_k^{-1})_{\infty}}\frac{(q{y_j}x_k^{-1})_\infty}{(q{y_j}y_k^{-1})_{\infty}}\right) =\frac{u_0(P_{\nu})}{u_0(P_{\mu})}\varphi_{\nu/\mu}.
\end{equation*}

Following \cite[VI,(6.11)]{M} we find that if we set
\begin{equation*}
\Delta^+ = \prod_{1\leq i<j\leq n} \frac{(x_i/x_j)_{\infty}}{(tx_i/x_j)_{\infty}}
\end{equation*}
then
\begin{equation*}
\frac{u_0(P_{\nu})}{u_0(P_{\mu})} = t^{n(\nu)-n(\mu)} \frac{ u_{\nu}(\Delta^+)}{ u_{\mu}(\Delta^+)} = t^{n(\nu)-n(\mu)} u_{\mu}\left(\prod_{1\leq i<j\leq n }\frac{(q^{\eta_i-\eta_j}x_i/x_j)_{\infty} }{(tq^{\eta_i-\eta_j}x_i/x_j)_{\infty}}\,\frac{(tx_i/x_j)_{\infty}}{(x_i/x_j)_{\infty}}\right).
\end{equation*}
Recall the formula for $\varphi_{\nu/\mu}$ given in (\ref{piereFormPhi}). By evaluating the first ratio of $f$'s when $i=j$ and then shifting indices for the second ratio we find that
\begin{equation*}
\varphi_{\nu/\mu} = \prod_{i=1}^{n} \frac{f(1)}{f(q^{\eta_i})} \prod_{1\leq i<j\leq n} \frac{f(q^{\nu_i-\nu_j}t^{j-i})}{f(q^{\nu_i-\mu_j}t^{j-i})}\frac{f(q^{\mu_i-\mu_j}t^{j-i-1})}{f(q^{\mu_i-\nu_j}t^{j-i-1})}
\end{equation*}
where $f(u) = (tu)_{\infty} / (qu)_{\infty}$.

We can, however, recognize that the product of $i<j$ can be written as
\begin{equation*}
u_{\mu}\left(\prod_{1\leq i<j\leq n} \frac{f(q^{\eta_i-\eta_j}x_i/x_j)}{f(q^{\eta_i}x_i/x_j)}\frac{f(t^{-1}x_i/x_j)}{f(q^{-\eta_j}t^{-1}x_i/x_j)} \right)
\end{equation*}
One now immediately sees that the required identity follows by substituting $y_m=x_m q^{\eta_m}$ as necessary.
\end{proof}


\section{Dynamics on Macdonald processes}\label{dynamicsSec}

\subsection{Commuting Markov operators}\label{commMarkovOps}
Let $y,z,v$ be Macdonald nonnegative specializations of $\Sym$. Set
\begin{align*}
p_{\lambda\mu}^{\uparrow}(y;z)&:=\frac 1{\Pi(y;z)}\frac{P_{\mu}(y)}{P_{\la}(y)}\,Q_{\mu/\la}(z),& \la,\mu\in \Y(y),\qquad\qquad&\\
p_{\lambda\nu}^{\downarrow}(y;v)&:=\frac{P_{\nu}(y)}{P_{\la}(y,v)}\,P_{\la/\nu}(v), &\la\in\Y(y,v),\ \nu\in\Y(y),\qquad\qquad&
\end{align*}
\glossary{$p_{\lambda\mu}^{\uparrow}$}\glossary{$p_{\lambda\nu}^{\downarrow}$}
where recall $\Y(\rho)=\{\kappa\in\Y\mid P_\kappa(\rho)>0\}$, and for the first definition we assume that $\Pi(y;z)=\sum_{\kappa\in\Y} P_\kappa(y)Q_\kappa(z)<\infty$.

Equations (\ref{tag1a}) and (\ref{tag6}) imply that the matrices
\begin{equation*}
p^\uparrow(y;z)=\bigl[p_{\lambda\mu}^{\uparrow}(y;z)\bigr]_{\la,\mu\in\Y(y)}\quad
\text{and}\quad p^\downarrow(y;v)= \bigl[p_{\lambda\nu}^{\downarrow}(y;v)\bigr]_{\la\in\Y(y,v),\nu\in\Y(y)}
\end{equation*}
are stochastic:
\begin{equation*}
\sum_{\mu\in\Y(y)} p_{\lambda\mu}^{\uparrow}(y;z)=\sum_{\nu\in\Y(y)} p_{\lambda\nu}^{\downarrow}(y;v)=1.
\end{equation*}

It also follows from equations (\ref{tag1a}), (\ref{tag6}) and (\ref{tag7}) that $p^\uparrow$ and $p^\downarrow$ act well
on the Macdonald measures:
\begin{equation}\label{tag9a}
\MM(x;y)p^{\uparrow}(y;z)=\MM(x,z;y),\qquad \MM(x;y,v)p^{\downarrow}(y;v)=\MM(x;y).
\end{equation}

Observe that $\MM(\rho_1;\rho_2)=\MM(\rho_2;\rho_1)$, so the parameters of the Macdonald measures in these relations can also be permuted.

\begin{proposition}\label{prop11}
Let $y,z,z_1,z_2,v_1,v_2$ be nonnegative specializations of $\Sym$. Then we have the commutativity relations
\begin{eqnarray*}
p^{\uparrow}(y;z_1)p^{\uparrow}(y;z_2)&=&
p^{\uparrow}(y;z_2)p^{\uparrow}(y;z_1),\\
p^{\downarrow}(y,v_2;v_1)p^{\downarrow}(y;v_2)&=&
p^{\downarrow}(y,v_1;v_2)p^{\downarrow}(y;v_1),\\
p^{\uparrow}(y,v;z)p^{\downarrow}(y;v)&=&
p^{\downarrow}(y;v)p^{\uparrow}(y;z),
\end{eqnarray*}
where for the first relation we assume $\Pi(y;z_1,z_2)<\infty$, and for the third relation we assume $\Pi(y,v;z)<\infty$.
\end{proposition}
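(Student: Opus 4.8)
The plan is to verify each of the three relations by computing matrix entries directly, collapsing the single intermediate sum with one of the generalized skew-function identities (\ref{tag1a}), (\ref{tag6}), (\ref{tag7}), and then observing that the resulting expression is manifestly symmetric under the interchange of the two specializations in question — which is exactly the claimed commutativity.

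For the first relation I would write $\bigl(p^{\uparrow}(y;z_1)p^{\uparrow}(y;z_2)\bigr)_{\lambda\nu}=\sum_{\mu}p^{\uparrow}_{\lambda\mu}(y;z_1)p^{\uparrow}_{\mu\nu}(y;z_2)$. The factors $P_\mu(y)$ cancel, the scalar prefactor becomes $\bigl(\Pi(y;z_1)\Pi(y;z_2)\bigr)^{-1}\tfrac{P_\nu(y)}{P_\lambda(y)}$, and the surviving sum $\sum_\mu Q_{\mu/\lambda}(z_1)Q_{\nu/\mu}(z_2)$ equals $Q_{\nu/\lambda}(z_1,z_2)$ by (\ref{tag7}). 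Since $\Pi(y;z_1)\Pi(y;z_2)=\Pi(y;(z_1,z_2))$ — a consequence of the exponential formula (\ref{PIeqn}) and $p_n(z_1,z_2)=p_n(z_1)+p_n(z_2)$ — and $Q_{\nu/\lambda}(z_1,z_2)$ depends only on the union, the entry is invariant under $z_1\leftrightarrow z_2$. The second relation is entirely parallel: in $\bigl(p^{\downarrow}(y,v_2;v_1)p^{\downarrow}(y;v_2)\bigr)_{\lambda\tau}$ the factors $P_\nu(y,v_2)$ cancel, leaving $\tfrac{P_\tau(y)}{P_\lambda(y,v_1,v_2)}\sum_\nu P_{\lambda/\nu}(v_1)P_{\nu/\tau}(v_2)$, and (\ref{tag1a}) turns the sum into $P_{\lambda/\tau}(v_1,v_2)$, which is symmetric in $v_1\leftrightarrow v_2$.

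For the third (mixed) relation I would compute both sides. After cancelling $P_\mu(y,v)$, the left side $\bigl(p^{\uparrow}(y,v;z)p^{\downarrow}(y;v)\bigr)_{\lambda\nu}$ becomes $\Pi(y,v;z)^{-1}\tfrac{P_\nu(y)}{P_\lambda(y,v)}\sum_\mu Q_{\mu/\lambda}(z)P_{\mu/\nu}(v)$; after cancelling $P_\kappa(y)$, the right side $\bigl(p^{\downarrow}(y;v)p^{\uparrow}(y;z)\bigr)_{\lambda\nu}$ becomes $\Pi(y;z)^{-1}\tfrac{P_\nu(y)}{P_\lambda(y,v)}\sum_\kappa P_{\lambda/\kappa}(v)Q_{\nu/\kappa}(z)$. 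Using $\Pi(y,v;z)=\Pi(y;z)\Pi(v;z)$, equality of the two sides reduces precisely to $\sum_\mu P_{\mu/\nu}(v)Q_{\mu/\lambda}(z)=\Pi(v;z)\sum_\kappa Q_{\nu/\kappa}(z)P_{\lambda/\kappa}(v)$, which is the generalized Cauchy identity (\ref{tag6}) with $\rho_1=v$, $\rho_2=z$, and the roles of $\nu,\hat\nu$ there played by $\nu,\lambda$.

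The computations are routine; the points requiring care are bookkeeping rather than conceptual. First, one must check each composition is legitimate, i.e. that the intermediate index ranges over the $\Y(\cdot)$ dictated by the definitions of $p^{\uparrow}$ and $p^{\downarrow}$ and that all indices lie in the appropriate supports — this is automatic since skew $P$- and $Q$-functions vanish outside the relevant supports. Second, convergence: the hypotheses $\Pi(y;z_1,z_2)<\infty$ and $\Pi(y,v;z)<\infty$ are exactly what make the $\Pi$-prefactors meaningful and justify the interchange of summations, and I would note that $\Pi(y,v;z)<\infty$ forces both $\Pi(y;z)<\infty$ and $\Pi(v;z)<\infty$. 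So the only mild obstacle is making these support and convergence remarks precise; the algebraic core of each case is a single application of (\ref{tag1a}), (\ref{tag6}), or (\ref{tag7}).
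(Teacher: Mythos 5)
Your proof is correct and follows essentially the same approach as the paper's: compute matrix entries, cancel the telescoping $P$-factors, and collapse the intermediate sum with (\ref{tag1a}), (\ref{tag7}), or (\ref{tag6}) respectively. The paper only writes out the third relation in full (noting the support bookkeeping you flag — extending the sum from $\Y(y,v)$ to $\Y$ and restricting from $\Y$ to $\Y(y)$ — explicitly), but your three sketches match its computation step for step.
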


\begin{proof} The arguments for all three identities are similar; we only give a proof of the third one (which is in a way the hardest and will be used later on). We have
\begin{eqnarray*}
\sum_{\mu}
p_{\la\mu}^{\uparrow}(y,v;z)p^{\downarrow}_{\mu\nu}(y;v)=
\frac{1}{\Pi(y,v;z)}\sum_{\mu\in\Y(y,v)}
\frac{P_\mu(y,v)}{P_\la(y,v)}Q_{\mu/\la}(z)\,\frac{P_\nu(y)}{P_\mu(y,v)}P_{\mu/\nu}(v)\\=
\frac{1}{\Pi(y,v;z)}\frac{P_\nu(y)}{P_\la(y,v)}\sum_{\mu\in\Y}
Q_{\mu/\la}(z)P_{\mu/\nu}(v)=\frac{\Pi(v;z)}{\Pi(y,v;z)}
\frac{P_\nu(y)}{P_\la(y,v)} \sum_{\kappa\in\Y}
P_{\la/\kappa}(v)Q_{\nu/\kappa}(z)\\=
\frac{1}{\Pi(y;z)}\sum_{\kappa\in\Y(y)}
\frac{P_\kappa(y)}{P_\la(y,v)}P_{\la/\kappa}(v)\,\frac{P_\nu(y)}{P_\kappa(y)}Q_{\nu/\kappa}(z)=
\sum_{\kappa\in\Y(y)}
p_{\la\kappa}^{\downarrow}(y;v)p^{\uparrow}_{\kappa\nu}(y;z),
\end{eqnarray*}
where along the way we extended the summation in $\mu$ from $\Y(y,v)$ to $\Y$ because $P_\nu(y)P_{\mu/\nu}(v)>0$ implies
$P_\mu(y,v)>0$ by equation (\ref{tag1a}); we used equation (\ref{tag6}) to switch from $\mu$ to $\kappa$, and finally we restricted the summation in $\kappa$ from $\Y$ to $\Y(y)$ because $P_{\nu}(y)Q_{\nu/\kappa}(z)>0$ implies $\kappa\subset\nu$ and $P_{\kappa}(y)>0$.
\end{proof}

For a Macdonald nonnegative specialization $y$ define a matrix $q^\uparrow=[q^\uparrow_{\la\mu}]_{\la,\mu\in \Y(y)}$ \glossary{$q^\uparrow$} by
\begin{equation*}
q_{\la\mu}^\uparrow(y)=\begin{cases}
\dfrac{P_{\mu}(y)}{P_{\la}(y)}\,\varphi_{\mu/\la},& \mu/\la
\text{ is a single box},\\
0,& \mu\ne \la \text{ and }\mu\ne \la\cup\{\square\},\\
-\sum_{\nu\ne \la} q_{\la\nu}^\uparrow(y),&\mu=\la,
\end{cases}
\end{equation*}
where $\varphi_{\la/\mu}$ is defined in Section~\ref{pieresec}. Then the off-diagonal entries of this matrix are nonnegative and $\sum_{\mu}q_{\la\mu}^\uparrow(y)=0$, hence $q^\uparrow$ could serve as a matrix of transition rates for a continuous time Markov process.

\begin{corollary}\label{cor12}
Let $y,v$ be nonnegative specializations of $\Sym$. Then
\begin{equation*}
q^{\uparrow}(y,v)p^{\downarrow}(y;v)= p^{\downarrow}(y;v)q^{\uparrow}(y).
\end{equation*}
\end{corollary}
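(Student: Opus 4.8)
\section*{Proof proposal for Corollary~\ref{cor12}}

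The plan is to obtain the identity by differentiating the third commutativity relation of Proposition~\ref{prop11} in an auxiliary parameter, after observing that $q^{\uparrow}$ is nothing but the first-order part of $p^{\uparrow}$ with an infinitesimally small single-variable specialization inserted in the second slot.

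\emph{Step 1: infinitesimal description of $q^{\uparrow}$.} For a Macdonald nonnegative specialization $w$ and a real $a\ge 0$ small enough that $\Pi(w;\underline{a})<\infty$, let $\underline{a}$ denote the finite-length specialization sending a single variable to $a$. By equation (\ref{7.14}), $Q_{\mu/\la}(\underline{a})=\varphi_{\mu/\la}\,a^{|\mu-\la|}$ when $\mu-\la$ is a horizontal strip and $0$ otherwise, while $\Pi(w;\underline{a})=1+O(a)$ (it is $\sum_{\kappa}P_\kappa(w)Q_\kappa(\underline{a})=1+g_1(w)a+O(a^2)$). Hence for $\la,\mu\in\Y(w)$,
\[
p^{\uparrow}_{\la\mu}(w;\underline{a})=\delta_{\la\mu}+a\,q^{\uparrow}_{\la\mu}(w)+O(a^{2}),
\]
that is, $q^{\uparrow}(w)=\tfrac{d}{da}\big|_{a=0}\,p^{\uparrow}(w;\underline{a})$: the off-diagonal entries reproduce $q^{\uparrow}_{\la\mu}(w)=\tfrac{P_\mu(w)}{P_\la(w)}\varphi_{\mu/\la}$ when $\mu=\la\cup\square$, vanish to order $\ge 2$ for other $\mu\ne\la$, and the diagonal entry is forced by stochasticity of $p^{\uparrow}(w;\underline{a})$ (which gives $\sum_\mu q^{\uparrow}_{\la\mu}(w)=0$, matching the defining formula, with the common value $\sum_{\mu=\la\cup\square}\tfrac{P_\mu(w)}{P_\la(w)}\varphi_{\mu/\la}=g_1(w)$ by the first Pieri rule (\ref{piereEqn})).

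\emph{Step 2: differentiate Proposition~\ref{prop11}.} Put $z=\underline{a}$ in the third relation of Proposition~\ref{prop11} (legitimate for small $a$, since then $\Pi(y,v;\underline{a})<\infty$): $p^{\uparrow}(y,v;\underline{a})\,p^{\downarrow}(y;v)=p^{\downarrow}(y;v)\,p^{\uparrow}(y;\underline{a})$. Since $p^{\downarrow}(y;v)$ does not depend on $a$, differentiating at $a=0$ and using Step 1 turns the left side into $q^{\uparrow}(y,v)\,p^{\downarrow}(y;v)$ and the right side into $p^{\downarrow}(y;v)\,q^{\uparrow}(y)$, which is exactly the asserted identity.

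\emph{Main obstacle: passing $\tfrac{d}{da}$ through the matrix products.} On the right side this is immediate: the $(\la,\nu)$ entry $\sum_{\kappa}p^{\downarrow}_{\la\kappa}(y;v)\,p^{\uparrow}_{\kappa\nu}(y;\underline{a})$ is a \emph{finite} sum, since $p^{\uparrow}_{\kappa\nu}(y;\underline{a})=0$ unless $\nu-\kappa$ is a horizontal strip, and a fixed $\nu$ has only finitely many such sub-diagrams $\kappa$; its derivative at $0$ is therefore $\sum_\kappa p^{\downarrow}_{\la\kappa}(y;v)\,q^{\uparrow}_{\kappa\nu}(y)=(p^{\downarrow}(y;v)q^{\uparrow}(y))_{\la\nu}$ by Step 1. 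On the left side the $(\la,\nu)$ entry $\sum_\mu p^{\uparrow}_{\la\mu}(y,v;\underline{a})\,p^{\downarrow}_{\mu\nu}(y;v)$ is genuinely an infinite sum, but Proposition~\ref{prop11} identifies it with the finite, real-analytic right-hand expression, so it is real-analytic in $a$ near $0$; writing it as $\tfrac{1}{P_\la(y,v)\,\Pi(y,v;\underline{a})}\sum_{d\ge0}c_d a^{d}$ with $c_d=\sum_{|\mu-\la|=d}P_\mu(y,v)\varphi_{\mu/\la}\,p^{\downarrow}_{\mu\nu}(y;v)$ a finite sum (only finitely many horizontal strips of a given size can be appended to a fixed diagram), one reads off the coefficient of $a^{1}$ and, using $\tfrac{d}{da}\big|_{0}\Pi(y,v;\underline{a})=g_1(y,v)=\sum_{\mu=\la\cup\square}\tfrac{P_\mu(y,v)}{P_\la(y,v)}\varphi_{\mu/\la}$, obtains precisely $(q^{\uparrow}(y,v)p^{\downarrow}(y;v))_{\la\nu}$. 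This bookkeeping is the only non-formal ingredient. (If one prefers to avoid the limiting argument entirely, the identity can instead be checked entrywise by expanding both sides via the definitions of $q^{\uparrow}$ and $p^{\downarrow}$ and applying the first Pieri rule together with its skew version, but the differentiation route is shorter.)
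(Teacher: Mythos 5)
Your proposal is correct and follows essentially the same route as the paper: the paper's proof takes $z$ in the third relation of Proposition~\ref{prop11} to be the specialization into a single variable $\epsilon$ and, using equation~(\ref{7.14}), collects the linear terms in $\epsilon$ — precisely what you do, with $\epsilon$ renamed $a$. The additional value in your write-up is the explicit justification for extracting the first-order coefficient through the (genuinely infinite) matrix product on the left side, by re-grouping the sum over $\mu$ into a convergent power series in $a$ with finite coefficients $c_d$; the paper leaves this implicit, but it is the right point to flag.
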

\begin{proof}
Consider the third relation of Proposition~\ref{prop11} and take $z$ to be the specialization into a single variable $\epsilon$. Using equation (\ref{7.14}) to collect the linear terms in $\epsilon$ we obtain the desired equality.
\end{proof}

\subsection{A general construction of multivariate Markov chains}\label{genconstructionSec}

Let $(\SS_1,\dots,\SS_n)$ be an $n$-tuple of discrete countable sets, and $P_1,\dots,P_n$ be stochastic matrices defining Markov chains $\SS_j\to\SS_j$. Also let $\La_1^2,\dots,$ $\La_{n-1}^n$ be {\it stochastic links} \index{stochastic links} between these sets:
\begin{eqnarray*}
P_k:\SS_k\times\SS_k\to [0,1], &
\sum_{y\in\SS_k}P_k(x,y)=1, & x\in \SS_k,\quad k=1,\ldots,n;\\
\La_{k-1}^k:\SS_k\times\SS_{k-1}\to [0,1], &
\sum_{y\in\SS_{k-1}}\La_{k-1}^k(x,y)=1, & x\in \SS_k,\quad
k=2,\ldots,n.
\end{eqnarray*}

Assume that these matrices satisfy the commutation relations
\begin{equation}\label{tag10}
\Delta^k_{k-1}:=\La^k_{k-1}P_{k-1}=P_k\La^k_{k-1},\qquad k=2,\dots,n.
\end{equation}

We will define a {\it multivariate\/} Markov chain \index{multivariate Markov chain} with transition matrix $P^{(n)}$ on the state space
\begin{equation}\label{statespace}
\SS^{(n)}=\Bigl\{(x_1,\dots,x_n)\in\SS_1\times\cdots\times\SS_n\mid \prod_{k=2}^n\La_{k-1}^k(x_k,x_{k-1})\ne 0\Bigr\}.
\end{equation}
The transition probabilities for the Markov chain with transition matrix $P^{(n)}$ are defined as (we use the notation $X_n=(x_1,\dots,x_n)$,
$Y_n=(y_1,\dots,y_n)$)
\begin{equation}\label{tag11}
P^{(n)}(X_n,Y_n)= P_1(x_1,y_1)\prod\limits_{k=2}^n\dfrac{P_k(x_k,y_k)\La_{k-1}^k(y_k,y_{k-1})}
{\Delta^k_{k-1}(x_k,y_{k-1})}
\end{equation}
if $\prod_{k=2}^n\Delta^k_{k-1}(x_k,y_{k-1})>0$, and $0$ otherwise.

One way to think of $P^{(n)}$ is as follows: Starting from $X=(x_1,\dots,x_n)$, we first choose $y_1$ according to the transition matrix $P_1(x_1,y_1)$, then choose $y_2$ using $\frac{P_2(x_2,y_2)\La_{1}^2(y_2,y_1)}{\Delta^2_{1}(x_2,y_1)}$,
which is the conditional distribution of the middle point in the successive application of $P_2$ and $\La^2_1$ provided that we start at $x_2$ and finish at $y_1$, after that we choose $y_3$ using the conditional distribution of the middle point in the successive application of $P_3$ and $\La^3_2$ provided that we start at $x_3$ and finish at $y_2$, and so on. Thus, one could say that $Y$ is obtained from $X$ by {\it sequential update} \index{multivariate Markov chain!sequential update}.

\begin{proposition}\label{prop13}
Let $m_n$ be a probability measure on $\SS_n$. Let $m^{(n)}$ be a probability measure on $\SS^{(n)}$ defined by
\begin{equation*}
m^{(n)}(X_n)=m_n(x_n)\La^n_{n-1}(x_n,x_{n-1})\cdots \La^2_1(x_2,x_1),\qquad X_n=(x_1,\ldots,x_n)\in\SS^{(n)}.
\end{equation*}
Set $\tilde m_n=m_nP_n$ and
\begin{equation*}
\tilde m^{(n)}(X_n) = \tilde m_n(x_n)\La^n_{n-1}(x_n,x_{n-1})\cdots \La^2_1(x_2,x_1),\qquad X_n=(x_1,\dots,x_n)\in\SS^{(n)}.
\end{equation*}
Then $m^{(n)}P^{(n)}= \tilde m^{(n)}$.
\end{proposition}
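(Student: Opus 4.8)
The plan is to verify the identity pointwise. Fix $Y_n=(y_1,\dots,y_n)\in\SS^{(n)}$ and compute
\begin{equation*}
\bigl(m^{(n)}P^{(n)}\bigr)(Y_n)=\sum_{X_n}m^{(n)}(X_n)\,P^{(n)}(X_n,Y_n)
\end{equation*}
directly from the definitions. Since $m^{(n)}$ is supported on $\SS^{(n)}$ and, by (\ref{tag11}), $P^{(n)}(X_n,Y_n)=0$ whenever $\prod_{k=2}^n\Delta^k_{k-1}(x_k,y_{k-1})=0$, one may restrict the sum to those $X_n=(x_1,\dots,x_n)$ for which $\Delta^k_{k-1}(x_k,y_{k-1})>0$ for all $k=2,\dots,n$; these constrain $x_2,\dots,x_n$ only, so $x_1$ still ranges over all of $\SS_1$. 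On this set every denominator in (\ref{tag11}) is strictly positive, so that
\begin{equation*}
m^{(n)}(X_n)P^{(n)}(X_n,Y_n)=m_n(x_n)\,P_1(x_1,y_1)\Bigl(\prod_{k=2}^n\La^k_{k-1}(x_k,x_{k-1})\Bigr)\prod_{k=2}^n\frac{P_k(x_k,y_k)\,\La^k_{k-1}(y_k,y_{k-1})}{\Delta^k_{k-1}(x_k,y_{k-1})}
\end{equation*}
holds as a literal equality.

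I would then sum over $x_1,x_2,\dots,x_{n-1}$ in this order (all summands are nonnegative, so Tonelli permits the rearrangement). In the summand, $x_1$ appears only through $\La^2_1(x_2,x_1)P_1(x_1,y_1)$, and by the intertwining relation (\ref{tag10}),
\begin{equation*}
\sum_{x_1\in\SS_1}\La^2_1(x_2,x_1)P_1(x_1,y_1)=\bigl(\La^2_1P_1\bigr)(x_2,y_1)=\Delta^2_1(x_2,y_1),
\end{equation*}
which cancels the denominator $\Delta^2_1(x_2,y_1)$ and leaves the factor $P_2(x_2,y_2)\La^2_1(y_2,y_1)$. Inductively, once $x_1,\dots,x_{j-1}$ have been summed out, the variable $x_j$ occurs in the remaining expression only through $\La^{j+1}_j(x_{j+1},x_j)P_j(x_j,y_j)$, and $\sum_{x_j}\La^{j+1}_j(x_{j+1},x_j)P_j(x_j,y_j)=\Delta^{j+1}_j(x_{j+1},y_j)$ cancels the $k=j+1$ denominator and produces $P_{j+1}(x_{j+1},y_{j+1})\La^{j+1}_j(y_{j+1},y_j)$. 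After $x_1,\dots,x_{n-1}$ are all summed out, one is left with $m_n(x_n)P_n(x_n,y_n)\prod_{k=2}^n\La^k_{k-1}(y_k,y_{k-1})$; summing over $x_n$ gives $\tilde m_n(y_n)\prod_{k=2}^n\La^k_{k-1}(y_k,y_{k-1})=\tilde m^{(n)}(Y_n)$, which is the claim.

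The only step that needs a word of care is the reduction to strictly positive denominators, i.e. the claim that configurations with $\Delta^k_{k-1}(x_k,y_{k-1})=0$ for some $k$ may be discarded without loss. On one side, $P^{(n)}(X_n,Y_n)=0$ for such $X_n$ by definition, so they contribute nothing to the left-hand sum; on the other side, in the telescoping step $\sum_{x_j}\La^{j+1}_j(x_{j+1},x_j)P_j(x_j,y_j)=\Delta^{j+1}_j(x_{j+1},y_j)$, a vanishing right-hand side forces every (nonnegative) summand on the left to vanish, so nothing is lost by excluding such $x_{j+1}$ from the intermediate sums either. Beyond this bookkeeping, the argument is a routine telescoping driven entirely by the commutation relations (\ref{tag10}) — the Diaconis--Fill intertwining mechanism — and I anticipate no genuine obstacle.
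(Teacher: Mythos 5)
Your proof is correct and follows the same route as the paper's: rewrite $m^{(n)}P^{(n)}(Y_n)$ as the full sum, extend the range of each $x_j$ (which adds only zero terms), and telescope consecutively via the intertwining relation (\ref{tag10}). The paper's proof is terse on the bookkeeping around vanishing denominators $\Delta^k_{k-1}(x_k,y_{k-1})=0$, and the paragraph you devote to that is a correct and welcome expansion of what the paper dismisses as "straightforward."
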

\begin{proof}
The argument is straightforward. Indeed,
\begin{equation*}
m^{(n)}P^{(n)}(Y_n)=\sum_{X_n\in\SS^{(n)}}m_n(x_n)\La^n_{n-1}(x_n,x_{n-1})\cdots
\La^2_1(x_2,x_1) P_1(x_1,y_1)\prod\limits_{k=2}^n\dfrac{P_k(x_k,y_k)\La_{k-1}^k(y_k,y_{k-1})}
{\Delta^k_{k-1}(x_k,y_{k-1})}\,.
\end{equation*}
Extending the sum to $x_1\in\SS_1$ adds 0 to the right-hand side. Then we can use equation (\ref{tag10}) to compute the sum over $x_1$, removing $\La^2_1(x_2,x_1)$, $P_1(x_1,y_1)$ and $\Delta^2_1(x_2,y_1)$ from the expression. Similarly, we sum
consecutively over $x_2,\dots,x_n$, and this gives the needed result.
\end{proof}

A generalization of Proposition~\ref{prop13} can be found in (\cite{BF}, Proposition 2.7). A continuous time variant of the above construction, which is not as straightforward, can be found in (\cite{BorodinOlshanski} Section 8).

\subsection{Markov chains preserving the ascending Macdonald processes}\label{MarkovChainAscSEC}

For any $k=1,2,\dots$, denote by $\Y(k)$ \glossary{$\Y(k)$} the set of Young diagrams with at most $k$ rows.

For a nonnegative specialization $\rho$ and a positive number $a$ such that $\Pi(\rho;a)<\infty$, and for two partitions
$\la\in\Y(n-1)$ and $\mu\in \Y(n)$, we define a probability distribution on $\Y(n)$ via

\begin{equation*}
P_{a,\rho}(\nu \,\Vert\,\la,\mu)=const\cdot P_{\nu/\la}(a)
Q_{\nu/\mu}(\rho),\qquad \nu\in\Y(n).
\end{equation*}
\glossary{$P_{a,\rho}(\nu \,\Vert\,\la,\mu)$}

Here we assume that the set of $\nu$'s giving nonzero contributions to the right-hand side is nonempty. Then equation (\ref{tag6}) implies the existence of the normalizing constant.

\begin{example}\label{ex14}
\mbox{}\newline
\noindent{\bf 1.} If $\rho$ is a specialization into a single variable $\alpha>0$, then equations (\ref{7.14}) and (\ref{7.14'}) enable us to rewrite $P_{a,\rho}$ in the form
\begin{equation*}
P_{a,\alpha}(\nu \,\Vert\,\la,\mu)=\begin{cases} const\cdot  \varphi_{\nu/\mu} \psi_{\nu/\la}
(\alpha a)^{|\nu|},& \nu/\la \text{ and } \nu/\mu \text{ are horizontal strips},\\
0& \text{otherwise},
\end{cases}
\end{equation*}
where $\varphi$ and $\psi$ are defined in Section~\ref{pieresec} and explicit formulas for them are given in equations (\ref{piereFormPhi}) and (\ref{piereFormPsi}). Note that the constant is independent of $\nu$ but does depend on all other variables.

\noindent{\bf 2.} If $\rho$ is a specialization into a single dual variable $\beta>0$, i.e. the right-hand side of (\ref{tag1}) has the form $1+\beta u$, then equation (\ref{omegaSkew}) implies
\begin{equation*}
P_{a,\hat\beta}(\nu \,\Vert\,\la,\mu)=\begin{cases} const\cdot \psi'_{\nu/\mu} \psi_{\nu/\la}
(\beta a)^{|\nu|},& \nu/\la \text{ and } \nu'/\mu' \text{ are horizontal strips},\\
0& \text{otherwise},
\end{cases}
\end{equation*}
where $\psi'$ is defined in Section~\ref{pieresec} and given by an explicit formula in equation (\ref{piereFormPrime}). We have used the notation $\hat\beta$ above to distinguish the $\beta$ specialization from the $\alpha$ specialization in the previous example.

\noindent{\bf 3.} The first two terms in Taylor expansions of $P_{a,\alpha}$ and $P_{a,\hat\beta}$ as $\alpha,\beta\to0$ are
\begin{equation*}
P_{a,\alpha}(\nu \,\Vert\,\la,\mu)\sim\bfone_{\nu=\mu}+\alpha a\begin{cases}
\frac{\psi_{\nu/\la}}{ \psi_{\mu/\la}}\,\varphi_{\nu/\mu} & \nu/\mu
\text{ is a single box},\\
0,& \nu\ne \mu \text{ and }\nu\ne \mu\cup\{\square\},\\
-\sum_{\kappa=\mu\cup\{\square\}} \frac{\psi_{\kappa/\la}}{\psi_{\mu/\la}}\,\varphi_{\kappa/\mu},&\nu=\mu,
\end{cases}
\end{equation*}
and
\begin{equation*}
P_{a,\hat\beta}(\nu \,\Vert\,\la,\mu)\sim\bfone_{\nu=\mu}+\beta a\begin{cases}
\frac{\psi_{\nu/\la}}{ \psi_{\mu/\la}}\,\psi'_{\nu/\mu} & \nu/\mu
\text{ is a single box},\\
0,& \nu\ne \mu \text{ and }\nu\ne \mu\cup\{\square\},\\
-\sum_{\kappa=\mu\cup\{\square\}} \frac{\psi_{\kappa/\la}}{\psi_{\mu/\la}}\,\psi'_{\kappa/\mu},&\nu=\mu.
\end{cases}
\end{equation*}
From definitions of $\varphi$ and $\psi'$ one immediately sees that the coefficient of $\alpha$ in
$P_{a,\alpha}$ is $\frac{1-t}{1-q}$ times the coefficient of $\beta$ in $P_{a,\hat\beta}$.
\end{example}

Fix $N\ge 1$, and denote \glossary{$\ss^{(N)}$}
\begin{equation*}
\ss^{(N)}=\left\{(\la^{(1)},\dots,\la^{(N)})\in \Y(1)\times\dots\times\Y(N)\mid \la^{(1)}\prec
\la^{(2)}\prec \dots\prec\la^{(N)}\right\}.
\end{equation*}
The ascending Macdonald process $\M_{asc}(a_1,\dots,a_N;\rho)$ defined in Section~\ref{MacProcessSec} is a probability measure supported on $\ss^{(N)}$.

Let $\sigma$ be a nonnegative specialization with $\Pi(\sigma;a_j)<\infty$ for $j=1,\dots,N$.
Define a matrix $\frakP_\sigma$ with rows and columns parameterized by $\ss^{(N)}$
via \glossary{$\frakP_\sigma$}
\begin{equation*}
\frakP_\sigma\left((\la^{(1)},\dots,\la^{(N)}),({\mu}^{(1)},\dots,{\mu}^{(N)})\right)=
 \prod_{k=1}^N
P_{a_k,\sigma}\left(\mu^{(k)}\,\Vert
\, \mu^{(k-1)},\lambda^{(k)}\right)
\end{equation*}
The structure of $\frakP_\sigma$ is such that to compute the entry with row indexed by $(\la^{(1)},\dots,\la^{(N)})$ and column $(\mu^{(1)},\ldots, \mu^{(N)})$, one first computes the probability of $\mu^{(1)}$ given $\la^{(1)}$, then $\mu^{(2)}$ given $\la^{(2)}$ and $\mu^{(1)}$, then $\mu^{(3)}$ given $\la^{(3)}$ and $\mu^{(2)}$, and so on.

\begin{proposition}\label{prop15}
The matrix $\frakP_\sigma$ is well-defined and it is stochastic. Moreover,
\begin{equation*}
\M_{asc}(a_1,\dots,a_N;\rho)\, \frakP_\sigma=\M_{asc}(a_1,\dots,a_N;\rho,\sigma).
\end{equation*}
\end{proposition}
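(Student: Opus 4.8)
The plan is to recognize $\frakP_\sigma$ as an instance of the general multivariate Markov chain construction of Section~\ref{genconstructionSec}, and then to invoke Proposition~\ref{prop13} to get the measure-update statement. First I would set up the ingredients for the construction with $n=N$: take $\SS_k = \Y(k)$, take the ``single-level'' Markov kernels $P_k = p^{\uparrow}(a_k;\sigma)$ built from the up-transition matrices of Section~\ref{commMarkovOps} (which are stochastic by equations~(\ref{tag1a}) and (\ref{tag6})), and take the stochastic links $\La^k_{k-1}$ to be the kernels $p^{\downarrow}(a_1,\dots,a_{k-1};a_k)$ from the same section. With these choices, the state space $\SS^{(N)}$ of equation~(\ref{statespace}) is exactly $\ss^{(N)}$ (the interlacing sequences), since $\La^k_{k-1}(\la^{(k)},\la^{(k-1)})\neq 0$ forces $\la^{(k-1)}\prec\la^{(k)}$ via equation~(\ref{7.14'}).

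The key algebraic input is the commutation relation~(\ref{tag10}), $\La^k_{k-1}P_{k-1}=P_k\La^k_{k-1}$, which here becomes precisely the third identity of Proposition~\ref{prop11} (with $y=(a_1,\dots,a_{k-1})$, $v=a_k$, $z=\sigma$) after permuting the parameters of the Macdonald measure using $\MM(\rho_1;\rho_2)=\MM(\rho_2;\rho_1)$. So that step is free. The next step — and the main technical obstacle — is to check that the sequential-update transition matrix $P^{(N)}$ defined by equation~(\ref{tag11}) coincides on the nose with $\frakP_\sigma$, i.e. that
\begin{equation*}
P_1(x_1,y_1)\prod_{k=2}^N \frac{P_k(x_k,y_k)\La^k_{k-1}(y_k,y_{k-1})}{\Delta^k_{k-1}(x_k,y_{k-1})} = \prod_{k=1}^N P_{a_k,\sigma}\bigl(\mu^{(k)}\,\Vert\,\mu^{(k-1)},\lambda^{(k)}\bigr),
\end{equation*}
where $x_j=\lambda^{(j)}$, $y_j=\mu^{(j)}$. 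Here the work is to expand both sides using the explicit formulas $p^{\uparrow}_{\la\mu}(y;z)=\Pi(y;z)^{-1}\tfrac{P_\mu(y)}{P_\la(y)}Q_{\mu/\la}(z)$, $p^{\downarrow}_{\la\nu}(y;v)=\tfrac{P_\nu(y)}{P_\la(y,v)}P_{\la/\nu}(v)$, compute the composite kernel $\Delta^k_{k-1}=\La^k_{k-1}P_{k-1}$ in closed form (using~(\ref{tag6}) and the fact that $Q_{\mu/\la}$ restricted to a single variable is $\varphi_{\mu/\la}\alpha^{|\mu/\la|}$ by~(\ref{7.14})), and verify that essentially all the $\Pi$-normalizations and the $P_\bullet(\cdot)$ ratios telescope, leaving exactly $const\cdot P_{\mu^{(k)}/\lambda^{(k)}}(a_k)\,Q_{\mu^{(k)}/\mu^{(k-1)}}(\sigma)$ at each level. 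One should also check the normalizing constant in $P_{a_k,\sigma}(\,\cdot\,\Vert\,\mu^{(k-1)},\lambda^{(k)})$ equals the corresponding $1/\Delta^k_{k-1}(x_k,y_{k-1})$ factor, which is where equation~(\ref{tag6}) again provides the finiteness and the exact value.

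Granting this identification, stochasticity of $\frakP_\sigma$ is immediate (it is $P^{(N)}$, a product of conditional probabilities), which also requires the hypothesis $\Pi(\sigma;a_j)<\infty$ for all $j$ so that $P_k=p^{\uparrow}(a_k;\sigma)$ is well-defined. For the measure-update statement, I would apply Proposition~\ref{prop13} with $m_N$ the Macdonald measure $\MM(a_1,\dots,a_{N-1};\rho,\sigma)$ on $\SS_N=\Y(N)$: the associated $m^{(N)}$ on $\ss^{(N)}$, namely $m_N(x_N)\prod_{k=2}^N\La^k_{k-1}(x_k,x_{k-1})$, is by the defining formula~(\ref{ascmeasure}) and the identities~(\ref{tag9a}) exactly $\M_{asc}(a_1,\dots,a_N;\rho)$ — here one uses $\MM(x;y)p^{\downarrow}(y;v)=\MM(x;y)$ repeatedly to push the links through, or more directly one recognizes $P_\nu(a_N)\cdots$ unfolding into the product $P_{\lambda^{(1)}}(a_1)P_{\lambda^{(2)}/\lambda^{(1)}}(a_2)\cdots$. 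Then $\tilde m_N = m_N P_N = \MM(a_1,\dots,a_{N-1};\rho,\sigma)\,p^{\uparrow}(a_N;\sigma) = \MM(a_1,\dots,a_N;\rho,\sigma)$ by~(\ref{tag9a}), and unfolding $\tilde m^{(N)}$ the same way gives $\M_{asc}(a_1,\dots,a_N;\rho,\sigma)$. Proposition~\ref{prop13} then yields $\M_{asc}(a_1,\dots,a_N;\rho)\,\frakP_\sigma = \M_{asc}(a_1,\dots,a_N;\rho,\sigma)$, as claimed. The delicate bookkeeping is entirely in matching $P^{(N)}$ with $\frakP_\sigma$; everything else is a direct appeal to results already in hand.
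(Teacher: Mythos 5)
Your overall route is exactly the paper's: specialize the multivariate Markov chain construction of Section~\ref{genconstructionSec}, match $\frakP_\sigma$ with $P^{(N)}$, and apply Proposition~\ref{prop13}. But two of your specializations are set incorrectly, and taken at face value they break the argument.

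First, you set $P_k = p^{\uparrow}(a_k;\sigma)$. This is a kernel on $\Y(1)$, not on $\SS_k=\Y(k)$, so it cannot serve as the one-step transition on the $k$th level. The correct choice is $P_k = p^{\uparrow}(a_1,\dots,a_k;\sigma)$, i.e. the up-transition built from the \emph{cumulative} specialization $(a_1,\dots,a_k)$. This is also what makes your own citation of the third identity in Proposition~\ref{prop11} (with $y=(a_1,\dots,a_{k-1})$, $v=a_k$, $z=\sigma$) land correctly: that identity is $p^{\uparrow}(y,v;z)p^{\downarrow}(y;v)=p^{\downarrow}(y;v)p^{\uparrow}(y;z)$, which reads $P_k\La^k_{k-1}=\La^k_{k-1}P_{k-1}$ only if $P_k=p^{\uparrow}(a_1,\dots,a_k;\sigma)$. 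With $p^{\uparrow}(a_k;\sigma)$ the two sides of~(\ref{tag10}) do not even have matching domains.

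Second, you set $m_N = \MM(a_1,\dots,a_{N-1};\rho,\sigma)$, which is supported on $\Y(N-1)$, not $\Y(N)$; this cannot be the top-level marginal of $\M_{asc}(a_1,\dots,a_N;\rho)$. The correct choice is $m_N = \MM(a_1,\dots,a_N;\rho)$. Then the telescoping of the $P_\bullet(\cdot)$ ratios in $m_N(x_N)\prod_{k\ge 2}\La^k_{k-1}(x_k,x_{k-1})$ that you sketch (and which uses~(\ref{7.14'})) does indeed reproduce the product form~(\ref{ascmeasure}) of $\M_{asc}(a_1,\dots,a_N;\rho)$, and $\tilde m_N = m_N P_N = \MM(a_1,\dots,a_N;\rho)\,p^{\uparrow}(a_1,\dots,a_N;\sigma)=\MM(a_1,\dots,a_N;\rho,\sigma)$ by~(\ref{tag9a}). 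With these two corrections your argument coincides with the paper's. Your explicit verification that $P^{(N)}$ as in~(\ref{tag11}) equals $\frakP_\sigma$ factor-by-factor (the cancellation of $\Pi$-factors and $P_\bullet$ ratios against $\Delta^k_{k-1}$, leaving $\mathrm{const}\cdot P_{\mu^{(k)}/\mu^{(k-1)}}(a_k)Q_{\mu^{(k)}/\lambda^{(k)}}(\sigma)$) is correct and slightly more detailed than the paper, which leaves it implicit.
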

\begin{proof}
This is a special case of Proposition~\ref{prop13}. We specialize the notation of Section~\ref{genconstructionSec}
as follows: $n=N$, $\SS_k=\Y(k)$, $k=1,\dots,N$,
\begin{eqnarray*}
P_k(\la,\mu)&=&p^\uparrow_{\la\mu}(a_1,\dots,a_k;\sigma),  k=1,\dots,n,\\
\La^{k}_{k-1}(\la,\nu)&=&p_{\lambda\nu}^\downarrow(a_1,\dots,a_{k-1};a_k),  k=1,\dots,n.
\end{eqnarray*}
The commutation relation of equation (\ref{tag10}) follows from the third identity in Proposition~\ref{prop11}.
One further takes $m_n$ to be the Macdonald measure $\MM(a_1,\dots,a_n;\rho)$. This immediately
implies
\begin{equation*}
\tilde m_n=\MM(a_1,\dots,a_n;\rho)p^\uparrow(a_1,\dots,a_n;\sigma)=\MM(a_1,\dots,a_n;\rho,\sigma),
\end{equation*}
cf. equation (\ref{tag9a}), and the statement follows.
\end{proof}

\subsubsection{A continuous time analog}\label{ctnsMarkovChainsec}
\index{multivariate Markov chain!continuous time}
Define a matrix $\frakq$ with rows and columns parameterized by $\ss^{(N)}$ as follows. For the off-diagonal entries, for any triple of integers $(A,B,C)$ such that
\begin{equation*}
1\le A\le B\le N,\qquad  0\le C\le N-B,
\end{equation*}
set \glossary{$\frakq$}
\begin{equation*}
\frakq\left(\{\la^{(1)},\dots,\la^{(N)}\},\{\mu^{(1)},\dots,\mu^{(N)}\}\right)=
a_B\frac{\psi_{\mu^{(B)}/\lambda^{(B-1)}}}{\psi_{\lambda^{(B)}/\lambda^{(B-1)}}}
\,\psi'_{\mu^{(B)}/\lambda^{(B)}}
\end{equation*}
if (we use the notation $\lambda^{(j)} = (\lambda^{(j)}_1 \geq \cdots \geq \lambda^{(j)}_{j})$)
\begin{eqnarray*}
&\la_A^{(B)}=\la_{A}^{(B+1)}=\cdots=\la_{A}^{(B+C)}=x,\\
&\mu_A^{(B)}=\mu_{A}^{(B+1)}=\cdots=\mu_{A}^{(B+C)}=x+1,
\end{eqnarray*}
$\mu_k^{(m)}=\la_k^{(m)}$ for all other values of $(k,m)$, and
\begin{equation*}
\frakq\left(\{\la^{(1)},\dots,\la^{(N)}\},\{\mu^{(1)},\dots,\mu^{(N)}\}\right)=0
\end{equation*}
if a suitable triple does not exist. The diagonal entries of $\frakq$ are then defined so that the sum of entries in every row is zero.

Less formally, this continuous time Markov chain can be described as follows. Each of the coordinates $\la_k^{(m)}$ has its own exponential clock with rate
\begin{equation*}
a_m\frac{\psi_{(\la^{(m)}\cup\square_k)/\lambda^{(m-1)}}}{\psi_{\lambda^{(m)}/\lambda^{(m-1)}}}
\,\psi'_{(\la^{(m)}\cup\square_k)/\lambda^{(m)}}\,,
\end{equation*}
where all clocks are independent. Here $\square_k$ denotes a box of a Young diagram that is located in the $k$th row. When the $\la_A^{(B)}$-clock rings, the coordinate checks if its jump by one to the right would violate the interlacing condition. If no violation happens, that is, if
\begin{equation*}
\la_A^{(B)}<\la_{A-1}^{(B-1)} \quad \text{and} \quad \la_A^{(B)}<\la_{A}^{(B+1)},
\end{equation*}
then this jump takes place. This means that we find the longest string $\la_A^{(B)}=\la_A^{(B+1)}=\dots=\la_A^{(B+C)}$ and move all the coordinates in this string to the right by one. If a jump would violate the interlacing condition, then no action is taken.

The reason to define $\frakq$ in this way is that it is the linear term in $\epsilon$ of the matrix $\frakP_\sigma$ defined earlier, when $\sigma$ is the specialization into a single dual variable $\epsilon$, cf. Example \ref{ex14}.3 above.

From the definition of $\psi,\psi'$ via products over boxes in equations (\ref{piereFormPsi}) and (\ref{piereFormPrime}), it is obvious that the off-diagonal matrix elements of $\frakq$ are nonnegative and uniformly bounded.
Also, the definition of $\frakq$ implies that in each row, at most $N(N+1)/2$ entries of $\frakq$ are nonzero. Thus, $\frakq$ uniquely defines a Feller Markov process on $\ss^{(N)}$ that has $\frakq$ as its generator, cf. \cite{LiggettMarkov}. For any $\tau\ge 0$, the matrix of transition probabilities for this process after time $\tau$ is $\frakQ_\tau=\exp(\tau\frakq)$. \glossary{$\frakQ_\tau$}

\begin{proposition}\label{prop16}
For any $\tau\ge 0$ we have
\begin{equation*}
\M_{asc}(a_1,\dots,a_N;\rho)\, \frakQ_\tau=\M_{asc}(a_1,\dots,a_N;\rho,\rho_\tau),
\end{equation*}
where $\rho_\tau$ is the Plancherel specialization afforded by equation (\ref{tag1}) with $\gamma=\tau$ and all other parameters equal to zero.
\end{proposition}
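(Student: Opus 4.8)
The plan is to realize $\frakQ_\tau=\exp(\tau\frakq)$ as a limit of the discrete-time operators $\frakP_\sigma$ from Proposition~\ref{prop15}, applied to a Plancherel specialization built up from many infinitesimal dual variables, and then invoke Proposition~\ref{prop15} together with the semigroup property of Plancherel specializations. Concretely, for an integer $M$ let $\sigma_M$ be the specialization into a single dual variable $\beta=\tau/M$, and consider the $M$-fold product $\frakP_{\sigma_M}^{M}$. By Proposition~\ref{prop15}, iterated $M$ times,
\begin{equation*}
\M_{asc}(a_1,\dots,a_N;\rho)\,\frakP_{\sigma_M}^{M}=\M_{asc}(a_1,\dots,a_N;\rho,\underbrace{\sigma_M,\dots,\sigma_M}_{M}),
\end{equation*}
since each application adjoins one more dual variable $\tau/M$ to the specialization list. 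The key algebraic fact is that the union of $M$ copies of the dual specialization with parameter $\tau/M$ converges, as $M\to\infty$, to the Plancherel specialization $\rho_\tau$ with $\gamma=\tau$: this is exactly the limit transition ``taking $M$ of the $\beta_i$'s equal to $\gamma/M$ and sending $M$ to infinity'' used in the proof that the specialization \eqref{tag1} is Macdonald nonnegative, and at the level of $\Pi$ it is the elementary identity $(1+\tfrac{\tau}{M}u)^M\to e^{\tau u}$. Hence the right-hand side above converges (weakly, hence in total variation on each finite set of Young diagrams since the state space is discrete) to $\M_{asc}(a_1,\dots,a_N;\rho,\rho_\tau)$.

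It remains to identify $\lim_{M\to\infty}\frakP_{\sigma_M}^{M}$ with $\frakQ_\tau=\exp(\tau\frakq)$. Here I would use Example~\ref{ex14}.3: the first-order Taylor expansion in $\beta$ gives $P_{a,\hat\beta}=\bfone+\beta\,(\text{rate kernel})+O(\beta^2)$, where the linear term is precisely the off-diagonal structure recorded in the definition of $\frakq$ (the factor $a_m\,\psi_{(\la^{(m)}\cup\square_k)/\lambda^{(m-1)}}/\psi_{\lambda^{(m)}/\lambda^{(m-1)}}\,\psi'_{(\la^{(m)}\cup\square_k)/\lambda^{(m)}}$), so that $\frakP_{\sigma_M}=\bfone+\tfrac{\tau}{M}\frakq+O(M^{-2})$. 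Since $\frakq$ has uniformly bounded entries and only $O(N^2)$ nonzero entries per row, a standard argument (e.g. dominated convergence on each row, or the generator convergence criterion for Feller chains, cf.\ \cite{LiggettMarkov}) gives $\frakP_{\sigma_M}^{M}\to\exp(\tau\frakq)=\frakQ_\tau$ entrywise, and moreover uniformly enough to pass the limit through the (at most countable, but effectively finite on any initial-state support) matrix product $\M_{asc}(\dots)\,\frakP_{\sigma_M}^{M}$. Combining the two limits yields $\M_{asc}(a_1,\dots,a_N;\rho)\,\frakQ_\tau=\M_{asc}(a_1,\dots,a_N;\rho,\rho_\tau)$.

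The main obstacle I anticipate is not the algebra but the analytic bookkeeping of the double limit: one must justify interchanging $M\to\infty$ with the infinite sum defining matrix multiplication by $\frakP_{\sigma_M}^{M}$, and with the earlier weak convergence of the specializations. This is genuinely harmless because for a fixed initial GT pattern the Markov chain governed by $\frakq$ reaches only patterns with bounded coordinates in bounded time with overwhelming probability (the jump rates are bounded and each jump increases $|\lambda^{(N)}|$ by one), so all sums are effectively finite up to exponentially small tails; still, this is where the care is needed, and I would either cite the continuous-time analog of Proposition~\ref{prop13} hinted at after its proof (\cite{BF}, \cite{BorodinOlshanski}) or spell out the tail estimate. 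An alternative, more direct route avoiding discrete approximation altogether is to differentiate the claimed identity in $\tau$: one checks that $\tfrac{d}{d\tau}\M_{asc}(a_1,\dots,a_N;\rho,\rho_\tau)$ equals $\M_{asc}(a_1,\dots,a_N;\rho,\rho_\tau)\,\frakq$ by differentiating the explicit product formula \eqref{ascmeasure} and using the Pieri rule \eqref{piereEqn} for $g_1$ (which produces exactly the $\psi,\psi'$ factors), then invokes uniqueness for the linear ODE $\tfrac{d}{d\tau}m=m\frakq$ with the matching initial condition at $\tau=0$; this sidesteps the interchange issue but requires the same Pieri computation as the discrete approach in disguise.
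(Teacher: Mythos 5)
Your proposal takes essentially the same route as the paper's proof: iterate $\frakP_{\sigma}$ with a single dual variable whose parameter shrinks to zero (your $\tau/M$, the paper's $\epsilon$ with $[\tau\epsilon^{-1}]$ iterations), apply Proposition~\ref{prop15} at each step, and pass to the limit using the convergence of the dual specializations to the Plancherel one. The one cosmetic difference is that where you invoke a Feller/generator-convergence criterion for $\frakP_{\sigma_M}^{M}\to e^{\tau\frakq}$, the paper simply observes that $\frakP_{\hat\epsilon}$ is a triangular matrix whose entries are polynomials in $\epsilon$ of degree at most $N(N+1)/2$, which makes the entrywise convergence elementary and sidesteps the interchange-of-limits worries you flag.
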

\begin{proof}
We start with Proposition~\ref{prop15} and take $\sigma=\hat{\epsilon}$ to be the specialization into a single dual variable $\epsilon$. Noting that $\frakP_{\hat \epsilon}$ is a triangular matrix whose entries are polynomials in $\epsilon$ of degree at most $N(N+1)/2$, we conclude that
\begin{equation*}
\lim_{\epsilon\to 0} (\frakP_{\hat \epsilon})^{[\tau\epsilon^{-1}]}=\exp(\tau\frakq)
\end{equation*}
entry-wise (recall that $\frakq$ is the coefficient of $\epsilon$ in $\frakP_{\hat \epsilon}$). On the other hand, Proposition~\ref{prop15} implies that
\begin{equation*}
\M_{asc}(a_1,\dots,a_N;\rho)\, (\frakP_{\hat \epsilon})^{[\tau\epsilon^{-1}]}
=\M_{asc}(a_1,\dots,a_N;\rho,\sigma_{\tau,\epsilon}),
\end{equation*}
where $\sigma_{\tau,\epsilon}$ is the specialization into $[\tau\epsilon^{-1}]$ dual variables equal to $\epsilon$. Since for any $\tau\ge 0$, we have that  $\lim_{\epsilon\to 0}~f(\sigma_{\tau,\epsilon})~=f(\rho_\tau)$ for any symmetric function $f$, and since also $\lim_{N\to \infty} \Pi(a_1,\ldots, a_N;\rho,\sigma_{\tau,\e}) = \Pi(a_1,\ldots, a_N;\rho,\rho_{\tau})$, it follows that $\M_{asc}(a_1,\dots,a_N;\rho,\sigma_{\tau,\epsilon})$ weakly converges to $\M_{asc}(a_1,\dots,a_N;\rho,\rho_\tau)$, and the proof is complete.
\end{proof}

\begin{remark}\label{rem17}
One could construct $\frakq$ starting from matrices $q^\uparrow$ introduced at the end of Section~\ref{commMarkovOps} by using the formalism of \cite{BorodinOlshanski}, Section 8. The needed commutativity relations follow from Corollary \ref{cor12}.
\end{remark}

\chapter{q-Whittaker processes}

\section{The q-Whittaker processes}\label{qWhitSec}

\subsection{Useful q-deformations}\label{qSec}

We record some $q$-deformations of classical functions and transforms. Section 10 of  \cite{AAR} is a good references for many these definitions and statements. We assume throughout that $|q|<1$. The classical functions are recovered in all cases in the $q\to 1$ limit, though the exact nature of this convergence will be relevant (and discussed) later.

\subsubsection{q-deformations of classical functions}\label{classicalqfunctions}
The {\it $q$-Pochhammer symbol}\index{q-deformed functions!Pochhammer symbol} is written as $(a;q)_{n}$ \glossary{$(a;q)_{n}$} and defined via the product (infinite convergent product for $n=\infty$ \glossary{$(a;q)_{\infty}$})
\begin{equation*}
(a;q)_{n}=(1-a)(1-aq)(1-aq^2)\cdots (1-aq^{n-1}), \qquad (a;q)_{\infty}=(1-a)(1-aq)(1-aq^2)\cdots.
\end{equation*}
The {\it $q$-factorial}\index{q-deformed functions!factorial} is written as either $[n]_{q}!$\glossary{$[n]_{q}!$} or just $n_q!$\glossary{$n_q!$} and is defined as
\begin{equation*}
n_q! = \frac{(q;q)_n}{(1-q)^n} = \frac{(1-q)(1-q^2)\cdots (1-q^n)}{(1-q)(1-q)\cdots (1-q)}.
\end{equation*}
The {\it $q$-binomial coefficients}\index{q-deformed functions!binomial coefficients} are defined in terms of $q$-factorials as \glossary{${n\choose k}_q$}
\begin{equation*}
{n\choose k}_q = \frac{n_q!}{k_q!(n-k)_q!} = \frac{(q;q)_{n}}{(q;q)_{k}(q;q)_{n-k}}.
\end{equation*}
The {\it $q$-binomial theorem}\index{q-deformed functions!binomial theorem} (\cite{AAR} Theorem 10.2.1) says that for all $|x|<1$ and $|q|<1$,
\begin{equation*}
\sum_{k=0}^{\infty} \frac{(a;q)_k}{(q;q)_k} x^k = \frac{(ax;q)_{\infty}}{(x;q)_{\infty}}.
\end{equation*}
Two corollaries of this theorem (\cite{AAR} Corollary 10.2.2a/b) which will be used later is that under the same hypothesis on $x$ and $q$,
\begin{equation}\label{qLaplace}
\sum_{k=0}^{\infty} \frac {x^k}{k_q!} = \frac{1}{\big((1-q)x;q\big)_{\infty}}, \qquad\qquad \sum_{k=0}^{\infty} \frac{(-1)^k q^{\frac{k(k-1)}{2}} x^k}{k_q!} = \big((1-q) x;q\big)_{\infty}.
\end{equation}
For any $x$ and $q$ we also have (\cite{AAR} Corollary 10.2.2.c)
\begin{equation}\label{finqBinExp}
\sum_{k=0}^{n} {n\choose k}_{q} (-1)^k q^{\frac{k(k-1)}{2}}x^k = (x;q)_n.
\end{equation}


There are two different {\it $q$-exponential functions}\index{q-deformed functions!exponential functions}. The first (which we will use extensively) is denoted $e_q(x)$ and defined as
\begin{equation*}
e_q(x) = \frac{1}{\big((1-q)x;q\big)_{\infty}},
\end{equation*}
\glossary{$e_q(x)$}
while the second is defined as
\begin{equation*}
E_q(x) = \big(-(1-q)x;q\big)_{\infty}.
\end{equation*}
\glossary{$E_q(x)$}
For compact sets of $x$, both $e_q(x)$ and $E_q(x)$ converge uniformly to $e^{x}$ as $q\to 1$. In fact, the convergence of $e_q(x)\to e^{x}$ is uniform over $x\in (-\infty,0)$ as well.

The {\it $q$-gamma function}\index{q-deformed functions!gamma function} is defined as
\begin{equation}\label{qGamma}
\Gamma_q(x) = \frac{(q;q)_{\infty}}{(q^x;q)_{\infty}} (1-q)^{1-x}.
\end{equation}
\glossary{$\Gamma_q(x)$}
For $x$ in compact subsets of $\C\setminus\{0,-1,\cdots\}$, $\Gamma_q(x)$ converges uniformly to $\Gamma(x)$ as $q\to 1$.

\subsubsection{q-Laplace transform}\label{qlaplaceinvsec}
\index{q-deformed functions!Laplace transform}

Define the following transform of a function $f\in \ell^1(\{0,1,\ldots\})$:
\begin{equation}\label{qlaplacedef}
\qhat{f}(z):= \sum_{n\geq 0} \frac{f(n)}{(zq^n;q)_{\infty}},
\end{equation}
where $z\in \C$.

\begin{proposition}\label{qlaplaceinverse}
One may recover a function $f\in \ell^1(\{0,1,\ldots\})$ from its transform $\qhat{f}(z)$ with $z\in \C\setminus\{q^{-k}\}_{k\geq 0}$ via the inversion formula
\begin{equation}\label{qlaplaceinverseEQN}
f(n) = -q^{n} \frac{1}{2\pi \iota} \int_{C_n} (q^{n+1}z;q)_{\infty} \qhat{f}(z) dz,
\end{equation}
where $C_n$ is any positively oriented contour which encircles the poles $z=q^{-M}$ for $0\leq M \leq n$.
\end{proposition}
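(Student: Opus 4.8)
The plan is to establish the inversion formula by substituting the definition of the transform into the right-hand side and exchanging the (absolutely convergent) sum with the contour integral, thereby reducing the claim to a single residue computation. Concretely, once we write
\begin{equation*}
-q^n\frac{1}{2\pi\iota}\int_{C_n}(q^{n+1}z;q)_\infty\,\qhat f(z)\,dz
= -q^n\sum_{m\geq 0}f(m)\,\frac{1}{2\pi\iota}\int_{C_n}\frac{(q^{n+1}z;q)_\infty}{(zq^m;q)_\infty}\,dz,
\end{equation*}
it suffices to show that the $m$-th integral equals $-q^{-n}\mathbf{1}_{m=n}$. First I would justify the interchange: since $f\in\ell^1$ and $C_n$ is a fixed compact contour avoiding all the poles $z=q^{-k}$, the factors $(q^{n+1}z;q)_\infty$ and $1/(zq^m;q)_\infty$ are uniformly bounded on $C_n$ uniformly in $m\geq 0$ (for $m$ large the denominator is in fact bounded away from $0$), so dominated convergence applies.

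The heart of the argument is the evaluation of $I_m:=\frac{1}{2\pi\iota}\int_{C_n}\frac{(q^{n+1}z;q)_\infty}{(zq^m;q)_\infty}\,dz$. The integrand is meromorphic with simple poles exactly at $z=q^{-m},q^{-m-1},q^{-m-2},\dots$, of which $C_n$ encircles only those with $q^{-k}$ for $m\leq k\leq n$ (and none if $m>n$). Thus $I_m=0$ for $m>n$, giving the correct vanishing. For $m\leq n$ I would compute the residue at $z=q^{-k}$, $m\leq k\leq n$: using that near $z=q^{-k}$ the factor $(zq^m;q)_\infty$ has a simple zero with $\frac{d}{dz}(zq^m;q)_\infty\big|_{z=q^{-k}}$ expressible via the standard identity, one gets
\begin{equation*}
\Res_{z=q^{-k}}\frac{(q^{n+1}z;q)_\infty}{(zq^m;q)_\infty}
= \frac{(q^{n+1-k};q)_\infty}{-q^{-k}(q;q)_{k-m}(q;q)_\infty}\,\big(\text{a finite }q\text{-Pochhammer}\big),
\end{equation*}
and the key point is that $(q^{n+1-k};q)_\infty$ contains the factor $(1-q^{n+1-k})(1-q^{n+2-k})\cdots(1-q^0)\cdots$, which vanishes whenever $k<n$ because one of these factors is $1-q^0=0$. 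Hence only the term $k=n$ survives, and summing that single residue (times $2\pi\iota$, then times $-q^n$) yields $f(n)$ after the elementary $q$-Pochhammer simplifications.

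The main obstacle I anticipate is the bookkeeping in the residue calculation at $z=q^{-n}$: one must carefully track the finite $q$-Pochhammer factors $(q;q)_{n-m}$ arising from the partially cancelled infinite product in the denominator, together with the surviving factors from $(q^{n+1}z;q)_\infty$ at $z=q^{-n}$, and confirm they combine to exactly $1$ so that the residue is $-q^{-n}f(n)$ with no spurious constant. A clean way to organize this is to use the identity $(q^{n+1}z;q)_\infty = (q^{n+1}z;q)_{\infty}$ evaluated at $z=q^{-n}$ equals $(q;q)_\infty$, and $(zq^m;q)_\infty$ factored as $(q^{m-n};q)_{n-m+1}\cdot(q;q)_\infty$ near that point, so that the ratio collapses. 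A secondary but routine point is verifying that the contour $C_n$ in the hypothesis—any positively oriented contour enclosing $q^{-M}$ for $0\leq M\leq n$—can be freely deformed without crossing other poles, which is immediate since the only other singularities of $\qhat f$ (inherited from each summand) lie at $q^{-k}$ with $k>n$, all outside $C_n$.
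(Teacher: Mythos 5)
Your approach is genuinely different from the paper's (the paper computes $\Res_{z=q^{-M}}\qhat f(z)$ directly, assembles these residues into an upper-triangular matrix acting on $\vec f$, and explicitly inverts that matrix; you substitute the series for $\qhat f$ into the claimed formula and reduce everything to one orthogonality integral). Your route is cleaner and more direct, and the interchange of sum and integral is justified as you say. However, your argument for the crucial step that $I_m:=\frac{1}{2\pi\iota}\int_{C_n}\frac{(q^{n+1}z;q)_\infty}{(zq^m;q)_\infty}\,dz=0$ for $0\le m<n$ is wrong, and the mechanism you propose simply does not occur.

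Concretely, you claim $(q^{n+1-k};q)_\infty$ vanishes for $k<n$ because the product reaches a factor $1-q^0$. But $(q^{n+1-k};q)_\infty=\prod_{j\ge0}(1-q^{n+1-k+j})$ has exponents that start at $n+1-k$ and \emph{increase}; they hit $0$ only if $n+1-k\le0$, i.e.\ $k\ge n+1$. For $k$ in the relevant range $m\le k\le n$ the numerator $(q^{n+1-k};q)_\infty$ is nonzero, and the individual residues do not vanish. You can see this explicitly with $m=n-1$: after cancellation the integrand is $\frac{1}{(1-zq^{n-1})(1-zq^n)}$, whose residues at $z=q^{-(n-1)}$ and $z=q^{-n}$ are $-\frac{q^{1-n}}{1-q}$ and $+\frac{q^{1-n}}{1-q}$ respectively — both nonzero, summing to $0$. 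The correct reason $I_m=0$ for $m<n$ is a collective cancellation: after the telescoping $(zq^m;q)_\infty=(q^{n+1}z;q)_\infty\prod_{k=m}^{n}(1-zq^k)$, the integrand is the rational function $\prod_{k=m}^{n}(1-zq^k)^{-1}$, which is $O(z^{-(n-m+1)})$ at infinity; since $n-m+1\ge2$ the residue at infinity vanishes, so the sum of all finite residues is $0$. (For $m=n$ the integrand is $\frac{1}{1-zq^n}$, giving $I_n=-q^{-n}$ as you compute.) So the theorem and your overall strategy are fine, but the stated cancellation argument must be replaced with this degree-count / residue-at-infinity argument.
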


\begin{remark}
This statement can be viewed as an inversion formula for the Laplace transform with $e_q(x)$ replacing the exponential function. An inversion of the Laplace transform with $E_q(x)$ replacing the exponential function goes back to \cite{Hahn}. However, for the $e_q(x)$ Laplace transform, it appears that the recent manuscript of Bangerezako \cite{Banger} contains the first inversion formula (as well as many other properties of the transform and worked out examples). We were initially unaware of this manuscript and thus produced our own inversion formula and the proof below.
\end{remark}

\begin{proof}
Observe that the residue of $\qhat{f}$ at $z=q^{-M}$ can be easily calculated for any $M\geq 0$ with the outcome
\begin{equation*}
\Res{z=q^{-M}} \qhat{f}(z)  = \sum_{n=0}^{M} \frac{ - f(n)}{(1-q^{n-M})\cdots (1-q^{-1})} q^{-M} \frac{1}{(q;q)_{\infty}}.
\end{equation*}
Alternatively this can be written in terms of matrix multiplication. Let $A=[A_{k,\ell}]_{k,\ell\geq 0}$ be an upper triangular matrix defined via its entries
\begin{equation*}
A_{k,\ell} = \bfone_{k\leq \ell} \frac{q^{-\ell}}{(1-q^{k-\ell})(1-q^{k-\ell+1})\cdots (1-q^{-1})}.
\end{equation*}

Then we have
\begin{equation*}
\Res{z=q^{-M}} \qhat{f}(z) = -\frac{1}{(q;q)_{\infty}} \vec{f} A = -\frac{1}{(q;q)_{\infty}} \sum_{n\geq 0} f(n) A_{n,M},
\end{equation*}
where $\vec{f}$ is the row vector $\vec{f}=\{f(0),f(1),\ldots\}$. Since $A$ is upper triangular, it has a unique inverse which is also upper triangular and can be readily calculated to find that $A^{-1} = B = [B_{k,\ell}]_{k,\ell\geq 0}$ is given by
\begin{equation*}
B_{k,\ell} = \bfone_{k\leq \ell} \frac{q^\ell}{(1-q^{\ell-k})(1-q^{\ell-k-1})\cdots (1-q)}.
\end{equation*}
Therefore
\begin{equation*}
f(n) =-(q;q)_{\infty} \sum_{M\geq 0}B_{M,n}\Res{z=q^{-M}} \qhat{f}(z) = -\sum_{M\geq 0} q^{n} (1-q^{n-M+1})(1-q^{n-M+2})\cdots \Res{z=q^{-M}} \qhat{f}(z).
\end{equation*}
Expressing this in terms of a contour integral yields the desired result of equation (\ref{qlaplaceinverseEQN}).
\end{proof}

\subsection{Definition and properties of q-Whittaker functions}\label{qwhittakersec}
\subsubsection{q-deformed Givental integral and recursive formula}
Macdonald polynomials in $\ell+1$ variables with $t=0$ are also known of as {\it $q$-deformed $\mathfrak{gl}_{\ell+1}$ Whittaker functions}\index{q-Whittaker functions} \cite{GLOqlim}. We denote the $P$ version of the $q$-Whittaker function as $\qWhitP$ and define it by
\begin{equation}\label{134}
\qWhitP_{x_1,\ldots, x_{\ell+1}}(\ul{p}{\ell+1}) = P_{\ul{p}{\ell+1}}(x_1,\ldots, x_{\ell+1}),
\end{equation}
\glossary{$\qWhitP_{x_1,\ldots, x_{\ell+1}}(\ul{p}{\ell+1})$}
where $\ul{p}{\ell+1} = \{p_{\ell+1,1},\ldots, p_{\ell+1,\ell+1}\}$. In \cite{GLOqlim} the $Q$ version of the $q$-Whittaker function is considered. It is denoted by $\qWhitQ$ and defined by \begin{equation*}
\qWhitQ_{x_1,\ldots, x_{\ell+1}}(\ul{p}{\ell+1}) = Q_{\ul{p}{\ell+1}}(x_1,\ldots, x_{\ell+1}).
\end{equation*}
The two functions are related by
\begin{equation*}
\qWhitQ_{x_1,\ldots, x_{\ell+1}}(\ul{p}{\ell+1}) = \Delta(\ul{p}{\ell+1}) \qWhitP_{x_1,\ldots, x_{\ell+1}}(\ul{p}{\ell+1})
\end{equation*}
where $\Delta(\ul{p}({ell+1})$ is defined in (\ref{deltadefeqn}). We will focus on the $\qWhitP$ function here which accounts for slight changes between what we write and what one finds in \cite{GLOqlim}. Note that the Pieri formulas for Section~\ref{pieresec} become, in this limit, the Hamiltonians for the quantum $q$-deformed $\mathfrak{gl}_{\ell+1}$-Toda chain (see e.g. \cite{Ru}, \cite{Et} or \cite{GLOqlim}).

These $q$-Whittaker functions can be expressed in terms of a combinatorial formula which follows from the combinatorial formula (\ref{7.13'}) for Macdonald polynomials. To state the combinatorial formula let us denote $\GT^{(\ell+1)}(\ul{p}{\ell+1})$ \glossary{$\GT^{(\ell+1)}(\ul{p}{\ell+1})$} to be the set of triangular arrays of integers $p_{k,i}$ for $1\leq i\leq k\leq \ell$ satisfying the {\it interlacing condition}\index{interlacing condition} that $p_{k+1,i}\geq p_{k,i}\geq~p_{k+1,i+1}$. One can also think of this set as those interlacing triangular arrays of height $\ell+1$ with a fixed top row given by $\ul{p}{\ell+1}$. Also let $\GT_{\ell+1,\ell}(\ul{p}{\ell+1})$ be a set $\ul{p}{\ell} = \{p_{\ell,1},\ldots, p_{\ell,\ell}\}$ of integers satisfying the interlacing condition $p_{\ell+1,i}\geq p_{\ell,i}\geq p_{\ell+1,i+1}$. Then \index{q-Whittaker functions!Givental integral representation}
\begin{equation}\label{135}
\qWhitP_{x_{1},\ldots, x_{\ell+1}}(\ul{p}{\ell+1}) = \sum_{\GT^{(\ell+1)}
(\ul{p}{\ell+1})} \prod_{k=1}^{\ell+1} x_k^{\sum_{i=1}^k p_{k,i} -\sum_{i=1}^{k-1} p_{k-1,i}} \frac{ \prod_{k=2}^{\ell+1}\prod_{i=1}^{k-1} \qq{p_{k,i}-p_{k,i+1}}}{\prod_{k=1}^{\ell}\prod_{i=1}^{k} \qq{p_{k+1,i}-p_{k,i}}\qq{p_{k,i}-p_{k+1,i+1}}}.
\end{equation}
For $\ul{p}{\ell+1}$ which is not ordered, we define the $q$-Whittaker function as zero (see Example 1.1 of \cite{GLOqlim}). Equation (\ref{135}) follows from (\ref{134}) and the combinatorial formula (\ref{7.13'}) for the Macdonald polynomials.

It follows from the combinatorial formula that the $q$-Whittaker functions satisfy a defining recursive relation: \index{q-Whittaker functions!Givental recursion relation}
\begin{equation}\label{receqn}
\qWhitP_{x_1,\ldots, x_{\ell+1}}(\ul{p}{\ell+1})= \sum_{\ul{p}{\ell}\in \GT_{\ell+1,\ell}(\ul{p}{\ell+1})} \Delta(\ul{p}{\ell+1}) \link_{\ell+1,\ell}(\ul{p}{\ell+1},\ul{p}{\ell};q)\qWhitP_{x_1,\ldots,x_{\ell}}(\ul{p}{\ell})
\end{equation}
where
\begin{eqnarray}\label{deltadefeqn}
\Delta(\ul{p}{\ell})  &=& \prod_{i=1}^{\ell-1} \qq{p_{\ell,i} - p_{\ell,i+1}},\\
\nonumber \link_{\ell+1,\ell}(\ul{p}{\ell+1},\ul{p}{\ell};q) &=& \prod_{i=1}^{\ell} \qq{p_{\ell+1,i}-p_{\ell,i}}^{-1}\qq{p_{\ell,i}-p_{\ell+1,i+1}}^{-1}.
\end{eqnarray}


This recursive formula will play a prominent role later when we prove that the $t=0$ ascending Macdonald process converges to the Whittaker process. It is easy to see that a similar formula exists for $\qWhitQ$, as one finds in \cite{GLOqlim}.

\subsection{Difference operators and integral formulas}\label{expmomform26}
The present goal is to take limits $t\to +0$ in some of the previously stated results.

The nonnegative specializations in this case are described by the degeneration of equation (\ref{tag1}):
\begin{equation*}
\sum_{n\ge 0} g_n(\rho) u^n= \exp(\gamma u) \prod_{i\ge 1} \frac{(1+\beta_i u)}{(\alpha_i u;q)_\infty}\,.
\end{equation*}

When $t=0$ we call the ascending Macdonald process $\M_{asc,t=0}(a_1,\dots,a_N;\rho)$ \glossary{$\M_{asc,t=0}(a_1,\dots,a_N;\rho)$} the {\it $q$-Whittaker process}\index{q-Whittaker process} and the Macdonald measure $\MM_{t=0}(a_1,\dots,a_N;\rho)$ \glossary{$\MM_{t=0}(a_1,\dots,a_N;\rho)$} the {\it $q$-Whittaker measure} \index{q-Whittaker measure}.

The partition function for the corresponding $q$-Whittaker measure $\MM_{t=0}(a_1,\dots,a_N;\rho)$ is given by
\begin{equation*}
\sum_{\lambda\in \Y(N)} P_\la(a_1,\dots,a_N)Q_\la(\rho)=\Pi(a_1,\dots,a_N;\rho)=\prod_{j=1}^N \exp(\gamma a_j) \prod_{i\ge 1} \frac{(1+\beta_i a_j)}{(\alpha_i a_j;q)_\infty}\,,
\end{equation*}
where $\rho$ is determined by $\{\alpha_j\}$, $\{\beta_j\}$, and $\gamma$ as before, and we assume $\alpha_i a_j<1$ for all $i,j$ so that the series converge.

Taking the limit of Proposition~\ref{prop6} yields
\begin{proposition}\label{qsumlambdaprop}
For any $1\le r\le N$, and assuming $\alpha_i a_j<1$ for all $i,j$,
\begin{eqnarray*}
\lefteqn{\left\langle q^{\la_N+\la_{N-1}+\cdots+\la_{N-r+1}}\right\rangle_{\MM_{t=0}(a_1,\dots,a_N;\rho)}=}  \\
&& \frac{(-1)^{\frac{r(r+1)}2}}{(2\pi \iota)^rr!} \oint\cdots\oint
\prod_{1\le k<\ell\le r} (z_k-z_\ell)^2 \\
&& \times \prod_{j=1}^r \left(\prod_{m=1}^N \frac{a_m}{a_m-z_j}\right) \left(\prod_{i\ge 1}(1-\alpha_i z_j)\,\frac{1+q\beta_iz_j}{1+\beta_iz_j}\right)
\exp\big((q-1)\gamma z_j\big)\frac{dz_j}{z_j^r}\,,
\end{eqnarray*}
where the $z_j$-contours contain $\{a_1,\ldots, a_N\}$ and no other poles.
\end{proposition}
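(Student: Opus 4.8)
The plan is to derive this as the $t \to 0^+$ degeneration of Proposition~\ref{prop6} (the contour integral representation of the Macdonald difference operator $D_N^r$), combined with the master identity~(\ref{tag8}) that turns eigenvalues into expectations over the Macdonald measure. First I would recall that by Proposition~\ref{prop5}, $D_N^r P_\lambda = e_r(q^{\lambda_1}t^{N-1}, q^{\lambda_2}t^{N-2}, \dots, q^{\lambda_N}) P_\lambda$; as $t \to 0$ every monomial in $e_r$ containing a positive power of $t$ vanishes, and the only surviving term is $q^{\lambda_{N-r+1}}q^{\lambda_{N-r+2}}\cdots q^{\lambda_N}$ (the product of the $r$ factors with no $t$, i.e. the last $r$ indices, using $\lambda_1 \geq \cdots \geq \lambda_N$ so that $\lambda_N + \cdots + \lambda_{N-r+1}$ is the relevant exponent). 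Hence $\lim_{t\to 0} D_N^r P_\lambda = q^{\lambda_N + \lambda_{N-1} + \cdots + \lambda_{N-r+1}} P_\lambda$, so $d_\lambda = q^{\lambda_N + \cdots + \lambda_{N-r+1}}$ in the notation of~(\ref{tag8}), and the left-hand side of the claimed formula is exactly $\langle d_\lambda\rangle$ over $\MM_{t=0}(a_1,\dots,a_N;\rho)$.

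Next I would evaluate the right-hand side of~(\ref{tag8}), namely $(D_N^r \Pi)/\Pi$ with $\Pi = \Pi(a_1,\dots,a_N;\rho) = \prod_{i=1}^N \Pi(a_i;\rho)$, which is multiplicative, so $f(u) = \Pi(u;\rho)$ and $F(u_1,\dots,u_N) = f(u_1)\cdots f(u_N)$ fits the hypothesis of Proposition~\ref{prop6}. Applying~(\ref{tag9}) gives
\begin{equation*}
\langle q^{\lambda_N + \cdots + \lambda_{N-r+1}}\rangle = \frac{1}{(2\pi\iota)^r r!}\oint\cdots\oint \det\left[\frac{1}{tz_k - z_\ell}\right]_{k,\ell=1}^r \prod_{j=1}^r \left(\prod_{m=1}^N \frac{tz_j - a_m}{z_j - a_m}\right)\frac{f(qz_j)}{f(z_j)}\,dz_j.
\end{equation*}
Now take $t \to 0$ inside the integral. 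The Cauchy-type determinant $\det[1/(tz_k - z_\ell)]$ tends to $\det[-1/z_\ell] = (-1)^r \prod_\ell z_\ell^{-1} \cdot \det[\delta_{k\ell}]$? No — more carefully, using the Cauchy determinant identity quoted in the proof of Proposition~\ref{prop6}, $\det[1/(tz_k-z_\ell)] = t^{r(r-1)/2}\prod_{k<\ell}(z_k-z_\ell)(z_\ell-z_k)/\prod_{k,\ell}(tz_k - z_\ell)$; as $t\to 0$ the numerator's $t^{r(r-1)/2}$ is killed unless... actually I should instead track the whole integrand: $t^{r(r-1)/2}$ from the determinant, and each factor $(tz_j - a_m) \to -a_m$, $f(qz_j)/f(z_j) \to \prod_{i\ge 1}(1-\alpha_i z_j)\frac{1+q\beta_i z_j}{1+\beta_i z_j}\exp((q-1)\gamma z_j)$ from the explicit product form~(\ref{tag1}) of $\Pi(u;\rho)$ at $t=0$ (where $\Pi(u;\rho) = \exp(\gamma u)\prod_i (1+\beta_i u)/(\alpha_i u;q)_\infty$, so $f(qu)/f(u) = \exp((q-1)\gamma u)\prod_i \frac{1+q\beta_i u}{1+\beta_i u}\cdot\frac{(\alpha_i u;q)_\infty}{(q\alpha_i u;q)_\infty} = \exp((q-1)\gamma u)\prod_i \frac{1+q\beta_i u}{1+\beta_i u}(1-\alpha_i u)$). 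The denominator $\prod_{k,\ell=1}^r(tz_k - z_\ell)$ has the diagonal terms $(tz_k - z_k) \to -z_k$ and off-diagonal $(tz_k-z_\ell) \to -z_\ell$ (for $k\ne\ell$), giving $(-1)^{r^2}\prod_k z_k \cdot \prod_{k\ne\ell} z_\ell = (-1)^{r^2}\prod_k z_k^r$; combined with the $t^{r(r-1)/2}$ which must be balanced — here is the one genuine subtlety: the $t^{r(r-1)/2}$ in the determinant numerator cancels the $t$-vanishing, because $\prod_{k<\ell}(z_k-z_\ell)(z_\ell-z_k) = (-1)^{r(r-1)/2}\prod_{k<\ell}(z_k-z_\ell)^2$ is $t$-independent, so actually $\det[1/(tz_k-z_\ell)] \to t^{r(r-1)/2}\cdot(\text{finite})/(\text{finite})$ which would vanish — so I must keep $t^{r(r-1)/2}$ paired with $\prod_m (tz_j - a_m)$; no, that doesn't help either. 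The resolution is that one keeps the $A_I(x;t)$ form: $A_I$ already contains $t^{r(r-1)/2}$, and as $t\to 0$ the whole operator $D_N^r$ does not vanish because $\prod_{i\in I}T_{q,x_i}$ contributes and $A_I(x;t)\to$ a finite nonzero limit only after dividing... Actually the correct bookkeeping is: as $t\to 0$, $D_N^r \to$ (a finite operator) by Proposition~\ref{prop5} since its eigenvalue has a finite limit $q^{\lambda_N+\cdots+\lambda_{N-r+1}}$; therefore the $t\to 0$ limit of the contour integral~(\ref{tag9}) exists and equals the stated formula. Concretely, rewriting $\det[1/(tz_k-z_\ell)] = t^{r(r-1)/2}(-1)^{r(r-1)/2}\prod_{k<\ell}(z_k-z_\ell)^2 / \prod_{k,\ell}(tz_k-z_\ell)$ and noting $\prod_{k,\ell=1}^r(tz_k-z_\ell) = \prod_{k}(tz_k - z_k)\prod_{k\ne\ell}(tz_k-z_\ell)$; as $t\to 0$, $\prod_k(tz_k-z_k)\to(-1)^r\prod_k z_k$ and $\prod_{k\ne\ell}(tz_k-z_\ell)\to(-1)^{r(r-1)}\prod_{\ell}z_\ell^{r-1}$, so the denominator $\to (-1)^{r^2}\prod_k z_k^r$, while the prefactor $t^{r(r-1)/2}\prod_{m,j}(tz_j-a_m)$: the $t^{r(r-1)/2}$ does tend to $0$ — which shows that in fact the limit of~(\ref{tag9}) with this naive substitution would be zero, contradicting the eigenvalue computation. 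The fix: one must deform contours so that poles at $z_j = 0$ are picked up, OR — the cleaner route — recognize from~(\ref{tag9}) as stated, the contours encircle only $\{a_1,\dots,a_N\}$ and NOT $0$, and compute the residues directly before taking $t\to 0$: the residue sum reproduces $D_N^r$ exactly, whose $t\to 0$ limit is the stated eigenvalue, and then one re-encodes that limiting residue sum as the contour integral in the proposition. This residue route avoids the $t^{r(r-1)/2}$ paradox entirely.

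Given that the honest computation must thread this needle, here is the plan I would actually execute: (1) establish $\lim_{t\to 0}D_N^r P_\lambda = q^{\lambda_N+\cdots+\lambda_{N-r+1}}P_\lambda$ via Proposition~\ref{prop5} and dominance ordering; (2) conclude via~(\ref{tag8}) that the LHS equals $\lim_{t\to 0}(D_N^r\Pi)/\Pi$; (3) substitute $f(u) = \Pi(u;\rho)$ into~(\ref{tag9}), write out $f(qu)/f(u)$ from the explicit product~(\ref{tag1}) at $t=0$; (4) carefully take $t\to 0$ in the contour integral, which I expect to require rewriting $\det[1/(tz_k-z_\ell)]$ using the Cauchy determinant identity, combining its $t^{r(r-1)/2}$ with the factors $\prod_m(tz_j-a_m)$ — the net effect of the limit is: $t^{r(r-1)/2}\prod_{j,m}(tz_j-a_m) \to 0$ formally, so instead one should deform each $z_j$-contour to additionally enclose $0$ before the limit (the integrand is analytic at $\infty$ appropriately after the $t=0$ substitution), pick up the residue there, and verify it is the only surviving contribution; the residue structure then collapses to $\prod_{k<\ell}(z_k-z_\ell)^2 \prod_j(\prod_m \frac{a_m}{a_m-z_j})\cdot(\text{the }q\text{-product})\cdot z_j^{-r}$ with an overall sign $(-1)^{r(r+1)/2}$ and the $1/r!$ retained. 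The main obstacle — and the only real content beyond routine limits — is precisely this contour deformation / residue-at-zero bookkeeping: tracking that the $t^{r(r-1)/2}$ in the Macdonald-operator normalization is exactly absorbed, and that no poles other than $\{a_1,\dots,a_N\}$ and $\{0\}$ enter, so that the limit of the contour integral is finite and equals the claimed expression with the contours now enclosing $\{a_1,\dots,a_N\}$ only (the $z_j^{-r}$ pole at $0$ having been accounted for as part of the integrand rather than an enclosed singularity). All other steps — the eigenvalue limit, the explicit form of $f(qu)/f(u)$, the sign and factorial constants — are straightforward verifications.
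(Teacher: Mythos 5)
Your proposal has a genuine gap at its core, stemming from an incorrect evaluation of the eigenvalue limit. You claim in step (1) that
\begin{equation*}
\lim_{t\to 0} e_r\bigl(q^{\lambda_1}t^{N-1},q^{\lambda_2}t^{N-2},\dots,q^{\lambda_N}\bigr)=q^{\lambda_N+\lambda_{N-1}+\cdots+\lambda_{N-r+1}},
\end{equation*}
asserting that the surviving monomial is ``the product of the $r$ factors with no $t$.'' But only \emph{one} of the $N$ variables, namely $q^{\lambda_N}t^0$, carries no $t$; any product of $r\ge 2$ distinct variables carries at least $t^{1+2+\cdots+(r-1)}=t^{r(r-1)/2}$. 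The dominant monomial of $e_r$, obtained by choosing indices $N,N-1,\dots,N-r+1$ (the ones with the smallest $t$-exponents, not a consequence of the dominance ordering on $\lambda$), is $q^{\lambda_N+\cdots+\lambda_{N-r+1}}\,t^{r(r-1)/2}$, so the eigenvalue tends to $0$ for $r\ge 2$, not to $q^{\lambda_N+\cdots+\lambda_{N-r+1}}$. Your ``paradox'' --- that $t^{-r(r-1)/2}$-scaled vanishing of the Cauchy determinant appears to make the integral representation go to zero while, in your reckoning, the eigenvalue stays finite --- is an artifact of this error. Both sides vanish at the same rate $t^{r(r-1)/2}$, so there is nothing to repair.

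Because of this, your proposed fix in step (4) --- deforming each $z_j$-contour to additionally enclose $0$ and harvest an extra residue --- is misguided and would introduce spurious contributions; the contours in the conclusion still enclose $\{a_1,\dots,a_N\}$ and nothing else. The paper's actual proof (which you otherwise track faithfully: same use of Proposition~\ref{prop5}, relation~(\ref{tag8}), and Proposition~\ref{prop6}) is a two-line normalization: multiply both sides of~(\ref{tag9}) by $t^{-r(r-1)/2}$ and pass to the limit, using
\begin{equation*}
\lim_{t\to 0}t^{-\frac{r(r-1)}2}e_r(q^{\lambda_1}t^{N-1},\dots,q^{\lambda_N})=q^{\lambda_N+\cdots+\lambda_{N-r+1}},
\qquad
\lim_{t\to 0}t^{-\frac{r(r-1)}2}\det\left[\frac1{tz_k-z_\ell}\right]_{k,\ell=1}^r=(-1)^{\frac{r(r+1)}2}\frac{\prod_{k<\ell}(z_k-z_\ell)^2}{\prod_{j}z_j^r}.
\end{equation*}
The remaining ingredients in your step (3) --- the ratio $f(qz)/f(z)$ from the $t=0$ form of~(\ref{tag1}) and the factors $(tz_j-a_m)/(z_j-a_m)\to a_m/(a_m-z_j)$ --- are correct; once the $t^{-r(r-1)/2}$ normalization is inserted, no residue analysis or contour deformation is needed, and the formula follows directly.
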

\begin{proof}
We have
\begin{eqnarray*}
\lim_{t\to 0}t^{-\frac {r(r-1)}2}e_r(q^{\la_1}t^{N-1},q^{\la_2}t^{N-1},\dots, q^{\la_N}) &=&
q^{\la_N+\la_{N-1}+\dots+\la_{N-r+1}},\\
\lim_{t\to 0}t^{-\frac {r(r-1)}2}\det\left[\frac 1{tz_k-z_l}\right]_{k,\ell=1}^r &=&(-1)^{\frac{r(r+1)}2}
\frac{\prod_{1\le k<\ell\le r} (z_k-z_\ell)^2}{\prod_{j=1}^r z_j^r}\,.
\end{eqnarray*}
Proposition~\ref{prop6} and relation (\ref{tag8}) conclude the proof.
\end{proof}

\begin{remark}
One may also take the $t\to 0$ limit of the operator $\tilde{D}_N^r$ defined in Remark \ref{firstlambdaremark}. Doing so we see that $\tilde{D}_n^r t^{rn-r(r+1)/2}$ converges to an operator whose eigenvalue at $P_{\lambda}$ is equal to $q^{-\lambda_1-\cdots - \lambda_r}$. Just as above, we may likewise take a limit of the contour integral formula for expectations.
\end{remark}

Let us take the limit $t\to 0$ of Proposition~\ref{prop8} along the same lines.

\begin{proposition}\label{prop8tzero}
For any $k \ge 1$,  and assuming $\alpha_i a_j<1$ for all $i,j$,
\begin{eqnarray*}
\lefteqn{\left\langle q^{k\la_N}\right\rangle_{\MM_{t=0}(a_1,\dots,a_N;\rho)} =}\\
&&\frac{(-1)^{k}q^{\frac{k(k-1)}2}}{(2\pi \iota)^k} \oint\cdots\oint
\prod_{1\le \kappa_1<\kappa_2\le k} \frac{z_{\kappa_1}-z_{\kappa_2}}{z_{\kappa_1}-qz_{\kappa_2}}\\
&&\times \prod_{j=1}^k
\left(\prod_{m=1}^N \frac{a_m}{a_m-z_j}\right) \left(\prod_{i\ge 1}(1-\alpha_i z_j)\,\frac{1+q\beta_iz_j}{1+\beta_iz_j}\right)
\exp((q-1)\gamma z_j)\frac{dz_j}{z_j}\,,
\end{eqnarray*}
where $z_j$-contour contains $\{qz_{j+1},\ldots,qz_k,a_1,\ldots,a_N\}$ and no other singularities for $j=1,\dots,k$.
\end{proposition}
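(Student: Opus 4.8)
The plan is to mimic the proof of Proposition~\ref{prop8} but take $t\to 0$ throughout, using the $t\to 0$ degenerations of Proposition~\ref{prop5} and Proposition~\ref{prop6} that were already recorded in the proof of Proposition~\ref{qsumlambdaprop}. First I would recall that by Proposition~\ref{prop5}, $D_N^1 P_\lambda = e_1(q^{\lambda_1}t^{N-1},\dots,q^{\lambda_N})P_\lambda$, so $(t^{-(N-1)}D_N^1)P_\lambda$ has eigenvalue $t^{-(N-1)}e_1(q^{\lambda_1}t^{N-1},\dots,q^{\lambda_N}) = q^{\lambda_1}+t q^{\lambda_2}+\cdots+t^{N-1}q^{\lambda_N}$, which converges as $t\to 0$ to $q^{\lambda_N}$ \emph{only if} we take the appropriately normalized limit. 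Actually the cleaner route (and the one consistent with the cited proof of Proposition~\ref{qsumlambdaprop}) is: the operator $\tilde D_N^1 := t^{-(N-1)}D_N^{N-1}T_{q^{-1}}$ from Remark~\ref{firstlambdaremark} has eigenvalue $q^{-\lambda_1-\cdots}$; but for the last part $\lambda_N$ one wants $D_N^1$ normalized so its eigenvalue tends to $q^{\lambda_N}$. Concretely, $t^{1-N} e_1(q^{\lambda_1}t^{N-1},\dots,q^{\lambda_N})$ does not tend to $q^{\lambda_N}$; rather one should use that the \emph{leading} behavior as $t\to 0$ of $e_1(q^{\lambda_1}t^{N-1},\dots,q^{\lambda_N})$ is $q^{\lambda_N}$ (the $t^0$ coefficient), so $\lim_{t\to0} D_N^1 P_\lambda = q^{\lambda_N}P_\lambda$ directly, with no rescaling of the eigenvalue. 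Hence $\lim_{t\to 0}(D_N^1)^k P_\lambda = q^{k\lambda_N}P_\lambda$, and applying relation (\ref{tag8}) we get $\langle q^{k\lambda_N}\rangle_{\MM_{t=0}} = \lim_{t\to0} (D_N^1)^k\Pi(a_1,\dots,a_N;\rho)/\Pi(a_1,\dots,a_N;\rho)$.

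Next I would take the $t\to 0$ limit inside the contour-integral formula of Proposition~\ref{prop8}. In that formula the integrand contains: the Cauchy-type cross term $\prod_{a<b}\frac{(tz_a-qz_b)(z_a-z_b)}{(z_a-qz_b)(tz_a-z_b)}$, which tends to $\prod_{a<b}\frac{(-qz_b)(z_a-z_b)}{(z_a-qz_b)(-z_b)} = \prod_{a<b}\frac{q(z_a-z_b)}{z_a-qz_b}$; the prefactor $(t-1)^{-k}$, which tends to $(-1)^k$; and the product $\prod_c\left(\prod_m\frac{tz_c-x_m}{z_c-x_m}\right)$, which tends to $\prod_c\prod_m\frac{-x_m}{z_c-x_m} = \prod_c\prod_m\frac{x_m}{x_m-z_c}$. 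Collecting the factor of $q$: there are $\binom{k}{2}=\frac{k(k-1)}{2}$ pairs, giving $q^{k(k-1)/2}$, which combines with $(-1)^k$ from the prefactor to produce the claimed $(-1)^k q^{k(k-1)/2}$. For the source function one plugs in $f(u) = \Pi(u;\rho) = \exp(\gamma u)\prod_i\frac{(t\alpha_i u;q)_\infty}{(\alpha_i u;q)_\infty}(1+\beta_i u)$, so that $\frac{f(qz_c)}{f(z_c)}$ becomes, after the cancellation $\frac{(t\alpha_i qz;q)_\infty}{(\alpha_i qz;q)_\infty}\big/\frac{(t\alpha_i z;q)_\infty}{(\alpha_i z;q)_\infty} = \frac{(1-\alpha_i z)(1-t\alpha_i q z)(1-t\alpha_i q^2 z)\cdots}{(1-t\alpha_i z)(1-t\alpha_i qz)\cdots}$, which tends to $1-\alpha_i z$ as $t\to 0$; together with $\exp(\gamma(q-1)z)$ and $\frac{1+q\beta_i z}{1+\beta_i z}$ this reproduces exactly the factor $\prod_{i\ge1}(1-\alpha_i z_j)\frac{1+q\beta_i z_j}{1+\beta_i z_j}\exp((q-1)\gamma z_j)$ in the statement.

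I would then check that the contours survive the limit: in Proposition~\ref{prop8} the $z_c$-contour encircles $\{qz_{c+1},\dots,qz_k,x_1,\dots,x_N\}$ and nothing else; specializing $x_m = a_m$ this is exactly the contour prescription in the statement. One must verify that no new singularities appear and none disappear in the $t\to0$ limit — the only $t$-dependent poles in Proposition~\ref{prop8}'s integrand are at $z_a = z_b/t$ (from $tz_a - z_b$), which escape to infinity as $t\to0$ and lie outside the contours throughout, so they contribute nothing; and the poles of $f(qz_c)/f(z_c)$ coming from $(t\alpha_i z;q)_\infty^{-1}$ sit at $z = (t\alpha_i q^j)^{-1}\to\infty$, again harmless. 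Uniform convergence of the integrand on the (compact, fixed) contours then justifies passing the limit under the integral sign. The main obstacle — really the only non-routine point — is justifying that the $t\to0$ limit commutes with both the expectation $\langle\cdot\rangle$ (equivalently with the infinite sum defining $\Pi$ and with dividing by it) and with the contour integral; this is handled by noting that $D_N^1$ acts term-by-term on the absolutely convergent series $\sum_\lambda P_\lambda(a)Q_\lambda(\rho)$ for $t$ in a neighborhood of $0$ (using $a_i\alpha_j<1$ for convergence) and by the uniform-convergence-on-compacts argument for the integral, exactly as in the cited proof of Proposition~\ref{qsumlambdaprop}; once that is in place the computation above finishes the proof.
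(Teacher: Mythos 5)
Your argument is correct and coincides with the paper's, whose entire proof of Proposition~\ref{prop8tzero} is the single line ``Straightforward limit $t\to 0$ of Proposition~\ref{prop8}''---you have simply carried out that limit term by term (eigenvalue $D_N^1 P_\lambda\to q^{\lambda_N}P_\lambda$ with no rescaling, prefactor $(t-1)^{-k}\to(-1)^k$, cross factor contributing $q^{k(k-1)/2}\prod_{a<b}(z_a-z_b)/(z_a-qz_b)$, the $\prod_m(tz_c-a_m)/(z_c-a_m)\to\prod_m a_m/(a_m-z_c)$ factor, and $f(qz)/f(z)$ with $f=\Pi(\,\cdot\,;\rho)$), together with the routine justification for passing the limit through the series and the integral. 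The opening detour through $\tilde{D}_N^1$ is unnecessary (and you misquote its normalization, which in Remark~\ref{firstlambdaremark} is $t^{-n(n-1)/2}$, not $t^{-(n-1)}$), but you immediately abandon it in favor of the direct limit of $D_N^1$, so nothing is affected.
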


\begin{proof} Straightforward limit $t\to 0$ of Proposition~\ref{prop8}.
\end{proof}


Likewise we take the limit $t\to 0$ of Proposition~\ref{prop8FULL}.
\begin{proposition}\label{prop8FULLtzero}
Fix $k\ge 1$ and $r_{\kappa}\geq 1$ for $1\leq \kappa \leq k$. Then assuming $\alpha_i a_j<1$ for all $i,j$,
\begin{eqnarray*}
\lefteqn{\left\langle \prod_{\kappa=1}^{k} q^{\lambda_N+\cdots \lambda_{N-r_{\kappa}+1}} \right\rangle_{\MM_{t=0}(a_1,\dots,a_N;\rho)}=}\\
&&\prod_{\kappa=1}^{k} \frac{(-1)^{-r_{\kappa}}}{(2\pi \iota)^{r_{\kappa}} r_{\kappa}!} \oint\cdots \oint \left(\prod_{1\leq \kappa_1<\kappa_2\leq k}\prod_{i=1}^{r_{\kappa_1}}\prod_{j=1}^{r_{\kappa_2}} q\frac{z_{\kappa_1,i}-z_{\kappa_2,j}}{z_{\kappa_1,i}-qz_{\kappa_2,j}}\right)\left(\prod_{\kappa=1}^{k}\prod_{i\neq j=1}^{ r_\kappa}\frac{z_{\kappa,i}-z_{\kappa,j}}{-z_{\kappa,j}}\right)\\
&&\times  \prod_{\kappa=1}^{k}\prod_{j=1}^{r_{\kappa}}\frac{(-a_1)\cdots(-a_{n})}{(z_{\kappa,j}-a_1)\cdots(z_{\kappa,j}-a_{N})} \,\left(\prod_{i\ge 1}(1-\alpha_i z_{\kappa,j})\,\frac{1+q\beta_iz_{\kappa,j}}{1+\beta_iz_{\kappa,j}}\right)
e^{(q-1)\gamma z_{\kappa,j}}\frac{dz_{\kappa,j}}{z_{\kappa,j}}
\end{eqnarray*}

where the $z_{\kappa_1,j}$-contour contains $\{qz_{\kappa_2,i}\}$ for all $i\in\{1,\ldots, r_{\kappa_2}\}$ and $\kappa_2>\kappa_1$, as well as $\{a_1,\ldots,a_N\}$ and no other singularities.
\end{proposition}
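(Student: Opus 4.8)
The plan is to derive Proposition~\ref{prop8FULLtzero} as the $t\to 0$ degeneration of Proposition~\ref{prop8FULL}, exactly as Proposition~\ref{prop8tzero} was derived from Proposition~\ref{prop8}. First I would take $F(u_1,\dots,u_N)=\prod_{j=1}^N\Pi(u_j;\rho)$ and evaluate at $x=(a_1,\dots,a_N)$, so that $f(u)=\Pi(u;\rho)$ and $F(x)=\Pi(a_1,\dots,a_N;\rho)$; since $\rho$ is defined by (\ref{tag1}) with the same convergence requirement $\alpha_i a_j<1$ needed at $t=0$, combining Proposition~\ref{prop8FULL} with the iterated form of (\ref{tag8}) gives, for each $t\in(0,1)$, an identity whose left side is $\langle\prod_{\alpha=1}^{k}e_{r_\alpha}(q^{\lambda_1}t^{N-1},\dots,q^{\lambda_N})\rangle_{\MM_t(a_1,\dots,a_N;\rho)}$. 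The remainder is to multiply both sides by $\prod_{\alpha}t^{-r_\alpha(r_\alpha-1)/2}$ and let $t\to 0$: the power of $t$ is chosen so that the left side has a finite, nonzero limit, and on the right side it combines with the powers of $t$ carried by the Cauchy determinants inside the operators $D_N^{r_\alpha}$ (see Proposition~\ref{prop6}) to leave the finite expression of Proposition~\ref{prop8FULLtzero}.

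For the left side I would use that the leading monomial of $e_r(q^{\lambda_1}t^{N-1},\dots,q^{\lambda_N})$ as $t\to 0$ is the one built from the $r$ smallest $t$-exponents, i.e.\ the indices $N-r+1,\dots,N$, so $t^{-r(r-1)/2}e_r(q^{\lambda_1}t^{N-1},\dots,q^{\lambda_N})\to q^{\lambda_N+\dots+\lambda_{N-r+1}}$, and this holds uniformly over all partitions $\lambda$ (the $\binom{N}{r}$ monomials are each bounded by $1$ and the sub-leading ones carry at least one extra power of $t$), hence so does the convergence of the finite product over $\alpha$. Combined with the fact that $\MM_t(a_1,\dots,a_N;\rho)(\lambda)\to\MM_{t=0}(a_1,\dots,a_N;\rho)(\lambda)$ for every $\lambda$ — which follows because $P_\lambda$, $Q_\lambda$ and $\Pi$ depend continuously on $t$ near $0$ and $\Pi$ stays positive — Scheff\'e's lemma yields convergence of the measures in total variation, so the left side tends to $\langle\prod_{\alpha=1}^{k}q^{\lambda_N+\dots+\lambda_{N-r_\alpha+1}}\rangle_{\MM_{t=0}(a_1,\dots,a_N;\rho)}$, the left side of the claimed formula.

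For the right side I would track each $t$-dependent factor: using $(aq;q)_\infty/(a;q)_\infty=(1-a)^{-1}$ one gets
\[
\frac{f(qu)}{f(u)}=\exp\big((q-1)\gamma u\big)\prod_{i\ge 1}\frac{1-\alpha_i u}{1-t\alpha_i u}\cdot\frac{1+q\beta_i u}{1+\beta_i u}\ \longrightarrow\ \exp\big((q-1)\gamma u\big)\prod_{i\ge 1}(1-\alpha_i u)\frac{1+q\beta_i u}{1+\beta_i u},
\]
while $(t-1)^{-r_\alpha}\to(-1)^{-r_\alpha}$, the cross factors $\tfrac{(tz_{\alpha,i}-qz_{\beta,j})(z_{\alpha,i}-z_{\beta,j})}{(z_{\alpha,i}-qz_{\beta,j})(tz_{\alpha,i}-z_{\beta,j})}\to q\,\tfrac{z_{\alpha,i}-z_{\beta,j}}{z_{\alpha,i}-qz_{\beta,j}}$, the same-block factors $\tfrac{z_{\alpha,i}-z_{\alpha,j}}{tz_{\alpha,i}-z_{\alpha,j}}\to\tfrac{z_{\alpha,i}-z_{\alpha,j}}{-z_{\alpha,j}}$, and $\prod_{m=1}^N\tfrac{tz_{\alpha,j}-a_m}{z_{\alpha,j}-a_m}\to\prod_{m=1}^N\tfrac{-a_m}{z_{\alpha,j}-a_m}$. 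Passing the limit inside the contour integrals is legitimate because, by the Remark preceding Proposition~\ref{prop6}, the contours may be chosen $t$-independently for all small $t$; on those fixed compact contours the only singularities of the integrand inside are the prescribed poles at $\{a_m\}$ and $\{qz_{\beta,i}\}$, the extraneous poles at $z_{\alpha,j}=a_m/t$ and $z_{\alpha,i}=z_{\alpha,j}/t$ escape to infinity, and the integrand converges uniformly there. Assembling the two sides gives Proposition~\ref{prop8FULLtzero}. The only non-cosmetic points — and thus the main obstacle, modest as it is — are precisely these two interchanges of $t\to 0$ with the sum defining the expectation and with the contour integrals; the former is handled by the uniform-in-$\lambda$ eigenvalue asymptotics together with Scheff\'e's lemma, the latter by the uniform choice of contours and uniform convergence of the integrand, and everything else is bookkeeping identical in structure to the proof of Proposition~\ref{prop8tzero}.
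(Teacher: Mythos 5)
Your proposal is correct and follows precisely the route the paper intends: the paper's entire proof of Proposition~\ref{prop8FULLtzero} is the single sentence ``Straightforward limit $t\to 0$ of Proposition~\ref{prop8FULL},'' and what you have written is the careful execution of that limit, tracking each $t$-dependent factor and justifying the two exchanges of limit with sum (via the uniform-in-$\lambda$ eigenvalue asymptotics and Scheff\'e) and with the contour integrals (via the $t$-independent contours and uniform convergence of the integrand), exactly as done for Proposition~\ref{prop8tzero}.
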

\begin{proof} Straightforward limit $t\to 0$ of Proposition~\ref{prop8FULL}.
\end{proof}

Finally, let us take the limit $t\to 0$ in Proposition~\ref{prop10}.

\begin{proposition}\label{qlaplaceintegralform}
With the notation $b_m=a_mq^{\eta_m}$ for $m=1,\ldots,N$, we have
\begin{eqnarray*}
\lefteqn{\left\langle \frac {1}{(q^{\lambda_N}u;q)_\infty}
\right\rangle_{\MM_{t=0}(a_1,\dots,a_N;\rho)}=}\\
&&\frac{1}{(q;q)_{\infty}}\sum_{\eta_1,\dots,\eta_N=0}^\infty \prod_{i=1}^N u^{\eta_i}a_i^{\eta_{i+1}+\dots+\eta_N-(i-1)\eta_i}q^{-(i-1)\frac{\eta_i(\eta_i-1)}2}{(q^{\eta_i+1};q)_\infty} \\
&&\times \prod_{1\le j<k\le N}\left((1-b_jb_k^{-1})\frac{(q{b_j}a_k^{-1};q)_\infty} {({a_j}b_k^{-1};q)_\infty}\,\right) \frac{\Pi(b_1,\dots,b_N;\rho)}{\Pi(a_1,\dots,a_N;\rho)}.
\end{eqnarray*}
Here $u$ is a formal variable, and the formula can be viewed as an identity of formal power series in $u$.
\end{proposition}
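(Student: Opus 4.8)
The plan is to read Proposition~\ref{prop10} as producing a $q$-integral operator, apply it to the Cauchy identity exactly as in the derivation of~(\ref{tag8}), and then degenerate $t\to0$. Write $\mathcal{Q}_u$ for the operator on functions of $x=(x_1,\dots,x_N)$ sending $G$ to
\[
\mathcal{Q}_u[G](x)=\sum_{\eta_1,\dots,\eta_N\ge0}\Biggl(\prod_{i=1}^N u^{\eta_i}t^{(i-1)\eta_i}\frac{(t;q)_\infty(q^{\eta_i+1};q)_\infty}{(tq^{\eta_i};q)_\infty(q;q)_\infty}\Biggr)K_\eta(x)\,G(x_1q^{\eta_1},\dots,x_Nq^{\eta_N}),
\]
where $K_\eta(x)$ is the product over $1\le j<k\le N$ appearing in Proposition~\ref{prop10} (with $y_m=x_mq^{\eta_m}$); that proposition says $\mathcal{Q}_uP_\lambda(x)=\prod_{i=1}^N\frac{(q^{\lambda_i}t^{N-i+1}u;q)_\infty}{(q^{\lambda_i}t^{N-i}u;q)_\infty}P_\lambda(x)$. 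First I would apply $\mathcal{Q}_u$ in the $a$-variables to $\sum_{\ell(\lambda)\le N}P_\lambda(a_1,\dots,a_N)Q_\lambda(\rho)=\Pi(a_1,\dots,a_N;\rho)$; since $\Pi(a_1,\dots,a_N;\rho)=\prod_j\Pi(a_j;\rho)$ is multiplicative, $\mathcal{Q}_u$ merely substitutes $a_j\mapsto b_j:=a_jq^{\eta_j}$ factor by factor, so after dividing by $\Pi(a_1,\dots,a_N;\rho)$ one gets, for every $t\in(0,1)$,
\[
\Bigl\langle\textstyle\prod_{i=1}^N\tfrac{(q^{\lambda_i}t^{N-i+1}u;q)_\infty}{(q^{\lambda_i}t^{N-i}u;q)_\infty}\Bigr\rangle_{\MM(a_1,\dots,a_N;\rho)}=\sum_{\eta}\Bigl(\textstyle\prod_i u^{\eta_i}t^{(i-1)\eta_i}\tfrac{(t;q)_\infty(q^{\eta_i+1};q)_\infty}{(tq^{\eta_i};q)_\infty(q;q)_\infty}\Bigr)K_\eta(a)\,\frac{\Pi(b_1,\dots,b_N;\rho)}{\Pi(a_1,\dots,a_N;\rho)}.
\]
This is precisely~(\ref{tag8}) applied to the difference operators that are the $z$-coefficients of $\mathcal{Q}_z$.

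The second and main step is to send $t\to0$. Since the identity is to be read as an equality of formal power series in $u$ and the $u$-degree of the $\eta$-term is $\eta_1+\dots+\eta_N$, the coefficient of each $u^M$ is a finite sum over $\{\eta:\sum_i\eta_i=M\}$, so the limit passes through the sum termwise; likewise $\MM(a_1,\dots,a_N;\rho)\to\MM_{t=0}(a_1,\dots,a_N;\rho)$ and the $u^M$-coefficient of $\prod_i\tfrac{(q^{\lambda_i}t^{N-i+1}u;q)_\infty}{(q^{\lambda_i}t^{N-i}u;q)_\infty}$ is bounded uniformly in $\lambda$ and in small $t$, so the limit also passes through the expectation and the left side tends to $\bigl\langle(q^{\lambda_N}u;q)_\infty^{-1}\bigr\rangle_{\MM_{t=0}(a_1,\dots,a_N;\rho)}$ (only the $i=N$ denominator survives). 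On the right the delicate point is a cancellation of singularities: the prefactor carries $\prod_i t^{(i-1)\eta_i}\to0$, while inside $K_\eta(a)$ each of the $(k-1)$ factors $\tfrac{(\tfrac qt a_jb_k^{-1};q)_\infty}{(\tfrac qt a_ja_k^{-1};q)_\infty}$ with $j<k$ blows up, and the exact evaluation
\[
\frac{(\tfrac qt a_jb_k^{-1};q)_\infty}{(\tfrac qt a_ja_k^{-1};q)_\infty}=\prod_{m=1}^{\eta_k}\Bigl(1-\tfrac{a_j}{ta_k}q^{1-m}\Bigr)\;\sim\;\Bigl(-\tfrac{a_j}{ta_k}\Bigr)^{\eta_k}q^{-\eta_k(\eta_k-1)/2}\qquad(t\to0)
\]
shows the powers of $t$ balance exactly, because $\sum_i(i-1)\eta_i=\sum_k(k-1)\eta_k$, and leaves monomials that regroup into $\prod_i a_i^{\eta_{i+1}+\dots+\eta_N-(i-1)\eta_i}$ and $\prod_i q^{-(i-1)\eta_i(\eta_i-1)/2}$. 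The remaining $t$-dependent factors have trivial limits, $\tfrac{(t;q)_\infty}{(tq^{\eta_i};q)_\infty}=(t;q)_{\eta_i}\to1$ and $\tfrac{(ta_ja_k^{-1};q)_\infty}{(tb_ja_k^{-1};q)_\infty}\to1$, while the $t$-free factors of $K_\eta$ combine (using $(c;q)_\infty=(1-c)(cq;q)_\infty$) into $\prod_{j<k}(1-b_jb_k^{-1})\tfrac{(qb_ja_k^{-1};q)_\infty}{(a_jb_k^{-1};q)_\infty}$; assembling these pieces gives the stated identity.

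The step I expect to be the real obstacle is exactly this last bookkeeping: sorting the $N$ prefactor factors and the $4\binom N2$ Pochhammer factors of $K_\eta$ into those vanishing, singular, and regular as $t\to0$, and then verifying that once the singularities cancel the leftover powers of $a_i$, powers of $q$, and Pochhammer symbols reassemble into the compact right-hand side. It uses nothing beyond $(c;q)_\infty/(cq^m;q)_\infty=(c;q)_m$ (and its $m<0$ analogue) and $(c;q)_\infty=(1-c)(cq;q)_\infty$, but it is lengthy; everything else — the operator identity of the first step and moving $t\to0$ past the sum and expectation — is routine once one works with the coefficients of $u^M$, where all sums in play are finite.
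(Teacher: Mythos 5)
Your proposal is correct and follows essentially the same route as the paper: derive the pre-limit $(q,t)$ moment identity from Proposition~\ref{prop10} via~(\ref{tag8}), then send $t\to 0$ term by term in the formal $u$-power series, with the only non-trivial work being the cancellation between $\prod_i t^{(i-1)\eta_i}$ and the singular factors $\prod_{j<k}(\tfrac qt a_jb_k^{-1};q)_\infty/(\tfrac qt a_ja_k^{-1};q)_\infty$. Your explicit evaluation $\prod_{m=1}^{\eta_k}(1-\tfrac{a_j}{ta_k}q^{1-m})\sim(-a_ja_k^{-1}t^{-1})^{\eta_k}q^{-\eta_k(\eta_k-1)/2}$ is precisely the computation in the paper's proof, and your sorting of the remaining $t\to0$ limits (the $(t;q)_{\eta_i}\to1$ factors and the third Pochhammer ratio in $K_\eta$) is the part the paper leaves implicit with the phrase ``obvious limit.''
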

\begin{proof} The only part in the identity of Proposition~\ref{prop10} that does not have an obvious limit is
\begin{equation*}
\prod_{i=1}^N t^{(i-1)\eta_i}\prod_{1\le j<k\le N} \frac{(\frac qt\,{x_j}y_k^{-1};q)_\infty}{(\frac qt\,{x_j}x_k^{-1};q)_\infty}\,.
\end{equation*}
We have
\begin{equation*}
t^{\eta_k}\frac{(\frac qt\,{x_j}y_k^{-1};q)_\infty}{(\frac qt\,{x_j}x_k^{-1};q)_\infty}=t^{\eta_k}\left(1-\frac qtx_jx_k^{-1}q^{-\eta_k}\right)\cdots \left(1-\frac qtx_jx_k^{-1}q^{-1}\right) \to (-x_jx_k^{-1})^{\eta_k} q^{-\frac{\eta_k(\eta_k-1)}2}
\end{equation*}
as $t\to 0$. Multiplying over all $1\le j<k\le N$ we obtain the result.
\end{proof}

\section{Moment and Fredholm determinant formulas}\label{qWhitSecFormulas}

\subsection{Moment formulas}

Recall the $q$-factorial $n_q! = \frac{(1-q)(1-q^2)\cdots (1-q^n)}{(1-q)(1-q)\cdots (1-q)}$ (see also Section~\ref{qSec}).

\begin{proposition}\label{mukprop}
For a meromorphic function $f(z)$ and $k\geq 1$ set $\poles$ to be a fixed set of singularities of $f$ (not including 0) and assume that $q^m\poles$ is disjoint from $\poles$ for all $m\geq 1$. Then setting
\begin{equation}\label{mukdef}
\mu_k:=\frac{(-1)^k q^{\frac{k(k-1)}{2}}}{(2\pi \iota)^k} \oint \cdots \oint \prod_{1\leq A<B\leq k} \frac{z_A-z_B}{z_A-qz_B} \frac{f(z_1)\cdots f(z_k)}{z_1\cdots z_k} dz_1\cdots dz_k,
\end{equation}
we have
\begin{equation}\label{muk}
\mu_k= k_q! \sum_{\substack{\lambda\vdash k\\ \lambda=1^{m_1}2^{m_{2}}\cdots}} \frac{1}{m_1!m_2!\cdots} \, \frac{(1-q)^{k}}{(2\pi \iota)^{\ell(\lambda)}} \oint \cdots \oint \det\left[\frac{1}{w_i q^{\lambda_i}-w_j}\right]_{i,j=1}^{\ell(\lambda)} \prod_{j=1}^{\ell(\lambda)}  f(w_j)f(qw_j)\cdots f(q^{\lambda_j-1}w_j) dw_j,
\end{equation}
where the $z_p$-contours contain $\{q z_j\}_{j> p}$, the fixed set of singularities $\poles$ of $f(z)$ but not 0, and the $w_j$ contours contain the same fixed set of singularities $\poles$ of $f$ and no other poles.
\end{proposition}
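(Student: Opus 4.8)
The plan is to evaluate the $k$-fold contour integral defining $\mu_k$ by residues, carefully tracking which poles each contour picks up, and to organize the resulting terms according to a combinatorial ``clustering'' structure. First I would observe that the contours are nested: the $z_p$-contour contains $\{qz_j\}_{j>p}$ together with the fixed singularity set $\poles$ but not $0$. Starting from the innermost integration (say $z_k$), I would deform its contour inward; the only poles it can cross are those in $\poles$ (from $f(z_k)$) and the pole at $z_k = z_A/q$ coming from factors $\tfrac{z_A - z_k}{z_A - q z_k}$ with $A<k$. The key mechanism is that when a variable $z_B$ is ``captured'' at $z_B = z_A/q$, the factor $z_A - z_B$ in the numerator does not vanish (it equals $z_A(1-1/q)\neq 0$), so this residue is nonzero, and iterating this captures strings of variables $z_{i_1}, z_{i_2} = q^{-1}z_{i_1}, z_{i_3} = q^{-2}z_{i_1},\dots$. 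Grouping the $k$ variables into such geometric strings is exactly what produces a partition $\lambda \vdash k$ with parts equal to the string lengths; the ``free'' head variable of each string then gets integrated against $\poles$ and renamed $w_j$.

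Second, I would carry out the bookkeeping of the cross-terms. When a string of length $\lambda_j$ is built with head $w_j$, the variables along it are $w_j, qw_j, \dots, q^{\lambda_j - 1} w_j$, and collecting the $f$-factors along the string gives precisely $f(w_j) f(qw_j)\cdots f(q^{\lambda_j - 1}w_j)$. The interactions \emph{between} two different strings with heads $w_i, w_j$ and lengths $\lambda_i, \lambda_j$ collapse, after taking all the internal residues, into the single factor $\tfrac{1}{w_i q^{\lambda_i} - w_j}$ (up to the $z_1\cdots z_k$ denominators and the $q$-powers), and antisymmetrizing over the assignment of which string is ``outer'' yields the determinant $\det\left[\tfrac{1}{w_i q^{\lambda_i} - w_j}\right]_{i,j=1}^{\ell(\lambda)}$. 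The prefactors $q^{k(k-1)/2}$, the signs $(-1)^k$, the powers of $(1-q)$, and the $q$-factorial $k_q!$ all come out of carefully accumulating the elementary residue computations $\Res_{z_B = z_A/q}$ and the telescoping products of $\tfrac{z_A - z_B}{z_A - qz_B}$ along strings; the combinatorial factor $\tfrac{1}{m_1! m_2!\cdots}$ appears because strings of equal length are interchangeable and we must not overcount.

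The cleanest way to make this rigorous is an induction on $k$ (or on the number of variables not yet assigned to a string), pushing one contour at a time and separating its residue contribution into ``lands on $\poles$'' (becomes a new string head) versus ``lands on $z_A/q$'' (extends or merges an existing nascent string). I would set up an intermediate family of integrals indexed by a partial partition structure, prove the recursion relating level $k$ to level $k-1$, and check the base cases $k=1,2$ by hand to fix all constants. An alternative, possibly slicker, route is to recognize the left side as a specialization of an identity for symmetric functions / a known $q$-deformed Cauchy-type expansion and match both sides term by term, but I expect the direct residue computation to be the most transparent given the contour hypotheses already stated.

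The main obstacle will be the residue bookkeeping itself: verifying that the ``string-internal'' residues telescope to exactly $f(w_j)\cdots f(q^{\lambda_j-1}w_j)$ with no leftover $z$-denominators, that the inter-string factors genuinely collapse to $\tfrac{1}{w_iq^{\lambda_i}-w_j}$ and assemble into a determinant after symmetrization, and that all the scalar prefactors ($k_q!$, $(1-q)^k$, the sign, the $q^{k(k-1)/2}$, and the multiplicity factor) come out precisely right. This is where sign errors and off-by-one errors in $q$-powers are most likely, so I would anchor the computation on the $k=1$ and $k=2$ cases and then trust the induction.
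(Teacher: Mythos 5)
Your overall strategy—evaluate the nested integral by residues, organize terms into geometric strings indexed by a partition, and prove the identity by induction on $k$—is precisely the route the paper takes. The paper also evaluates $z_k$ first and reduces to a $(k-1)$-fold integral with a modified function $\tilde f_j(z)=f(z)\tfrac{z-a_j}{z-qa_j}$, then closes the induction by showing a combinatorial identity (a sum over poles of a rational function equals a difference of residues at $0$ and $\infty$).

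However, there is a concrete error in your description of the residue mechanism. You claim the innermost $z_k$-contour, when shrunk, crosses both $\poles$ and a pole at $z_k=z_A/q$ ($A<k$). That second pole is \emph{not} enclosed by the $z_k$-contour: by hypothesis the $z_k$-contour contains $\poles$ and no other singularities, and since the $z_A$-contour ($A<k$) is nested \emph{outside} so as to enclose $qz_k$, the point $z_A/q$ lies well outside the small $z_k$-contour. Shrinking $z_k$ picks up \emph{only} the residues at $\poles$. The string mechanism enters one step later: after fixing $z_k=a_j$, the remaining factor $\tfrac{z_p-a_j}{z_p-qa_j}$ gives each outer variable $z_p$ ($p<k$) a \emph{new} pole at $z_p=qa_j$, which its (larger) contour does enclose. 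Thus strings grow outward as $a_j,\,qa_j,\,q^2a_j,\dots$; your intermediate formula $z_{i_2}=q^{-1}z_{i_1}$ goes the wrong way, and is internally inconsistent with your later (correct) parametrization $w_j,qw_j,\dots,q^{\lambda_j-1}w_j$. If you set up the induction exactly as written, you would be taking residues at poles that are not inside the contour. Once you reverse the roles—inner variable lands on $\poles$ and creates a $q$-shifted pole for the outer variables—your plan aligns with the paper's proof and the remaining work is the bookkeeping you already anticipate, culminating (as in the paper) in the algebraic identity
\begin{equation*}
\sum_{j\geq 1}\frac{\prod_{\ell\geq 1}(b_j-b_\ell q^{\lambda_\ell})}{\prod_{\ell\neq j}(b_j-b_\ell)}\,\frac{1}{b_j}=1-q^{\sum\lambda_i},
\end{equation*}
which can itself be verified by one more residue argument.
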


As a quick example consider $f(z)$ which has a pole at $z=1$. Then the $z_k$-contour is a small circle around 1, the $z_{k-1}$-contour goes around $1$ and $q$, and so on until the $z_1$-contour encircles $\{1,q,\ldots, q^{k-1}\}$ (see Figure \ref{circontours} for example). All the $w$ contours are small circles around 1 and can be chosen to be the same.

\begin{figure}
\begin{center}
\includegraphics[scale=1.3]{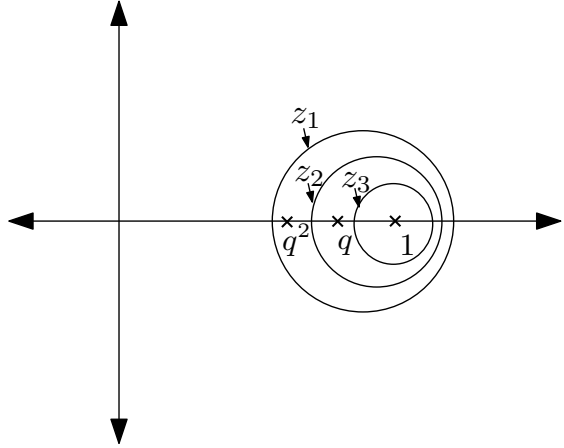}
\end{center}
\caption{Possible contours when $k=3$ for the $z_j$ contour integrals in Proposition~\ref{mukprop}}\label{circontours}
\end{figure}

\begin{proof}
This proposition amounts to book-keeping of the residues of $\prod \frac{z_A-z_B}{z_A - qz_B}$ and can be proved by considering a rich enough class of ``dummy'' functions $f(z)$. Assume that $f(z)$ has poles at $\poles:=\{a_j\}_{j\in J}$ (where $J=\{1,\ldots, |\poles|\}$ is an index set) inside the integration contour.

When we evaluate $\mu_k$ as the sum of residues in (\ref{muk}), for each $\lambda\vdash k$ we have to sum over $\{a_{j_i}\}_{i=1}^{\ell(\lambda)}$ with $a_{j_i}\in \poles$ referring to the pole of $f(w_i)$. Because of the determinant, we get nonzero contributions only when all $a_{j_i}$ are mutually distinct. Also, if $\lambda_{i}=\lambda_{i'}$ then permuting $j_i$ and $j_{i'}$ does not affect the contribution, This symmetry allows us to cancel the prefactor $(m_1!m_2!\cdots)^{-1}$ and replace the summation over $\lambda$ by a summation over disjoint subsets of $J$ of size $m_1,m_2,$ and so on. Hence we can evaluate expression (\ref{muk}) for $\mu_k$ as a sum over sets $S=(S_1,S_2,\ldots,)$ of size $m_1,m_2,\ldots$ such that $S_i\subset \poles$, $|S_i|=m_i$, $m_1+2m_2+3m_3+\cdots =k$ and the $S_i$ are disjoint. Call $\mathcal{S}$ the set of all such sets $S=(S_1,S_2,\ldots)$. It is also convenient to index the elements of $S$ as
\begin{equation*}
S=\{b_{1},b_{2},\ldots,b_{m_1},b_{m_1+1},\ldots, b_{m_1+m_{2}},\ldots, b_{m_1+m_2+\cdots}\}
\end{equation*}
where the first $m_1$ elements are in $S_1$, the next $m_2$ are in $S_2$ and so on. For a given $b\in S$ we denote by $\lambda(b)$ the index of the $S_i$ such that $b\in S_i$. Then by expanding into residues equation (\ref{muk}) takes the form
\begin{equation}\label{reseval}
\mu_k = \sum_{S\in \mathcal{S}}  k_q! (1-q)^{k} \prod_{j=1}^{m_1+m_2+\cdots} \Res{w=b_j}f(w) f(qb_j)\cdots f(q^{\lambda(b_j)-1}b_j) \det\left[\frac{1}{b_i q^{\lambda(b_i)}-b_j}\right]_{i,j=1}^{m_1+m_2+\cdots}.
\end{equation}

We will prove our identity (the equivalence of (\ref{mukdef}) and (\ref{muk})) via induction on $k$. The case $k=1$ is immediately checked. Let $k$ be general, and assume we already have a proof for $k-1$. In (\ref{mukdef}) we can evaluate the integral over $z_k$ as a sum of residues involving $(k-1)$-fold integral:
\begin{eqnarray}\label{eqnSS}
(\ref{mukdef}) &=& (-q^{k-1})\sum_{j\in J} \frac{\Resfrac{z=a_j} f(z)}{a_j} (-1)^{k-1} \frac{q^{\frac{(k-1)(k-2)}{2}}}{(2\pi \iota)^{k-1}} \\
\nonumber &&\times \oint \cdots \oint \prod_{1\leq A<B\leq k-1} \frac{z_A-z_B}{z_A-qz_B} \frac{\left(f(z_1)\frac{z_1-a_j}{z_1-qa_j}\right)\cdots \left(f(z_{k-1})\frac{z_{k-1}-a_j}{z_{k-1}-qa_j}\right)}{z_1\cdots z_{k-1}} dz_1\cdots dz_{k-1}.
\end{eqnarray}

Now apply the induction hypothesis to the $(k-1)$-fold integral in the right-hand side above with $\tilde{f}_j(z) = f(z)\frac{z-a_j}{z-qa_j}$ and the set of poles $\tilde{\poles}_j = (\poles\backslash\{a_j\})\cup \{qa_j\}$. We note two useful facts: For $\ell\neq j$
\begin{eqnarray}\label{eqnSSS}
\nonumber\tilde{f}_j(a_{\ell})\tilde{f}_j(q a_{\ell})\cdots \tilde{f}_j(q^{\lambda_{\ell}-1}a_{\ell}) &=& \frac{a_{\ell}-a_k}{a_{\ell}-qa_j}\,  \frac{qa_{\ell}-a_k}{qa_{\ell}-qa_j}\cdots  \frac{q^{\lambda_{\ell}-1}a_{\ell}-a_k}{q^{\lambda_{\ell}-1}a_{\ell}-qa_j}\, f(a_{\ell})\cdots f(q^{\lambda_{j}-1}a_{\ell})\\
&=& \frac{q^{\lambda_{\ell}-1}a_{\ell}-a_j}{a_{\ell}-qa_j} \, q^{1-\lambda_{\ell}} f(a_{\ell})\cdots f(q^{\lambda_{j}-1}a_{\ell}),
\end{eqnarray}
and
\begin{eqnarray}\label{eqnSSSS}
\nonumber\Res{z=qa_j} \tilde{f}_j(z) \tilde{f}_{j}(q^2a_j)\cdots \tilde{f}_j(q^{\lambda_j -1}a_j) &=& (q-1)a_j \frac{q^2 a_j-a_j}{q^2a_j-qa_j}\cdots \frac{q^{\lambda_j -1}a_j - a_j}{q^{\lambda_j -1}a_j - qa_j} f(qa_j)\cdots f(q^{\lambda_j-1}a_j)\\
&=& (q^{\lambda_{j}-1}a_j-a_j)q^{2-\lambda_j}f(qa_j)\cdots f(q^{\lambda_j-1}a_j).
\end{eqnarray}

By induction, the integral in the right-hand side of (\ref{eqnSS}) can be written (according to the discussion at the beginning of this proof) as a sum over non-intersecting subsets $\tilde{S}_j =(\tilde{S}_{j,1},\tilde{S}_{j,2}\ldots)$ with  $\tilde{S}_{j,\ell}\subset \tilde{\poles}_j$, $|\tilde{S}_{j,\ell}|=\tilde{m}_{\ell}$ and $\tilde{m}_1+2\tilde{m}_2+3\tilde{m}_3+\cdots = k-1$ (let $\tilde{\mathcal{S}}_j$ be the set of all such $\tilde{S}_j$). Thus from (\ref{reseval}) the right-hand side of (\ref{eqnSS}) can be expanded as
\begin{eqnarray}\label{resexptilde}
\nonumber&&(k-1)_q! (1-q)^{k-1} \sum_{j\in J} \sum_{\tilde{S}\in \tilde{\mathcal{S}}_j} -q^{k-1} \frac{1}{a_j} \det\left[\frac{1}{\tilde b_i q^{\tilde\lambda(\tilde b_i)}-\tilde b_\ell}\right]_{\tilde b_i,\tilde b_j\in \tilde{S}_j}\\
&&\times \prod_{\tilde b\in \tilde{S}\backslash \{qa_j\}} \Res{w=b} \tilde{f}_j(w) \cdot \tilde{f}_j(q\tilde b)\cdots \tilde{f}_{j}(q^{\tilde{\lambda}(\tilde b)-1}\tilde b) \\
\nonumber &&\times \Res{w=a_j} f(w)\cdot  \Res{w=qa_j} \tilde{f}_{j}(w)\cdot \tilde{f}_j(q^2 a_j)\cdots \tilde{f}_j(q^{\tilde{\lambda}(qa_j)}a_j),
\end{eqnarray}
where $\tilde{\lambda}(\tilde b)$ is the index of the set $\tilde{S}_{i,\cdot}$ that contains $\tilde b$.

It will be more convenient to map these subsets and $a_j$ onto a new set of subsets. From $a_j\in \poles$ and the collection $(\tilde{S}_{j,1},\tilde{S}_{j,2},\ldots)$ we now construct a collection $(S_1,S_2,\ldots)$ of disjoint subsets of $\poles$ with $|S_i|=m_i$ and $m_1+2m_2+3m_3+\cdots = k$. If $qa_j\in \tilde{S}_{j,\ell}$ for some $\ell$ then
\begin{equation*}
S_{\ell}=\tilde{S}_{j,\ell}\backslash \{qa_j\}; \qquad S_{\ell+1}=\tilde{S}_{j,\ell+1}\cup\{a_j\}; \qquad S_m=\tilde{S}_{j,m}, \quad m\neq \ell,\ell+1.
\end{equation*}
If $qa_j\notin \cup_{\ell\geq 1} \tilde{S}_{j,\ell}$ then
\begin{equation*}
S_{1} = \tilde{S}_{j,1}\cup\{a_j\}; \qquad S_{m}=\tilde{S}_{j,m}, \quad m>1.
\end{equation*}

Then using (\ref{eqnSSS}) for the second line of (\ref{resexptilde}) and (\ref{eqnSSSS}) for the third line we find that for a given collection $(S_1,S_2,\ldots)$, the associated term in equation (\ref{resexptilde}) is given by

\begin{eqnarray*}
&&(k-1)_q!(1-q)^{k-1}\sum_{j=1}^{m_1+m_2+\cdots} -q^{k-1} \frac{1}{b_j} b_j(q^{\lambda(b_j)-1}-1)q^{2-\lambda(b_j)} \prod_{\ell\neq j} \frac{q^{\lambda(b_\ell)-1}b_\ell - b_j}{b_{\ell}-qb_j} q^{1-\lambda(b_\ell)}\\
&& \times\det\left[\frac{1}{\tilde{b}_i q^{\lambda(b_i)-\delta_{ij}}-\tilde{b}_{\ell}}\right]_{i,\ell=1}^{m_1+m_2+\cdots} \prod_{b\in S_1\cup S_2\cup \cdots } \Res{w=b} f(w) f(qb)f(q^2b)\cdots f(q^{\lambda(b)-1}b),
\end{eqnarray*}
where $\tilde{b}_i=b_i q^{\delta_{ij}}$ and $\delta_{ij}=1$ if $i=j$ and zero otherwise. Note that although the above expression makes sense literally only for $\lambda(b_j)>1$, the natural limit $\lambda(b_j)\to 1$ gives the correct contribution for those $b\in S_1$. In that case the factor $b_j(q^{\lambda(b_j)-1}-1)q^{2-\lambda(b_j)}$ disappears and the $j^{th}$ row and column of the matrix whose determinant we compute is removed.

To prove the induction step, we need to show that the sum above equals the analogous term in (\ref{reseval}), which is
\begin{equation*}
k_q! (1-q)^{k} \det\left[\frac{1}{b_i q^{\lambda(b_i)}-b_\ell}\right]_{i,\ell=1}^{m_1+m_2+\cdots} \prod_{b\in S_1\cup S_2\cup\cdots} \Res{w=b}f(w) f(qb)\cdots f(q^{\lambda(b)-1}b).
\end{equation*}
The contribution of $f$ can be canceled right away. Recalling the $q$-factorial definition and gathering factors of $q$ we are thus lead to prove the following identity
\begin{eqnarray*}
&&\sum_{j=1}^{m_1+m_2+\cdots} (q-q^{\lambda(b_j)})\prod_{\ell:\ell\neq j} \frac{q b_j -q^{\lambda(b_\ell)}b_\ell}{qb_j - b_{\ell}} \det\left[\frac{1}{b_i q^{\lambda(b_i)} - b_{\ell}q^{\delta_{\ell j}}}\right]_{i,\ell=1}^{m_1+m_2+\cdots} \\
&&= (1-q^{\sum\lambda(b_i)}) \det\left[\frac{1}{b_i q^{\lambda(b_i)} - b_{\ell}}\right]_{i,\ell=1}^{m_1+m_2+\cdots}.
\end{eqnarray*}

Abbreviating $\lambda(b_i)=:\lambda_i$ and evaluating the Cauchy determinants via
\begin{equation*}
\det\left[\frac{1}{x_i+y_j}\right] = \frac{\prod_{i<j} (x_i-x_j)(y_i-y_j)}{\prod_{i,j}(x_i+y_j)} = \frac{V(x_i)V(y_i)}{\prod_{i,j}(x_i+y_j)}
\end{equation*}
(where $V(x_i)= \prod_{i<j} (x_i-x_j)$ is the Vandermonde determinant) we obtain
\begin{equation*}
\sum_{j=1}^{m_1+m_2+\cdots} (q-q^{\lambda_j}) \prod_{\ell\neq j} \frac{qb_j - q^{\lambda_\ell}b_{\ell}}{qb_j-b_\ell} \, \frac{V(b_iq^{\lambda_i})V(b_\ell q^{\delta_{\ell j}})}{\prod_{i,\ell} (b_{\ell}q^{\delta_{\ell j}} - b_i q^{\lambda_i})} = (1-q^{\sum \lambda_i}) \frac{V(b_iq^{\lambda_i})V(b_\ell)}{\prod_{i,\ell} (b_{\ell} - b_i q^{\lambda_i})}.
\end{equation*}

Equivalently, the desired identity reduces to showing that
\begin{equation*}
\sum_{j\geq 1} \frac{\prod_{\ell\geq 1} (b_j-b_{\ell}q^{\lambda_\ell})}{\prod_{\ell\neq j}(b_j-b_{\ell})} \frac{1}{b_j} = 1-q^{\sum\lambda_i}.
\end{equation*}

The left-hand side of the above formula is the sum of the residues of the function
\begin{equation*}
\prod_{\ell\geq 1} \frac{z-b_{\ell}q^{\lambda_\ell}}{z-b_{\ell}} \frac{1}{z}
\end{equation*}
at the points $z=b_{\ell}$ for $\ell\geq 1$. On the other hand $-(1-q^{\sum \lambda_\ell})$ is the difference of residues of the same function at $z=\infty$ and $z=0$. This completes the proof.
\end{proof}

We state an analogous formula to the one contained in Proposition~\ref{mukprop} which now involves integrating around zero and over large contours.

\begin{proposition}\label{mukproplarge}
For any continuous function $f(z)$ and $k\geq 1$,
\begin{eqnarray}\label{mukproplargeeqn}
&& \frac{(-1)^k q^{\frac{k(k-1)}{2}}}{(2\pi \iota)^k} \oint \cdots \oint \prod_{1\leq A<B\leq k} \frac{z_A-z_B}{z_A-qz_B} \frac{f(z_1)\cdots f(z_k)}{z_1\cdots z_k} dz_1\cdots dz_k\\
&&= \frac{k_q!}{k!} \frac{(1-q^{-1})^{k}}{(2\pi \iota)^{k}} \oint \cdots \oint \det\left[\frac{1}{w_i q^{-1}-w_j}\right]_{i,j=1}^{k} \prod_{j=1}^{k} f(w_j) dw_j,
\end{eqnarray}
where the $z_j$-contours and $w_j$-contours are all the same.
\end{proposition}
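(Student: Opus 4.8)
The proof of Proposition~\ref{mukproplarge} follows the same strategy as that of Proposition~\ref{mukprop}: it is a residue book-keeping identity that can be verified by testing against a sufficiently rich family of ``dummy'' functions $f(z)$. The plan is to expand the left-hand side by iterated residue computation, but now deforming all contours so that they are identical large circles; the only poles of the integrand $\prod_{A<B}\tfrac{z_A-z_B}{z_A-qz_B}\cdot\prod_j \tfrac{f(z_j)}{z_j}$ that lie inside such a contour (besides the singularities of $f$, which no longer contribute a residue since we are not shrinking onto them) are the points $z_A = z_B/q$ coming from the cross terms, together with $z_j = 0$. The essential combinatorial content is: when one evaluates the $k$-fold integral as a sum of residues at these poles, the resulting terms reorganize into Cauchy-type determinants $\det[1/(w_iq^{-1}-w_j)]$, exactly mirroring (but ``dual to'') the small-contour computation.

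Concretely, first I would set up the induction on $k$, with $k=1$ being immediate (both sides equal $-\oint f(w)\tfrac{dw}{w}$ up to the trivial $1_q!/1! = 1$, $(1-q^{-1})^1/(-1)$ bookkeeping). For the induction step, integrate the left-hand side over $z_k$ by residues. The residues at $z_k = q^{-1}z_j$ (for $j<k$) generate $(k-1)$-fold integrals whose integrand is of the same shape but with $f(z_j)$ replaced by $\tilde f_j(z) = f(z)\tfrac{z - z_j}{z - qz_j}$ evaluated appropriately, while the residue at $z_k = 0$ contributes a separate term. Then invoke the induction hypothesis (or, more efficiently, mimic the ``subsets'' reorganization used in the proof of Proposition~\ref{mukprop}) to recognize the sum of these contributions as the claimed Cauchy determinant expansion. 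Since all contours are now equal, the clusters that appeared as $\ell(\lambda)$-dimensional Cauchy determinants $\det[1/(w_iq^{\lambda_i}-w_j)]$ in Proposition~\ref{mukprop} collapse: one checks that summing the small-contour right-hand side of (\ref{muk}) over all ways of merging the fixed singularity set into a single large contour telescopes into $\tfrac{k_q!}{k!}(1-q^{-1})^k\oint\cdots\oint\det[1/(w_iq^{-1}-w_j)]\prod f(w_j)dw_j$.

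An alternative and probably cleaner route, which I would actually pursue first, is to deduce Proposition~\ref{mukproplarge} directly from Proposition~\ref{mukprop} by a contour-deformation argument: take $f$ with its singularities $\poles$ inside a small disk, start from the nested small contours on the left-hand side of (\ref{mukdef}), and deform each $z_j$-contour outward to a common large circle $C$. Each time the contour of $z_A$ crosses the pole at $z_A = q z_B$ one picks up a residue; tracking all such crossings and using the symmetry of the integrand under permutation of the $z_j$'s (which supplies the $1/k!$ and the $k!$ orderings), the collected residue terms exponentiate/telescope into precisely the single Cauchy determinant $\det[1/(w_iq^{-1}-w_j)]$. The key algebraic identity underlying the collapse is the partial-fractions/residue statement already isolated at the end of the proof of Proposition~\ref{mukprop}, namely that $\sum_{j}\tfrac{1}{b_j}\prod_{\ell}\tfrac{b_j - b_\ell q^{\lambda_\ell}}{\prod_{\ell\neq j}(b_j - b_\ell)} = 1 - q^{\sum\lambda_i}$, read in the specialization where all $\lambda_i$ are eventually set so that the clusters merge.

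The main obstacle, as in Proposition~\ref{mukprop}, is the careful accounting of residues and signs: verifying that the powers of $q$ (the $q^{k(k-1)/2}$ on the left, the absence of such a factor on the right, and the $(-1)^k$ versus $(1-q^{-1})^k$), the factorial prefactors $k_q!/k!$, and the orientation conventions all match up after every contour is deformed and every cluster collapses. In particular one must check that enlarging a contour past $z_A = qz_B$ rather than shrinking past $z_A = z_B/q$ flips the relevant sign in exactly the way that converts $\det[1/(w_iq^{\lambda_i}-w_j)]$ with $\lambda_i \geq 1$ into $\det[1/(w_iq^{-1}-w_j)]$, and that the contributions of the residues at $z_j=0$ are consistent with the $1/z_j$ factors already present. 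These are routine but delicate; everything else is a direct transcription of the argument already given for Proposition~\ref{mukprop}, so I would simply remark that ``the proof is entirely analogous to that of Proposition~\ref{mukprop}, with the roles of the small and large contours interchanged and the Cauchy determinants merging into a single one,'' and supply only the verification of the prefactors.
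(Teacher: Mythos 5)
Your proposal misses the key idea and heads down a considerably harder and, as written, incorrect path. The statement is a purely algebraic identity between two $k$-fold integrals over \emph{identical} contours of arbitrary shape, and it does not require any residue bookkeeping or contour deformation. The only thing standing between the two sides of \eqref{mukproplargeeqn} is that the factor $\prod_{A<B}\tfrac{z_A-z_B}{z_A-qz_B}$ in the integrand is not symmetric in the $z_j$'s, while the remaining factors $\prod_j f(z_j)/z_j$ and the common contour are. Since the contour is the same for all variables, one may replace the integrand by its symmetrization over $S_k$ at the cost of a $1/k!$; the symmetrized product is evaluated in closed form by a classical identity from Macdonald's book (\cite[III,(1.4)]{M}):
\begin{equation*}
\sum_{\sigma\in S_k}\prod_{1\le A<B\le k}\frac{z_{\sigma(A)}-z_{\sigma(B)}}{z_{\sigma(A)}-qz_{\sigma(B)}}
= c_{q,k}\,z_1\cdots z_k\,\det\left[\frac{1}{z_iq^{-1}-z_j}\right]_{i,j=1}^k,\qquad
c_{q,k}=\prod_{j=1}^k(q^{-j}-1).
\end{equation*}
The factor $z_1\cdots z_k$ exactly cancels the $1/(z_1\cdots z_k)$ already present, leaving the Cauchy determinant integrand; and a one-line check shows
\begin{equation*}
(-1)^k q^{\frac{k(k-1)}{2}}\,c_{q,k}=(-1)^k q^{-k}\prod_{j=1}^k(1-q^j)=k_q!\,(1-q^{-1})^k,
\end{equation*}
which matches the constants claimed. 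That is the entire proof.

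By contrast, both of your proposed routes confuse the present statement with a different one. When all $z_j$-contours coincide (say a circle $|z|=R$), the poles $z_A=qz_B$ never lie on or between the contours, so there are no residues to ``pick up'' as you deform; the iterated-residue/induction machinery of Proposition~\ref{mukprop} is simply not engaged here. Your second route, deforming from the nested small contours of \eqref{mukdef} outward to a common large circle and tracking crossings, is the content of Remark~\ref{starstar31} together with Proposition~\ref{muandmutildeprop}; it relates $\mu_k$ to $\tilde\mu_k$, which is a different (and logically downstream) statement, and in any case presupposes a determinantal form for the common-contour integral rather than deriving it. You should also be careful about what ``cross the pole at $z_A=qz_B$'' means: for $0<q<1$ and contours near $|z|=1$, moving the $z_A$-contour \emph{outward} moves it \emph{away} from $qz_B$; the picture in your last paragraph has the geometry backwards. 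The symmetrization argument avoids all of these pitfalls.
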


\begin{remark}\label{starstar31}
Assuming $\poles$ is inside a small enough neighborhood of $z=1$, the integration contours of Proposition~\ref{mukprop} with additional small circles around $z_j=0$, $j=1,\ldots, k$ (cf. Proposition~\ref{muandmutildeprop} below) can be deformed without passing through singularities to the same circle of the form $|z_j|=R>1$.
\end{remark}


\begin{proof}
The following is a useful combinatorial identity from which the result readily follows. This identity can be derived using Proposition~\ref{mukprop} by choosing $f$ with exactly $k$ poles and then evaluating both sides via residues. It can also be shown through induction.

\begin{lemma}\cite[III,(1.4)]{M}
Set $c_{q,k} = (q^{-1}-1)\cdots (q^{-k}-1)$ then
\begin{equation*}
\sum_{\sigma\in S_k} \prod_{1\leq A<B\leq k} \frac{z_{\sigma(A)}-z_{\sigma(B)}}{z_{\sigma(A)}-qz_{\sigma(B)}} = c_{q,k} z_{1}\cdots z_{k} \det\left[\frac{1}{z_i q^{-1} -z_j}\right]_{i,j=1}^{k}.
\end{equation*}
\end{lemma}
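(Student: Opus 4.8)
The final statement to be proved is the Macdonald identity \cite[III,(1.4)]{M}:
\begin{equation*}
\sum_{\sigma\in S_k} \prod_{1\leq A<B\leq k} \frac{z_{\sigma(A)}-z_{\sigma(B)}}{z_{\sigma(A)}-qz_{\sigma(B)}} = c_{q,k}\, z_{1}\cdots z_{k} \det\left[\frac{1}{z_i q^{-1} -z_j}\right]_{i,j=1}^{k},
\end{equation*}
where $c_{q,k} = (q^{-1}-1)\cdots (q^{-k}-1)$. The cleanest approach is to recognize the right-hand side as a Cauchy-type determinant and reduce the claim to a rational-function identity that can be checked by examining poles and residues in one variable at a time.

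First I would rewrite the determinant using the Cauchy determinant evaluation: with $x_i = z_i q^{-1}$ and $y_j = -z_j$ one has $\det[1/(x_i + y_j)] = \prod_{i<j}(x_i-x_j)(y_i-y_j) / \prod_{i,j}(x_i+y_j)$, so the right-hand side becomes an explicit rational function, namely (up to the constant and the $z_1\cdots z_k$ factor) $\prod_{i<j}(z_i-z_j)^2 \big/ \prod_{i,j}(z_i q^{-1} - z_j)$. Multiplying through, the identity to prove is the polynomial-denominator statement
\begin{equation*}
\prod_{i\neq j}(z_i - q z_j)\sum_{\sigma\in S_k}\prod_{A<B}\frac{z_{\sigma(A)}-z_{\sigma(B)}}{z_{\sigma(A)}-qz_{\sigma(B)}} = c_{q,k}\, z_1\cdots z_k \prod_{i<j}(z_i-z_j)^2\prod_{i}\frac{1}{\text{(appropriate diagonal factors)}},
\end{equation*}
which after clearing all denominators reduces to an identity between two Laurent polynomials in $z_1,\dots,z_k$ that are symmetric and of controlled degree. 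The plan is then: (i) show both sides have the same set of possible poles (only along $z_i = q z_j$), and that in fact after symmetrization the left side is a Laurent polynomial of the same bidegree as the right side; (ii) compute the residue of the left-hand side at a generic hyperplane $z_i = q z_j$ and match it to that of the right side, which reduces the $k$-variable identity to the $(k-1)$-variable case; (iii) pin down the overall normalization constant by specializing, e.g., sending one variable to $0$ or $\infty$, or comparing leading coefficients of a monomial such as $z_1^{k-1}z_2^{k-2}\cdots z_k^0$.

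Concretely, I expect an induction on $k$ to be the engine: assume the formula for $k-1$, and in the $k$-variable sum split the permutations according to the position of $k$; the terms where $\sigma(k)=k$ or $\sigma^{-1}(k)=k$ factor conveniently, and a Desnanot--Jacobi / partial-fractions manipulation in the variable $z_k$ (taking residues at $z_k = q^{-1} z_j$, mirroring the residue computation already used in the proof of Proposition~\ref{mukprop}) lets one peel off one variable and invoke the inductive hypothesis. The constant $c_{q,k} = c_{q,k-1}(q^{-k}-1)$ emerges from counting the number of new terms (a factor of order $k$) combined with the $q$-shift. Alternatively, one can avoid induction entirely: regard both sides as rational functions, verify they are regular wherever the other is (the alternating/Vandermonde factors kill spurious poles on $z_i = z_j$), check they have matching residues on each $z_i = qz_j$, conclude their difference is a polynomial of negative degree hence zero, and fix the constant by one evaluation.

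The main obstacle will be the bookkeeping in step (ii)/(iii): verifying that the symmetrized left-hand side genuinely has no poles beyond $z_i = qz_j$ (one must see the cancellation of the apparent poles coming from the $\prod(z_{\sigma(A)}-qz_{\sigma(B)})$ denominators whose factors do not appear in $\prod_{i<j}(z_i q^{-1}-z_j)$ — this is the content of the determinant collapsing the full symmetric group sum), and correctly tracking powers of $q$ through the residue recursion so that the constant comes out exactly $(q^{-1}-1)\cdots(q^{-k}-1)$ rather than off by a sign or a power of $q$. Since this is a known identity of Macdonald, I would in the write-up either cite \cite[III,(1.4)]{M} directly (as the excerpt does) or give the short residue/induction argument above; the residue-matching route is the most robust and least calculation-heavy.
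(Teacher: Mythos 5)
Your plan is on the right track, and both you and the paper ultimately lean on the citation to Macdonald's book \cite[III,(1.4)]{M}: the paper states the lemma with that reference and only remarks that it ``can be derived using Proposition~\ref{mukprop} by choosing $f$ with exactly $k$ poles\ldots{} [or] shown through induction,'' so there is no self-contained proof in the paper to compare against line by line. Your Cauchy-determinant reduction is the standard route and is correct: using the evaluation already displayed in the proof of Proposition~\ref{prop6} with $t=q^{-1}$, the right-hand side reduces to a constant times $\prod_{i<j}(z_i-z_j)^2/\prod_{i\ne j}(z_i-qz_j)$.

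That said, your steps (i)--(ii) (pole inventory plus a residue recursion on each hyperplane $z_i=qz_j$ to descend in $k$) are more machinery than is needed, and the step where you ``clear all denominators\ldots{} reduces to an identity between two Laurent polynomials'' can be sharpened into the whole argument. Bring the left-hand side over the common denominator $\prod_{i\ne j}(z_i-qz_j)$. Since $\prod_{A<B}(z_{\sigma(A)}-z_{\sigma(B)})=\operatorname{sgn}(\sigma)\prod_{i<j}(z_i-z_j)$, the left-hand side equals
\[
\frac{\prod_{i<j}(z_i-z_j)}{\prod_{i\ne j}(z_i-qz_j)}\sum_{\sigma\in S_k}\operatorname{sgn}(\sigma)\prod_{A>B}\bigl(z_{\sigma(A)}-qz_{\sigma(B)}\bigr).
\]
The inner sum is a polynomial of total degree $\binom{k}{2}$ that is antisymmetric under interchanging any $z_i\leftrightarrow z_j$ (relabelling $\sigma\mapsto (ij)\sigma$ flips the sign), hence it is a scalar multiple of the Vandermonde $\prod_{i<j}(z_i-z_j)$. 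This immediately gives equality with the Cauchy-determinant form up to a constant in $q$, which you then fix by a single specialization (comparing the coefficient of $z_1^{k-1}z_2^{k-2}\cdots z_k^0$, or checking $k=1,2$ and iterating the obvious $c_{q,k}=c_{q,k-1}(q^{-k}-1)$ recursion). This avoids the residue bookkeeping you flagged as the ``main obstacle'' and produces exactly $c_{q,k}=(q^{-1}-1)\cdots(q^{-k}-1)$. So: your approach works, but the antisymmetry observation after clearing denominators replaces the residue-matching recursion with a one-line degree argument.
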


We may use this to symmetrize the integrand in the right-hand side of (\ref{mukproplargeeqn}). The only non-symmetric term is the product over $A<B$. However, using the above combinatorial identity we find that (recall the $k!$ came from symmetrizing)
\begin{equation*}
(\ref{mukproplargeeqn})
= \frac{1}{k!}(-1)^k c_{q,k} \frac{q^{\frac{k(k-1)}{2}}}{(2\pi \iota)^k} \oint \cdots \oint \det\left[\frac{1}{w_i q^{-\lambda_i}-w_j}\right]_{i,j=1}^{k} \prod_{j=1}^{k} f(w_j) dw_j  .
\end{equation*}
It is now straightforward to check that the prefactors of the integral are as desired.
\end{proof}


\begin{proposition}\label{muandmutildeprop}
For simplicity assume $f(0)=1$. Recall $\mu_j$ from (\ref{mukdef}) (with $\mu_0=1$ for convenience) and define $\tilde{\mu}_j$ by adding a small circle around 0 to each of the $z_j$ contours in (\ref{mukdef}). Then

\begin{equation*}
\tilde{\mu}_k = (-1)^k q^{\frac{k(k-1)}{2}} \sum_{j=0}^{k} {k\choose j}_{q^{-1}}(-1)^{j} q^{-\frac{j(j-1)}{2}} \mu_j.
\end{equation*}
\end{proposition}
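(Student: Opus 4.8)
The plan is to compare $\tilde\mu_k$ and $\mu_k$ directly at the level of contour integrals, using the fact that $\tilde\mu_k$ differs from $\mu_k$ only by the extra residue contributions at $z_j=0$. Concretely, in the definition (\ref{mukdef}) of $\mu_k$, enlarge each contour so that it also encircles the origin; since $f(0)=1$, the integrand near $z_j=0$ behaves like $\prod_{1\le A<B\le k}\frac{z_A-z_B}{z_A-qz_B}\cdot\prod_j \frac{1}{z_j}$, up to the regular factors $f(z_j)$. The key computation is therefore to expand $\tilde\mu_k$ by first integrating out, one at a time, those variables $z_j$ whose contour is taken around $0$ versus around the ``true'' singularities $\poles$ of $f$. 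I expect the cleanest route is to write $\tilde\mu_k$ as a sum over subsets $J\subset\{1,\dots,k\}$: for $j\in J$ we take the residue at $z_j=0$, and for $j\notin J$ we keep the contour around $\poles$. By the cyclic/permutation symmetry of the integrand (the product $\prod_{A<B}\frac{z_A-z_B}{z_A-qz_B}$ is the one from the lemma of Macdonald quoted just above, III.(1.4)), the contribution of a subset $J$ depends only on $|J|=k-j$, so $\tilde\mu_k = \sum_{j=0}^k \binom{k}{j}_{?}\,(\text{coefficient})\cdot(\text{$j$-fold integral of the same shape})$, and the $j$-fold integral of the same shape is recognizably $\mu_j$ up to an explicit prefactor in $q$.

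The main step is thus to evaluate the ``all residues at $0$'' piece and, more generally, the mixed pieces. For the piece where we take residues at $z_j=0$ for $j$ in a size-$m$ set, one uses the symmetrization identity: summing $\prod_{A<B}\frac{z_{\sigma(A)}-z_{\sigma(B)}}{z_{\sigma(A)}-qz_{\sigma(B)}}$ over $\sigma\in S_m$ produces $c_{q,m}\,z_1\cdots z_m\det[\tfrac{1}{z_iq^{-1}-z_j}]$ (exactly as used in the proof of Proposition~\ref{mukproplarge}), which when the $z_i$ all go to $0$ in a nested fashion collapses to an explicit scalar. Carrying out this nested residue at $0$ produces precisely the $q^{-1}$-binomial coefficient $\binom{k}{j}_{q^{-1}}$ together with the Gaussian-type factor $(-1)^j q^{-j(j-1)/2}$, while the overall sign/power $(-1)^k q^{k(k-1)/2}$ is the prefactor already sitting in front of $\mu_k$ in (\ref{mukdef}). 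One then checks that after pulling out these explicit constants, the remaining $j$-fold contour integral over contours around $\poles$ is, term by term, exactly the integral defining $\mu_j$. Summing over $j=0,\dots,k$ yields
\[
\tilde\mu_k = (-1)^k q^{\frac{k(k-1)}{2}} \sum_{j=0}^k \binom{k}{j}_{q^{-1}} (-1)^j q^{-\frac{j(j-1)}{2}} \mu_j,
\]
as claimed.

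The hard part will be the bookkeeping of signs, powers of $q$, and the emergence of $\binom{k}{j}_{q^{-1}}$ from the nested residue at $0$: one has to track how the factors $\frac{1}{z_A-qz_B}$ degenerate when a subset of the $z$'s is sent to $0$ in a prescribed order, and verify that the antisymmetry of the numerator $\prod(z_A-z_B)$ together with the symmetrization lemma reproduces the finite $q$-binomial expansion (\ref{finqBinExp}) with parameter $q^{-1}$. I would organize this by first doing the case where $f$ has no poles inside the contour at all (so $\mu_j=0$ for $j\ge 1$ and one is just checking the $j=k$ term against a pure residue-at-zero computation), which isolates and pins down the constant, and then reinserting a generic $f$ with poles $\poles$ and invoking linearity in the residues of $f$ — the same ``dummy function'' device used in the proof of Proposition~\ref{mukprop}. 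An alternative, if the direct residue calculus gets unwieldy, is to prove the identity by induction on $k$, peeling off one variable at a time exactly as in the induction in Proposition~\ref{mukprop}; but I expect the subset/symmetrization argument above to be shorter.
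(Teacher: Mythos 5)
Your primary plan --- expand $\tilde\mu_k$ as a sum over subsets $J\subseteq\{1,\dots,k\}$ recording which variables pick up the residue at $0$ --- is the right first move, but the argument that ``the contribution of a subset $J$ depends only on $|J|$ by the cyclic/permutation symmetry of the integrand'' is false, and it is worth seeing why before repairing it. The product $\prod_{1\le A<B\le k}\frac{z_A-z_B}{z_A-qz_B}$ is \emph{not} symmetric in $z_1,\dots,z_k$ (that is precisely what the symmetrization lemma Macdonald III.(1.4) that you cite is about: one has to \emph{sum} over $S_k$ to symmetrize it). Moreover the contours are nested, so the variables are not interchangeable. A concrete check: for $k=2$, the residue at $z_1=0$ (the outer contour) turns $\frac{z_1-z_2}{z_1-qz_2}$ into $q^{-1}$, while the residue at $z_2=0$ (the inner contour) turns it into $1$. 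So the subsets $\{1\}$ and $\{2\}$ contribute $q^{-1}\mu_1$ and $\mu_1$ respectively --- not equal. What is true is that their \emph{sum}, $(1+q^{-1})\mu_1={2\choose 1}_{q^{-1}}\mu_1$, matches the asserted $q$-binomial, but this requires a careful bookkeeping of the residue order rather than an appeal to symmetry.

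Your fallback --- induction, peeling off one variable at a time exactly as in Proposition~\ref{mukprop} --- is in fact the route the paper takes, and it is the clean way to organize the bookkeeping. Concretely, one introduces a two-parameter family $\nu_{j,k}$, the $(k+j)$-fold contour integral in which the first $j$ contours avoid $0$ and the last $k$ contain it, and establishes the recurrence
\begin{equation*}
\nu_{j,k}=\nu_{j+1,k-1}+q^{-(k-1)}\,\nu_{j,k-1}
\end{equation*}
by shrinking the contour for $z_{j+1}$ past $0$: the residue at $z_{j+1}=0$ kills the cross-factors against the earlier variables (they become $1$) and converts the $k-1$ cross-factors against the later variables into $q^{-1}$ each, giving the coefficient $q^{-(k-1)}$. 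Solving the recurrence with $\nu_{j,0}$ as base cases yields $\nu_{0,k}=\sum_{j=0}^{k}{k\choose j}_{q^{-1}}\nu_{j,0}$, and re-inserting the prefactors $(-1)^m q^{m(m-1)/2}$ that relate $\nu_{0,k}$ to $\tilde\mu_k$ and $\nu_{j,0}$ to $\mu_j$ produces the claimed identity. This recursion is precisely the ``nested residue at $0$'' you anticipated, but without any symmetry hypothesis; I would drop the symmetrization lemma entirely (it plays no role here) and run with the induction.
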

\begin{proof}
Define a triangular array of integrals $\nu_{j,k}$ by
\begin{equation*}
\nu_{j,k} = \frac{1}{(2\pi \iota)^{k+j}} \oint\cdots \oint \prod_{1\leq A<B\leq k+j} \frac{z_A-z_B}{z_A-qz_B} f(z_1)\cdots f(z_{k+j})\frac{dz_1\cdots dz_{k+j}}{z_1\cdots z_{k+j} }
\end{equation*}
where the contours for $z_{1},\ldots, z_{j}$ do not contain 0, whereas the contours for $z_{j+1},\ldots, z_{k+j}$ do contain 0.

By shrinking the contour $z_{j+1}$ without crossing through 0 we can establish the recurrence relation
\begin{equation*}
\nu_{j,k} = \nu_{j+1,k-1} + q^{-k+1} \nu_{j,k-1}.
\end{equation*}
This can be solved to find that
\begin{equation*}
\nu_{0,k} = \sum_{j=0}^{k}  {k\choose j}_{q^{-1}} \nu_{j,0},
\end{equation*}
from which the proposition follows by relating $\nu_{j,0}$ to $\mu_j$ and $\nu_{0,k}$ to $\tilde{\mu}_k$.
\end{proof}

\subsection{General Fredholm determinant formula}
In this section we state certain formal identities between power series and Fredholm determinant expansions. Later we specialize our choice of functions and show that these formal identities then hold numerically as well. We should first, however, specify what we mean by a formal identity versus a numerical identity, and also what is a Fredholm determinant (in both contexts).

\subsubsection{Background and definitions for Fredholm determinants}
\index{Fredholm determinant}

For a general overview of the theory of Fredholm determinants, the reader is referred to \cite{Lax, BSimon, GKrein}. However, for our purposes, the below definition will suffice and no further properties of the determinants will be needed.

\begin{definition}
Fix a Hilbert space $L^2(X,\mu)$ where $X$ is a measure space and $\mu$ is a measure on $X$. When $X=\Gamma$, a simple (anticlockwise oriented) smooth contour in $\C$, we write $L^2(\Gamma)$ where for $z\in \Gamma$, $d\mu(z)$ is understood to be $\tfrac{dz}{2\pi \iota}$. When $X$ is the product of a discrete set $D$ and a contour $\Gamma$, $d\mu$ is understood to be the product of the counting measure on $D$ and $\frac{dz}{2\pi \iota}$ on $\Gamma$.

Let $K$ be an {\it integral operator} acting on $f(\cdot)\in L^2(X,\mu)$ by $Kf(x) = \int_{X} K(x,y)f(y) d\mu(y)$. $K(x,y)$ is called the {\it kernel} of $K$ and we will assume throughout that $K(x,y)$ is continuous in both $x$ and $y$. A {\it formal Fredholm determinant expansion} of $I+K$ is a formal series written as
\begin{equation*}
\det(I+K)_{L^2(X)} = 1+\sum_{n=1}^{\infty} \frac{1}{n!} \int_{X} \cdots \int_{X} \det\left[K(x_i,x_j)\right]_{i,j=1}^{n} \prod_{i=1}^{n} d\mu(x_i).
\end{equation*}
In other words, it is the formal power series in $z$ of $\det(I+zK)_{L^2(X)}$ with $z$ set to 1.

If $K$ is a {\it trace-class} operator then the terms in the above expansion can be evaluated and form a convergent series. This is called the {\it numerical Fredholm determinant expansion} as it actually takes a numerical value. Note that it is not necessary that $K$ be trace-class for the series to be absolutely convergent.

The following is a useful criteria for an operator to be trace-class (see \cite{Lax} page 345 or \cite{Bornemann}).
\begin{lemma}\label{traceclasscrit}
An operator $K$ acting on $L^2(\Gamma)$ for a simple smooth contour $\Gamma$ in $\C$ with integral kernel $K(x,y)$ is trace-class if $K(x,y):\Gamma^2\to \R$ is continuous as well as $K_2(x,y)$ is continuous in $y$. Here $K_2(x,y)$ is the derivative of $K(x,y)$ along the contour $\Gamma$ in the second entry.
\end{lemma}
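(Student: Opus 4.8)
The statement to prove is Lemma~\ref{traceclasscrit}, the trace-class criterion: an integral operator $K$ on $L^2(\Gamma)$ for a simple smooth contour $\Gamma\subset\C$, with kernel $K(x,y)$ continuous on $\Gamma^2$ and with $K_2(x,y)$ (the derivative along $\Gamma$ in the second argument) continuous in $y$, is trace-class.

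The plan is to reduce to the standard fact that a composition of two Hilbert--Schmidt operators is trace-class, following the argument referenced in \cite{Lax} and \cite{Bornemann}. First I would parametrize $\Gamma$ by arc length, so that $\Gamma$ becomes a map from an interval (or circle) $I$ of finite total length $L$ into $\C$, and $L^2(\Gamma)$ is identified with $L^2(I)$ (up to the constant $1/(2\pi\iota)$ absorbed into the measure, which is harmless). Under this identification $K$ has a kernel that is continuous on $I\times I$, hence bounded, hence $K$ is automatically Hilbert--Schmidt (its kernel is in $L^2(I\times I)$ since $I$ has finite measure). That alone does not give trace-class, so the key step is to factor $K=AB$ with $A,B$ both Hilbert--Schmidt in a way that exploits the extra regularity in the second variable. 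The natural choice: integrate by parts in $y$ along the contour. Writing $K(x,y)=K(x,y_0)+\int_{y_0}^{y}K_2(x,u)\,du$ along $\Gamma$, one sees $K$ splits as a rank-one piece (from the constant-in-$y$ term, which is trivially trace-class since $x\mapsto K(x,y_0)$ is continuous hence $L^2$) plus an operator whose kernel is an antiderivative in $y$ of the continuous function $K_2$. For that second piece, write it as the composition of the operator $V$ with kernel $\mathbf{1}_{\{u \preceq y\}}$ (indicator that $u$ precedes $y$ in the arc-length order), which is Hilbert--Schmidt on $L^2(I)$ because its kernel is bounded on a finite-measure space, and the operator with kernel $K_2(x,u)$, also Hilbert--Schmidt for the same reason. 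Hence the second piece is a product of two Hilbert--Schmidt operators and therefore trace-class, and the total operator $K$ is trace-class.

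Let me restate the main steps in order: (1) reparametrize $\Gamma$ by arc length to get a finite-measure space $I$ and absorb the $1/(2\pi\iota)$ constant; (2) note continuity of $K$ on the compact set $\Gamma^2$ gives boundedness, so $K$ is Hilbert--Schmidt, but record that this is not yet enough; (3) fix a basepoint $y_0\in\Gamma$ and write $K(x,y)=K(x,y_0)+\int_{\gamma(y_0,y)}K_2(x,u)\,du$ where the integral is along the sub-arc of $\Gamma$ from $y_0$ to $y$; (4) observe the first summand defines a rank-one operator (the function $g(y)\equiv 1$ times the function $x\mapsto K(x,y_0)$, both in $L^2$), which is trace-class; (5) express the second summand as $K_2$-operator composed with the Volterra-type integration operator $V$ having kernel $\mathbf{1}_{\{u\text{ between }y_0\text{ and }y\}}$, check both factors are Hilbert--Schmidt (bounded kernels on the finite-measure space $I\times I$), and invoke that HS$\,\circ\,$HS is trace-class; (6) conclude by additivity of the trace-class ideal. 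Throughout one should use continuity of $K_2$ in $y$ (and measurability/local boundedness in $x$, which follows since $K_2(x,\cdot)$ being the pointwise derivative of the continuous-in-both-variables $K$ means $x\mapsto K_2(x,y)$ is at least measurable and, on the compact contour, the relevant integrals are finite) to justify that the kernel of the $K_2$-operator lies in $L^2(I\times I)$.

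The main obstacle is step (6)'s hypothesis bookkeeping: the lemma as stated asks only for continuity of $K_2$ in its \emph{second} variable $y$, not joint continuity, so one must be slightly careful that $x\mapsto K_2(x,y)$ is still good enough (measurable and square-integrable over the compact contour) for the $K_2$-operator to be genuinely Hilbert--Schmidt. This is where I would lean on the fact that $K_2$ arises as a derivative of the jointly continuous $K$: for each fixed $y$, $K_2(\cdot,y)$ is a pointwise limit of continuous functions $\big(K(\cdot,y+h)-K(\cdot,y)\big)/h$, hence Borel measurable, and dominated-convergence-type estimates on the compact contour give the needed $L^2$ control uniformly enough to apply Fubini and conclude the kernel is in $L^2(\Gamma^2)$. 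Once that technical point is pinned down, the rest is the textbook HS$\,\times\,$HS $=$ trace-class argument, so I do not expect real difficulty there; I would simply cite \cite{Lax} or \cite{Bornemann} for the factorization principle and for the claim that Hilbert--Schmidt norms control trace-class norms of products.
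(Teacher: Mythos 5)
The paper does not prove this lemma; it is stated with a pointer to \cite{Lax} (p.~345) and \cite{Bornemann}, and every application of it in the paper is to kernels that are in fact jointly smooth. So the right comparison is with the standard argument in those references, which is exactly the one you outline: parametrize $\Gamma$ by arc length, write $K(x,y)=K(x,y_0)+\int_{y_0}^{y}K_2(x,u)\,du$, observe the first piece is rank one, and factor the second piece as a product of the $K_2$-operator with a Volterra operator, each of which is Hilbert--Schmidt because it has a bounded kernel on a finite-measure product space. That factorization, together with the facts that the trace-class operators form a two-sided ideal and that $\mathrm{HS}\circ\mathrm{HS}\subset\text{trace-class}$, is precisely the proof in Lax and Bornemann, so the structure of your argument is correct and matches the intended one.

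The one place where your write-up overreaches is the attempted repair in step~(6). You correctly flag that, as literally stated, the hypothesis ``$K_2(x,y)$ is continuous in $y$'' does not obviously give $K_2\in L^2(\Gamma\times\Gamma)$, which is what you need for the $K_2$-operator to be Hilbert--Schmidt. But the fix you propose --- deduce Borel measurability of $x\mapsto K_2(x,y)$ from its being a pointwise limit of difference quotients of the jointly continuous $K$, then invoke ``dominated-convergence-type estimates'' to get the $L^2$ bound --- does not actually close the gap: measurability plus compactness of $\Gamma$ gives you nothing about integrability, and dominated convergence would require a square-integrable dominating function for the difference quotients, i.e.\ essentially a uniform Lipschitz bound in $y$, which is not part of the hypothesis. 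Continuity in $y$ alone does not rule out $\sup_x\|K_2(x,\cdot)\|_\infty=\infty$. The honest reading of the lemma (and the one consistent with \cite{Bornemann}, where the relevant class is $C^{1,0}$, and with all the kernels it is applied to in this paper) is that $K_2$ is jointly continuous on $\Gamma^2$, so that it is bounded on the compact set $\Gamma^2$ and the Hilbert--Schmidt bound is immediate. With that reading, your proof is correct and complete; without it, the lemma itself would need a different justification or an extra hypothesis, and your measurability argument would not be the missing ingredient.
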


A formal power series in a variable $\zeta$ and a formal Fredholm determinant expansion whose kernel $K(x,y)$ depends implicitly on $\zeta$ are {\it formally equal} (or their equality is a {\it formal identity}) if they are equal as formal power series in $\zeta$.
\end{definition}

\subsubsection{Fredholm determinant formulas}

\begin{proposition}\label{gendetprop}
Consider $\mu_k$ as in equation (\ref{muk}) defined with respect to the same set of poles $\poles$ for $k=1,2,\ldots$, and set the $w_j$-contours all to be $C_{w}$ such that $C_{w}$ contains the set of poles $\poles$ of the meromorphic function $f(w)$. Then the following formal equality holds
\begin{equation}\label{deteqn}
\sum_{k\geq 0}\mu_k \frac{\zeta^k}{k_q!} = \det(I+K)_{L^2(\Zgzero\times C_{w})}
\end{equation}
where $\det(I+K)$ is a formal Fredholm determinant expansion
and the operator $K$ is defined in terms of its integral kernel
\begin{equation*}
K(n_1,w_1;n_2;w_2) = \frac{(1-q)^{n_1}\zeta^{n_1} f(w_1)f(qw_1)\cdots f(q^{n_1-1}w_1)}{q^{n_1} w_1 - w_2}.
\end{equation*}
The above identity is formal, but also holds numerically if the following is true:
There exists a positive constant $M$ such that for all $w\in C_{w}$ and all $n\geq 0$, $|f(q^n w)|\leq M$ and $\zeta$ is such that $|(1-q)\zeta| <M^{-1}$; and $1/|q^n w -w'|$ is uniformly bounded from zero for all $w,w'\in C_{w}$ and $n\geq 1$.
\end{proposition}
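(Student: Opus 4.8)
The plan is to establish the formal identity (\ref{deteqn}) by expanding the right-hand Fredholm determinant into its series and matching coefficients of $\zeta^k$ against $\mu_k/k_q!$ as given by (\ref{muk}). First I would write out the $n$-th term of $\det(I+K)_{L^2(\Zgzero\times C_w)}$: it is $\frac1{n!}\sum_{n_1,\dots,n_n\ge 1}\oint_{C_w}\cdots\oint_{C_w}\det[K(n_i,w_i;n_j,w_j)]_{i,j=1}^n\prod\frac{dw_i}{2\pi\iota}$. Pulling the factor $(1-q)^{n_i}\zeta^{n_i}f(w_i)f(qw_i)\cdots f(q^{n_i-1}w_i)$ out of row $i$ of the determinant, the remaining matrix is exactly $\bigl[\frac{1}{q^{n_i}w_i - w_j}\bigr]_{i,j=1}^n$. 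So the $n$-th term becomes a sum over $(n_1,\dots,n_n)\in\Zgzero^n$, and grouping by the total weight $k=n_1+\cdots+n_n$ and by the multiplicities $m_\ell = \#\{i: n_i=\ell\}$ one gets $\zeta^k$ times exactly the sum over $\lambda\vdash k$, $\lambda = 1^{m_1}2^{m_2}\cdots$ with $n=\ell(\lambda)$, with the symmetry of the determinant and the $\prod f$-factors under permuting equal $n_i$'s producing the $\frac{1}{m_1!m_2!\cdots}$ after cancelling against $\frac1{n!}$ (this is the same combinatorial bookkeeping already used in the proof of Proposition~\ref{mukprop}). Matching with (\ref{muk}), and noting $(1-q)^k$ appears on both sides and that (\ref{muk}) carries a prefactor $k_q!$ so that $\mu_k/k_q!$ is precisely the coefficient, gives the formal identity.

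For the numerical statement, the plan is to show that under the stated hypotheses the kernel $K$ defines a trace-class (or at least Hilbert--Schmidt with convergent Fredholm series) operator and that the series $\sum_k \mu_k\zeta^k/k_q!$ converges absolutely, so that the formal manipulation above is justified termwise. The key bound is Hadamard's inequality: $\bigl|\det[K(x_i,x_j)]_{i,j=1}^n\bigr|\le n^{n/2}\prod_{i=1}^n\bigl(\max_j |K(x_i,x_j)|\bigr)$, and from the hypotheses $|f(q^n w)|\le M$ and $|(1-q)\zeta|<M^{-1}$ we get $|K(n_1,w_1;n_2,w_2)| \le \bigl(M|(1-q)\zeta|\bigr)^{n_1}\cdot C$ for a constant $C$ coming from the uniform lower bound on $|q^{n_1}w_1-w_2|$ and the length of $C_w$. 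Summing the geometric series over $n_1\ge 1$ (the discrete variable) and integrating $w$ over the finite-length contour $C_w$ shows each Fredholm term is bounded by $n^{n/2}(C')^n/n!$ for a constant $C'$, which is summable; hence the numerical Fredholm expansion converges absolutely. The same geometric estimate applied to the left side shows $|\mu_k|/k_q! \le (C'')^k$ (using that (\ref{muk}) is a finite sum over $\lambda\vdash k$ of integrals bounded as above), so for $|\zeta|$ small the left side converges and equals the right side by the already-established coefficientwise identity.

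I expect the main obstacle to be the careful bookkeeping in the first paragraph: verifying that the grouping of the $\Zgzero^n$-sum by partitions $\lambda\vdash k$ reproduces exactly the $\frac{1}{m_1!m_2!\cdots}$ weighting, the $(1-q)^k$ factor, and the shape $\det\bigl[\frac{1}{w_iq^{\lambda_i}-w_j}\bigr]$ appearing in (\ref{muk}), including checking that no spurious sign or power of $q$ is introduced. This is essentially dual to the residue computation in the proof of Proposition~\ref{mukprop}, so the cleanest route may be to invoke that proposition directly: substitute (\ref{muk}) for $\mu_k/k_q!$ into $\sum_k \mu_k\zeta^k/k_q!$ and recognize the resulting double sum (over $k$ and over $\lambda\vdash k$) as the reindexing of the Fredholm series by $n=\ell(\lambda)$ and $n_i=\lambda_i$. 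The analytic estimates in the second half are routine once the hypotheses are in hand; the only mild subtlety is that trace-class is not actually needed — absolute convergence of the Fredholm series via Hadamard suffices, and the problem statement already flags this.
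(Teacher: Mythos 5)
Your proposal is correct and follows essentially the same route as the paper: the formal identity is established by the same combinatorial reindexing of the Fredholm series over $\Zgzero^n$ into sums over partitions $\lambda\vdash k$ with the multinomial coefficient cancelling the $1/n!$, and the numerical statement is handled by the same Hadamard bound plus geometric summation in the discrete index. The paper runs the reindexing from $\sum_k\mu_k\zeta^k/k_q!$ toward the Fredholm series rather than expanding the determinant and grouping by $k$, but as you observe in your closing paragraph these are the same computation read in opposite directions.
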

\begin{proof} All contour integrals in this proof are along $C_{w}$. Observe that we can rewrite the summation in the definition of $\mu_k$ so that
\begin{equation*}
\mu_k \frac{\zeta^k}{k_q!}= \sum_{L\geq 0}\sum_{\substack{m_1,m_2,\ldots\\ \sum m_i =L \\ \sum i m_i = k}} \frac{1}{(m_1+m_2+\cdots)!}\cdot \frac{(m_1+m_2+\cdots)!}{m_1! m_2! \cdots}\oint\cdots \oint \prod_{j=1}^{L}  I_L(\lambda;w;\zeta) dw_j,
\end{equation*}
where $w=(w_1,\ldots, w_L)$ , $\lambda=(\lambda_1,\ldots, \lambda_L)$ and is specified by $\lambda=1^{m_1}2^{m_2}\cdots$, and where the integrand is
\begin{equation*}
I_L(\lambda;w;\zeta) = \frac{1}{(2\pi \iota)^L} \det\left[\frac{1}{w_i q^{\lambda_i}-w_j}\right]_{i,j=1}^{L} \prod_{j=1}^{L}(1-q)^{\lambda_j}\zeta^{\lambda_j} f(w_j)f(qw_j)\cdots f(q^{\lambda_j-1}w_j).
\end{equation*}

The term $\tfrac{(m_1+m_2+\cdots)!}{m_1! m_2! \cdots}$ is a multinomial coefficient and can be removed by replacing the inner summation by
\begin{equation*}
\sum_{n_1,\ldots,n_L\in \mathcal{L}_{k,m_1,m_2,\ldots}} \oint\cdots \oint \prod_{j=1}^{L}  I_L(n;w;\zeta)dw_j,
\end{equation*}
with $n=(n_1,\ldots,n_L)$ and where $\mathcal{L}_{k,m_1,m_2,\ldots} = \{n_1,\ldots,n_L\geq 1: \sum n_i = k \textrm{ and for each } j\geq 1, m_j \textrm{ of the } n_i \textrm{ equal } j\}$.
This gives

\begin{equation*}
\mu_k \frac{\zeta^k}{k_q!} = \sum_{L\geq 0}\frac{1}{L!} \sum_{\substack{n_1,\ldots,n_L\geq 1\\ \sum n_i=k}} \oint\cdots \oint \prod_{j=1}^{L} I_L(n;w;\zeta) dw_j.
\end{equation*}

Now we may sum over $k$ which removes the requirement that $\sum n_i = k$. This yields that the left-hand side of equation (\ref{deteqn}) can be expressed as
\begin{equation}\label{fredexpabove}
\sum_{L\geq 0} \frac{1}{L!} \sum_{n_1,\ldots,n_L\geq 1} \oint \cdots \oint \det\left[\frac{1}{q^{n_i}w_i-w_j}\right]_{i,j=1}^{L} \prod_{j=1}^{L} (1-q)^{n_j}\zeta^{n_j} f(w_j)f(qw_j)\cdots f(q^{n_j-1}w_j) \frac{dw_j}{2\pi \iota } .
\end{equation}
This is the definition of the formal Fredholm determinant expansion $\det(I+K)_{L^2(\Zgzero\times C_{w})}$, as desired.

We turn now to the additional conditions for numerical equality. In order to justify the rearrangements in the above argument, we must know that the double summation of (\ref{fredexpabove}) is absolutely convergent. By the conditions stated and Hadamard's bound we find that:
\begin{equation*}
\left|   \oint \cdots \oint  \det\left[\frac{1}{q^{n_i}w_i-w_j}\right]_{i,j=1}^{L} \prod_{j=1}^{L} (1-q)^{n_j}\zeta^{n_j} f(w_j)f(qw_j)\cdots f(q^{n_j-1}w_j)\frac{dw_j}{2\pi \iota} \right| \leq \prod_{j=1}^{L} r^{n_j} B^{L} L^{L/2}
\end{equation*}
for some constant $B$ large enough and a positive constant $r<1$ (such that $|(1-q)\zeta C|\leq~r~<1$). This bound may be plugged into the summation in $L$ and in $n_1,\ldots, n_{L}$ and evaluating the geometric sums we arrive at
\begin{equation*}
\sum_{L\geq 0} \frac{1}{L!} \left(\frac{B}{(1-r)}\right)^L L^{L/2}
\end{equation*}
which is finite due to the $L!$ in the denominator. This absolute convergence means we can permute terms any way we want, and hence justifies the earlier calculations as being numerical identities (and not just formal manipulations).
\end{proof}

\begin{proposition}\label{tildemuDetProp}
For
\begin{equation}\label{muktildedef}
\tilde{\mu}_k= \frac{(-1)^k q^{\frac{k(k-1)}{2}}}{(2\pi \iota)^k} \oint \cdots \oint \prod_{1\leq A<B\leq k} \frac{z_A-z_B}{z_A-qz_B} \frac{f(z_1)\cdots f(z_k)}{z_1\cdots z_k} dz_1\cdots dz_k,
\end{equation}
cf. Proposition~\ref{mukproplarge} and Remark \ref{starstar31}, with $k\geq 1$ and integrals along a fixed contour $\tilde{C}_w$, the following formal identity holds
\begin{equation*}
\sum_{k\geq 0}\tilde{\mu}_k \frac{\zeta^k}{k_q!} = \det(I+\tilde{K})_{L^2(\tilde{C}_{w})}
\end{equation*}
where $\det(I+\tilde{K})$ is a formal Fredholm determinant expansion
and the operator $\tilde K$ is defined in terms of its integral kernel
\begin{equation*}
\tilde{K}(w_1,w_2) = (1-q^{-1})\zeta \frac{f(w_1)}{w_1 q^{-1}-w_2}.
\end{equation*}
The above identity is formal, but also holds numerically for $\zeta$ such that the left-hand side converges absolutely and the right-hand side operator $\tilde K$ is trace-class.
\end{proposition}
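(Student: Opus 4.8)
The plan is to reduce the statement to the large-contour identity of Proposition~\ref{mukproplarge} and then to read off a Fredholm expansion by absorbing the scalar prefactors into the determinant; this is the same mechanism as in the proof of Proposition~\ref{gendetprop}, only simpler because the relevant combinatorics have already been packaged into Proposition~\ref{mukproplarge}. Adopt the convention $\tilde\mu_0=1$. Since the integral in (\ref{muktildedef}) is taken with all $z_j$ along the common contour $\tilde C_w$, it is precisely the left-hand side of (\ref{mukproplargeeqn}) for the continuous function $f$; hence Proposition~\ref{mukproplarge} gives, for every $k\geq 1$,
\begin{equation*}
\frac{\tilde\mu_k}{k_q!} \;=\; \frac{1}{k!}\,\frac{(1-q^{-1})^{k}}{(2\pi\iota)^{k}}\oint\cdots\oint \det\left[\frac{1}{w_i q^{-1}-w_j}\right]_{i,j=1}^{k}\prod_{j=1}^{k} f(w_j)\,dw_j .
\end{equation*}

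Next I would apply the elementary identity $\det[a_i M_{ij}]_{i,j=1}^{k} = \bigl(\prod_{i=1}^{k} a_i\bigr)\det[M_{ij}]_{i,j=1}^{k}$ with $a_i = (1-q^{-1})\zeta f(w_i)$ and $M_{ij} = 1/(w_i q^{-1}-w_j)$, which turns the displayed expression into
\begin{equation*}
\tilde\mu_k\,\frac{\zeta^{k}}{k_q!} \;=\; \frac{1}{k!}\,\frac{1}{(2\pi\iota)^{k}}\oint\cdots\oint \det\bigl[\tilde K(w_i,w_j)\bigr]_{i,j=1}^{k}\prod_{j=1}^{k} dw_j , \qquad \tilde K(w_1,w_2) = \frac{(1-q^{-1})\zeta f(w_1)}{w_1 q^{-1}-w_2}.
\end{equation*}
The right-hand side is exactly the $k$-th term of the formal Fredholm determinant expansion of $\det(I+\tilde K)_{L^2(\tilde C_w)}$, and the $k=0$ term equals $1=\tilde\mu_0$; summing over $k\geq 0$ therefore yields the asserted formal identity.

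For the numerical version, the computation above shows that the two sides are the \emph{same} series term by term. If $\tilde K$ is trace-class the Fredholm expansion converges numerically to $\det(I+\tilde K)_{L^2(\tilde C_w)}$, and if in addition $\sum_{k\geq 0}\tilde\mu_k\zeta^k/k_q!$ converges absolutely it converges to the same limit, being that same series; one may also argue this exactly as in Proposition~\ref{gendetprop}, bounding the general term via Hadamard's bound. I do not expect a genuine obstacle here: the real content sits in Proposition~\ref{mukproplarge} (whose proof rested on the symmetrization identity \cite[III,(1.4)]{M}). The only points demanding care are bookkeeping ones — checking that the hypotheses of Proposition~\ref{mukproplarge} are met by the contour conventions of (\ref{muktildedef}) and Remark~\ref{starstar31} (all $z_j$-contours equal, $\poles$ confined to a small neighbourhood of $1$, common radius $R>1$), and that the trace-class and absolute-convergence hypotheses suffice to pass from the formal to the numerical identity.
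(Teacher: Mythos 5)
Your proof is correct and follows essentially the same route as the paper's: invoke Proposition~\ref{mukproplarge} to rewrite $\tilde\mu_k$ as a single-contour determinant integral, absorb the scalar factors $(1-q^{-1})\zeta f(w_i)$ into the rows of the determinant, and recognize the result as the $k$-th term of the Fredholm expansion of $\det(I+\tilde K)$. The paper's own proof is simply a more compressed version of this same computation.
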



\begin{proof}
Observe that $\tilde{\mu}_k$ coincides with the left-hand side of Proposition \ref{mukproplarge}. Rewriting it as in the right-hand side and plugging this formula into the power series gives (absorbing the $2\pi \iota$ into $dw$):
\begin{equation*}
\sum_{k\geq 0}\tilde{\mu}_k \frac{\zeta^k}{k_q!} = \sum_{k\geq 0}\frac{1}{k!} \oint_{\tilde{C}_w} \cdots \oint_{\tilde{C}_w} \det\left[(1-q^{-1})\zeta\frac{f(w_i)}{w_i q^{-1}-w_j}\right]_{i,j=1}^{k} \prod_{i=1}^{k} \frac{dw_i}{2\pi \iota} = \det(I+\tilde{K})_{L^2(\tilde{C}_{w})}.
\end{equation*}
Given the additional convergence, the numerical equality is likewise clear.
\end{proof}

\subsection{q-Whittaker process Fredholm determinant formulas}\label{Macdetformsec}
We may apply the general theory above to the special case given by Proposition~\ref{prop8tzero} to compute the $q$-Laplace transform of $\lambda_N$ under the measure $\MM_{t=0}(a_1,\ldots,a_N;\rho)$.

\subsubsection{Small contour formulas}

\begin{corollary}\label{abgCor}
There exists a positive constant $C\geq 1$ such that for all $|\zeta|<C^{-1}$ the following numerical equality holds
\begin{equation*}
\left\langle \frac{1}{\left(\zeta q^{\lambda_N};q\right)_{\infty}}\right\rangle_{\MM_{t=0}(a_1,\ldots,a_N;\rho)} = \det(I+K)_{L^2(\Zgzero\times C_{a,\rho})}
\end{equation*}
where the operator $K$ is defined in terms of its integral kernel
\begin{equation}\label{abgKernel}
K(n_1,w_1;n_2,w_2) = \frac{\zeta^{n_1} f(w_1)f(qw_1)\cdots f(q^{n_1-1}w_1)}{q^{n_1} w_1 - w_2}
\end{equation}
with
\begin{equation}\label{abgf}
f(w) = \left(\prod_{m=1}^{N} \frac{a_m}{a_m-w}\right) \left(\prod_{i\geq 1} (1-\alpha_i w)\frac{1+q \beta_iw}{1+\beta_i w}\right) \exp\{(q-1)\gamma w\}
\end{equation}
and $C_{a,\rho}$ is a positively oriented contour containing $a_1,\ldots, a_N$ and no other singularities of $f$.
\end{corollary}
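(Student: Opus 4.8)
The plan is to obtain Corollary~\ref{abgCor} by assembling three results already in hand --- the $t\to0$ moment formula of Proposition~\ref{prop8tzero}, the residue bookkeeping of Proposition~\ref{mukprop}, and the general Fredholm determinant identity of Proposition~\ref{gendetprop} --- and then gluing the resulting $q$-moments into the $q$-Laplace transform via the $q$-binomial theorem~(\ref{qLaplace}). The first step is to observe that the function $f$ in~(\ref{abgf}) is precisely the function produced by the $t\to0$ limit in Proposition~\ref{prop8tzero}: combining that proposition with relation~(\ref{tag8}) shows that $\mu_k:=\langle q^{k\lambda_N}\rangle_{\MM_{t=0}(a_1,\dots,a_N;\rho)}$ equals the right-hand side of~(\ref{mukdef}) with the set of poles taken to be $\poles=\{a_1,\dots,a_N\}$ and the nested $z_j$-contours as required. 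Assuming $q^m\poles$ is disjoint from $\poles$ for all $m\ge1$ --- which holds for generic $a_i$, automatically when all $a_i$ coincide (as in the Plancherel case $a_i\equiv1$), and in general follows by continuity of both sides in the $a_i$ --- Proposition~\ref{mukprop} rewrites this $\mu_k$ in the partition-sum form~(\ref{muk}), which is exactly the input demanded by Proposition~\ref{gendetprop}.

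Next I would pass to the $q$-Laplace transform. By the first identity in~(\ref{qLaplace}), applied pointwise with $x=\zeta q^{\lambda_N}/(1-q)$, one has $\frac{1}{(\zeta q^{\lambda_N};q)_\infty}=\sum_{k\ge0}\frac{(\zeta/(1-q))^k}{k_q!}\,q^{k\lambda_N}$ whenever $|\zeta|<1$; since $0<q^{k\lambda_N}\le1$ and $\sum_{k\ge0}\frac{(|\zeta|/(1-q))^k}{k_q!}=\frac{1}{(|\zeta|;q)_\infty}<\infty$ for $|\zeta|<1$, dominated convergence allows moving the expectation inside the sum, giving
\begin{equation*}
\left\langle \frac{1}{(\zeta q^{\lambda_N};q)_\infty}\right\rangle_{\MM_{t=0}(a_1,\dots,a_N;\rho)}=\sum_{k\ge0}\frac{(\zeta/(1-q))^k}{k_q!}\,\mu_k .
\end{equation*}
Then I would invoke Proposition~\ref{gendetprop} with its parameter $\zeta$ replaced by $\zeta/(1-q)$: the factor $(1-q)^{n_1}$ in its kernel is cancelled by $(\zeta/(1-q))^{n_1}$, so the kernel becomes exactly~(\ref{abgKernel}), its left-hand side becomes exactly the series above, and its numerical hypothesis $|(1-q)\cdot\zeta/(1-q)|<M^{-1}$ becomes $|\zeta|<M^{-1}$.

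It then remains to verify the two numerical conditions of Proposition~\ref{gendetprop} for a suitable $C_{a,\rho}$. I would take $C_{a,\rho}$ to be a positively oriented contour winding once around each of $a_1,\dots,a_N$ (e.g.\ a union of small circles) whose radius is small enough that: (i) $C_{a,\rho}$ neither encloses nor approaches any singularity of $f$ other than $a_1,\dots,a_N$ (the only other poles, at $w=-1/\beta_i<0$, lie away from the positive reals); (ii) the compact set $\{0\}\cup\bigcup_{n\ge0}q^nC_{a,\rho}$ avoids every pole of $f$ --- possible because the $q^n$-contracted contours drift toward the origin while staying near the positive reals, and $f(0)$ is finite; and (iii) $q^nC_{a,\rho}\cap C_{a,\rho}=\varnothing$ for all $n\ge1$. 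Since $f$ is continuous with $f(q^nw)\to f(0)=1$ uniformly in $w\in C_{a,\rho}$, (ii) yields $M:=\sup_{n\ge0,\,w\in C_{a,\rho}}|f(q^nw)|<\infty$; and since the discrete index in $L^2(\Zgzero\times C_{a,\rho})$ runs over $n\ge1$, (iii) yields $\inf_{n\ge1,\,w,w'\in C_{a,\rho}}|q^nw-w'|>0$, so $1/|q^nw-w'|$ is uniformly bounded. These are exactly the hypotheses of Proposition~\ref{gendetprop}, so setting $C=\max(M,1)\ge1$ makes the identity hold numerically for all $|\zeta|<C^{-1}$.

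The only genuinely delicate ingredient in this argument is the residue bookkeeping behind Proposition~\ref{mukprop} --- hence the condition $q^m\poles\cap\poles=\varnothing$ and the requirement that the $q$-shifted contours not collide; but since that proposition is already established, the remaining work is confined to the elementary contour estimates above and the substitution $\zeta\mapsto\zeta/(1-q)$. In writing this up I would also record explicitly the simple fact that $f(w)\to1$ as $w\to0$, used to obtain the uniform bound $M$.
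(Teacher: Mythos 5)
Your proposal is correct and follows essentially the same route as the paper's proof: expand the $q$-Laplace transform by the $q$-binomial theorem, identify $\langle q^{k\lambda_N}\rangle$ with $\mu_k$ via Proposition~\ref{prop8tzero}, and feed this through Propositions~\ref{mukprop} and~\ref{gendetprop} after the substitution $\zeta\mapsto\zeta/(1-q)$. You are somewhat more explicit than the paper about the dominated-convergence step and about verifying the second numerical hypothesis of Proposition~\ref{gendetprop} (that $q^nC_{a,\rho}$ stays away from $C_{a,\rho}$), which the paper's one-paragraph proof leaves to the reader.
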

\begin{proof}
Observe that using the corollary of the $q$-binomial theorem recorded in equation (\ref{qLaplace}), for $|\zeta|<1$ we may expand
\begin{equation*}
\left\langle \frac{1}{\left(\zeta q^{\lambda_N};q\right)_{\infty}}\right\rangle_{\MM_{t=0}(a_1,\ldots,a_N;\rho)} = \sum_{k\geq 0} \frac{(\zeta/(1-q))^k}{k_q!} \left\langle q^{k\lambda_N}\right\rangle_{\MM_{t=0}(a_1,\ldots,a_N;\rho)}.
\end{equation*}
The convergence of the right-hand side above follows from the fact that $q^{k\lambda_N}\leq 1$ and
\begin{equation*}
\frac{(\zeta/(1-q))^k}{k_q!}  = \frac{\zeta^k}{(1-q)\cdots (1-q^k)},
\end{equation*}
which shows geometric decay for large enough $k$.

In Propositions \ref{mukprop} and \ref{gendetprop} we may choose $f$ as in equation (\ref{abgf}) and choose contours so as to enclose only the $a_1,\ldots, a_m$ singularities of $f$. The result is the kernel of equation (\ref{abgKernel}). Proposition~\ref{gendetprop} now applies (since the constant $C$ in the statement of the result can be chosen such that for all $w\in C_{a,\rho}$ and for all $n\geq 0$, $|f(q^n w)|<C$, the equality in that proposition is numerical). Finally Proposition~\ref{prop8tzero} shows that $\left\langle q^{k\lambda_N}\right\rangle_{\MM_{t=0}(a_1,\ldots,a_N;\rho)}=\mu_k$ as defined by the $f$ above, and thus the corollary follows.
\end{proof}

\begin{theorem}\label{PlancherelfredThm}
Fix $\rho$ a Plancherel (see Definition~\ref{specializationtype}) Macdonald nonnegative specialization. Fix $0<\delta<1$ and $a_1,\ldots, a_N$ such that $|a_i -1|\leq d$ for some constant $d <\frac{1-q^{\delta}}{1+q^{\delta}}$. Then for all $\zeta\in \C\setminus \Rplus$
\begin{equation}\label{thmlaplaceeqn}
\left\langle \frac{1}{\left(\zeta q^{\lambda_N};q\right)_{\infty}}\right\rangle_{\MM_{t=0}(a_1,\ldots,a_N;\rho)} = \det(I+K_{\zeta})_{L^2(C_{a})}
\end{equation}
where $C_a$ is a positively oriented circle $|w-1|=d$ and the operator $K_{\zeta}$ is defined in terms of its integral kernel
\begin{equation*}
K_{\zeta}(w,w') = \frac{1}{2\pi \iota}\int_{-\iota \infty + \delta}^{\iota \infty +\delta} \Gamma(-s)\Gamma(1+s)(-\zeta)^s g_{w,w'}(q^s)ds
\end{equation*}
where
\begin{equation}\label{gwwprimeeqn}
g_{w,w'}(q^s) = \frac{1}{q^s w - w'} \prod_{m=1}^{N} \frac{(q^{s}w/a_m;q)_{\infty}}{(w/a_m;q)_{\infty}} \exp\big(\gamma w(q^{s}-1)\big).
\end{equation}
The operator $K_{\zeta}$ is trace-class for all $\zeta\in \C \setminus\Rplus$.
\end{theorem}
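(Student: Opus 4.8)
The plan is to obtain Theorem~\ref{PlancherelfredThm} from Corollary~\ref{abgCor} in two stages: first convert the $L^2(\Zgzero\times C_{a,\rho})$ Fredholm determinant into an $L^2(C_a)$ one by carrying out the sum over the discrete variable via a Mellin--Barnes (Cauchy residue) representation, and second analytically continue the resulting identity in $\zeta$ from a neighborhood of $0$ to all of $\C\setminus\Rplus$. In the Plancherel case the function $f$ in (\ref{abgf}) specializes to $f(w)=\prod_{m}\frac{a_m}{a_m-w}\exp\{(q-1)\gamma w\}$, and the product $f(w)f(qw)\cdots f(q^{n-1}w)$ telescopes: $\prod_{j=0}^{n-1}\frac{a_m}{a_m-q^j w}=\frac{1}{(w/a_m;q)_n}=\frac{(q^nw/a_m;q)_\infty}{(w/a_m;q)_\infty}$, and $\prod_{j=0}^{n-1}\exp\{(q-1)\gamma q^j w\}=\exp\{\gamma w(q^n-1)\}$. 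Thus the kernel of Corollary~\ref{abgCor} becomes $K(n_1,w_1;n_2,w_2)=\zeta^{n_1}g_{w_1,w_2}(q^{n_1})$ with $g_{w,w'}$ exactly as in (\ref{gwwprimeeqn}) (reading $q^s=q^{n_1}$). The key analytic input is the Mellin--Barnes identity of the type in Lemma~\ref{gammasumlemma} (referenced in the introduction): for suitable $h$,
\begin{equation*}
\sum_{n\ge 1}\zeta^n h(q^n)=\frac{1}{2\pi\iota}\int_{-\iota\infty+\delta}^{\iota\infty+\delta}\Gamma(-s)\Gamma(1+s)(-\zeta)^s h(q^s)\,ds,
\end{equation*}
valid because $\Gamma(-s)\Gamma(1+s)=-\pi/\sin(\pi s)$ has simple poles at $s\in\Zgeqzero$ with residues $(-1)^{n+1}$, so closing the contour to the right picks up exactly $\sum_{n\ge0}\zeta^n h(q^n)$ (the $n=0$ term dropping out if $h(1)=0$, which is arranged by the $n_1\ge1$ structure of the kernel — note $K(n_1,\cdot\,;\cdot)$ only enters the Fredholm expansion with $n_1\ge1$ effectively, or more precisely one checks the $n=0$ contribution is absorbed correctly).

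Concretely, I would first observe that the Fredholm determinant $\det(I+K)_{L^2(\Zgzero\times C_{a,\rho})}$ can be rewritten as $\det(I+K_\zeta)_{L^2(C_a)}$ where $K_\zeta(w,w')=\sum_{n\ge1}K(n,w;\cdot)$-type sum collapses the discrete trace: this is a standard manipulation (the $n$-fold expansion over $\Zgzero\times C_a$ reorganizes, after summing each discrete index, into the $n$-fold expansion over $C_a$ with the summed kernel), and it is precisely where the ``reduce to a single contour'' step advertised before the theorem statement takes place. Applying the Mellin--Barnes identity to the geometric-type sum in $n$ then yields the stated $K_\zeta(w,w')$. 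One must check the contour for $s$ can be placed at real part $\delta\in(0,1)$: the factor $(q^sw/a_m;q)_\infty/(w/a_m;q)_\infty$ is entire in $s$ (no poles from the numerator, and the denominator is a nonzero constant once $w\in C_a$ avoids $q^{-\Zgeqzero}a_m$, which the hypothesis $|a_i-1|\le d<\frac{1-q^\delta}{1+q^\delta}$ guarantees — this is exactly the condition ensuring $q^s w\ne a_m$ for $\re s\ge 0$ and $w$ on the circle of radius $d$ about $1$), the factor $1/(q^sw-w')$ is pole-free for $w,w'$ on $C_a$ and $\re s\ge 0$ under the same smallness condition, and the exponential $\exp(\gamma w(q^s-1))$ decays as $|\Imag s|\to\infty$ along the line $\re s=\delta$. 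This last point, together with the super-exponential decay of $\Gamma(-s)\Gamma(1+s)$ in the imaginary direction, justifies the interchange of sum and integral.

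Second, for the analytic continuation in $\zeta$: Corollary~\ref{abgCor} gives the identity for $|\zeta|<C^{-1}$, and both sides are analytic in $\zeta$ on a larger domain. The left side $\langle 1/(\zeta q^{\lambda_N};q)_\infty\rangle$ is analytic for $\zeta\in\C\setminus q^{\Zleqzero}$ (poles only where $\zeta q^{\lambda_N}=q^{-k}$), hence in particular on $\C\setminus\Rplus$. For the right side, I would show $\zeta\mapsto\det(I+K_\zeta)_{L^2(C_a)}$ is analytic on $\C\setminus\Rplus$ by showing $K_\zeta$ is trace-class with kernel depending analytically on $\zeta$; the branch cut of $(-\zeta)^s$ is along $\zeta\in\Rplus$, which is exactly the excluded ray. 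Trace-class follows from Lemma~\ref{traceclasscrit}: $K_\zeta(w,w')$ is continuous on $C_a^2$ and differentiable in $w'$ with continuous derivative, the $s$-integral converging absolutely and uniformly by the decay estimates above. By uniqueness of analytic continuation the identity extends from the small disk to all of $\C\setminus\Rplus$.

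\textbf{Main obstacle.} The technical heart is the pole/decay bookkeeping for the Mellin--Barnes contour and the verification that the geometric parameter range $|a_i-1|\le d<\frac{1-q^\delta}{1+q^\delta}$ is exactly what is needed to keep the $s$-contour at $\re s=\delta$ free of poles of $1/(q^sw-w')$ and of singularities of the $q$-Pochhammer ratio, while simultaneously ensuring that closing the contour to the right recovers the correct series $\sum_n\zeta^n\mu_n/n_q!$ with no spurious $n=0$ term. A secondary (but real) difficulty is justifying the reorganization of the discrete-variable Fredholm expansion into the single-contour one rigorously — i.e., that summing the kernel over the $\Zgzero$ index commutes with the Fredholm series — which requires the uniform bounds from Corollary~\ref{abgCor}'s hypotheses ($|f(q^nw)|\le M$, $|q^nw-w'|$ bounded below) to control the tails. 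Everything else (telescoping of $f$, residues of $\Gamma(-s)\Gamma(1+s)$, Hadamard bounds for trace-class) is routine.
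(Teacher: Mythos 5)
Your proposal follows the paper's proof essentially verbatim: convert the discrete Fredholm determinant of Corollary~\ref{abgCor} to a single-contour one by summing the discrete index with the Mellin--Barnes lemma (Lemma~\ref{gammasumlemma}), verify the contour-shift estimates under the hypothesis on $d$, then analytically continue in $\zeta$ to $\C\setminus\Rplus$ using Hadamard bounds on the Fredholm series and the trace-class criterion (Lemma~\ref{traceclasscrit}). One minor slip in your aside: the $s$-contour lies at $\re s=\delta\in(0,1)$, so closing to the right already collects only the residues at $s=1,2,\ldots$, matching the $\Zgzero$-indexing of the kernel; there is no $n=0$ term to dispose of (and indeed $g_{w,w'}(1)\ne 0$ in general, so the claim that $h(1)=0$ would be false if it were needed).
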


The above theorem should be contrasted with Proposition~\ref{qlaplaceintegralform} which gives an $N$-fold summation formula for the left-hand side of (\ref{thmlaplaceeqn}). Note also the following (cf. Proposition~\ref{qlaplaceinverse}).
\begin{corollary}\label{lambdadistcor}
We also have that
\begin{equation*}
\PP_{\MM_{t=0}(a_1,\ldots,a_N;\rho)}(\lambda_N = n) = \frac{-q^n}{2\pi \iota} \int_{C_{n,q}} (q^n \zeta; q)_{\infty} \det(I+K_{\zeta})_{L^2(C_{a})} d\zeta
\end{equation*}
where $C_{n,q}$ is any simple positively oriented contour which encloses the poles $\zeta=q^{-M}$ for $0\leq M\leq n$ and which only intersects $\Rplus$ in finitely many points.
\end{corollary}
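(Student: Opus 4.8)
The plan is to recover Corollary~\ref{lambdadistcor} from the $q$-Laplace transform inversion of Proposition~\ref{qlaplaceinverse}, applied to the probability mass function $f(n):=\PP_{\MM_{t=0}(a_1,\ldots,a_N;\rho)}(\lambda_N=n)$. Since $f$ is a probability distribution on $\Zgeqzero$ it lies in $\ell^1(\{0,1,\ldots\})$, and by definition of the transform
\[
\qhat{f}(\zeta)=\sum_{n\ge 0}\frac{f(n)}{(\zeta q^{n};q)_\infty}=\left\langle\frac{1}{(\zeta q^{\lambda_N};q)_\infty}\right\rangle_{\MM_{t=0}(a_1,\ldots,a_N;\rho)},
\]
which by Theorem~\ref{PlancherelfredThm} equals $\det(I+K_\zeta)_{L^2(C_a)}$ for every $\zeta\in\C\setminus\Rplus$. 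Proposition~\ref{qlaplaceinverse} expresses $f(n)$ as $-q^{n}(2\pi\iota)^{-1}$ times the integral of $\qhat{f}(\zeta)$ against an explicit entire $q$-Pochhammer weight over a positively oriented contour enclosing the points $\zeta=q^{-M}$, $0\le M\le n$; substituting $\det(I+K_\zeta)_{L^2(C_a)}$ for $\qhat{f}(\zeta)$ on such a contour then gives the asserted identity.

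The single point requiring attention is that the inversion contour of Proposition~\ref{qlaplaceinverse} must surround the points $q^{-M}$, all of which lie on $\Rplus$, whereas the Fredholm determinant representation of Theorem~\ref{PlancherelfredThm} is stated only for $\zeta\in\C\setminus\Rplus$. I would first observe that $\qhat{f}$ is holomorphic on $\C\setminus q^{\Zleqzero}$ (the defining series sums to a function with at most simple poles at the $q^{-M}$), and that the $q$-Pochhammer weight is entire and in fact vanishes at $q^{-M}$ for all $M>n$; hence the full integrand has singularities only at $\zeta=q^{-M}$, $0\le M\le n$. Consequently the integral is unchanged if one replaces the original contour by any simple positively oriented $C_{n,q}$ that still encircles exactly these points and meets $\Rplus$ in only finitely many points, none of which is one of the $q^{-M}$. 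On such a $C_{n,q}$ the identity $\qhat{f}(\zeta)=\det(I+K_\zeta)_{L^2(C_a)}$ holds off the finitely many real crossing points, and at each crossing point $\qhat{f}$ is holomorphic, so $\det(I+K_\zeta)_{L^2(C_a)}$ extends there by continuity (equivalently, by analytic continuation from $\C\setminus\Rplus$) and still agrees with $\qhat{f}$; the crossing points forming a null set for the contour integral, the substitution is legitimate and the corollary follows.

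I do not anticipate a genuine obstacle: the statement is essentially a bookkeeping consequence of the two cited results. The only care needed is (i) noting that $f\in\ell^1(\{0,1,\ldots\})$, which is immediate from $\sum_n f(n)=1$, and (ii) arranging the deformation so that precisely the poles $q^{-M}$, $0\le M\le n$, remain enclosed while the crossings with $\Rplus$ are confined to a finite set and avoid the $q^{-M}$ themselves. This is routine since the integrand is singular only at the isolated points of $q^{\Zleqzero}$, so if one insists on a ``hard part'' it is purely this topological placement of the contour; there is no analytic difficulty beyond what is already contained in Theorem~\ref{PlancherelfredThm}.
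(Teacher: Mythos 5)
Your proposal is correct and follows essentially the same route as the paper: apply the $q$-Laplace inversion of Proposition~\ref{qlaplaceinverse} to $f(n)=\PP(\lambda_N=n)$, identify $\qhat{f}(\zeta)$ with the left side of Theorem~\ref{PlancherelfredThm}, substitute the Fredholm determinant for $\zeta\notin\Rplus$, and handle the finitely many points where $C_{n,q}$ meets $\Rplus$ by the observation that $\qhat{f}$ itself is analytic away from $q^{\Zleqzero}$ so those crossings form a null set on the contour. The one small stylistic difference is that you phrase the resolution at the crossing points as ``the determinant extends by continuity,'' whereas the paper simply says the determinant is not defined on $\Rplus$ but the crossing set is negligible for the integral; both viewpoints are compatible since the extension you invoke is nothing but $\qhat{f}$ itself, and the substance of the argument is identical.
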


\begin{proof}[Proof of Theorem~\ref{PlancherelfredThm}]
The starting point for this proof is Corollary \ref{abgCor}. There are, however, two issues we must deal with. First, the operator in the corollary acts on a different $L^2$ space; second, the equality is only proved for $|\zeta|<C^{-1}$. We split the proof into three steps. Step 1: We present a general lemma which provides an integral representation for an infinite sum. Step 2: Assuming $\zeta\in \{\zeta:|\zeta|<C^{-1}, \zeta\notin\Rplus\}$ we derive equation (\ref{thmlaplaceeqn}). Step 3: A direct inspection of the left-hand side of that equation shows that for all $\zeta\neq q^{-M}$ for $M\geq 0$ the expression is well-defined and analytic. The right-hand side expression can be analytically extended to all $\zeta\notin \Rplus$ and thus by uniqueness of the analytic continuation, we have a valid formula on all of $\C\setminus\Rplus$.

{\bf Step 1:}
The purpose of the next lemma is to change that $L^2$ space we are considering and to replace the summation in Corollary \ref{abgCor} by a contour integral.
\begin{lemma}\label{gammasumlemma}
For all functions $g$ which satisfy the conditions below, we have the identity that for $\zeta\in \{\zeta:|\zeta|<1, \zeta\notin\Rplus\}$:
\begin{equation}\label{gammares}
\sum_{n=1}^{\infty} g(q^n) (\zeta)^n  = \frac{1}{2\pi \iota} \int_{C_{1,2,\ldots}} \Gamma(-s)\Gamma(1+s)(-\zeta)^s g(q^s) ds,
\end{equation}
where the infinite contour $C_{1,2,\ldots}$ is a negatively oriented contour which encloses $1,2,\ldots$ and no singularities of $g(q^s)$ (e.g. $C_{1,2,\ldots}=\iota \R+\delta$ oriented from $-\iota \infty+\delta$ to $\iota \infty +\delta$), and $z^s$ is defined with respect to a branch cut along $z\in \R^{-}$. For the above equality to be valid the left-hand-side must converge, and the right-hand-side integral must be able to be approximated by integrals over a sequence of contours $C_{k}$ which enclose the singularities at $1,2,\ldots, k$ and which partly coincide with $C_{1,2,\ldots}$ in such a way that the integral along the symmetric difference of the contours $C_{1,2,\ldots}$ and $C_{k}$ goes to zero as $k$ goes to infinity.
\end{lemma}
\begin{proof}
The identity follows from $\Res{s=k}\Gamma(-s)\Gamma(1+s) = (-1)^{k+1}$.
\end{proof}
\begin{remark}\label{gammasumlemmaremark}
Let us briefly illustrate the application of Lemma~\ref{gammasumlemma} which is similar to Mellin-Barnes type integral representations for hypergeometric functions. Consider
\begin{equation*}
\frac{1}{(\zeta;q)_{\infty}} = \sum_{k=0}^{\infty} \frac{\zeta^k}{(q;q)_k} = \frac{1}{(q;q)_{\infty}} \sum_{k=0}^{\infty} \zeta^k (q^k;q)_{\infty}.
\end{equation*}
By including the pole at 0 in Lemma~\ref{gammasumlemma} one readily checks that for $\zeta\in \{\zeta:|\zeta|<1, \zeta\notin\Rplus\}$
\begin{equation*}
\sum_{k=0}^{\infty} \zeta^k (q^k;q)_{\infty} = \int_{C_{0,1,2,\ldots}} \Gamma(-s)\Gamma(1+s) (-\zeta)^s(q^s;q)_{\infty} ds,
\end{equation*}
and thus for $\zeta\in \{\zeta:|\zeta|<1, \zeta\notin\Rplus\}$
\begin{equation*}
\frac{(q;q)_{\infty}}{(\zeta;q)_{\infty}} = \int_{C_{0,1,2,\ldots}} \Gamma(-s)\Gamma(1+s) (-\zeta)^s(q^s;q)_{\infty} ds.
\end{equation*}
The left-hand side is analytic for all $\zeta\neq q^{-M}$, $M\geq 0$, while the right-hand side is analytic for $\zeta \notin \Rplus$. By analytic continuation, the above equality holds for all $\zeta\notin \Rplus$. Analytic continuation of the right-hand side to the whole domain of analyticity would require additional means.
\end{remark}

{\bf Step 2:} For this step let us assume that $\zeta\in \{\zeta:|\zeta|<C^{-1}, \zeta\notin\Rplus\}$. We may rewrite equation (\ref{abgKernel}) as
\begin{equation*}
K(n_1,w_1;n_2,w_2) = \zeta^{n_1} g_{w_1,w_2}(q^{n_1})
\end{equation*}
where $g$ is given in equation (\ref{gwwprimeeqn}).

Writing out the $M^{th}$ term in the Fredholm expansion we have ($C_{a,\rho}$ of Corollary \ref{abgCor} are chosen to be $C_a$ of the statement of Theorem~\ref{PlancherelfredThm})
\begin{equation*}
\frac{1}{M!} \sum_{\sigma\in S_M} sgn(\sigma)\prod_{j=1}^{M} \oint_{C_{a}} dw_j \sum_{n_j=1}^{\infty} \zeta^{n_j} g_{w_j,w_{\sigma(j)}}(q^{n_j}).
\end{equation*}

In order to apply Lemma~\ref{gammasumlemma} to the inner summations consider contours $\{C_{k}\}_{k\geq 1}$ which are negatively oriented semi-circles that contain the right half of a circle centered at $\delta$ and of radius $k$. The infinite contour $C_{1,2,\ldots} = \delta + \iota\R$. By the condition on the contour $C_{a}$ we are assured that these finite semi-circles $C_k$ do not contain any poles beyond those of the Gamma function $\Gamma(-s)$. The only other possible poles as $s$ varies is when $q^s w_j -w_{\sigma(j)} =0$. However, the conditions on $C_{a}$ and on the contours on which $s$ varies show that these poles are not present in the semi-circles $C_{k}$.

In order to apply the above lemma we must also estimate the integral along the symmetric difference. Identify the part of the symmetric difference given by the semi-circle arc as $C^{arc}_k$ and the part given by $\{\iota y + \delta: |y|>k\}$ as $C^{seg}_k$. Since $\Real(s)\geq \delta$ it is straight-forward to see that $g_{w_1,w_2}(q^s)$ stays uniformly bounded as $s$ varies along these contours. Consider then $(-\zeta)^s$. This is defined by writing $-\zeta = r e^{i\theta}$ for $\theta\in (-\pi,\pi)$ and $r>0$, and setting $(-\zeta)^s =r^s e^{\iota s\theta}$. Writing $s=x+\iota y$ we have $|(-\zeta)^s| = r^{x}e^{-y\theta}$. Note that our assumption on $\zeta$ corresponds to $r<1$ and $\theta\in(-\pi,\pi)$. Concerning the product of Gamma functions, recall Euler's Gamma reflection formula
\begin{equation}\label{gammareflection}
\Gamma(-s)\Gamma(1+s) = \frac{\pi}{\sin (-\pi s)}.
\end{equation}
One readily confirms that for all $s$: $\dist(s,\Z)>c$ for some $c>0$ fixed,
\begin{equation*}
\left| \frac{\pi}{\sin (-\pi s)} \right| \leq \frac{c'}{e^{\pi \Imag(s)}}
\end{equation*}
for a fixed constant $c'>0$ which depends on $c$. Therefore, along the $C^{seg}_k$ contour,
\begin{equation*}
|(-\zeta)^s\Gamma(-s)\Gamma(1+s)|\sim e^{-y\theta}e^{-\pi|y|},
\end{equation*}
and since $\theta\in(-\pi,\pi)$ is fixed, this product decays exponentially in $|y|$ and the integral goes to zero as $k$ goes to infinity. Along the $C^{arc}_k$ contour, the product of Gamma functions still behaves like $c'e^{-\pi |y|}$ for some fixed $c'>0$. Thus along this contour
\begin{equation*}
|(-\zeta)^s\Gamma(-s)\Gamma(1+s)| \sim e^{-y\theta}r^x e^{-\pi|y|}.
\end{equation*}
Since $r<1$ and $-(\pi+\theta)<0$ these terms behave like $e^{-c''(x+|y|)}$ ($c''>0$ fixed) along the semi-circle arc. Clearly, as $k$ goes to infinity, the integrand decays exponentially in $k$ (versus the linear growth of the length of the contour) and the conditions of the lemma are met.

Applying Lemma~\ref{gammasumlemma} we find that the $M^{th}$ term in the Fredholm expansion can be written as
\begin{equation*}
\frac{1}{M!} \sum_{\sigma\in S_M} sgn(\sigma)\prod_{j=1}^{M} \oint_{C_{a}} dw_j \frac{1}{2\pi \iota}\int_{-\iota \infty +\delta}^{\iota \infty +\delta}ds_j \Gamma(-s)\Gamma(1+s)(-\zeta)^{s} g_{w_j,w_{\sigma(j)}}(q^{s}).
\end{equation*}
Therefore the determinant can be written as $\det(I+K_{\zeta})_{L^2(C_{a})}$ as desired.

{\bf Step 3:} In order to analytically extend our formula we must prove two facts. First, that the left-hand side of equation (\ref{thmlaplaceeqn}) is analytic for all $\zeta\notin\Rplus$; and second, that the right-hand side determinant is likewise analytic for all $\zeta\notin\Rplus$ (and in doing so that the operator is trace class on that set of $\zeta$).

Expand the left-hand side of equation (\ref{thmlaplaceeqn}) as
\begin{equation}
\sum_{n=0}^{\infty} \frac{ \PP(\lambda_N = n) } {(1-\zeta q^n)(1-\zeta q^{n+1})\cdots},
\end{equation}
where $\PP=\PP_{\MM_{t=0}(a_1,\ldots,a_N;\rho)}$.

Observe that for any $\zeta\notin \{q^{-M}\}_{M=0,1,\ldots}$, within a neighborhood of $\zeta$ the infinite products are uniformly convergent and bounded away from zero. As a result the series is uniformly convergent in a neighborhood of any such $\zeta$ which implies that its limit is analytic, as desired.

Turning to the Fredholm determinant, let us first check that we are, indeed, justified in writing the infinite series as a Fredholm determinant (i.e., the $K_{\zeta}$ is trace-class). We use the conditions given in Lemma~\ref{traceclasscrit} which requires that we check that $K_{\zeta}(w,w')$ is continuous simultaneously in both $w$ and $w'$, as well as that the derivative of $K_{\zeta}(w,w')$ in the second entry is continuous in $w'$. Both of these facts are readily checked. In fact, since we have a uniformly and exponentially convergent elementary integral, it and all its derivatives in $w$ and $w'$ are continuous. This is because differentiation does not affect the exponential decay.


In order to establish that the Fredholm determinant $\det(I+K_{\zeta})_{L^2(C_{a})}$ is an analytic function of $\zeta$ away from $\Rplus$ we appeal to the Fredholm expansion and the same criteria that limits of uniformly absolutely convergent series of analytic functions are themselves analytic. There exist theorems (such as in \cite{GKrein} Section IV.1, Property 8, or in the appendix of \cite{ACQ}) which show that if a trace-class operator is an analytic function of a parameter $\zeta$, then so if the determinant. However, the analyticity of the operator must be shown in the trace-class norm which tends to be involved. Since we are already dealing directly with Fredholm series expansions, we find it more natural to prove analyticity directly from the series rather than try to appeal to these results.

Returning to the issue at hand, recall that
\begin{equation*}
\det(I+K_{\zeta})_{L^2(C_{a})} = 1 + \sum_{n=1}^{\infty} \frac{1}{n!} \int_{C_{a}}dw_1 \cdots  \int_{C_{a}}dw_n \det(K_{\zeta}(w_i,w_j))_{i,j=1}^n.
\end{equation*}
It is clear from the definition of $K_{\zeta}$ that $\det(K_{\zeta}(w_i,w_j))_{i,j=1}^{n}$ is analytic in $\zeta$ away from $\Rplus$. Thus any partial sum of the above series is analytic in the same domain. What remains is to show that the series is uniformly absolutely convergent on any fixed neighborhood of $\zeta$ not including $\Rplus$.  Towards this end consider the $n^{th}$ term in the Fredholm expansion:
\begin{eqnarray}\label{fredexpbound}
F_n(\zeta) &=& \frac{1}{n!} \int_{C_{a}}dw_1\cdots  \int_{C_{a}}dw_n \int_{-\iota \infty +\delta}^{\iota \infty +\delta} ds_1 \cdots \int_{-\iota \infty + \delta}^{\iota \infty +\delta} ds_n \det\left(\frac{1}{q^{s_i}w_i -w_j}\right)_{i,j=1}^{n}\\
\nonumber&&\times \prod_{j=1}^{n}  \left(\Gamma(-s_j)\Gamma(1+s_j) (-\zeta)^{s_j} \prod_{m=1}^{N}\frac{(q^{s_j}w_j/a_m;q)_{\infty}}{(w_j/a_m;q)_{\infty}} \exp\big(\gamma w_j(q^{s_j}-1)\big)\right).
\end{eqnarray}
We wish to bound the absolute value of this. Observe that uniformly over $s_j$ and $w_j$ the term
\begin{equation}\label{pochbound}
\left| \prod_{m=1}^{N} \frac{(q^{s_j}w_j/a_m;q)_{\infty}}{(w_j/a_m;q)_{\infty}} \exp\big(\gamma w_j(q^{s_j}-1)\big) \right| \leq const_1
\end{equation}
for some fixed constant $const_1>0$. Now consider the determinant. Along the contours for $s$ and $w$ the ratio $\frac{1}{q^{s_i}w_i -w_j}$ is uniformly bounded in absolute value by a constant $const_2$. By Hadamard's bound
\begin{equation}\label{detbound}
\left| \det\left(\frac{1}{q^{s_i}w_i -w_j}\right)_{i,j=1}^{n} \right| \leq (const_2)^n n^{n/2}.
\end{equation}


Taking the absolute value of (\ref{fredexpbound}) and bringing the absolute value all the way inside the integrals, we find that after plugging in the results of (\ref{pochbound}) and (\ref{detbound})
\begin{eqnarray*}
|F_n(\zeta)| &\leq&  \frac{1}{n!} \int_{C_{a}}dw_1\cdots  \int_{C_{a}}dw_n \int_{-\iota \infty + \delta}^{\iota \infty +\delta} ds_1 \cdots \int_{-\iota \infty +\delta}^{\iota \infty +\delta} ds_n \\ && \times \prod_{j=1}^{n}  \left|\Gamma(-s_j)\Gamma(1+s_j) (-\zeta)^{s_j}\right| (const_1)^n (const_2)^n n^{n/2}.
\end{eqnarray*}
For $\zeta\notin \Rplus$ it is then easy to check (using decay properties) that in a neighborhood of $\zeta$ which does not touch $\Rplus$, the expression
\begin{equation*}
 \int_{-\iota \infty + \delta}^{\iota \infty +\delta} ds |\Gamma(-s)\Gamma(1+s) (-\zeta)^{s}|
\end{equation*}
is uniformly bounded by a fixed constant $const_3>0$. Thus we have
\begin{equation*}
|F_n(\zeta)| \leq  \frac{1}{n!} \int_{C_{a}}dw_1\cdots  \int_{C_{a}}dw_n (const_1)^n (const_2)^n (const_3)^n n^{n/2}\leq \frac{(const_4)^n n^{n/2}}{n!}
\end{equation*}
which is uniformly convergent as a series in $n$, just as was desired to prove analyticity in $\zeta$.
\end{proof}

\begin{proof}[Proof of Corollary \ref{lambdadistcor}]
By virtue of the fact (given in Step 3 of the proof of Theorem~\ref{PlancherelfredThm}) that $\langle \frac{1}{\left(\zeta q^{\lambda_N};q\right)_{\infty}}\rangle$ is analytic away from $\zeta = q^{-M}$, for integers $M\geq 0$, it follows that even though the Fredholm determinant $\det(I+K_{\zeta})_{L^2(C_{a})}$ is not defined for $\zeta\in\Rplus$ we may still integrate along a contour which intersects $\Rplus$ only a finite number of times. We may then apply the inversion formula of Proposition~\ref{qlaplaceinverse} to conclude our corollary.
\end{proof}

\subsubsection{Large contour formulas}\label{largecontourformulas}
\begin{definition}
A closed simple contour $C$ is {\it star-shaped} with respect to a point $p$ if it strictly contains $p$ and every ray from $p$ crosses $C$ exactly once.
\end{definition}

\begin{theorem}\label{largeconThm}
Fix $\rho$ a Macdonald nonnegative specialization, defined as in equation (\ref{tag1}) in terms of nonnegative numbers $\{\alpha_i\}_{i\geq 1}$, $\{\beta_i\}_{i\geq 1}$ and $\gamma$. Also fix $a_1,\ldots, a_N$ positive numbers such that $\alpha_i a_j<1$ for all $i,j$. Then for all $\zeta\in \C$
\begin{equation}\label{thmlaplaceeqnLARGE}
\left\langle (\zeta;q)_{\lambda_N} \right\rangle_{\MM_{t=0}(a_1,\ldots,a_N;\rho)} =
(\zeta;q)_{\infty} \left\langle \frac{1}{\left(\zeta q^{\lambda_N};q\right)_{\infty}}\right\rangle_{\MM_{t=0}(a_1,\ldots,a_N;\rho)} = \det(I+\zeta \tilde{K})_{L^2(\tilde{C}_{a,\rho})}
\end{equation}
where $\det(I+\zeta\tilde{K})$ is an entire function of $\zeta$ and
the operator $\tilde{K}$ is defined in terms of its integral kernel
\begin{equation*}
\tilde{K}(w_1,w_2) =\frac{f(w_1)}{q w_2 - w_1}
\end{equation*}
with
\begin{equation*}
f(w) = \left(\prod_{m=1}^{N} \frac{a_m}{a_m-w}\right) \left(\prod_{i\geq 1} (1-\alpha_i w)\frac{1+q \beta_iw}{1+\beta_i w}\right) \exp\{(q-1)\gamma w\}
\end{equation*}
and $\tilde{C}_{a,\rho}$ a positively oriented star-shaped (with respect to 0) contour containing $a_1,\ldots, a_N$ and no other singularities of $f$.
\end{theorem}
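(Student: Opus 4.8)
The plan is to follow the same three-part scheme used in the proof of Theorem~\ref{PlancherelfredThm}, but now using the \emph{large-contour} combinatorial identity of Proposition~\ref{mukproplarge} and the associated Fredholm determinant of Proposition~\ref{tildemuDetProp}, rather than their small-contour counterparts. First I would record the elementary identity between the two sides of the first equality in~(\ref{thmlaplaceeqnLARGE}): by the second formula in~(\ref{qLaplace}) (the $q$-binomial theorem corollary), $(\zeta;q)_\infty\cdot(\zeta q^{\lambda_N};q)_\infty^{-1} = (\zeta;q)_{\lambda_N}$, and this is a finite product for each realization of $\lambda_N$, so the expectation is an entire function of $\zeta$ (it is a power series in $\zeta$ with coefficients $\langle$ elementary symmetric polynomials in $1,q,\dots,q^{\lambda_N-1}\rangle$, each bounded by $\binom{\lambda_N}{k}\le 2^{\lambda_N}$, and $\lambda_N$ has all exponential moments finite since by Proposition~\ref{sumsprop} the $|\lambda^{(k)}-\lambda^{(k-1)}|$ are independent with explicit geometric-type generating functions). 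So it suffices to prove the right-hand equality.

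Next, for $|\zeta|$ small I would expand $(\zeta;q)_{\lambda_N} = \sum_{k\ge 0}\frac{(-1)^k q^{k(k-1)/2}}{k_q!}(1-q)^k\zeta^k q^{k\lambda_N}\cdot k_q!/((1-q)^k\cdots)$; more precisely, from~(\ref{finqBinExp}) one gets $(\zeta;q)_{n}=\sum_{k=0}^{n}\binom{n}{k}_q(-1)^k q^{k(k-1)/2}\zeta^k$, and after reorganizing this is exactly of the form $\sum_{k\ge0}\tilde\mu_k\zeta^k/k_q!$ with $\tilde\mu_k = \langle q^{k\lambda_N}\rangle_{\MM_{t=0}}\cdot(\text{sign/power-of-}q)$ matched to the quantity in~(\ref{muktildedef}). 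Here is where Proposition~\ref{prop8tzero} enters: it identifies $\langle q^{k\lambda_N}\rangle_{\MM_{t=0}(a_1,\dots,a_N;\rho)}$ with the $k$-fold contour integral $\mu_k$ built from the function $f$ displayed in the statement, integrated over contours enclosing $\{a_1,\dots,a_N\}$. I then invoke Remark~\ref{starstar31}: since $\rho$ and the $a_i$ are such that the pole set $\poles=\{a_1,\dots,a_N\}$ lies in a small enough neighborhood of $1$ (or, in the generality of this theorem, since one may take $f$'s pole set inside a fixed star-shaped region), the small contours of Proposition~\ref{mukprop} together with small circles around $z_j=0$ can be deformed without crossing singularities to a single large star-shaped contour $\tilde C_{a,\rho}$; this turns $\mu_k$ into $\tilde\mu_k$ of~(\ref{muktildedef}) on a common contour. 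Now Proposition~\ref{tildemuDetProp} applies verbatim and yields $\sum_{k\ge0}\tilde\mu_k\zeta^k/k_q! = \det(I+\zeta\tilde K)_{L^2(\tilde C_{a,\rho})}$ with kernel $\tilde K(w_1,w_2)=f(w_1)/(qw_2-w_1)$ (matching, up to the trivial substitution of variables and the $(1-q^{-1})$ normalization absorbed into $f$ via the $\exp\{(q-1)\gamma w\}$ and the $\frac{a_m}{a_m-w}$ factors, the kernel in Proposition~\ref{tildemuDetProp}). One must check the hypotheses there: the left side converges absolutely for small $\zeta$ (geometric decay of $\tilde\mu_k\zeta^k/k_q!$, as in the proof of Corollary~\ref{abgCor}), and $\tilde K$ is trace-class on the smooth contour $\tilde C_{a,\rho}$ by Lemma~\ref{traceclasscrit} since $f$ is holomorphic in a neighborhood of $\tilde C_{a,\rho}$ (its only singularities, the $a_j$ and any $-\beta_i^{-1}$, are enclosed or avoided), so $\tilde K(w_1,w_2)$ and its $w_2$-derivative are continuous on $\tilde C_{a,\rho}^2$.

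Finally, Step 3 is analytic continuation in $\zeta$. The left-hand side $\langle(\zeta;q)_{\lambda_N}\rangle$ is entire in $\zeta$ by the first paragraph. For the right-hand side, I would argue exactly as in Step 3 of the proof of Theorem~\ref{PlancherelfredThm}: write the Fredholm expansion $\det(I+\zeta\tilde K)=1+\sum_{n\ge1}\frac{\zeta^n}{n!}\int\cdots\int\det[\tilde K(w_i,w_j)]_{i,j=1}^n\prod dw_j/(2\pi\iota)$, note each term is a monomial in $\zeta$ times an absolutely convergent finite-dimensional integral (Hadamard's bound gives $|\det|\le (const)^n n^{n/2}$ since $|\tilde K|$ is bounded on the compact contour), so the series converges uniformly on compact $\zeta$-sets to an entire function. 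Two entire functions agreeing on a neighborhood of $\zeta=0$ agree everywhere, giving~(\ref{thmlaplaceeqnLARGE}) for all $\zeta\in\C$, and in particular $\det(I+\zeta\tilde K)$ is entire. I expect the main obstacle to be the bookkeeping in Step 2 — verifying that the large-contour deformation of Remark~\ref{starstar31} is legitimate for the full class of $\rho$ and $a_i$ allowed here (where the pole structure of $f$ includes the points $-\beta_i^{-1}$ and the essential behavior near $\alpha_i^{-1}$), and matching the precise constants/orientations so that the output of Proposition~\ref{tildemuDetProp} is exactly the stated kernel $f(w_1)/(qw_2-w_1)$; everything else is a routine transcription of the already-proved small-contour argument.
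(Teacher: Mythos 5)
Your overall scheme mirrors the paper's — Proposition~\ref{mukproplarge}, Proposition~\ref{tildemuDetProp}, Remark~\ref{starstar31}, Proposition~\ref{prop8tzero}, the $q$-binomial theorem, and analytic continuation — but there are two genuine gaps in your Step~2.

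The central one: you assert that $\langle(\zeta;q)_{\lambda_N}\rangle$ reorganizes as $\sum_k \tilde\mu_k \zeta^k/k_q!$ with $\tilde\mu_k = \langle q^{k\lambda_N}\rangle\cdot(\text{sign/power of }q)$. This is false, and the confusion traces to conflating $\mu_k$ and $\tilde\mu_k$. Proposition~\ref{prop8tzero} identifies $\langle q^{k\lambda_N}\rangle$ with $\mu_k$ (nested contours, \emph{not} encircling $0$). Adding the small circles around $0$ and deforming to a common large contour does not give $\mu_k$ back; it produces a genuinely different quantity $\tilde\mu_k$, and the relation between them is the content of Proposition~\ref{muandmutildeprop}, which you never invoke:
\[
\tilde\mu_k = (-1)^k q^{\frac{k(k-1)}{2}}\sum_{j=0}^{k}{k\choose j}_{q^{-1}}(-1)^j q^{-\frac{j(j-1)}{2}}\,\mu_j .
\]
Substituting $\mu_j = \langle q^{j\lambda_N}\rangle$ and applying (\ref{finqBinExp}) inside the expectation gives $\tilde\mu_k = \langle q^{k\lambda_N}(q^{-\lambda_N};q)_k\rangle = \langle \prod_{i=0}^{k-1}(q^{\lambda_N}-q^i)\rangle$ --- a polynomial of degree $k$ in $q^{\lambda_N}$, not a constant multiple of $\langle q^{k\lambda_N}\rangle$. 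It is this polynomial, summed against $(\zeta/(1-q))^k/k_q!$ and recombined via the $q$-binomial theorem, that yields $\langle(\zeta;q)_\infty/(\zeta q^{\lambda_N};q)_\infty\rangle=\langle(\zeta;q)_{\lambda_N}\rangle$. Your shortcut silently skips the one combinatorial lemma (Proposition~\ref{muandmutildeprop}) that ties the large-contour generating function to the left-hand side. Relatedly, the factor $(1-q^{-1})\zeta$ in the kernel of Proposition~\ref{tildemuDetProp} is not ``absorbed into $f$''; it is handled by the substitution $\zeta\mapsto\zeta/(1-q)$, which is why the paper's generating-function variable carries that normalization.

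The second gap is the contour deformation for general $\rho$ and $a_i$, which you flag as ``the main obstacle.'' The paper does not actually confront it in that generality: it first establishes (real) analyticity of both sides in the $a_1,\dots,a_N$ as well as in $\zeta$, proves the identity for $|\zeta|<1$ \emph{and} $a_i$ in a small neighborhood of $1$ (where Remark~\ref{starstar31} applies without bookkeeping issues), and then analytically continues in both $\zeta$ and the $a_i$. Your proposal only continues in $\zeta$, leaving you to verify the deformation for all allowed $a_i$ and pole structures of $f$, which is precisely the work the two-parameter continuation is designed to avoid.
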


\begin{proof}
Observe first that both sides of equation (\ref{thmlaplaceeqnLARGE}) are analytic functions of $\zeta$. The right-hand side is clear since $\det(I+\zeta K)_{L^2(\tilde{C}_{a,\rho})}$ is always an entire function of $\zeta$ for $K$ trace-class. The left-hand side is analytic as can be seen from considering its series expansion:
\begin{equation*}
\sum_{n=0}^{\infty} \PP(\lambda_N = n)(1-\zeta)(1-\zeta q)\cdots (1-\zeta q^{n-1}),
\end{equation*}
where $\PP=\PP_{\MM_{t=0}(a_1,\ldots,a_N;\rho)}$.
The products above are clearly converging uniformly to non-trivial limits in any fixed neighborhood of $\zeta$ and hence the series is likewise uniformly convergent in any neighborhood defining an analytic function. One similarly shows the (real) analyticity of both sides of (\ref{thmlaplaceeqnLARGE}) in $a_1,\ldots, a_N$.

It remains to prove equation (\ref{thmlaplaceeqnLARGE}) for $|\zeta|<1$ and $a_i$'s in a small enough neighborhood of $1$. Define $\tilde\mu_k$ by the left-hand side of (\ref{muktildedef}) with $f$ as defined in the statement of the theorem and $\tilde{C}_w$ equal to $\tilde{C}_{a,\rho}$ likewise defined in the statement of the theorem. By Remark \ref{starstar31} and Proposition~\ref{muandmutildeprop} we can rewrite $\tilde{\mu}_k$ via an expansion into $\mu_j$ as $j$ varies between $0$ and $k$ and where $\mu_j$ is defined as in (\ref{mukdef}) with $f$ as in the statement of the theorem and with nested contours not including 0. By Proposition~\ref{prop8tzero}, we can recognize that $\mu_k = \left\langle q^{k\lambda_N}\right\rangle_{\MM_{t=0}(a_1,\ldots,a_N;\rho)}$.

Summing up these deductions we have found that
\begin{equation*}
\tilde{\mu}_k = (-1)^k q^{\frac{k(k-1)}{2}} \sum_{j=0}^{k} {k\choose j}_{q^{-1}}(-1)^{j} q^{-\frac{j(j-1)}{2}} \left\langle q^{k\lambda_N}\right\rangle_{\MM_{t=0}(a_1,\ldots,a_N;\rho)}.
\end{equation*}
By (\ref{finqBinExp}) we get
\begin{equation*}
\tilde{\mu}_k = (-1)^k q^{\frac{k(k-1)}{2}}   \left\langle (1-q^{\lambda_N})\cdots (1-q^{\lambda_N-k})\right\rangle_{\MM_{t=0}(a_1,\ldots,a_N;\rho)} =  \left\langle q^{k\lambda_N}(q^{-\lambda_N};q)_{k}\right\rangle_{\MM_{t=0}(a_1,\ldots,a_N;\rho)}.
\end{equation*}
From the above it is clear that $|\tilde{\mu}_k|\leq 1$. Observe that due to the $q$-Binomial theorem and this a priori bound, for $|\zeta|<1$
\begin{equation*}
\sum_{k\geq 0} \tilde{\mu}_k \frac{(\zeta/(1-q))^k}{k_q!} = \left\langle \sum_{k\geq 0} \frac{ (q^{-\lambda_N};q)_k}{(q;q)_{k}} \left(\zeta q^{\lambda_N}\right)^k \right\rangle_{\MM_{t=0}(a_1,\ldots,a_N;\rho)}= \left\langle\frac{(\zeta;q)_{\infty}}{(\zeta q^{\lambda_N};q)_{\infty}} \right\rangle_{\MM_{t=0}(a_1,\ldots,a_N;\rho)}.
\end{equation*}
On the other hand, replacing $\zeta$ by $\zeta/(1-q)$ in Proposition~\ref{tildemuDetProp}, we find that the left-hand side above is equal to the Fredholm determinant in equation (\ref{thmlaplaceeqnLARGE}). Thus for $|\zeta|<1$ and $a_i$'s in a small enough neighborhood of $1$ we have proved the desired result. The full result follows by the earlier mentioned analyticity.
\end{proof}

Note also the following (cf. Proposition~\ref{qlaplaceinverse}),
\begin{corollary} Under the assumptions of Theorem~\ref{largeconThm} one has
\begin{equation*}
\PP_{\MM_{t=0}(a_1,\ldots,a_N;\rho)}(\lambda_N = n) = \frac{-q^n}{2\pi \iota} \int \frac{\det(I+\zeta \tilde{K})_{L^2(\tilde{C}_{a,\rho})}}{(\zeta; q)_{n+1}}d\zeta
\end{equation*}
where the integration is along any positively oriented contour which encloses the poles $\zeta=q^{-M}$ for $0\leq M\leq n$.
\end{corollary}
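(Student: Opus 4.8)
The plan is to derive this directly from Theorem~\ref{largeconThm} and the $q$-Laplace inversion formula of Proposition~\ref{qlaplaceinverse}, so the argument will be short. First I would rewrite the conclusion of Theorem~\ref{largeconThm} in the form
\begin{equation*}
\left\langle \frac{1}{\left(\zeta q^{\lambda_N};q\right)_{\infty}}\right\rangle_{\MM_{t=0}(a_1,\ldots,a_N;\rho)} = \frac{\det(I+\zeta\tilde{K})_{L^2(\tilde{C}_{a,\rho})}}{(\zeta;q)_{\infty}},
\end{equation*}
which holds for every $\zeta\notin\{q^{-M}\}_{M\ge 0}$ (the left-hand side is analytic there, as already noted in the proof of Theorem~\ref{largeconThm}, and the right-hand side is meromorphic since $\det(I+\zeta\tilde{K})$ is entire). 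Setting $f(n):=\PP_{\MM_{t=0}(a_1,\ldots,a_N;\rho)}(\lambda_N=n)$, I would observe that $f\in\ell^1(\{0,1,\ldots\})$ (it is a probability mass function) and that the left-hand side above is exactly the transform $\qhat{f}(\zeta)$ introduced in Section~\ref{qlaplaceinvsec}.

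Next I would apply Proposition~\ref{qlaplaceinverse} to recover $f(n)$, which gives
\begin{equation*}
f(n) = -q^n\,\frac{1}{2\pi\iota}\int_{C_n}(q^{n+1}\zeta;q)_{\infty}\,\qhat{f}(\zeta)\,d\zeta = -q^n\,\frac{1}{2\pi\iota}\int_{C_n}(q^{n+1}\zeta;q)_{\infty}\,\frac{\det(I+\zeta\tilde{K})_{L^2(\tilde{C}_{a,\rho})}}{(\zeta;q)_{\infty}}\,d\zeta.
\end{equation*}
Using the elementary factorization $(\zeta;q)_{\infty}=(\zeta;q)_{n+1}\,(q^{n+1}\zeta;q)_{\infty}$, the ratio of infinite $q$-Pochhammer symbols collapses to $1/(\zeta;q)_{n+1}$, which produces precisely the asserted integrand.

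It then remains to argue that the integration contour may be taken to be any positively oriented simple contour enclosing the poles $\zeta=q^{-M}$, $0\le M\le n$. Here I would invoke Theorem~\ref{largeconThm} again: $\det(I+\zeta\tilde{K})_{L^2(\tilde{C}_{a,\rho})}$ is an entire function of $\zeta$, while $(\zeta;q)_{n+1}=(1-\zeta)(1-\zeta q)\cdots(1-\zeta q^n)$ is a polynomial whose only zeros are at $\zeta=q^{-M}$ for $0\le M\le n$; hence the integrand is meromorphic with poles only at these $n+1$ points, and Cauchy's theorem allows replacement of $C_n$ by any contour enclosing exactly those poles. Because Proposition~\ref{qlaplaceinverse} and Theorem~\ref{largeconThm} do all the heavy lifting, there is no substantive obstacle; the only point requiring any care is identifying the set of poles of the integrand, which the entireness of the Fredholm determinant settles. (Alternatively one could give a self-contained proof by expanding $\det(I+\zeta\tilde{K})$ through the series $\sum_{k\ge0}\tilde{\mu}_k\,\zeta^k/\bigl(k_q!\,(1-q)^k\bigr)$ and carrying out the residue bookkeeping term by term, but routing through the inversion formula is cleaner.)
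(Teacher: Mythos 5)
Your proof is correct and takes exactly the route the paper intends (the paper merely annotates the corollary with ``cf.\ Proposition~\ref{qlaplaceinverse}'' and leaves the derivation to the reader): rewrite Theorem~\ref{largeconThm} as $\qhat{f}(\zeta)=\det(I+\zeta\tilde{K})/(\zeta;q)_{\infty}$, invert via Proposition~\ref{qlaplaceinverse}, collapse $(q^{n+1}\zeta;q)_{\infty}/(\zeta;q)_{\infty}=1/(\zeta;q)_{n+1}$, and use entireness of $\det(I+\zeta\tilde{K})$ to deform the contour. The only nontrivial point -- that after cancellation the integrand is meromorphic with poles only at $\zeta=q^{-M}$, $0\le M\le n$ -- is handled correctly by the combination of the polynomial factorization and the entireness statement in Theorem~\ref{largeconThm}.
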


\begin{remark}
The above, simple, formula for the probability density is highly reminiscent of the initial Fredholm determinant expression of Tracy and Widom \cite{TW3} in the context of the ASEP. While simple, it is not obvious how to take asymptotics of this formula. The approach of \cite{TW3} may serve as a guide and one might try to factor the kernel so as to cancel the denominator product and then re-express the resolvent as a kernel written in terms of a separate integration. We, however, make no attempt at this presently, as we already have a different Fredholm determinant expression which is more readily studied asymptotically.
\end{remark}

\section{q-TASEP and the q-Whittaker 2d-growth model}\label{qTASEPandthe}


\subsection{The q-Whittaker 2d-growth model}\label{2dgrowthmodel}

Let us now look at the limits as $t\to 0$ of the transition rates of the continuous time Markov process from Section~\ref{ctnsMarkovChainsec}.
\begin{lemma}\label{lemma22}
For $t=0$ we have
\begin{eqnarray*}
\varphi_{\la/\mu}&=&(q;q)_\infty^{-\ell(\la)}\prod_{i=1}^{\ell(\la)}\frac{(q^{\la_i-\mu_i+1};q)_\infty
(q^{\mu_i-\lambda_{i+1}+1};q)_\infty}{(q^{\mu_i-\mu_{i+1}+1};q)_\infty}\,,\\
\psi_{\la/\mu}&=&(q;q)_\infty^{-\ell(\mu)}\prod_{i=1}^{\ell(\mu)}\frac{(q^{\la_i-\mu_i+1};q)_\infty
(q^{\mu_i-\lambda_{i+1}+1};q)_\infty}{(q^{\la_i-\la_{i+1}+1};q)_\infty}\,,\\
\psi'_{\la/\mu}&=&\prod_{i\ge 1: \la_i=\mu_i, \la_{i+1}=\mu_{i+1}+1} (1-q^{\mu_i-\mu_{i+1}}).
\end{eqnarray*}
\end{lemma}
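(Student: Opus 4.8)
The plan is to pass to the limit $t\to 0$ directly in the explicit product formulas (\ref{piereFormPhi}), (\ref{piereFormPsi}) and (\ref{piereFormPrime}), keeping track of which factors survive; throughout, $\varphi_{\lambda/\mu}$ and $\psi_{\lambda/\mu}$ are taken under the hypothesis $\mu\prec\lambda$ (i.e. $\lambda/\mu$ a horizontal strip) that makes them nonzero, and $\psi'_{\lambda/\mu}$ under the analogous vertical strip hypothesis.

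The computation rests on two elementary facts about $f(u)=(tu;q)_\infty/(qu;q)_\infty$. Since $(tu;q)_\infty=\prod_{k\ge 0}(1-tuq^{k})$ converges uniformly on compact $u$-sets and tends to $1$ as $t\to 0$, we have $\lim_{t\to 0}f(q^{a}t^{b})=f(0)=1$ whenever $b\ge 1$, and $\lim_{t\to 0}f(q^{a})=(q^{a+1};q)_\infty^{-1}$ for every $a\ge 0$. Moreover no pole of $f$ -- these sit at $u=q^{-k-1}\ge q^{-1}>1$, $k\ge 0$ -- is ever met: when $\mu\prec\lambda$ the interlacing $\lambda_i\ge\mu_i\ge\lambda_{i+1}$ together with weak decrease of partitions forces every exponent occurring (for $i\le j$) in (\ref{piereFormPhi})--(\ref{piereFormPrime}), namely $\lambda_i-\lambda_j$, $\mu_i-\mu_{j+1}$, $\lambda_i-\mu_j$, $\mu_i-\lambda_{j+1}$, $\lambda_i-\lambda_{j+1}$, $\mu_i-\mu_j$, to be nonnegative, so every argument $q^{(\cdot)}t^{(\cdot)}$ stays in $(0,1]$ and the two limits above apply termwise in the finite products.

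For $\varphi_{\lambda/\mu}$, every off-diagonal factor in the product over $1\le i\le j\le\ell(\lambda)$ carries $t^{j-i}$ with $j-i\ge 1$ in all four of its $f$-arguments, hence tends to $1$; only the diagonal terms $i=j$ remain, the $i$-th one being the ratio of $f(1)f(q^{\mu_i-\mu_{i+1}})$ to $f(q^{\lambda_i-\mu_i})f(q^{\mu_i-\lambda_{i+1}})$, which by the two facts tends to
\begin{equation*}
\frac{(q^{\lambda_i-\mu_i+1};q)_\infty\,(q^{\mu_i-\lambda_{i+1}+1};q)_\infty}{(q;q)_\infty\,(q^{\mu_i-\mu_{i+1}+1};q)_\infty}.
\end{equation*}
Multiplying over $i=1,\dots,\ell(\lambda)$ gives the claimed expression. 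The case of $\psi_{\lambda/\mu}$ is verbatim the same with $\ell(\lambda)$ replaced by $\ell(\mu)$ and the surviving diagonal numerator $f(1)f(q^{\lambda_i-\lambda_{i+1}})$, so that the limit has $(q^{\lambda_i-\lambda_{i+1}+1};q)_\infty$ in the denominator in place of $(q^{\mu_i-\mu_{i+1}+1};q)_\infty$. For $\psi'_{\lambda/\mu}$ in (\ref{piereFormPrime}), both denominator factors and the second numerator factor carry $t^{j-i}$ or $t^{j-i+1}$ with exponent $\ge 1$ and tend to $1$, while the first numerator factor $1-q^{\mu_i-\mu_j}t^{j-i-1}$ tends to $1$ unless $j=i+1$, in which case it is exactly $1-q^{\mu_i-\mu_{i+1}}$; so only the pairs with $j=i+1$ (still subject to $\lambda_i=\mu_i$ and $\lambda_{i+1}=\mu_{i+1}+1$) survive, producing $\prod_{i:\ \lambda_i=\mu_i,\ \lambda_{i+1}=\mu_{i+1}+1}(1-q^{\mu_i-\mu_{i+1}})$.

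There is no genuine obstacle here: the only point needing a word is the termwise passage to the limit in the finite products, which is immediate once the nonnegativity of all exponents -- hence the separation of every argument from the poles of $f$ -- is recorded, the $q$-Pochhammer symbols being continuous functions of $t$. In the write-up I would treat the $\varphi$ case in full detail and simply indicate the identical modifications needed for $\psi$ and $\psi'$.
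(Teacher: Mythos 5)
Your proof is correct and takes exactly the same route as the paper, whose entire argument is the one-line statement ``Substitution $t=0$ into the formulas of Section~\ref{pieresec}.'' You have simply carried out that substitution in detail, correctly identifying that only the diagonal $i=j$ factors (and in the $\psi'$ case only the $j=i+1$ factors) survive the limit and verifying the nonnegativity of the exponents needed to justify the termwise limit.
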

\begin{proof}
Substitution $t=0$ into the formulas of Section~\ref{pieresec}.
\end{proof}

Lemma~\ref{lemma22} enables us to compute the limits of the matrix elements of the generator $\frak q$. The off-diagonal entries are either 0 or
\begin{equation*}
a_m\frac{\psi_{(\la^{(m)}\cup\square_k)/\lambda^{(m-1)}}}{\psi_{\lambda^{(m)}/\lambda^{(m-1)}}}
\,\psi'_{(\la^{(m)}\cup\square_k)/\lambda^{(m)}}\,,
\end{equation*}
for suitable $1\le k\le m$.
\begin{lemma}\label{lemma23}
For any $1\le k\le m$, $\lambda^{(m-1)}\in\Y(m-1)$, $\lambda^{(m)}\in\Y(m)$, $\lambda^{(m-1)}\prec \lambda^{(m)}$, we have
\begin{equation*}
\lim_{t\to 0} \frac{\psi_{(\la^{(m)}\cup\square_k)/\lambda^{(m-1)}}}{\psi_{\lambda^{(m)}/\lambda^{(m-1)}}}
\,\psi'_{(\la^{(m)}\cup\square_k)/\lambda^{(m)}}\,=
\frac{(1-q^{\lambda_{k-1}^{(m-1)}-\lambda_k^{(m)}})(1-q^{\lambda_{k}^{(m)}-\lambda_{k+1}^{(m)}+1})}
{(1-q^{\lambda_{k}^{(m)}-\lambda_k^{(m-1)}+1})}
\end{equation*}
If $k=m=1$, the right-hand side is 1. If $k=1$ and $m>1$ the right-hand side should be read as ${(1-q^{\lambda_{1}^{(m)}-\lambda_{2}^{(m)}+1})}/{(1-q^{\lambda_{1}^{(m)}-\lambda_1^{(m-1)}+1})}$. Finally, if $k=m$ the right-hand side is equal to $(1-q^{\lambda_{m-1}^{(m-1)}-\lambda_m^{(m)}})$.
\end{lemma}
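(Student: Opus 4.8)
The plan is to prove this by direct substitution of the explicit $t=0$ formulas for $\psi$ and $\psi'$ from Lemma~\ref{lemma22} into the ratio on the left-hand side, and then simplify. Write $\mu := \lambda^{(m-1)}$, $\lambda := \lambda^{(m)}$, and $\nu := \lambda^{(m)}\cup\square_k$, so that $\nu_k = \lambda_k+1$ and $\nu_i = \lambda_i$ for $i\ne k$. Since $\mu\prec\lambda$ and (assuming the jump is legal) $\mu\prec\nu$, both $\lambda/\mu$ and $\nu/\mu$ are horizontal strips with $\ell(\mu)=m-1$.

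First I would compute $\psi_{\nu/\mu}/\psi_{\lambda/\mu}$. Using the product formula
\begin{equation*}
\psi_{\la/\mu}=(q;q)_\infty^{-\ell(\mu)}\prod_{i=1}^{\ell(\mu)}\frac{(q^{\la_i-\mu_i+1};q)_\infty(q^{\mu_i-\lambda_{i+1}+1};q)_\infty}{(q^{\la_i-\la_{i+1}+1};q)_\infty}\,,
\end{equation*}
the prefactors $(q;q)_\infty^{-\ell(\mu)}$ cancel in the ratio, as do all factors with index $i\notin\{k-1,k\}$ except for the $(q^{\mu_i-\lambda_{i+1}+1};q)_\infty$ term at $i=k-1$ (which involves $\lambda_k$ and hence changes) and the $(q^{\lambda_i-\lambda_{i+1}+1};q)_\infty$ denominator terms at $i=k-1$ and $i=k$ (which involve $\lambda_k$ and change) and the $(q^{\lambda_i-\mu_i+1};q)_\infty$ numerator term at $i=k$. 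Carefully tracking these changes, each affected $(a;q)_\infty$ either gains or loses a single factor $(1-aq^{\text{something}})$ since replacing $\lambda_k$ by $\lambda_k+1$ shifts the argument by one power of $q$; the ratios $(aq;q)_\infty/(a;q)_\infty = 1/(1-a)$ and $(a;q)_\infty/(aq;q)_\infty = (1-a)$ then produce exactly the three simple factors
\begin{equation*}
\frac{(1-q^{\mu_{k-1}-\lambda_k})\,(1-q^{\lambda_k-\lambda_{k+1}+1})}{(1-q^{\lambda_k-\mu_k+1})},
\end{equation*}
after noting that $\psi'_{\nu/\lambda}$ contributes the factor $(1-q^{\lambda_{k-1}-\lambda_k})$ — wait, rather, $\psi'_{\nu/\lambda}$ with $\nu=\lambda\cup\square_k$ has, by Lemma~\ref{lemma22}, the single box in row $k$, so the condition $\nu_i=\lambda_i,\ \nu_{i+1}=\lambda_{i+1}+1$ is met only at $i=k-1$, giving $\psi'_{\nu/\lambda}=(1-q^{\lambda_{k-1}-\lambda_k})$ — and combining with the $\psi$-ratio above (where one should identify $\mu_{k-1}=\lambda^{(m-1)}_{k-1}$, etc.) yields the claimed product with $\lambda_{k-1}^{(m-1)}$ in the numerator. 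I would reconcile the bookkeeping so that the surviving numerator factor is $(1-q^{\lambda^{(m-1)}_{k-1}-\lambda^{(m)}_k})$ coming from $\psi'$ rather than $\psi$, and the remaining two factors come from the $\psi$-ratio.

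The main obstacle, and the only real work, is the careful index bookkeeping: determining precisely which of the finitely many $(q^{\bullet};q)_\infty$ factors in $\psi_{\nu/\mu}$ differ from those in $\psi_{\lambda/\mu}$, and in which direction (numerator vs.\ denominator). This is routine but error-prone, and the boundary cases $k=1$ (no $\mu_{k-1}$ term, and if also $m=1$ there is no $\mu$ at all so $\ell(\mu)=0$ and the whole ratio collapses) and $k=m$ (no $\lambda_{k+1}$ constraint from interlacing, and $\mu_m$ does not exist so the denominator factor $(1-q^{\lambda_m-\mu_m+1})$ is absent, i.e.\ read as $1$) must be extracted from the general formula by the convention that $(q^{\bullet};q)_\infty$ factors with a nonexistent index are omitted (equivalently set to $1$). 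I would verify each of the three stated special-case reductions directly from the general answer: $k=m=1$ gives $1$; $k=1<m$ drops the $(1-q^{\mu_0-\lambda_1})$ numerator factor leaving $(1-q^{\lambda_1^{(m)}-\lambda_2^{(m)}+1})/(1-q^{\lambda_1^{(m)}-\lambda_1^{(m-1)}+1})$; and $k=m$ drops the denominator and the $(1-q^{\lambda_m-\lambda_{m+1}+1})$ factor, leaving $(1-q^{\lambda_{m-1}^{(m-1)}-\lambda_m^{(m)}})$. Since the left-hand side is manifestly continuous in $t$ near $t=0$, taking $\lim_{t\to 0}$ commutes with the finite product, so it suffices to substitute the $t=0$ formulas, which completes the argument.
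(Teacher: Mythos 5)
Your overall strategy matches the paper's exactly: substitute the $t=0$ formulas from Lemma~\ref{lemma22}, simplify the Pochhammer ratios, and then handle the boundary cases by dropping ill-defined factors. However, you make a concrete bookkeeping error in precisely the step you flag as "error-prone." The ratio $\psi_{\nu/\mu}/\psi_{\lambda/\mu}$ does not produce "exactly the three simple factors" of the claimed answer; it produces \emph{four}:
\begin{equation*}
\frac{\psi_{(\la\cup\square_k)/\mu}}{\psi_{\lambda/\mu}}
= \frac{(1-q^{\mu_{k-1}-\la_k})(1-q^{\la_k-\la_{k+1}+1})}{(1-q^{\la_k-\mu_k+1})(1-q^{\la_{k-1}-\la_k})},
\end{equation*}
where all $\la$'s denote $\lambda^{(m)}$ and $\mu=\lambda^{(m-1)}$. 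Replacing $\la_k$ by $\la_k+1$ perturbs the Pochhammer factors at \emph{both} indices $i=k-1$ and $i=k$ in the product, and at $i=k-1$ it perturbs two of the three factors (the numerator $(q^{\mu_{k-1}-\la_k+1};q)_\infty$ and the denominator $(q^{\la_{k-1}-\la_k+1};q)_\infty$), giving a numerator $(1-q^{\mu_{k-1}-\la_k})$ and a denominator $(1-q^{\la_{k-1}-\la_k})$; the perturbations at $i=k$ give the remaining $(1-q^{\la_k-\la_{k+1}+1})/(1-q^{\la_k-\mu_k+1})$. Then $\psi'_{(\la\cup\square_k)/\lambda}=(1-q^{\la_{k-1}-\la_k})$ (both subscripts from $\lambda^{(m)}$, as you correctly note just before the "reconciliation"), and it is this factor that \emph{cancels} the extra denominator $(1-q^{\la_{k-1}-\la_k})$, leaving the stated three-factor answer.

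Your final "reconciliation" sentence has this exactly backwards: the surviving numerator factor $(1-q^{\lambda^{(m-1)}_{k-1}-\lambda^{(m)}_k})$ comes from the $\psi$-ratio (from the $(q^{\mu_{k-1}-\la_k+1};q)_\infty$ factor at $i=k-1$), not from $\psi'$, and $\psi'$ contributes the all-superscript-$(m)$ factor $(1-q^{\lambda^{(m)}_{k-1}-\lambda^{(m)}_k})$, which disappears by cancellation rather than surviving. The rest of your proposal (the treatment of boundary cases $k=1$, $k=m$, and $k=m=1$ by omitting absent factors) is correct and matches the paper's approach.
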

\begin{proof}
We will do the computation for $m>1$ and $1<k<m$. The three cases listed separately are obtained by omitting suitable factors in the expressions below. From the formula of Lemma~\ref{lemma22} we have
\begin{eqnarray*}
\frac{\psi_{(\la\cup\square_k)/\mu}}{\psi_{\lambda/\mu}} &=& \frac{(q^{\la_k-\mu_k+2};q)_\infty}
{(q^{\la_k-\mu_k+1};q)_\infty}\,\frac{(q^{\mu_{k-1}-\la_k};q)_\infty}{(q^{\mu_{k-1}-\la_k+1};q)_\infty}
\frac{(q^{\la_{k-1}-\la_k+1};q)_{\infty} (q^{\la_k-\la_{k+1}+1};q)_\infty}
{(q^{\la_{k-1}-\la_k};q)_{\infty}(q^{\la_k-\la_{k+1}+2};q)_\infty}\\
&=& \frac{(1-q^{\mu_{k-1}-\la_k})(1-q^{\la_k-\la_{k+1}+1})}{(1-q^{\la_k-\mu_k+1})(1-q^{\la_{k-1}-\la_k})}\,.
\end{eqnarray*}
Since $\psi'_{(\la\cup\square_k)/\lambda}=(1-q^{\la_{k-1}-\la_k})$, we arrive at the result.
\end{proof}

We may now make the following definition.

\begin{definition}\label{2dgrowth}
The {\it $q$-Whittaker 2d-growth model}\index{q-Whittaker 2d-growth model} with drift $a=(a_1,\ldots, a_N)\in \R^N$ is a continuous time Markov process on the space $\ss^{(N)}$ of Section~\ref{MarkovChainAscSEC}, whose transition rates depend on $q$ and the set of parameters $a_1\ldots, a_N$ and are as follows: Each of the coordinates $\la_k^{(m)}$ has its own independent exponential clock with rate
\begin{equation*}
a_m\frac{(1-q^{\lambda_{k-1}^{(m-1)}-\lambda_k^{(m)}})(1-q^{\lambda_{k}^{(m)}-\lambda_{k+1}^{(m)}+1})}{(1-q^{\lambda_{k}^{(m)}-\lambda_k^{(m-1)}+1})}
\end{equation*}
(as in Lemma~\ref{lemma23} the factors that do not make sense must be omitted). When the $\la_A^{(B)}$-clock rings we find the longest string $\la_A^{(B)}=\la_A^{(B+1)}=\dots=\la_A^{(B+C)}$ and move all the coordinates in this string to the right by one. Observe that if $\la_A^{(B)}=\la_{A-1}^{(B-1)}$ then the jump rate automatically vanishes.

We will sometimes call the above dynamics the q-Whittaker 2d-dynamics. To visualize this process it is convenient to set $x_{k}^{(m)} = \lambda_k^{(m)}-k$. These dynamics are illustrated in Figure \ref{qWhitfig} (a) and (b).
\end{definition}

\begin{remark}
It follows immediately from Proposition~\ref{prop16} that if the $q$-Whittaker 2d-growth model is initialized according to $\MM_{t=0}(a_1,\ldots,a_N;\rho)$ with $\rho$ a Plancherel specialization with $\gamma=0$ (i.e., initially $\lambda_k^{(m)}\equiv 0$), then after time $\tau$, the entire GT pattern is distributed according to $\MM_{t=0}(a_1,\ldots,a_N;\rho)$ with $\rho$ a Plancherel specialization with $\gamma=\tau$.
\end{remark}

\begin{remark}
Even though the above 2d-growth model is initially defined on the state space $\ss^{(N)}$ for a fixed number of layers $N$, it is possible to extend these dynamics to the space of infinite Gelfand-Tsetlin patterns $\ss^{(\infty)}$ -- one can find constructions of such Markov dynamics on infinite schemes in the Schur case in \cite{BorodinOlshanski,BorGor}.
\end{remark}

\begin{remark}
Proposition~\ref{sumsprop} implies that for $k$ fixed, the sum $\sum_{i=1}^{k} \lambda^{(k)}_{i}$ evolve under the $q$-Whittaker 2d-dynamics as a Poisson processes, and at a fixed time moment, these quantities for different $k$ are distributed jointly as independent Poisson random variables.
\end{remark}

\subsection{q-TASEP formulas}

The projection of the $q$-Whittaker 2d-growth model dynamics to the set of smallest coordinates $\{\la_k^{(k)}\}_{k\ge1}$ is also Markovian. If we set $x_k=\la_k^{(k)}-k$ for $k=1,2,\ldots$, then the state space consists of ordered sequences of integers $x_1>x_2>x_3>\cdots$. The time evolution is given by a variant on the classical totally asymmetric simple exclusion process (TASEP) which we call $q$-TASEP and define as follows (see Figure \ref{qWhitfig} (c)):

\begin{definition}
The {\it $q$-TASEP} \index{q-TASEP} is a continuous time interacting particle system on $\Z$ where each particle $x_i$ jumps to the right by 1 independently of the others according to an exponential clock with rate $a_i(1-q^{x_{i-1}-x_i-1})$, where $a_i$'s are positive and $0<q<1$. A particular initial condition of interest is that of the {\it step} \index{q-TASEP!step initial condition} initial condition where $x_n(0)=-n$, $n=1,2,\dots$.
\end{definition}
As long as there is a finite number of particles to the right of the origin initially, this process is well-defined since only particles to the right affect a particle's evolution.  In Section~\ref{qtaseprop} below we briefly discuss two approaches which may be used to construct $q$-TASEP for any initial condition.

The exponential moment formulas of Section~\ref{expmomform26} and the Fredholm determinant formula of Section~\ref{Macdetformsec} translate immediately into formulas for $q$-TASEP since $x_n+n = \lambda^{(n)}_{n}$. In particular, Theorem~\ref{PlancherelfredThm} gives the distribution of $x_n$ in terms of a Fredholm determinant, from which one could extract asymptotics.

\subsection{Properties of the q-TASEP}\label{qtaseprop}

We now briefly study the properties of the $q$-TASEP from the perspective of interacting particle systems. We assume here that the jump rate parameters $a_m\equiv 1$. We also indicate a few results one might hope to extend from the usual TASEP setting. The totally asymmetric simple exclusion process (TASEP) corresponds to the $q=0$ limit of $q$-TASEP in which particles jump right at rate 1 if the destination is unoccupied (this is the exclusion rule).

When initialized with a finite number of particles to the right of the origin it is easy to construct the $q$-TASEP, as we did in the previous section. This approach fails when there are an infinite number of particles because we can not simply construct the process from the right-most particle on. There are two approaches one employs to prove the existence of such infinite volume dynamics: Markov generators, and graphical constructions. Both of these approaches work for the $q$-TASEP though we will mainly present the graphical construction.

\begin{proposition}
For any (possibly random) initial condition $x^0=\{x_{i}(0)\}$ of particles, there exists a $q$-TASEP process $x(t)$ for $t\geq 0$ with $x(0)=x^0$ almost surely.
\end{proposition}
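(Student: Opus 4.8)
The plan is to give the graphical (Harris-type) construction of the infinite-volume $q$-TASEP and to show that it is almost surely well-defined for an arbitrary (possibly random) initial configuration $x^0=\{x_i(0)\}_{i\ge 1}$ with $x_1(0)>x_2(0)>\cdots$. First I would attach to each ordered pair $(i,i+1)$ of consecutive labels -- equivalently to each particle label $i\ge 1$ -- an independent rate-$1$ Poisson clock on $[0,\infty)$, realized on a single probability space; denote by $\{T^{(i)}_k\}_{k\ge 1}$ the ordered jump times of the $i$-th clock. The dynamics are then defined by the local rule: at a time $T^{(i)}_k$, particle $x_i$ attempts to jump one step to the right, and the attempt succeeds with ``probability'' governed by the factor $1-q^{x_{i-1}(t^-)-x_i(t^-)-1}$. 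Because $q\in(0,1)$, this success factor lies in $[0,1]$ and vanishes exactly when $x_{i-1}-x_i=1$, i.e. when the exclusion-type constraint would be violated; to implement the random success I would additionally draw, at each clock ring, an independent uniform $[0,1]$ variable $U^{(i)}_k$ and perform the jump iff $U^{(i)}_k<1-q^{\mathrm{gap}}$ with $\mathrm{gap}=x_{i-1}(t^-)-x_i(t^-)-1$ (interpreting the factor as $1$ when $i=1$). The key structural observation, exactly as for ordinary TASEP, is that the evolution of $x_i$ up to time $t$ depends only on the clocks, uniforms, and initial positions of particles with label $\le i$ that lie to its right; more precisely the rate for $x_i$ depends only on $x_{i-1}$.

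The main work is the standard percolation-type argument showing that, almost surely, for every $t>0$ the configuration can be constructed on $[0,t]$ by a finite local procedure. Fix $t>0$. For each $i$, let $N_i$ be the number of rings of the $i$-th clock in $[0,t]$; the $N_i$ are i.i.d.\ Poisson$(t)$. I would say that there is a \emph{break} at label $i$ on $[0,t]$ if $N_{i+1}=0$, i.e.\ particle $x_{i+1}$ has no clock ring in $[0,t]$ (so $x_{i+1}$ cannot move, hence the gap $x_i-x_{i+1}$ cannot decrease, hence $x_i$'s rate over $[0,t]$ is unaffected by anything with label $>i$). By Borel--Cantelli (since $\PP(N_{i+1}=0)=e^{-t}>0$ and the $N_i$ are independent) there are almost surely infinitely many breaks; let $i_1<i_2<\cdots$ enumerate them. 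Between consecutive breaks the labels $\{i_m+1,\dots,i_{m+1}\}$ form a finite block whose dynamics on $[0,t]$ depend only on finitely many clocks, uniforms, and initial positions and on the position of the single particle $x_{i_m}$ immediately to the right of the block (itself determined by the previous block), so each block can be constructed by a finite sequence of jump events, and the blocks can be resolved one at a time from right to left. This produces a càdlàg trajectory $x(\cdot)$ on $[0,t]$ with $x(0)=x^0$; letting $t\to\infty$ along integers and using consistency of the construction gives a process on $[0,\infty)$. One then checks it is a Markov jump process whose generator acts on cylinder functions by the stated rates, i.e.\ it is a $q$-TASEP; monotonicity of the jump rule in the gap variable (the rate is nondecreasing in $x_{i-1}$ and the update preserves the strict ordering $x_i>x_{i+1}$) makes the block-wise resolution consistent and shows the ordering is preserved for all time.

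I expect the genuinely delicate point to be the almost-sure finite-range-of-dependence claim and its uniformity: one must be careful that a single block, though involving only finitely many clocks, could in principle contain arbitrarily many jump events if gaps are unboundedly large, so one should note that within $[0,t]$ a block of $k$ labels experiences at most $N_{i_m+1}+\cdots+N_{i_{m+1}}<\infty$ attempted jumps, hence finitely many actual jumps, so the construction terminates. A cleaner packaging of the whole argument is to invoke the standard existence theorem for attractive (or merely ``finite-range, bounded-rate'') interacting particle systems via Markov generators: the rates $a_i(1-q^{x_{i-1}-x_i-1})$ are uniformly bounded (by $a_i\le \sup_m a_m$, here $=1$) and each depends on a single neighboring coordinate, so Liggett's construction (cf.\ \cite{LiggettMarkov}, as already used for $\frakq$ in Section~\ref{ctnsMarkovChainsec}) applies directly and yields a Feller process on the (locally compact, after the change of variables) state space of ordered integer sequences. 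I would present the graphical construction as the main line of proof since it is the most transparent and also gives the couplings used later, and remark that the generator approach gives an alternative, citing \cite{LiggettMarkov}.
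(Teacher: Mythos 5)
Your overall strategy --- a Harris/graphical construction together with a Borel--Cantelli argument producing almost-surely infinitely many frozen particles that decouple the process into finite pieces --- matches the paper's proof (which uses site-indexed rather than label-indexed Poisson clocks, but the mechanism is the same). However, your justification and deployment of the break points does not actually establish the decoupling in the one case where it is needed. The rate of $x_i$ is $a_i(1-q^{x_{i-1}-x_i-1})$, a function of the distance to the particle $x_{i-1}$ to its \emph{right} (smaller label); the gap $x_i-x_{i+1}$ to the left never enters. So the chain ``$x_{i+1}$ cannot move $\Rightarrow$ $x_i-x_{i+1}$ cannot decrease $\Rightarrow$ $x_i$'s rate is unaffected by anything with label $>i$'' is a non sequitur: the conclusion is trivially always true, and the gap you invoke is irrelevant to it. The genuine force of $N_{i_m+1}=0$ is that it freezes $x_{i_m+1}$, which is the \emph{rightmost} member of your block $\{i_m+1,\dots,i_{m+1}\}$, so that the entire block is determined by its own finitely many clocks, uniforms, and initial positions, with no dependence on any other block --- that anchoring is exactly what the paper exploits. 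Instead you describe each block as depending on $x_{i_m}$ from the block to its right and declare the blocks ``resolved one at a time from right to left.'' Right-to-left resolution requires a rightmost seed, which exists precisely when there is a rightmost particle; in that case no breaks are needed and one simply runs $x_1\to x_2\to\cdots$ directly, as the paper notes just before the proposition. The nontrivial case the paper singles out (infinitely many particles to the right of the origin, hence no rightmost particle) is the one your right-to-left scheme cannot start. The fix is small: replace ``depends on $x_{i_m}$, resolved right to left'' by the observation that the break freezes the rightmost particle of each block, so each block is self-contained.

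The secondary remark about the generator route is also not quite right. In the occupation-variable picture $\eta\in\{0,1\}^{\Z}$ in which Liggett's interacting-particle-system existence theorems are formulated, the jump rate at an occupied site $x$ is $1-q^{g_x(\eta)+1}$ with $g_x(\eta)$ the number of empty sites to the right of $x$ before the next particle --- an unbounded-range, non-local function of $\eta$, so a finite-range hypothesis fails and the theorem does not apply ``directly.'' The paper's own remark is explicit on this: it flags that the speed function is not a local function of the occupation variables, and it verifies Liggett's summability condition (Theorem 1.3.9 of \cite{LiggettIPSBOOK}, not \cite{LiggettMarkov}) using the geometric decay in $u$ of the influence coefficients $\gamma(x,u)$ supplied by the factor $q^{g_x}$. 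If you retain the generator remark, it should go through that summability check rather than a finite-range assertion.
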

\begin{proof}
Assume there is an infinite number of particles to the right of the origin (otherwise the dynamics have already been constructed). The {\it graphical construction} (and idea going back to Harris in 1978 \cite{Harris}) \index{q-TASEP!graphical construction} enables us to construct the dynamics simultaneous for all initial conditions (and hence provides the {\it basic coupling}\index{q-TASEP!basic coupling} of $q$-TASEP). We only consider a finite time interval $[0,T]$ but because of the Markov property, we can extend our construction to all $t\geq 0$. The key idea is to prove that the dynamics split into finite (random) pieces on which the construction is trivially that of a continuous Markov chain on a finite state space.

Above every site of $\Z$ draw half-infinite time lines and throw independent Poisson point processes of intensity 1 along these lines. To every point associate a random variable uniformly distributed on $[0,1]$. Particles move up along these time lines until they encounter a Poisson point at which time they compare the point's random variable $r$ with the quantity $r'=1-q^{d-1}$ where $d$ is the distance to the next particle to the right. If $r<r'$ then the particle jumps one space to the right, otherwise it makes no action.

This process exists for the following reason: For any fixed time $T>0$ there almost surely exists a site $n$ (which depends on the initial data and the Poissonian environment) such that there is initially a particle at site $n$ and such that in the time interval $[0,T]$ there are no Poisson points above $n$ (i.e., the particle at $n$ never moves). This ensures that the process to the left of $n$ evolves independently of the process to the right and hence ensures the existence of the process to the left. There will be infinitely many such special separation points to the right of the origin and hence the process can be constructed in a consistent way in every compact set and hence the infinite volume limit dynamics exists.
\end{proof}

In particular it is worth noting that $q$-TASEP falls into a category of exclusion processes which are known as speed-changed\index{q-TASEP!speed-changed process} simple exclusion processes ($q$-TASEP, however, does not satisfy the gradient condition and has a speed function which is not a local function of the occupation variables -- see for instance \cite{DPSW} for more on speed-changed exclusion).

\begin{remark}
Without getting into the technicalities, the generator construction deals not with particles but rather with occupation variables $\eta\in \{0,1\}^{\Z}$ where $\eta(z)=1$ corresponds to having a particle at $z$ and $\eta(z)=0$ corresponds to having a hole there. The generator for $q$-TASEP is denoted by $L^{q-TASEP}$ and is defined by its action on local (i.e., depending only on a finite number of occupation variables) functions $f:\{0,1\}^{\Z}\to \R$ by
\begin{equation*}
L^{q-TASEP}f(\eta) = \sum_{x\in \Z} (1-q^{g_x(\eta)+1}) \eta(x) (f(\eta^{x,x+1})-f(\eta))
\end{equation*}
where $g_x(\eta)$ is the number of 0's in $\eta$ to the right of $x$ until the first 1, and where $\eta^{x,x+1}$ is the configuration in which the occupation variables at $x$ and $x+1$ have been switched. One readily checks the conditions for Theorem 1.3.9 of \cite{LiggettIPSBOOK} are satisfied, hence the above generator gives rise to a Feller Markov process as desired (condition 1.3.3 is clearly satisfied, whereas 1.3.8 is satisfied since $\sum_{u} \gamma(x,u)$ is bounded by a geometric series with parameter $q$ for $u>x$).
\end{remark}

Instead of keeping track of particle locations or occupation variables, one could alternatively consider the gaps between consecutive particles. Let $y_i=x_i-x_{i+1} -1$ represent the number of holes between particle $i$ and particle $i+1$ (which recall is to its left). The evolution of the collection of $\{y_i\}$ also has a nice description in terms of a zero range process.

\begin{definition}
The totally asymmetric (nearest neighbor) zero range process (TAZRP) on $\Z$ with rate function $g:\Zgzero\to [0,\infty)$ is a Markov process with state space $\{0,1,\ldots\}^{\Z}$. For a state $y\in \{0,1,\ldots\}^{\Z}$ one says there are $y_i$ particles at site $i$. The dynamics of TAZRP are given as follows: in each site $i$, $y_i$ decreases by 1 and $y_{i+1}$ increase by 1 (simultaneously) in continuous time at rate given by $g(y_i)$. We fix throughout that $g(0)=0$.
\end{definition}

We record a few pertinent facts about TAZRP proved in the literature on the subject (see the book of Kipnis and Landim \cite{KipLan} for additional discussion).

\begin{proposition}
\mbox{}
\begin{enumerate}
\item Assume $g(k)>0$ for all $k\geq 1$ and $\sup_{k\geq 0} | g(k+1)-g(k)|<\infty$ then TAZRP is a well-defined Markov process.
\item The set of invariant distributions for TAZRP which are also translation-invariant (with respect to translations of $\Z$) are generated by the following extremal invariant distributions $\mu_\alpha$ on $y(0)$, for $\alpha\in [0,\sup g(k))$: $\mu_\alpha$ is the product measure with each $y_i(0)$ distributed according to
    \begin{equation*}
    \mu_{\alpha}(y(0): y_i(0)=k) = \begin{cases} Z_{\alpha} \frac{ \alpha^k}{g(1)\cdots g(k)}, & \textrm{if } k>0,\\ Z_\alpha & \textrm{if } k=0.\end{cases}
    \end{equation*}
\item If $g(k+1)>g(k)$ for $k\geq 1$ (the {\it attractive} case) then for two initial conditions $y(0)\leq y'(0)$ (i.e. $y_i(0)\leq y'(0)$ for all $i\in \Z$) there exists a monotone coupling of the dynamics such that almost surely under the coupled measure, $y(t)\leq y'(t)$ for all $t$.
\end{enumerate}
\end{proposition}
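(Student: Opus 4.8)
The three assertions are classical structural facts about the totally asymmetric zero range process, and the plan is to assemble them from the general theory of Liggett, Andjel, and Kipnis--Landim \cite{KipLan}, indicating in each case the mechanism rather than reproducing the proofs.

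For (1), the plan is to reduce to the standard existence theory for interacting particle systems with controlled rates. The hypothesis $\sup_{k\ge 0}|g(k+1)-g(k)|<\infty$ together with $g(0)=0$ gives $g(k)\le Ck$, so the total jump rate out of a configuration grows at most linearly in the occupation numbers. This is exactly the bound needed to run the graphical (Harris-type) construction already used for $q$-TASEP above: one builds the process from configurations with finitely many particles, then passes to an increasing limit, using the linear bound on the rates to rule out mass escaping to infinity in finite time. Equivalently one verifies the hypotheses of the Hille--Yosida criterion for Feller processes as in \cite[Chapter I]{LiggettIPSBOOK}. Attractiveness of $g$ is not used here. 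I would simply cite this.

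For (2), the forward direction is a short computation. By definition the single-site marginal satisfies $g(k)\,\mu_\alpha(k)=\alpha\,\mu_\alpha(k-1)$ for all $k\ge 1$. Applying the generator
\[
Lf(y)=\sum_i g(y_i)\bigl(f(y^{i,i+1})-f(y)\bigr),
\]
where $y^{i,i+1}$ is $y$ with one particle moved from $i$ to $i+1$, to a local function $f$ and integrating against the product measure $\bigotimes_i\mu_\alpha$, the contribution of each bond $(i,i+1)$ telescopes: the equilibrium rate of jumps across that bond equals the constant $\alpha$ independent of $i$, so $\int Lf\,d(\bigotimes\mu_\alpha)=0$. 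Hence each $\mu_\alpha$, and any mixture of them, is invariant and translation invariant. The converse — that these exhaust the translation-invariant invariant measures, i.e. that the $\mu_\alpha$ are precisely the extreme points — is the substantive half, and here I would quote Andjel's classification theorem (see also \cite[Chapter 2]{KipLan}); its proof uses the basic coupling of part (3) together with a relative-entropy / coupling argument, and I would not reproduce it.

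For (3), the plan is the basic coupling via a graphical representation adapted to attractiveness. Since $g$ is nondecreasing, the increments $\delta_n:=g(n)-g(n-1)\ge 0$ are nonnegative. Attach to each ordered bond $(i,i+1)$ and each level $n\ge 1$ an independent rate-$\delta_n$ Poisson clock; when the clock at $(i,n)$ rings, move one particle from $i$ to $i+1$ in every coupled configuration $y$ with $y_i\ge n$. The total rate of a departure from a site holding $y_i$ particles is $\sum_{n=1}^{y_i}\delta_n=g(y_i)$, so each marginal is a TAZRP. For monotonicity, suppose $y\le y'$ coordinatewise and the clock at $(i,n)$ rings: if $y_i\ge n$ the move occurs in both configurations and the order is clearly preserved; if $y_i<n\le y'_i$ the move occurs only in $y'$, but then $y'_i\ge y_i+1$, so $y'_i-1\ge y_i$, while $y'_{i+1}$ only increases, so again $y\le y'$ is maintained. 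Each elementary transition preserves the partial order and the construction is a.s. well defined on finite time intervals by part (1), giving $y(t)\le y'(t)$ for all $t$ almost surely. The main obstacle in the whole proposition is the converse half of (2): identifying the $\mu_\alpha$ as all translation-invariant stationary measures is not a computation but requires the coupling machinery of (3) plus an ergodic argument, so the honest route is to cite it; parts (1) and (3) are routine given the graphical construction already in hand.
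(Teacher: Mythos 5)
Your proposal is correct and takes essentially the same route as the paper, whose proof is purely a citation: existence of the process to \cite{LiggettZRP} and the invariant measures together with the monotone coupling to \cite{Andjel}. The additional detail you supply — the linear growth bound $g(k)\le Ck$, the telescoping computation showing each $\mu_\alpha$ is invariant, and the layered Poisson-clock basic coupling with rates $\delta_n=g(n)-g(n-1)$ — is accurate and matches the standard arguments in those references.
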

\begin{proof}
The existence of the dynamics is proved in \cite{LiggettZRP}. The invariant distributions and monotone coupling is shown in \cite{Andjel}. Recall that a measure $\mu$ is invariant for a Markov chain given by a semi-group $S(t)$ if the push-forward of $\mu$ under $S(t)$, written as $\mu S(t)$ is equal to $\mu$ for all $t$.
\end{proof}

From the above theorem we conclude the following immediate corollary.
\begin{corollary}
The TAZRP associated to $q$-TASEP started with an infinite number of particles to the left and right of the origin is given by a rate function $g(k)=1-q^k$ is a well-defined Markov process. Its translation invariant extremal invariant distributions are given by product measure $\mu_\alpha$ for $\alpha\in [0,1)$ with each $y_i(0)$ distributed according to the marginal distribution
\begin{equation*}
\mu_\alpha ( y_i(0)) =  (\alpha;q)_{\infty} \frac{\alpha^k}{(q;q)_{k}}.
\end{equation*}
We call this distribution the {\it $q$-Geometric distribution of parameter $\alpha$}\index{q-Geometric distribution}. In the $q=0$ (TASEP) limit, the $q$-Geometric distributions converge to standard geometric distributions with probability of $k$ given by $(1-\alpha)\alpha^k$.
\end{corollary}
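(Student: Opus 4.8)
The statement to prove is the corollary describing the TAZRP associated to $q$-TASEP: namely, that the rate function $g(k) = 1-q^k$ yields a well-defined Markov process, and that its translation-invariant extremal invariant distributions are the product measures $\mu_\alpha$ with $q$-Geometric marginals.

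\medskip

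\noindent\textbf{Plan of proof.} The strategy is simply to verify that the general TAZRP proposition stated just above applies, and then to compute the explicit form of the invariant measure for the specific rate function $g(k) = 1-q^k$. First I would check the hypotheses of part (1) of the TAZRP proposition: one needs $g(k) > 0$ for all $k \geq 1$, which is immediate since $0 < q < 1$ forces $1 - q^k > 0$; and one needs $\sup_{k\geq 0}|g(k+1)-g(k)| < \infty$, which holds because $g(k+1)-g(k) = q^k - q^{k+1} = q^k(1-q)$, a quantity bounded by $1-q$. Hence TAZRP with this rate function is a well-defined Markov process by \cite{LiggettZRP}. Next, to identify the translation-invariant extremal invariant distributions, I would invoke part (2) of the proposition: these are the product measures $\mu_\alpha$ for $\alpha \in [0, \sup_k g(k))$. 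Since $g(k) = 1-q^k \nearrow 1$ as $k \to \infty$, we have $\sup_k g(k) = 1$, so the range of the parameter is $\alpha \in [0,1)$, as claimed.

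\medskip

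\noindent It then remains to compute the marginal $\mu_\alpha(y_i(0) = k)$ explicitly. By the formula in part (2) of the proposition, for $k > 0$ this marginal is proportional to $\alpha^k / (g(1)\cdots g(k))$. Here $g(1)\cdots g(k) = \prod_{j=1}^k (1-q^j) = (q;q)_k$, using the $q$-Pochhammer notation from Section~\ref{qSec}. So the marginal is $Z_\alpha \alpha^k/(q;q)_k$ for $k \geq 1$ and $Z_\alpha$ for $k=0$, and it only remains to determine the normalization constant $Z_\alpha$. Summing over all $k \geq 0$ and using the first corollary of the $q$-binomial theorem, equation (\ref{qLaplace}) (or equivalently the $q$-binomial theorem with $a=0$: $\sum_{k\geq 0} x^k/(q;q)_k = 1/(x;q)_\infty$), we get $Z_\alpha^{-1} = \sum_{k\geq 0} \alpha^k/(q;q)_k = 1/(\alpha;q)_\infty$. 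Therefore $Z_\alpha = (\alpha;q)_\infty$, which is finite and nonzero precisely because $\alpha \in [0,1)$ and $|q| < 1$. This gives the stated formula $\mu_\alpha(y_i(0) = k) = (\alpha;q)_\infty \alpha^k/(q;q)_k$.

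\medskip

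\noindent Finally, I would verify the $q \to 0$ degeneration: as $q \to 0$, $(q;q)_k \to 1$ and $(\alpha;q)_\infty \to 1-\alpha$, so the $q$-Geometric distribution converges to $\mathbb{P}(k) = (1-\alpha)\alpha^k$, the ordinary geometric distribution — consistent with the fact that $q=0$ $q$-TASEP is ordinary TASEP, whose gap process is the classical TAZRP with geometric product invariant measures. There is no real obstacle here; the only mildly delicate point is making sure the normalization sum converges, which is exactly where the constraint $\alpha \in [0,1)$ (inherited from $\sup_k g(k) = 1$) is used, and this is handled cleanly by the $q$-binomial theorem. One small bookkeeping remark: one should note that the correspondence $y_i = x_i - x_{i+1} - 1$ between $q$-TASEP gaps and TAZRP occupation numbers intertwines the two dynamics — a particle of $q$-TASEP jumping right at rate $1 - q^{x_{i-1}-x_i-1} = 1 - q^{y_{i-1}}$ corresponds exactly to one TAZRP particle moving from site $i-1$ to site $i$ at rate $g(y_{i-1}) = 1 - q^{y_{i-1}}$ — so that the rate function is indeed $g(k) = 1-q^k$ as asserted.
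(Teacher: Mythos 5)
Your proof is correct and is exactly the straightforward verification the paper leaves implicit when it writes ``From the above theorem we conclude the following immediate corollary'' without further comment: you check the two hypotheses of part (1), identify $\sup_k g(k)=1$ for part (2), compute $g(1)\cdots g(k)=(q;q)_k$ and normalize via the $q$-binomial theorem, and confirm the $q\to 0$ degeneration. This matches the intended argument; nothing to add.
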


It is an easy application of the $q$-Binomial theorem (see Section~\ref{qSec}) that for a random variable $r$ given by a $q$-Geometric distribution of parameter $\alpha$
\begin{equation*}
\EE[q^{r}] = \sum_{k\geq 0} (\alpha;q)_{\infty} \frac{ (q\alpha)^k}{(q;q)_k} = (1-\alpha).
\end{equation*}

In the case of $q$-TASEP with only a finite number of particles to the left of or right of the origin, the TAZRP is no longer on $\Z$ but rather an interval (possibly half-infinite) of $\Z$. At the boundaries one must put a source and a sink, but let us not go into that in detail.

\begin{remark}
In the case $a_i\equiv 1$, the TAZRP associated with $q$-TASEP and its quantum version under the name ``$q$-bosons'' were introduced in \cite{SasWad} where their integrability was also noted. A stationary version of $q$-TAZRP was studied in \cite{BKS} and a cube root fluctuation result was shown as a consequence of a more general theory of cube root fluctuations for TAZRPs.
\end{remark}

Second class particles play an important role in the study of ASEP and can be studied here as well. For ASEP started with invariant distribution initial data, second class particles macroscopically follow characteristic lines for the conservative Hamilton-Jacobi PDE which governs the limiting shape of the height function associated to this particle system. Using couplings, this approach was employed to prove general hydrodynamic theories \cite{SeppHydro}. Finer scale properties of second class particles can be related to two-point functions for ASEP as well as to the current of particles crossing a characteristic. These relations have been useful in establishing tight bounds on scaling exponents for the height function evolution of the ASEP \cite{BalSepp}. It would be certainly very interesting to see if these approaches still apply for $q$-TASEP.

One of the key pieces in determining the associated hydrodynamic PDE for a particle system is to develop an expression for the {\it flux} in equilibrium. We now provide a heuristic computation for what the flux through the bond between 0 and 1 should be per unit time (i.e., $\EE[N_t]/t$ where $N_t$ is defined as the number of particles to cross from site 0 to site 1 in time $t$). In order for a particle to move from site 0 to 1, there must be a particle at site 0 at time $t$ and hole at 1 at time $t$. First consider the probability of having site 1 occupied. If we can determine the overall density of particles for a given parameter $\alpha$, then the probability of a particle at 0 should equal that density. This density is given by $(1+\EE[r])^{-1}$ where $r$ is given by a $q$-Geometric distribution of parameter $\alpha$. In order to calculate $\EE[r]$ observe that
\begin{equation*}
\EE[r] = (\alpha;q)_{\infty} \alpha \frac{\partial}{\partial \alpha} \left(\sum_{k=0}^{\infty} \frac{\alpha^k}{(q;q)_{k}} \right) = (\alpha;q)_{\infty} \alpha \frac{\partial}{\partial \alpha} \left(\prod_{m=0}^{\infty}\frac{1}{1-\alpha q^m}\right) = \alpha \sum_{m=0}^{\infty} \frac{q^m}{1-\alpha q^m},
\end{equation*}
where we used the $q$-Binomial theorem in the second equality.

Now given a particle at 0, the probability of a jump occurring in a unit time $dt$ is given by
\begin{equation*}
\sum_{k=0}^{\infty} \PP(r=k) (1-q^k) = \alpha \sum_{k=1}^{\infty} (\alpha;q)_{\infty} \frac{\alpha^{k-1}}{(q;q)_{k-1}} = \alpha
\end{equation*}
where the last equality is from shifting the indices and recognizing the sum as $\sum_k \PP(r=k)$. Putting these two pieces together we find that (at least in this heuristic way)
\begin{equation*}
\frac{\EE[N_t]}{t} = \frac{1}{\frac{1}{\alpha} + \sum_{m=0}^{\infty} \frac{ q^m}{1-\alpha q^m}},
\end{equation*}
where $N_t$ is defined as the number of particles to cross from 0 to 1 in time $t$. We do not have a nicer form than this for the flux, though when $q=0$ this reduces to $\alpha(1-\alpha)$ as expected.

\chapter{Whittaker processes} 

\section{The Whittaker process limit}\label{whittakerSec}

\subsection{Definition and properties of Whittaker functions}\label{Whittakerfunctiondefs}

\subsubsection{Givental integral and recursive formula}
The {\it class-one $\mathfrak{gl}_{\ell+1}$-Whittaker functions} \index{Whittaker functions} are basic objects of representation theory and integrable systems \cite{Kostant,Et}. One of their properties is that they are eigenfunctions for the quantum $\mathfrak{gl}_{\ell+1}$-Toda chain. As showed by Givental \cite{Giv}, they can also be defined via the following integral representation \index{Whittaker functions!Givental integral representation}
\begin{equation}\label{giventaldef}
\psi_{\ul{\lambda}{\ell+1}}(\ul{x}{\ell+1})=\int_{\R^{\ell(\ell+1)/2}} e^{\mathcal{F}_\lambda(x)} \prod_{k=1}^{\ell}\prod_{i=1}^k dx_{k,i},
\end{equation}
\glossary{$\psi_{\ul{\lambda}{\ell+1}}(\ul{x}{\ell+1})$}
$\ul{x}{\ell+1} = \{x_{\ell+1,1},\ldots, x_{\ell+1,\ell+1}\}\in \R^{\ell+1}$, $\ul{\lambda}{\ell+1}=\{\lambda_{1},\ldots,\lambda_{\ell+1}\}\in \C^{\ell+1}$ and
\begin{equation*}
\mathcal{F}_{\lambda}(x)=\iota\sum_{k=1}^{\ell+1} \lambda_k\left(\sum_{i=1}^k x_{k,i}-\sum_{i=1}^{k-1} x_{k-1,i}\right)-\sum_{k=1}^{\ell}\sum_{i=1}^k \left(e^{x_{k,i}-x_{k+1,i}}+e^{x_{k+1,i+1}-x_{k,i}}\right).
\end{equation*}
\glossary{$\mathcal{F}_{\lambda}(x)$}
Note $e^{-e^{const\cdot M}}$-type decay of the integrand outside a box of size $M$.
\begin{remark}
Despite notational similarity, these are {\it different} than the $\psi_{\lambda/\mu}$ functions which arise in the Pieri formulas. The letter $\psi$ is used for both functions so as to be consistent with the literature. The Whittaker functions will always be written with both a subscript and an argument, while the $\psi$ from the Pieri formula appears with only a subscript.
\end{remark}

The Givental integral representation has a recursive structure. The resulting recursive formula for the Whittaker functions plays a significant role in estimating bounds on the growth of Whittaker functions in the $\ul{x}{\ell+1}$ variables. It says that \cite{GKLO} \index{Whittaker functions!Givental recursion}
\begin{equation}\label{recWhit}
\psi_{\ul{\lambda}{\ell+1}}(\ul{x}{\ell+1}) = \int_{\R^\ell} d\ul{x}{\ell} \BaxKer{\ell+1}{\ell}{\lambda_{\ell+1}}{\ul{x}{\ell+1}}{\ul{x}{\ell}} \psi_{\ul{\lambda}{\ell}}(\ul{x}{\ell}),
\end{equation}
where we have a base case $\BaxKerfour{1}{0}{\lambda_1}{x_{1,1}}=e^{\iota \lambda_1 x_{1,1}}$ and for $\ell>0$
\begin{equation}\label{baxterQ}
\BaxKer{\ell+1}{\ell}{\lambda_{\ell+1}}{\ul{x}{\ell+1}}{\ul{x}{\ell}} = e^{\iota\lambda_{\ell+1}\left(\sum_{i=1}^{\ell+1} x_{\ell+1,i} - \sum_{i=1}^{\ell} x_{\ell,i}\right)} \prod_{i=1}^{\ell}e^{-e^{-(x_{\ell+1,i}-x_{\ell,i})}-e^{-(x_{\ell,i}-x_{\ell+1,i+1})}}.
\end{equation}
\glossary{$\BaxKer{\ell+1}{\ell}{\lambda_{\ell+1}}{\ul{x}{\ell+1}}{\ul{x}{\ell}}$}
The operator $\BaxOp{\ell+1}{\ell}{\lambda_{\ell+1}}$ is known of as a {\it Baxter operator} \index{Whittaker functions!Baxter operator}.

\subsubsection{Orthogonality and completeness relations}\label{orthcom}
The Whittaker functions have many remarkable properties. Let us note the orthogonality and completeness relations which can be found in \cite{STS,Wal2} or \cite{GLOBax} Theorem 2.1: For any $\ell\ge 0$ and $\ul{\lambda}{\ell+1},\ul{\tilde\lambda}{\ell+1},\ul{x}{\ell+1},\ul{y}{\ell+1}\in\R^{\ell+1}$,\index{Whittaker functions!orthogonality and completeness}.
\begin{eqnarray*}
&&\int_{\R^{\ell+1}} \overline{\psi_{\ul{\lambda}{\ell+1}}}(\ul{x}{\ell+1})\,\psi_{\ul{\tilde{\lambda}}{\ell+1}}(\ul{x}{\ell+1})d\ul{x}{\ell+1} = \frac{1}{(\ell+1)!\, m_{\ell+1}(\ul{\lambda}{\ell+1})} \sum_{\sigma\in S_{\ell+1}} \delta(\ul{\lambda}{\ell+1}-\sigma(\ul{\tilde{\lambda}}{\ell+1})),\\
&&\int_{\R^{\ell+1}} \overline{\psi_{\ul{\lambda}{\ell+1}}}(\ul{x}{\ell+1})\, \psi_{\ul{\lambda}{\ell+1}}(\ul{y}{\ell+1})\, m_{\ell+1}(\ul{\lambda}{\ell+1})d\ul{\lambda}{\ell+1} =\delta(\ul{x}{\ell+1}-\ul{y}{\ell+1}),
\end{eqnarray*}
where the Skylanin measure $m_{\ell+1}(\ul{\lambda}{\ell+1})$ \glossary{$m_{\ell+1}(\ul{\lambda}{\ell+1})$} is defined by \index{Whittaker functions!Skylanin measure}
\begin{equation*}
m_{\ell+1}(\ul{\lambda}{\ell+1})=\frac1{(2\pi)^{\ell+1} (\ell+1)!}\prod_{j\ne k} \frac 1{\Gamma(\iota\la_k-\iota\la_j)}\,.
\end{equation*}
These identities should be understood in a weak sense and show that the transform defined by integration against Whittaker functions is an isometry from $L^2(\R^{\ell+1},d\ul{x}{\ell+1})$ to $L^{2,sym}(\R^{\ell+1},m_{\ell+1} d\ul{\lambda}{\ell+1})$ where the {\it sym} implies that functions are invariant in permuting their entries.

\subsubsection{Mellin Barnes integral representation and recursion}\label{WMellinBarnes}

There exists a dual integral representation to that of Givental which was discovered in \cite{KL} and goes by the name of a Mellin Barnes representation\index{Whittaker functions!Mellin Barnes representation}.  Following \cite{GKLO} ($q$-Whittaker functions also have an analogous representation \cite{GLOq3}, as do Macdonald polynomials \cite{AOS}) we have
\begin{equation}\label{MBrep}
\psi_{\ul{\lambda}{\ell+1}}(\ul{x}{\ell+1}) = \int_{S} \prod_{n=1}^{\ell} \frac{\prod_{k=1}^{n}\prod_{m=1}^{n+1} \Gamma(\iota \lambda_{n,k}-\iota \lambda_{n+1,m})}{(2\pi \iota )^n n! \prod_{s\neq p}^{\ell} \Gamma(\iota \lambda_{n,s}-\iota \lambda_{n,p})} e^{-\iota \sum_{n=1}^{\ell+1}\sum_{j=1}^{\ell+1} (\lambda_{n,j}-\lambda_{n-1,j})x_n} \prod_{1\leq j\leq n\leq \ell} d\lambda_{n,j},
\end{equation}
where, just for the purpose of this representation we write $\ul{\lambda}{n} = \{\lambda_{n,1},\ldots, \lambda_{n,n}\}\in \C^{n}$ and where the domain of integration $S$ is defined by the conditions $\max_j\{\Imag \lambda_{k,j}\} <\min_m\{\Imag \lambda_{k+1,m}\}$ for all $k=1,\ldots,\ell$. Recall that we assumed $\lambda_{n,j}=0$ for $j>n$.

The Mellin Barnes integral representation has a recursive structure. The resulting recursive formula for the Whittaker functions plays a significant role in estimating bounds for the growth of Whittaker functions in the $\ul{\lambda}{\ell+1}$ indices:
\begin{equation}\label{MBrecformula}
\psi_{\ul{\lambda}{\ell+1}}(\ul{x}{\ell+1}) = \int_{S_{\ell}} \DBaxKer{\ell+1}{\ell}{x_{\ell+1}}{\ul{\lambda}{\ell+1}}{\ul{\lambda}{\ell}} \psi_{\ul{\lambda}{\ell}}(\ul{x}{\ell})m_{\ell}(\ul{\lambda}{\ell})\prod_{j=1}^{\ell}d\lambda_{\ell,j},
\end{equation}
where we have for $\ell\geq 1$
\begin{equation*}
\DBaxKer{\ell+1}{\ell}{x_{\ell+1}}{\ul{\lambda}{\ell+1}}{\ul{\lambda}{\ell}}  = e^{-\iota \left(\sum_{j=1}^{\ell+1} \lambda_{\ell+1,j}-\sum_{k=1}^{\ell} \lambda_{\ell,k}\right)x_{\ell+1} } \prod_{k=1}^{\ell}\prod_{m=1}^{\ell+1} \Gamma(\iota \lambda_{\ell,k} - \iota \lambda_{\ell+1,m}),
\end{equation*}
and where the domain of integration $S_{\ell}$ is defined by the conditions $\max_j\{\Imag \lambda_{\ell,j}\} <\min_{m}\{\Imag \lambda_{\ell+1,m}\}$.

The operator $\DBaxOp{\ell+1}{\ell}{x_{\ell+1}}$\glossary{$\DBaxOp{\ell+1}{\ell}{x_{\ell+1}}$} is known of as a {\it dual Baxter operator} \index{Whittaker functions!dual Baxter operator}.

\subsubsection{Controlled decay of Whittaker functions}
\index{Whittaker functions!controlled decay}
\begin{definition}\label{sigmadef}
For $\ul{x}{\ell+1}\in \R^{\ell+1}$ define the set-valued function
\begin{equation*}
\sigma(\ul{x}{\ell+1}) = \{i\in \{1,\ldots, \ell\}: x_{\ell+1,i}-x_{\ell+1,i+1}\leq 0\}.
\end{equation*}
\glossary{$\sigma(\ul{x}{\ell+1})$}
\end{definition}

\begin{proposition}\label{whitProp}
Fix $\ell\geq 0$. Then for each $\sigma\subseteq\{1,\ldots,\ell\}$ there exists a polynomial $\poly_{\ell+1,\sigma}$  of $\ell+1$ variables such that for all $\ul{\lambda}{\ell+1}\in \R^{\ell+1}$ and for all $\ul{x}{\ell+1}$ with $\sigma(\ul{x}{\ell+1})=\sigma$ we have the following estimate:
\begin{equation*}
|\psi_{\ul{\lambda}{\ell+1}}(\ul{x}{\ell+1})| \leq \poly_{\ell+1,\sigma}(\ul{x}{\ell+1}) \prod_{i\in \sigma} \exp\{-e^{-(x_{\ell+1,i}-x_{\ell+1,i+1})/2}\}.
\end{equation*}
\end{proposition}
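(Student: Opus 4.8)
The plan is to prove Proposition~\ref{whitProp} by induction on $\ell$, using the Givental recursive formula (\ref{recWhit})--(\ref{baxterQ}) to peel off the top layer of variables and bound the resulting integral. The base case $\ell=0$ is trivial since $|\psi_{\lambda_1}(x_{1,1})| = |e^{\iota \lambda_1 x_{1,1}}| = 1$, so we may take $\poly_{1,\varnothing} \equiv 1$. For the inductive step, we fix $\ul{x}{\ell+1}$ with $\sigma(\ul{x}{\ell+1}) = \sigma$ and write
\begin{equation*}
|\psi_{\ul{\lambda}{\ell+1}}(\ul{x}{\ell+1})| \leq \int_{\R^\ell} \left| \BaxKer{\ell+1}{\ell}{\lambda_{\ell+1}}{\ul{x}{\ell+1}}{\ul{x}{\ell}} \right| \cdot |\psi_{\ul{\lambda}{\ell}}(\ul{x}{\ell})| \, d\ul{x}{\ell}.
\end{equation*}
The kernel modulus is $\prod_{i=1}^{\ell} \exp\{-e^{-(x_{\ell+1,i}-x_{\ell,i})} - e^{-(x_{\ell,i}-x_{\ell+1,i+1})}\}$ (the phase drops out), and by the inductive hypothesis $|\psi_{\ul{\lambda}{\ell}}(\ul{x}{\ell})| \leq \poly_{\ell,\sigma(\ul{x}{\ell})}(\ul{x}{\ell}) \prod_{i\in\sigma(\ul{x}{\ell})} \exp\{-e^{-(x_{\ell,i}-x_{\ell,i+1})/2}\}$. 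Since there are only finitely many possible values of $\sigma(\ul{x}{\ell})$, we may bound $\poly_{\ell,\sigma(\ul{x}{\ell})}$ by a single polynomial $\overline{R}_\ell$ uniformly, and we drop the inner exponential factors (they are bounded by $1$) to get the clean bound $|\psi_{\ul{\lambda}{\ell}}(\ul{x}{\ell})| \leq \overline{R}_\ell(\ul{x}{\ell})$, which is crucially independent of $\ul{\lambda}{\ell}$ and hence of $\ul{\lambda}{\ell+1}$.

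The heart of the argument is then the one-dimensional estimate: for each $i$, integrate out $x_{\ell,i}$ against $\exp\{-e^{-(x_{\ell+1,i}-x_{\ell,i})} - e^{-(x_{\ell,i}-x_{\ell+1,i+1})}\}$ times a polynomial factor. I would show that
\begin{equation*}
\int_{\R} (1+|y|)^k \exp\{-e^{-(a-y)} - e^{-(y-b)}\} \, dy \leq C_k \cdot \poly(a,b) \cdot \begin{cases} 1 & \text{if } a > b,\\ \exp\{-e^{-(a-b)/2}\} & \text{if } a \leq b,\end{cases}
\end{equation*}
for a suitable polynomial $\poly$ and constant $C_k$. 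The mechanism: when $a \leq b$ (i.e.\ $x_{\ell+1,i} \leq x_{\ell+1,i+1}$, so $i\in\sigma$), split the integration region at the midpoint $(a+b)/2$; on each half one of the two doubly-exponential terms is at least $e^{-(a-b)/2}$, which pulls out the desired factor $\exp\{-e^{-(a-b)/2}\}$, while the remaining integral of $(1+|y|)^k$ against a single $\exp\{-e^{-|y-c|}\}$-type weight converges and produces at most polynomial growth in $a,b$. When $a > b$, the integrand is bounded by the two single-sided weights and the integral is again polynomially controlled. Multiplying these $\ell$ one-dimensional bounds together, converting polynomials-in-$x_{\ell,i}$ that appear (via the inductive polynomial $\overline{R}_\ell$ evaluated at shifted arguments) into polynomials in $\ul{x}{\ell+1}$, and collecting the factors $\prod_{i\in\sigma}\exp\{-e^{-(x_{\ell+1,i}-x_{\ell+1,i+1})/2}\}$, yields the claimed bound with a new polynomial $\poly_{\ell+1,\sigma}$.

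The main obstacle I anticipate is the bookkeeping in the one-dimensional integral estimate and making sure the polynomial prefactors genuinely stay polynomial under the iteration — in particular, verifying that the polynomial growth coming from $\int (1+|y|)^k \exp\{-e^{-e}\text{-weight}\}\,dy$ is at most polynomial in the boundary data $a,b$ (not, say, exponential), and that no growth in $\ul{\lambda}{\ell+1}$ sneaks in. A clean way to handle the latter is to note, as observed after (\ref{giventaldef}), that the Givental integrand has $e^{-e^{c|x|}}$-type decay uniformly in $\lambda$, so all $\lambda$-dependence is confined to unimodular phases; this is why the inductive hypothesis can be stated and used with a $\lambda$-free polynomial bound. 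A secondary technical point is justifying that the recursion (\ref{recWhit}) converges absolutely so that the pointwise bounding above is legitimate — but this follows from the same doubly-exponential decay of the Baxter kernel together with the (inductively established) polynomial bound on $\psi_{\ul{\lambda}{\ell}}$.
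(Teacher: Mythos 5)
Your plan is the paper's argument: induction on $\ell$ via the Givental recursion (\ref{recWhit}), bounding the kernel by its modulus, dropping the doubly exponential factors from the inductive hypothesis so that a $\lambda$-free polynomial bound survives, factoring into one-dimensional integrals of the form $\int (x_{\ell,i})^m e^{-e^{-(x_{\ell+1,i}-x_{\ell,i})}-e^{-(x_{\ell,i}-x_{\ell+1,i+1})}}\,dx_{\ell,i}$, and splitting at the midpoint in the case $i\in\sigma$. One caution about the phrase ``pulls out the desired factor'': you cannot literally extract $\exp\{-e^{-(a-b)/2}\}$ by a pointwise sup bound on $\exp\{-e^{-(y-b)}\}$ over the left half-line, because that bound destroys the integrability of $(1+|y|)^k$ against the (now essentially constant) remaining factor $\exp\{-e^{-(a-y)}\}$, which tends to $1$ as $y\to-\infty$; rather one must retain $\exp\{-e^{-(y-b)}\}$ as the decaying weight and prove the quantitative statement that $\int_{-\infty}^{0}|x|^m e^{-e^{-(a+x)}}\,dx\leq c(m)e^{-e^{-a}}$ for $a\leq 0$ --- this is exactly the paper's Lemma~\ref{gumblemma}, and it is the nontrivial part of the one-dimensional estimate you state. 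With that lemma substituted for the informal ``pull out'' step, your argument matches the paper's proof of Proposition~\ref{whitProp} essentially line by line (including the three-way split $(-\infty,x_{\ell+1,i+1})$, $(x_{\ell+1,i+1},x_{\ell+1,i})$, $(x_{\ell+1,i},\infty)$ in the ordered case $i\notin\sigma$, which you should spell out since bounding both exponentials on the middle piece is what produces the polynomial contribution there).
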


The idea here is that double exponentials serve as softened versions of indicator functions. If the double exponentials were replaced by indicator functions, and if the $\lambda$ were set to zero, then the integral (\ref{giventaldef}) could easily be evaluated and found equal to a constant times the Vandermonde determinant of the vector $\underline{x}_{\ell+1}$ when the vector entries are ordered, and zero otherwise. This is certainly of the type of bound given above. The purpose of the bound above is to show that the softening does not greatly change such an upper bound.

The proof of this Proposition relies on the following.
\begin{lemma}\label{gumblemma}
For all $m\geq 0$ there exists constants $c=c(m)\in (0,\infty)$ such that for $a\leq 0$
\begin{equation*}
\int_{-\infty}^{0} |x|^m e^{-e^{-(a+x)}} \leq c(m) e^{-e^{-a}}.
\end{equation*}
\end{lemma}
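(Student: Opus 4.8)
The plan is to reduce the integral to a standard Gamma integral by two elementary substitutions. First I would substitute $y=-x$, which turns $\int_{-\infty}^{0}|x|^m e^{-e^{-(a+x)}}\,dx$ into $\int_{0}^{\infty}y^m e^{-e^{\,y-a}}\,dy=\int_{0}^{\infty}y^m e^{-u e^{y}}\,dy$, where I abbreviate $u=e^{-a}$. Note that the hypothesis $a\le 0$ is exactly the statement $u\ge 1$, and this is the only place the sign hypothesis will be used.

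Next I would apply the convexity bound $e^{y}\ge 1+y$, valid for all $y\in\R$ (in particular for $y\ge 0$). Combined with $u\ge 1$ this gives $u e^{y}\ge u(1+y)=u+uy\ge u+y$, hence $e^{-u e^{y}}\le e^{-u}e^{-y}$ pointwise on $[0,\infty)$. Substituting this in,
\[
\int_{0}^{\infty}y^m e^{-u e^{y}}\,dy\;\le\; e^{-u}\int_{0}^{\infty}y^m e^{-y}\,dy\;=\;\Gamma(m+1)\,e^{-u},
\]
and recalling $u=e^{-a}$ this is exactly the claimed inequality with $c(m)=\Gamma(m+1)=m!$, which is finite and strictly positive.

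Since every step is a one-line elementary estimate, I do not expect any real obstacle; the only point requiring a moment's care is the direction of the inequality $u e^{y}\ge u+y$, which genuinely uses $u\ge 1$ (equivalently $a\le 0$) and would fail for $u<1$ — but that case is outside the hypothesis. An alternative route, substituting $v=u e^{y}$ and bounding $1/v\le 1$ together with $\log(v/u)\le v/u-1\le v$, also works but is slightly messier (it needs a separate treatment of $m=0$), so I would present the convexity argument above.
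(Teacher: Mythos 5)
Your proof is correct, and it takes a genuinely different---and cleaner---route than the paper. The paper picks a threshold $x^*=x^*(m)\le-1$ at which the inequality $e^{-x}\ge m\log|x-1|-x$ begins to hold, splits the integral at $x^*$, uses $e^{a}\le1$ to strengthen that inequality and deduce the pointwise tail bound $|x|^m e^{-e^{-(a+x)}}\le e^{e^{-a}x}$ on $(-\infty,x^*]$, and on the compact piece $[x^*,0]$ simply bounds $e^{-e^{-(a+x)}}\le e^{-e^{-a}}$ and integrates the polynomial. Your argument dispenses with the split entirely: after writing $u=e^{-a}\ge1$ and substituting $y=-x$, the single convexity inequality $e^y\ge1+y$ gives $ue^y\ge u+uy\ge u+y$, hence $e^{-ue^y}\le e^{-u}e^{-y}$ globally on $[0,\infty)$, and what remains is exactly $\Gamma(m+1)$. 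Both proofs use $a\le0$ in essentially the same way (to absorb a factor of $u=e^{-a}\ge1$), but yours replaces the implicitly defined, $m$-dependent threshold $x^*$ and the two-piece estimate with a one-line global pointwise bound, and it produces the explicit constant $c(m)=m!$. This is a genuine simplification.
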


\begin{proof}
Let $x^*$ by the largest $x\leq -1$ such that the following inequality holds: $e^{-x}\geq~m~\log|x-1|-x$. Clearly for $x$ negative enough this will hold, though $x^*$ will depend on $m$. Now split the integral between $-\infty$ and $x^*$ and $x^*$ and 0. Observe that when the inequality holds, so does $e^{-x}\geq e^{a} m \log |x-1| - x$ (since $a<0$ implies $e^{a}<1$ and $\log|x-1|$ is positive on $x<0$) and hence
\begin{equation*}
|x|^m e^{-e^{-(a+x)}} \leq \frac{|x|^m}{|x-1|^m}\left(e^{e^{-a}}\right)^{x} \leq \left(e^{e^{-a}}\right)^{x}.
\end{equation*}
Therefore the first integral (between $-\infty$ and $x^*<-1$) can be majorized by the integral of $\left(e^{e^{-a}}\right)^x$ which is certainly bounded by a constant time $e^{-e^{-a}}$. For the second integral, we can upper-bound $e^{-e^{-(a+x)}}$ by $e^{-e^{-a}}$ so that the integral is bounded by that times the integral of  $|x|^m$ from $x^*$ to 0, which is simply a constant depending on $m$, as desired.
\end{proof}

\begin{proof}[Proof of Proposition~\ref{whitProp}]
The proof of this Proposition is instructive since it underlies the idea of the proof of the analogous upper bound in Theorem~\ref{qwhitconvTHM}.
We prove this by induction on $\ell$. The base case is $\ell=0$. Then $\psi_{\lambda_{1,1}}(x_{1,1})= e^{\iota \lambda_{1,1}x_{1,1}}$ and it is clear that the proposition's claim is satisfied by letting the polynomial be a constant exceeding one.



Now assume that we have proved the induction up to $\ell-1$ and we will prove it for $\ell$. The key here is the recursive formula for the Whittaker functions given in equation (\ref{recWhit}).

Consider $\underline{x}_{\ell+1}\in \R^{\ell+1}$ and call $\sigma=\sigma(\underline{x}_{\ell+1})$. We do not, in fact, need to use the double exponential decay of the Whittaker function (given by the inductive hypothesis), but rather just that its absolute value is bounded by a polynomial of its arguments. Actually, by dropping this double exponential term (which is after all bounded by 1) we can expand the bounding polynomial into monomials and then factor the integration into one-dimensional integrals. These integrals are of the form
\begin{equation}\label{factoredintegralform}
\int_{-\infty}^{\infty} dx_{\ell,i} (x_{\ell,i})^m e^{-e^{-(x_{\ell+1,i}-x_{\ell,i})}-e^{-(x_{\ell,i}-x_{\ell+1,i+1})}}.
\end{equation}
We must consider separately the case $x_{\ell+1,i}-x_{\ell+1,i+1} \geq 0$ and $x_{\ell+1,i}-x_{\ell+1,i+1} \leq 0$. The first case, where $x_{\ell+1,i}-x_{\ell+1,i+1} \geq 0$, corresponds to $i\notin \sigma(\ul{x}{\ell+1})$. We would like to show, therefore, that (\ref{factoredintegralform}) is bounded by a polynomial. We may split the integral in (\ref{factoredintegralform}) into three parts:
\begin{equation}\label{threepartsintegral}
\left(\int_{-\infty}^{x_{\ell+1,i+1}} +\int_{x_{\ell+1,i+1}}^{x_{\ell+1,i}}+\int_{x_{\ell+1,i}}^{\infty}\right) dx_{\ell,i} (x_{\ell,i})^m \exp\{-e^{-(x_{\ell+1,i}-x_{\ell,i})}\}\exp\{-e^{-(x_{\ell,i}-x_{\ell+1,i+1})}\}.
\end{equation}
On the first integration we bound $\exp\{-e^{-(x_{\ell+1,i}-x_{\ell,i})}\}$ by 1. We then perform a change of variables $\tilde{x}_{\ell,i}= x_{\ell,i} - x_{\ell+1,i+1}$ and the bound on this integration becomes
\begin{equation*}
\int_{-\infty}^{0}d\tilde{x}_{\ell,i} (\tilde{x}_{\ell,i}+x_{\ell+1,i+1})^m \exp\{-e^{-\tilde{x}_{\ell,i}}\}.
\end{equation*}
By expanding this as a polynomial in $\tilde{x}_{\ell,i}$ we can apply Lemma~\ref{gumblemma} to each term separately to see that the first part of (\ref{threepartsintegral}) is bounded by a polynomial in $\ul{x}{\ell+1}$ as desired. The bound for the third part goes similarly. For the second part of the integral, we bound both double exponential terms by one and then note that the remaining integral is clearly a polynomial. Summing up, when $x_{\ell+1,i}-x_{\ell+1,i+1} \geq 0$, we find a polynomial bound for (\ref{factoredintegralform}), as desired.

We turn to the case $x_{\ell+1,i}-x_{\ell+1,i+1} \leq 0$, which corresponds to $i\in \sigma(\ul{x}{\ell+1})$. We would like to show that in this case, (\ref{factoredintegralform}) is bounded by a polynomial times a double exponential factor. We can split (\ref{factoredintegralform}) into two parts:
\begin{equation}\label{twopartsintegral}
\left(\int_{-\infty}^{(x_{\ell+1,i}+x_{\ell+1,i+1})/2} + \int_{(x_{\ell+1,i}+x_{\ell+1,i+1})/2}^{\infty} \right) dx_{\ell,i} (x_{\ell,i})^m \exp\{-e^{-(x_{\ell+1,i}-x_{\ell,i})}\}\exp\{-e^{-(x_{\ell,i}-x_{\ell+1,i+1})}\}.
\end{equation}
Let us focus on just the first part of the integral, since the second part is analogously bounded. On the first part of the integration we bound $\exp\{-e^{-(x_{\ell+1,i}-x_{\ell,i})}\}$ by one and perform a change of variables $\tilde{x}_{\ell,i} = x_{\ell,i} - (x_{\ell+1,i}+x_{\ell+1,i+1})/2$. The result is a bound on that part of the integral by
\begin{equation}\label{twopartsintegralbound}
\int_{-\infty}^{0}d\tilde{x}_{\ell,i} \left(\tilde{x}_{\ell,i}+\frac{x_{\ell+1,i}+x_{\ell+1,i+1}}{2}\right)^m \exp\{-e^{-(\tilde{x}_{\ell,i}+(x_{\ell+1,i}-x_{\ell+1,i+1})/2)}\}.
\end{equation}
By expanding the polynomial $\left(\tilde{x}_{\ell,i}+\frac{x_{\ell+1,i}+x_{\ell+1,i+1}}{2}\right)^m$ into monomials we may apply Lemma~\ref{gumblemma} with $a= (x_{\ell+1,i}-x_{\ell+1,i+1})/2$. The result is that we find that (\ref{twopartsintegralbound}) is bounded by a polynomial in $\ul{x}{\ell+1}$ times the double exponential factor $\exp\{-e^a\}$ with $a$ as above. This, however, is exactly the desired bound.

By combining the results for each monomial of the form of (\ref{factoredintegralform}) we get the desired bound for level $\ell+1$, thus completing the inductive step.
\end{proof}


\begin{remark}
Fix any compact set $D\subset \R^{\ell+1}$. Then for all $c<\pi/2$ there is a constant $C$ such that
\begin{equation*}
|\psi_{\ul{\lambda}{\ell+1}}(\ul{x}{\ell+1})| \leq C \exp\left\{-c \sum_{i<j}^{\ell+1} |\lambda_{\ell+1,i}-\lambda_{\ell+1,j}|\right\}
\end{equation*}
for all $\ul{x}{\ell+1}\in D$.

Let us briefly sketch how this is shown. This estimate relies on the recursive version of the Mellin Barnes representation for Whittaker functions given in Section~\ref{WMellinBarnes}. By induction on $\ell$, and by replacing Gamma functions by their asymptotic exponential behavior, one can bound $|\psi_{\ul{\lambda}{\ell+1}}(\ul{x}{\ell+1})|$ in terms of the integral
\begin{equation*}
\int \exp\left\{ - c \sum_{i<j}^{\ell} |\lambda_{\ell,i}-\lambda_{\ell,j}| - \pi/2 \left( \sum_{i=1}^{\ell}\sum_{j=1}^{\ell+1} |\lambda_{\ell,i}-\lambda_{\ell+1,j}| - 2\sum_{i< j}^{\ell} |\lambda_{\ell,i}-\lambda_{\ell,j}|\right)\right\} \prod_{i=1}^{\ell} d\lambda_{\ell,i}.
\end{equation*}
By studying the critical points of the function being exponentiated, one sees that the integral is bounded as desired.
\end{remark}

\subsection{Whittaker functions as degenerations of q-Whittaker functions}\label{721}
In order to relate $q$-Whittaker functions to classical Whittaker functions we must take certain rescalings and limits.

\begin{definition}\label{scaleDef}
We introduce the following scalings:
\begin{eqnarray*}
q=e^{-\e}, & z_k = e^{\iota \e \nu_k}, & p_{\ell,k} = (\ell+1-2k)m(\e) + \e^{-1} x_{\ell,k},\\
m(\e) = -\left[\e^{-1}\log \e\right], & \A(\e) = -\tfrac{\pi^2}{6} \tfrac{1}{\e} - \tfrac{1}{2} \log \tfrac{\e}{2\pi}, & f_{\alpha}(y,\e) = (q;q)_{[\e^{-1}y] + \alpha m(\e)},\, \alpha\in\Z^{\geq 1}.
\end{eqnarray*}
\glossary{$m(\e)$} \glossary{$\A(\e)$} \glossary{$f_{\alpha}(y,\e)$}
Furthermore, define the rescaling of the $q$-Whittaker function $\qWhitP_{\ul{z}{\ell+1}}(\ul{p}{\ell+1})$ (defined in Section~\ref{qwhittakersec}) as
\begin{equation*}
\psi^{\e}_{\ul{\nu}{\ell+1}}(\ul{x}{\ell+1}) = \e^{\frac{\ell(\ell+1)}{2}} e^{\frac{\ell(\ell+3)}{2}\A(\e)} \qWhitP_{\ul{z}{\ell+1}}(\ul{p}{\ell+1}).
\end{equation*}
\glossary{$\psi^{\e}_{\ul{\nu}{\ell+1}}(\ul{x}{\ell+1})$}
\end{definition}


We are now in a position to state our main result concerning the controlled convergence of $q$-Whittaker functions to Whittaker functions.

\begin{theorem}\label{qwhitconvTHM}
\index{Whittaker functions!degeneration of $q$ Whittaker functions}
For all $\ell\geq 1$ and all $\underline{\nu}_{\ell+1}\in \R^{\ell+1}$ we have the following:
\begin{enumerate}
\item For each $\sigma\subseteq\{1,\ldots,\ell\}$ there exists a polynomial $\poly_{\ell+1,\sigma}$  of $\ell+1$ variables (chosen independently of $\nu$ and $\e$) such that for all $\underline{x}_{\ell+1}$ with $\sigma(\ul{x}{\ell+1})=\sigma$ (recall the function $\sigma(\cdot)$ from equation (\ref{sigmadef})) we have the following estimate: for some $c^*>0$
\begin{equation}\label{qboundthmeqn}
|\psi^{\e}_{\ul{\nu}{\ell+1}}(\ul{x}{\ell+1})| \leq \poly_{\ell+1,\sigma(\ul{x}{\ell+1})}(\ul{x}{\ell+1}) \prod_{i\in \sigma(\ul{x}{\ell+1})} \exp\{-c^* e^{-(x_{\ell+1,i}-x_{\ell+1,i+1})/2}\}.
\end{equation}
\item For $\ul{x}{\ell+1}$ varying in a compact domain, $\psi^{\e}_{\ul{\nu}{\ell+1}}(\ul{x}{\ell+1})$ converges (as $\e$ goes to zero) uniformly to $\psi_{\ul{\nu}{\ell+1}}(\ul{x}{\ell+1})$.
\end{enumerate}
\end{theorem}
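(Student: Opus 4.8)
The plan is to prove parts (1) and (2) simultaneously by induction on $\ell$, with the recursive formula (\ref{receqn}) for the $q$-Whittaker functions and its classical counterpart (\ref{recWhit}) as the engine. The base case $\ell=0$ is an exact identity: since $\sum_{i=1}^{1}(1+1-2i)=0$, the scaling of Definition~\ref{scaleDef} reduces to $p_{1,1}=\e^{-1}x_{1,1}$, so $\psi^{\e}_{\nu_{1}}(x_{1,1})=\qWhitP_{z_{1}}(p_{1,1})=z_{1}^{\,p_{1,1}}=e^{\iota\nu_{1}x_{1,1}}=\psi_{\nu_{1}}(x_{1,1})$, and the bound in part (1) holds with $\poly_{1,\varnothing}\equiv 1$.

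For the inductive step I would substitute the scalings into (\ref{receqn}). A bookkeeping computation shows two things. First, the monomial prefactor $z_{\ell+1}^{\,|\ul{p}{\ell+1}|-|\ul{p}{\ell}|}$ equals $e^{\iota\nu_{\ell+1}\left(\sum_{i}x_{\ell+1,i}-\sum_{i}x_{\ell,i}\right)}$ exactly (using $\sum_{i}(\ell+2-2i)=0$), which is the leading factor of the classical Baxter kernel $\BaxKer{\ell+1}{\ell}{\nu_{\ell+1}}{\ul{x}{\ell+1}}{\ul{x}{\ell}}$ of (\ref{baxterQ}). Second, after the change of variables $x_{\ell,i}=\e\bigl(p_{\ell,i}-(\ell+1-2i)m(\e)\bigr)$, the sum over $\ul{p}{\ell}\in\GT_{\ell+1,\ell}(\ul{p}{\ell+1})$ becomes a Riemann sum over an $\e$-mesh on $\R^{\ell}$ whose interlacing constraints $p_{\ell+1,i}\ge p_{\ell,i}\ge p_{\ell+1,i+1}$ recede to infinity (they read $x_{\ell,i}\le x_{\ell+1,i}+\e m(\e)$, etc., and $\e m(\e)\to+\infty$), matching the domain $\R^{\ell}$ in (\ref{recWhit}). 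After absorbing into the prefactor of $\psi^{\e}$ the appropriate powers of $\e$ and of $e^{\A(\e)}$ — where the exact forms of $\A(\e)$ and $m(\e)$ enter — the step reduces to two estimates on the rescaled kernel $K^{\e}$ built from $z_{\ell+1}^{\,|\ul{p}{\ell+1}|-|\ul{p}{\ell}|}\,\Delta(\ul{p}{\ell+1})\,\link_{\ell+1,\ell}(\ul{p}{\ell+1},\ul{p}{\ell};q)$ times the chosen normalization: (a) $K^{\e}(\ul{x}{\ell+1},\ul{x}{\ell})\to\BaxKer{\ell+1}{\ell}{\nu_{\ell+1}}{\ul{x}{\ell+1}}{\ul{x}{\ell}}$ uniformly on compacts; and (b) a uniform-in-$\e$ majorant $|K^{\e}(\ul{x}{\ell+1},\ul{x}{\ell})|\le P(\ul{x}{\ell+1},\ul{x}{\ell})\prod_{i=1}^{\ell}\exp\{-c\,e^{-(x_{\ell+1,i}-x_{\ell,i})/2}-c\,e^{-(x_{\ell,i}-x_{\ell+1,i+1})/2}\}$ for a fixed $c>0$ and a polynomial $P$.

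Both (a) and (b) rest on the asymptotics of the finite $q$-Pochhammer symbols in $\Delta$ and $\link$ with $q=e^{-\e}$. Two facts are used: the modular (Dedekind $\eta$) asymptotic $(q;q)_{\infty}\sim\sqrt{2\pi/\e}\,e^{-\pi^{2}/6\e}$, which is precisely what $\A(\e)$ tracks and which fixes the normalization; and the effect of the shift $m(\e)$ — an argument such as $p_{\ell+1,i}-p_{\ell,i}=m(\e)+\e^{-1}(x_{\ell+1,i}-x_{\ell,i})$ gives $q^{\,p_{\ell+1,i}-p_{\ell,i}}\sim \e\,e^{-(x_{\ell+1,i}-x_{\ell,i})}$, hence $(q^{\,p_{\ell+1,i}-p_{\ell,i}+1};q)_{\infty}=1-\tfrac{q^{\,p_{\ell+1,i}-p_{\ell,i}+1}}{1-q}+O(\e)\to e^{-e^{-(x_{\ell+1,i}-x_{\ell,i})}}$, producing exactly the double-exponential factors of (\ref{baxterQ}); the $\Delta$-arguments grow like $2m(\e)$ and contribute only $(q;q)_{\infty}$-type factors that cancel against $\A(\e)$. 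For (b) one keeps these computations at the level of inequalities, uniform in $\e$ and over all admissible configurations (including those where an argument of some $\qq{\cdot}$ is negative, in which case that factor is $1$), and the half-exponent $e^{-(x_{\ell+1,i}-x_{\ell,i})/2}$ is extracted exactly as in Lemma~\ref{gumblemma} by splitting the ranges at midpoints.

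Granting (a) and (b), part (1) at level $\ell+1$ follows by inserting the inductive bound on $|\psi^{\e}_{\ul{\nu}{\ell}}|$ and the majorant (b) into the Riemann sum and performing the summations via a uniform-in-$\e$, discretized version of Lemma~\ref{gumblemma}; the $\sigma$-set structure emerges exactly as in the proof of Proposition~\ref{whitProp}, the double-exponential in slot $i$ surviving iff $x_{\ell+1,i}-x_{\ell+1,i+1}\le 0$. Part (2) at level $\ell+1$ follows by dominated convergence: on compacts the integrand converges uniformly by (a) and the inductive part (2); the majorant (b) together with the inductive bound supplies an $\e$-independent integrable dominating function controlling the tails and the Riemann-sum error; and the limit is $\int_{\R^{\ell}}\BaxKer{\ell+1}{\ell}{\nu_{\ell+1}}{\ul{x}{\ell+1}}{\ul{x}{\ell}}\,\psi_{\ul{\nu}{\ell}}(\ul{x}{\ell})\,d\ul{x}{\ell}=\psi_{\ul{\nu}{\ell+1}}(\ul{x}{\ell+1})$ by (\ref{recWhit}). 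The main obstacle is the pair (a)+(b): making the Riemann-sum approximation of $\log(q^{\,\cdot};q)_{\infty}$ uniform across the full range of arguments (not just bounded ones), verifying that the $e^{-\pi^{2}/6\e}$- and $\e^{-1/2}$-scale contributions cancel against the $e^{\A(\e)}$-normalization so that only the classical Baxter kernel survives, and propagating the ``$/2$ in the exponent'' so that the majorant in part (1) is self-reproducing under the induction.
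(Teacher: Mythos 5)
Your overall strategy is the same as the paper's: induct on $\ell$ via the $q$-Givental recursion (\ref{receqn}) and its classical counterpart (\ref{recWhit}), control the $q$-Pochhammer asymptotics through $\A(\e)$ (the Dedekind $\eta$ / modular normalization — this is Lemma~\ref{Aepslemma} inside Proposition~\ref{qfactProp}) and the shift $m(\e)$, and then pass from a Riemann sum to the Givental integral. The base case $\ell=0$ is handled the same way. However, there are two genuine gaps in the inductive step.

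First, the $/2$ in your claimed kernel majorant (b) is misplaced and, taken literally, makes the induction fail to close. The factor $e^{-(\cdot)/2}$ in the final bound (\ref{qboundthmeqn}) is not a feature of the kernel; it is produced by the integration/summation over the $\ell$-th-level variable via the midpoint-split argument of Lemma~\ref{gumblemma} (equivalently, AM--GM: $e^{-(a-y)}+e^{-(y-b)}\ge 2e^{-(a-b)/2}$). If the kernel majorant already carries $/2$ in the inner exponent, the same splitting yields $\exp\{-c\,e^{-(x_{\ell+1,i}-x_{\ell+1,i+1})/4}\}$, i.e., $/4$, which is strictly weaker than (\ref{qboundthmeqn}) for $x_{\ell+1,i}-x_{\ell+1,i+1}<0$ and would degrade further at each level. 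The correct statement is that the rescaled kernel $|Q^\e_{\ell+1,\ell}|$ is bounded by $e^c\exp\{\sum_i(-e^{-(x_{\ell+1,i}-x_{\ell,i})}-e^{-(x_{\ell,i}-x_{\ell+1,i+1})})\}$ — no $/2$ — and the half-exponent appears only after summing over $x_{\ell,i}$.

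Second, your treatment of the $\Delta$-factor is only valid for $\ul{x}{\ell+1}$ in a compact set, while part~(1) must hold for all admissible $\ul{x}{\ell+1}$. You assert that the $\Delta$-arguments grow like $2m(\e)$, so that after normalization by $e^{\A(\e)}$ they stay bounded. This is true generically, but near the boundary of the interlacing domain — where a difference such as $x_{\ell,i}-x_{\ell,i+1}$ (or $x_{\ell+1,i}-x_{\ell+1,i+1}$) approaches $-2\log\e^{-1}$, i.e., the $q$-Pochhammer argument approaches $0$ — the factor $e^{-\A(\e)}\qq{\cdot}\approx e^{-\A(\e)}$ blows up. The paper's key technical input at exactly this point is Lemma~\ref{unifupperboundLemma}: the blowup of $\Delta^\e$ is absorbed by the decay of $Q^\e$, giving $|\Delta^\e\,Q^\e|\le C\,|Q^\e|^{c^*}$ for some fixed $c^*\in(0,1)$; this is also precisely the origin of the constant $c^*$ that appears in the statement of part~(1). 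Your proposal never introduces such a comparison, and without it the kernel majorant (b) cannot be established uniformly over the whole admissible range of $\ul{x}{\ell}$ and $\ul{x}{\ell+1}$.
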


This theorem is proved in Section~\ref{indproofsec} and follows a similar, albeit more involved, route as the proof of Proposition~\ref{whitProp} for Whittaker functions. Prior to commencing that proof it is necessary to record and prove a few precise estimates about asymptotics of $q$-factorials.

\begin{remark}
In \cite{GLOqlim} a non-rigorous derivation of a point-wise version of part 2 of the above convergence result is given. Through careful considerations of uniformity and tail bounds we prove the above theorem.
\end{remark}

\subsubsection{Asymptotics and bounds on a certain q-Pochhammer symbol}\label{qfactSec}
Recall that we have defined $\A(\e) = -\frac{\pi^2}{6} \frac{1}{\e} -\frac{1}{2}\log\frac{\e}{2\pi}$, $m(\e) = -[\e^{-1}\log \e], q=e^{-\e}$ and
$f_{\alpha}(y,\e) = (q;q)_{\e^{-1}y_{\alpha,\e}}$, where $y_{\alpha,\e}=y+\e \alpha m(\e)$. Throughout this section we will always assume that $y$ is such that $\e^{-1}y_{\alpha,\e}$ is a nonnegative integer.

\begin{proposition}\label{qfactProp}
Assume $\alpha\geq 1$, then for all $y$ such that $\e^{-1}y_{\alpha,\e}$ is a nonnegative integer,
\begin{equation}\label{lwbdd}
\log f_{\alpha}(y,\e) -\A(\e) \geq -c + \e^{-1}e^{-y_{\alpha,\e}},
\end{equation}
where $c=c(\e)>0$ is independent of all variables besides $\e$ and can be taken to go to zero with $\e$.

On the other hand for all $b^*<1$ and for all $y$ such that $\e^{-1}y_{\alpha,\e}$ is a nonnegative integer,
\begin{equation}\label{upbdd}
\log f_{\alpha}(y,\e) -\A(\e) -\e^{-1} e^{-(\e+y_{\alpha,\e})} \leq c+ e^{-(\e+y_{\alpha,\e})} + (b^*)^{-1}\e^{-1} \sum_{r=2}^{\infty} \frac{e^{-r(\e+y_{\alpha,\e})}}{r^2},
\end{equation}
where $c=c(y_{\alpha,\e})<C$ for $C<\infty$ fixed and independent of all variables, and where $c(y_{\alpha,\e})$ can be taken as going to zero as $y_{\alpha,\e}$ increases.
\end{proposition}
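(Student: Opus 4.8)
The plan is to obtain both estimates from the elementary identity $\log(q;q)_M = \sum_{k=1}^{M} \log(1-q^k)$ together with the expansion $\log(1-x) = -\sum_{r\ge 1} x^r/r$, and to treat the discrete sum as a Riemann sum for $\int_0^{\infty} \log(1-e^{-\e t})\,dt = -\frac{\pi^2}{6\e}$ plus a boundary correction. Concretely, with $q=e^{-\e}$ and $M=\e^{-1}y_{\alpha,\e}$ (a nonnegative integer by hypothesis), write
\begin{equation*}
\log f_{\alpha}(y,\e) = \sum_{k=1}^{M}\log(1-e^{-\e k}) = -\sum_{r\ge 1}\frac{1}{r}\sum_{k=1}^{M} e^{-r\e k} = -\sum_{r\ge 1}\frac{1}{r}\,\frac{e^{-r\e}-e^{-r\e(M+1)}}{1-e^{-r\e}}.
\end{equation*}
The $r=1$ term contributes $-\frac{e^{-\e}-e^{-\e(M+1)}}{1-e^{-\e}}$; using $e^{-\e(M+1)} = e^{-(\e+y_{\alpha,\e})}$ and $\frac{e^{-\e}}{1-e^{-\e}} = \frac{1}{\e} - \frac12 + O(\e)$ (a standard expansion, with the $O(\e)$ term bounded uniformly), this already produces the leading $-\frac{\pi^2}{6\e}$ piece of $\A(\e)$ together with the $+\e^{-1}e^{-(\e+y_{\alpha,\e})}$ correction appearing in (\ref{upbdd}); the remaining $r\ge 2$ terms give exactly $\e^{-1}\sum_{r\ge 2} e^{-r(\e+y_{\alpha,\e})}/r^2$ up to the factor $(b^*)^{-1}$ coming from bounding $\frac{r\e}{1-e^{-r\e}}\le (b^*)^{-1}$ for $\e$ small (valid since $\frac{x}{1-e^{-x}}\to 1$ as $x\to 0$ and is increasing). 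The $-\frac12\log\frac{\e}{2\pi}$ half of $\A(\e)$ comes from the more delicate comparison of $\sum_r \frac{1}{r}\frac{e^{-r\e}}{1-e^{-r\e}}$ with $\frac{\pi^2}{6\e}$: this is precisely the asymptotics of the Dedekind eta function / the modular transformation of $(q;q)_\infty$, namely $\log(q;q)_\infty = -\frac{\pi^2}{6\e} + \frac12\log\frac{\e}{2\pi} + O(\e)$, and I would cite this (or derive it from Euler–Maclaurin applied to $g(r)=\frac1r\frac{e^{-r\e}}{1-e^{-r\e}}$) as a known fact.

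For the lower bound (\ref{lwbdd}): since $\log(1-e^{-\e k})$ is negative, dropping the $r\ge 2$ terms and the tail of the $r=1$ term only decreases the sum, so it suffices to compare $\log f_\alpha(y,\e)$ from below with $\log(q;q)_\infty$ minus the terms $\sum_{r\ge 1}\frac1r \frac{e^{-r\e(M+1)}}{1-e^{-r\e}}$. The first of those subtracted terms is $\frac{e^{-(\e+y_{\alpha,\e})}}{1-e^{-\e}} \ge \e^{-1}e^{-y_{\alpha,\e}}$ up to a bounded multiplicative distortion (here I use $1-e^{-\e}\le \e$ and $e^{-\e}\ge$ const, and I may need to replace $e^{-(\e+y_{\alpha,\e})}$ by $e^{-y_{\alpha,\e}}$ absorbing the $e^{-\e}$ factor into the constant $c$), and the sum over $r\ge 2$ of the analogous terms is dominated by a geometric series in $e^{-y_{\alpha,\e}}$, hence absorbed into $c$. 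The hypothesis $\alpha\ge 1$ enters because $y_{\alpha,\e} = y + \e\alpha m(\e) \ge y - \alpha\log\e$ grows like $-\log\e$, so $e^{-y_{\alpha,\e}}$ is at least a fixed power of $\e$ and the ``error'' geometric series $\sum_{r\ge 2} e^{-r y_{\alpha,\e}}$ is genuinely lower order than the main $\e^{-1}e^{-y_{\alpha,\e}}$ term; this is what lets $c(\e)\to 0$.

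The main obstacle — and the place requiring the most care — is the uniformity in $y$ (equivalently in $M$) of the error terms, together with pinning down the constant $-\frac12\log\frac{\e}{2\pi}$. The Euler–Maclaurin / modular asymptotics for $(q;q)_\infty$ is classical but one must check that replacing $(q;q)_\infty$ by the finite product $(q;q)_M$ introduces an error that is \emph{monotone and explicitly controlled} rather than merely $o(1)$: this is exactly the role of the geometric tail estimates above, and the reason the statement is phrased with a constant $c=c(y_{\alpha,\e})$ that decays as $y_{\alpha,\e}\to\infty$. I would organize the proof as: (i) the exact double-sum identity for $\log(q;q)_M$; (ii) isolate the $r=1$ term and expand $\frac{e^{-\e}}{1-e^{-\e}}$ with explicit remainder; (iii) quote the $\log(q;q)_\infty$ asymptotic for the remaining $\sum_{r\ge1}\frac1r\frac{e^{-r\e}}{1-e^{-r\e}}$; (iv) bound the ``$M$-dependent'' correction $\sum_{r\ge1}\frac1r\frac{e^{-r\e(M+1)}}{1-e^{-r\e}}$ from above and below by extracting its $r=1$ term $\sim \e^{-1}e^{-(\e+y_{\alpha,\e})}$ and geometrically bounding the rest, using $\alpha\ge1$ to guarantee decay; (v) reassemble to read off (\ref{lwbdd}) and (\ref{upbdd}) with the stated forms of $c$.
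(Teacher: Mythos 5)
Your strategy is a genuine variant of the paper's. You work directly with the exact decomposition
\begin{equation*}
\log f_\alpha(y,\e) = \log(q;q)_\infty + \sum_{r\geq1}\frac{e^{-r(\e+y_{\alpha,\e})}}{r(1-e^{-r\e})},
\end{equation*}
handling $\log(q;q)_\infty - \A(\e)$ via the Dedekind eta modular transformation (the paper's Lemma~\ref{Aepslemma}) and then estimating the finite-$M$ correction term by term. The paper instead differentiates this same series in $y$ (which kills the constant $\log(q;q)_\infty$ and the factor $1/r$), bounds the derivative from both sides (equation~(\ref{logupbdd}) and Lemma~\ref{loglowbdd}), and integrates from $y$ to $\infty$ using Lemma~\ref{diffineq}. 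Your more direct route is legitimate and avoids the differentiate-then-integrate detour, and the two approaches ultimately rest on the same two ingredients. (A small slip in your exposition: the $r=1$ term of the series does not ``already produce'' the $-\pi^2/(6\e)$ piece of $\A(\e)$ --- it contributes only $-\e^{-1}+O(1)$ --- but you correct this implicitly by appealing to the full modular asymptotic for $\log(q;q)_\infty$ afterwards.)

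There is, however, a concrete gap in your treatment of the $r\geq 2$ tail, the place where~(\ref{upbdd}) is really earned. You assert $\frac{r\e}{1-e^{-r\e}}\leq(b^*)^{-1}$ ``for $\e$ small'', citing $x/(1-e^{-x})\to1$ as $x\to0$. But $x/(1-e^{-x})$ is increasing and \emph{unbounded}; once $r\gtrsim \e^{-1}$ the bound fails, and you cannot rescue it by shrinking $\e$ because the range of $r$ grows with it. This is exactly why the paper's Lemma~\ref{loglowbdd} splits the $r$-sum three ways: $r=1$; $2\leq r\leq r^*$ with $r^*=\lfloor a^*/\e\rfloor$ and $a^*$ chosen so that $1-e^{-a}\geq b^*a$ on $[0,a^*]$; and $r>r^*$, where instead the crude uniform bound $1-e^{-r\e}\geq 1-e^{-a^*}$ is used and the resulting tail is absorbed into the bounded, $y_{\alpha,\e}$-dependent constant $c(y_{\alpha,\e})$ of~(\ref{upbdd}). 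You need the analogous three-way split on your series; the single threshold-free estimate will not close the argument. A separate issue: your account of where $\alpha\geq1$ enters is misdirected. The Proposition's bounds must hold for \emph{all} $y$ with $y_{\alpha,\e}\geq0$, including $y_{\alpha,\e}=0$ where $e^{-y_{\alpha,\e}}=1$ and your ``the error geometric series is genuinely lower order'' heuristic fails; in fact $\alpha$ plays no role in the paper's proof of the Proposition itself (only $y_{\alpha,\e}\geq0$ is used), and the hypothesis $\alpha\geq1$ is exploited only in Corollary~\ref{qfactCor}, to keep $\e^{\alpha-1}e^{-y}$ bounded under $y\geq-M$.
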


The following is then an immediate consequence of combining the above asymptotics:

\begin{corollary}\label{qfactCor}
Assume $\alpha\geq 1$, then for any $M>0$ and any $\delta>0$, there exists $\e_0>0$ such that for all $\e<\e_0$ and all $y\geq -M$ such that $\e^{-1}y_{\alpha,\e}$ is a nonnegative integer,
\begin{equation}\label{unifconveqn}
\log f_{\alpha}(y,\e) -\A(\e) - \e^{\alpha-1}e^{-y}\in [-\delta,\delta].
\end{equation}
\end{corollary}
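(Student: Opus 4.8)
The plan is to derive the two-sided estimate (\ref{unifconveqn}) by sandwiching $\log f_{\alpha}(y,\e)-\A(\e)$ between the lower bound (\ref{lwbdd}) and the upper bound (\ref{upbdd}) of Proposition~\ref{qfactProp}, and then showing that both of the additive ``main terms'' appearing there — namely $\e^{-1}e^{-y_{\alpha,\e}}$ in (\ref{lwbdd}) and $\e^{-1}e^{-(\e+y_{\alpha,\e})}$ in (\ref{upbdd}) — differ from the target quantity $\e^{\alpha-1}e^{-y}$ by an amount that tends to $0$ uniformly over $y\ge -M$ as $\e\to 0$, while all the remaining error terms in those two bounds likewise tend to $0$ uniformly.

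First I would record the elementary asymptotics of $m(\e)$. Since $m(\e)=-[\e^{-1}\log\e]$ and $\log\e<0$, one has $\e m(\e)=-\log\e+\theta(\e)\e$ with $\theta(\e)\in[0,1]$, hence $\e m(\e)=\log(1/\e)+O(\e)$. Therefore $y_{\alpha,\e}=y+\alpha\e m(\e)=y+\alpha\log(1/\e)+O(\e)$, which gives
\begin{equation*}
e^{-y_{\alpha,\e}}=e^{-y}\,\e^{\alpha}\,e^{-\alpha\theta(\e)\e},
\qquad
\e^{-1}e^{-y_{\alpha,\e}}=\e^{\alpha-1}e^{-y}\,e^{-\alpha\theta(\e)\e}.
\end{equation*}
Because $\alpha\ge 1$ forces $\e^{\alpha-1}\le 1$ for $\e<1$, and $y\ge -M$ gives $e^{-y}\le e^{M}$, and $|e^{-\alpha\theta(\e)\e}-1|\le\alpha\e$ for small $\e$, it follows that $|\e^{-1}e^{-y_{\alpha,\e}}-\e^{\alpha-1}e^{-y}|\le\alpha e^{M}\e$ uniformly over all admissible $y\ge -M$; the analogous estimate (with the constant adjusted) holds for $|\e^{-1}e^{-(\e+y_{\alpha,\e})}-\e^{\alpha-1}e^{-y}|$.

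Next I would dispose of the remaining error terms. For the lower bound (\ref{lwbdd}) the only extra contribution is $-c(\e)$ with $c(\e)\to 0$, so the lower half of (\ref{unifconveqn}) follows immediately once $\e$ is small. For the upper bound (\ref{upbdd}) I would note three things, all uniform over $y\ge -M$: that $y_{\alpha,\e}\to+\infty$ (again from $\alpha\e m(\e)\sim\alpha\log(1/\e)$), so $c(y_{\alpha,\e})\to 0$; that $e^{-(\e+y_{\alpha,\e})}\le e^{M}\e^{\alpha}\to 0$; and that, using $\sum_{r\ge 2}x^{r}/r^{2}\le x^{2}\sum_{r\ge 2}r^{-2}\le x^{2}$ for $0\le x\le 1$ with $x=e^{-(\e+y_{\alpha,\e})}$, the series term is bounded by $(b^{*})^{-1}\e^{-1}e^{-2(\e+y_{\alpha,\e})}\le (b^{*})^{-1}e^{2M}\e^{2\alpha-1}\le (b^{*})^{-1}e^{2M}\e$, which again vanishes since $2\alpha-1\ge 1$. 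Assembling these pieces, both inequalities of (\ref{unifconveqn}) hold as soon as $\e<\e_{0}$, where $\e_{0}$ is chosen (depending only on $M$, $\delta$, $\alpha$ and the fixed constant $b^{*}<1$) so that each of the listed $o(1)$ quantities is below $\delta$.

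The whole argument is a matter of uniform bookkeeping; the one point requiring care is ensuring that the constant $c(y_{\alpha,\e})$ in (\ref{upbdd}) — which Proposition~\ref{qfactProp} only asserts to decay as its argument grows — is uniformly small, and this is exactly where one uses the divergence $y_{\alpha,\e}\to\infty$ (uniform in $y\ge -M$) produced by the scaling $m(\e)\sim\e^{-1}\log(1/\e)$. I do not anticipate any genuine obstacle beyond keeping track of the hypothesis $\alpha\ge 1$, which is what renders the powers $\e^{\alpha-1}$ and $\e^{2\alpha-1}$ harmless.
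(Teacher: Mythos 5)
Your argument is correct and fills in precisely the bookkeeping that the paper leaves implicit when it calls Corollary~\ref{qfactCor} an ``immediate consequence'' of Proposition~\ref{qfactProp}: you sandwich $\log f_{\alpha}(y,\e)-\A(\e)$ between (\ref{lwbdd}) and (\ref{upbdd}), use $\e m(\e)=\log(1/\e)+O(\e)$ to identify both $\e^{-1}e^{-y_{\alpha,\e}}$ and $\e^{-1}e^{-(\e+y_{\alpha,\e})}$ with $\e^{\alpha-1}e^{-y}$ up to an $O(\e)$ error uniform over $y\ge -M$, and then check that $c(\e)$, $c(y_{\alpha,\e})$, $e^{-(\e+y_{\alpha,\e})}$, and the $r\ge 2$ tail all vanish uniformly. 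This matches the intended route, with the role of $\alpha\ge 1$ correctly isolated as making $\e^{\alpha-1}$ and $\e^{2\alpha-1}$ bounded, and the uniform divergence $y_{\alpha,\e}\to\infty$ correctly invoked to control $c(y_{\alpha,\e})$.
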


\begin{proof}[Proof of Proposition~\ref{qfactProp}]

Let us first introduce the plan of this proof. There are three steps. In the first step we consider $\log f_{\alpha}(y,\e)$ as $y$ goes to infinity and compute its limit. In the second step we take the derivative of this expression in $y$ and notice that this is a much more manageable series (the logarithms disappear) so that we can approximate it to any degree of accuracy we need. Finally, in the third step, we write out the bounds from below and above and integrate them to obtain the claimed estimates.

{\bf Step 1:} First observe that for any fixed $\e$, $\lim_{y\rightarrow \infty} \log f_{\alpha}(y,\e)$ exists and (by dominated convergence) is given by
\begin{equation}\label{ylimitformula}
\lim_{y\rightarrow \infty} \log f_{\alpha}(y,\e)  = -\sum_{r=1}^{\infty} \frac{1}{r} \left(\frac{e^{-r\e}}{1-e^{-r\e}}\right)= -\sum_{n=1}^{\infty} \sum_{r=1}^{\infty} \frac{1}{r} e^{-nr\e} =\log \prod_{n=1}^{\infty} (1-e^{-n\e}).
\end{equation}
We may say even more.
\begin{lemma}\label{Aepslemma}
For all $\alpha\geq 1$ and all $\delta>0$ there exists $\e_0>0$ such that for all $\e<\e_0$
\begin{equation}
\lim_{y\rightarrow \infty} \log f_{\alpha}(y,\e) - \A(\e) \in [-\delta,0]
\end{equation}
\end{lemma}

\begin{proof}
The Dedekind eta function
\begin{equation*}
\eta(\tau) = e^{\frac{\iota\pi\tau}{12}} \prod_{n=1}^{\infty} (1-e^{2\pi \iota n\tau})
\end{equation*}
has the modular property that
\begin{equation*}
\eta(-\tau^{-1}) = \sqrt{-\iota\tau} \eta(\tau).
\end{equation*}
Taking $\tau = \tfrac{\iota\e}{2\pi}$ and recalling the definition of $f_{\alpha}$ we have
\begin{equation*}
\lim_{y\rightarrow \infty} f_{\alpha}(y,\e) = e^{\e/24} \eta\left(\frac{\iota \e}{2\pi}\right) =e^{\e/24} \sqrt{2\pi \e^{-1}} e^{-\frac{\pi^2}{6} \e^{-1}} \prod_{n=1}^{\infty}(1-e^{-\e^{-1}(2\pi)^2n}).
\end{equation*}
Note that the second equality follows from the modular property of the Dedekind eta function.

This implies that
\begin{equation*}
\lim_{y\rightarrow \infty} \log f_{\alpha}(y,\e) = \tfrac{\e}{24} + \A(\e) + \log\prod_{n=1}^{\infty}(1-e^{-\e^{-1}(2\pi)^2n}).
\end{equation*}
All that remains, therefore, is to show that for any $\delta$ we can choose $\e_0$ such that for all $\e<\e_0$
\begin{equation*}
\log\prod_{n=1}^{\infty}(1-e^{-\e^{-1}(2\pi)^2n}) \in [-\delta,0].
\end{equation*}
The upper bound is clear. For the lower bound we use the following inequality
\begin{equation*}
\log(1-e^{-a}) \geq -\frac{1}{a^2} \quad \textrm{ for } a>0.
\end{equation*}
Applying this inequality shows that
\begin{equation*}
\log\prod_{n=1}^{\infty}(1-e^{-\e^{-1}(2\pi)^2n}) \geq - \sum_{n=1}^{\infty} \frac{1}{(\e^{-1}(2\pi)^2n)^2}
\end{equation*}
which, for $\e$ small enough is certainly above $-\delta$.
\end{proof}

{\bf Step 2:} Let us first note the identity which holds for $q<1$
\begin{equation*}
\log \prod_{n=1}^{N}(1-q^n) = \sum_{n=1}^{N} \log (1-q^n) = -\sum_{n=1}^{N}\sum_{r=1}^{\infty} \frac{1}{r} q^{nr} = -\sum_{r=1}^{\infty} \frac{q^r}{r} \left(\frac{1-q^{Nr}}{1-q^r}\right).
\end{equation*}
Now set $q=e^{-\e}$ and $N = \e^{-1} y_{\alpha,\e}$ to obtain
\begin{equation}\label{logseries}
\log f_{\alpha}(y,\e) = -\sum_{r=1}^{\infty} \frac{e^{-r\e}}{r} \left(\frac{1 - e^{-r y_{\alpha,\e}}}{1-e^{-r\e}}\right).
\end{equation}
Observe that since $y_{\alpha,\e}\geq 0$, each term in the summation above is positive. This expression becomes more manageable if we differentiate both sides.

\begin{lemma}\label{difflemma}
For all $y_{\alpha,\e}\geq 0$
\begin{equation}\label{diffseries}
\frac{\partial}{\partial y} \log f_{\alpha}(y,\e) = -\sum_{r=1}^{\infty} \frac{e^{-r(\e+y_{\alpha,\e})}}{1-e^{-r\e}}.
\end{equation}
\end{lemma}
\begin{proof}
It suffices that for $\e>0$ fixed and $y_{\alpha,\e}\geq 0$, we prove that (a) there exists $\eta>0$ such that the right-hand side of (\ref{logseries}) is uniformly convergent (in $r$) in $[y_{\alpha,\e}-\eta,y_{\alpha,\e}+\eta]$, (b) each term in the series has a continuous derivative, and (c) the term-wise differentiated series given on the right-hand side of equation (\ref{diffseries}) is also uniformly convergent (in $r$) in $[y_{\alpha,\e}-\eta,y_{\alpha,\e}+\eta]$.

It is clear that (b) is true. To prove (a) and (c) consider the tail of the series.  Let us focus on (a) for the moment. As observed already, the summands in equation (\ref{logseries}) are all positive. Since $\e$ is fixed, assume that we only consider tail terms for which $r\geq  \lceil \e^{-1}\log 2 \rceil$. This means that we can bound $1-e^{-r\e}\geq 1/2$. Therefore it suffices to show the following converges uniformly for $x$ varying in $[y_{\alpha,\e}-\eta,y_{\alpha,\e}+\eta]$
\begin{equation*}
\sum_{r=\lceil \e^{-1}\log 2 \rceil}^{\infty} \frac{e^{-r\e}}{r} \left(\frac{1-e^{-rx}}{1/2}\right).
\end{equation*}
Since we could choose $\eta$ small enough so that $\e+x> 0$ for all $x\in[y_{\alpha,\e}-\eta,\eta+y_{\alpha,\e}]$ it is clear that this positive series is bounded above by a convergent geometric series and hence is uniformly convergent in $r$. For (c) a similar logic applies.
\end{proof}

{\bf Step 3:}
We immediately get an upper bound on the derivative of the logarithm
\begin{equation}\label{logupbdd}
\frac{\partial}{\partial y} \log f_{\alpha}(y,\e) \leq -\frac{e^{-(\e+y_{\alpha,\e})}}{\e}.
\end{equation}
This is easy to see since each term of the summation in (\ref{diffseries}) is positive. This bound comes from taking only the $r=1$ term and using the inequality $1-e^{-a}\leq a$ for $a\geq 0$.

In order to get a lower bound on the derivative of the logarithm we must work a little harder. Fix $b^*<1$ and note that there exists an $a^*>0$ such that
\begin{equation}\label{ineq1}
1-e^{-a}\geq b^* a \quad \textrm{ for } 0\leq a\leq a^*.
\end{equation}
\begin{lemma}\label{loglowbdd}
Define $r^*=\lfloor \e^{-1}a^*\rfloor$ with $a^*$ as above. Then
\begin{equation}
\frac{\partial}{\partial y} \log f_{\alpha}(y,\e) +\e^{-1} e^{-(\e + y_{\alpha,\e})} \geq -e^{-(\e+y_{\alpha,\e})} -\sum_{r=2}^{\infty} \frac{e^{-r(\e+y_{\alpha,\e})}}{b^*r\e} - \sum_{r=r^*+1}^{\infty} \frac{e^{-r(\e+y_{\alpha,\e})}}{1-e^{-a^*}}.
\end{equation}
\end{lemma}
\begin{proof}
Start by splitting (\ref{diffseries}) into three parts as
\begin{equation}\label{threeparteqn}
\frac{\partial}{\partial y} \log f_{\alpha}(y,\e) = -\frac{e^{-(\e+y_{\alpha,\e})}}{1-e^{-\e}}- \sum_{r=2}^{r^*} \frac{e^{-r(\e+y_{\alpha,\e})}}{1-e^{-r\e}} - \sum_{r=r^*+1}^{\infty} \frac{e^{-r(\e+y_{\alpha,\e})}}{1-e^{-r\e}}.
\end{equation}
The first term is controlled by the following bound:
\begin{equation*}
\frac{1}{a} -\frac{1}{1-e^{-a}} \geq -1
\end{equation*}
from which it follows that
\begin{equation*}
-\frac{e^{-(\e+y_{\alpha,\e})}}{1-e^{-\e}} \geq -e^{-(\e+y_{\alpha,\e})}-\e^{-1} e^{-(\e+y_{\alpha,\e})}.
\end{equation*}

For the second term we may apply inequality (\ref{ineq1}) which gives
\begin{equation}\label{middleterm}
\sum_{r=2}^{r^*} \frac{e^{-r(\e+y_{\alpha,\e})}}{1-e^{-r\e}} \leq \sum_{r=2}^{r^*} \frac{e^{-r(\e+y_{\alpha,\e})}}{b^*r\e}\leq \sum_{r=2}^{\infty} \frac{e^{-r(\e+y_{\alpha,\e})}}{b^*r\e}.
\end{equation}
For the third term we use the fact that $1-e^{-a}$ is increasing for $a\geq 0$, which gives
\begin{equation}\label{thirdterm}
\sum_{r=r^*+1}^{\infty} \frac{e^{-r(\e+y_{\alpha,\e})}}{1-e^{-r\e}} \leq \sum_{r=r^*+1}^{\infty} \frac{e^{-r(\e+y_{\alpha,\e})}}{1-e^{-a^*}}.
\end{equation}
Combining these three bounds gives the lemma.
\end{proof}

The upper bound of (\ref{logupbdd}) and the lower bound of Lemma~\ref{loglowbdd} we have shown deal with the derivative of $\log f_{\alpha}(y,\e)$. In order to conclude information about $\log f_{\alpha}(y,\e)$ itself we employ the following:
\begin{lemma}\label{diffineq}
Consider $a\in \R$ and $y_0<\infty$, and a continuous differentiable function $g(y)$ such that $\lim_{y\rightarrow \infty}g(y)$ exists and equals $g^{\infty}\in \R$. Then if
\begin{equation*}
(1)\quad \frac{\partial}{\partial y} g(y) \leq -e^{-y} a \qquad \textrm{ or } \qquad (2)\quad \frac{\partial}{\partial y} g(y) \geq -e^{-y} \qquad \textrm{ for } y\geq y_0
\end{equation*}
then (respectively)
\begin{equation*}
(1)\quad g(y) \geq g^{\infty} + e^{-y}a \qquad \textrm{ or } \qquad (2)\quad g(y) \leq g^{\infty} + e^{-y}a \qquad \textrm{ for } y\geq y_0.
\end{equation*}
\end{lemma}
\begin{proof}
First observe that if $h(y)$ has a limit $h^{\infty}$ at $y=\infty$, then
\begin{equation*}
\int_{y}^{\infty} \frac{\partial}{\partial x} h(x) dx  = h^{\infty} - h(y).
\end{equation*}
Thus if $\frac{\partial}{\partial y} h(y)\leq 0$ for all $y\geq y_0$ then also
\begin{equation*}
 h^{\infty} - h(y) \leq 0.
\end{equation*}
Applying this and noting that $e^{-y} = \frac{\partial}{\partial y} \left(- e^{-y}\right)$, we find that by taking $h(y)= g(y) - e^{-y}a$ we get (since $e^{-\infty}=0$) the desired inequality (1). Similarly we derive (2).
\end{proof}
Now we can complete the proof of Proposition~\ref{qfactProp}.
From equation (\ref{ylimitformula}) of Step 1 we know that a limit exists for $\log f_{\alpha}(y,\e)$ as $y$ goes to infinity. Thus we may apply case (1) of Lemma~\ref{diffineq} and using the lower bound in Lemma~\ref{Aepslemma} along with the upper bound on the derivative given in (\ref{logupbdd}), we immediately conclude the desired lower bound of equation (\ref{lwbdd}).

To get the upper bound of equation (\ref{upbdd}) we apply case (2) of Lemma~\ref{diffineq} and using the upper bound in Lemma~\ref{Aepslemma} along with the lower bound on the derivative given in (\ref{loglowbdd}), we find that
\begin{equation}\label{almostthereeqn}
\log f_{\alpha}(y,\e) -\A(\e) -\e^{-1} e^{-(\e+y_{\alpha,\e})} \leq e^{-(\e+y_{\alpha,\e})} + (b^*)^{-1}\e^{-1} \sum_{r=2}^{\infty} \frac{e^{-r(\e+y_{\alpha,\e})}}{r^2} +\sum_{r=r^*+1}^{\infty} \frac{e^{-r(\e+y_{\alpha,\e})}}{r(1-e^{-a^*})}.
\end{equation}
The last summation above can be bounded using the value of $r^*$ as
\begin{equation*}
\sum_{r=r^*+1}^{\infty} \frac{e^{-r(\e+y_{\alpha,\e})}}{r(1-e^{-a^*})} \leq C\e \frac{1}{1-e^{-(\e + y_{\alpha,\e})}}
\end{equation*}
which is itself bounded by a constant $c$ uniformly in $\e$ and $y_{\alpha,e}\geq 0$. Moreover, this constant can be taken to be going to zero as $y_{\alpha,\e}$ increases. Putting this bound back into (\ref{almostthereeqn}) gives equation (\ref{ylimitformula}) and hence concludes the proof of Proposition~\ref{qfactProp}.
\end{proof}

\subsubsection{Inductive proof of Theorem~\ref{qwhitconvTHM}}\label{indproofsec}

Let us first explain the plan of the proof. We prove the statement of Theorem~\ref{qwhitconvTHM} by induction on $\ell$.  Step 1 is to restate in (\ref{epsreceqn}) the recursion equation (\ref{receqn}) given the scaling we are now considering. The base case $\ell=0$ for the inductive proof we give is trivially satisfied. Step 2 is to record uniform bounds on the terms in (\ref{epsreceqn}). Here we make critical use of Proposition~\ref{qfactProp}. Finally, Step 3 is to prove the inductive hypothesis. To prove the bounds in part 1 of the theorem we can employ the bounds from  Step 2 to reduce the problem to a similar consideration as in the proof of Proposition~\ref{whitProp}. The proof of the convergence result in part 2 of the theorem follows from the inductive hypothesis along with the uniformity of the bounds given in Proposition~\ref{qfactProp}.

{\bf Step 1:} Let us recall the recursion equation (\ref{receqn}), which can be rewritten (see \cite{GLOqlim}) as
\begin{equation}\label{epsreceqn}
\psi^{\e}_{\ul{\nu}{\ell+1}}(\ul{x}{\ell+1})= \sum^{\qquad\e}_{\ul{x}{\ell+1}\in R_{\e}(\ul{x}{\ell+1})} \Delta^{\e}(\ul{x}{\ell}) e^{\iota \nu_{\ell+1}\left(\sum_{i=1}^{\ell+1} x_{\ell+1,i} - \sum_{i=1}^{\ell} x_{\ell,i}\right)} Q^{\e}_{\ell+1,\ell}(\ul{x}{\ell+1},\ul{x}{\ell})\psi^{\e}_{\ul{\nu}{\ell}}(\ul{x}{\ell})
\end{equation}
where
\begin{eqnarray*}
\sum^{\qquad\e}_{\ul{x}{\ell}\in R_{\e}(\ul{x}{\ell+1})} &=& \sum_{x_{\ell,1}\in R^{\e}_1} \e \cdots  \sum_{x_{\ell,\ell}\in R^{\e}_\ell} \e\\
R^{\e}_i &=& \{x_{\ell,i}\in \e\Z : -\e m(\e) +x_{\ell+1,i+1} \leq x_{\ell,i} \leq \e m(\e) +x_{\ell+1,i}\}
\end{eqnarray*}
and where
\begin{eqnarray*}
\Delta^{\e}(\ul{x}{\ell})  &=& \prod_{i=1}^{\ell-1} e^{-\A(\e)}f_2(x_{\ell,i} - x_{\ell,i+1},\e),\\
Q^{\e}_{\ell+1,\ell}(\ul{x}{\ell+1},\ul{x}{\ell}) &=& \prod_{i=1}^{\ell} e^{\A(\e)}f_1(x_{\ell+1,i}-x_{\ell,i},\e)^{-1}e^{\A(\e)}f_1(x_{\ell,i}-x_{\ell+1,i+1},\e)^{-1}.
\end{eqnarray*}

When $\ell=0$, we have
\begin{equation*}
\psi^{\e}_{\nu_1}(x_{1,1}) = e^{\iota \nu_1 x_{1,1}}.
\end{equation*}

This actually is equal to $\psi_{\nu_1}(x_{1,1})$ for all $\e$, hence part 2 of the Theorem is satisfied. Moreover, since $\ell=0$, part 1 is trivially satisfied by taking the polynomial to be a constant greater than 1. Therefore we have established the $\ell=0$ base case of the theorem. The rest of the proof is devoted to proving the induction step.

{\bf Step 2:} Assume now that we have proved the theorem for $\ell-1$ and we will show how the theorem for $\ell$ follows. From the inductive hypothesis we have the following bounds which are valid for $\underline{x}_{\ell}\in R_{\e}(\underline{x}_{\ell+1})$ and for $\e$ sufficiently small (here $c$ is a positive constant which varies between lines and $b^*$ is arbitrary but strictly bounded above by 1):
\begin{eqnarray}\label{psideltaqbdd}
\nonumber |\psi^{\e}_{\ul{\nu}{\ell}}(\ul{x}{\ell})| &\leq& \poly_{\ell,\sigma(\ul{x}{\ell})}(\ul{x}{\ell}) \prod_{i\in \sigma(\ul{x}{\ell})} \exp\{-e^{-(x_{\ell,i}-x_{\ell,i+1})/2}\}\\
\nonumber |\Delta^{\e}(\ul{x}{\ell})| &\leq& \exp\left\{\sum_{i=1}^{\ell-1} \left(c + (b^*)^{-1} \sum_{r=1}^{\infty} \frac{\e^{2r-1}e^{-r(x_{\ell,i}-x_{\ell,i+1})}}{r^2}\right)\right\}\\
|Q^{\e}_{\ell+1,\ell}(\ul{x}{\ell+1},\ul{x}{\ell})| & \leq & e^{c} \exp\left\{\sum_{i=1}^{\ell} \left( -e^{-(x_{\ell+1,i}-x_{\ell,i})} - e^{-(x_{\ell,i}-x_{\ell+1,i+1})}\right)\right\}.
\end{eqnarray}

The first inequality is directly from part 1 of the theorem, for $\ell-1$. The second inequality relies on the bound given in equation (\ref{upbdd}) when $\alpha=2$. The domain $R_{\e}(\ul{x}{\ell+1})$ for $\ul{x}{\ell}$ corresponds to the condition that $y_{2,\e}\geq 0$. The exact statement of the inequality uses the fact that
\begin{equation*}
(\e^{-1}+1)e^{-(\e+y_{\alpha,\e})} \leq (b^*)^{-1} \e^{-1}e^{-(\e+y_{\alpha,\e})}
\end{equation*}
for $b^{*}<1$ fixed and $\e$ small enough; it also uses the fact that $e^{-r\e}<1$. The third inequality comes immediately from equation (\ref{lwbdd}).

Observe that the $\Delta^{\e}$ term has the potential to become large. This occurs when $x_{\ell,i}-x_{\ell,i+1}$ approaches the lower bound of its values on the domain for $\ul{x}{\ell}$ which corresponds to $x_{\ell,i}-x_{\ell,i+1} = -2\log \e^{-1}$. This bound on the domain is due to the fact that in Definition \ref{scaleDef}, $p_{\ell,k}$ satisfied the interlacing condition. The growth of $\Delta^{\e}$ is, however, canceled by the rapid decay of the $Q^{\e}$ term, as we now show:

\begin{lemma}\label{unifupperboundLemma}
Consider $\ul{x}{\ell+1}$ such that $x_{\ell+1,i}-x_{\ell+1,i+1}\geq -2\log \e^{-1}$, $\ul{x}{\ell}\in R_{\e}(\ul{x}{\ell+1})$ and $b^*$ close to (though less than) one. Then there exist constants $c^*\in (0,1)$ and $C>0$ such that
\begin{equation*}
|\Delta^{\e}(\ul{x}{\ell+1}) Q^{\e}_{\ell+1,\ell}(\ul{x}{\ell+1},\ul{x}{\ell})|\leq C|Q^{\e}_{\ell+1,\ell}(\ul{x}{\ell+1},\ul{x}{\ell})|^{c^*}.
\end{equation*}
\end{lemma}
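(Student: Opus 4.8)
The plan is to sidestep the crude estimates (\ref{upbdd}) and (\ref{lwbdd}) -- applied term by term they overshoot the $\e$-scaling by a power of $\e$ -- and instead use the \emph{exact} series (\ref{logseries}) together with (\ref{ylimitformula}) and Lemma~\ref{Aepslemma}. These give, for every $\alpha\ge1$ and every admissible $y$, the exact identity
\begin{equation*}
\log f_{\alpha}(y,\e)-\A(\e)=\theta_{\e}+\phi_{\e}\bigl(\e+y+\alpha\e m(\e)\bigr),\qquad \phi_{\e}(s):=\sum_{r\ge1}\frac{e^{-rs}}{r(1-e^{-r\e})},
\end{equation*}
where $\theta_{\e}:=\lim_{y'\to\infty}\log f_{1}(y',\e)-\A(\e)$ depends only on $\e$ and, crucially, \emph{not} on $\alpha$, while $\phi_{\e}$ is positive, strictly decreasing, and \emph{convex} on $(0,\infty)$ -- convexity because $\phi_{\e}$ is a positive combination of the convex functions $s\mapsto e^{-rs}$. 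First I would record this decomposition and these properties of $\phi_{\e}$ (each a one-line check).

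Next I would substitute into $\Delta^{\e}(\ul{x}{\ell+1})=\prod_{i=1}^{\ell}e^{-\A(\e)}f_{2}(x_{\ell+1,i}-x_{\ell+1,i+1},\e)$ and into $Q^{\e}_{\ell+1,\ell}$. Writing $a_{i}=x_{\ell+1,i}-x_{\ell,i}$ and $b_{i}=x_{\ell,i}-x_{\ell+1,i+1}$ (so $a_{i}+b_{i}=x_{\ell+1,i}-x_{\ell+1,i+1}$), and $s_{i}^{a}:=\e+a_{i}+\e m(\e)$, $s_{i}^{b}:=\e+b_{i}+\e m(\e)$, the membership $\ul{x}{\ell}\in R_{\e}(\ul{x}{\ell+1})$ gives precisely $s_{i}^{a},s_{i}^{b}\ge\e>0$ (so $\phi_{\e}$ is evaluated in its domain), and the elementary identity $\e+(a_{i}+b_{i})+2\e m(\e)=s_{i}^{a}+s_{i}^{b}-\e$ identifies the argument of $\phi_{\e}$ in the $i$-th factor of $\Delta^{\e}$ (which uses $\alpha=2$). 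Since $\Delta^{\e}$ and $Q^{\e}_{\ell+1,\ell}$ are both positive, taking logarithms yields
\begin{equation*}
\log\Delta^{\e}(\ul{x}{\ell+1})=\sum_{i=1}^{\ell}\bigl[\theta_{\e}+\phi_{\e}(s_{i}^{a}+s_{i}^{b}-\e)\bigr],\qquad -\log Q^{\e}_{\ell+1,\ell}=\sum_{i=1}^{\ell}\bigl[2\theta_{\e}+\phi_{\e}(s_{i}^{a})+\phi_{\e}(s_{i}^{b})\bigr].
\end{equation*}

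The core step is a single pointwise inequality: since $s_{i}^{a}+s_{i}^{b}\ge2\e$ we have $s_{i}^{a}+s_{i}^{b}-\e\ge\tfrac12(s_{i}^{a}+s_{i}^{b})$, so monotonicity and then convexity of $\phi_{\e}$ give $\phi_{\e}(s_{i}^{a}+s_{i}^{b}-\e)\le\phi_{\e}\bigl(\tfrac12(s_{i}^{a}+s_{i}^{b})\bigr)\le\tfrac12\bigl(\phi_{\e}(s_{i}^{a})+\phi_{\e}(s_{i}^{b})\bigr)$. Summing over $i$, the $\theta_{\e}$ contributions in $\log\Delta^{\e}$ cancel exactly against $\tfrac12$ of the $2\theta_{\e}$ contributions in $-\log Q^{\e}$, leaving $\log\Delta^{\e}(\ul{x}{\ell+1})\le-\tfrac12\log Q^{\e}_{\ell+1,\ell}$; exponentiating gives $\Delta^{\e}Q^{\e}\le(Q^{\e})^{1/2}$, i.e.\ the lemma with $c^{*}=\tfrac12$ and $C=1$. (This route needs neither the hypothesis on $b^{*}$ nor the explicit bound $x_{\ell+1,i}-x_{\ell+1,i+1}\ge-2\log\e^{-1}$ beyond what $R_{\e}$ already supplies.)

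The only genuine obstacle is conceptual rather than computational: one must resist bounding $\log\Delta^{\e}$ by an absolute constant -- it genuinely grows like $\e^{-1}$ -- and instead notice that the same exponential differences feed $\Delta^{\e}$ through the combined gap $a_{i}+b_{i}$ and feed $-\log Q^{\e}$ through $a_{i}$ and $b_{i}$ separately, so that a convexity/AM--GM split transfers the growth of $\Delta^{\e}$ into half the growth of $-\log Q^{\e}$. The routine points to verify are that $\e^{-1}$ times each relevant difference is a nonnegative integer (so the floors and $q$-Pochhammers in $f_{\alpha}$ behave as written), which follows from Definition~\ref{scaleDef} and the interlacing of the $p$-arrays, and that $\theta_{\e}$ is indeed $\alpha$-independent, which is immediate from (\ref{ylimitformula}).
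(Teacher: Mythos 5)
Your proof is correct, and it takes a genuinely different route from the paper's. The paper works from the one-sided bounds of Proposition~\ref{qfactProp} -- the upper bound (\ref{upbdd}) on $\log f_{2}$ to control $\Delta^{\e}$ and the lower bound (\ref{lwbdd}) on $\log f_{1}$ to control $Q^{\e}_{\ell+1,\ell}$, as packaged in (\ref{psideltaqbdd}) -- and then reduces the claim to a pointwise inequality in two auxiliary variables $a,b\ge0$, verified by checking the value at the corner $a=b=0$ (which is where $b^{*}$ close to $1$ is needed, to ensure $(b^{*})^{-1}\zeta(2)<2(1-c^{*})$) and then a monotonicity argument to cover all $(a,b)$. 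You instead exploit the \emph{exact} series decomposition $\log f_{\alpha}(y,\e)-\A(\e)=\theta_{\e}+\phi_{\e}(\e+y+\alpha\e m(\e))$ coming from (\ref{logseries}) and (\ref{ylimitformula}), observe that $\phi_{\e}$ is decreasing and convex, and notice that the $i$-th factor of $\Delta^{\e}$ feeds on $s_{i}^{a}+s_{i}^{b}-\e$ while $Q^{\e}$ feeds separately on $s_{i}^{a}$ and $s_{i}^{b}$, so that a midpoint-convexity inequality transfers the whole growth of $\log\Delta^{\e}$ into half of $-\log Q^{\e}$, with the $\alpha$-independent constants $\theta_{\e}$ cancelling identically. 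This has three advantages: it produces the explicit constants $c^{*}=1/2$, $C=1$ (whereas the paper only gets $c^{*}$ small); it makes no use of $b^{*}$ at all; and it needs only $s_{i}^{a},s_{i}^{b}\ge\e$, which is exactly the $R_{\e}$ membership, so the separate hypothesis on the level-$(\ell+1)$ gaps is redundant. The paper's route, by contrast, is what you get if you insist on reusing the asymptotic estimates already proved for other purposes in Proposition~\ref{qfactProp} rather than going back to the underlying series; it is more modular but demonstrably lossier. One small caveat worth recording if you write this up: the cancellation of the $\theta_{\e}$'s is exact only when you pair the $\ell$ factors of $\Delta^{\e}(\ul{x}{\ell+1})$ against the $2\ell$ factors of $Q^{\e}_{\ell+1,\ell}$ as the lemma is literally stated; if one instead works with $\Delta^{\e}(\ul{x}{\ell})$ (which is what actually multiplies $Q^{\e}_{\ell+1,\ell}$ in the recursion (\ref{epsreceqn})), the $i$-th factor involves $s_{i}^{b}+s_{i+1}^{a}-\e$ rather than $s_{i}^{a}+s_{i}^{b}-\e$, the index ranges are shifted by one, and a single residual $-\theta_{\e}$ survives the cancellation, giving $C=e^{-\theta_{\e}}$ which is still uniformly bounded as $\e\to0$ by Lemma~\ref{Aepslemma} -- so the argument goes through in either reading.
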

\begin{proof}
Using the bounds of (\ref{psideltaqbdd}) showing this suffices to proving that for each $1\leq i\leq \ell$,
\begin{equation*}
(b^*)^{-1} \sum_{r=1}^{\infty} \frac{\e^{2r-1}e^{-r(x_{\ell+1,i}-x_{\ell+1,i+1})}}{r^2} \leq (c^*-1) \left(-e^{-(x_{\ell+1,i}-x_{\ell,i})} - e^{-(x_{\ell,i}-x_{\ell+1,i+1})}\right)
\end{equation*}
with $\ul{x}{\ell+1}$ and $\ul{x}{\ell}$ as specified in the statement of the lemma. Call $a=x_{\ell+1,i}-x_{\ell,i} + \log \e^{1}$ and $b=x_{\ell,i}-x_{\ell+1,i+1}+\log\e^{-1}$. Observe that the conditions on $\ul{x}{\ell+1}$ and $\ul{x}{\ell}$ then reduce to the condition that $a,b\geq 0$ and the desired inequality reduces to proving that
\begin{equation*}
(b^*)^{-1} \sum_{r=1}^{\infty} \frac{\e^{2r-1}e^{-ra}e^{-rb}}{r^2} \leq (c^*-1) \left(-e^{-a} - e^{-b}\right)
\end{equation*}
for all $a,b\geq 0$. It is now straightforward to prove the above inequality. For $a=b=0$ this reduces to showing that
\begin{equation*}
(b^*)^{-1} \zeta(2) \leq 2(1-c^*)
\end{equation*}
which follows for $b^*$ close enough to 1 and $c^*$ close enough to 0 (since $\zeta(2)=\pi^2/6<2$).
To extend the proof to all $a,b$ simply observe that the derivative of the left-hand side in any positive direction in the $(a,b)$ plane, is less than the corresponding derivative of the right-hand side. Showing this (which only uses the inequality $\log(1-x)\leq -x$ for $x<1$) completes the proof of the lemma.
\end{proof}

{\bf Step 3:} We may now prove that given the inductive hypothesis for $\ell-1$, the theorem is satisfied for $\ell$. Let us start with part 1 of the theorem. By the recursive formula (\ref{epsreceqn}), the triangle inequality and Lemma~\ref{unifupperboundLemma} we have
\begin{equation}\label{epsreceqnbound}
|\psi^{\e}_{\ul{\nu}{\ell+1}}(\ul{x}{\ell+1})|\leq  \sum^{\qquad\e}_{\ul{x}{\ell}\in R_{\e}(\ul{x}{\ell+1})} |Q^{\e}_{\ell+1,\ell}(\ul{x}{\ell+1},\ul{x}{\ell})|^{c^*} |\psi^{\e}_{\ul{\nu}{\ell}}(\ul{x}{\ell})|
\end{equation}
for the constant $c^*\in (0,1)$ given in Lemma~\ref{unifupperboundLemma}. From the inductive hypothesis, $|\psi^{\e}_{\ul{\nu}{\ell}}(\ul{x}{\ell})|$ is bounded by a polynomial in $\ul{x}{\ell}$ times a collection of double exponential terms. The double exponential terms are bounded by 1, so let us bound $|\psi^{\e}_{\ul{\nu}{\ell}}(\ul{x}{\ell})|$ by just a polynomial. Splitting that polynomial into monomials and using the bound given in (\ref{psideltaqbdd}) for $Q^\e$ we find that the summation in (\ref{epsreceqnbound}) factors into a product of terms like
\begin{equation*}
\sum_{x_{\ell,i}\in R^{\e}_i} \e (x_{\ell,i})^m \exp\{-c^* e^{-(x_{\ell+1,i}-x_{\ell,i})}\}\exp\{- c^* e^{-(x_{\ell,i}-x_{\ell+1,i+1})}\},
\end{equation*}
where $m\in \Zgeqzero$.

This summation can be bounded by the integral it converges to which is
\begin{equation*}
\int_{-\infty}^{\infty} dx_{\ell,i} (x_{\ell,i})^m \exp\{-c^* e^{-(x_{\ell+1,i}-x_{\ell,i})}\}\exp\{- c^* e^{-(x_{\ell,i}-x_{\ell+1,i+1})}\}.
\end{equation*}

Using the exact same argument as in the inductive step in the proof of Proposition~\ref{whitProp}, we can now complete the proof of part 1 of the theorem by bounding this integral, and then recombining all of the monomial terms. The only change between that proof and this is the factor of $c^*$ in the exponential. However, as one readily seems from Lemma~\ref{gumblemma}, this factor just carries through to the final formula and accounts for the $c^*$ in (\ref{qboundthmeqn}).

We turn now to proving part 2 in this inductive step. Fix a compact domain $D_{\ell+1}$ in which $\underline{x}_{\ell+1}$ may vary. We wish to show that on $D_{\ell+1}$, $\psi^{\e}_{\ul{\nu}{\ell+1}}(\ul{x}{\ell+1})$ converges uniformly as $\e$ goes to zero and $\ul{x}{\ell+1}$ is varied in $D$ to $\psi_{\ul{\nu}{\ell+1}}(\ul{x}{\ell+1})$. By Lemma~\ref{unifupperboundLemma}, for any fixed $\eta>0$ we can find a compact domain $D_{\ell}$ for the $\underline{x}_{\ell}$ variables, outside of which the summation in equation (\ref{epsreceqn}) is bounded in absolute value by $\eta$, uniformly over $\underline{x}_{\ell+1}$ varying in $D_{\ell+1}$.

On the domain $D_{\ell}$ we have uniform convergence of the summand of (\ref{epsreceqn}) to the analogous integrand of equation (\ref{recWhit}). This can be seen by combining the inductive hypothesis for $\ell-1$ and equation (\ref{unifconveqn}) of Corollary \ref{qfactCor}. It follows then that the Riemann sum of the summand of (\ref{epsreceqn}) over the domain $D_{\ell}$ converges to the integral over the domain $D_{\ell}$ in the recursive formula for the Whittaker functions, given in equation (\ref{recWhit}). Moreover, this convergence is uniform as $\underline{x}_{\ell+1}\in D_{\ell+1}$.

All that remains to finish the proof is to note that by Proposition~\ref{whitProp}, by taking $D_{\ell}$ large enough we can ensure an upper bound of $\eta$ on the absolute value of the contribution of the integral in (\ref{recWhit}) outside of $D_{\ell}$. Moreover, this upper bound is uniform over $\underline{x}_{\ell+1}\in D_{\ell+1}$. Since $\eta$ was arbitrary, this completes the inductive step for uniform convergence, and hence completes the proof of Theorem~\ref{qwhitconvTHM}.

\subsection{Weak convergence to the Whittaker process}\label{weakconvofan}

We start by defining the Whittaker process and measure as introduced and first studied by O'Connell \cite{OCon} in relation to the image of the O'Connell-Yor semi-discrete directed polymer under a continuous version of the tropical Robinson-Schensted-Knuth correspondence (see Section~\ref{OConmodel}).

\begin{definition}
For any $\tau>0$ set
\begin{equation}\label{thetaformula}
\theta_{\tau}(\ul{x}{N})=\int_{\R^N} \psi_{\ul{\nu}{N}}(\ul{x}{N}) e^{-\tau\sum_{j=1}^N\nu_j^2/2} m_N(\ul{\nu}{N})d\ul{\nu}{N}.
\end{equation}
\glossary{$\theta_{\tau}(\ul{x}{N})$}

For any $N$-tuple $a=(a_1,\dots,a_N)\in \R^N$ and $\tau>0$, define the {\it Whittaker process}\index{Whittaker process} as a measure on $\R^{\frac{N(N+1)}2}$ with density function (with respect to the Lebesgue measure) given by
\begin{equation}\label{Wdef}
\W{a;\tau}\big(\{T_{k,i}\}_{1\le i\le k\le N}\big)=e^{-\tau\sum_{j=1}^N a_j^2/2} \exp({\mathcal{F}_{\iota a}(T)})\,{\theta_{\tau}(T_{N,1},\dots,T_{N,N})}.
\end{equation}
\glossary{$\W{a;\tau}$}
The nonnegativity of the density follows from definitions. Integrating over the variables $T_{k,i}$ with $k<N$ yields the following {\it Whittaker measure}\index{Whittaker measure} with density function given by
\begin{equation}\label{WMdef}
\WM{a;\tau}\big(\{T_{N,i}\}_{1\le i\le N}\big)=e^{-\tau\sum_{j=1}^N a_j^2/2}\psi_{\iota a}(T_{N,1},\dots,T_{N,N})\,{\theta_{\tau}(T_{N,1},\dots,T_{N,N})}.
\end{equation}
\glossary{$\WM{a;\tau}$}
\end{definition}

\begin{remark}\label{diffformofmeasures}
The form of the Whittaker measure and the Macdonald measure may appear different. The Macdonald $P$ polynomial has degenerated into the Whittaker function $\psi$, the normalizing constant has become the Gaussian prefactor. The specialization of the Macdonald $Q$ function which degenerates to the measure we are presently studying is the Plancherel specialization. Whittaker functions only arise as limits of pure alpha Macdonald polynomials. In order to find the limit of the Placherel specialized Macdonald $Q$ function we use the fact that it can be represented via the torus inner product in terms of the Macdonald $P$ polynomial and the normalizing constant. This torus integral limits to the expression $\theta_{\tau}(\ul{x}{N})$ above. The Whittaker measure we are considering is, perhaps, more appropriately called the Plancherel Whittaker measure, as we will introduce in Section \ref{alphawhitprocessSec} the $\alpha$-Whittaker measure.
\end{remark}

\subsubsection{The Whittaker measure is a probability measure}\label{whitprobmeas}
It is not clear, a priori, that these measures integrate to 1, however we have the following:

\begin{proposition}\label{equiv1}
Fix $N\geq 1$. For all $a=(a_1,\ldots, a_N)\in \R^N $,
\begin{equation*}
\int_{\R^N} \WM{a;\tau}(\ul{x}{N}) d\ul{x}{N} =1.
\end{equation*}
\end{proposition}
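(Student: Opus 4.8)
The plan is to show that the Whittaker measure $\WM{a;\tau}$ integrates to one by unwinding the definition of $\theta_\tau$ and invoking the orthogonality (completeness) relation for Whittaker functions from Section~\ref{orthcom}. First I would substitute the definition \eqref{thetaformula} of $\theta_\tau(\ul{x}{N})$ into the density \eqref{WMdef}, so that
\begin{equation*}
\int_{\R^N}\WM{a;\tau}(\ul{x}{N})\,d\ul{x}{N}
= e^{-\tau\sum_j a_j^2/2}\int_{\R^N}\int_{\R^N}\psi_{\iota a}(\ul{x}{N})\,\psi_{\ul{\nu}{N}}(\ul{x}{N})\,e^{-\tau\sum_j\nu_j^2/2}\,m_N(\ul{\nu}{N})\,d\ul{\nu}{N}\,d\ul{x}{N}.
\end{equation*}
The goal is then to carry out the $\ul{x}{N}$-integral first. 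Note that $\psi_{\iota a}(\ul{x}{N})=\overline{\psi_{\ul{\lambda}{N}}}(\ul{x}{N})$ with $\ul{\lambda}{N}=\iota a$ real --- one must check the reality/conjugation convention, using that $\overline{\psi_{\ul{\lambda}{N}}(\ul{x}{N})}=\psi_{-\ul{\lambda}{N}}(\ul{x}{N})$ and the permutation-symmetry of the completeness relation, or alternatively work directly with the Givental representation \eqref{giventaldef} where $\mathcal{F}_{\iota a}(x)$ already matches $\mathcal{F}_0$ up to the real index term. Applying the first orthogonality relation of Section~\ref{orthcom},
\begin{equation*}
\int_{\R^N}\overline{\psi_{\ul{\lambda}{N}}}(\ul{x}{N})\,\psi_{\ul{\tilde\lambda}{N}}(\ul{x}{N})\,d\ul{x}{N}
= \frac{1}{N!\,m_N(\ul{\lambda}{N})}\sum_{\sigma\in S_N}\delta(\ul{\lambda}{N}-\sigma(\ul{\tilde\lambda}{N})),
\end{equation*}
the inner integral collapses the $\ul{\nu}{N}$-integration onto the $N!$ points $\sigma(\iota a)$, the factors $m_N(\ul{\nu}{N})$ cancel against $1/m_N(\ul{\lambda}{N})$, each of the $N!$ delta contributions gives $e^{-\tau\sum_j a_j^2/2}$ (the Gaussian is symmetric), and the prefactor $1/N!$ absorbs the count. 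What remains is exactly $e^{-\tau\sum_j a_j^2/2}\cdot e^{+\tau\sum_j a_j^2/2}=1$.

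The main obstacle is justifying the interchange of the order of integration (Fubini) and the manipulation of the distributional orthogonality relation, since $\psi_{\ul{\nu}{N}}$ is not integrable in $\ul{x}{N}$ on its own and the orthogonality relation holds only in a weak sense. The clean way to handle this is to regard the $\ul{\nu}{N}$-integral against the Gaussian weight $e^{-\tau\sum\nu_j^2/2}$ as producing a Schwartz-class test function in the $\ul{\nu}{N}$ variables (times $m_N$), pair it with the tempered distribution furnished by the completeness/orthogonality statement, and appeal to the controlled-decay estimates of Proposition~\ref{whitProp} (which give polynomial-times-double-exponential bounds on $|\psi_{\ul{\nu}{N}}(\ul{x}{N})|$ uniformly in $\ul{\nu}{N}\in\R^N$) to make the $\ul{x}{N}$-integral absolutely convergent after the Gaussian regularization. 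One should also remark that nonnegativity of $\WM{a;\tau}$ (already noted after \eqref{WMdef}) together with this computation upgrades ``integrates to one'' to ``is a probability density.'' I would present this as a short argument citing Section~\ref{orthcom} and Proposition~\ref{whitProp}, relegating the Fubini justification to a sentence, since the analytic content is exactly the tail control already established for Whittaker functions.
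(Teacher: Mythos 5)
There is a genuine gap. You apply the first orthogonality relation of Section~\ref{orthcom} with $\ul{\lambda}{N}=\iota a$, but that relation is stated \emph{only for real indices} $\ul{\lambda}{N},\ul{\tilde\lambda}{N}\in\R^N$. When $a\in\R^N$ is not identically zero, $\iota a$ is purely imaginary, and the orthogonality identity does not apply as written. The parenthetical "with $\ul{\lambda}{N}=\iota a$ real" is precisely the point at which the argument breaks: it is false unless $a=0$. The paper flags exactly this issue and resolves it by analytic continuation in $a$: one defines $f(a)=\int\WM{a;\tau}$, verifies $f\equiv1$ on $\iota\R^N$ (where the orthogonality does apply), and then shows $f$ is entire on $\C^N$ by dominated convergence. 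Your proposal never performs this continuation, so the computation only establishes the claim for $a=0$.

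A second, related gap is your Fubini justification. You appeal to Proposition~\ref{whitProp} for "controlled decay of $|\psi_{\ul{\nu}{N}}(\ul{x}{N})|$ uniformly in $\ul{\nu}{N}\in\R^N$," but the outer Whittaker function $\psi_{\iota a}$ has an imaginary index and \emph{grows} exponentially in $\ul{x}{N}$ (the paper's small lemma inside the proof of Proposition~\ref{equiv1} gives $|\psi_{\nu}(x)|\leq C_0e^{c_0\sum|x_j|}$ for $|\Imag(\nu_j)|$ bounded). Gaussian regularization of the $\ul{\nu}{N}$ variables does nothing to kill this growth in $\ul{x}{N}$, and Proposition~\ref{whitProp} only bounds Whittaker functions with real index, not $\psi_{\iota a}$. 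What is actually needed---and what constitutes the analytic core of the paper's proof---is the super-exponential decay estimate for $\theta_\tau(\ul{x}{N})$ (Proposition~\ref{supexpthetadecay}), which requires a contour-shift argument and, for $N\geq2$, a Bessel-function expansion. Without that estimate the double integral is not absolutely convergent and the Fubini/distributional pairing cannot be justified. To fix the proposal you would need to (a) restrict the orthogonality step to $a\in\iota\R^N$, (b) prove Proposition~\ref{supexpthetadecay} or an equivalent, and (c) carry out the analytic continuation to all of $\R^N$.
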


\begin{proof}
Formally this fact follows immediately by applying the orthogonality relations for Whittaker functions given in Section~\ref{orthcom}. That relation, however, is only necessarily true when the index, here written as $\iota a$ is purely real. Thus, in order to justify this identity for indices $(a_1,\ldots, a_N)$ not all identically 0, we must perform an analytic continuation of the orthogonality relation (when $a_i\equiv 0$ this continuation is not needed and the result follows immediately). This analytic continuation relies on a Paley-Weiner type super-exponential decay estimate for $\theta_{\tau}(\ul{x}{N})$ which is given in Proposition~\ref{supexpthetadecay}.


A less direct proof that the Whittaker measure integrates to 1 comes from the fact that the Whittaker process / measure arise as the law of the image of a directed polymer model (studied in \cite{OCon}) under a continuous version of the tropical Robinson-Schensted-Knuth correspondence (see Remark \ref{probresrem}).

For the direct proof we now give, define a function
\begin{equation*}
f(a_1,\ldots, a_N) = \int_{\R^N} e^{-\tau \sum_{j=1}^{N} a_j^2/2}  \psi_{\iota a}(\ul{x}{N}) \theta_{\tau}(\ul{x}{N}) d\ul{x}{N}
\end{equation*}
for $(a_1,\ldots, a_N)\in \C^N$. When restricted to the complex axes $(a_1,\ldots,a_N)\in \iota\R^N$, the orthogonality of Whittaker functions readily implies that
\begin{equation*}
f(a_1,\ldots, a_N) \equiv 1 \quad \textrm{for }(a_1,\ldots,a_N)\in \iota\R^N.
\end{equation*}
We wish to analytically continue this result to all of $\C^N$ and hence show that $f$ is everywhere identically 1. This requires us to show that $f(a_1,\ldots, a_N)$ is analytic. The Whittaker function $\psi_{\iota a}(\ul{x}{N})$ is known to be entire with respect to its index (see the discussion in \cite{OCon}) as well as continuous in its variable. On account of this, analyticity of $f(a_1,\ldots, a_N)$ will follow if we can show that for any compact region $A\subset \C^N$ and any $\e>0$, there exists a compact region $X\in\R^N$ such that
\begin{equation}\label{epsboundtau}
\int_{\R^N \setminus X} \left| e^{-\tau \sum_{j=1}^{N} a_j^2/2}  \psi_{\iota a}(\ul{x}{N}) \theta_{\tau}(\ul{x}{N})\right| d\ul{x}{N} \leq \e
\end{equation}
for all $(a_1,\ldots,a_N)\in A$.

This claim relies on two pieces. The first is a lemma about the growth of Whittaker functions with imaginary index and the second (a more sizable result) is about the super-exponential decay of $\theta_{\tau}$.
\begin{lemma}
For any positive real numbers $y_1,\ldots, y_N$ there exist $c_0, C_0>0$ such that
\begin{equation*}
|\psi_{\nu_1,\ldots ,\nu_N}(x_1,\ldots,x_N)| \leq C_0 e^{c_0 \sum_{j=1}^{N} |x_j|} \qquad \textrm{ for all } \nu_j: |\Imag(\nu_j)|\leq y_j, j=1,\ldots, N.
\end{equation*}
\end{lemma}
\begin{proof}
This follows easily by induction from the Givental recursive formula (\ref{recWhit}) for the Whittaker functions. \end{proof}
The second estimate is given by the following:
\begin{proposition}\label{supexpthetadecay}
Fix $N\geq 1$. For any constant $R\geq 0$, there exists a constant $C>0$ such that for all $\ul{x}{N}\in \R^N$,
\begin{equation*}
|\theta_{\tau}(\ul{x}{N})| \leq C e^{-R \sum_{j=1}^{N} |x_{j}|}.
\end{equation*}
\end{proposition}

By combining this proposition with the above lemma we see that by choosing $R$ to exceed the $c_0$, we can ensure exponential decay in $(x_1,\ldots, x_N)$ and hence prove the bound (\ref{epsboundtau}). This completes the proof of analyticity of $f(a_1,\ldots, a_N)$ and hence by analytic continuation, $f(a_1,\ldots, a_N)\equiv 1$ for all $(a_1,\ldots, a_N)\in \C^N$ as desired.

All that remains, therefore, is to prove Proposition~\ref{supexpthetadecay}.

\begin{proof}[Proof of Proposition~\ref{supexpthetadecay}]

The case of $N=1$ is standard, yet instructive. In this case, $\psi_{\nu}(x) = e^{-i\nu x}$ and $m_N$ is just $1/(2\pi)$. The idea now is to shift the contour of integration in (\ref{thetaformula}) from $\R$ to $\R+\iota y$. This is justified by Cauchy's theorem and the fact that the integrand decays suitably fast along every horizontal contour. Changing variables to absorb this shift into $x$ yields
\begin{equation*}
\theta_{\tau}(x)=\int_{\R} e^{-\iota \nu x} e^{yx} e^{-\tau \nu^2/2} e^{-\tau \iota y \nu} e^{\tau y^2/2}  \frac{d\nu}{2\pi}=e^{yx}e^{\tau y^2/2} \int_{\R} e^{-\iota\nu x} e^{-\tau \nu^2/2} e^{-\tau \iota y \nu} \frac{d\nu}{2\pi}.
\end{equation*}
Since the final integral above is bounded in absolute value by a constant, this yields the inequality
\begin{equation*}
|\theta_{\tau}(x)| \leq e^{yx} C_y
\end{equation*}
where $C_y>0$ depends on $y$. By taking $y=R$ when $x<0$ and $y=-R$ when $x>0$, we arrive at the claimed inequality of the proposition.

The case of $N\geq 2$ is more subtle and relies on a series expansion for the Whittaker function. We work out the calculation explicitly for $N=2$. For general $N\geq 2$ a similar series expansion into fundamental Whittaker functions exists (combining Theorem 18 of \cite{IStade} and Proposition 3 of \cite{OConmanuscript}) for which the argument presented below can readily be adapted.

In the case $N=2$ the Whittaker functions can be expressed explicitly in terms of the Bessel-K function (sometimes also called the Macdonald function -- not to be confused with the Macdonald symmetric function):
\begin{equation*}
\psi_{\nu_1,\nu_2}(x_1,x_2) = 2 e^{\frac{1}{2} \iota \kappa_1 y_1} K_{\iota\kappa_2}(2 e^{-y_2/2})
\end{equation*}
where
\begin{equation*}
\kappa_1=\nu_1+\nu_2,\qquad \kappa_2 = \nu_1-\nu_2,\qquad y_1= x_1+x_2,\qquad y_2=x_1-x_2,
\end{equation*}
and the Bessel-K function is given by
\begin{equation*}
K_{v}(z) = \frac{1}{2}\int_{-\infty}^{\infty} e^{xv} \exp\left(-\frac{z}{2}(e^x+e^{-x})\right) dx.
\end{equation*}

Thus we can write
\begin{equation}\label{thetaformulanew}
\theta_{\tau}(x_1,x_2)=\int_{\R^2} 2 e^{\frac{1}{2} \iota \kappa_1 y_1} K_{\iota \kappa_2}(2 e^{-y_2/2}) e^{-\frac{\tau}{4}(\kappa_1^2+\kappa_2^2)} \frac{1}{(2\pi)^2 2!} \frac{1}{\Gamma(\iota \kappa_2)\Gamma(-\iota \kappa_2)} \frac{d\kappa_1 d\kappa_2}{2}.
\end{equation}
This double integral factors as
\begin{equation*}
\theta_{\tau}(x_1,x_2)= \left(\int_{\R}  e^{\frac{1}{2} \iota \kappa_1 y_1}  e^{-\frac{\tau}{4}\kappa_1^2}\frac{d\kappa_1}{4\pi} \right) \left(\int_{\R} e^{-\frac{\tau}{4}\kappa_1^2}  K_{\iota\kappa_2}(2 e^{-y_2/2})\frac{\kappa_2 \sinh(\pi \kappa_2)}{\pi} \frac{d\kappa_2}{2\pi}\right).
\end{equation*}

We may compute the first integral as
\begin{equation*}
F_1(y_1)= \int_{\R}  e^{\frac{1}{2} \iota\kappa_1 y_1}  e^{-\frac{\tau}{4}\kappa_1^2}\frac{d\kappa_1}{4\pi} = \frac{1}{2\sqrt{\pi}} e^{-y_1^2/4\tau}.
\end{equation*}

Turning to the second integral, let us call
\begin{equation*}
F_2(y_2) = \int_{\R} e^{-\frac{\tau}{4}\kappa_2^2}  K_{\iota \kappa_2}(2 e^{-y_2/2})\frac{\kappa_2 \sinh(\pi \kappa_2)}{\pi} \frac{d\kappa_2}{2\pi}.
\end{equation*}
If we can show that for any $R>0$, $|F_2(y_2)| \leq \exp(-R |y_2|)$ then Proposition~\ref{supexpthetadecay} will be proved. This is because we already know (from the above computation) that $|F_1(y_1)|\leq \exp(-R|y_1|)$ for all $R>0$. The claimed result of the proposition follows by the triangle inequality ($|y_1|+|y_2| \geq |x_1|+|x_2|$) and the relation $|\theta_{\tau}(x)| = |F_1(y_1)||F_2(y_2)|$.

In order to show $|F_2(y_2)| \leq \exp(-R |y_2|)$ we should consider two cases: (1) when $y_2\leq 0$, and (2) when $y_2\geq 0$. In case (1) we use the uniform asymptotics that as $z\to \infty$, for all $v\in \R$,
\begin{equation*}
|K_{\iota v}(z)| \sim \sqrt{\pi/2z} e^{-z}.
\end{equation*}
The exponential decay is the important part here and we can bound the right-hand side above by $Ce^{-cz}$ for some $C>0$ and any $c<1$. This allows us to show that for $y_2\leq 0$,
\begin{equation*}
|F_2(y_2)| \leq C e^{-2c e^{-y_2/2}} \int_{\R} e^{-\frac{\tau}{4}\kappa_2^2} \frac{\kappa_2 \sinh(\pi \kappa_2)}{\pi} \frac{d\kappa_2}{2\pi}.
\end{equation*}
The remaining integral is independent of $y_2$ and clearly converges due to the Gaussian decay in $\kappa_2$. Thus, for $y_2\leq 0$
\begin{equation*}
|F_2(y_2)| \leq C' e^{-2c e^{-y_2/2}}.
\end{equation*}
It is clear that this double exponential decay can be bounded by exponential decay to any order $R$, at the cost of prefactor constant to account for $y_0$ near 0. This completes the desired estimate in case (1).

Turning to case (2), where $y_2\geq 0$, we will first utilize the fact that the Bessel-K function can be rewritten as a Bessel-I function in such a way as to cancel the $\sinh$ term. This expansion and cancelation is important and a similar phenomena occurs for general $N\geq 2$ (see Proposition 3 of \cite{OConmanuscript}). In particular
\begin{equation*}
K_{v}(z) = \frac{\pi}{2} \frac{I_{-v}(z) - I_{v}(z)}{\sin(\pi v)}
\end{equation*}
where the Bessel-I function has the following convergent Taylor series expansion around $z=0$.
\begin{equation*}
I_v(z)  = (z/2)^v \sum_{\ell=0}^{\infty} \frac{(z^2/4)^\ell}{\ell! \Gamma(v+\ell+1)}.
\end{equation*}
Plugging this in (and absorbing numerical constants as $c$)
\begin{equation*}
F_2(y_2) = c \int_{-\infty}^{\infty} e^{-\frac{\tau}{4} \kappa_2^2} \kappa_2 \left(I_{-\iota \kappa_2}(2 e^{-y_2 /2})-I_{\iota \kappa_2}(2 e^{-y_2 /2})\right) d\kappa_2.
\end{equation*}

We wish to cut the series expansion for Bessel-I at a particular level $\ell^*$ and estimate the remainder term.

Use the Gamma functional equation to rewrite this as
\begin{equation*}
I_v(z)  = \frac{(z/2)^v}{\Gamma(v+1)} \sum_{\ell=1}^{\infty} \frac{(z^2/4)^\ell}{\ell! (v+1)\cdots (v+\ell)},
\end{equation*}
where $(v+1)\cdots (v+\ell)$ for $\ell=0$ is just interpreted as $1$.
For all $z\in \R$ and $v\in \iota\R$ it follows that
\begin{equation*}
\left| \sum_{\ell=\ell^*}^{\infty} \frac{(z^2/4)^\ell}{\ell! (v+1)\cdots (v+\ell)}\right| \leq \sum_{\ell=\ell^*}^{\infty} \frac{(z^2/4)^\ell}{(\ell!)^2} \leq C(\ell^*) (z^2/4)^{\ell^*}
\end{equation*}
for the constant $C(\ell^*)$ depending on $\ell^*$ but independent of $z$ and $v$.

Thus we can show that
\begin{equation*}
|F_2(y_2)| \leq I+II+III,
\end{equation*}
where
\begin{eqnarray*}
I &=& c \sum_{\ell=0}^{\ell^*-1} \left|\int_{-\infty}^{\infty} e^{-\frac{\tau}{4} \kappa_2^2} \kappa_2 e^{-\iota y_2\kappa_2 /2} \frac{e^{-y_2 \ell}}{\ell! \Gamma(\iota \kappa_2+\ell+1)} d\kappa_2 \right| \\
II &=&  c \sum_{\ell=0}^{\ell^*-1} \left|\int_{-\infty}^{\infty} e^{-\frac{\tau}{4} \kappa_2^2} \kappa_2 e^{\iota y_2\kappa_2 /2} \frac{e^{-y_2 \ell}}{\ell! \Gamma(-\iota \kappa_2+\ell+1)} d\kappa_2 \right|\\
III &=& 2 c  e^{-y_2 (\ell^*+1)} C(\ell^*) \int_{-\infty}^{\infty} e^{-\frac{\tau}{4} \kappa_2^2} \kappa_2  d\kappa_2.
\end{eqnarray*}
Note that III contains the remainder term estimate from both the $I_{-\iota \kappa_2}$ and $I_{\iota \kappa_2}$ terms, with $z=2 e^{-y_2/2}$.

It remains to show that each of these terms can be bounded by $Ce^{-R|y_2|}$ for $y_2\geq 0$ and $C$ depending on $R$ but not on $y_2$. As noted before, this will complete the proof of the proposition. Consider III first. By taking $\ell^*+1 \geq R$ the desired bound $III\leq C e^{-Ry_2}$ immediately follows. I and II can be dealt with in the same manner, thus let us just consider I. We will bound each term in I separately and call them $I_{\ell}$. Recall that $1/\Gamma(w)$ is an entire function in $w$. It follows from Cauchy's theorem and the Gaussian decay of the integrand, that for any $b\in \R$,
\begin{equation*}
I_{\ell}= \left|\int_{-\infty}^{\infty} e^{-\frac{\tau}{4} \kappa_2^2} \kappa_2 e^{-\iota y_2\kappa_2 /2} \frac{e^{-y_2 \ell}}{\ell! \Gamma(\iota \kappa_2+\ell+1)} d\kappa_2 \right| = \left|\int_{-\infty+\iota b}^{\infty+\iota b} e^{-\frac{\tau}{4} \kappa_2^2} \kappa_2 e^{-\iota y_2\kappa_2 /2} \frac{e^{-y_2 \ell}}{\ell! \Gamma(\iota \kappa_2+\ell+1)} d\kappa_2 \right|.
\end{equation*}
The shift by $\iota b$ of the contour can be removed via a change of variables which yields
\begin{equation*}
I_{\ell} = e^{y_2 b/2} \left|\int_{-\infty}^{\infty} e^{-\frac{\tau}{4} (\kappa_2+\iota b)^2} (\kappa_2+\iota b) e^{-\iota y_2\kappa_2 /2} \frac{e^{-y_2 \ell}}{\ell! \Gamma(\iota (\kappa_2+\iota b)+\ell+1)} d\kappa_2 \right|.
\end{equation*}
Take $b=-2R$ and bound the integral above as
\begin{equation*}
\left|\int_{-\infty}^{\infty} e^{-\frac{\tau}{4} (\kappa_2+\iota b)^2} (\kappa_2+\iota b) e^{-\iota y_2\kappa_2 /2} \dfrac{e^{-y_2 \ell}}{\ell! \Gamma(\iota (\kappa_2+\iota b)+\ell+1)} d\kappa_2\right|
\end{equation*}
\begin{equation*}
\leq \int_{-\infty}^{\infty} e^{-\frac{\tau}{4} \kappa_2^2}e^{\frac{\tau}{2}b^2} |\kappa_2+b| \dfrac{1}{\ell! |\Gamma(\iota (\kappa_2+\iota b)+\ell+1)|} d\kappa_2 \leq C
\end{equation*}
where we have used $e^{-y_2\ell} \leq 1$ since $y_2\geq 0$, and where the constant $C$ is thus independent of $y_2$.
This means that for $y_2\geq 0$
\begin{equation*}
I_{\ell} \leq C e^{-R y_2}
\end{equation*}
which is exactly as necessary. This estimate can be made for each term in I and likewise for each term in II and hence completes the proof of Proposition~\ref{supexpthetadecay}.
\end{proof}

As we have proved Proposition~\ref{supexpthetadecay} above, the proof of Proposition~\ref{equiv1} is also completed.
\end{proof}

\subsubsection{Weak convergence statement and proof}\label{weakconvstateproofSec}

\begin{theorem}\label{theorem26}
In the limit regime
\begin{eqnarray*}
&&q=e^{-\epsilon},\qquad \gamma=\tau \epsilon^{-2}, \qquad A_k=e^{-\epsilon a_k}, \  1\le k\le N, \\
&&\lambda^{(k)}_j=\tau\epsilon^{-2}-(k+1-2j)\epsilon^{-1}\log\epsilon+T_{k,j}\epsilon^{-1}, \quad 1\le j\le k\le N,
\end{eqnarray*}
the $q$-Whittaker process $\M_{asc,t=0}(A_1,\dots,A_N;\rho)$ with $t=0$ and a Plancherel specialization
$\rho$ determined by $\gamma$ (and $\alpha_i,\beta_i=0$ for all $i\geq 0$) as in equation (\ref{tag1}), weakly converges, as $\e\to 0$, to the Whittaker process $\W{a,\tau}\bigl(\{T_{k,i}\}_{1\le i\le k\le N}\bigr)$.
\end{theorem}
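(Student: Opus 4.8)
The plan is to deduce weak convergence of the full $q$-Whittaker process from two ingredients: the explicit formula for the $q$-Whittaker process in terms of products of $q$-Whittaker functions and skew $q$-Whittaker functions, together with the controlled convergence result Theorem~\ref{qwhitconvTHM} for $q$-Whittaker functions and its natural companion for the recursion kernels. Recall from Section~\ref{MacProcessSec} and Section~\ref{qWhitSec} that, for $t=0$, the density of $\M_{asc,t=0}(A_1,\dots,A_N;\rho)$ at a sequence $\varnothing\prec \lambda^{(1)}\prec\cdots\prec\lambda^{(N)}$ is
\begin{equation*}
\frac{P_{\lambda^{(1)}}(A_1)P_{\lambda^{(2)}/\lambda^{(1)}}(A_2)\cdots P_{\lambda^{(N)}/\lambda^{(N-1)}}(A_N)\, Q_{\lambda^{(N)}}(\rho)}{\Pi(A_1,\dots,A_N;\rho)}.
\end{equation*}
Using the combinatorial formula (\ref{7.13'}), each single-variable skew factor $P_{\lambda^{(k)}/\lambda^{(k-1)}}(A_k)$ equals $\psi_{\lambda^{(k)}/\lambda^{(k-1)}}A_k^{|\lambda^{(k)}-\lambda^{(k-1)}|}$, and these $\psi$'s are exactly the building blocks of the Givental-type recursion (\ref{receqn}) for the $q$-Whittaker functions $\qWhitP_{A_1,\dots,A_k}(\lambda^{(k)})$. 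So the strategy is: first rewrite the whole density as a telescoping product of the $\link_{k+1,k}$ kernels (times the $\Delta$ factors) appearing in (\ref{receqn}), together with the terminal factor $Q_{\lambda^{(N)}}(\rho)/\Pi$; then substitute the scalings of Theorem~\ref{theorem26} and show each piece converges, after the stated rescaling, to the corresponding continuous object ($\mathcal F_{\iota a}$ factors from $\link$ and $\Delta$, and $\theta_\tau$ from the terminal factor).

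The concrete steps, in order, are as follows. First, I would record the $t=0$ degeneration of the skew Pieri coefficients (Lemma~\ref{lemma22} already gives $\varphi_{\lambda/\mu}$, $\psi_{\lambda/\mu}$, $\psi'_{\lambda/\mu}$ at $t=0$), and write the density of $\M_{asc,t=0}$ as a product of $\link^{\e}_{k+1,k}$, $\Delta^{\e}$ kernels — this is precisely the rewriting already used in (\ref{epsreceqn}) in the inductive proof of Theorem~\ref{qwhitconvTHM}. Second, under the substitution $A_k=e^{-\e a_k}$, $\lambda^{(k)}_j=\tau\e^{-2}-(k+1-2j)\e^{-1}\log\e+T_{k,j}\e^{-1}$ I would compute the Jacobian of the change of variables from the lattice $\lambda^{(k)}_j$ to the continuous $T_{k,j}$ (this produces the factor $\e^{-N(N+1)/2}$ matching the combinatorial factor $\e^{\ell(\ell+1)/2}$ in Definition~\ref{scaleDef}, so everything is normalized consistently), and then apply the asymptotics of the $q$-Pochhammer symbol from Proposition~\ref{qfactProp}/Corollary~\ref{qfactCor} to see that $\prod_k \Delta^{\e}(\lambda^{(k)})\link^{\e}_{k+1,k}(\lambda^{(k)},\lambda^{(k-1)})$, after the appropriate $\A(\e)$ renormalizations, converges to $\exp(\mathcal F_0(T))$ times the Gaussian/exponential prefactors $A_k^{|\lambda^{(k)}-\lambda^{(k-1)}|}\to e^{\iota a_k(\cdots)}$, assembling into $e^{-\tau\sum a_j^2/2}\exp(\mathcal F_{\iota a}(T))$. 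Third, for the terminal factor $Q_{\lambda^{(N)}}(\rho)/\Pi(A_1,\dots,A_N;\rho)$ with $\rho$ Plancherel, I would use the torus-scalar-product representation of $Q_{\lambda^{(N)}}$ from Section~\ref{torusSec}, namely $Q_{\lambda}(\rho) = \langle P_\lambda, P_\lambda\rangle_N'^{-1}\langle\Pi(\cdot;\rho),P_\lambda(\cdot)\rangle_N'$, and pass to the limit: the torus integral, after the substitution $z_k=e^{\iota\e\nu_k}$, degenerates to the $\nu$-integral in (\ref{thetaformula}) against the Sklyanin measure $m_N$, the norm $\langle P_\lambda,P_\lambda\rangle_N'$ (formula (\ref{9ex1d}) at $t=0$) contributes the remaining normalization, and the Gaussian weight $\exp\{(q-1)\gamma z_j\}$-type terms become $e^{-\tau\sum\nu_j^2/2}$; the outcome is that the terminal factor, suitably rescaled, converges to $\theta_\tau(T_{N,1},\dots,T_{N,N})$. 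Putting the three pieces together yields pointwise convergence of densities to $\W{a;\tau}$.

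The main obstacle will be upgrading pointwise convergence of the rescaled densities to genuine weak convergence of probability measures: one needs domination, i.e. uniform (in $\e$) integrable tail bounds so that no mass escapes to infinity. This is exactly the role played by part~(1) of Theorem~\ref{qwhitconvTHM} (the $\poly_{\ell+1,\sigma}\prod\exp\{-c^*e^{-(\cdot)/2}\}$ bound) together with the super-exponential decay of $\theta_\tau$ in Proposition~\ref{supexpthetadecay}: combining the uniform upper bound on $\psi^{\e}_{\ul\nu{N}}$ with a matching uniform bound on the rescaled terminal factor gives an $\e$-independent dominating function whose integral is finite, and Scheffé's lemma (or dominated convergence applied to test functions) then promotes pointwise convergence to weak convergence. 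A secondary technical point is that the lattice sums defining the marginals must be shown to converge to the corresponding Lebesgue integrals uniformly on compacts while the tails are uniformly small — but this is handled by the same Riemann-sum argument already carried out in Step~3 of the proof of Theorem~\ref{qwhitconvTHM}, now applied one level higher (to the whole array rather than a single recursion step), so the estimate $|\tilde\mu_k|\le 1$-style a priori bounds and the uniformity in Corollary~\ref{qfactCor} do the work. Also worth checking carefully is that the normalizing constant $\Pi(A_1,\dots,A_N;\rho)$, after rescaling, converges to the constant making $\W{a;\tau}$ integrate to one, which is consistent with Proposition~\ref{equiv1}; this is a bookkeeping consequence of the above once one tracks the $\A(\e)$ and $\e$-power prefactors.
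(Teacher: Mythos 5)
Your outline follows the paper's own proof almost exactly: both decompose the $q$-Whittaker process density into (i) the chain of single-variable skew factors $P_{\lambda^{(k)}/\lambda^{(k-1)}}(A_k)$ (your $\link$ and $\Delta$ kernels are exactly those appearing in (\ref{epsreceqn}), built from the $t=0$ Pieri coefficients of Lemma~\ref{lemma22}), (ii) the normalizing constant $\Pi(A_1,\dots,A_N;\rho)$, and (iii) the terminal factor $Q_{\lambda^{(N)}}(\rho)$ via the torus scalar product, and then pass to the limit in each piece using Proposition~\ref{qfactProp}/Corollary~\ref{qfactCor} and Theorem~\ref{qwhitconvTHM}. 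These three pieces correspond to Lemmas~\ref{Skewlemma}, \ref{Pilemma} and \ref{Qlemma} in the paper's proof, and your sketch of what happens in each — $\exp(\mathcal F_{\iota a}(T))$ from the skew chain, the Gaussian prefactor from $\Pi$, $\theta_\tau$ from the torus-scalar-product limit of $Q$ — is accurate. The Jacobian bookkeeping you mention (the $\e^{-N(N+1)/2}$ factor matching $\e^{\ell(\ell+1)/2}$ in Definition~\ref{scaleDef}) is indeed the normalization the paper uses.

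The one place where your plan genuinely diverges, and where it has a gap that would cost you work, is the final upgrade from pointwise (uniform-on-compacts) convergence of densities to weak convergence of measures. You propose to build an $\e$-independent integrable dominating function from part (1) of Theorem~\ref{qwhitconvTHM} (which controls the $\psi^\e$ factors) and Proposition~\ref{supexpthetadecay} (super-exponential decay of $\theta_\tau$). But Proposition~\ref{supexpthetadecay} controls only the \emph{limit} $\theta_\tau$; it does not give you an $\e$-uniform super-exponential bound on the rescaled pre-limit factor $Q_{\lambda^{(N)}}(\rho)/\Pi$, which is what a genuine domination argument requires, and such a bound is not proved anywhere in the paper. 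The paper sidesteps this entirely: since the $q$-Whittaker process is a probability measure for every $\e$, and Proposition~\ref{equiv1} shows the limit $\W{a;\tau}$ is also a probability measure, nonnegativity plus uniform convergence of densities on compacts already forces weak convergence — tightness comes for free from conservation of total mass (choose a compact $K$ carrying mass $>1-\delta$ under the limit; uniform convergence forces $\liminf$ of the pre-limit mass on $K$ to exceed $1-\delta$, hence $\limsup$ of the mass outside $K$ is at most $\delta$). Replace your Scheff\'e-with-domination step with this "total mass one on both sides" observation and your plan becomes a faithful reproduction of the paper's proof; as stated, that last step would be harder to make rigorous than it needs to be.
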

\begin{remark}
The $q$-Whittaker process induces a measure on $\{T_{k,j}\}_{1\leq j\leq k\leq N}$ through the scalings given above. It is this measure which has a limit as $\e$ goes to zero.
\end{remark}

\begin{proof}

Recall definition (\ref{ascmeasure}) which says
\begin{equation}\label{MascDefrecall}
\M_{asc,t=0}(A_1,\ldots, A_N;\rho)(\lambda^{(1)},\ldots,\lambda^{(N)})= \frac{P_{\lambda^{(1)}}(A_1)P_{\lambda^{(2)}/\lambda^{(1)}}(A_2)\cdots P_{\lambda^{(N)}/\lambda^{(N-1)}}(A_N) Q_{\lambda^{(N)}}(\rho)}{\Pi(A_1,\ldots, A_N;\rho)}.
\end{equation}

It suffices to control the convergence for $\{T_{k,i}\}$ varying in any given compact set. This is due to the positivity of the measure and because we can see independently that the limiting expression integrates to 1. However, since we wish to prove weak convergence of this measure to the limiting measure, it is crucial that all of our estimates (with respect to convergence as $\e$ goes to zero) are suitably uniform over the given compact set. As such, for the rest of the proof fix a compact set for the $\{T_{k,i}\}$.

We consider the right-hand side of equation (\ref{MascDefrecall}) in three lemmas which, when combined, prove the theorem: (1) The product of skew Macdonald polynomials $P_{\lambda/\mu}$, (2) The $\Pi$ factor in the denominator, (3) The $Q$ Macdonald symmetric function. The first of these lemmas relies on the asymptotics developed in Proposition~\ref{qfactProp}, the second is simply a matter of Taylor approximation. The third lemma is more involved and employs the torus scalar product. The full strength of Theorem~\ref{qwhitconvTHM} is utilized in the proof of this third lemma.

Before commencing the proof, let us recall the notation introduced in Section~\ref{721}: $\A(\e) = -\frac{\pi^2}{6} \frac{1}{\e} -\frac{1}{2}\log\frac{\e}{2\pi}$ and $f_{\alpha}(y,\e)~=~(q;q)_{\e^{-1}y_{\alpha,\e}}$ where $y_{\alpha,\e}=y+\e \alpha m(\e)$, and $m(\e)= -[\e^{-1}\log\e]$.

Let us start by considering the skew Macdonald polynomials with $t=0$.
\begin{lemma}\label{Skewlemma}
Fix any compact subset $D\subset \R^{N(N+1)/2}$. Then
\begin{equation}\label{skewPprodeqn}
P_{\lambda^{(1)}}(A_1)P_{\lambda^{(2)}/\lambda^{(1)}}(A_2)\cdots P_{\lambda^{(N)}/\lambda^{(N-1)}}(A_N) = \left(e^{-\frac{(N-1)(N-2)}{2} \A(\e)} e^{-\e^{-1}\tau \sum_{k=1}^{N}a_k}\right)\mathcal{F}_{\iota a}(T) e^{o(1)}
\end{equation}
where the $o(1)$ error goes uniformly (with respect to $\{T_{k,i}\}_{1\leq i\leq k\leq N}\in D$) to zero as $\e$ goes to zero.
\end{lemma}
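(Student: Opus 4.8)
The plan is to compute the asymptotics of the product of skew Macdonald polynomials by using the combinatorial formula (\ref{7.13'}), which expresses $P_{\lambda^{(i)}/\lambda^{(i-1)}}(A_i)$ in a single variable $A_i$ as $\psi_{\lambda^{(i)}/\lambda^{(i-1)}} A_i^{|\lambda^{(i)}-\lambda^{(i-1)}|}$ when $\lambda^{(i)}/\lambda^{(i-1)}$ is a horizontal strip (which it always is here, since we are on the support $\ss^{(N)}$ of the ascending process), and zero otherwise. Thus the entire product equals $\prod_{i=1}^N \psi_{\lambda^{(i)}/\lambda^{(i-1)}} A_i^{|\lambda^{(i)}-\lambda^{(i-1)}|}$. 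Under the scaling $A_i = e^{-\e a_i}$ and $\lambda^{(k)}_j = \tau\e^{-2} - (k+1-2j)\e^{-1}\log\e + T_{k,j}\e^{-1}$, the power of $A_i$ contributes $\exp\{-\e a_i \cdot |\lambda^{(i)}-\lambda^{(i-1)}|\}$; summing the exponents over $i$ telescopes the bulk term $\tau\e^{-2}$ and the $\log\e$ terms, producing the prefactor $\exp\{-\e^{-1}\tau\sum_k a_k\}$ together with the linear-in-$T$ part of $\mathcal{F}_{\iota a}(T)$, namely $\iota\sum_k a_k\big(\sum_i T_{k,i} - \sum_i T_{k-1,i}\big)$ after matching signs (recall $\mathcal{F}_{\iota a}$ has imaginary unit absorbed so this is real). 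The careful bookkeeping of which $(k+1-2j)\e^{-1}\log\e$ terms survive is routine but must be done to extract exactly the $e^{-\frac{(N-1)(N-2)}{2}\A(\e)}$ normalization and the first sum in $\mathcal{F}$.

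The second and main ingredient is the asymptotics of $\prod_{i=1}^N \psi_{\lambda^{(i)}/\lambda^{(i-1)}}$ with $t=0$. Here I would invoke Lemma~\ref{lemma22}, which gives the $t=0$ formula $\psi_{\la/\mu} = (q;q)_\infty^{-\ell(\mu)} \prod_{i=1}^{\ell(\mu)} \frac{(q^{\la_i-\mu_i+1};q)_\infty (q^{\mu_i-\lambda_{i+1}+1};q)_\infty}{(q^{\la_i-\la_{i+1}+1};q)_\infty}$. Taking logarithms, every factor $(q^N;q)_\infty$ with $N$ a large integer is exactly $\lim_{y\to\infty} f_\alpha(y,\e)/$ ratios thereof — but more directly, one recognizes $\log (q^{m};q)_\infty = \log\frac{(q;q)_\infty}{(q;q)_{m-1}}$, so each factor becomes a difference $\A(\e) - \log f_\alpha(\cdot,\e)$ up to controlled error, and then Proposition~\ref{qfactProp} (especially Corollary~\ref{qfactCor}) converts $\log f_\alpha(y,\e) - \A(\e)$ into $\e^{\alpha-1}e^{-y} + o(1)$ uniformly for $y$ bounded below. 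Applying this to each of the three types of factors in $\psi_{\la/\mu}$ with the appropriate value of $y$ — reading off the exponents $\la_i-\mu_i$, $\mu_i-\lambda_{i+1}$, $\la_i-\lambda_{i+1}$ from the scaling, which turn into differences like $T_{k,i}-T_{k-1,i}$ and $T_{k-1,i}-T_{k,i+1}$ — produces precisely the exponential terms $-\sum e^{T_{k,i}-T_{k+1,i}} - \sum e^{T_{k+1,i+1}-T_{k,i}}$ appearing in $\mathcal{F}_{\iota a}$, while the $(q;q)_\infty^{-\ell(\mu)}$ prefactors and the surviving $\A(\e)$'s accumulate to the stated $e^{-\frac{(N-1)(N-2)}{2}\A(\e)}$. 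One must check the arithmetic of counting how many $\A(\e)$ factors appear with each sign across all $N$ levels — this is where the binomial-type coefficient $\frac{(N-1)(N-2)}{2}$ comes from.

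The main obstacle I anticipate is the \emph{uniformity} of the error term over the compact set $D$, not the leading-order identification. The estimates in Proposition~\ref{qfactProp} and Corollary~\ref{qfactCor} are stated with $y\geq -M$ and give errors going to zero uniformly on such a half-line, so as long as the relevant exponents $\la_i-\mu_i$, etc., stay bounded below uniformly for $\{T_{k,i}\}\in D$ — which they do, since these differences equal $(\text{bounded }T\text{-terms}) + (\text{positive }\log\e^{-1}\text{-terms})$, hence are eventually $\geq -M$ for any fixed $M$ once $\{T_{k,i}\}\in D$ — the uniform $o(1)$ control propagates through the finite product. A secondary technical point is that some exponents may be of order $\log\e^{-1}$ (when $i$ and $j$ differ by $2$ or more), making the corresponding $e^{-y}$ terms negligible rather than contributing to $\mathcal{F}$; I would note that $\mathcal{F}_{\iota a}$ only contains nearest-neighbor interaction terms, consistent with exactly the $i=j$ (resp.\ adjacent) contributions surviving. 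Assembling: multiply the $A_i$-power asymptotics of the first paragraph with the $\psi$-asymptotics of the second, collect all $\A(\e)$ and $\e^{-1}\tau$ factors, and observe the remaining $T$-dependent terms assemble into $\mathcal{F}_{\iota a}(T)$ exactly, with a uniform $e^{o(1)}$ multiplicative error — which is the claim of Lemma~\ref{Skewlemma}.
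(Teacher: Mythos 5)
Your approach is the same as the paper's: express each one-variable skew Macdonald polynomial via (\ref{7.14'}), use the $t=0$ Pieri coefficient from Lemma~\ref{lemma22}, convert the $q$-Pochhammer factors to the form $f_\alpha(y,\e) = (q;q)_{[\e^{-1}y]+\alpha m(\e)}$ controlled by Proposition~\ref{qfactProp} and Corollary~\ref{qfactCor}, and let the uniformity of those estimates over the compact set $D$ propagate through the finite product. The paper works directly with the finite Pochhammers $(q;q)_n$ appearing in (\ref{PlambdaAeqn}) rather than through the $(q^{m};q)_\infty$ form of Lemma~\ref{lemma22}, but these are algebraically identical, so the routes coincide. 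Your observation that only the $\alpha=1$ factors (nearest-level differences, which have $m(\e)$ added) produce surviving $e^{-y}$ corrections, while the $\alpha=2$ factors produce $\e\,e^{-y}=o(1)$, is exactly the mechanism the paper uses to identify the double-exponential part of $\mathcal{F}_{\iota a}(T)$, and your analysis of uniformity via lower bounds on the Pochhammer indices is the right one.

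One thing you should not take on faith, though, is the arithmetic you defer to as ``routine.'' At level $\ell$ (i.e., for $P_{\lambda^{(\ell+1)}/\lambda^{(\ell)}}(A_{\ell+1})$) the Pieri coefficient has $\ell$ numerator and $2\ell$ denominator Pochhammer factors, each $\approx e^{\A(\e)}$, giving a net factor $e^{-\ell\A(\e)}$ --- this is exactly the intermediate formula the paper records in its proof. Summing $\ell$ over $0,\dots,N-1$ produces $e^{-\frac{N(N-1)}{2}\A(\e)}$, which does \emph{not} match the $\frac{(N-1)(N-2)}{2}$ displayed in the lemma. So the ``binomial-type coefficient'' you describe as coming from a routine count is not what the count actually gives. (The offset traces back to the $\A(\e)$ exponent in Definition~\ref{scaleDef}, which feeds with the opposite sign into Lemma~\ref{Qlemma}, so the two cancel when Theorem~\ref{theorem26} is assembled; but a proof of Lemma~\ref{Skewlemma} on its own terms must record what the count yields, not what the statement asserts.) A secondary point of the same kind: the right-hand side of the display should read $\exp\bigl(\mathcal{F}_{\iota a}(T)\bigr)$ rather than $\mathcal{F}_{\iota a}(T)$, as is clear both from your own reasoning and from the per-level formula the argument produces.
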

\begin{proof}
From equation (\ref{7.14'}) we have that when restricted to one variable, $P_{\lambda/\mu}(A)=\psi_{\lambda/\mu} A^{|\lambda-\mu|}$,  with $\psi_{\lambda/\mu}$ as in Lemma~\ref{lemma22}. From that we may express
\begin{equation}\label{PlambdaAeqn}
P_{\lambda^{(\ell+1)}/\lambda^{(\ell)}}(A_{\ell+1}) =A_{\ell+1}^{|\lambda^{(\ell+1)}-\lambda^{(\ell)}|} \prod_{i=1}^{\ell} \frac{ \qq{\lambda^{(\ell+1)}_{i} - \lambda^{(\ell+1)}_{i+1}}}{\qq{\lambda^{(\ell+1)}_{i} - \lambda^{(\ell)}_{i}}\qq{\lambda^{(\ell)}_{i} - \lambda^{(\ell+1)}_{i+1}}}.
\end{equation}

Now observe that given the scalings we have chosen, we have
\begin{eqnarray*}
\lambda^{(\ell+1)}_i - \lambda^{(\ell+1)}_{i+1} &=& \e^{-1}(T_{\ell+1,i} - T_{\ell+1,i+1}) + 2m(\e)\\
\lambda^{(\ell+1)}_i - \lambda^{(\ell)}_{i} &=& \e^{-1}(T_{\ell+1,i} - T_{\ell,i}) + m(\e)\\
\lambda^{(\ell)}_i - \lambda^{(\ell+1)}_{i+1} &=& \e^{-1}(T_{\ell,i} - T_{\ell+1,i+1}) + m(\e).
\end{eqnarray*}

Noting that
\begin{eqnarray*}
|\lambda^{(\ell+1)}-\lambda^{(\ell)}| &=& \tau\e^{-2}+ \e^{-1} \left(\sum_{i=1}^{\ell+1}T_{\ell+1,i} - \sum_{i=1}^{\ell}T_{\ell,i}\right),\\
\qq{\lambda^{(\ell+1)}_{i} - \lambda^{(\ell+1)}_{i+1}} &=& f_2(T_{\ell+1,i}-T_{\ell+1,i+1},\e),\\
\qq{\lambda^{(\ell+1)}_{i} - \lambda^{(\ell)}_{i}} &=& f_1(T_{\ell+1,i}-T_{\ell,i},\e),\\
\qq{\lambda^{(\ell)}_{i} - \lambda^{(\ell+1)}_{i+1}} &=& f_1(T_{\ell,i}-T_{\ell+1,i+1},\e),
\end{eqnarray*}
it follows from the estimates of Corollary \ref{qfactCor} and scalings of the $A_i$ that we may rewrite equation (\ref{PlambdaAeqn}) as
\begin{eqnarray*}
P_{\lambda^{(\ell+1)}/\lambda^{(\ell)}}(A_{\ell+1}) &=&  e^{-\ell \A(\e)} e^{-\e^{-1}\tau a_{\ell+1}} \exp\left\{-a_{\ell+1} \left(\sum_{i=1}^{\ell+1} T_{\ell+1,i}-\sum_{i=1}^{\ell} T_{\ell,i}\right)\right\} \\ \nonumber &&\times \exp\left\{-\sum_{i=1}^{\ell}\exp\{T_{\ell,i}-T_{\ell+1,i}\} - \sum_{i=1}^{\ell}\exp\{T_{\ell+1,i+1}-T_{\ell,i}\}\right\} e^{o(1)}
\end{eqnarray*}
where the $o(1)$ error goes uniformly (with respect to $\{T_{k,i}\}_{1\leq i\leq k\leq N}\in D$) to zero as $\e$ goes to zero.

Applying this to each of the skew Macdonald polynomials we arrive at the claimed result.
\end{proof}

Now turn to the $\Pi$ factor.
\begin{lemma}\label{Pilemma}
We have
\begin{equation*}
\Pi(A_1,\ldots,A_N;\rho) = \left(e^{\tau N \e^{-2}} e^{-\e^{-1}\tau \sum_{k=1}^{N} a_k}\right) e^{\tau \sum_{k=1}^{N} a_k^2/2}e^{o(1)}
\end{equation*}
where the $o(1)$ error goes to zero as $\e$ goes to zero.
\end{lemma}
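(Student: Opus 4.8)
\textbf{Proof proposal for Lemma~\ref{Pilemma}.}

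The plan is to directly evaluate the explicit product formula for $\Pi(A_1,\dots,A_N;\rho)$ in the Plancherel specialization and then substitute the scalings. Since $\rho$ is Plancherel ($\alpha_i=\beta_i=0$, $\gamma=\tau\e^{-2}$), the formula $\Pi(A_1,\dots,A_N;\rho)=\prod_{j=1}^N\Pi(A_j;\rho)$ from equation (\ref{tag1}) reduces to the clean expression $\Pi(A_1,\dots,A_N;\rho)=\prod_{j=1}^N \exp(\gamma A_j)=\exp\!\big(\gamma\sum_{j=1}^N A_j\big)$, with no $q$-Pochhammer factors present. This is the key simplification that makes the Plancherel case easy (by contrast with general $\rho$, where one would need the $q\to 1$ asymptotics of $(\alpha_i a_j;q)_\infty$).

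Next I would substitute $\gamma=\tau\e^{-2}$ and $A_k=e^{-\e a_k}$, so that the exponent is $\tau\e^{-2}\sum_{k=1}^N e^{-\e a_k}$. The only remaining work is a Taylor expansion of $e^{-\e a_k}$ as $\e\to 0$: we have $e^{-\e a_k}=1-\e a_k+\tfrac{1}{2}\e^2 a_k^2 + O(\e^3)$, hence
\begin{equation*}
\tau\e^{-2}\sum_{k=1}^N e^{-\e a_k} = \tau N\e^{-2} - \tau\e^{-1}\sum_{k=1}^N a_k + \frac{\tau}{2}\sum_{k=1}^N a_k^2 + O(\e).
\end{equation*}
Exponentiating gives exactly $\big(e^{\tau N\e^{-2}}\, e^{-\e^{-1}\tau\sum_k a_k}\big)\, e^{\tau\sum_k a_k^2/2}\, e^{o(1)}$, which is the claimed identity. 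The $O(\e)$ term in the exponent becomes the $e^{o(1)}$ factor, and since the $a_k$ are fixed this error is genuinely $o(1)$ (indeed $O(\e)$), with no uniformity issue to worry about here because $\Pi$ does not depend on the $T_{k,i}$.

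There is essentially no obstacle in this lemma — it is the routine ``Taylor approximation'' step flagged in the surrounding text. The only point requiring a word of care is confirming that in the Plancherel specialization all the $q$-Pochhammer contributions to $\Pi(u;\rho)$ truly vanish (they do, since $\alpha_i\equiv\beta_i\equiv 0$), so that no subtle $q\to 1$ limit of an infinite product is hidden in the computation; this is immediate from (\ref{tag1}). I would then note that this lemma, combined with Lemma~\ref{Skewlemma} and the forthcoming lemma on the $Q$-factor, assembles into the proof of Theorem~\ref{theorem26}: the $e^{\tau N\e^{-2}}$ and $e^{-(N-1)(N-2)\A(\e)/2}$ type prefactors will cancel against those produced by the other two lemmas, the $e^{-\e^{-1}\tau\sum a_k}$ factors cancel between numerator and denominator, and what survives is precisely the density $e^{-\tau\sum a_k^2/2}\exp(\mathcal F_{\iota a}(T))\,\theta_\tau(T_{N,1},\dots,T_{N,N})=\W{a;\tau}$.
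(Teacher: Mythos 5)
Your proof is correct and matches the paper's own argument exactly: both simply note that in the Plancherel case $\Pi(A_1,\dots,A_N;\rho)=\prod_k e^{\gamma A_k}$ and then Taylor-expand $\gamma A_k = \tau\e^{-2}e^{-\e a_k}$ to second order in $\e$. The paper gives this as a one-line computation labeled ``simple Taylor approximation''; your write-up just spells out the same steps (and adds some forward-looking commentary about how the prefactors cancel in the proof of Theorem~\ref{theorem26}, which is accurate but not part of the lemma itself).
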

\begin{proof}
Observe that by the definition of $\Pi$ and $A$, and simple Taylor approximation
\begin{equation*}
\Pi(A_1,\ldots,A_N;\rho) = \prod_{k=1}^{N} e^{\gamma A_k} = e^{\sum_{k=1}^{N} \tau \e^{-2} e^{-\e a_k}} = e^{\tau N \e^{-2}} e^{-\e^{-1}\tau \sum_{k=1}^{N} a_k} e^{\tau \sum_{k=1}^{N} a_k^2/2}e^{o(1)}.
\end{equation*}
\end{proof}

Finally turn to the $Q_{\lambda^{(N)}}(\rho)$ factor.
\begin{lemma}\label{Qlemma}
Fix any compact subset $D\subset \R^{N(N+1)/2}$. Then
\begin{equation*}
Q_{\lambda^{(N)}}(\rho) = \left(e^{\frac{(N-1)(N-2)}{2}\A(\e)} e^{\tau N \e^{-2}} \e^{\frac{N(N+1)}{2}}\right)\theta_{\tau}(T_{N,1},\ldots,T_{N,N}) e^{o(1)}
\end{equation*}
where the $o(1)$ error goes uniformly (with respect to $\{T_{k,i}\}_{1\leq i\leq k\leq N}\in D$) to zero as $\e$ goes to zero.
\end{lemma}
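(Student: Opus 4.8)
The starting point is to recall the torus scalar product representation of the $Q$-function: from Section~\ref{torusSec} we have
\begin{equation*}
Q_{\lambda^{(N)}}(x) = \frac{1}{\langle P_{\lambda^{(N)}},P_{\lambda^{(N)}}\rangle_N'}\,\langle \Pi(\cdot;x),P_{\lambda^{(N)}}(\cdot)\rangle_N',
\end{equation*}
and here $x=\rho$ is the Plancherel specialization, so $\Pi(z;\rho)=\prod_{j=1}^N e^{\gamma z_j}$ with $\gamma=\tau\epsilon^{-2}$. Writing this out with $t=0$, the claim reduces to tracking the asymptotics of three pieces as $\epsilon\to 0$: (i) the torus integral $\frac{1}{(2\pi\iota)^N N!}\int_{\T^N}\prod_j e^{\gamma z_j}\,\overline{P_{\lambda^{(N)}}(z)}\,\prod_{i\ne j}(z_iz_j^{-1};q)_\infty\,\prod_j \frac{dz_j}{z_j}$ (using that for $t=0$ the weight $\frac{(z_iz_j^{-1};q)_\infty}{(tz_iz_j^{-1};q)_\infty}$ becomes $(z_iz_j^{-1};q)_\infty$); (ii) the normalization $\langle P_{\lambda^{(N)}},P_{\lambda^{(N)}}\rangle_N'$, which from equation (\ref{9ex1d}) with $t=0$ is an explicit product of $q$-Pochhammer symbols in the differences $\lambda^{(N)}_i-\lambda^{(N)}_j$, and hence is governed by Proposition~\ref{qfactProp}/Corollary~\ref{qfactCor}; (iii) the $q$-Whittaker polynomial $P_{\lambda^{(N)}}(z)$ itself on the torus, which after the rescaling of Definition~\ref{scaleDef} converges to the classical Whittaker function $\psi_{\underline\nu_N}$ by Theorem~\ref{qwhitconvTHM}.

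\textbf{Key steps, in order.} First I would substitute $z_j = e^{\iota\epsilon\nu_j}$ and change variables in the torus integral from $z_j$ to $\nu_j$ ranging over $[-\pi/\epsilon,\pi/\epsilon]$, so that $\frac{dz_j}{2\pi\iota z_j} = \frac{\epsilon}{2\pi}d\nu_j$, producing a prefactor $\epsilon^N$; combined with the $\epsilon^{\ell(\ell+1)/2}$-type factor in the definition of $\psi^\epsilon$ this accounts for the $\epsilon^{N(N+1)/2}$ in the statement. Second, I would expand the Gaussian-type factor: $\prod_j e^{\gamma z_j} = \prod_j e^{\tau\epsilon^{-2}e^{\iota\epsilon\nu_j}}$, and Taylor expand $e^{\iota\epsilon\nu_j} = 1+\iota\epsilon\nu_j - \tfrac12\epsilon^2\nu_j^2 + O(\epsilon^3)$, giving $e^{\tau N\epsilon^{-2}}\cdot e^{\iota\tau\epsilon^{-1}\sum\nu_j}\cdot e^{-\tau\sum\nu_j^2/2}\cdot e^{o(1)}$ on the region where $\nu$ stays bounded; the $e^{\tau N\epsilon^{-2}}$ is exactly one of the claimed growth factors. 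Third, I would handle the Macdonald weight $\prod_{i\ne j}(z_iz_j^{-1};q)_\infty = \prod_{i\ne j}(e^{\iota\epsilon(\nu_i-\nu_j)};q)_\infty$, which in the $\epsilon\to 0$ limit (after appropriate scaling absorbed into $\mathfrak{A}(\epsilon)$) converges to the inverse Sklyanin-measure density $\prod_{i\ne j}\Gamma(\iota\nu_i-\iota\nu_j)^{-1}$ — this is precisely the content underlying the $q\to 1$ degeneration of the Macdonald norm and is where Proposition~\ref{qfactProp} and the Dedekind-eta asymptotics from Lemma~\ref{Aepslemma} enter, and it produces the $\mathfrak{A}(\epsilon)$-powers that, together with those from step (ii), yield $e^{\frac{(N-1)(N-2)}{2}\mathfrak{A}(\epsilon)}$. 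Fourth, I would insert Theorem~\ref{qwhitconvTHM}: $P_{\lambda^{(N)}}(z) = \Upsilon_{\underline z_N}(\underline p_N)$ with $\underline p_N$ scaled as in the theorem's hypotheses, so $\overline{\Upsilon}$ converges to $\overline{\psi_{\underline\nu_N}}(T_{N,1},\dots,T_{N,N})$ uniformly for $T$ in the compact set $D$ and for $\nu$ in compacts; combining with the Gaussian from step two and the Sklyanin density from step three, the $\nu$-integral becomes exactly $\int_{\R^N}\psi_{\underline\nu_N}(T_{N,\cdot})e^{-\tau\sum\nu_j^2/2}m_N(\underline\nu_N)\,d\underline\nu_N = \theta_\tau(T_{N,1},\dots,T_{N,N})$ (up to conjugation, which is immaterial since $\psi$ with real index is — after the standard reality adjustment — real, or one absorbs it into $\nu\mapsto-\nu$). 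Finally, I would collect all prefactors and verify they match $e^{\frac{(N-1)(N-2)}{2}\mathfrak{A}(\epsilon)}e^{\tau N\epsilon^{-2}}\epsilon^{\frac{N(N+1)}{2}}$.

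\textbf{Main obstacle.} The genuinely hard part is controlling the \emph{tails} of the $\nu$-integral — i.e., the region where $|\nu_j|$ is not bounded as $\epsilon\to 0$ — uniformly in $T\in D$, so that one may legitimately replace the torus integral by a convergent integral over all of $\R^N$ and pass the limit inside. Near $\nu_j=\pm\pi/\epsilon$ the factor $e^{\tau\epsilon^{-2}\cos(\epsilon\nu_j)}$ is smallest, which helps, but one still needs the uniform bound on $|\Upsilon_{\underline z_N}(\underline p_N)|$ (equivalently on $|\psi^\epsilon_{\underline\nu_N}|$) together with a bound on the Macdonald weight $\prod_{i\ne j}(e^{\iota\epsilon(\nu_i-\nu_j)};q)_\infty$ that is strong enough to beat the measure of the integration region; this is where the full strength of part (1) of Theorem~\ref{qwhitconvTHM} (the polynomial-times-double-exponential estimate, here used in the index variables, or rather its torus-variable analogue) is needed, and one should expect to combine it with an estimate of Paley--Wiener type analogous to Proposition~\ref{supexpthetadecay} to dominate the integrand by an $\epsilon$-independent integrable function. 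Once that dominated-convergence framework is in place, the computation of the explicit prefactors is routine bookkeeping of the kind carried out in Lemmas~\ref{Skewlemma} and~\ref{Pilemma}.
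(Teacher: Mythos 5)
Your structural plan — torus scalar product, change of variables $z_j=e^{\iota\e\nu_j}$, Taylor expansion of $\Pi$, conversion of the Macdonald weight to the Sklyanin density, invocation of Theorem~\ref{qwhitconvTHM} for $P_{\lambda^{(N)}}$, and dominated convergence for the tails — is exactly the paper's route. But your factor accounting contains a genuine error: the normalization $\langle P_{\lambda^{(N)}},P_{\lambda^{(N)}}\rangle'_N$ contributes \emph{no} $\A(\e)$ powers. With $t=0$, equation~(\ref{9ex1d}) reduces to $\prod_{i=1}^{N-1}\big(q^{\lambda^{(N)}_i-\lambda^{(N)}_{i+1}+1};q\big)_\infty^{-1}$, and under the scaling $q^{\lambda^{(N)}_i-\lambda^{(N)}_{i+1}+1}=\e^2 e^{T_{N,i+1}-T_{N,i}+\e}$, which vanishes like $\e^2$ and makes the whole product $e^{o(1)}$. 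Proposition~\ref{qfactProp} is the wrong tool here: it controls \emph{finite} $q$-factorials $(q;q)_n$ with $n\sim\e^{-1}y_{\alpha,\e}$ large and positive, not infinite $q$-Pochhammers at a vanishingly small argument. The $\A(\e)$ powers actually come from (a) the torus-measure-to-Sklyanin conversion, contributing $e^{N(N-1)\A(\e)}$, and (b) the rescaling prefactor $e^{-\frac{(N-1)(N+2)}{2}\A(\e)}$ baked into the definition of $\psi^\e$ in Definition~\ref{scaleDef} — the convergence in Theorem~\ref{qwhitconvTHM} is for $\psi^\e$, not for $\Upsilon=P_\lambda$ itself, which is a distinction your write-up elides. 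The combination $N(N-1)-\tfrac{(N-1)(N+2)}{2}=\tfrac{(N-1)(N-2)}{2}$ then matches the lemma.

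Two further points. Before Theorem~\ref{qwhitconvTHM} can be applied, you must first strip the overall $\tau\e^{-2}$ shift that sits inside every $\lambda^{(N)}_j$: the theorem's scaling for $p_{\ell,k}$ has no such additive term. The paper does this via the index-shift property (\ref{macLaurent}), which extracts a factor $(z_1\cdots z_N)^\gamma=e^{\tau\e^{-1}\iota\sum_k\nu_k}$; this is precisely what cancels the linear-in-$\e^{-1}$ phase from Taylor-expanding $\prod_j e^{\gamma z_j}$, and skipping it leaves the argument broken. Finally, your suggestion to invoke a Paley--Wiener estimate like Proposition~\ref{supexpthetadecay} for the tails would be circular, since $\theta_\tau$ is the object this lemma is constructing; the paper instead uses $\cos x - 1\leq -x^2/6$ to get $|\Pi/E_\Pi|\leq e^{-\tau\sum\nu_k^2/6}$ on all of $[-\pi\e^{-1},\pi\e^{-1}]^N$, the uniform (in $\nu$, for $T\in D$) constant bound on $\psi^\e$ from Theorem~\ref{qwhitconvTHM}(1), and a linear-exponential growth bound on $1/\Gamma_q$; the Gaussian then dominates.
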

\begin{proof}

We employ the torus scalar product (see Section~\ref{torusSec}) with respect to which the Macdonald polynomials are orthogonal (we keep $t=0$):
\begin{equation*}
\langle f,g\rangle'_N = \int_{\T^N} f(z) \overline{g(z)} m_N^{q}(z) \prod_{i=1}^{N} \frac{ dz_i}{z_i}, \qquad m_N^q(z)  = \frac{1}{(2\pi \iota)^{N} N!}\prod_{i\neq j} (z_iz_j^{-1};q)_{\infty}.
\end{equation*}
Note that taking $t=0$ in equation (\ref{9ex1d}) yields
\begin{equation*}
\langle P_\lambda,P_\lambda\rangle'_N = \prod_{i=1}^{N-1} (q^{\lambda_i-\lambda_{i+1}+1};q)_{\infty}^{-1}.
\end{equation*}
Recalling the definition of $\Pi$ from equation (\ref{PIeqn}) we may write
\begin{equation*}
Q_{\lambda}(\rho) = \frac{1}{\langle P_\lambda,P_\lambda\rangle'_N} \langle \Pi(z_1,\ldots, z_N;\rho),P_{\lambda}(z_{1},\ldots, z_{N})\rangle'_N.
\end{equation*}

Let us break up the study of the asymptotics of $Q$ into a few steps.

{\bf Step 1:} We show that
\begin{equation*}
\langle P_{\lambda^{(N)}},P_{\lambda^{(N)}}\rangle'_N = e^{o(1)},
\end{equation*}
where the $o(1)$ error goes (with respect to $\{T_{k,i}\}_{1\leq i\leq k\leq N}\in D$) to zero as $\e$ goes to zero.

Since
\begin{equation*}
\lambda^{(N)}_{i} - \lambda^{(N)}_{i+1} +1= -2\e^{-1}\log \e +(T_{N,i}-T_{N,i+1})\e^{-1} +1
\end{equation*}
and $q=e^{-\e}$ we find that
\begin{equation*}
q^{\lambda^{(N)}_{i} - \lambda^{(N)}_{i+1} +1}  = \e^{2} e^{T_{N,i+1}-T_{N,i} + \e}.
\end{equation*}
Now consider (taking $b=e^{T_{N,i+1}-T_{N,i} + \e}$)
\begin{equation*}
\log(q^{\lambda^{(N)}_{i} - \lambda^{(N)}_{i+1} +1};q)_{\infty} = \log (\e^2 b;e^{-\e})_{\infty} = \sum_{k=0}^{\infty} \log(1-\e^2 b e^{-k\e}) = -\sum_{k=0}^{\infty} \left(\e^2 b e^{-k\e} + O\left(\e^{4}b^2 e^{-2k\e}\right)\right),
\end{equation*}
where the last approximation is valid uniformly for $b$ bounded (as is our case). Thus, summing the geometric series, we find that
\begin{equation*}
\log(q^{\lambda^{(N)}_{i} - \lambda^{(N)}_{i+1} +1};q)_{\infty} = -\e^{2}b \frac{1}{1-e^{-\e}} -O\left(\e^{4}b^2\frac{1}{1-e^{-2\e}}\right) = -\e b + O(\e^2).
\end{equation*}
Since $\log (\langle P_{\lambda^{(N)}},P_{\lambda^{(N)}}\rangle'_N) = \sum_{i=1}^{N-1} \log(q^{\lambda^{(N)}_{i} - \lambda^{(N)}_{i+1} +1};q)_{\infty}$, the desired result immediately follows.

The next three steps deal with the convergence of the integrand of the torus scalar product, and then the convergence of the integral itself. We introduce the change of variables $z_j= \exp\{\iota \e \nu_j\}$.

{\bf Step 2:} Fix a compact subset $V\subset \R^N$. Then
\begin{equation*}
\Pi(z_1,\ldots, z_N;\rho) = E_{\Pi} e^{-\tau \sum_{k=1}^{N} \nu_k^2/2} e^{o(1)}, \qquad E_{\Pi}= e^{\tau N \e^{-2}}  e^{\tau  \e^{-1} \iota \sum_{k=1}^{N} \nu_k},
\end{equation*}
and
\begin{equation*}
P_{\lambda^{(N)}}(z_1,\ldots, z_N) = E_{P}\psi_{\nu_1,\ldots, \nu_N}(T_{N,1},\ldots, T_{N,N}) e^{o(1)},\qquad E_{P}=\e^{-\frac{N(N-1)}{2}} e^{-\frac{(N-1)(N+2)}{2}\A(\e)}e^{\tau \e^{-1}\iota\sum_{k=1}^{N} \nu_k},
\end{equation*}
where the $o(1)$ error goes uniformly (with respect to $\{T_{k,i}\}_{1\leq i\leq k\leq N}\in D$ and $\{\nu_i\}_{1\leq i\leq N}\in V$) to zero as $\e$ goes to zero.

The $\Pi$ asymptotics follows just as in the proof of Lemma~\ref{Pilemma}. For $P_{\lambda^{(N)}}(z_1,\ldots, z_N)$, using the shift property of equation (\ref{macLaurent}) for Laurent Macdonald polynomials we may remove the $\gamma$ factor out of each term of $\lambda^{(N)}$ at the cost of introducing the prefactor of $(z_1\cdots z_N)^{\gamma} = e^{\tau\e^{-1}\iota \sum_{k=1}^{N} \nu_k}$. Theorem~\ref{qwhitconvTHM} applies to $P_{\lambda^{(N)}-(\gamma,\ldots, \gamma)}(z_1,\ldots, z_N)$ and allows us to deduce the uniform estimate above (recall Macdonald symmetric functions with $t=0$ are $q$-Whittaker functions with variable and index switched).

{\bf Step 3:} Fix a compact subset $V\subset \R^N$. Then
\begin{equation}\label{meas}
m_N^{q}(z)\prod_{i=1}^{N} \frac{ dz_i}{z_i} = E_m m_N(\ul{\nu}{N}) \prod_{i=1}^{N} d\nu_i e^{o(1)}, \qquad E_{m}=\e^{N^2}e^{N(N-1)\A(\e)},
\end{equation}
where the $o(1)$ error goes uniformly (with respect to $\{\nu_i\}_{1\leq i\leq N}\in V$) to zero as $\e$ goes to zero.

Set $x=\iota(\nu_j-\nu_i)$, so that  $q^x = z_i z_j^{-1}$. Then recalling the $q$-gamma function defined in equation (\ref{qGamma}), we find that the left-hand side of equation (\ref{meas}) equals
\begin{equation*}
\frac{\e^N}{(2\pi)^N N!} \prod_{i\neq j} \frac{(q;q)_{\infty}  (1-q)^{1-\iota(\nu_j-\nu_i)}}{\Gamma_{q}(\iota(\nu_j-\nu_i))} \prod_{i=1}^{N} d\nu_i.
\end{equation*}
Note that the term $\iota(\nu_j-\nu_i)$ can be removed from the exponent of $(1-q)$ due to cancelation over the full product $i\neq j$. Also note that $(q;q)_{\infty} = e^{\A(\e)} e^{o(1)}$. Lastly, recall that $\Gamma_q$ converges to $\Gamma$ as $\e\to 0$ uniformly on compact sets bounded from $\{0,-1,\ldots\}$. Combine these observations we recover the right-hand side of equation (\ref{meas}).

{\bf Step 4:} We have
\begin{equation}\label{PiPscalareqn}
\langle \Pi(z_1,\ldots, z_N;\rho),P_{\lambda}(z_{1},\ldots, z_{N})\rangle'_N = \left(e^{\frac{(N-1)(N-2)}{2}\A(\e)} e^{\tau N \e^{-2}} \e^{\frac{N(N+1)}{2}}\right)\theta_{\tau}(T_{N,1},\ldots,T_{N,N}) e^{o(1)},
\end{equation}
where the $o(1)$ error goes uniformly (with respect to $\{T_{k,i}\}_{1\leq i\leq k\leq N}\in D$) to zero as $\e$ goes to zero.

Steps 2 and 3 readily show that the integrand of the scalar product on the left-hand side of equation (\ref{PiPscalareqn}) converges to the integrand of $\theta_{\tau}$ on the right-hand side times $E_{\Pi}\overline{E_{P}} E_m$ (recall that we must take the complex conjugate of $P_{\lambda}$ and hence also $E_{P}$). These three terms combine to equal the prefactor of $\theta_{\tau}$ above. This convergence, however, is only on compact sets of the integrand variables $\nu$. In order to conclude convergence of the integrals we must show suitable tail decay for large $\nu$.

In particular, we need to show that if we define
\begin{equation*}
V_M = \{(z_1,\ldots, z_N): \forall 1\leq k\leq N, z_k=e^{\iota\e \nu_k} \textrm{ and } |\nu_k|>M\},
\end{equation*} then
\begin{equation}\label{DMzero}
\lim_{M\rightarrow \infty} \lim_{\e\rightarrow 0}\int_{\nu\in V_{M}} \frac{\Pi(z_1,\ldots, z_N;\rho)}{E_{\Pi}} \frac{\overline{P_{\lambda^{(N)}}(z_1,\ldots, z_N)}}{\overline{E_{P}}} \frac{m_N^q(z)}{E_m} \prod_{i=1}^{N} \frac{ dz_i}{z_i}=0
\end{equation}
with the limits uniform with respect to $\{T_{k,i}\}_{1\leq i\leq k\leq N}\in D$.

First consider $\left|\frac{\Pi(z_1,\ldots, z_N;\rho)}{E_{\Pi}}\right|$. For $x\in [-\pi,\pi]$, $\cos(x)-1 \leq -x^2/6$ (this is not sharp but will do). This shows that
\begin{equation*}
\Real(e^{\iota \e \nu_k}-1) = \cos(\e \nu_k) -1 \leq -\e^2 \nu_k^2/6
\end{equation*}
for $\nu_k\in [-\e^{-1}\pi,\e^{-1}\pi]$ (which are the bounds of integration in $\nu_k$). This implies that for all $\nu_k\in [-\e^{-1}\pi,\e^{-1}\pi]$,
\begin{equation}\label{decaynueqn}
\left|\frac{\Pi(z_1,\ldots, z_N;\rho)}{E_{\Pi}}\right| \leq e^{-\tau \sum_{k=1}^{N} \nu_k^2/6}.
\end{equation}

Turning now to $\left|\frac{\overline{P_{\lambda^{(N)}}(z_1,\ldots, z_N)}}{\overline{E_{P}}}\right|$, we see that by applying the first part of Theorem~\ref{qwhitconvTHM}, this term can be bounded by a constant, uniformly with respect to $\{T_{k,i}\}_{1\leq i\leq k\leq N}\in D$.

Finally, consider
\begin{equation*}
\left|\frac{m_N^q(z)}{E_m} \prod_{i=1}^{N} \frac{ dz_i}{z_i}\right| = \left(\prod_{i\neq j} \e^{-1}(1-q)e^{-\A(\e)}(q;q)_{\infty}\right)\frac{1}{(2\pi)^N N!}\prod_{i\neq j} \frac{1}{\Gamma_{q}(\iota(\nu_j-\nu_i))} \prod_{i=1}^{N} d\nu_i.
\end{equation*}
The prefactors are independent of $\nu$ and we have already determined that they go to 1 suitably as $\e$ goes to zero. Thus we can focus entirely on the term $\frac{1}{\Gamma_{q}(\iota(\nu_j-\nu_i))}$. For this we use the fact that there exist constant $c,c'$ such that for all $a\in[-2\pi \e^{-1},2\pi \e^{-1}]$,
\begin{equation}\label{365}
\left|\frac{1}{\Gamma_{q}(\iota a)}\right| \leq c' e^{c|a|}.
\end{equation}
From this inequality, it follows that on the whole domain of integration for the $\nu_k\in [-\e^{-1}\pi,\e^{-1}\pi]$ we may bound this part of the integrand by
\begin{equation*}
\left|\frac{1}{\Gamma_{q}(\iota(\nu_j-\nu_i))}\right| \leq c' e^{c|\nu_j-\nu_i|}.
\end{equation*}

This growth bound can be compared to the decay bound of equation (\ref{decaynueqn}). There exists an $M_0>0$ and $c''>0$ such that for all $M>M_0$, for $\nu\in D_M$
\begin{equation*}
-\tau\sum_{k=1}^{N} \nu_k^2/6 + \sum_{i\neq j} c|\nu_j-\nu_i| \leq -c''\sum_{k=1}^{N} \nu_k^2.
\end{equation*}
Therefore, on this domain, we have been able to bound our integrand by a constant times $e^{-c'\sum_{k=1}^{N} \nu_k^2}$. Moreover, this bound is uniform in $\e$ small enough and in $\{T_{k,i}\}_{1\leq i\leq k\leq N}\in D$. Thus we can take $\e$ to zero and we are left with an integral which has Gaussian decay in the $\nu_k$. Therefore, as $M\to \infty$, the integrand goes to zero as desired, thus proving equation (\ref{DMzero}) and completing this step of the proof and hence the proof of Lemma~\ref{Qlemma}.
\end{proof}

Having proved Lemmas \ref{Skewlemma}, \ref{Pilemma} and \ref{Qlemma} we can combine the three factors along with a Jacobian factor of $\e^{-\frac{N(N+1)}{2}}$ which comes from our rescaling of the $q$-Whittaker process. Combining these factors, and noting both the cancelation of the prefactors and uniformity of the convergence of each term, we find our desired limit and thus conclude the proof of Theorem~\ref{theorem26}.
\end{proof}

\subsubsection{Whittaker 2d-growth model}\label{contlimit2dgrowth}

\begin{definition}
Fix $N\geq 1$, and $a=(a_1,\ldots, a_N)\in \R^N$. The {\it Whittaker 2d-growth model}\index{Whittaker 2d-growth model} with drift $a$ is a continuous time Markov diffusion process $T(\tau)=\{T_{k,j}(\tau)\}_{1\leq j\leq k\leq N}$ \glossary{$T(\tau)$} with state space $\R^{N(N+1)/2}$ and $\tau$ representing time, which is given by the following system of stochastic (ordinary) differential equations: Let $\{W_{k,j}\}_{1\leq j\leq k \leq N}$ be a collection of independent standard one-dimensional Brownian motions. The evolution of $T$ is defined recursively by $dT_{1,1} = dW_{1,1}+a_1 dt$ and for $k=2,\ldots, N$,
\begin{eqnarray*}
dT_{k,1} &=& d W_{k,1} + \left(a_k+ e^{T_{k-1,1}-T_{k,1}}\right)dt\\
dT_{k,2} &=& dW_{k,2} + \left(a_k+ e^{T_{k-1,2}-T_{k,2}} - e^{T_{k,2}-T_{k-1,1}}\right)dt\\
&\vdots&\\
dT_{k,k-1} &=& dW_{k,k-1} + \left(a_k+ e^{T_{k-1,k-1}-T_{k,k-1}} - e^{T_{k,k-1}-T_{k-1,k-2}}\right)dt\\
dT_{k,k} &=& dW_{k,k} + \left(a_k- e^{T_{k,k}-T_{k-1,k-1}}\right)dt.
\end{eqnarray*}
The Whittaker 2d-growth model coincides with ``symmetric dynamics'' of \cite{OCon} Section 9.
\end{definition}

\begin{theorem}\label{thm7.26}
Fix $N\geq 1$, and $a=(a_1,\ldots, a_N)\in \R^N$. Consider the $q$-Whittaker 2d-growth model with drifts $(A_1,\ldots, A_N)$ (Definition~\ref{2dgrowth}) initialized with $\lambda^{(k)}_{j}=0$ for all $1\leq j\leq k\leq N$. Let $\lambda^{(k)}_j(\tau)$ represent the value at time $\tau\geq 0$ of $\lambda^{(k)}_j$. For $\e>0$ set
\begin{equation*}
q=e^{-\e},\qquad A_{k} = e^{-\e a_k}, \space 1\leq k\leq N, \qquad \lambda^{(k)}_j(\tau) = \tau \e^{-2} -(k+1-2j)\e^{-1} \log \e +T_{k,j}(\tau) \e^{-1}, \space 1\leq j\leq k \leq N.
\end{equation*}
Then, for each $0< \delta \leq t$, $T(\cdot) = \{T_{k,j}(\cdot)\}_{1\leq j\leq k\leq N}$ converges in the topology of $C([\delta,t],\R^{N(N+1)/2})$ (i.e., continuous trajectories in the space of triangular arrays $\R^{N(N+1)/2}$ with the uniform topology) to the Whittaker 2d-growth model with drift $(-a_1,\ldots,-a_N)$ and the entrance law \index{Whittaker 2d-growth model!entrance law} for $\{T_{k,j}(\delta)\}_{1\le j\le k\le N}$ given by the density $\W{a;\delta}\left(\{T_{k,j}\}_{1\le j\le k\le N}\right)$.
\end{theorem}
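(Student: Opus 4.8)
The plan is to combine the fixed-time weak convergence of Theorem~\ref{theorem26} with a generator/martingale-problem argument, upgrading convergence of one-dimensional marginals to convergence of the whole trajectory in path space. First I would observe that the $q$-Whittaker $2$d-growth model (Definition~\ref{2dgrowth}) is, for fixed $\e>0$, a pure-jump continuous-time Markov chain on $\ss^{(N)}$ with bounded (in fact uniformly bounded) jump rates, so its law on $C([\delta,t],\R^{N(N+1)/2})$ (after the rescaling $\lambda^{(k)}_j = \tau\e^{-2} - (k+1-2j)\e^{-1}\log\e + T_{k,j}\e^{-1}$) is well-defined via the Feller semigroup $\exp(\tau\frakq)$. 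The target is a diffusion whose generator is the second-order operator associated to the SDE system in the statement; note the drift becomes $(-a_1,\dots,-a_N)$ because $A_k = e^{-\e a_k}$ contributes a rate $A_k(1-q^{\lambda^{(m-1)}_{k-1}-\lambda^{(m)}_k})$ whose macroscopic Poissonian part has intensity $\e^{-2}A_k \to \e^{-2}(1-\e a_k + \cdots)$, shifting the effective drift by $-a_k$ relative to the deterministic lattice velocity $\tau\e^{-2}$.

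The core of the argument is a generator-convergence computation in the spirit of the heuristic $N=2$ discussion already given in the introduction. For a smooth compactly supported test function $F$ of the variables $\{T_{k,j}\}$, I would write out $\e^{-2}\frakq$ acting on $F$ composed with the rescaling map, expand each rate
\begin{equation*}
A_m\frac{(1-q^{\lambda_{k-1}^{(m-1)}-\lambda_k^{(m)}})(1-q^{\lambda_{k}^{(m)}-\lambda_{k+1}^{(m)}+1})}{(1-q^{\lambda_{k}^{(m)}-\lambda_k^{(m-1)}+1})}
\end{equation*}
using $q=e^{-\e}$, $A_m = e^{-\e a_m}$ and the scalings, so that each factor $(1-q^{\lambda^{(\cdot)}-\lambda^{(\cdot)}})$ of the form $1-q^{2m(\e)/\e\text{-order}}$ becomes $\e\, e^{T_{\cdot}-T_{\cdot}} + O(\e^2)$ while the numerator factor $1-q^{\lambda_{k}^{(m)}-\lambda_{k+1}^{(m)}+1}$ tends to $1$ for $k<m$ and contributes the $e^{T_{k-1}-T_k}$ term for the boundary coordinate. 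Expanding $F(T + \e\,e_{k,j}) - F(T)$ to second order in $\e$ (the jump size is a single unit of $\lambda$, i.e.\ size $\e$ in $T$), the $O(\e)$ terms multiplied by the $O(\e^{-2})\cdot\e$ piece of the rate assemble into the drift, and the $O(\e^2)$ Taylor term times the $O(\e^{-2})$ Poissonian part of the rate assembles into $\tfrac12\partial_{T_{k,j}}^2 F$; all cross and higher terms are $o(1)$ uniformly on compacts. One must check that the ``longest string'' feature of the dynamics (moving $\lambda_A^{(B)}=\dots=\lambda_A^{(B+C)}$ together) does not affect the marginal generator on a fixed level set of indices — it does not, because in the scaling limit strings of equal coordinates across levels have probability zero at any fixed time, and the joint jumps they induce contribute only to higher levels' drifts in a way already accounted for by the recursive SDE structure. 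This identifies $\lim_{\e\to 0}\e^{-2}\frakq$ with the generator $\mathcal{L}$ of the Whittaker $2$d-growth model on test functions.

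To turn generator convergence into path-space convergence I would invoke the standard machinery (e.g.\ Ethier--Kurtz, Theorem 4.2.11 or Corollary 4.8.7): establish tightness of the rescaled processes in $C([\delta,t],\R^{N(N+1)/2})$ and identify every limit point as a solution of the martingale problem for $\mathcal{L}$, which is well-posed since the SDE system is triangular with locally Lipschitz (smooth) coefficients — one solves for $T_{1,1}$, then $T_{2,\cdot}$ given $T_{1,\cdot}$, etc., with no blow-up because each equation has bounded-above drift in the relevant direction. Tightness follows from the uniform boundedness of the jump rates together with the Aldous--Rebolledo criterion: the compensator of the rescaled process has uniformly (in $\e$, on the compact time window $[\delta,t]$, and with high probability using the fixed-time tail control implicit in the weak convergence of Theorem~\ref{theorem26}) bounded increments, and the predictable quadratic variation scales like $\e^{-2}\cdot(\text{rate})\cdot\e^2 = O(1)$. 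The entrance law at time $\delta$ is pinned down precisely by Theorem~\ref{theorem26}: the rescaled $q$-Whittaker process at time $\delta\e^{-2}$ (which is $\M_{asc,t=0}(A_1,\dots,A_N;\rho)$ with Plancherel parameter $\gamma=\delta\e^{-2}$) converges weakly to $\W{a;\delta}(\{T_{k,j}\})$. Combining the identified limiting generator, the unique solvability of the martingale problem, and the identified initial distribution gives convergence of the full trajectory, which is the assertion.

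\textbf{Main obstacle.} The delicate point is the tightness and the uniform control of the generator expansion near the boundary of the scaling region — specifically, the approximation $(1-q^{\lambda^{(m)}_k - \lambda^{(m)}_{k+1}+1}) \approx \e\, e^{T_{m,k+1}-T_{m,k}}$ degrades when $T_{m,k}-T_{m,k+1}$ is very negative (the interlacing constraint forces $\lambda^{(m)}_k - \lambda^{(m)}_{k+1} \ge 0$, i.e.\ $T_{m,k}-T_{m,k+1}\ge -2\log\e^{-1}+o(1)$), so the rates can become large. This is exactly the phenomenon handled in the proof of Theorem~\ref{qwhitconvTHM} by the cancellation between the growing $\Delta^{\e}$ factor and the rapidly decaying $Q^{\e}$ factor (Lemma~\ref{unifupperboundLemma}); I expect one needs an analogous a priori estimate here showing the rescaled process spends negligible time in those boundary regions, so that the error terms in the generator expansion are genuinely $o(1)$ in the relevant (e.g.\ $L^1$ against the process law) sense and tightness is not destroyed by the fast clocks. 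Everything else — the algebra of the Taylor expansion, the well-posedness of the limiting triangular SDE, and the identification of the entrance law — is routine given Theorems~\ref{theorem26} and~\ref{qwhitconvTHM} and standard diffusion-approximation theorems.
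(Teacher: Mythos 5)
Your proposal is correct and fleshes out what the paper leaves almost entirely implicit: the paper's proof of Theorem~\ref{thm7.26} is a two--sentence sketch, asserting only that the entrance law follows from Theorem~\ref{theorem26} and that ``the convergence of the dynamics of the Markov processes follows by induction on the level $k$ and standard methods of stochastic analysis.'' Your route --- compute $\e^{-2}\frakq$ on test functions, expand the jump rates using $q=e^{-\e}$ and the scaling of $\lambda^{(k)}_j$, show the Taylor expansion of $F(T+\e e_{k,j})-F(T)$ against the rate yields the Whittaker-$2$d-growth generator, then close the argument via Aldous--Rebolledo tightness and uniqueness of the martingale problem --- is a legitimate instantiation of those ``standard methods.''

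The one organizational difference worth noting is that the paper frames the dynamics convergence as an \emph{induction on the level} $k$, while you carry out a global generator/martingale-problem argument and invoke the triangular SDE structure only for well-posedness of the limit. The inductive scheme is arguably the more economical way to handle the boundary degeneracy you flag at the end: since each rate only couples level $m$ to level $m-1$, one can condition on the already-established convergence of the lower levels, and the factor $1-q^{\lambda^{(m)}_k-\lambda^{(m-1)}_k+1}$ in the denominator (which approaches $1-\e e^{T_{m-1,k}-T_{m,k}}$ and is the source of potentially large rates) becomes controlled by a priori bounds on a process already known to be close to its diffusion limit. In your global formulation you would need that same a priori control to be uniform across all levels simultaneously --- which Theorem~\ref{theorem26} does furnish at fixed times, but you would still need to patch it into a uniform-over-$[\delta,t]$ estimate. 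So the gap you honestly flag as the ``main obstacle'' is real, but the level-by-level induction the paper alludes to is exactly the natural device for closing it; if you restructure your tightness/generator-convergence argument one level at a time, the boundary control becomes the inductive hypothesis rather than a separate uniform estimate to prove from scratch. With that restructuring your proposal would match the paper's intended argument essentially in full.

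Two small technical corrections to the rate asymptotics: under the stated scaling it is the differences $\lambda^{(m-1)}_{k-1}-\lambda^{(m)}_k$ and $\lambda^{(m)}_k-\lambda^{(m-1)}_k$ (shifted log-$\e$ exponents of one unit) that produce the $1-\e\,e^{T_\cdot-T_\cdot}$ factors feeding into the drift, while the same-level gap $\lambda^{(m)}_k-\lambda^{(m)}_{k+1}$ (shifted by two units of $\log\e$) gives $1-O(\e^2)$ and contributes only to the error term, not, as you wrote, to the boundary drift. And the jump rates are not uniformly bounded in $\e$ --- the denominator $1-q^{\lambda^{(m)}_k-\lambda^{(m-1)}_k+1}$ can be as small as $1-q\sim\e$ when interlacing is tight --- which is precisely why the boundary estimate is needed; the correct claim is that the time the rescaled process spends in that region vanishes, not that the rates themselves are bounded.
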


\begin{proof}
The entrance law follows immediately from the weak convergence of the $q$-Whittaker process to the Whittaker process (i.e., the single time convergence of the 2d-growth processes) which is given by Theorem~\ref{theorem26}. The convergence of the dynamics of the Markov processes follows by induction on the level $k$ and standard methods of stochastic analysis.
\end{proof}

\begin{remark}\label{Whittaker2dgrowthprojections}
Given the Whittaker 2d-growth model dynamics $T(\tau)$ with drift $(-a_1,\ldots,-a_N)$ there are two projections which are themselves (i.e., with respect to their own filtration) Markov diffusion processes. It is easy to see that $\{T_{k,k}\}_{1\leq k\leq N}$ evolves autonomously of the other coordinates. This is the continuous space limit of the $q$-TASEP process.

A less obvious second case is proved in Section 9 of \cite{OCon}: The projection onto $\{T_{N,j}(\tau)\}_{1\leq j\leq N}$ is a Markov diffusion process with entrance law given by the density $\WM{a;\tau}\left(\{T_{N,j}\}_{1\leq j\leq N}\right)$, i.e., for $x=(x_1,\ldots,x_N)$
\begin{equation*}
\WM{a;\tau}(x) = e^{-\frac{\tau}{2} \sum_{j=1}^{N} a_j^2} \psi_{\iota a}(x) \int_{\R^N} \overline{\psi_{\lambda}(x)}e^{-\frac{\tau}{2} \sum_{j=1}^{N} \lambda_j^2} m_N(\lambda)d\lambda.
\end{equation*}
The infinitesimal generator for the diffusion is given by
\begin{equation}\label{semigpW2d}
\mathcal{L} = \tfrac{1}{2} \psi_{\iota a}^{-1}\left(H-\sum_{j=1}^{N}a_j^2\right) \psi_{\iota a},
\end{equation}
\glossary{$\mathcal{L}$}
where $H$ is the {\it quantum $\mathfrak{gl}_{n}$-Toda lattice Hamiltonian}\index{quantum $\mathfrak{gl}_{n}$-Toda lattice Hamiltonian}
\begin{equation*}
H= \Delta - 2 \sum_{j=1}^{N-1} e^{x_{i+1}-x_{i}}.
\end{equation*}
\glossary{$H$}
\end{remark}

\subsection{Integral formulas for the Whittaker process}\label{whitintform}
By applying the scalings of Theorem~\ref{theorem26} we may deduce integral formulas for the Whittaker measure which are analogous to those of Propositions \ref{qsumlambdaprop}, \ref{prop8tzero}, \ref{prop8FULLtzero} and \ref{qlaplaceintegralform}.

The limiting form of Proposition~\ref{qsumlambdaprop} is the following:
\begin{proposition}\label{Proposition27}

For any $1\le r\le N$,
\begin{eqnarray*}
\lefteqn{\left\langle e^{-(T_{N,N}+T_{N,N-1}+\cdots+T_{N,N-r+1})}\right\rangle_{\WM{a_1,\dots,a_N;\tau}} =}\\
&&\frac{(-1)^{\frac{r(r+1)}2+Nr}}{(2\pi \iota)^rr!} e^{r\tau/2} \oint\cdots\oint \prod_{1\le k<l\le r} (w_k-w_l)^2\prod_{j=1}^r\left(\prod_{m=1}^N \frac{1}{w_j+a_m}\right)
{e^{-\tau w_j}dw_j}\,,
\end{eqnarray*}
where the contours include all the poles of the integrand.
\end{proposition}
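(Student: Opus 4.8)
The plan is to obtain Proposition~\ref{Proposition27} as the $q\to 1$ degeneration of Proposition~\ref{qsumlambdaprop} in the Plancherel specialization, using exactly the scalings of Theorem~\ref{theorem26}. First I would specialize Proposition~\ref{qsumlambdaprop} to the pure Plancherel case ($\alpha_i\equiv\beta_i\equiv 0$, $\gamma>0$), so that the integrand factor $\prod_{i\ge 1}(1-\alpha_i z_j)\tfrac{1+q\beta_i z_j}{1+\beta_i z_j}$ collapses to $1$, leaving
\begin{equation*}
\left\langle q^{\lambda_N+\cdots+\lambda_{N-r+1}}\right\rangle_{\MM_{t=0}(A_1,\dots,A_N;\rho)} = \frac{(-1)^{\frac{r(r+1)}{2}}}{(2\pi\iota)^r r!}\oint\cdots\oint \prod_{1\le k<\ell\le r}(z_k-z_\ell)^2 \prod_{j=1}^r\left(\prod_{m=1}^N\frac{A_m}{A_m-z_j}\right)e^{(q-1)\gamma z_j}\frac{dz_j}{z_j^r},
\end{equation*}
with $z_j$-contours encircling $\{A_1,\dots,A_N\}$. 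Then I would insert $q=e^{-\e}$, $\gamma=\tau\e^{-2}$, $A_m=e^{-\e a_m}$, and change integration variables via $z_j = e^{\iota\e w_j}$ (equivalently $z_j = 1 - \e w_j + O(\e^2)$; the scaling is chosen so the poles at $z_j=A_m$ become poles at $w_j = -a_m + O(\e)$). The left side, by the entrance-law part of Theorem~\ref{theorem26} together with the scaling $\lambda^{(N)}_j = \tau\e^{-2} - (N+1-2j)\e^{-1}\log\e + T_{N,j}\e^{-1}$, converges to $\langle e^{-(T_{N,N}+\cdots+T_{N,N-r+1})}\rangle_{\WM{a;\tau}}$, because $q^{\lambda^{(N)}_j} = e^{-\e\lambda^{(N)}_j}$ and the macroscopic and logarithmic pieces of $\lambda^{(N)}_j$ are deterministic shifts that contribute an overall constant prefactor which must be matched against the right-hand side asymptotics.

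Next I would track the asymptotics of each factor on the right. Under $z_j = e^{\iota\e w_j}$: the Vandermonde-type factor $(z_k-z_\ell)^2 \sim (\iota\e)^2(w_k-w_\ell)^2$, producing $(\iota\e)^{r(r-1)}\prod(w_k-w_\ell)^2$; the denominator $\prod_m (A_m - z_j) = \prod_m(e^{-\e a_m}-e^{\iota\e w_j}) \sim \prod_m(-\iota\e)(w_j + a_m)\cdot(1+O(\e))$, contributing $(-\iota\e)^{-Nr}\prod_{j,m}(w_j+a_m)^{-1}$ together with $\prod_m A_m \to 1$; the factor $e^{(q-1)\gamma z_j} = e^{(e^{-\e}-1)\tau\e^{-2} z_j}$; here one must be careful, writing $e^{-\e}-1 = -\e + \e^2/2 + O(\e^3)$ and $z_j = 1 + \iota\e w_j + O(\e^2)$, so $(q-1)\gamma z_j = \tau\e^{-2}(-\e+\tfrac{\e^2}{2})(1+\iota\e w_j) + O(\e) = -\tau\e^{-1} + \tfrac{\tau}{2} - \iota\tau w_j + o(1)$, hence $e^{(q-1)\gamma z_j} \to e^{-\tau w_j}$ (the $-\iota$ gets absorbed into the definition of the contour orientation, matching $e^{-\tau w_j}$ in the statement) times the divergent-but-$w$-independent prefactor $e^{-\tau\e^{-1}}e^{\tau/2}$, which over $r$ variables gives $e^{-r\tau\e^{-1}}e^{r\tau/2}$; and $\prod_j z_j^{-r}\to 1$; finally $dz_j = \iota\e e^{\iota\e w_j}dw_j \sim \iota\e\, dw_j$ gives $(\iota\e)^r$. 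I would then collect all powers of $\e$ and of $\iota$: the explicit ones are $\e^{r(r-1)}\cdot\e^{-Nr}\cdot\e^{r}= \e^{r^2 - Nr}$, and the matching $\e$-powers coming from the deterministic shifts in $q^{\lambda^{(N)}_j+\cdots}$ on the left-hand side (the macroscopic $\tau\e^{-2}$ pieces cancel against $e^{-r\tau\e^{-1}}$, and the $\e^{-1}\log\e$ pieces produce exactly the $\e$-power that cancels $\e^{r^2-Nr}$ — this is the book-keeping that makes the two sides balance); the sign bookkeeping turns $(-1)^{r(r+1)/2}$ into $(-1)^{r(r+1)/2 + Nr}$ exactly as in the statement, with the $\iota$'s from the three factors combining to a real power.

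The main obstacle, and the step requiring genuine care rather than routine algebra, is twofold. First, justifying the interchange of the limit $\e\to 0$ with the $r$-fold contour integral: one needs that after the change of variables the contours can be taken to a fixed compact family (independent of $\e$) enclosing the limiting poles $w_j = -a_m$ and no others, and that the integrand converges uniformly on those contours with an $\e$-independent dominating bound — this is where one invokes, as in the proof of Theorem~\ref{theorem26} and Remark~\ref{starstar31}, that for $\e$ small the poles $z_j = A_m$ stay in a small neighborhood of $z=1$ so the original nested/encircling contours deform to the rescaled ones without crossing singularities. Second, matching the prefactors: Theorem~\ref{theorem26} only asserts weak convergence of the rescaled $q$-Whittaker process to the Whittaker process, so to conclude convergence of the specific expectation $\langle q^{k_1\lambda^{(N)}_{i_1}+\cdots}\rangle$ one must either supplement with uniform integrability (the observable $q^{\sum \lambda_j} = e^{-\e\sum\lambda_j}\le 1$ is bounded, which helps, but the deterministic rescaling shift must be peeled off first, reducing it to $\langle e^{-\sum T_{N,j}}\rangle$ with $e^{-\sum T_{N,j}}$ not bounded), or use the tail estimates of Theorem~\ref{qwhitconvTHM} part (1) to get the needed uniform control; I would phrase this the same way as the proof of Lemma~\ref{Qlemma}, using the super-exponential/tail bounds to dominate. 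Once these two points are settled the identity falls out, and I would note that the resulting contours ``include all the poles of the integrand'' precisely because shrinking each $z_j$-contour in Proposition~\ref{qsumlambdaprop} to encircle only $\{A_1,\dots,A_N\}$ survives the limit as encircling only $\{-a_1,\dots,-a_N\}$, which are all the poles of the limiting integrand.
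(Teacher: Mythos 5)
Your approach — take the $q\to 1$ limit of Proposition~\ref{qsumlambdaprop} using the scalings of Theorem~\ref{theorem26} — is genuinely different from the paper's, and the paper explicitly flags it as insufficient as stated. In the remark following Proposition~\ref{Proposition28FULLtzero} the authors write: "Since the Whittaker measure is supported on all of $\R^N$, it is clear that the above formulas can not be proved from the weak convergence result of Theorem~\ref{theorem26}." The obstruction is exactly the one you notice near the end: after peeling off the deterministic shifts, the observable becomes $e^{-(T_{N,N}+\cdots+T_{N,N-r+1})}$, which is unbounded on the support of the measure. Weak convergence of the rescaled $q$-Whittaker process to $\WM{a;\tau}$ then gives nothing about convergence of this expectation, and the cancellation of all the $\e$-power prefactors between the two sides (which you carry out correctly) is necessary but far from sufficient. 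To close the gap one would need a bound of the form $\EE[e^{-\theta\sum T_{N,j}}]\le C$ for some $\theta>1$, uniformly in $\e$, so as to conclude uniform integrability of $e^{-\sum T_{N,j}}$ under the rescaled $q$-Whittaker measures. You gesture toward Theorem~\ref{qwhitconvTHM} part (1) and Lemma~\ref{Qlemma}, but those give tail decay of the $q$-Whittaker \emph{functions} as the $\underline{x}$ argument separates — not an $\e$-uniform tail bound on the probability measure in the direction where $\sum T_{N,j}\to -\infty$, which is where $e^{-\sum T_{N,j}}$ blows up. That estimate is not in the paper and would require genuine new work.

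What the paper actually does is bypass the $q\to 1$ limit of the expectation entirely and prove Proposition~\ref{Proposition28FULLtzero} (of which Proposition~\ref{Proposition27} is the $k=1$, $N_1=N$ specialization) directly at the Whittaker level, in Section~\ref{sec7.5}. The ingredients are: (i) the Whittaker difference-operator identity, equations~(\ref{Whitdiffeqnunintegerate})--(\ref{Whitdiffeqn}), which is the $q\to 1$ degeneration of the Macdonald operator applied to the \emph{functions} rather than to the measures; (ii) the spectral representation~(\ref{spectralBaxKer}) of the Baxter operators, which lets one write the joint density of several levels of the Whittaker process as an explicit integral against Whittaker functions; (iii) the super-exponential decay estimate Proposition~\ref{supexpthetadecay} on $\theta_\tau$, which justifies the termwise integrations; and (iv) a residue lemma that converts the resulting sums over subsets $I_\alpha$ into the nested contour integrals. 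This is the "analogous degeneration theory for Whittaker processes" alluded to in the remark — the discrete calculus is redone at the $q=1$ level where the required decay is available, rather than trying to interchange a $q\to 1$ limit with an integral that lacks uniform control. If you want to rescue your route, the honest statement is that it reduces the proof to an open uniform-integrability estimate; the paper's route avoids needing that estimate at all.
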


The limiting form of Proposition~\ref{prop8tzero} is the following:
\begin{proposition}\label{Proposition28}
For any $k \ge 1$,
\begin{eqnarray*}
\lefteqn{\left\langle e^{-kT_{N,N}}\right\rangle_{\WM{a_1,\dots,a_N;\tau}} =}  \\
&&\frac{(-1)^{k(N-1)}}{(2\pi \iota)^k} e^{k\tau/2}\oint\cdots\oint \prod_{1\le A<B\le k} \frac{w_A-w_B}{w_A-w_B+1}\prod_{j=1}^k \left(\prod_{m=1}^N \frac{1}{w_j+a_m}\right) {e^{-\tau w_j}dw_j}\,,
\end{eqnarray*}
where the $w_j$-contour contains $\{w_{j+1}-1,\cdots,w_k-1,-a_1,\cdots,-a_N\}$ and no other singularities for $j=1,\dots,k$.
\end{proposition}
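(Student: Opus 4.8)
\textbf{Proof proposal for Proposition~\ref{Proposition28}.}

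The plan is to derive this as the $q\to 1$ scaling limit of Proposition~\ref{prop8tzero} under the change of variables appearing in Theorem~\ref{theorem26}. Recall that Proposition~\ref{prop8tzero} gives, for the $q$-Whittaker measure $\MM_{t=0}(A_1,\dots,A_N;\rho)$ with $\rho$ the Plancherel specialization of parameter $\gamma$,
\[
\left\langle q^{k\lambda_N}\right\rangle_{\MM_{t=0}(A_1,\dots,A_N;\rho)} = \frac{(-1)^k q^{\frac{k(k-1)}{2}}}{(2\pi \iota)^k}\oint\cdots\oint \prod_{1\le \kappa_1<\kappa_2\le k}\frac{z_{\kappa_1}-z_{\kappa_2}}{z_{\kappa_1}-qz_{\kappa_2}}\prod_{j=1}^k\left(\prod_{m=1}^N\frac{A_m}{A_m-z_j}\right)e^{(q-1)\gamma z_j}\frac{dz_j}{z_j},
\]
with nested contours. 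I would substitute $q=e^{-\e}$, $A_m=e^{-\e a_m}$, $\gamma=\tau\e^{-2}$, and set $z_j=e^{-\e w_j}$; simultaneously, under the scaling $\lambda^{(N)}_N = \tau\e^{-2} - (N-1)\e^{-1}\log\e + T_{N,N}\e^{-1}$ one has $q^{k\lambda_N} = e^{-\e k\lambda_N}$, whose $\e$-dependent deterministic prefactors $e^{-\tau k\e^{-1}}\e^{k(N-1)}$ must be tracked and will cancel against matching prefactors produced on the integral side. Since Theorem~\ref{theorem26} already establishes weak convergence of the whole $q$-Whittaker process to the Whittaker process (together with the uniform tail bound coming from part 1 of Theorem~\ref{qwhitconvTHM}, which controls the exponential moments), the left-hand side $\e^{-k(N-1)}e^{\tau k\e^{-1}}\langle q^{k\lambda_N}\rangle$ converges to $\langle e^{-kT_{N,N}}\rangle_{\WM{a_1,\dots,a_N;\tau}}$; the only real work is to show the contour-integral side converges to the claimed limit, and to match the bookkeeping constants.

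The key steps, in order, are: (i) compute the limit of each factor in the integrand. One has $\frac{A_m}{A_m-z_j} = \frac{e^{-\e a_m}}{e^{-\e a_m}-e^{-\e w_j}}\to \frac{1}{w_j+a_m}$ after extracting a factor $\e^{-1}$ from each such term (giving $\e^{-N}$ per $j$, hence $\e^{-Nk}$ total, to be absorbed into the Jacobian), and $e^{(q-1)\gamma z_j} = e^{(e^{-\e}-1)\tau\e^{-2}e^{-\e w_j}} \to e^{-\tau w_j}$ up to an $\e$-dependent factor $e^{-\tau\e^{-1}}$ that contributes the $e^{-\tau k\e^{-1}}$ matching the left side. (ii) The cross term: $\frac{z_{\kappa_1}-z_{\kappa_2}}{z_{\kappa_1}-qz_{\kappa_2}} = \frac{e^{-\e w_{\kappa_1}}-e^{-\e w_{\kappa_2}}}{e^{-\e w_{\kappa_1}}-e^{-\e(w_{\kappa_2}+1)}} \to \frac{w_{\kappa_1}-w_{\kappa_2}}{w_{\kappa_1}-w_{\kappa_2}+1}$ (the leading $\e$'s cancel between numerator and denominator). (iii) The prefactor and Jacobian: $dz_j/z_j = -\e\,dw_j$, contributing $(-\e)^k$; combined with $\e^{-Nk}$ from step (i), $\e^{k(N-1)}$ from the left side cancels, leaving a clean $(-1)^k$. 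Collecting the remaining signs from step (i) — there is $(-1)^N$ per factor $\frac{1}{w_j+a_m}$ if one writes $\frac{A_m}{A_m-z_j}\sim \frac{-1}{\e(w_j+a_m)}$ more carefully, giving $(-1)^{Nk}$ — together with the residual $(-1)^k$ and the fact that $q^{k(k-1)/2}\to 1$, yields the stated constant $(-1)^{k(N-1)}e^{k\tau/2}$; here the $e^{k\tau/2}$ arises because the natural limiting object is $\langle e^{-kT_{N,N}}\rangle$ after the shift by $\log\e$, and the careful accounting of the $(N+1-2j)\e^{-1}\log\e$ shifts in Theorem~\ref{theorem26} produces exactly this Gaussian-type correction (cf. the $e^{r\tau/2}$ in Proposition~\ref{Proposition27}, which I would verify is consistent). (iv) Track the contours: the $z_j$-contour in Proposition~\ref{prop8tzero} encircles $\{qz_{j+1},\dots,qz_k,A_1,\dots,A_N\}$; under $z=e^{-\e w}$ this becomes, in the $w$-plane, a contour enclosing $\{w_{j+1}-1,\dots,w_k-1,-a_1,\dots,-a_N\}$, which is precisely the prescription in the statement.

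The main obstacle will be making the interchange of limit and integration rigorous: one must show the integrands converge uniformly on the (deformed, $\e$-dependent) contours and are dominated, so that the $\e\to 0$ limit can be passed inside the $k$-fold contour integral. This requires choosing the contours to be genuinely $\e$-independent in the $w$-plane (fixed small circles around the relevant nested points) and checking that the pullbacks to the $z$-plane remain valid integration contours for small $\e$ — i.e., that they still separate the required poles and avoid the branch issues of $z\mapsto e^{-\e w}$, which is fine since everything stays in a bounded region. Uniform control of $\prod_m\frac{A_m}{A_m-z_j}$ near its poles is automatic once the $w$-contour is a fixed positive distance from $\{-a_m\}$. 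A secondary subtlety is confirming the constant and the $e^{k\tau/2}$ factor by cross-checking against the known identity that $T_{N,N}$ and $-\Fsd^N_1(\tau)$ agree in law (so that, combined with Proposition~\ref{Proposition28OCon}, both routes give the same moment formula); if the bookkeeping of the $\log\e$ shifts is done correctly this is guaranteed, but it provides a useful consistency check. Alternatively — and this is the cleaner route I would actually write up — one can bypass the limit entirely and prove the formula directly at the Whittaker level by the same difference-operator-to-contour-integral argument used in Sections~\ref{difopSEC} and~\ref{expmomform26}, applied to the quantum Toda Hamiltonian $H$ of~(\ref{semigpW2d}) and its higher conserved quantities acting on $\psi_{\iota a}$; the residue computation is the continuous analogue of the proof of Proposition~\ref{prop6}, and the nesting of contours comes from iterating as in Proposition~\ref{prop8}. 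I would present the scaling-limit derivation as the primary proof (since all the heavy lifting is already in Theorem~\ref{theorem26}) and remark on the direct derivation.
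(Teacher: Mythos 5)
Your primary route (take the $\e\to0$ limit of Proposition~\ref{prop8tzero}) has a genuine gap that the paper itself flags. The weak convergence of Theorem~\ref{theorem26} does \emph{not} by itself give convergence of the exponential moment $\langle e^{-kT_{N,N}}\rangle$, because $e^{-kT_{N,N}}$ is unbounded and the limiting Whittaker measure is supported on all of $\R^N$; you would need uniform-in-$\e$ integrability of this observable under the rescaled $q$-Whittaker measures. The bound you invoke, part 1 of Theorem~\ref{qwhitconvTHM}, controls the $q$-Whittaker \emph{function} $\psi^{\e}$, not the full measure: the measure also carries the $\theta$-type factor $Q_{\lambda^{(N)}}(\rho)$, and the needed decay of the measure in the direction $T_{N,N}\to-\infty$ comes from \emph{that} factor, whose uniform-in-$\e$ control is not established. (Even in the limiting Whittaker case this requires the nontrivial super-exponential estimate of Proposition~\ref{supexpthetadecay}.) The paper states this obstruction explicitly in the remark following Proposition~\ref{Proposition28FULLtzero}, and precisely for this reason does not prove Propositions~\ref{Proposition27}--\ref{Proposition28FULLtzero} by a limiting procedure.

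There is also a sign-bookkeeping slip in step (ii): with $z=e^{-\e w}$ one gets $\tfrac{z_{\kappa_1}-z_{\kappa_2}}{z_{\kappa_1}-qz_{\kappa_2}}\to\tfrac{w_{\kappa_1}-w_{\kappa_2}}{w_{\kappa_1}-w_{\kappa_2}-1}$ (denominator $-1$, not $+1$), poles at $w_j=a_m$ (not $-a_m$), and exponential $e^{+\tau w_j}$; a global substitution $w\mapsto -w$ is still needed to land on the stated form, and your write-up silently skips it, conflating the pre- and post-substitution formulas.

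Your ``cleaner route,'' however, is essentially what the paper actually does. The paper proves the more general Proposition~\ref{Proposition28FULLtzero} directly at the Whittaker level (Section~\ref{sec7.5}), from which Proposition~\ref{Proposition28} follows by specializing all $r_\alpha=1$, $N_\alpha=N$. The concrete ingredients there are: (a) the degeneration of the Macdonald operators to the first-order Whittaker difference relation~(\ref{Whitdiffeqnunintegerate}) and its weak dual~(\ref{Whitdiffeqn}); (b) the spectral representation~(\ref{spectralBaxKer}) of the iterated Baxter kernels, which lets one integrate out level by level; (c) the decay estimate of Proposition~\ref{supexpthetadecay} to justify the interchanges of integration; and (d) a residue lemma converting the resulting finite sums over subsets $I$ into nested contour integrals, with a final substitution $w^{(\alpha)}_i=-w_{\alpha,i}$. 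Your sketch gestures at (a) and the residue mechanism but omits the Baxter-operator step and the decay input, and attributes the structure to ``higher conserved quantities of the Toda Hamiltonian,'' which is not quite what is used. If you want to write this up, lead with the direct Whittaker-level argument and make (b) and (c) explicit; the scaling-limit computation is a good sanity check of the constants but cannot be the proof as written.
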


The limiting form of Proposition~\ref{prop8FULLtzero} can be generalized to the following most general moment expectation:
\begin{proposition}\label{Proposition28FULLtzero}
Fix $k\geq 1$, $N\geq N_1\geq N_2 \cdots \geq N_k$, and $1\leq r_{\alpha}\leq N_{\alpha}$ for $1\leq \alpha\leq k$. Then
\begin{eqnarray*}
\lefteqn{\left\langle \prod_{\alpha=1}^{k} e^{-(T_{N_\alpha,N_\alpha}+\cdots +T_{N_{\alpha},N_{\alpha}-r_{\alpha}+1})}\right\rangle_{\W{a_1,\dots,a_N;\tau}} =}\\
&&\prod_{\alpha=1}^{k} \frac{(-1)^{r_{\alpha}(r_{\alpha}+1)/2 + N_\alpha r_{\alpha}}}{(2\pi \iota)^{r_{\alpha}} r_{\alpha}!} \prod_{\alpha=1}^{k} e^{\tau r_{\alpha}/2} \oint \cdots \oint \prod_{1\leq \alpha <\beta\leq k} \left(\prod_{i=1}^{r_{\alpha}}\prod_{j=1}^{r_{\beta}} \frac{w_{\alpha,i}-w_{\beta,j}}{w_{\alpha,i}-w_{\beta,j}+1}\right)\\
 &&\times \prod_{\alpha=1}^{k}\left( \left(\prod_{1\leq i<j\leq r_{\alpha}}(w_{\alpha,i}-w_{\alpha,j})^2\right) \left(\prod_{j=1}^{r_{\alpha}}\frac{ e^{-\tau w_{\alpha,j}}dw_{\alpha,j}}{(w_{\alpha,j}+a_1)\cdots (w_{\alpha,j}+a_{N_\alpha})}\right)\right)\,,
\end{eqnarray*}
where the $w_{\alpha,j}$-contour contains $\{w_{\beta,i}-1\}$ for all $i\in \{1,\ldots,r_{\beta}\}$ and $\beta>\alpha$, as well as $\{-a_1,\ldots,-a_N\}$ and no other singularities.
\end{proposition}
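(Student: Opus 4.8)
The plan is to obtain Proposition~\ref{Proposition28FULLtzero} as the $t\to 0$, $q\to 1$ degeneration of Proposition~\ref{prop8FULLtzero} (equivalently Proposition~\ref{prop8FULL}) under the scalings of Theorem~\ref{theorem26}. The starting point is the observation, already exploited in Section~\ref{whitintform}, that the passage from the $q$-Whittaker measure $\MM_{t=0}(A_1,\dots,A_N;\rho)$ to the Whittaker measure $\WM{a;\tau}$ (and more generally $\W{a;\tau}$, which retains the lower rows $T_{k,j}$, $k<N$) sends $q^{\lambda^{(k)}_N + \cdots + \lambda^{(k)}_{N-r+1}}$-type observables to $e^{-(T_{k,k}+\cdots+T_{k,k-r+1})}$-type observables, after absorbing the deterministic shifts $\lambda^{(k)}_j = \tau\epsilon^{-2} - (k+1-2j)\epsilon^{-1}\log\epsilon + T_{k,j}\epsilon^{-1}$ and the drift scaling $A_k = e^{-\epsilon a_k}$. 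Concretely, first I would write down the multilevel analog of Proposition~\ref{prop8FULLtzero}: Proposition~\ref{prop8FULL} produces, for nonnegative specializations, contour-integral formulas for $\big\langle \prod_{\alpha} d^{r_\alpha}_\lambda \big\rangle$ where the $\alpha$-th factor uses the Macdonald operator $D^{r_\alpha}$ acting in the first $N_\alpha$ variables — this is legitimate because the ascending Macdonald (and $q$-Whittaker) process projects to the Macdonald measure $\MM(A_1,\dots,A_{N_\alpha};\rho)$ on $\lambda^{(N_\alpha)}$, and $D^{r_\alpha}_{N_\alpha}$ has eigenvalue $e_{r_\alpha}(q^{\lambda^{(N_\alpha)}_1}t^{N_\alpha-1},\dots)$ on $P_{\lambda^{(N_\alpha)}}$. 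Iterating these operators on the $\Pi$-identity (as in (\ref{tag8})) and using that the integrand of (\ref{tag9}) is multiplicative in the $A_m$'s gives, after the $t\to 0$ limit as in the proof of Proposition~\ref{prop8FULLtzero}, a $q$-version of the claimed formula with $\big\langle \prod_\alpha q^{\lambda^{(N_\alpha)}_{N_\alpha} + \cdots + \lambda^{(N_\alpha)}_{N_\alpha - r_\alpha + 1}}\big\rangle_{\MM_{t=0}}$ on the left and nested contours containing $\{q z_{\beta,i}\}_{\beta>\alpha}$ and $\{A_1,\dots,A_{N_\alpha}\}$ on the right. (Here I use that for the $t=0$ process the only free specialization is Plancherel, so $\alpha_i=\beta_i=0$ and the $f$ in (\ref{abgf}) reduces to $\prod_{m=1}^{N_\alpha}\frac{A_m}{A_m-z}\exp\{(q-1)\gamma z\}$.)

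The second step is the asymptotic analysis. I would substitute $q=e^{-\epsilon}$, $\gamma=\tau\epsilon^{-2}$, $A_m=e^{-\epsilon a_m}$ and rescale the integration variables by $z_{\alpha,j} = \epsilon w_{\alpha,j}$ (up to a sign and shift to be determined by matching), so that the small contours around $A_m\approx 1$ become contours around $-a_m$ in the $w$-plane. Under this substitution: the factor $\prod_m \frac{A_m}{A_m - \epsilon w}$ behaves, after appropriate normalization, like $\prod_m \frac{1}{w + a_m}$ times a deterministic prefactor; the factor $\exp\{(q-1)\gamma z\} = \exp\{-(1-e^{-\epsilon})\tau\epsilon^{-2}\cdot\epsilon w\} \to \exp\{-\tau w\}$; the cross-ratios $\frac{z_{\alpha,i}-z_{\beta,j}}{z_{\alpha,i}-qz_{\beta,j}}$, which in the $q$-formula of Proposition~\ref{prop8FULLtzero} carry a factor $q$, become $\frac{w_{\alpha,i}-w_{\beta,j}}{w_{\alpha,i}-w_{\beta,j}+1}$ (the $-q z_{\beta,j}$ denominator giving $z_{\alpha,i}-z_{\beta,j}+\epsilon z_{\beta,j}$, i.e. a shift by $1$ in $w$-variables); the within-level factors $\prod_{i\neq j}\frac{z_{\alpha,i}-z_{\alpha,j}}{-z_{\alpha,j}}$ become $\prod_{i<j}(w_{\alpha,i}-w_{\alpha,j})^2$ times $dw/(\text{stuff})$ once combined with the $dz/z$ measure (this is the same collapse $t^{-r(r-1)/2}\det[1/(tz_k-z_\ell)] \to (-1)^{r(r+1)/2}V(z)^2/\prod z_j^r$ used in Proposition~\ref{qsumlambdaprop}); and the $dz_{\alpha,j}/z_{\alpha,j}$ contributes the Jacobian and the remaining sign. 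On the left-hand side, $q^{\lambda^{(N_\alpha)}_{N_\alpha}+\cdots} = e^{-\epsilon(\lambda^{(N_\alpha)}_{N_\alpha}+\cdots)}$ and inserting the scaling for $\lambda^{(N_\alpha)}_j$ turns this into $e^{-(T_{N_\alpha,N_\alpha}+\cdots+T_{N_\alpha,N_\alpha-r_\alpha+1})}$ times exactly the deterministic prefactors (powers of $\epsilon$, $e^{\tau\epsilon^{-2}}$-type factors, and the $\log\epsilon$ contributions) that must cancel against those generated on the right-hand side and against the Jacobian of the change of variables from the $q$-Whittaker process to the Whittaker process. Tracking these prefactors and verifying the cancelation is the bookkeeping core of the argument; it is entirely parallel to, and should reuse, the cancelations already carried out in Lemmas~\ref{Skewlemma}, \ref{Pilemma}, \ref{Qlemma}.

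The third step is to justify that the formal $\epsilon\to 0$ limit of the contour integrals is an actual limit, i.e. that one may pass the limit inside the (finitely many, nested) contour integrals. Here I would argue exactly as in Proposition~\ref{Proposition28} and its single-level predecessors: the integration contours are compact, one can fix them independently of $\epsilon$ (the $q$-contours around $\{A_m\}\approx\{1\}$ and $\{qz_{\beta,i}\}$ deform without crossing singularities to fixed $w$-contours around $\{-a_m\}$ and $\{w_{\beta,i}-1\}$ since for small $\epsilon$ the poles are $O(\epsilon)$-perturbations of their limits), and the integrand converges uniformly on those contours because each constituent factor does (the convergence $\exp\{(q-1)\gamma\epsilon w\}\to\exp\{-\tau w\}$ and $\frac{A_m}{A_m-\epsilon w}\cdot(\text{norm})\to\frac{1}{w+a_m}$ are uniform on compacta avoiding the poles, and there are no poles on the contours). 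Dominated convergence on a compact contour then gives the result. Since the left-hand-side expectations $\langle\cdot\rangle_{\MM_{t=0}}$ converge to $\langle\cdot\rangle_{\W{a;\tau}}$ by Theorem~\ref{theorem26} (the relevant observables $e^{-(T_{N_\alpha,N_\alpha}+\cdots)}$ are bounded continuous functionals of the $T_{k,j}$ — bounded because $T_{N_\alpha,N_\alpha}$ is the largest coordinate at level $N_\alpha$ and the exponent stays bounded above, which one checks via the tail estimates of Theorem~\ref{qwhitconvTHM} and Proposition~\ref{whitProp} to control integrability), both sides pass to the limit and the identity follows.

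The main obstacle I anticipate is neither the change of variables nor the uniform convergence but rather the uniform integrability needed to upgrade weak convergence of the measures to convergence of the specific exponential moments $\big\langle \prod_\alpha e^{-(T_{N_\alpha,N_\alpha}+\cdots+T_{N_\alpha,N_\alpha-r_\alpha+1})}\big\rangle$ — these are not bounded functionals in an obvious way when $r_\alpha>1$, since a lower coordinate $T_{N_\alpha,N_\alpha-r_\alpha+1}$ can be very negative, making $e^{-T_{N_\alpha,N_\alpha-r_\alpha+1}}$ large. Controlling this requires the uniform tail bounds of part (1) of Theorem~\ref{qwhitconvTHM} (the double-exponential decay $\prod_{i\in\sigma}\exp\{-c^*e^{-(x_{\ell+1,i}-x_{\ell+1,i+1})/2}\}$), which decays faster than any single exponential grows, together with the fact that the $q$-moments on the left are \emph{a priori} bounded by explicit contour integrals uniformly in $\epsilon$ — so the cleanest route may be to avoid the uniform-integrability question entirely and instead prove the identity directly at the level of the contour-integral formulas (steps 1–3 above), obtaining the Whittaker-side formula as the genuine $\epsilon\to 0$ limit of the $q$-Whittaker-side formula, and then \emph{separately} verifying that the resulting right-hand side indeed computes the claimed Whittaker expectation by checking it against, e.g., the single-level cases (Propositions~\ref{Proposition27}, \ref{Proposition28}) and the known structure of $\W{a;\tau}$ from \cite{OCon}. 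This sidesteps having to establish strong moment convergence and reduces the problem to the finite, mechanical verifications of prefactor cancelation already present in the paper.
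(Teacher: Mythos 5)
Your proposal and the paper's proof take genuinely different routes, and your route has a gap you yourself identify but do not close. You propose to derive the Whittaker formula as a $q\to 1$ limit of a (multilevel) $q$-Whittaker contour formula, invoking Theorem~\ref{theorem26} to convert the left-hand sides. But the paper explicitly warns against this: in the remark just after Propositions~\ref{Proposition27}--\ref{Proposition28FULLtzero} it says ``Since the Whittaker measure is supported on all of $\R^N$, it is clear that the above formulas can not be proved from the weak convergence result of Theorem~\ref{theorem26}.'' The observable $\prod_\alpha e^{-(T_{N_\alpha,N_\alpha}+\cdots+T_{N_\alpha,N_\alpha-r_\alpha+1})}$ is unbounded on the state space, and weak convergence does not give convergence of expectations of unbounded functionals; that requires a uniform-integrability or uniform-tail estimate which you flag but do not prove. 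Your proposed ``cleanest route''---take the limit of the contour integrals, then ``separately verify'' the result against single-level cases and the structure of $\W{a;\tau}$---does not constitute a proof: showing the right-hand side is the limit of the $q$-formula only shows what that limit is, not that it equals the left-hand side, and the single-level cases are strictly weaker statements. There is also a secondary issue: the multilevel $q$-formula you begin from (applying $D^{r_\alpha}_{N_\alpha}$ for different $N_\alpha$) is not actually proved anywhere in the paper; Proposition~\ref{prop8FULLtzero} is a single-level statement, and the multilevel extension is deferred to later work (\cite{BorCorGorSha}), so step 1 of your argument is itself not available.

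The paper instead works entirely at the Whittaker level, developing a direct degeneration theory that avoids the limit-of-moments issue. The ingredients are: (i) the Whittaker-level difference-operator eigenrelation (\ref{Whitdiffeqnunintegerate}), obtained as a $q\to 1$ degeneration of the Macdonald operator \emph{identity} (not of moments); (ii) the dual form (\ref{Whitdiffeqn}) via Whittaker orthogonality; (iii) the spectral representation (\ref{spectralBaxKer}) of the Baxter kernel, which expresses the multilevel Whittaker-process marginal density in a factored form; (iv) the decay estimate of Proposition~\ref{supexpthetadecay} to justify interchanging integrals; and (v) a residue lemma to package the resulting sums over subsets $I_1,\dots,I_k$ into nested contour integrals. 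This approach handles the multilevel structure and the convergence issues simultaneously. If you want to pursue the limiting strategy, you would need to first establish the multilevel $q$-moment formula and then supply genuine uniform integrability (the double-exponential tails of Theorem~\ref{qwhitconvTHM}, part (1), uniformly in $\epsilon$, controlling $\prod_\alpha e^{-(T_{N_\alpha,N_\alpha}+\cdots)}$), neither of which your sketch actually does.
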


\begin{remark}
Since the Whittaker measure is supported on all of $\R^N$, it is clear that the above formulas can not be proved from the weak convergence result of Theorem~\ref{theorem26}. One could attempt to strengthen that result so as to imply the above formulas via a limiting procedure. One can alternatively derive the formulas by developing an analogous (degeneration) theory for Whittaker processes as we have done for Macdonald symmetric functions. After stating and proving the limit of Proposition~\ref{qlaplaceintegralform} momentarily, we will use this approach to prove Proposition~\ref{Proposition28FULLtzero} (and hence also \ref{Proposition27} and \ref{Proposition28}). Another rigorous alternative to prove the above results is to pursue the method of replicas. In Part \ref{replicassec} we develop this replica approach which, among other things, gives an independent check of the formula of Proposition~\ref{Proposition28} above.
\end{remark}

The methods we develop below in Section~\ref{sec7.5} allow us to prove integral formulas for joint exponential moments at different times that do not have obvious Macdonald analogs. We will just give one of those here.

\begin{proposition}\label{ktimeintform}
Fix $N\geq 1$ and $a=(a_1,\ldots, a_N)\in \R^N$. Consider the Whittaker 2d-growth model with drift $-a$, denoted by $T(\tau)$ for $\tau\geq 0$. For times $\tau_1\leq\tau_2\leq\cdots\leq\tau_k$,
\begin{eqnarray*}
&&\left\langle e^{-\sum_{i=1}^{k} T_{N,N}(\tau_i)} \right\rangle =\\
&&\frac{(-1)^{k(N-1)} e^{\sum_{i=1}^{k} \tau_i/2}}{(2\pi \iota)^k} \oint\cdots\oint \prod_{1\leq A<B\leq k} \frac{w_{A}-w_{B}}{w_{A}-w_{B}+1} \prod_{m=1}^{N} \prod_{j=1}^k \frac{1}{w_j+a_m} \prod_{j=1}^{k} e^{-\tau_j w_j}dw_j,
\end{eqnarray*}
where the $w_{A}$ contour contains only the poles at $\{w_B-1\}$ for $B>A$ as well as $\{-a_1,\ldots, -a_N\}$, and where the expectation is with respect to the growth model evolution.
\end{proposition}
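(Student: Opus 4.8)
## Proof plan for Proposition~\ref{ktimeintform}

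The plan is to set up a ``multi-time'' version of the difference-operator machinery of Section~\ref{difopSEC} at the level of the $q$-Whittaker process and then pass to the limit as in Theorem~\ref{theorem26}. The key observation is that the $q$-Whittaker $2$d-growth model of Definition~\ref{2dgrowth}, started from the zero GT pattern, has the property (Proposition~\ref{prop16}) that at time $\tau$ the whole pattern is distributed according to $\MM_{t=0}(a_1,\ldots,a_N;\rho_\tau)$ with $\rho_\tau$ the Plancherel specialization of parameter $\tau$. Moreover the evolution is a Markov chain that acts on Macdonald processes by adjoining a single extra variable (continuous-time limit of $\frakP_\sigma$). So for ordered times $\tau_1\le\cdots\le\tau_k$ I would write $\langle e^{-\sum_i T_{N,N}(\tau_i)}\rangle$ as a nested expectation: condition on the configuration at time $\tau_1$, use the Markov property to push forward to $\tau_2$, etc. Concretely, at the $q$-level one wants to evaluate
\begin{equation*}
\left\langle \prod_{i=1}^k q^{\lambda^{(N)}_N(\gamma_i)} \right\rangle
\end{equation*}
where $\gamma_i=\tau_i\e^{-2}$ and $\lambda^{(N)}_N(\gamma)$ is the bottom-right coordinate of the pattern at time $\gamma$ in the $q$-Whittaker $2$d-growth dynamics started from zero.

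First I would establish the $q$-analog of the stated formula: for $\gamma_1\le\cdots\le\gamma_k$,
\begin{equation*}
\left\langle \prod_{i=1}^k q^{\lambda^{(N)}_N(\gamma_i)} \right\rangle
= \frac{(-1)^k q^{\binom{k}{2}}}{(2\pi\iota)^k}\oint\cdots\oint \prod_{A<B}\frac{z_A-z_B}{z_A-qz_B}\prod_{j=1}^k\left(\prod_{m=1}^N\frac{a_m}{a_m-z_j}\right)e^{(q-1)\gamma_j z_j}\,\frac{dz_j}{z_j},
\end{equation*}
with the $z_A$-contour enclosing $\{qz_B\}_{B>A}$ and $\{a_1,\ldots,a_N\}$. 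The strategy for this is the same telescoping/residue argument as in Proposition~\ref{prop8}: apply the operator $D_N^1$ (in the limit $t\to0$, as in Proposition~\ref{prop8tzero}) to the Cauchy identity for $\MM_{t=0}(a;\rho_{\gamma_1})$, which produces a factor $q^{\lambda_N}$ and a single contour integral in $z_k$ whose only $\gamma_1$-dependence sits in $e^{(q-1)\gamma_1 z_k}$; then use the Markov property together with Proposition~\ref{prop16} to recognize the remaining expectation as a $q$-Whittaker measure expectation at the \emph{shifted} Plancherel time $\gamma_2-\gamma_1$ but with the multiplicative perturbation $\tilde f(u)=f(u)(t z_k-u)/(z_k-u)$ absorbed; iterate. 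The bookkeeping is identical to the proof of Proposition~\ref{prop8} except that each successive application of $D_N^1$ runs against a process that has evolved for an additional amount of time, which is exactly what replaces $f(qz_c)/f(z_c)$ by $e^{(q-1)\gamma_c z_c}$ with a \emph{different} $\gamma_c$ for each $c$. The factorization property of the right-hand side of \eqref{tag9} in the $x_j$'s is what makes the iteration go through even with the time-dependent weights.

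The second half is the asymptotic analysis. Under the scalings $q=e^{-\e}$, $A_k=e^{-\e a_k}$, $\gamma_i=\tau_i\e^{-2}$, $\lambda^{(N)}_N(\gamma_i)=\tau_i\e^{-2}+(\text{shift})+T_{N,N}(\tau_i)\e^{-1}$ (Theorem~\ref{thm7.26}), substitute $z_j=\e w_j$ into the $q$-level formula; then $q^{\lambda^{(N)}_N(\gamma_i)}\to \text{(deterministic prefactor)}\cdot e^{-T_{N,N}(\tau_i)}$ after dividing out the macroscopic pieces, $(z_A-z_B)/(z_A-qz_B)\to (w_A-w_B)/(w_A-w_B+1)$, $a_m/(a_m-z_j)\to 1/(w_j+a_m)$ up to an overall sign $(-1)^{N}$ per variable, $e^{(q-1)\gamma_j z_j}\to e^{-\tau_j w_j}$, and $q^{\binom k2}\to 1$; tracking the $(-1)$'s gives the $(-1)^{k(N-1)}$ and the $e^{\sum\tau_i/2}$ comes from the Gaussian-completion term $q^{k(k-1)/2}$ combined with the shift. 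This is the same limit transition performed for Propositions~\ref{Proposition27}--\ref{Proposition28}. To make the convergence of the contour integrals rigorous I would, as in the proof of Theorem~\ref{theorem26}, check uniform tail bounds along the contours (the $e^{(q-1)\gamma_j z_j}$ factors provide Gaussian decay after the $z_j=\e w_j$ substitution and one deforms to contours of the form in the statement), so that dominated convergence applies.

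The main obstacle I anticipate is the first step: justifying that one may ``restart'' the difference-operator computation at each successive time while keeping the earlier spectral perturbations $(tz_c-u)/(z_c-u)$ in play. This requires knowing that applying $D_N^1$ and then evolving the $q$-Whittaker process by the $2$d-growth dynamics commute in the appropriate sense — equivalently, that the operator $D_N^1$ acting in the $x$-variables commutes with the generator $\frakq$ restricted to the relevant marginal. That commutation is really a consequence of the fact that both the Macdonald operators and the dynamics are diagonalized by (act diagonally on) the $P_\lambda$'s, but making this precise at the level of generating functions — i.e., proving the multi-time analog of \eqref{tag8} — is where the careful argument is needed. A clean way to package it is to prove the $q$-level multi-time moment formula directly from the joint law of $(\lambda^{(N)}(\gamma_1),\ldots,\lambda^{(N)}(\gamma_k))$, which by Proposition~\ref{prop16} and the Markov property is an explicit sum of products of skew Macdonald functions and Plancherel specializations at the time increments, and then apply the contour-integral representation of each $D_N^1$ factor to that sum; this sidesteps any commutation subtlety at the cost of a somewhat longer residue computation, structurally parallel to Proposition~\ref{prop8FULL}.
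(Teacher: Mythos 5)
Your proposal takes a genuinely different route from the paper's. The paper works entirely at the Whittaker level: it uses the spectral decomposition of the quantum Toda semigroup, $e^{\frac{1}{2}\tau H}(x,y)=\int_{\R^N}\psi_b(x)\overline{\psi_b(y)}e^{-\frac{\tau}{2}\sum b_j^2}m_N(b)\,db$, to write the two-time (and then $k$-time) joint density explicitly, and then integrates the factor $e^{-x_N-y_N}$ against it using the weak "Whittaker difference operator" identity (\ref{Whitdiffeqn}) (justified by the decay estimate of Proposition~\ref{supexpthetadecay}), reducing everything to a residue computation. You instead propose to prove a $q$-level nested-contour analog of the multi-time formula by iterating $D_N^1$ through the Markov transitions, and then take the $\e\to 0$ limit. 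The $q$-level iteration is structurally sound: using $p^\uparrow_{\lambda\mu}(a;\sigma)=\Pi(a;\sigma)^{-1}P_\mu(a)Q_{\mu/\lambda}(\sigma)/P_\lambda(a)$ and identity (\ref{skewformulas1}) one finds that the $k$-time expectation equals $D_N^1\bigl[\Pi_k\cdot D_N^1[\Pi_{k-1}\cdots D_N^1[\Pi_1]\cdots]\bigr]/\prod_i\Pi_i$ with $\Pi_i=\Pi(a;\rho_{\gamma_i-\gamma_{i-1}})$, and iterating Proposition~\ref{prop6} exactly as in Proposition~\ref{prop8} (the absorbed factor $\tilde f(u)=f(u)(tz_1-u)/(z_1-u)$ now also absorbs the next Plancherel weight) produces the formula you conjecture with time-dependent $e^{(q-1)\gamma_j z_j}$ factors. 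So step one can be made to work, though it is a nontrivial extension.

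The serious gap is in your second step, the $\e\to 0$ limit. You justify convergence only at the level of the contour integrals (Gaussian decay along deformed contours), but that alone does not give convergence of the \emph{moments} $\bigl\langle\prod_i q^{\lambda^{(N)}_N(\gamma_i)}\bigr\rangle\to\bigl\langle\prod_i e^{-T_{N,N}(\tau_i)}\bigr\rangle$. The left side is uniformly bounded by $1$, but the right side is the expectation of an \emph{unbounded} observable over the Whittaker measure, which is supported on all of $\R^N$; weak convergence of the processes (Theorems~\ref{theorem26}, \ref{thm7.26}) does not imply convergence of expectations of unbounded functionals without a uniform integrability argument, which you do not supply. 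The paper explicitly flags this obstruction in the Remark following Proposition~\ref{Proposition28FULLtzero}: "Since the Whittaker measure is supported on all of $\R^N$, it is clear that the above formulas can not be proved from the weak convergence result of Theorem~\ref{theorem26}." This is precisely why the paper abandons the limiting approach and develops the degenerate (Whittaker-level) machinery of Section~\ref{sec7.5} instead. To salvage your route you would need either a quantitative moment bound uniform in $\e$ or a direct argument that the $q$-level contour integral converges \emph{and} equals the claimed Whittaker expectation — and the latter is essentially the content of the paper's Whittaker-level proof anyway.
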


\begin{remark}
The proofs of Propositions \ref{Proposition28FULLtzero} and \ref{ktimeintform} use spectral representations for relevant Markov kernels that were pointed out to us by O'Connell.
\end{remark}

The following Proposition is likely to be a limiting form of Proposition~\ref{qlaplaceintegralform}. It is a generalization of \cite{OCon} Corollary 4.2, and follows in a similar manner.
\begin{proposition}[\cite{OCon} Corollary 4.2]\label{Proposition29}
For any $u\in \C$ with $\Real(u)>0$ we have
\begin{equation*}
\left\langle e^{-ue^{-T_{N,N}}}\right\rangle_{\WM{a_1,\dots,a_N;\tau}} = \int \cdots \int \prod_{i,j=1}^{N} \Gamma(a_i+\iota \nu_j) \prod_{j=1}^{N} u^{-(a_j+\iota\nu_j)} e^{-\tfrac{\tau}{2}(\nu_j^2+a_j^2)} m_N(\nu)\prod_{j=1}^{N} d\nu_j\,,
\end{equation*}
where the integral is taken over lines parallel to the real axis with $\Imag(\nu_j)<\min_{i=1}^{N}(a_i)$.
\end{proposition}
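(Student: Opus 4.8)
\emph{Proof proposal.} The plan is to argue as in O'Connell's derivation of \cite{OCon} Corollary~4.2, of which this statement is the drift-parameter generalization. First I would unfold the definition of the Whittaker measure and insert the spectral representation of $\theta_\tau$: by \eqref{WMdef} and \eqref{thetaformula},
\begin{equation*}
\left\langle e^{-ue^{-T_{N,N}}}\right\rangle_{\WM{a_1,\dots,a_N;\tau}} = e^{-\frac{\tau}{2}\sum_{j=1}^{N} a_j^2}\int_{\R^N} e^{-ue^{-x_N}}\,\psi_{\iota a}(\ul{x}{N})\left(\int \psi_{\ul{\nu}{N}}(\ul{x}{N})\,e^{-\frac{\tau}{2}\sum_{j=1}^N\nu_j^2}\,m_N(\ul{\nu}{N})\,d\ul{\nu}{N}\right)d\ul{x}{N}.
\end{equation*}
Since $m_N$ has no singularities ($1/\Gamma$ is entire) and $\psi_{\ul{\nu}{N}}(\ul{x}{N})$ is entire in $\ul{\nu}{N}$ with Gaussian-dominated growth, the inner $\nu$-integral may be moved, by Cauchy's theorem in each variable, to the product of horizontal lines $\Imag\nu_j=c_j$ with $c_j>-\min_i a_i$. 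I would then interchange the $\nu$- and $x$-integrations and isolate, for $\nu$ on those lines, the integral over $\ul{x}{N}$.

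The crux is the Whittaker integral identity
\begin{equation*}
\int_{\R^N} e^{-ue^{-x_N}}\,\psi_{\iota a}(\ul{x}{N})\,\psi_{\ul{\nu}{N}}(\ul{x}{N})\,d\ul{x}{N}=\prod_{i,j=1}^{N}\Gamma(a_i-\iota\nu_j)\;\prod_{j=1}^{N}u^{-(a_j-\iota\nu_j)},
\end{equation*}
valid whenever $\Real u>0$ and $\Real(a_i-\iota\nu_j)>0$ for all $i,j$ (that is, $\min_j\Imag\nu_j>-\min_i a_i$). For $N=1$ this is the elementary evaluation $\int_{\R}e^{-ue^{-x}}e^{sx}\,dx=\Gamma(-s)\,u^{s}$ (valid for $\Real s<0$) applied with $s=\iota\nu_1-a_1$, using $\psi_{\iota a_1}(x_1)=e^{-a_1x_1}$ and $\psi_{\nu_1}(x_1)=e^{\iota\nu_1x_1}$. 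For general $N$ I would prove it by induction, writing both Whittaker functions through the Givental/Baxter recursion \eqref{recWhit} and unfolding the $x$-integration one level at a time in the Rankin--Selberg manner, collapsing at each stage a layer of Baxter kernels against a lower-rank Whittaker function and extracting one row of $\Gamma$-factors; alternatively one obtains it by analytic continuation in the parameters $a$ from the drift-free identity underlying \cite{OCon}. Substituting this into the display above and then changing variables $\nu_j\mapsto-\nu_j$ — which leaves $e^{-\frac{\tau}{2}\sum\nu_j^2}$ and $m_N(\ul{\nu}{N})$ invariant (one checks directly $m_N(-\nu)=m_N(\nu)$) and carries the contour to the lines $\Imag\nu_j<\min_i a_i$ of the statement — turns $\prod_{i,j}\Gamma(a_i-\iota\nu_j)\prod_j u^{-(a_j-\iota\nu_j)}$ into $\prod_{i,j}\Gamma(a_i+\iota\nu_j)\prod_j u^{-(a_j+\iota\nu_j)}$. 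Finally, absorbing the prefactor $e^{-\frac{\tau}{2}\sum a_j^2}$ into the Gaussian produces $\prod_j e^{-\frac{\tau}{2}(\nu_j^2+a_j^2)}$, which is exactly the claimed formula.

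I expect the main obstacle to be analytic rather than algebraic, and to be twofold. First, the Fubini interchange and the $\nu$-contour shift must be justified carefully: here one uses the super-exponential spatial decay of $\theta_\tau$ (Proposition~\ref{supexpthetadecay}), the polynomial-times-double-exponential control of $\psi_{\iota a}(\ul{x}{N})$ and $\psi_{\ul{\nu}{N}}(\ul{x}{N})$ in the disordered directions (Proposition~\ref{whitProp}) together with the Harish-Chandra-type leading behavior in the ordered cone, the at-most-exponential growth of imaginary-index Whittaker functions used in Section~\ref{whitprobmeas}, the bound $|e^{-ue^{-x_N}}|\le1$ for $\Real u>0$, and the Gaussian decay of $m_N(\ul{\nu}{N})e^{-\frac{\tau}{2}\sum\nu_j^2}$ along the shifted lines. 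The delicate point is that the inner $x$-integral is only conditionally convergent, with convergence in each asymptotic direction forcing a linear constraint on the $c_j$ and $a_i$, and the hypothesis $\Real(a_i-\iota\nu_j)>0$ for all $i,j$ is precisely what makes all of these hold simultaneously. Second, the inductive proof of the integral identity requires bookkeeping of which half-plane each $\Gamma$-argument occupies so that every intermediate $x$-integral converges, and verifying that this constraint propagates correctly through the level-by-level unfolding. The remaining steps — the change of variables and the recombination of exponentials — are routine.
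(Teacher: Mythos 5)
The paper gives no proof of this proposition: it simply states that the result "is a generalization of [O'Connell, Corollary 4.2], and follows in a similar manner." Your sketch reconstructs exactly that argument — unfolding $\WM{a;\tau}$ via the spectral representation \eqref{thetaformula}, invoking the Bump--Stade (Stade) identity
\[
\int_{\R^N} e^{-ue^{-x_N}}\,\psi_{\iota a}(\ul{x}{N})\,\psi_{\ul{\nu}{N}}(\ul{x}{N})\,d\ul{x}{N}=\prod_{i,j=1}^{N}\Gamma(a_i-\iota\nu_j)\,\prod_{j=1}^{N}u^{-(a_j-\iota\nu_j)}
\]
which is precisely the ingredient the paper's remark credits to O'Connell, then substituting $\nu\mapsto-\nu$ and recombining the Gaussian prefactors — so your approach is the same as the one the paper cites, and your $N=1$ check and contour bookkeeping ($\Imag\nu_j>-\min_i a_i$ becoming $\Imag\nu_j<\min_i a_i$ after the sign flip) are correct.
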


\subsection{Whittaker difference operators and integral formulas}\label{sec7.5}

The Macdonald operator of rank $r$ with $t=0$ is given by (see Section~\ref{difopSEC})
\begin{equation*}
\sum_{\substack{I\subset\{1,\ldots,n\}\\|I|=r}} \prod_{\substack{i\in I\\j\notin I}} \frac{z_j}{z_j-z_i} \prod_{i\in I} T_{q,z_i}P_{\mu}(z_1,\ldots,z_n) = q^{\mu_n+\cdots +\mu_{n-r+1}} P_{\mu}(z_1,\ldots,z_n).
\end{equation*}

Take the limit as in Definition~\ref{scaleDef} -- i.e.,
\begin{equation*}
q=e^{-\e},\qquad z_k = e^{\iota \e \nu_k},\qquad \mu_{k} = (n+1-2k)m(\e) + \e^{-1} x_{k},\qquad m(\e) = -\left[\e^{-1}\log \e\right].
\end{equation*}
Under this scaling
\begin{eqnarray*}
\prod_{\substack{i\in I\\j\notin I}} \frac{z_j}{z_j-z_i}  &=& \prod_{\substack{i\in I\\j\notin I}} \frac{e^{\iota \e \nu_j}}{e^{\iota \e \nu_j}-e^{\iota \e \nu_i}} \sim (-\iota \e^{-1})^{r(n-r)} \prod_{\substack{i\in I\\j\notin I}} \frac{1}{\nu_j-\nu_i},\\
q^{\mu_n+\cdots +\mu_{n-r+1}} &=& e^{-\e\left((1+3+\cdots + (2r-1) - nr)\e^{-1}\log\e^{-1} + \e^{-1}(x_n+\cdots+x_{n-r+1})\right)} \sim \frac{e^{-x_n-\cdots-x_{n-r+1}}}{\e^{r(n-r)}},
\end{eqnarray*}
and
\begin{equation*}
P_{\mu}(x) \sim  \psi_{\nu}(x),\qquad T_{q,z_k}P_{\mu}(x) \sim \psi_{\nu_1,\ldots, \nu_k+\iota,\ldots, \nu_n}(x).
\end{equation*}

The conclusion is that
\begin{lemma}[\cite{KL}, 5.3(c)]
We have
\begin{equation}\label{Whitdiffeqnunintegerate}
\sum_{\substack{I\subset\{1,\ldots,n\}\\|I|=r}} \prod_{\substack{i\in I\\j\notin I}} \frac{-\iota}{\nu_j-\nu_i}\psi_{\nu_1,\ldots, \nu_k+\iota,\ldots, \nu_n}(x)=e^{-x_n-\cdots-x_{n-r+1}}\psi_{\nu}(x).
\end{equation}
\end{lemma}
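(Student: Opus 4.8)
The statement is Equation (\ref{Whitdiffeqnunintegerate}), the $t\to 0$, $q\to 1$ degeneration of the Macdonald difference operator $D_n^r$ acting on $P_\mu$. The plan is to realize this as a controlled limit of Proposition~\ref{prop5} (i.e.\ $D_n^r P_\lambda = e_r(q^{\lambda_1}t^{n-1},\dots,q^{\lambda_n})P_\lambda$), using exactly the rescalings of Definition~\ref{scaleDef} together with Theorem~\ref{qwhitconvTHM}, which supplies both the pointwise convergence $\psi^\e_{\ul{\nu}{n}}\to\psi_{\ul{\nu}{n}}$ and, crucially, the uniform polynomial-times-double-exponential bounds that justify passing the limit through the finite sum over $I$.

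First I would substitute $q=e^{-\e}$, $z_k=e^{\iota\e\nu_k}$, $\mu_k=(n+1-2k)m(\e)+\e^{-1}x_k$ into Proposition~\ref{prop5} with $t=0$; note that with $t=0$ the coefficient $A_I(x;t)$ collapses to $\prod_{i\in I, j\notin I}\frac{z_j}{z_j-z_i}$ (the $t^{r(r-1)/2}$ and the $tz_i$ numerators vanish), and the eigenvalue $e_r(q^{\lambda_1}t^{n-1},\dots,q^{\lambda_n})$ collapses to $q^{\mu_n+\cdots+\mu_{n-r+1}}$ since all but the last $r$ entries carry a positive power of $t$. This is precisely the combinatorial identity recorded right before the Lemma in the excerpt. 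Next I would carry out the three asymptotic computations already displayed there: $\prod_{i\in I,j\notin I}\frac{z_j}{z_j-z_i}\sim(-\iota\e^{-1})^{r(n-r)}\prod\frac1{\nu_j-\nu_i}$, $q^{\mu_n+\cdots+\mu_{n-r+1}}\sim \e^{-r(n-r)}e^{-x_n-\cdots-x_{n-r+1}}$, and — via the index shift property (\ref{macLaurent}) if needed to handle the $m(\e)$ shift — $P_\mu(z)\sim\psi^\e_{\ul{\nu}{n}}(\ul{x}{n})\to\psi_{\ul{\nu}{n}}(\ul{x}{n})$ and $T_{q,z_k}P_\mu(z)\sim\psi^\e_{\nu_1,\dots,\nu_k+\iota,\dots,\nu_n}(\ul{x}{n})\to\psi_{\nu_1,\dots,\nu_k+\iota,\dots,\nu_n}(\ul{x}{n})$, the last by Theorem~\ref{qwhitconvTHM}(2) applied with the $k$-th index shifted by $\iota$ (so one needs the theorem for complex index, which is fine: the $\nu$-dependence in the bounds of part~(1) is uniform and the convergence in part~(2) extends to a complex neighborhood of real $\nu$). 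Dividing both sides of the rescaled Proposition~\ref{prop5} identity by the common prefactor $\e^{-r(n-r)}$ (the $(-\iota)^{r(n-r)}$ gets absorbed into the $(-\iota/(\nu_j-\nu_i))^{r(n-r)}$ on the left as written) and letting $\e\to 0$ term by term yields (\ref{Whitdiffeqnunintegerate}).

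The main obstacle is making the term-by-term passage to the limit rigorous rather than formal: each summand on the left of the rescaled identity is a product of a blowing-up constant $\sim\e^{-r(n-r)}$ and a shifted $q$-Whittaker function, and one must confirm that after dividing out $\e^{-r(n-r)}$ the finitely many summands converge \emph{and} that no hidden cancellation or loss of mass occurs. Since the sum over $I$ is finite (it has $\binom{n}{r}$ terms) this is genuinely easy once one has uniform-on-compacts convergence of each $\psi^\e_{\nu_1,\dots,\nu_k+\iota,\dots,\nu_n}$ — which is exactly Theorem~\ref{qwhitconvTHM}(2) — so there is no dominated-convergence subtlety as there would be for an infinite sum or an integral. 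The only mild care needed is: (i) the shift $\mu_k\mapsto\mu_k$ with an extra box in position $k$ changes $x_k\mapsto x_k+\e$ and shifts $\nu_k\mapsto\nu_k+\iota$ simultaneously; one tracks that $T_{q,z_k}$ implements exactly $z_k\mapsto qz_k=e^{\iota\e(\nu_k+\iota)}$, consistent with the index shift, and (ii) handling the $m(\e)=-[\e^{-1}\log\e]$ integer-part rounding, which contributes only $o(1)$ errors of the type already absorbed throughout Section~\ref{721}. Assembling these, the identity follows; alternatively, as the excerpt notes, this is \cite[5.3(c)]{KL} and one may simply cite it, but the limit-from-Macdonald derivation is self-contained given Theorem~\ref{qwhitconvTHM}.
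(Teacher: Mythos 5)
Your derivation is the same informal limit computation that the paper records in the text immediately preceding the Lemma, where it displays the $t=0$ form of $D_n^r$, the Definition~\ref{scaleDef} rescaling, and the three asymptotics $A_I(z;0)\sim(-\iota\e^{-1})^{r(n-r)}\prod\frac1{\nu_j-\nu_i}$, $q^{\mu_n+\cdots+\mu_{n-r+1}}\sim\e^{-r(n-r)}e^{-x_n-\cdots-x_{n-r+1}}$, and $\prod_{i\in I}T_{q,z_i}P_\mu\sim\psi_{\nu+\iota e_I}$. The paper then does \emph{not} turn this into a proof; it cites \cite[5.3(c)]{KL}. So your proposal is not so much a different route as an attempt to upgrade the paper's heuristic derivation into a self-contained argument. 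That is a reasonable ambition, but there is a concrete gap.

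The gap is the sentence ``one needs the theorem for complex index, which is fine.'' Theorem~\ref{qwhitconvTHM} (both parts) is stated for $\ul{\nu}{\ell+1}\in\R^{\ell+1}$, and the shift induced by $\prod_{i\in I}T_{q,z_i}$ takes you to $\nu+\iota e_I\in\C^n$, where the proof of that theorem does not apply as written. In the recursion~(\ref{epsreceqn}), the only $\nu$-dependence of the summand is the phase $e^{\iota\nu_{\ell+1}(\sum_i x_{\ell+1,i}-\sum_i x_{\ell,i})}$, which has modulus~$1$ when $\nu_{\ell+1}\in\R$ and is what makes the part~(1) bound $\nu$-independent. For $\Im(\nu_{\ell+1})\ne 0$ this factor becomes $e^{-\Im(\nu_{\ell+1})(\sum_i x_{\ell+1,i}-\sum_i x_{\ell,i})}$, which grows exponentially as the inner sum ranges over its (unbounded as $\e\to 0$) domain $R_\e(\ul{x}{\ell+1})$. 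The polynomial-times-double-exponential bound of part~(1), chosen independently of $\nu$, is then simply false; the whole inductive machinery of Section~\ref{indproofsec} would have to be re-run with $\nu$-dependent exponential corrections absorbed into the $Q^\e$ decay, and the uniform-on-compacts convergence of part~(2) re-established. That is believable but not automatic, and certainly not something you can dismiss with ``is fine''; if it were, the authors would likely have proved the Lemma rather than citing \cite{KL}. (An alternative rigorous route — pair the $\e$-recursion against Schwartz test functions in $x$ and pass to the limit in a weak sense, which is closer to how the integrated form~(\ref{Whitdiffeqn}) is actually used — sidesteps pointwise complex-index convergence but is a genuinely different argument from the one you outline.)

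Two smaller points. First, the Lemma as printed has a typo: the summand should read $\psi_{\nu+\iota e_I}(x)$ (all coordinates in $I$ shifted by $\iota$), not $\psi_{\nu_1,\dots,\nu_k+\iota,\dots,\nu_n}(x)$, which does not depend on $I$; you silently inherit the typo when you speak only of ``the $k$-th index shifted by $\iota$'' while intending to treat general $r$. Second, your normalization bookkeeping is right — both sides of the rescaled Proposition~\ref{prop5} identity carry the same $\psi^\e$-normalization prefactor (the shifts $z_i\mapsto qz_i$ do not change degrees), so dividing by $\e^{-r(n-r)}$ and sending $\e\to 0$ does yield the stated identity once the complex-index convergence is secured.
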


By orthogonality of Whittaker functions, the following identity holds:
\begin{equation}\label{Whitdiffeqn}
\int_{\R^n} \psi_{\nu}(x)\overline{\psi_{\lambda}(x)} e^{-x_n-\cdots - x_{n-r+1}} dx = \frac{1}{n! m_n(\lambda)} \sum_{\substack{I\subset\{1,\ldots,n\}\\|I|=r}} \prod_{\substack{i\in I\\j\notin I}} \frac{\iota}{\nu_i-\nu_j} \sum_{\sigma\in S_n} \delta\big(\lambda - \sigma(\nu+\iota e_I)\big ),
\end{equation}
where $e_I = (\bfone_{k\in I})_{k=1,\ldots,n}$ is the vector with ones in the slots of label $I$ and zeros otherwise. This identity should be understood in a weak sense, in that it holds when both sides are integrated against $m_{n}(\lambda)W(\lambda)$ for $W(\lambda)$ such that
\begin{equation*}
\int_{\R^n} \overline{\psi_{\lambda}(x)} W(\lambda) m_n(\lambda) d\lambda
\end{equation*}
decays super-exponentially in $x$ and $W(\lambda)$ is analytic in $\lambda$ in at least a neighborhood of radius 1 around $\R^n$.

\begin{remark}
Note that if for $m$ fixed, $\lambda = \nu + \iota e_m$ then
\begin{eqnarray*}
\frac{1}{m_n(\lambda)} &=& (2\pi)^n n! \prod_{j\neq k} \Gamma(\iota\lambda_k - \iota \lambda_j) = (2\pi)^n n! (-1)^{n-1}\prod_{j\neq k} \Gamma(\iota \nu_k - \iota \nu_j) \prod_{\ell \neq m} \frac{\nu_m-\nu_\ell}{\nu_m -\nu_\ell +\iota}\\
&=& \frac{1}{m_n(\nu)} (-1)^{n-1} \prod_{\ell \neq m} \frac{\nu_m-\nu_\ell}{\nu_m -\nu_\ell +\iota} = \frac{1}{m_n(\nu)} (-1)^{n-1} \prod_{\ell \neq m} \frac{\nu_m-\nu_\ell}{\lambda_m -\lambda_\ell}.
\end{eqnarray*}
This equality agrees with the fact that if we conjugate both sides of (\ref{Whitdiffeqn}) then $\lambda$ and $\nu$ switch places (note that $m_n(\lambda)$ and $m_n(\nu)$ are real if we assume that $\lambda,\nu\in \R^n$).
\end{remark}

Recall the Baxter operator (\ref{baxterQ}) $\BaxOp{N}{N-1}{\kappa}$ which was defined via its integral kernel
\begin{equation*}
\BaxKer{N}{N-1}{\kappa}{x}{y}= e^{\iota \kappa \left(\sum_{i=1}^{N} x_i - \sum_{i=1}^{N-1} y_i\right) -\sum_{k=1}^{N} \left(e^{y_i-x_i} + e^{x_{i+1}-y_i}\right)}
\end{equation*}
where $x\in \R^N$ and $y\in \R^{N-1}$. Set
\begin{equation*}
\BaxOp{N}{N-k}{\kappa_N,\ldots,\kappa_{N-k+1}}= \BaxOp{N}{N-1}{\kappa_N}\circ \BaxOp{N-1}{N-2}{\kappa_{N-1}}\circ \cdots \circ \BaxOp{N-k+1}{N-k}{\kappa_{N-k+1}}.
\end{equation*}

Also, recall that Givental's integral representation of the Whittaker functions leads to the recursive formula
\begin{equation*}
\int_{\R^{N-1}}\BaxKer{N}{N-1}{\kappa}{x}{y} \psi_{\lambda}(y_1,\ldots,y_{N-1})dy = \psi_{(\lambda,\kappa)}(x_1,\ldots,x_N)
\end{equation*}
where $\lambda=(\lambda_1,\ldots,\lambda_{N-1})$ and $(\lambda,\kappa) = (\lambda_1,\ldots, \lambda_{N-1},\kappa)$. By using the orthogonality of Whittaker functions we obtain from this
\begin{equation*}
\BaxKer{N}{N-1}{\kappa}{x}{y} = \int_{\R^{N-1}} \psi_{(b,\kappa)}(x_1,\ldots,X_N)\overline{\psi_{b}(y_1,\ldots,y_{N-1})}m_{N-1}(b)db.
\end{equation*}
Further utilizing the orthogonality we compute the convolutions as
\begin{equation}\label{spectralBaxKer}
\BaxKer{N}{N-k}{\kappa_N,\ldots,\kappa_{N-k+1}}{x}{y}=\int_{\R^{N-k}} \psi_{(b,\kappa_{N-k+1},\ldots,\kappa_N)}(x_1,\ldots,x_N)\overline{\psi_{b}(y_1,\ldots,y_{N-k})}m_{N-k}(b)db,
\end{equation}
where $x\in \R^N$ and $y\in \R^{N-k}$. Also set $Q^{m\to m}(x,y) = \delta(x-y)$ for any $m\geq 1$.

\begin{proof}[Proof of Proposition~\ref{Proposition28FULLtzero}]
Recall that by Theorem~\ref{thm7.26}, $T(\tau)$ has an entrance law $\W{a;\tau}$. The density of the marginal distribution of the Whittaker process $\W{a;\tau}(\{x^k_j\}_{1\leq j\leq k \leq N})$ restricted to levels $x^{N_1},\ldots, x^{N_k}$ (where $N\geq N_1\geq N_2\geq \cdots \geq N_k$ and $x^{N_k}=(x^{N_k}_1,\ldots,x^{N_k}_{N_k})$) has the form

\begin{eqnarray*}
&&\left(e^{-\frac{\tau}{2}\sum_{j=1}^{N_1}a_j^2}\psi_{\iota a^{N_1}}(x^{N_1}_1,\ldots, x^{N_1}_{N_1}) \int_{\R^{N_1}} \overline{\psi_{\lambda}(x^{N_1})} e^{-\frac{\tau}{2} \sum_{j=1}^{N_1} \lambda_j^2} m_{N}(\lambda) d\lambda\right) \\
\nonumber &&\times \frac{\psi_{\iota a^{N_2}}(x^{N_2})}{\psi_{\iota a^{N_1}}(x^{N_1})} \BaxKer{N_1}{N_2}{\iota a_{N_1},\ldots, \iota a_{N_2+1}}{x^{N_1}}{x^{N_2}} \cdots \frac{\psi_{\iota a^{N_k}}(x^{N_k})}{\psi_{\iota a^{N_{k-1}}}(x^{N_{k-1}})} \BaxKer{N_{k-1}}{N_k}{\iota a_{N_{k-1}},\ldots, \iota a_{N_k+1}}{x^{N_{k-1}}}{x^{N_k}}
\end{eqnarray*}
where $a^M=(a_1,\ldots, a_M)$. Note that the term above in the large parentheses is just $\WM{a^{N_1};\tau}$. By canceling $\psi_{\iota a^j}$ factors and using the spectral representation (\ref{spectralBaxKer}) for the Baxter operators, we may rewrite this expression as
\begin{eqnarray*}
&& \left(e^{-\frac{\tau}{2} \sum_{j=1}^{N_1} a_{j}^2} \int_{\lambda^{N_1}\in \R^{N_1}} \overline{\psi_{\lambda^{N_1}}(x^{N_1})} e^{-\frac{\tau}{2}\sum_{j=1}^{N_1}(\lambda^{N_1}_j)^2} m_{N_1}(\lambda^{N_1}) d\lambda^{N_1}\right)\\
&&\times \int_{\lambda^{N_2}\in \R^{N_2}} \psi_{\lambda^{N_2},\iota a_{N_2+1},\ldots, \iota a_{N_1}}(x^{N_1}) \overline{\psi_{\lambda^{N_2}}(x^{N_2})} m_{N_2}(\lambda^{N_2}) d\lambda^{N_2} \\
&&\times \cdots
\int_{\lambda^{N_k}\in \R^{N_k}} \psi_{\lambda^{N_k},\iota a_{N_k+1},\ldots, \iota a_{N_k}}(x^{N_{k-1}}) \overline{\psi_{\lambda^{N_k}}(x^{N_k})} m_{N_k}(\lambda^{N_k}) d\lambda^{N_k} \cdot \psi_{\iota a_1,\ldots, \iota a_{N_k}}(x^{N_k}).
\end{eqnarray*}

Let us multiply this expression by
\begin{equation*}
e^{-(x^{N_1}_{N_1} + \cdots + x^{N_1}_{N_1-r_1+1})} \cdots e^{-(x^{N_k}_{N_k} + \cdots + x^{N_k}_{N_k-r_k+1})}
\end{equation*}
and integrate over all the $x$-variables using (\ref{Whitdiffeqn}). We start the integration with $x^{N_k}$, then go to $x^{N_{k-1}}$ and so on. The decay estimate of Proposition~\ref{supexpthetadecay} is needed at this place to make sure that the integrals converge and that we may apply the identity (\ref{Whitdiffeqn}). We obtain
\begin{eqnarray}\label{sumaboveint}
&&e^{-\frac{\tau}{2} ||a^{N_1}||^2} \sum_{\substack{I_1\subset\{1,\ldots, N_1\}\\|I_1|=r_1}} \cdots \sum_{\substack{I_k\subset\{1,\ldots, N_k\}\\|I_k|=r_k}} e^{\frac{\tau}{2} ||a^{N_1} + e_{I_1}+e_{I_2}+\cdots +e_{I_k}||^2}\\
\nonumber &&\times \prod_{\substack{i_1\in I_1\\j_1\in \{1,\ldots, N_1\}\setminus I_1}} \frac{1}{(a^{N_1}+e_{I_2}+\cdots+e_{I_k})_{i_1} -(a^{N_1}+e_{I_2}+\cdots+e_{I_k})_{j_1}} \cdots \prod_{\substack{i_k\in I_k\\j_k\in \{1,\ldots, N_k\}\setminus I_k}} \frac{1}{a_{i_k}-a_{j_k}}.
\end{eqnarray}
Above, the subscript around the parentheses as in $(a^{N_1}+e_{I_2}+\cdots+e_{I_k})_{i_1}$ refers to the $i_1$ entry of the vector in the parentheses.
For a sequence $b=\{b_1,\ldots,b_n\}$ and a subset $J$ of its indices denote
\begin{equation*}
|b|_{J} = \sum_{j\in J} b_j.
\end{equation*}
Then we may write
\begin{eqnarray*}
&&||a^{N_1} + e_{I_1} + \cdots + e_{I_k}||^2 - ||a^{N_1}||^2 = \\
&&|I_1|+\cdots + |I_k| + 2\left(|a|_{I_k} + |a+e_{I_k}|_{I_{k-1}} + |a+e_{I_k}+e_{I_{k-1}}|_{I_{k-2}} + \cdots + |a+e_{I_k}+e_{I_{2}}|_{I_{1}}\right).
\end{eqnarray*}
Using the above we may rewrite (\ref{sumaboveint}) as
\begin{eqnarray}\label{sumaboveint2}
&&e^{\frac{\tau}{2}(r_1+\cdots +r_k)} \sum_{\substack{I_k\subset \{1,\ldots,N_k\}\\|I_k|=r_k}} \prod_{\substack{i_k\in I_k\\j_k \in \{1,\ldots, N_k\}\setminus I_k}} \frac{1}{a_{i_k}-a_{j_k}} e^{\tau|a|_{I_k}}\\
\nonumber &&\times \sum_{\substack{I_{k-1}\subset \{1,\ldots,N_{k-1}\}\\|I_{k-1}|=r_{k-1}}} \prod_{\substack{i_{k-1}\in I_{k-1}\\j_{k-1} \in \{1,\ldots, N_{k-1}\}\setminus I_{k-1}}} \frac{1}{(a^{N_k}+e_{I_k})_{i_{k-1}}-(a^{N_k}+e_{I_k})_{j_{k-1}}} e^{\tau|a+e_{I_k}|_{I_{k-1}}}\\
\nonumber &&\times\cdots
\sum_{\substack{I_{1}\subset \{1,\ldots,N_{1}\}\\|I_{1}|=r_{1}}} \prod_{\substack{i_{1}\in I_{1}\\j_{1} \in \{1,\ldots, N_{1}\}\setminus I_{1}}} \frac{1}{(a^{N_1}+e_{I_2}+\cdots +e_{I_k})_{i_{1}}-(a^{N_1}+e_{I_2}+\cdots + e_{I_k})_{j_{1}}} e^{\tau|a+e_{I_2}+\cdots + e_{I_k}|_{I_{1}}}
\end{eqnarray}
The interior sums over $I_{j}$ have the exact same structure as the sum over $I_k$ but with $k$ replaced by $j$ and the sequence $(a_1,\ldots, a_{N_1})=a^{N_1}$ replaced by $a^{N_1}+e_{I_{j+1}}+\cdots +e_{I_k}$.

\begin{lemma}
Let $f(u)$ be an analytic function in a sufficiently large neighborhood of $x_1,\ldots, x_n\in \C$. Then
\begin{eqnarray*}
&&\sum_{\substack{I\subset \{1,\ldots, n\}\\|I|=r}} \prod_{\substack{i\in I\\j\in \{1,\ldots, n\}\setminus I}} \frac{1}{x_i-x_j}\prod_{k\in I} f(x_k)
=\\
&&\frac{(-1)^{r(r-1)/2}}{(2\pi \iota)^r r!} \oint\cdots \oint \prod_{1\leq k<\ell\leq r} (w_k-w_\ell)^2 \prod_{j=1}^{r}\left(\prod_{m=1}^{n} \frac{1}{w_j-x_m}\right) f(w_j) dw_j,
\end{eqnarray*}
where the contours for the $w_j$ are such that they only include the poles $w_j=x_m$ for all $j,m$.
\end{lemma}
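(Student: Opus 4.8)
The statement to prove is the combinatorial identity

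\begin{equation*}
\sum_{\substack{I\subset \{1,\ldots, n\}\\|I|=r}} \prod_{\substack{i\in I\\j\in \{1,\ldots, n\}\setminus I}} \frac{1}{x_i-x_j}\prod_{k\in I} f(x_k)
=\frac{(-1)^{r(r-1)/2}}{(2\pi \iota)^r r!} \oint\cdots \oint \prod_{1\leq k<\ell\leq r} (w_k-w_\ell)^2 \prod_{j=1}^{r}\left(\prod_{m=1}^{n} \frac{1}{w_j-x_m}\right) f(w_j) dw_j,
\end{equation*}

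with the $w_j$-contours enclosing all the poles $w_j=x_m$ and nothing else.

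The plan is to evaluate the right-hand side by residues. Since each $w_j$-contour encircles exactly the simple poles $w_j = x_1,\dots,x_n$ coming from the factor $\prod_{m=1}^n (w_j-x_m)^{-1}$ (and $f$ is holomorphic in a neighbourhood of these points, so contributes no poles), the $r$-fold integral equals the sum over all choices $(m_1,\dots,m_r)\in\{1,\dots,n\}^r$ of the product of residues at $w_j = x_{m_j}$. First I would observe that whenever two of the indices coincide, say $m_j = m_{j'}$ with $j\neq j'$, the Vandermonde-type factor $\prod_{k<\ell}(w_k-w_\ell)^2$ vanishes at the corresponding point, so only tuples with pairwise distinct $m_j$ survive. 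For such a tuple, the residue at $w_j=x_{m_j}$ of $\prod_{m}(w_j-x_m)^{-1}$ is $\prod_{m\neq m_j}(x_{m_j}-x_m)^{-1}$, and the Vandermonde factor becomes $\prod_{k<\ell}(x_{m_k}-x_{m_\ell})^2$. Collecting these, the RHS becomes
\begin{equation*}
\frac{(-1)^{r(r-1)/2}}{r!}\sum_{\substack{(m_1,\dots,m_r)\\ \text{distinct}}} \prod_{1\le k<\ell\le r}(x_{m_k}-x_{m_\ell})^2 \prod_{j=1}^r\Bigl(\prod_{m\neq m_j}\frac{1}{x_{m_j}-x_m}\Bigr) f(x_{m_j}).
\end{equation*}

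Next I would reorganize this sum over ordered distinct tuples as a sum over the underlying set $I=\{m_1,\dots,m_r\}$ times $r!$ for the orderings, which cancels the $1/r!$. The contribution attached to a fixed $r$-subset $I$ is then $(-1)^{r(r-1)/2}\prod_{\{k,\ell\}\subset I}(x_k-x_\ell)^2 \cdot \prod_{i\in I}\bigl(\prod_{m\notin I}(x_i-x_m)^{-1}\bigr)\prod_{i\in I} f(x_i) \cdot \prod_{i\in I}\prod_{i'\in I, i'\neq i}(x_i-x_{i'})^{-1}$, where I have split the product $\prod_{m\neq m_j}$ into the part with $m\notin I$ and the part with $m\in I\setminus\{m_j\}$. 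The key step is the elementary identity $\prod_{i\in I}\prod_{i'\in I,\,i'\neq i}(x_i-x_{i'})^{-1} = (-1)^{r(r-1)/2}\bigl(\prod_{\{k,\ell\}\subset I}(x_k-x_\ell)^2\bigr)^{-1}$, which follows by pairing each unordered pair $\{k,\ell\}$ with its two orderings and noting $(x_k-x_\ell)(x_\ell-x_k) = -(x_k-x_\ell)^2$, producing the sign $(-1)^{\binom{r}{2}}$. Plugging this in, the sign $(-1)^{r(r-1)/2}$ squares to $1$ and the $\prod(x_k-x_\ell)^2$ factors cancel, leaving exactly $\prod_{i\in I}\bigl(\prod_{j\notin I}(x_i-x_j)^{-1}\bigr)\prod_{i\in I} f(x_i)$, i.e.\ the summand of the LHS. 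Summing over all $r$-subsets $I$ gives the left-hand side, completing the proof.

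There is essentially no hard analytic obstacle here: the only thing to be careful about is the bookkeeping of signs and of which factors belong to the ``inside'' ($i'\in I$) versus ``outside'' ($m\notin I$) products when extracting the residue, together with the observation that the Vandermonde factor kills all non-distinct tuples so that the residue expansion is a genuinely finite, manifestly convergent sum and no contour-deformation subtleties arise (the hypothesis that $f$ is analytic in a sufficiently large neighbourhood of the $x_m$ guarantees the contours can be chosen to enclose only the prescribed poles). I would also remark, as the text does for its other residue lemmas, that this identity can alternatively be proved by induction on $r$ or deduced as a special case of Proposition~\ref{mukprop}, but the direct residue computation above is the cleanest route.
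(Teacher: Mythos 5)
Your proof is correct and uses the same residue-expansion argument that the paper gives (only in more detail): expand the $r$-fold integral over the simple poles at $w_j=x_{m_j}$, observe that the squared Vandermonde factor kills any tuple with repeated indices, then pass from ordered distinct tuples to $r$-subsets (absorbing the $1/r!$) and simplify the "internal" factor $\prod_{i\in I}\prod_{i'\in I,\,i'\neq i}(x_i-x_{i'})^{-1}=(-1)^{r(r-1)/2}\bigl(\prod_{\{k,\ell\}\subset I}(x_k-x_\ell)^2\bigr)^{-1}$ to cancel both the sign prefactor and the remaining Vandermonde, leaving the left-hand side. Your sign bookkeeping and the split of $\prod_{m\neq m_j}$ into $m\notin I$ and $m\in I\setminus\{m_j\}$ are exactly the steps the paper's one-line proof leaves implicit.
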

\begin{proof}
This follows by a straightforward computation of residues. If different $w_j$'s pick the same pole $x_m$, the contribution is zero because of the Vandermonde factor. There are $r!$ ways to match $\{w_1,\ldots, w_r\}$ to $\{x_i\}_{i\in I}$, thus the $1/r!$.
\end{proof}

Using this lemma we can compute the inner most sum over $I_1$ in (\ref{sumaboveint2}) taking $n=N_1$, $x~=~a^{N_1}+e_{I_2}+\cdots+e_{I_k}$, and $f(u) = e^{\frac{\tau}{2}u}$. This gives
\begin{equation*}
\frac{(-1)^{r_1(r_1-1)/2}}{(2\pi \iota)^{r_1} r_1!} \oint\cdots \oint \prod_{1\leq k_1<\ell_1\leq r_1} (w^{(1)}_{k_1}-w^{(1)}_{\ell_1})^2 \prod_{j_1=1}^{r_1}\left(\prod_{m_1=1}^{N_1} \frac{1}{w^{(1)}_{j_1}-x_{m_1}}\right) e^{\tau w_{j_1}^{(1)}} dw_{j_1}^{(1)}.
\end{equation*}
We now want to do the same summation over $I_2$ with $\tilde n= N_2$, $\tilde x = a^{N_2}+e_{I_3}+\cdots + e_{I_k}$. Note that
\begin{equation*}
\prod_{m=1}^{N_2} \frac{1}{w-x_m} = \prod_{m\notin I_2} \frac{1}{w-\tilde x_m} \prod_{m\in I_2}\frac{1}{w-\tilde x_m - 1} = \prod_{m=1}^{N_2} \frac{1}{w-\tilde x_m} \prod_{m\in I_2} \frac{w-\tilde x_m}{w-\tilde x_m-1}.
\end{equation*}
Thus we take
\begin{equation*}
\tilde f(u) = \prod_{j=1}^{r_1} \frac{w^{(1)}_{j_1}-u}{w^{(1)}_{j_1} - u - 1} e^{\frac{\tau}{2} u}
\end{equation*}
and obtain
\begin{eqnarray*}
&&\frac{(-1)^{r_1(r_1-1)/2 + r_2(r_2-1)/2}}{(2\pi \iota)^{r_1+r_2} r_1!r_2!} \oint\cdots \oint \prod_{1\leq k_1<\ell_1\leq r_1} (w^{(1)}_{k_1}-w^{(1)}_{\ell_1})^2 \prod_{1\leq k_2<\ell_2\leq r_2} (w^{(2)}_{k_1}-w^{(2)}_{\ell_1})^2\\
&& \times \prod_{j_1=1}^{r_1}\left(\prod_{m_1=N_2+1}^{N_1} \frac{1}{w^{(1)}_{j_1}-a_{m_1}} \prod_{m_1=1}^{N_2} \frac{1}{w^{(1)}_{j_1} - \tilde x_{m_1}} \right) e^{\tau w_{j_1}^{(1)}}\\
&&\times \prod_{j_2=1}^{r_2}\left(\prod_{m_2=1}^{N_2} \frac{1}{w^{(2)}_{j_2}-\tilde x_{m_2}}\right) \prod_{j_1=1}^{r_1} \frac{w^{(1)}_{j_1}-w^{(2)}_{j_2}}{w^{(1)}_{j_1} -w^{(2)}_{j_2}-1} e^{\tau w_{j_2}^{(2)}} dw_{j_1}^{(1)}dw_{j_2}^{(2)},
\end{eqnarray*}
where the $w^{(2)}$ contours contain only $\tilde x_{m_2}$ as poles, while $w^{(1)}$ contours also contain the poles at points given by $w^{(2)}+1$. Iterating this procedure shows that (\ref{sumaboveint2}) can be rewritten as
\begin{eqnarray*}
&&\prod_{\alpha=1}^{k} \frac{(-1)^{r_\alpha(r_\alpha-1)/2} e^{\frac{\tau}{2}r_{\alpha}}}{(2\pi \iota)^{r_\alpha} r_\alpha!} \oint\cdots \oint
\prod_{1\leq \alpha<\beta\leq k} \left(\prod_{i=1}^{r_{\alpha}}\prod_{j=1}^{r_{\beta}} \frac{w^{(\alpha)}_{i}-w^{(\beta)}_{j}}{w^{(\alpha)}_{i}-w^{(\beta)}_{j}-1}\right)\\
\nonumber &&\times \prod_{\alpha=1}^k\left(\left(\prod_{1\leq i<j\leq r_\alpha} (w^{(\alpha)}_{i}-w^{(\alpha)}_{j})^2 \right)
\left(\prod_{j=1}^{r_\alpha}\frac{1}{(w^{(\alpha)}-a_1)\cdots (w^{(\alpha)}-a_{N_\alpha})} e^{\tau w^{(\alpha)}_{j}} dw^{(\alpha)}_j\right)\right),
\end{eqnarray*}
where the $w^{(\alpha)}_j$ contour contains the poles $\{a_1,\ldots, a_{N_{\alpha}}\}$ and also the poles $\{w^{(\beta)}_{i}+1\}$ for all $i\in~\{1,\ldots~r_{\beta}\}$ and $\beta>\alpha$.
Proposition~\ref{Proposition28FULLtzero} follows by taking $w^{(\alpha)}_i = -w_{\alpha,i}$.

\end{proof}

\begin{proof}[Proof of Proposition~\ref{ktimeintform}]
The evolution over time $\tau$ is given by the semi-group (recall Definition (\ref{semigpW2d}))
\begin{equation*}
e^{\tau\mathcal{L}} = e^{-\frac{\tau}{2}\sum_{j=1}^{N} a_j^2} \psi_{\iota a}^{-1} e^{\frac{1}{2} \tau H} \psi_{\iota a}.
\end{equation*}
Since $H$ is diagonalized by $\{\psi_{b}\}_{b\in \R^N}$ with eigenvalues given by $H\psi_{b} = (-\sum_{j=1}^{N} b_j^2)\psi_{b}$ we can write the kernel of the semi-group $e^{\frac{1}{2}tH}$ as
\begin{equation*}
e^{\frac{1}{2}\tau H}(x,y) = \int_{\R^N} e^{-\frac{\tau}{2} \sum_{j=1}^{N} b_j^2} \psi_{b}(x) \overline{\psi_b(y)} m_N(b) db.
\end{equation*}

We will focus on the case $k=2$ (i.e., the two time case) as the general case follows similarly. The above considerations show that the two-time distribution for our diffusion is
\begin{equation*}
e^{-\frac{\tau_1}{2}\sum_{j=1}^N a_j^2}e^{-\frac{\tau_2-\tau_1}{2}\sum_{j=1}^N a_j^2}\int_{\R^N} \overline{\psi_{\lambda}(x)} e^{-\frac{\tau_1}{2} \sum_{j=1}^N \lambda_j^2} m_N(\lambda) d\lambda
\int_{\R^N} e^{-\frac{\tau_2-\tau_1}{2}\sum_{j=1}^{N} b_j^2} \psi_b(x) \overline{\psi_{b}(y)} m_N(b) db \psi_{\iota a}(y).
\end{equation*}

We multiply this expression by $e^{-x_N-y_N}$ and integrate over $x,y\in \R^N$. Doing this amounts to computing $\left\langle e^{-\sum_{i=1}^{2} T_{N,N}(\tau_i)} \right\rangle$ as desired. Using (\ref{Whitdiffeqn}) we obtain, by first integrating over $y$ then over $x$ (the convergence of the integrals is justified by the decay of Proposition~\ref{supexpthetadecay}), that $\left\langle e^{-\sum_{i=1}^{2} T_{N,N}(\tau_i)} \right\rangle$ equals
\begin{eqnarray*}
 && e^{-\frac{\tau_2}{2} \sum_{j=1}^{N} a_j^2} \int_{\R^N} \overline{\psi_{\lambda}(x)} e^{-\frac{\tau_1}{2} \sum_{j=1}^{N} \lambda_j^2} m_N(\lambda)d\lambda \sum_{k=1}^{N} \prod_{j\neq k}^N \frac{1}{a_k-a_j} \psi_{\iota (a+e_k)}(x) e^{\frac{\tau_2-\tau_1}{2}\sum_{j=1}^{N} (a_j+\delta_{jk})^2}\\
&=& e^{-\frac{\tau_2}{2}}\sum_{k,\ell=1}^{N} \prod_{m\neq \ell}^{N} \frac{1}{a_\ell + \delta_{k\ell} -(a_m+\delta_{km})} \prod_{j\neq k}^{N} \frac{1}{a_k -a_j} e^{\frac{\tau_2}{2} \sum_{j=1}^{N} (a_j+\delta_{jk}+\delta_{j\ell})^2} e^{\frac{\tau_2-\tau_1}{2} \sum_{j=1}^{N} (a_j+\delta_{jk})^2}\\
&=& e^{\frac{\tau_1+\tau_2}{2}} \sum_{k,\ell=1}^{N} \prod_{m\neq \ell}^{N} \frac{1}{a_\ell + \delta_{k\ell} -(a_m+\delta_{km})} \prod_{j\neq k}^{N} \frac{1}{a_k -a_j} e^{\tau_1(a_{\ell} + \delta_{k\ell})} e^{\tau_2 a_k}\\
&=& e^{\frac{\tau_1+\tau_2}{2}} \oint \oint \frac{w_1-w_2}{w_1-w_2-1} \prod_{m=1}^{N} \frac{1}{(w_1-a_m)(w_2-a_m)} e^{\tau_1 w_1 + \tau_2 w_2} \frac{dw_1 dw_2}{(2\pi \iota)^2},
\end{eqnarray*}
where the $w_2$ contour contains $\{a_1,\ldots, a_N\}$ and the $w_1$ contour contains $\{w_2+1,a_1,\ldots,a_N\}$.
\end{proof}

\subsection{Fredholm determinant formulas for the Whittaker process}\label{WhitFredDetSec}

\subsubsection{Convergence lemmas}
We introduce two probability lemmas. The first will be useful when we perform asymptotics on the Whittaker process Fredholm determinant, whereas the second (proved similarly) will be useful presently.

\begin{lemma}\label{problemma1}
Consider a sequence of functions $\{f_n\}_{n\geq 1}$ mapping $\R\to [0,1]$ such that for each $n$, $f_n(x)$ is strictly decreasing in $x$ with a limit of $1$ at $x=-\infty$ and $0$ at $x=\infty$, and for each $\delta>0$, on $\R\setminus[-\delta,\delta]$ $f_n$ converges uniformly to $\bfone(x\leq 0)$. Define the $r$-shift of $f_n$ as $f^r_n(x) = f_n(x-r)$. Consider a sequence of random variables $X_n$ such that for each $r\in \R$,
\begin{equation*}
\EE[f^r_n(X_n)] \to p(r)
\end{equation*}
and assume that $p(r)$ is a continuous probability distribution function. Then $X_n$ converges weakly in distribution to a random variable $X$ which is distributed according to $\PP(X\leq r) = p(r)$.
\end{lemma}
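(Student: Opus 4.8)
The plan is to deduce weak convergence from the convergence of the ``softened distribution functions'' $\EE[f_n^r(X_n)]$. The point is that $f_n^r$ is a smooth monotone approximation of the indicator $\bfone(x\le r)$, and the hypothesis gives us control of $\EE[f_n^r(X_n)]$ for every real shift $r$, not just a dense set. So the first step is to convert convergence of $\EE[f_n^r(X_n)]$ into convergence of the genuine distribution functions $F_n(r) = \PP(X_n\le r)$ at every continuity point of $p$ — but since $p$ is assumed continuous everywhere, we are aiming to show $F_n(r)\to p(r)$ for all $r$.

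The key inequalities are the two-sided sandwich coming from monotonicity of $f_n$. Fix $r$ and $\delta>0$. Since $f_n$ maps into $[0,1]$, is decreasing, and tends to $1$ at $-\infty$ and $0$ at $+\infty$, we have for any $x$: $f_n^{r+\delta}(x) = f_n(x-r-\delta) \ge \bfone(x\le r)$ is \emph{not} quite right pointwise, so instead I would argue as follows. Because $f_n\to \bfone(\cdot\le 0)$ uniformly on $\R\setminus[-\delta,\delta]$, there is $N$ such that for $n\ge N$ we have $f_n(y)\ge 1-\delta$ for $y\le -\delta$ and $f_n(y)\le \delta$ for $y\ge \delta$. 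Hence for $n\ge N$,
\begin{equation*}
f_n^{r-\delta}(x) \le \bfone(x\le r) + \delta \quad\text{for } x\notin(r-2\delta, r), \qquad f_n^{r-\delta}(x)\le 1 \text{ always},
\end{equation*}
so that $\EE[f_n^{r-\delta}(X_n)] \le \PP(X_n\le r) + \delta + \PP(X_n\in(r-2\delta,r))$; similarly $\EE[f_n^{r+\delta}(X_n)]\ge \PP(X_n\le r) - \delta - \PP(X_n\in(r,r+2\delta))$. A cleaner route avoiding the ``middle interval'' terms: since $f_n$ is decreasing, $f_n^{r-\delta}(x)\ge f_n^{r-\delta}(r)\,\bfone(x\le r)$ is false too; the honest clean bound is the pointwise one $\bfone(x\le r) \le \frac{1}{1-\delta} f_n^{r+\delta}(x)$ for $n\ge N$ and $x$ away from the transition window, combined with $\bfone(x\le r) \le f_n^{r+\delta}(x)/(1-\delta)$ plus an error on $x\in(r,r+2\delta)$. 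I would organize the estimates so that taking $n\to\infty$ first and then $\delta\to0$ gives $\limsup_n \PP(X_n\le r) \le p(r)$ and $\liminf_n \PP(X_n\le r)\ge p(r)$, using continuity of $p$ at $r\pm\delta$.

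So the steps, in order: (1) record the uniform-convergence consequence ``$f_n\approx 1$ left of $-\delta$, $\approx 0$ right of $\delta$'' for $n$ large; (2) derive the upper sandwich: for $n$ large, $\EE[f_n^{r-\delta}(X_n)] \ge (1-\delta)\PP(X_n\le r-2\delta)$, hence letting $n\to\infty$, $p(r-\delta)\ge (1-\delta)\limsup_n \PP(X_n\le r-2\delta)$, and then $\delta\to0$ with continuity of $p$ yields $\limsup_n \PP(X_n\le r^-)\le p(r)$; (3) derive the lower sandwich symmetrically: $\EE[f_n^{r+\delta}(X_n)]\le \PP(X_n\le r+2\delta) + \delta$, giving $p(r+\delta)\le \liminf_n \PP(X_n\le r+2\delta) + \delta$, and $\delta\to0$ yields $\liminf_n \PP(X_n\le r)\ge p(r)$; (4) combine (2) and (3) to conclude $\PP(X_n\le r)\to p(r)$ at all $r$, which is precisely weak convergence of $X_n$ to $X\sim p$.

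The main obstacle — really the only subtlety — is handling the ``transition window'' $[r-2\delta, r+2\delta]$ where the approximating functions $f_n^r$ are neither close to $0$ nor close to $1$ and where we have no a priori control on the mass $\PP(X_n\in(r-2\delta,r+2\delta))$. The resolution is standard: never estimate that mass directly; instead bound it crudely by $1$ in one direction and absorb it by shifting the argument of $f_n$ by $\pm2\delta$ and using monotonicity, then use continuity of the \emph{limit} $p$ to close the gap as $\delta\to0$. A minor additional point is that the hypothesis ``$f_n(x)$ strictly decreasing with limits $1$ and $0$'' is used to guarantee $f_n^r$ really does straddle the indicator after the uniform-convergence estimate kicks in; strictness itself is not essential, only monotonicity and the boundary limits, so I would just invoke monotonicity. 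No tightness argument is needed separately because the convergence $\PP(X_n\le r)\to p(r)$ at every $r$ with $p$ a bona fide distribution function already rules out escape of mass.
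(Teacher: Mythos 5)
Your strategy — sandwiching the indicator between shifted copies of $f_n$, taking $n\to\infty$, then $\delta\to0$ and invoking continuity of $p$ — is exactly the right one, and is essentially what the paper does. But your bookkeeping leaves a genuine gap in steps (2) and (3): in both estimates the distribution function is evaluated at a $\delta$-shifted point, $\PP(X_n\le r-2\delta)$ and $\PP(X_n\le r+2\delta)$ respectively. From (2) you obtain $\limsup_n\PP(X_n\le r-2\delta)\le p(r-\delta)/(1-\delta)$ for each $\delta>0$, and sending $\delta\to0$ only yields
\begin{equation*}
\sup_{\delta>0}\,\limsup_n\PP(X_n\le r-2\delta)\;\le\;p(r),
\end{equation*}
which is weaker than $\limsup_n\PP(X_n\le r)\le p(r)$ because $\limsup_n$ at a strictly smaller argument can sit strictly below $\limsup_n\PP(X_n\le r)$ for every $\delta>0$. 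You flag this with the notation ``$\limsup_n\PP(X_n\le r^-)$'' in step (2), but the same defect is present in step (3): $p(r+\delta)\le\liminf_n\PP(X_n\le r+2\delta)+\delta$ gives a lower bound on $\liminf_n$ at $r+2\delta$, which already dominates $\liminf_n\PP(X_n\le r)$; sending $\delta\to0$ does not transfer the bound to the unshifted point.

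The fix is to keep the probability pinned at $\PP(X_n\le r)$ and shift the $f_n$ instead. For $n$ large, $f_n^{r-\delta}(x)\le\delta$ for $x>r$ (since $x-(r-\delta)>\delta$), so $\EE[f_n^{r-\delta}(X_n)]\le\PP(X_n\le r)+\delta$, giving $p(r-\delta)\le\liminf_n\PP(X_n\le r)+\delta$; and $f_n^{r+\delta}(x)\ge1-\delta$ for $x\le r$ (since $x-(r+\delta)\le-\delta$), so $\EE[f_n^{r+\delta}(X_n)]\ge(1-\delta)\PP(X_n\le r)$, giving $p(r+\delta)\ge(1-\delta)\limsup_n\PP(X_n\le r)$. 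Now $\delta\to0$ and continuity of $p$ close both sides cleanly. This is exactly the paper's organization: it fixes $s<t<u$, establishes $\EE[f_n^s(X_n)]-\e\le\PP(X_n\le t)\le\EE[f_n^u(X_n)]+\e$ for $n$ large, passes to the limit in $n$, then sends $\e\to0$ and $s\uparrow t$, $u\downarrow t$. The moral is that the CDF argument should not carry the $\delta$-dependence; only the shift in $f_n$ should.
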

\begin{proof}
Consider $s<t<u$. By the conditions on $f_n$ it follows that for all $\e>0$ there exists $n_0>0$ such that for all $n>n_0$
\begin{equation*}
\EE\big[f^s_n(X_n)\big] - \e \leq \PP(X_n \leq t) \leq \EE\big[f^u_n(X_n)\big] +\e.
\end{equation*}
The above fact follows from the uniform convergence outside of any fixed interval of the origin.
By the convergence of $\EE[f^s_n(X_n)]\to p(s)$ as $n\to \infty$, it follows further that there also exists $n_1>n_0$ such that for all $n>n_1$
\begin{equation*}
p(s) - 2\e \leq \PP(X_n \leq t) \leq p(u) +2\e.
\end{equation*}
This implies that
\begin{equation*}
p(s) - 2\e \leq \liminf_{n\to\infty} \PP(X_n \leq t)\leq \limsup_{n\to\infty} \PP(X_n \leq t)  \leq p(u) +2\e.
\end{equation*}
The above is established for arbitrary $\e$ and thus
\begin{equation*}
p(s) \leq \liminf_{n\to\infty} \PP(X_n \leq t)\leq \limsup_{n\to\infty} \PP(X_n \leq t)  \leq p(u).
\end{equation*}
The above is also established for arbitrary $s<t<u$ and by taking $s$ and $u$ to $t$ it follows that
\begin{equation*}
\lim_{s\to t^-}p(s) \leq \liminf_{n\to\infty} \PP(X_n \leq t)\leq \limsup_{n\to\infty} \PP(X_n \leq t)  \leq \lim_{u\to t^+}p(u).
\end{equation*}
Since $p(\cdot)$ is continuous it follows that the left and right bounds are identical, hence
\begin{equation*}
\lim_{n\to\infty} \PP(X_n \leq t)=p(t),
\end{equation*}
which is exactly as desired.
\end{proof}

\begin{lemma}\label{problemma2}
Consider a sequence of functions $\{f_n\}_{n\geq 1}$ mapping $\R\to [0,1]$ such that for each $n$, $f_n(x)$ is strictly decreasing in $x$ with a limit of $1$ at $x=-\infty$ and $0$ at $x=\infty$, and $f_n$ converges uniformly on $\R$ to $f$. Consider a sequence of random variables $X_n$ converging weakly in distribution to $X$.
Then
\begin{equation*}
\EE[f_n(X_n)] \to \EE[f(X)].
\end{equation*}
\end{lemma}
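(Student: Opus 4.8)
The plan is to compare $\EE[f_n(X_n)]$ with $\EE[f(X)]$ by passing through the intermediate quantity $\EE[f(X_n)]$. First I would write
\begin{equation*}
\big|\EE[f_n(X_n)] - \EE[f(X)]\big| \;\le\; \EE\big[|f_n(X_n) - f(X_n)|\big] + \big|\EE[f(X_n)] - \EE[f(X)]\big| \;\le\; \|f_n - f\|_\infty + \big|\EE[f(X_n)] - \EE[f(X)]\big|,
\end{equation*}
using that $f_n$ and $f$ are $[0,1]$-valued. The first summand tends to zero by the assumed uniform convergence $f_n \to f$; this disposes of the ``moving'' part of the problem and reduces the whole question to a single fixed test function $f$.

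It then remains to establish $\EE[f(X_n)] \to \EE[f(X)]$. The limit $f$, being a uniform limit of non-increasing $[0,1]$-valued functions, is itself non-increasing, $[0,1]$-valued, hence bounded, Borel, and continuous off an at most countable set $D$ of jump points. In every situation in which this lemma is invoked the approximating functions $f_n$ are themselves continuous (they are built from exponentials, e.g.\ $x\mapsto e^{-u e^{-x}}$), so $f$ is continuous and bounded, and $\EE[f(X_n)] \to \EE[f(X)]$ is simply the definition of $X_n$ converging weakly to $X$. For the statement in the generality written, I would instead invoke the mapping theorem for weak convergence: $f$ is bounded and its discontinuity set $D$ is $\PP_X$-null whenever $X$ has a continuous distribution function (the regime in which the lemma is actually used, matching the continuity hypothesis on $p$ in Lemma~\ref{problemma1}), so again $\EE[f(X_n)] \to \EE[f(X)]$. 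Combining this with $\|f_n - f\|_\infty \to 0$ yields $\EE[f_n(X_n)] \to \EE[f(X)]$.

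The main (and essentially only) obstacle is the convergence $\EE[f(X_n)] \to \EE[f(X)]$ in the borderline case where $f$ is permitted to be discontinuous and $X$ is permitted to charge a discontinuity point of $f$; there the bare conclusion can genuinely fail, so the argument must lean on one of the two innocuous extra facts mentioned above -- continuity of the $f_n$ (hence of $f$), or atomlessness of $X$ -- both of which hold wherever the lemma is needed. Everything else (the triangle-inequality split, the uniform bound, and the elementary monotonicity and boundedness properties inherited by $f$) is routine.
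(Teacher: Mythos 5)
Your proposal is correct and takes essentially the same route as the paper's own (terse) justification: the triangle-inequality split
\begin{equation*}
\big|\EE[f_n(X_n)] - \EE[f(X)]\big| \le \|f_n - f\|_\infty + \big|\EE[f(X_n)] - \EE[f(X)]\big|
\end{equation*}
is precisely the sandwich $\EE[f(X_n)] - \e \le \EE[f_n(X_n)] \le \EE[f(X_n)] + \e$ (valid for large $n$ by uniform convergence) followed by a $\liminf/\limsup$ argument, which is what the paper means by ``follows from a similar sandwiching approach as above.''

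Where your write-up goes beyond the paper is in the final paragraph, and this observation deserves to be kept: the lemma as literally stated is false, because a strictly decreasing $[0,1]$-valued $f_n$ need not be continuous, so the uniform limit $f$ can have jumps, and if the law of $X$ charges a jump point of $f$ then $\EE[f(X_n)] \to \EE[f(X)]$ can fail even though $X_n$ converges weakly to $X$. For instance, take $f_n \equiv f$ with a single jump at $0$, $X_n = -1/n$, $X = 0$; then $\EE[f_n(X_n)] \to f(0^-) \ne f(0) = \EE[f(X)]$. The paper is silent on this point, but as you note the lemma is only invoked with $f_n(x) = e_q\big(-u e^{-x}\e/(1-q)\big)$ and $f(x) = e^{-u e^{-x}}$, both continuous, and with an atomless limit law (the Whittaker measure), so either of your two patches -- continuity of the $f_n$, hence of $f$, or continuity of the limiting distribution function -- suffices. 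A cleaner statement of the lemma would add ``continuous'' to the hypotheses on $f_n$, or assume $\PP(X \le r)$ is continuous, matching the continuity hypothesis on $p$ in Lemma~\ref{problemma1}.
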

\begin{proof}
Follows from a similar sandwiching approach as above.
\end{proof}

\subsubsection{Asymptotics of the q-Whittaker process Fredholm determinant formulas}
\begin{theorem}\label{NeilPolymerFredDetThm}
Fix $0<\delta_2<1$, and $\delta_1<\delta_2 /2$; also fix $a_1,\ldots, a_N$ such that $|a_i|<\delta_1$. Then for all $u\in \C$ such that $\Real(u)>0$
\begin{equation*}
\left\langle e^{-u e^{-T_{N,N}}}  \right\rangle_{\WM{a_1,\ldots,a_N;\tau}} = \det(I+ K_{u})_{L^2(C_a)}
\end{equation*}
where $C_a$ is a positively oriented contour containing $a_1,\ldots, a_N$ and such that for all $v,v'\in C_a$, $|v-v'|~<~\delta_2$. The operator $K_u$ is defined in terms of its integral kernel
\begin{equation}\label{kvvprime}
K_{u}(v,v') = \frac{1}{2\pi \iota}\int_{-\iota \infty + \delta_2}^{\iota \infty +\delta_2}ds \Gamma(-s)\Gamma(1+s) \prod_{m=1}^{N}\frac{\Gamma(v-a_m)}{\Gamma(s+v-a_m)} \frac{ u^s e^{v\tau s+\tau s^2/2}}{v+s-v'}.
\end{equation}
\end{theorem}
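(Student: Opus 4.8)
The plan is to obtain Theorem~\ref{NeilPolymerFredDetThm} as the $q\to 1$ degeneration of Theorem~\ref{PlancherelfredThm}, exactly parallel to the way Theorem~\ref{theorem26} is proved by taking the limit of the $q$-Whittaker process. Concretely, I would start from the $q$-Whittaker measure $\MM_{t=0}(A_1,\ldots,A_N;\rho)$ with the Plancherel specialization $\rho$ and drift parameters $A_k=e^{-\epsilon a_k}$, $\gamma=\tau\epsilon^{-2}$, $q=e^{-\epsilon}$, and use the scaling $\lambda_N = \tau\epsilon^{-2}-(N-1)\epsilon^{-1}\log\epsilon + T_{N,N}\epsilon^{-1}$ from Theorem~\ref{theorem26}. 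Under this scaling, $q^{\lambda_N}\sim \epsilon^{N-1}e^{-T_{N,N}}$. So if I plug $\zeta = -u\epsilon^{-(N-1)}$ into the $q$-Laplace transform $\langle 1/(\zeta q^{\lambda_N};q)_\infty\rangle$, the $q$-Pochhammer $(\zeta q^{\lambda_N};q)_\infty = e_q(\zeta q^{\lambda_N}/(1-q))^{-1}$ converges to $\exp(-(-\zeta q^{\lambda_N}))$... more carefully, $(\zeta q^{\lambda_N};q)_\infty \to \exp(u e^{-T_{N,N}})$ after one tracks the $\epsilon$ factors and uses that $e_q(x)\to e^x$ uniformly on $(-\infty,0]$ (recorded in Section~\ref{qSec}). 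This identifies the left-hand side of Theorem~\ref{PlancherelfredThm} with $\langle e^{-ue^{-T_{N,N}}}\rangle_{\WM{a;\tau}}$ in the limit, using the weak convergence of the $q$-Whittaker measure to $\WM{a;\tau}$ (the marginal of Theorem~\ref{theorem26}, equivalently Proposition~\ref{equiv1}).

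Next I would carry the limit through the Fredholm determinant side. In Theorem~\ref{PlancherelfredThm} the kernel $K_\zeta(w,w')$ is a Mellin--Barnes integral in $s$ against $\Gamma(-s)\Gamma(1+s)(-\zeta)^s g_{w,w'}(q^s)$. I change variables $w=1+\epsilon v$ (so the small circle $|w-1|=d$ becomes a circle $C_a$ around $0$ of radius $\sim d/\epsilon$... rather I rescale so $C_a$ encircles $a_1,\ldots,a_N$), and similarly $w'=1+\epsilon v'$, and keep $s$ fixed. Then I need the asymptotics: $(-\zeta)^s = (u\epsilon^{-(N-1)})^s$ up to the branch, the ratio $\prod_m (q^sw/a_m;q)_\infty/(w/a_m;q)_\infty$ degenerates to $\prod_m \Gamma(v-a_m)/\Gamma(s+v-a_m)$ times an $\epsilon$-power (this is precisely the $q\to 1$ degeneration of the $q$-Gamma function, equation~(\ref{qGamma}), with $a_m$ here playing the role of the shifted drift), the Gaussian factor $\exp(\gamma w(q^s-1))$ with $\gamma=\tau\epsilon^{-2}$ and $q^s=e^{-\epsilon s}$ expands as $\exp(-\tau s w) \approx \exp(-\tau s(1+\epsilon v)) = e^{-\tau s}e^{-\tau\epsilon s v}$, and crucially the next-order term $\exp(\gamma w(q^s-1+\epsilon s)) = \exp(\tau\epsilon^{-2}w \cdot \epsilon^2 s^2/2+\cdots) \to \exp(\tau s^2/2)$. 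Likewise $1/(q^sw-w') \to \epsilon^{-1}/(v+s-v')$ after writing $q^sw-w' = \epsilon(v+s-v')+o(\epsilon)$. All the stray powers of $\epsilon$ — from $(-\zeta)^s$, the $q$-Gamma ratios, the $1/(q^sw-w')$ factor, the measure $dw=\epsilon\,dv$, and the Fredholm multilinearity — must cancel; this bookkeeping is identical in spirit to the prefactor cancellations in the proofs of Lemmas~\ref{Skewlemma}, \ref{Pilemma}, \ref{Qlemma}. The upshot is that $K_\zeta(w,w')\,dw \to K_u(v,v')\,dv$ with $K_u$ as in (\ref{kvvprime}), and the $n$th term of the Fredholm expansion of $\det(I+K_\zeta)_{L^2(C_a)}$ converges to the $n$th term of $\det(I+K_u)_{L^2(C_a)}$.

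To turn termwise convergence of the Fredholm series into convergence of the determinant, I would establish a dominated-convergence bound uniform in $\epsilon$: using Euler's reflection formula $\Gamma(-s)\Gamma(1+s)=\pi/\sin(-\pi s)$ for exponential decay of the Gamma factors along the vertical $s$-contour, a uniform bound on the ratio of (finite products of) $q$-Gamma functions on the relevant compact region (for which I can invoke the uniform convergence $\Gamma_q\to\Gamma$ on compacts away from $\{0,-1,\ldots\}$ from Section~\ref{qSec}), and Hadamard's inequality on the determinant — exactly the estimates already performed in Step~2 and Step~3 of the proof of Theorem~\ref{PlancherelfredThm} and in the proof of Proposition~\ref{gendetprop}. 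This gives $|F_n(\epsilon)|\leq (C)^n n^{n/2}/n!$ uniformly, so the series converges uniformly in $\epsilon$ and the limit passes inside. Finally, I would note the trace-class property of $K_u$ follows from Lemma~\ref{traceclasscrit} just as for $K_\zeta$ (the $s$-integral and its $v'$-derivative are continuous by the exponential decay), and the extension from $\Real(u)>0$ small to all $\Real(u)>0$ follows by analyticity: the left side is analytic in $u$ for $\Real(u)>0$ (its series $\sum_n \PP(T_{N,N}\in\cdot)$-type expansion converges uniformly there) and the right side is entire in $u$ times the branch structure, matching on a set with a limit point.

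I expect the main obstacle to be the uniform (in $\epsilon$) control of the Mellin--Barnes kernel — in particular verifying that the $q$-Gamma ratio $\prod_m (q^sw/a_m;q)_\infty/(w/a_m;q)_\infty$, under the rescaling $w=1+\epsilon v$ and with $s$ ranging over an infinite vertical line, is bounded by an $\epsilon$-independent constant times the right power of $\epsilon$, and that the error in the Gaussian expansion $\exp(\gamma w(q^s-1))\to e^{-\tau s}\exp(\tau s^2/2)$ is controlled uniformly for $s$ on the contour and $v$ on the (growing) contour $C_a$. This is the $q\to 1$ analog of the delicate tail estimates in the proof of Theorem~\ref{qwhitconvTHM} and Proposition~\ref{qfactProp}, and as in those arguments one must be careful that the contour $C_a$ (whose $\epsilon$-rescaled radius is fixed but whose pre-image shrinks to $1$) stays inside the region where these asymptotics are valid — this is exactly why the hypotheses fix $|a_i|<\delta_1<\delta_2/2<1/2$ and the $s$-contour at $\Real(s)=\delta_2$. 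Everything else is routine residue and prefactor bookkeeping of the kind already carried out elsewhere in the paper.
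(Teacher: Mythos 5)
Your approach is the same as the paper's: obtain the result as the $q\to1$ degeneration of Theorem~\ref{PlancherelfredThm}, with the left side converging via weak convergence of the $q$-Whittaker measure (Theorem~\ref{theorem26}) together with Lemma~\ref{problemma2} and the uniform convergence $e_q(x)\to e^x$ on $(-\infty,0]$, and the Fredholm side converging term by term via the pointwise degenerations of $(-\zeta)^s$, the $q$-Gamma ratios, the $q^sw-w'$ denominator, and the Gaussian factor, supported by dominated-convergence estimates (reflection formula for the Gamma decay, $\Gamma_q\to\Gamma$ on compacts, Hadamard's bound) -- exactly the content of the paper's Step~1, Step~2 and Lemmas~\ref{uniflemma}, \ref{tailboundslemma}, \ref{fredtailemma}.

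However, there is a concrete error in the scaling bookkeeping that would make the argument fail if carried through literally. You took $\zeta=-u\epsilon^{-(N-1)}$, but this misses the crucial factor $e^{\tau\epsilon^{-1}}$: the paper uses $\zeta=-\epsilon^N e^{\tau\epsilon^{-1}}u$. The error compounds from two sources. First, the scaling in Theorem~\ref{theorem26} for $\lambda^{(N)}_N$ reads $\tau\epsilon^{-2}+(N-1)\epsilon^{-1}\log\epsilon+T_{N,N}\epsilon^{-1}$ (plus, not minus). Second, you wrote $q^{\lambda_N}\sim\epsilon^{N-1}e^{-T_{N,N}}$, but with the correct scaling
\begin{equation*}
q^{\lambda_N}=e^{-\tau\epsilon^{-1}}\epsilon^{-(N-1)}e^{-T_{N,N}},
\end{equation*}
so the divergent factor $e^{-\tau\epsilon^{-1}}$ cannot be dropped. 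Even internal to your own arithmetic, $\zeta q^{\lambda_N}\sim -ue^{-T_{N,N}}$ but the $e_q$ argument is $\zeta q^{\lambda_N}/(1-q)\sim -ue^{-T_{N,N}}/\epsilon\to-\infty$, which would force $e_q(\cdot)\to 0$ rather than $e^{-ue^{-T_{N,N}}}$: you need one more power of $\epsilon$, i.e., $\zeta q^{\lambda_N}\sim-\epsilon\,ue^{-T_{N,N}}$ so that division by $1-q\sim\epsilon$ gives a finite limit. The change of variables should also be $w=q^v$ (so that $q^sw-w'\sim-\epsilon(v+s-v')$ and the measure picks up $-\epsilon\,dv$, and the $q$-Pochhammer ratios become $\Gamma_q(v-a_m)/\Gamma_q(s+v-a_m)$ without a sign flip). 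Once the $\zeta$ scaling is repaired, the four pointwise limits you list (with the correct normalizing factors $(-\zeta/(1-q)^N)^s\to u^s$ after multiplying by $e^{-\tau s\epsilon^{-1}}$, etc.) are exactly right, and the uniform control you flag as the main obstacle is precisely what Lemmas~\ref{tailboundslemma} and \ref{fredtailemma} supply. Your closing remark about analytic continuation in $u$ is not needed: with $\zeta=-\epsilon^Ne^{\tau\epsilon^{-1}}u$ and $\Real(u)>0$ one has $\zeta\in\C\setminus\Rplus$ directly, so Theorem~\ref{PlancherelfredThm} applies for all such $u$.
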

\begin{proof}
The proof splits into two pieces. Step 1: We prove that the left-hand side of equation (\ref{thmlaplaceeqn}) of Theorem~\ref{PlancherelfredThm}  converges to $\left\langle e^{-u e^{-T_{N,N}}}  \right\rangle_{\WM{a_1,\ldots,a_N;\tau}}$. This relies on combining Theorem~\ref{theorem26} (which provides weak convergence of the $q$-Whittaker process to the Whittaker process) with Lemma~\ref{problemma2} and the fact that the $q$-Laplace transform converges to the usual Laplace transform. Step 2: We prove that the Fredholm determinant expression coming from the right-hand side of Theorem~\ref{PlancherelfredThm} converges to the Fredholm determinant given in the theorem we are presently proving. This convergence is done via the Fredholm expansion and uniformly controlled term by term asymptotics.

We scale the parameters of Theorem~\ref{PlancherelfredThm} as
\begin{eqnarray*}
&&q =e^{-\e}, \qquad\gamma =\tau\e^{-2}, \qquad A_k=e^{-\e a_k}, 1\leq k\leq N\\
&&w= q^v, \qquad \zeta = -\e^{N} e^{\tau \e^{-1}} u, \qquad \lambda_N = \tau \e^{-2} + (N-1) \e^{-1} \log \e + T_N \e^{-1}.
\end{eqnarray*}

{\bf Step 1:}
Rewrite the left-hand side of equation (\ref{thmlaplaceeqn}) in  Theorem~\ref{PlancherelfredThm} as
\begin{equation*}
\left\langle \frac{1}{\left(\zeta q^{\lambda_N};q\right)_{\infty}}\right\rangle_{\MM_{t=0}(A_1,\ldots,A_N;\rho)}  = \left\langle e_q(x_q)\right\rangle_{\MM_{t=0}(A_1,\ldots,A_N;\rho)}
\end{equation*}
where
\begin{equation*}
x_q=(1-q)^{-1}\zeta q^{\lambda_N} =-ue^{-T_N} \e/(1-q)
\end{equation*}
and $e_q(x)$ is as in Section~\ref{classicalqfunctions}. Combine this with the fact that $e_q(x)\to e^{x}$ uniformly on $x\in~(-\infty,0)$ to show that, considered as a function of $T_N$,  $e_q(x_q)\to e^{-u e^{-T_N}}$ uniformly for $T_N\in \R$. By Theorem~\ref{theorem26}, the measure on $T_N$ (induced from the $q$-Whittaker measure on $\lambda_N$) converges weakly in distribution to the Whittaker measure $\WM{a_1,\ldots,a_N;\tau}$. Combining this weak convergence with the uniform convergence of $e_q(x_q)$ and Lemma~\ref{problemma2} gives that
\begin{equation*}
\left\langle \frac{1}{\left(\zeta q^{\lambda_N};q\right)_{\infty}}\right\rangle_{\MM_{t=0}(A_1,\ldots,A_N;\rho)} \to \left\langle e^{-u e^{-T_{N}}}  \right\rangle_{\WM{a_1,\ldots,a_N;\tau}}
\end{equation*}
as $q\to 1$ (i.e., $\e \to 0$).

{\bf Step 2:}
Recall the kernel in the right-hand side of equation (\ref{thmlaplaceeqn}) in  Theorem~\ref{PlancherelfredThm}. It can be rewritten as a Fredholm determinant of a kernel in $v$ and $v'$ (recall $w=q^v$) as follows
\begin{equation*}
K^{q}(v,v') = \frac{1}{2\pi \iota}\int_{-\iota \infty + \delta}^{\iota \infty +\delta}h^q(s) ds,
\end{equation*}
where
\begin{equation*}
h^q(s)=\Gamma(-s)\Gamma(1+s)\left(\frac{-\zeta}{(1-q)^N}\right)^s \frac{q^v \log q}{q^s q^v - q^{v'}} \prod_{m=1}^{N} \frac{\Gamma_q(\log_q(q^v/A_m))}{\Gamma_q(\log_q(q^s q^v/A_m))} \exp\{\gamma q^v(q^{s}-1)\}
\end{equation*}
where the new term $q^v \log q$ came from the Jacobian of changing $w$ to $v$.

Let us first demonstrate the point-wise limit before turning to the necessary uniformity to prove convergence of the Fredholm determinant. Consider the behavior of each term as $q\to 1$ (or equivalently as $\e\to 0$):
\begin{eqnarray*}
e^{-\tau s \e^{-1}}\left(\frac{-\zeta}{(1-q)^N}\right)^s &\to & u^s,\\
\frac{q^v \log q}{q^s q^v - q^{v'}} &\to& \frac{1}{v+s-v'},\\
\frac{\Gamma_q(v-a_m)}{\Gamma_q(s+v-a_m)} &\to& \frac{\Gamma(v-a_m)}{\Gamma(s+v-a_m)},\\
e^{\tau s \e^{-1}} \exp\{\gamma q^v(q^{s}-1)\} &\to& e^{v \tau s + \tau s^2/2}.
\end{eqnarray*}
Combining these point-wise limits together gives the kernel (\ref{kvvprime}). However, in order to prove convergence of the determinants, or equivalently the Fredholm expansion series, one needs more than just point-wise convergence.

In fact, we require three lemmas to complete the proof:

\begin{lemma}\label{uniflemma}
Fix any compact subset $D$ of the interval $\iota\R + \delta$. Then the following convergence as $q\to 1$ is uniform over all $s\in D$ and $v,v'\in C_{a}$:
\begin{equation}\label{uniflemmaeqn}
h^q(s) \to \Gamma(-s)\Gamma(1+s)  \prod_{m=1}^{N}\frac{\Gamma(v-a_m)}{\Gamma(s+v-a_m)} \frac{ u^s e^{v\tau s+\tau s^2/2}}{v+s-v'}.
\end{equation}
\end{lemma}
\begin{proof}
This strengthened version of the above point-wise convergence follows from the uniform convergence of the $\Gamma_q$ function to the $\Gamma$ function on compact regions away from the poles, as well as standard Taylor series estimates.
\end{proof}

\begin{lemma}\label{tailboundslemma}
There exists a constant $C$ such that for all $s\in \iota\R + \delta$, and all $q\in (1/2,1)$
\begin{equation*}
\left| \prod_{m=1}^{N} \frac{1}{\Gamma_q(s+v-a_m)} e^{\tau s \e^{-1}} \exp\{\gamma q^v(q^{s}-1)\} \right| \leq C.
\end{equation*}
\end{lemma}
\begin{proof}
Though not exactly, the left-hand side above is very close to a periodic function of $\Imag(s)$ of fundamental domain $[-\pi \e^{-1},\pi \e^{-1}]$. Observe that, as in equation (\ref{365}),
\begin{equation*}
\left|\frac{1}{\Gamma_q(s+v-a_m)}\right| \leq c e^{c' f^{\e}(s)}
\end{equation*}
where $c,c'$ are positive constants independent of $\e$ (or equivalently $q$) and $f^{\e} = \dist(\Imag(s),2\pi \e^{-1} \Z)$.

The other terms are periodic in $s$ of fundamental domain $[-\pi \e^{-1},\pi \e^{-1}]$. In order to control their absolute value we look to upper-bound the real part of their logarithm and find that for $s\in \iota \R + \delta_2$:
\begin{equation*}
\Real\left(\tau s \e^{-1} +\tau \e^{-2} e^{-\e v} (e^{-\e s}-1) \right) \leq - \Imag(s)^2 / 2.
\end{equation*}
This bound follows from careful Taylor series estimation and the bound that for $x\in [-\pi,\pi]$, $\cos(x)-1\leq -x^2/6$.
These two bounds shows that
\begin{equation*}
\left| \prod_{m=1}^{N} \frac{1}{\Gamma_q(s+v-a_m)} e^{\tau s \e^{-1}} \exp\{\gamma q^v(q^{s}-1)\} \right| \leq c e^{c' f^{\e}(s)} e^{-\Imag(s)^2 / 2}
\end{equation*}
which is easily bounded by a constant for all $s$.
\end{proof}

\begin{lemma}\label{fredtailemma}
For all $q\in (1/2,1)$, $|K^q(v,v')|\leq C$ for a fixed constant $C>0$ and all $v,v'\in C_{a}$.
\end{lemma}
\begin{proof}
Combine the tail decay estimate of Lemma~\ref{tailboundslemma} with the decay in $s$ (uniform over $q\in (1/2,1)$ and  $v,v'\in C_a$) of the other terms of the integrand of $K^{q}(v,v')$. This shows that for any constant $C>0$, there exists another constant $c>0$ such that for all $s$, $|s|>C$ we have $h^q(s)\leq e^{-c|s|}$, uniformly over over $q\in (1/2,1)$ and  $v,v'\in C_a$. By Lemma~\ref{uniflemma}, for $s$ such that $|s|\leq C$, there is uniform convergence to the right-hand side of equation (\ref{uniflemmaeqn}). Since the right-hand side of equation (\ref{uniflemmaeqn}) is uniformly bounded in $s$ and $v,v'\in C_{a}$, it follows that $h^q(s)$ is likewise bounded on the compact set of $s$ such that $|s|\leq C$. Combining this with the decay for large $s$ proves the lemma.
\end{proof}

Now combine the lemmas to finish the proof. By Lemma~\ref{fredtailemma} and Hadamard's bound, in order to proof convergence of the Fredholm expansion of $\det(I+K_{\zeta})$ to $\det(I+K_{u})$, it suffices to consider only a finite number of terms in the expansion (as the contribution of the later terms can be bounded arbitrarily close to zero by going out far enough in the expansion). By the tail bounds of Lemma~\ref{tailboundslemma} along with bounds for the other terms (which are uniform in $q,v,v'$) one shows that the integrals in $s$ variables in the Fredholm expansion can likewise be restricted to compact sets (as the contribution to the integrals from outside these set can be bounded arbitrarily close to zero by choosing large enough compact sets). Finally, restricted to compact sets, the uniform convergence result of Lemma~\ref{uniflemma} implies convergence of the remaining (compactly supported) integrals as $q$ goes to 1. This completes the proof of convergence of the Fredholm determinants.
\end{proof}

\subsection{Tracy-Widom asymptotics}\label{TWasymsec}

In performing steepest descent analysis on Fredholm determinants, the following proposition allows one to deform contours to descent curves.

\begin{lemma}[Proposition 1 of \cite{TW3}]\label{TWprop1}
Suppose $s\to C_s$ is a deformation of closed curves $C_s$ and a kernel $L(\eta,\eta')$ is analytic in a neighborhood of $C_s\times C_s\subset \C^2$ for each $s$. Then the Fredholm determinant of $L$ acting on $C_s$ is independent of $s$.
\end{lemma}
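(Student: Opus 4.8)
The statement to prove is Lemma~\ref{TWprop1}: if $s\mapsto C_s$ is a continuous deformation of closed contours and $L(\eta,\eta')$ is jointly analytic in a complex neighborhood of $C_s\times C_s$ for each $s$, then $\det(I+L)_{L^2(C_s)}$ does not depend on $s$. Since this is quoted as Proposition~1 of \cite{TW3}, the cleanest route is to reproduce that argument. The plan is to work directly with the Fredholm expansion
\begin{equation*}
\det(I+L)_{L^2(C_s)} = 1 + \sum_{n\ge 1} \frac{1}{n!} \int_{C_s}\cdots\int_{C_s} \det\bigl[L(\eta_i,\eta_j)\bigr]_{i,j=1}^n \prod_{i=1}^n \frac{d\eta_i}{2\pi\iota},
\end{equation*}
and show each term is $s$-independent, then argue the sum is too. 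First I would fix $n\ge 1$ and set $D_n(s) := \int_{C_s}\cdots\int_{C_s} \det[L(\eta_i,\eta_j)]\,\prod d\eta_i$; the goal is $\frac{d}{ds}D_n(s)=0$.

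The key step is to parametrize $C_s$ by a fixed reference circle, say $C_s = \{\phi(s,\theta) : \theta\in[0,2\pi)\}$ with $\phi$ smooth in $s$ and $\theta$ and $2\pi$-periodic in $\theta$ (such a parametrization exists since the $C_s$ form a deformation; I would note that one may reduce to a smooth family of diffeomorphisms onto the circle). Pulling the integral back to the fixed domain $[0,2\pi)^n$, the integrand becomes $F(s,\theta_1,\dots,\theta_n) := \det[L(\phi(s,\theta_i),\phi(s,\theta_j))]\prod_i \partial_\theta\phi(s,\theta_i)$, which is smooth in all variables because $L$ is analytic in a neighborhood of $C_s\times C_s$ and $\phi$ is smooth. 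Now differentiate under the integral sign in $s$. The derivative $\partial_s F$ splits into a sum of terms, each obtained by differentiating one factor — either one entry $L(\phi(s,\theta_i),\phi(s,\theta_j))$ of the determinant or one Jacobian factor $\partial_\theta\phi(s,\theta_i)$ — with respect to $s$. The crucial observation, exactly as in \cite{TW3}, is that each such term, when viewed back on the contour, is a total $\theta$-derivative of a $2\pi$-periodic function: analyticity of $L$ lets one convert $\partial_s$ of a composition $L(\phi(s,\cdot),\cdot)$ into $(\partial_s\phi)\cdot(\partial_1 L)$, and then the combination $(\partial_s\phi)\,\partial_1 L \cdot(\partial_\theta\phi) + L\cdot \partial_\theta\partial_s\phi$ along the $i$-th variable reorganizes into $\partial_{\theta_i}$ of a smooth periodic quantity (this is the Leibniz-type identity that makes the vector field $\partial_s\phi$ tangent-plus-transverse decomposition collapse). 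Integrating that total derivative over the periodic variable $\theta_i$ gives zero. Summing over $i$ and over the terms shows $\partial_s D_n(s)=0$, hence $D_n$ is constant.

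Finally I would pass from the individual terms to the series. Here I would invoke a uniform bound: on any compact $s$-interval $[s_0,s_1]$, analyticity and compactness give $\sup_{\eta,\eta'\in C_s,\,s\in[s_0,s_1]}|L(\eta,\eta')|\le M$ and $\sup_s \mathrm{length}(C_s)\le \ell$, so by Hadamard's inequality the $n$-th term of the expansion is bounded by $\frac{1}{n!}(\ell M)^n n^{n/2}$, uniformly in $s$; this is summable, so the Fredholm series converges uniformly in $s$ and $\det(I+L)_{L^2(C_s)}$ is a continuous (indeed, locally the uniform limit of the constant functions $1+\sum_{n\le N}D_n/n!$) function of $s$ whose value on each dyadic refinement of the partition equals its value at $s_0$. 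Since each $D_n(s)$ is constant, the whole sum is constant, proving the claim.

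The main obstacle I anticipate is the bookkeeping in the ``total $\theta$-derivative'' step: one must check carefully that differentiating the determinant entry-by-entry and combining with the Jacobian factors really does produce, for each fixed index $i$, an exact $\theta_i$-derivative, and that the remaining factors depending on the other $\theta_j$ are smooth and periodic so that Fubini and the fundamental theorem of calculus apply. This is where analyticity of $L$ (not merely smoothness) is used — it guarantees $\partial_1 L$ exists and the Cauchy--Riemann-type identities hold so that the $s$-derivative of $L\circ\phi$ factors as claimed. Everything else (differentiation under the integral, the Hadamard bound, the reduction to a smooth parametrization) is routine given the hypotheses.
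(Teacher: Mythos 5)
The paper does not supply its own proof of this lemma; it simply cites Proposition~1 of Tracy--Widom \cite{TW3}, so there is nothing in the paper to compare against line by line. Your argument is correct, but it takes a genuinely different (more computational) route than the one in \cite{TW3}. Tracy and Widom argue one variable at a time: each $n$-fold integral $D_n$ is an iterated contour integral of a function analytic in each variable separately, so deforming the $\eta_1$-contour from $C_{s_0}$ to $C_{s_1}$ while freezing the others leaves $D_n$ unchanged by Cauchy's theorem, and one then iterates over $\eta_2,\dots,\eta_n$; no parametrization or differentiation in $s$ is needed. You instead parametrize all $n$ contours simultaneously by $\phi(s,\theta)$, differentiate the pulled-back integrand in $s$, and organize the result as $\sum_i \partial_{\theta_i}\bigl[\psi_i\, f(\phi)\prod_{j\ne i} J_j\bigr]$ with $\psi_i=\partial_s\phi(s,\theta_i)$, $J_i=\partial_\theta\phi(s,\theta_i)$, $f=\det[L(\eta_i,\eta_j)]$; the identity holds precisely because analyticity ensures the chain rule produces only $\partial_{\eta_i}f$ (no antiholomorphic term), and periodicity then kills each $\theta_i$-integral. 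That computation is essentially a hands-on re-derivation of homotopy invariance of contour integrals, so it is more laborious but self-contained. One small caveat: you attribute the ``total $\theta$-derivative'' decomposition to \cite{TW3} (``exactly as in [TW3]''), but \cite{TW3} does not phrase the argument this way; the step is correct, only the attribution is off. Also, once you know each $D_n(s)$ is literally constant, the term-by-term identity of the series immediately gives $s$-independence of the determinant, so the uniform Hadamard bound at the end, while valid, is not needed for that conclusion.
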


\begin{definition}\label{digammadef}
The {\it Digamma function}\index{Diagamma function} \glossary{$\Psi(z)$} $\Psi(z) = [\log \Gamma]'(z)$. Define for $\kappa>0$
\begin{equation*}
\bfk = \inf_{t>0} (\kappa t - \Psi(t)),
\end{equation*}\glossary{$\bfk$} \glossary{$\btk$} \glossary{$\bgk$}
and let $\btk$ denote the unique value of $t$ at which the minimum is achieved. Finally, define the positive number (scaling parameter) $\bgk= -\Psi''(\btk)$.
\end{definition}

\begin{theorem}\label{TWasymptoticskappaTHM}
There exists a $\kappa^*>0$ such that for $\kappa>\kappa^*$ and $\tau = \kappa N$
\begin{equation*}
\lim_{N\to \infty} \PP_{\WM{0,\ldots,0;\tau}}\left( \frac{ -T_{N,N}(\tau) - N \bfk}{N^{1/3}}\leq r\right) = F_{{\rm GUE}}\left((\bgk / 2)^{-1/3}r\right).
\end{equation*}
\end{theorem}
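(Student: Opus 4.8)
\textbf{Proof plan for Theorem~\ref{TWasymptoticskappaTHM}.}
The starting point is the Fredholm determinant identity of Theorem~\ref{NeilPolymerFredDetThm} in the case $a_1=\cdots=a_N=0$, which gives
\begin{equation*}
\left\langle e^{-u e^{-T_{N,N}(\tau)}}\right\rangle_{\WM{0,\ldots,0;\tau}} = \det(I+K_u)_{L^2(C_0)},
\end{equation*}
with kernel $K_u(v,v') = \frac{1}{2\pi\iota}\int \Gamma(-s)\Gamma(1+s)\bigl(\Gamma(v)/\Gamma(s+v)\bigr)^N u^s e^{v\tau s + \tau s^2/2}\,ds/(v+s-v')$. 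The plan is to take $\tau=\kappa N$, set $u = e^{-N\bfk - N^{1/3}(\bgk/2)^{1/3} r}$ (so that $u e^{-T_{N,N}}$ concentrates and the double exponential $e^{-ue^{-x}}$ becomes, in the limit, the indicator $\bfone\{-T_{N,N} \le N\bfk + N^{1/3}(\bgk/2)^{1/3}r\}$ up to the change of variables recorded after the statement of Theorem~\ref{Thmkappalarge}), and then perform a steepest descent analysis of the Fredholm determinant. First I would absorb the $u^s$ factor together with the $\Gamma$-ratio and the Gaussian into a single exponential: writing the integrand as $e^{N(f(v+s) - f(v))}$ up to lower-order corrections, where $f$ is built from $\log\Gamma$ and the quadratic term $\kappa$-weighted, one locates the critical point. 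The definitions of $\bfk$, $\btk$, $\bgk$ in Definition~\ref{digammadef} are precisely engineered so that $v = \btk$ is a double critical point of the relevant exponent: $\bfk = \inf_{t>0}(\kappa t - \Psi(t))$ and $\btk$ the minimizer make the first two $v$-derivatives of the exponent vanish, and $\bgk = -\Psi''(\btk)$ controls the cubic coefficient, which is what produces the Airy kernel and the $N^{1/3}$ scaling.

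The main technical steps, in order, would be: (1) justify shifting the $s$-contour (using Lemma~\ref{TWprop1}, i.e.\ Proposition 1 of \cite{TW3}) and the $v$-contour $C_0$ to pass through the critical point $\btk$ along steepest descent paths, keeping $C_0$ a small closed curve around $0$ as required and deforming $s$ to $\delta_2 + \iota\R$ suitably translated; the condition $\kappa > \kappa^*$ is exactly what guarantees that such globally descending contours exist and that the $\Gamma$-function poles at $v, v+s \in \Zleqzero$ and the kernel pole $v+s = v'$ are not crossed — this is the artificial hypothesis flagged after Theorem~\ref{Thmkappalarge}. (2) Rescale near the critical point, $v = \btk + N^{-1/3}(\bgk/2)^{-1/3}\tilde v$ and similarly for $s$, and show that after this rescaling the kernel $K_u$ converges, uniformly on compact sets and with integrable tail bounds, to the Airy kernel $K_{\Ai}(\tilde v,\tilde v')$ shifted by the parameter $r$ — i.e.\ to the kernel whose Fredholm determinant on $L^2(r,\infty)$ is $F_{\rm GUE}\bigl((\bgk/2)^{-1/3}r\bigr)$ after the appropriate change of variables; the cubic term from $\log\Gamma$ at $\btk$ contributes the Airy function, and the coefficient $-\Psi''(\btk) = \bgk$ sets the $(\bgk/2)^{-1/3}$ normalization. (3) Upgrade pointwise kernel convergence to convergence of Fredholm determinants via Hadamard's bound together with uniform exponential decay estimates along the tails of both the $v$- and $s$-contours (these tail bounds mirror those established in the proof of Theorem~\ref{NeilPolymerFredDetThm}, now with $N$-dependence that must be controlled uniformly). (4) Convert the convergence of Laplace-transform-type expectations into convergence in distribution: here one invokes Lemma~\ref{problemma1}, checking that $x\mapsto e^{-ue^{-x}}$ with $u$ scaled as above plays the role of the family $f^r_n$, that the limiting distribution function $r\mapsto F_{\rm GUE}((\bgk/2)^{-1/3}r)$ is continuous, and that $\EE[f^r_n(-T_{N,N})] \to F_{\rm GUE}((\bgk/2)^{-1/3}r)$ from steps (1)--(3).

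I expect the \textbf{main obstacle} to be step (1)--(3): producing steepest descent contours that are simultaneously (a) admissible for the Fredholm determinant (closed curve around $0$ for $v$, vertical line for $s$, correct pole structure), (b) globally descending for the exponent $N(f(v+s)-f(v))$ built from $\log\Gamma$, which is only meromorphic and whose descent geometry is delicate away from the critical point, and (c) equipped with the uniform-in-$N$ tail bounds needed to control the Fredholm series. This is exactly where the restriction $\kappa > \kappa^*$ enters and why the theorem as stated is not proved for all $\kappa>0$; removing it (as done in \cite{BorCorFer}) requires a more careful, non-local analysis of the descent contours. The remaining steps — the local Airy limit computation and the distributional upgrade via Lemmas~\ref{problemma1} and~\ref{TWprop1} — are comparatively routine given the machinery already set up in the excerpt. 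One should also record at the outset, as noted after Theorem~\ref{Thmkappalarge}, that $\Fsd^N_1(\tau)$ and $-T_{N,N}(\tau)$ are equal in law (O'Connell's Theorem~3.1), so that Theorem~\ref{Thmkappalarge} follows from Theorem~\ref{TWasymptoticskappaTHM} with $\bfk$ identified via the law of large numbers constant of \cite{OConnellMoriarty}.
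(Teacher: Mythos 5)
Your proposal follows the paper's proof essentially verbatim: same starting point (Theorem~\ref{NeilPolymerFredDetThm} with $a_m\equiv 0$), same distributional upgrade via Lemma~\ref{problemma1} applied to $f_N(x)=e^{-e^{N^{1/3}x}}$ and $u=e^{-N\bfk-rN^{1/3}}$, same steepest descent analysis around the double critical point $\btk$ of $G(z)=\log\Gamma(z)-\kappa z^2/2 + \bfk z$ after the substitution $\zeta = s+v$, same use of Lemma~\ref{TWprop1} for contour deformation, and the same identification of the $\kappa>\kappa^*$ restriction as arising from the contour-admissibility and global descent estimates (which the paper splits into a near-critical Taylor-remainder bound and far-field bounds). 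The only cosmetic difference is that you fold the $(\bgk/2)^{1/3}$ normalization into the definition of $u$ rather than absorbing it at the end, which is an equivalent reparametrization.
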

\begin{remark}\label{remkappa}
This result should hold for all $\kappa^*>0$. However there are certain technical challenges which must be overcome to get the full set of $\kappa$. At the level of a critical point analysis, the results works equally well for all $\kappa^*>0$. The challenge comes in proving the negligibility of integrals away from the critical point. This is accomplished in \cite{BorCorFer} and the result is shown for all $\kappa^*>0$. The law of large numbers with the constant $\bfk$ was conjectured in \cite{OCon-Yor} and proved in \cite{OConnellMoriarty}. A tight one-sided bound on the fluctuation exponent for $\Fsd^{N}_1(t)$ was determined in \cite{SeppValko} Spohn \cite{SpohnPolymer} has recently described how the above theorem fits into a general physical KPZ scaling theory.
\end{remark}

\begin{proof}[Proof of Theorem~\ref{TWasymptoticskappaTHM}]
The starting point of our proof is the Fredholm determinant formula given in Theorem~\ref{NeilPolymerFredDetThm} for the expectation of $e^{-ue^{-T_{N,N}}}$ over Whittaker measure $\WM{0,\ldots,0;\tau}$. Consider the function $f_N(x) = e^{-e^{N^{1/3}x}}$ and define $f_N^r(x) = f_N(x-r)$. Observe that this sequence of functions meets the criteria of Lemma~\ref{problemma1}. Setting
\begin{equation*}
u=u(N,r,\kappa)=e^{-N\bfk - rN^{1/3}}
\end{equation*}
observe that
\begin{equation*}
e^{-ue^{-T_{N,N}}} = f_N^r\left(\frac{-T_{N,N} - N \bfk}{N^{1/3}}\right).
\end{equation*}
By Lemma~\ref{problemma1}, if for each $r\in \R$ we can prove that
\begin{equation*}
\lim_{N\to \infty} \left\langle f_N^r\left(\frac{-T_{N,N} - N \bfk}{N^{1/3}}\right) \right\rangle_{\WM{0,\ldots,0;\tau}} = p_{\kappa}(r)
\end{equation*}
for $p_{\kappa}(r)$ a continuous probability distribution function, then it will follow that
\begin{equation*}
\lim_{N\to \infty} \PP_{\WM{0,\ldots,0;\tau}}\left( \frac{ -T_{N,N}(\tau) - N \bfk}{N^{1/3}}\leq r\right) = p_{\kappa}(r)
\end{equation*}
as well.

In light of the above observations, it remains for us to analyze the asymptotics of the Fredholm determinant formula given in Theorem~\ref{NeilPolymerFredDetThm} for the case $a_m\equiv 0$.
The result of that theorem is that
\begin{equation*}
\left\langle f_N^r\left(\frac{-T_{N,N} - N \bfk}{N^{1/3}}\right) \right\rangle_{\WM{0,\ldots,0;\tau}}= \det(I+K_{u(N,r,\kappa)})_{L^2(C_0)}
\end{equation*}
where $C_0$ is positively oriented contour containing $0$ such that for all $v,v'\in C_{0}$, $|v-v'|<\delta_2$ where $\delta_2$ is any fixed real number in $(0,1)$. The operator $K_{u(N,r,\kappa)}$ is defined in terms of its integral kernel via
\begin{equation}\label{Kunrkappa}
K_{u(N,r,\kappa)}(v,v') = \frac{1}{2\pi \iota}\int_{-\iota \infty + \delta_2}^{\iota \infty +\delta_2}ds \Gamma(-s)\Gamma(1+s) \frac{\Gamma(v)^N}{\Gamma(s+v)^N} \frac{ u^s e^{v\tau s+\tau s^2/2}}{v+s-v'}.
\end{equation}

Let us first provide a critical point derivation of the asymptotics. We will then provide the rigorous proof. Besides standard issues of estimation, we must be very careful about manipulating contours due to a mine-field of poles (this ultimately leads to our present technical limitation that $\kappa>\kappa^*$).

We start by making a change of variables in the integral defining the kernel $K_{u(N,r,\kappa)}(v,v')$. Set $\zeta = s + v$ and rewrite
\begin{equation}\label{zetaKeqn}
K_{u(N,r,\kappa)}(v,v') = \frac{1}{2\pi \iota}\int_{v-\iota \infty + \delta_2}^{v+\iota \infty +\delta_2}\frac{\pi}{\sin (\pi(v-\zeta))}\exp\left\{N \left(G(v)-G(\zeta)\right)+ r N^{1/3} (v-\zeta)\right\}\frac{d\zeta}{\zeta-v'}.
\end{equation}
where
\begin{equation*}
G(z) = \log \Gamma(z) - \kappa z^2/2 + \bfk z
\end{equation*}
and where we have also replaced $\Gamma(-s)\Gamma(1+s) = \pi / \sin(-\pi s)$. The problem is now prime for steepest descent analysis of the integral defining the kernel above.

The idea of steepest descent is to find critical points for the function being exponentiated, and then to deform contours so as to go close to the critical point. The contours should be engineered such that away from the critical point, the real part of the function in the exponential decays and hence as $N$ gets large, has negligible contribution. This then justifies localizing and rescaling the integration around the critical point. The order of the first non-zero derivative (here third order) determines the rescaling in $N$ (here $N^{1/3}$) which in turn corresponds with the scale of the fluctuations in the problem we are solving. It is exactly this third order nature that accounts for the emergence of Airy functions and hence the Tracy Widom (GUE) distribution.

The critical point equation for $G$ is given by $G'(z)=0$ where
\begin{equation*}
G'(z) = \Psi(z) - \kappa z + \bfk.
\end{equation*}
The Digamma function $\Psi(z)$ is given in Definition~\ref{digammadef} as is $\btk$, which clearly satisfies $G'(\btk) = 0$ and hence is a critical point. We will Taylor expand around this point $z=\btk$. Consider the second and third derivatives of $G(z)$:
\begin{eqnarray*}
G''(z) &=& \Psi'(z) - \kappa \\
G'''(z) &=& \Psi''(z).
\end{eqnarray*}
By the definition of $\btk$ as the argmin of $\kappa t - \Psi(t)$, it is clear that $G''(\btk)=0$, and recalling the definition of $\bgk$ we have that $G'''(\btk) = -\bgk$.
This indicates that near $z=\btk$,
\begin{equation*}
G(v) - G(\zeta) = -\frac{\bgk (v-\btk)^3}{6} + \frac{\bgk (\zeta-\btk)^3}{6}
\end{equation*}
plus lower order terms. This cubic behavior suggests rescaling around $\btk$ by the change of variables
\begin{equation*}
\tilde{v} = N^{1/3}(v-\btk), \qquad \tilde{\zeta} = N^{1/3}(\zeta-\btk).
\end{equation*}
Clearly the steepest descent contour for $v$ from $\btk$ departs at an angle of $\pm 2\pi /3$ whereas the contour for $\zeta$ departs at angle $\pm \pi/3$. The $\zeta$ contour must lie to the right of the $v$ contour (so as to avoid the pole from $1/(\zeta-v')$). As $N$ goes to infinity, neglecting the contribution away from the critical point, the point-wise limit of the kernel becomes
\begin{equation}\label{Krkappa}
K_{r,\kappa}(\tilde{v},\tilde{v}') = \frac{1}{2\pi \iota}\int \frac{1}{\tilde{v}-\tilde{\zeta}}\frac{\exp\left\{\tfrac{-\bgk}{6}\tilde{v}^3+ r \tilde{v}\right\}}{\exp\left\{\tfrac{-\bgk}{6}\tilde{\zeta}^3+ r \tilde{\zeta}\right\}}\frac{d\tilde{\zeta}}{\tilde{\zeta}-\tilde{v}'}.
\end{equation}
where the kernel acts on the contour $\pm 2\pi/ 3$ (oriented from negative imaginary part to positive imaginary part) and the integral in $\tilde{\zeta}$ is on the contour $\pm \pi/3 + \delta$ for any $\delta>0$ (likewise oriented). This is owing to the fact that
\begin{equation*}
N^{-1/3}\frac{\pi}{\sin (\pi(v-\zeta))} \to \frac{1}{\tilde{v}-\tilde{\zeta}}, \qquad \frac{d\zeta}{\zeta-v'} \to \frac{d\tilde{\zeta}}{\tilde{\zeta}-\tilde{v}'}.
\end{equation*}
where the $N^{-1/3}$ comes from the Jacobian associated with the change of variables in $v$ and $v'$.

Another change of variables to rescale by $(\tilde{g}_{\kappa}/2)^{1/3}$ results in
\begin{equation*}
K_{r,\kappa}(\tilde{v},\tilde{v}') = \frac{1}{2\pi \iota}\int \frac{1}{\tilde{v}-\tilde{\zeta}}\frac{\exp\left\{-\tilde{v}^3 /3+ (\bgk/2)^{-1/3} r \tilde{v}\right\}}{\exp\left\{-\tilde{\zeta}^3 /3 + (\bgk/2)^{-1/3} r \tilde{\zeta}\right\}}\frac{d\tilde{\zeta}}{\tilde{\zeta}-\tilde{v}'}.
\end{equation*}
Noting that $\Real(\tilde{\zeta}-\tilde{v}')>0$ we can linearize
\begin{equation*}
\frac{1}{\tilde{\zeta}-\tilde{v}'} = \int_{0}^{\infty} e^{-t(\tilde{\zeta}-\tilde{v}')} dt.
\end{equation*}
This allows us to factor the kernel into the composition of three operators $ABC$ and by the cyclic nature of the Fredholm determinant, $\det(I+ABC) = \det(I+CAB)$. This reordering allows us to evaluate the Airy integrations and we find the standard form of the Airy kernel. This shows that the limiting expectation $p_{\kappa}(r)= F_{{\rm GUE}}\left((\bgk / 2)^{-1/3}r\right)$ which shows that it is a continuous probability distribution function and thus Lemma~\ref{problemma1} applies. This completes the critical point derivation.

The challenge now is to rigorously prove that $\det(I+K_{u(N,r,\kappa)})$ converges to $\det(I+K_{r,\kappa})$ where the two operators act on their respective $L^2$ spaces and are defined with respect to the kernels above in (\ref{Kunrkappa}) and (\ref{Krkappa}). In fact, everything done above following this convergence claim is rigorous.

We will consider the case where $\kappa>\kappa^*$ for $\kappa^*$ large enough and perform certain estimates given that assumption. Let us record some useful such estimates:
\begin{eqnarray*}
\btk &=& \frac{1}{\sqrt{\kappa}} + O(\kappa^{-3/2})\\
\bfk &=& 2\sqrt{\kappa} + O(\kappa^{-1/2}).
\end{eqnarray*}
For $z$ close to zero we also have ($\gamma$ is the Euler Mascheroni constant $\sim .577$)
\begin{eqnarray*}
\log\Gamma(z) &=& -\log z -\gamma z + O(z^2)\\
\Psi'''(z) &=& \frac{6}{z^4} + O(1).
\end{eqnarray*}
Now consider replacing $z= \kappa^{-1/2}\tilde{z} $:
\begin{eqnarray}\label{festimate}
G(z)-G(\btk) &=&\log\Gamma(z) - \kappa z^2/2 + \kappa^{1/2} 2z -\log\Gamma(\btk)+\kappa \btk^2/2 - \bfk \btk   +O(\kappa^{-1}) \\
&=& f(\tilde{z}) + \gamma\kappa^{-1/2}(1-\tilde{z}) + O(\kappa^{-1}),
\end{eqnarray}

where the error is uniform for $\tilde{z}$ in any compact domain and
\begin{equation*}
f(\tilde{z}) = -\log \tilde{z}  - \tilde{z}^2/2 + 2\tilde{z} -3/2.
\end{equation*}

Our approach will be as follows: Step 1: We will deform the contour $C_{0}$ on which $v$ and $v'$ are integrated as well as the contour on which $\zeta$ is integrated so that they both locally follow the steepest descent curve for $G(z)$ coming from the critical point $\btk$ and so that along them there is sufficient global decay to ensure that the problem localizes to the vicinity of $\btk$. Step 2: In order to show this desired localization we will use a Taylor series with remainder estimate in a small ball of radius approximately $1/\sqrt{\kappa}$ around $\btk$, and outside that ball we will use the estimate (\ref{festimate}) for the $v$ and $v'$ contour, and a similarly straight-forward estimate for the $\zeta$ contour. Step 3: Given these estimates we can show convergence of the Fredholm determinants as desired.

\noindent {\bf Step 1:} Define the contour $C_{f}$ which corresponds to a steep (not steepest though) descent contour of the function $f(\tilde{z})$ leaving $\tilde{z}=1$ at an angle of $2\pi/3$ and returning to $\tilde{z}=1$ at an angle of $-2\pi/3$, given a positive orientation. In particular we will take $C_{f}$ to be composed of a line segment from $\tilde{z}=1$ to $\tilde{z}=1 + e^{\iota 2\pi/3}$, then a circular arc (centered at 0) going counter-clockwise until $\tilde{z}=1+e^{-\iota 2\pi/3}$ and then a line segment back to $\tilde{z}=1$. It is elementary to confirm that along $C_f$, $\Real(f)$ achieves its maximal value of 0 at the unique point $\tilde{z}=1$ and is negative everywhere else.

Now we can define the $v$ contour $C_{G,\kappa,c}$ which will serve as our steep descent contour for $G(v)$ (see Figure \ref{contour1}). The contour $C_{G,\kappa,c}$ starts at $v=\btk$ and departs at an angle of $2\pi/3 + O(\kappa^{-1})$ along a straight line of length $c/\sqrt{\kappa}$ where the angle is chosen so that the endpoint of this line-segment touches the scaled contour $\kappa^{-1/2}C_{f}$ given above. The contour then continues along $\kappa^{-1/2}C_{f}$. The contour in the lower half plane is defined via reflecting through the real axis. That the error to the angle is $O(\kappa^{-1})$ is readily confirmed due to the $O(\kappa^{-3/2})$ error between $\btk$ and $1/\sqrt{\kappa}$. The constant $c>0$ will be fixed soon as needed for the estimates.

One should note that for $\kappa$ large, the contour $C_{G,\kappa,c}$ is such that for $v$ and $v'$ along it, $|v-v'|<\delta_2$ where $\delta_2$ can be chosen as any real number in $(0,1)$. By virtue of this fact, we may employ Proposition~\ref{TWprop1} to deform our initial contour $C_{0}$ (given from Theorem~\ref{NeilPolymerFredDetThm}) to the contour $C_{G,\kappa,c}$ without changing the value of the Fredholm determinant.

Likewise, we must deform the contour along which the $\zeta$ integration in (\ref{zetaKeqn}) is performed. Given that $v,v'\in C_{G,\kappa,c}$ now, we may again use Proposition~\ref{TWprop1} to deform the $\zeta$ integration to a contour $C_{\langle,N}$ which is defined symmetrically over the real axis by a ray from $\btk+N^{-1/3}$ leaving at an angle of $\pi /3$ (again, see Figure \ref{contour1}). It is easy to see that this deformation does not pass any poles, and with ease one confirms that due to the quadratic term in $G(\zeta)$ the deformation of the infinite contour is justified by suitable decay bounds near infinity between the original and final contours.

Thus, the outcome for this first step is that the $v,v'$ contour is now given by $C_{G,\kappa,c}$ and the $\zeta$ contour is now given by $C_{\langle,N}$ which is independent of $v$.

\begin{figure}
\begin{center}
\includegraphics[width=.5\textwidth,height=.5\textwidth]{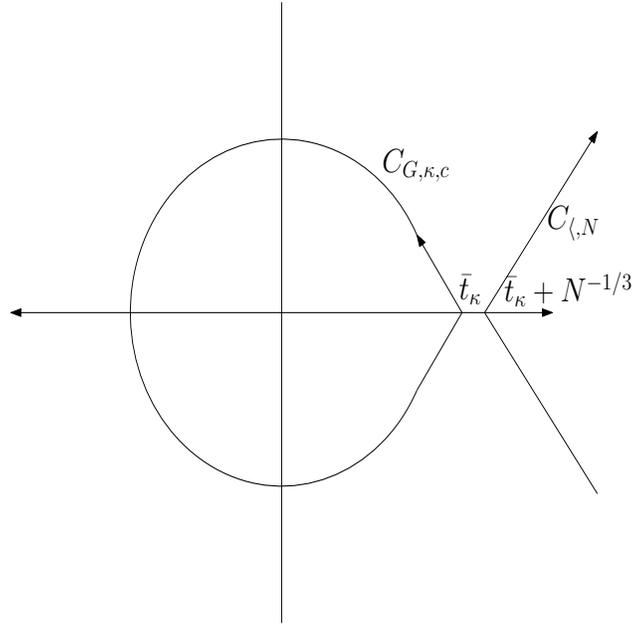}
\end{center}
\caption{Steep descent contours.}\label{contour1}
\end{figure}

\noindent {\bf Step 2:} We will presently provide two types of estimates for our function $G$ along the specified contours: those valid in a small ball around the critical point and those valid outside the small ball. Let us first focus on the small ball estimate.

\begin{lemma}
There exists a constant $c$ and $\kappa^*>0$ such that for all $\kappa>\kappa^*$ the following two facts hold:

\noindent(1) There exists a constant $c'>0$ such that for all $v$ along the line segments of length $c/\sqrt{\kappa}$ in the contour $C_{G,\kappa,c}$:
\begin{equation*}
\Real\left[G(\zeta)-G(\btk)\right] \leq \Real\left[ -c'\kappa^{3/2}(v-\btk)^3 \right].
\end{equation*}

\noindent(2)  There exists a constant $c'>0$ such that for all $\zeta$ along the line segments of length $c/\sqrt{\kappa}$ in the contour $C_{\langle,N}$:
\begin{equation*}
\Real\left[G(v)-G(\btk)\right] \geq \Real\left[ -c'\kappa^{3/2}(\zeta-\btk)^3 \right].
\end{equation*}
\end{lemma}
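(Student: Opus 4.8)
\textbf{The plan} is to prove both inequalities by Taylor expanding $G$ to third order about its critical point $\btk$, with an explicit fourth-order remainder, and then substituting the concrete parametrizations of the two contours; the $\kappa\to\infty$ content is packaged entirely in two elementary facts, namely that $\bgk$ grows like $\kappa^{3/2}$ and that the fourth derivative of $G$ is $O(\kappa^{2})$ on the relevant disc. (I read the left side of (1) as $G(v)-G(\btk)$ and the right side of (2) as $\Real[-c'\kappa^{3/2}(\zeta-\btk)^3]$, matching the quantified variable in each case.) Recall from Definition~\ref{digammadef} that $\btk$ minimizes $\kappa t-\Psi(t)$, so $\Psi'(\btk)=\kappa$ and $\Psi(\btk)=\kappa\btk-\bfk$; hence $G'(\btk)=\Psi(\btk)-\kappa\btk+\bfk=0$, $G''(\btk)=\Psi'(\btk)-\kappa=0$, and $G'''(\btk)=\Psi''(\btk)=-\bgk$. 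Taylor's theorem with integral remainder then gives, for $|z-\btk|\le c\kappa^{-1/2}$,
\begin{equation*}
G(z)-G(\btk)=-\frac{\bgk}{6}(z-\btk)^3+\frac{(z-\btk)^4}{6}\int_0^1(1-t)^3\,\Psi'''\big(\btk+t(z-\btk)\big)\,dt .
\end{equation*}

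\textbf{The quantitative input} is the near-origin behaviour already used in the excerpt: $\btk=\kappa^{-1/2}+O(\kappa^{-3/2})$, $\Psi''(z)=-2z^{-3}+O(1)$ and $\Psi'''(z)=6z^{-4}+O(1)$ as $z\downarrow 0$. The first two give $\bgk=-\Psi''(\btk)=2\btk^{-3}+O(1)$, so there are constants $0<c_0<c_1$ with $c_0\kappa^{3/2}\le\bgk\le c_1\kappa^{3/2}$ for $\kappa$ large. Fixing a constant $c\in(0,1/4)$ once and for all, the disc $|w-\btk|\le c\kappa^{-1/2}$ keeps $|w|$ comparable to $\kappa^{-1/2}$, so the third expansion yields $|\Psi'''(w)|\le C\kappa^{2}$ there, and the remainder above is bounded by $C\kappa^{2}|z-\btk|^{4}$; thus $|G(z)-G(\btk)+\tfrac{\bgk}{6}(z-\btk)^3|\le C\kappa^{2}|z-\btk|^{4}$ on that disc (substituting $z=\kappa^{-1/2}\tilde z$ recovers (\ref{festimate})).

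\textbf{Substituting the contours} is then routine. On a length-$c\kappa^{-1/2}$ segment of $C_{G,\kappa,c}$ one has $v-\btk=\rho e^{\iota\theta}$ with $\theta=\tfrac{2\pi}{3}+O(\kappa^{-1})$ and $0\le\rho\le c\kappa^{-1/2}$, so $\cos 3\theta\ge\tfrac12$ for $\kappa$ large and $\Real[-\tfrac{\bgk}{6}(v-\btk)^3]=-\tfrac{\bgk}{6}\rho^3\cos 3\theta\le-\tfrac{c_0}{12}\kappa^{3/2}\rho^3$, whereas the remainder contributes at most $C\kappa^{2}\rho^{4}\le Cc\,\kappa^{3/2}\rho^{3}$; choosing $c$ small enough that $Cc\le c_0/24$ gives $\Real[G(v)-G(\btk)]\le-\tfrac{c_0}{24}\kappa^{3/2}\rho^{3}\le\Real[-c'\kappa^{3/2}(v-\btk)^3]$ with $c'=c_0/24$, which is (1). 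For (2) I would parametrize a segment of the $\zeta$-contour as $\zeta-\btk=N^{-1/3}+\rho e^{\iota\pi/3}$, $0\le\rho\le c\kappa^{-1/2}$; a direct computation gives $\Real[(\zeta-\btk)^3]=(a+\tfrac b2)(a+2b)(a-b)$ with $a=N^{-1/3}$, $b=\rho$, which is $\le 0$ once $\rho\ge N^{-1/3}$, so on that part of the segment the cubic term of $\Real[G(\zeta)-G(\btk)]$ is nonnegative and dominates the $O(\kappa^2\rho^4)$ remainder, yielding the claimed lower bound.

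\textbf{The main obstacle} is the short initial piece $0\le\rho\lesssim N^{-1/3}$ of the $\zeta$-contour, where $\zeta-\btk$ is essentially the positive real $N^{-1/3}$, $\Real[(\zeta-\btk)^3]>0$, and $G(\zeta)-G(\btk)\approx-\tfrac{\bgk}{6}N^{-1}$ is slightly negative; that is, $G$ is steep-descent there only up to an $O(N^{-1})$ shortfall, and the fourth-order remainder ($O(\kappa^2N^{-4/3})$) is too weak to close that gap uniformly in $N$. I would handle this by allowing $c'$ in (2) to be at least $\bgk/(6\kappa^{3/2})$, so the inequality simply absorbs the offset (the remainder being $\ll N^{-1}$ for $N$ large) consistently with the bound already obtained on $\rho\ge N^{-1/3}$; alternatively, since this piece has length $O(N^{-1/3})$ it contributes only a bounded factor $e^{O(1)}$ to the kernel and is harmless for the ensuing convergence argument. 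Everything after the resulting convergence claim — localization, rescaling by $N^{1/3}$, linearizing $1/(\zeta-v')$ and recognizing the Airy kernel — then proceeds as in the critical-point derivation above.
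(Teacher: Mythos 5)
Your overall approach matches the paper's: Taylor-expand $G$ to third order at $\btk$ with a fourth-order remainder, exploit $\bgk\asymp\kappa^{3/2}$ and $\Psi'''=O(\kappa^2)$ on the small disc, and then substitute the contour parametrizations. Your treatment of part~(1) is fine and more explicit than the paper's one-line conclusion after its tilde-variable key estimate.

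For part~(2), however, you correctly flag the trouble near the start of the $\zeta$-contour, but your proposed patch does not work, and the failure is in fact sharper than you describe. At the point $\rho=N^{-1/3}$ (so $b=a$ in your notation) one has $\Real[(\zeta-\btk)^3]=(a+b/2)(a-b)(a+2b)=0$, so the right-hand side of~(2) vanishes for \emph{every} $c'>0$, while the left-hand side equals $\Real$ of the Taylor remainder, whose leading term is $\tfrac{1}{24}G''''(\btk)(\zeta-\btk)^4$ with $(\zeta-\btk)^4=9N^{-4/3}e^{2\iota\pi/3}$ and $G''''(\btk)=\Psi'''(\btk)\approx 6\kappa^2>0$, hence $\Real[\mathrm{rem}]\approx-\tfrac{9}{8}\kappa^2N^{-4/3}<0$. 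Thus the claimed inequality fails at this point no matter how $c'$ is chosen, so taking $c'\geq\bgk/(6\kappa^{3/2})$ cannot rescue the lemma there. Moreover on the main part $\rho\gg N^{-1/3}$ one also finds $\Real[(\zeta-\btk)^4]<0$, so the requirement $(\bgk/6-c'\kappa^{3/2})\,|\Real[(\zeta-\btk)^3]| \ge -\Real[\mathrm{rem}]$ actually \emph{forces} $c'$ to be strictly less than $\bgk/(6\kappa^{3/2})$ by an $O(c)$ margin; this is directly incompatible with the lower bound on $c'$ you propose for the short initial piece. Your fallback remark, that this piece has length $O(N^{-1/3})$, contributes only an $e^{O(1)}$ factor to the kernel, and is harmless for the localization and Fredholm-determinant convergence in Step 3, is correct and is really why the theorem survives; but it does not prove the lemma as literally stated. (The paper's own proof stops at the tilde-variable estimate and asserts ``both parts of the lemma follow readily'' without engaging this point.)
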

\begin{proof}
Recall the Taylor expansion remainder estimate for a function $F(z)$ expanded around $z^*$,
\begin{eqnarray*}
&&\left| F(z) - \left( F(z^*) + F'(z^*)(z-z^*)+ \tfrac{1}{2} F''(z^*)(z-z^*)^2 + \tfrac{1}{6} F'''(z^*)(z-z^*)^3 \right)\right| \\
&&\leq \max_{w\in B(z^*,|z-z^*|)} \tfrac{1}{24}|F''''(w)||z-z^*|^4.
\end{eqnarray*}
We may apply this to our function $G(z)$ around the point $\btk$ giving
\begin{equation*}
\left| G(z) - G(\btk) + \tfrac{1}{6} \bgk(z-\btk)^3\right| \leq \max_{w\in B(\btk,|z-\btk|)} \tfrac{1}{24}|G''''(w)||z-\btk|^4.
\end{equation*}
As before let $z=\kappa^{-1/2}\tilde{z}$ and also let $w=\kappa^{-1/2}\tilde{w}$. Note that for $\kappa$ large,
\begin{equation*}
\bgk = 2\kappa^{3/2} + O(\kappa^{1/2}), \qquad G''''(w) = \Psi'''(w) = \frac{6\kappa^2}{\tilde{w}^4} + O(1).
\end{equation*}
This implies the key estimate
\begin{equation*}
\left| G(z) - G(\btk) + \tfrac{1}{3}(\tilde{z}-1)^3\right| \leq \frac{1}{4} \left|\frac{\tilde{z}-1}{\tilde{z}}\right|^4 + O(\kappa^{-1}).
\end{equation*}
Both parts of the lemma follow readily from this estimate.
\end{proof}

We may now turn to the estimate outside the ball of size $c/\sqrt{\kappa}$.
\begin{lemma}
For every constant $c\in (0,1)$ there exists $\kappa^*>0$ and $c'>0$ such that for all $v$ along the circular part of the contour $C_{G,\kappa,c}$, the following holds:
\begin{equation*}
\Real\left[G(v)-G(\btk)\right] \leq -c'.
\end{equation*}
\end{lemma}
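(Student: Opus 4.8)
\emph{Proof plan.} The approach is to reduce the bound to the already-available inequality for $\Real(f)$ on $C_f$ via the estimate (\ref{festimate}). Recall that the circular part of $C_{G,\kappa,c}$ is, by its very construction, a subarc of $\kappa^{-1/2}C_f$: it is what remains of $\kappa^{-1/2}C_f$ once the two straight segments of length $c/\sqrt{\kappa}$ emanating from $\btk$ (handled in the previous lemma) have been removed. The first step is to check that this subarc stays inside a fixed compact subset of $C_f$ bounded away from $\tilde z=1$. Since $\btk\sqrt{\kappa}=1+O(\kappa^{-1})$ and the exit angles of the straight segments are $\pm(2\pi/3+O(\kappa^{-1}))$, for all $\kappa$ large enough the rescaled circular part is contained in $\kappa^{-1/2}\mathcal K$, where $\mathcal K:=\{\tilde z\in C_f:|\tilde z-1|\ge c/2\}$. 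Note $\mathcal K$ is compact, does not contain $\tilde z=1$, and does not contain $\tilde z=0$ (one checks $|\tilde z|^2=1-t+t^2\ge 3/4$ on the straight parts of $C_f$ and $|\tilde z|=1$ on the arc), so $f$, and hence $\Real(f)$, is continuous on $\mathcal K$.

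Next I would parametrize a point $v$ of the circular part as $v=\kappa^{-1/2}\tilde z$ with $\tilde z\in\mathcal K$ and invoke (\ref{festimate}). Because $\mathcal K$ is a fixed compact domain independent of $\kappa$, the error term there is uniform, so $\Real[G(v)-G(\btk)]=\Real[f(\tilde z)]+\gamma\kappa^{-1/2}\Real[1-\tilde z]+O(\kappa^{-1})=\Real[f(\tilde z)]+O(\kappa^{-1/2})$, with the $O(\kappa^{-1/2})$ bound uniform over $\tilde z\in\mathcal K$ (using that $|1-\tilde z|$ is bounded on $C_f$). Then I would use the property, stated just before the definition of $C_{G,\kappa,c}$, that along $C_f$ the function $\Real(f)$ attains its maximal value $0$ only at $\tilde z=1$ and is negative elsewhere. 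Since $\Real(f)$ is continuous on the compact set $\mathcal K$ and $1\notin\mathcal K$, this gives $\sup_{\tilde z\in\mathcal K}\Real[f(\tilde z)]=-2c'$ for some $c'=c'(c)>0$.

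Finally, combining the last two displays and enlarging $\kappa^*$ so that the uniform $O(\kappa^{-1/2})$ error is at most $c'$ in absolute value for $\kappa>\kappa^*$, one obtains $\Real[G(v)-G(\btk)]\le -2c'+c'=-c'$ for every $v$ on the circular part of $C_{G,\kappa,c}$, which is the assertion.

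The only genuinely fiddly point — and the step I expect to require the most care — is the first one: verifying that, uniformly in $\kappa$, the circular part of $C_{G,\kappa,c}$ indeed lies inside the fixed compact set $\mathcal K\subset C_f\setminus\{1\}$. Once that containment is established, the remainder is a routine compactness estimate: it reduces matters to the (already granted) strict negativity of $\Real(f)$ away from $\tilde z=1$, plus the uniform error control in (\ref{festimate}) over compact domains. No new analytic input about $\Gamma$ or $\Psi$ is needed beyond what (\ref{festimate}) already packages.
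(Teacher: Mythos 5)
Your proof is correct and follows the same approach as the paper: rescale $v=\kappa^{-1/2}\tilde v$, invoke (\ref{festimate}), and use that $\Real(f)$ is strictly negative away from $\tilde z=1$. You supply the compactness argument (that the circular part lands in a fixed compact subset of $C_f$ bounded away from $\tilde z=1$, so that strict negativity upgrades to a uniform bound $-2c'$) which the paper's one-sentence proof leaves implicit; this is exactly the right missing detail to supply.
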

\begin{proof}
Writing $v=\kappa^{-1/2}\tilde{v}$ we may appeal to the estimate (\ref{festimate}) and the fact that along the circular part of the contour $C_{G,\kappa,c}$, the function $f(\tilde v)$ is strictly negative and the error in the estimate is $O(\kappa^{-1/2})$.
\end{proof}

The above bound suffices for the $v$ contour since it is finite. However, the $\zeta$ contour is infinite so our estimate must be sufficient to ensure that the contour's tails do not play a role.
\begin{lemma}
For every constant $c\in (0,1)$ there exists  $\kappa^*>0$ and  $c'>0$ such that for all $\zeta$ along the contour $C_{\langle,N}$ of distance exceeding $c/\sqrt{\kappa}$ from $\btk$, the following holds:
\begin{equation*}
\Real\left[G(\zeta)-G(\btk)\right] \geq \Real\left[-c'\kappa \zeta^2/2\right].
\end{equation*}
\end{lemma}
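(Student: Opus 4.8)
The plan is to isolate the leading quadratic part of $G$ and show the remainder never eats more than a fixed fraction of it. Write
\begin{equation*}
G(\zeta)-G(\btk)=H(\zeta)-\tfrac{\kappa}{2}\zeta^2,\qquad H(\zeta):=\log\Gamma(\zeta)-\log\Gamma(\btk)+\bfk(\zeta-\btk)+\tfrac{\kappa}{2}\btk^2,
\end{equation*}
so that the asserted bound $\Real[G(\zeta)-G(\btk)]\ge\Real[-c'\kappa\zeta^2/2]$ is equivalent to $\Real H(\zeta)\ge(1-c')\tfrac{\kappa}{2}\Real[\zeta^2]$. Since $\log\Gamma$ and the quadratic and linear terms are real on $(0,\infty)$ we have $G(\bar\zeta)=\overline{G(\zeta)}$, and $C_{\langle,N}$ is symmetric about $\R$, so it suffices to treat the upper half of the contour, parametrized as $\zeta=\btk+N^{-1/3}+se^{\iota\pi/3}$ with $s$ bounded below by the value $s_0$ at which $|\zeta-\btk|=c/\sqrt\kappa$. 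Along this ray $\Real[\zeta^2]=(\Real\zeta)^2-(\Imag\zeta)^2$ is nonnegative only for $s$ up to a point of order $\btk\asymp\kappa^{-1/2}$, and for larger $s$ it is negative with $\Real[\zeta^2]\asymp-s^2$; choosing a large absolute constant $M$ we arrange that the entire portion of the contour with $\Real[\zeta^2]\ge 0$ lies inside $\{|\zeta-\btk|\le M/\sqrt\kappa\}$.

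On the inner range $c/\sqrt\kappa<|\zeta-\btk|\le M/\sqrt\kappa$ I rescale $\tilde\zeta=\sqrt\kappa\,\zeta$ and invoke the expansion (\ref{festimate}): uniformly for $\tilde\zeta$ in the relevant compact set, $G(\zeta)-G(\btk)=f(\tilde\zeta)+\gamma\kappa^{-1/2}(1-\tilde\zeta)+O(\kappa^{-1})$ with $f(\tilde z)=-\log\tilde z-\tilde z^2/2+2\tilde z-3/2$, while $-c'\kappa\zeta^2/2=-c'\tilde\zeta^2/2$. Where $\Real[\tilde\zeta^2]\ge 0$ the target right side is $\le 0$, and it suffices to note $\Real f(\tilde\zeta)\ge 0$ there; this holds because $e^{\iota\pi/3}$ is an ascent direction for $f$ away from its critical point $\tilde z=1$ (recall $f(1)=f'(1)=f''(1)=0$, $f'''(1)=-2$, so $f(\tilde\zeta)\approx-\tfrac13(\tilde\zeta-1)^3$ and $(se^{\iota\pi/3})^3=-s^3$, and this is exactly the near-critical cubic estimate of the previous lemma's part (2)), plus a direct check on the bounded set. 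On the complementary compact piece, where $|\tilde\zeta-1|\ge c$ and $\Real[\tilde\zeta^2]<0$, $\Real f$ is bounded below by a fixed positive constant, so picking $c'$ small (depending on $c$ and $M$) and then $\kappa^*$ large enough to absorb the $O(\kappa^{-1/2})$ error gives the inequality on this range.

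On the outer range $|\zeta-\btk|>M/\sqrt\kappa$ one has $\Real[\zeta^2]<0$ throughout, so $(1-c')\tfrac{\kappa}{2}\Real[\zeta^2]<0$ and it suffices to bound $\Real H(\zeta)$ below by a $\kappa$-independent constant. Here I use that $\bfk\asymp2\sqrt\kappa>0$ with $\Real\zeta>0$ makes $\Real[\bfk(\zeta-\btk)]$ a positive contribution growing like $\sqrt\kappa\,s$; that $\Real[\log\Gamma(\zeta)]$ is large and positive where $|\zeta|$ is small (since $\log\Gamma(\zeta)=-\log\zeta+O(|\zeta|)$ near $0$) and, by Stirling, grows only like $\tfrac s2\log s$ where $|\zeta|$ is large; and that $\log\Gamma(\btk)$ and $\tfrac\kappa2\btk^2$ are essentially constant. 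Combining these yields $\Real H(\zeta)\ge-C$ for fixed $C$, and in fact $\Real H(\zeta)\ge 0$ once $s$ exceeds a bounded initial segment, which is more than enough.

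Assembling the two ranges and reflecting to the lower half of $C_{\langle,N}$ proves the lemma, with constants fixed in the order $M$ (large, absolute), then $c'$ (small, via the compact inner-range inequality), then $\kappa^*$ (large). The main obstacle I expect is bookkeeping uniformity across scales: (\ref{festimate}) is only $O(\kappa^{-1/2})$-accurate and valid on compacts in the rescaled variable, whereas the outer range needs genuine un-rescaled Stirling bounds on $\log\Gamma$, and one must verify that at the transition $|\zeta-\btk|\asymp M/\sqrt\kappa$ the quantity $\Real H$ has already turned nonnegative so that no gap opens between the two estimates. A secondary point to track is the $N^{-1/3}$ shift of the contour's base point, which perturbs all of the above by $o(1)$ for fixed $\kappa$ and is absorbed by taking $N$ large.
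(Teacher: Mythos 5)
Your decomposition $G(\zeta)-G(\btk)=H(\zeta)-\tfrac{\kappa}{2}\zeta^2$ with the target inequality rewritten as $\Real H(\zeta)\ge(1-c')\tfrac{\kappa}{2}\Real[\zeta^2]$ is a clean way to organize things, and your two-zone split (rescaled inner zone handled by $f(\tilde z)$ via \eqref{festimate}, un-rescaled outer zone handled by direct estimates on $\Real H$) covers the same ground as the paper's three-zone split. The paper keeps its intermediate zone ($c_1/\sqrt\kappa<|\zeta-\btk|<c_2$) inside the rescaled picture, using the cruder bound $G(z)-G(\btk)=f(\tilde z)+O(1)$ together with $f(\tilde z)\approx-\tilde z^2/2$ for large $\tilde z$, whereas you merge this into your outer zone and estimate $\log\Gamma(\zeta)$, $\bfk(\zeta-\btk)$, $\log\Gamma(\btk)$, $\tfrac\kappa2\btk^2$ by hand. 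That organization is fine and arguably more explicit, but it forces you to track the $\kappa$-dependence of each term, which is where the sketch slips.

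The concrete error is the assertion that ``$\log\Gamma(\btk)$ and $\tfrac\kappa2\btk^2$ are essentially constant.'' The second is (it tends to $\tfrac12$), but the first is not: $\btk\sim\kappa^{-1/2}$, so $\log\Gamma(\btk)=-\log\btk+O(\kappa^{-1/2})\sim\tfrac12\log\kappa\to\infty$. Since $-\log\Gamma(\btk)$ enters $H$ with a minus sign, the conclusion ``$\Real H(\zeta)\ge-C$ with $C$ independent of $\kappa$'' is not automatic: you must see the $-\tfrac12\log\kappa$ be compensated, either by $\Real\log\Gamma(\zeta)\approx-\log|\zeta|$ (which supplies a matching $\tfrac12\log\kappa-\log M+O(1)$ precisely at the inner boundary $|\zeta-\btk|\asymp M/\sqrt\kappa$), or by $\Real[\bfk(\zeta-\btk)]\asymp\sqrt\kappa\,s\gg\log\kappa$ once $s$ is of order a fixed constant. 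That cancellation does in fact occur, so the lemma's conclusion survives; but as written your outer-zone argument omits the step that makes it $\kappa$-uniform, and a reader taking ``essentially constant'' at face value would produce an incorrect proof. Inside the paper's rescaled intermediate zone this bookkeeping is invisible -- which is exactly why the paper inserts a third zone.
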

\begin{proof}
This estimate is best established in three parts. We first estimate for $\zeta$ between distance $c/\sqrt{\kappa}$ and distance $c_1/\sqrt{\kappa}$ from $\btk$ ($c_1$ large). Second we estimate for $\zeta$ between distance $c_1/\sqrt{\kappa}$ and distance $c_2$ from $\btk$. Finally we estimate for all $\zeta$ yet further. This third estimate is immediate from the first line of (\ref{festimate}) in which the quadratic term in $z$ clearly exceeds the other terms for $\kappa$ large enough and $|z|>c_2$.

To make the first estimate we use the bottom line of (\ref{festimate}). The function $f(\tilde{z})$ has growth along this contour sufficient to overwhelm the terms $\gamma \kappa^{-1/2}(1-\tilde{z}) + O(\kappa^{-1})$ as long as $\kappa$ is large enough. The $O(\kappa^{-1})$ error in (\ref{festimate}) is only valid for $\tilde{z}$ in a compact domain though. So for the second estimate we must use the cruder bound that $G(z)-G(\btk) = f(\tilde{z}) +O(1)$ for $z$ along the contour and of size less than $c_2$ from $\btk$. Since in this regime of $z$, $f(\tilde{z})$ behaves like $-\kappa z^2/2$, one sees that for $\kappa$ large enough, this $O(1)$ error is overwhelmed and the claimed estimate follows.
\end{proof}

\noindent{\bf Step 3:} We now employ the estimates given above to conclude that $\det(I+K_{u(N,r,\kappa)})$ converges to $\det(I+K_{r,\kappa})$ where the two operators act on their respective $L^2$ spaces and are defined with respect to the kernels above in (\ref{Kunrkappa}) and (\ref{Krkappa}). The approach is standard so we just briefly review what is done. Convergence can either be shown at the level of Fredholm series expansion or trace-class convergence of the operators. Focusing on the Fredholm series expansion, the estimates provided in Step 2, along with Hadamard's bound show that for any $\e$, there is a $k$ large enough such that for all $N$,  the contribution of the terms of the Fredholm series expansion past index $k$ can be bounded by $\e$. This localizes the problem of asymptotics to a finite number of integrals involving the kernels. The estimates of Step 2 then show that these integrals can be localized in a large window of size $N^{-1/3}$ around the critical point $\btk$. The cost of throwing away the portion of the integrals outside this window can be uniformly (in $N$) bounded by $\e$, assuming the window is large enough. Finally, after a change of variables to rescale this window by $N^{1/3}$ we can use the Taylor series with remainder to show that as $N$ goes to infinity, the integrals coming from the kernel $K_{u(N,r,\kappa)}$ converge to those coming from $K_{r,\kappa}$. This last step is essentially the content of the critical point computation given earlier.
\end{proof}

\section{The $\alpha$-Whittaker process limit}\label{alphawhitprocessSec}

\subsection{Weak convergence to the $\alpha$-Whittaker process}

We start by defining the $\alpha$-Whittaker process and measure as introduced and first studied in \cite{COSZ} in relation to the image of the log-gamma discrete directed polymer under the tropical Robinson-Schensted-Knuth correspondence (see Section~\ref{logsec}).

\begin{definition}\label{alphadefs}
Fix integers $n\geq N\geq 1$ and vectors $\alpha = (\alpha_1,\ldots, \alpha_n)$ and $a=(a_1,\ldots, a_N)$ such that $\alpha_i>0$ for all $1\leq i\leq n$ and $\alpha_i+a_j>0$ for all combinations of $1\leq i\leq n$  and $1\leq j\leq N$. Define the {\it $\alpha$-Whittaker process}\index{$\alpha$-Whittaker process} as a probability measure on $\R^{\frac{N(N+1)}2}$ with the density function given by
\begin{equation*}
\alphaW{a;\alpha,n}\left(\{T_{k,j}\}_{1\le j\le k\le N}\right)=\prod_{i=1}^n \prod_{k=1}^{N}\frac{1}{\Gamma(\walpha_i + a_j)}\exp({\mathcal{F}_{\iota a}(T)}) \theta_{\walpha,n}(T_{N,1},\ldots, T_{N,N})
\end{equation*}
\glossary{$\alphaW{a;\alpha,n}$}\glossary{$\theta_{\walpha,n}(\ul{x}{N})$}
where
\begin{equation*}
\theta_{\walpha,n}(\ul{x}{N})=\int_{\R^N} \overline{\psi_{\ul{\nu}{N}}(\ul{x}{N})}\prod_{i=1}^n \prod_{k=1}^{N} \Gamma(\walpha_i - \iota \nu_{k}) m_N(\ul{\nu}{N})d\ul{\nu}{N}.
\end{equation*}

\begin{remark}
As explained in Remark \ref{diffformofmeasures}, the difference in appearance between this measure and the Macdonald measure is due to the use of the torus product representation for the Macdonald $Q$ function. However, since we are dealing with finite pure-alpha specializations for the $Q$ function, it should be possible to write the $\alpha$-Whittaker process in terms of the product of two Whittaker functions. Such a formula when $N=n$ can be found in \cite{COSZ}.
\end{remark}

The nonnegativity of the density is not obvious but it does follow from Theorem~\ref{alphalimitthm} below. To see that the total integral is equal to 1, one first integrates over the variables $T_{k,i}$ with $k<N$, which yields the following
{\it $\alpha$-Whittaker measure}\index{$\alpha$-Whittaker measure}\glossary{$\alphaWM{a;\alpha,n}$}
\begin{equation*}
\alphaWM{a;\alpha,n}\left(\{T_{N,i}\}_{1\le i\le N}\right)= \prod_{i=1}^n \prod_{k=1}^{N}\frac{1}{\Gamma(\walpha_i + a_j)}\psi_{\iota a}(T_{N,1},\ldots, T_{N,N}) \theta_{\walpha,n}(T_{N,1},\ldots, T_{N,N})
\end{equation*}
and then over the remaining variables, using the orthogonality of the Whittaker functions.

\begin{remark}\label{starstar83}
In order to make use of the orthogonality we need to justify the analytic continuation of this relation off the real axis. As in Section~\ref{whitprobmeas} this requires strong decay estimates for $\theta_{\alpha,n}$ which presently we do not make.
\end{remark}
\end{definition}

\begin{theorem}[modulo decay estimates]\label{alphalimitthm}
In the limit regime
\begin{eqnarray*}
&&q=e^{-\epsilon},\qquad \malpha_i=e^{-\walpha_i \e},\  1\le i\le n, \qquad A_k=e^{-\epsilon a_k}, \  1\le k\le N, \\
&&\lambda^{(k)}_j=n\e^{-1}\log\e^{-1} + (k+1-2j)\e^{-1}\log\e^{-1}+T_{k,j}\epsilon^{-1}, \quad 1\le j\le k\le N,
\end{eqnarray*}
the $q$-Whittaker process $\M_{asc,t=0}(A_1,\dots,A_N;\rho)$ with pure alpha specialization
$\rho$ determined by $\alpha=(\alpha_1,\ldots,\alpha_n)$ (and $\gamma=0$, $\beta_i=0$ for all $i\geq 0$) as in equation (\ref{tag1}), weakly converges to the $\alpha$-Whittaker process
$\alphaW{a;\alpha,n}\bigl(\{T_{k,i}\}_{1\le i\le k\le N}\bigr)$.
\end{theorem}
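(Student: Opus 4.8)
The plan is to reproduce, mutatis mutandis, the three-lemma argument behind Theorem~\ref{theorem26}. Write the $q$-Whittaker process density as
\begin{equation*}
\M_{asc,t=0}(A_1,\dots,A_N;\rho)(\la)=\frac{P_{\la^{(1)}}(A_1)P_{\la^{(2)}/\la^{(1)}}(A_2)\cdots P_{\la^{(N)}/\la^{(N-1)}}(A_N)\,Q_{\la^{(N)}}(\rho)}{\Pi(A_1,\dots,A_N;\rho)},
\end{equation*}
multiply by the Jacobian $\e^{-N(N+1)/2}$ of the change of variables $\la^{(k)}_j\leftrightarrow T_{k,j}$, and establish uniform asymptotics on compact sets of $\{T_{k,j}\}$ for each of the three factors. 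As in Theorem~\ref{theorem26}, positivity together with the (asserted) fact that $\alphaW{a;\alpha,n}$ has total mass one upgrades uniform convergence of densities to weak convergence, so it suffices to produce the pointwise–uniform limits. The only structural change is that the macroscopic crystallization level is $nm(\e)$ rather than $\tau\e^{-2}$; since $n\e^{-1}\log\e^{-1}=nm(\e)+O(1)$, the hypothesis's shift differs from the integer $nm(\e)$ by an $O(1)$ amount, absorbed into the $e^{o(1)}$ errors and into the (lattice-dependent) set of admissible $T_{k,j}$, exactly as the non-integrality of $\tau\e^{-2}$ is handled in Theorem~\ref{theorem26}.

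For the skew-polynomial factor, use $P_{\la/\mu}(A)=\psi_{\la/\mu}A^{|\la-\mu|}$ from~(\ref{7.14'}), the $t=0$ formula for $\psi_{\la/\mu}$ of Lemma~\ref{lemma22}, and the uniform $q$-factorial asymptotics of Corollary~\ref{qfactCor}. Under the present scaling the consecutive gaps $\la^{(\ell+1)}_i-\la^{(\ell+1)}_{i+1}$, $\la^{(\ell+1)}_i-\la^{(\ell)}_i$, $\la^{(\ell)}_i-\la^{(\ell+1)}_{i+1}$ agree, up to $O(1)$ corrections, with those appearing in the proof of Lemma~\ref{Skewlemma}, so that computation carries over with the sole change that $|\la^{(\ell+1)}-\la^{(\ell)}|$ now has macroscopic part $nm(\e)$, giving uniformly on compacts
\begin{equation*}
P_{\la^{(1)}}(A_1)\cdots P_{\la^{(N)}/\la^{(N-1)}}(A_N)=\Bigl(e^{-\frac{(N-1)(N-2)}{2}\A(\e)}\,\e^{\,n\sum_k a_k}\Bigr)\exp\bigl(\mathcal{F}_{\iota a}(T)\bigr)\,e^{o(1)}.
\end{equation*}
For the normalization, $\rho$ is the pure-alpha specialization in the variables $\hat\alpha_1,\dots,\hat\alpha_n$ with $\hat\alpha_iA_k=q^{\alpha_i+a_k}$, so $\Pi(A_k;\rho)=\prod_{i=1}^n(q^{\alpha_i+a_k};q)_\infty^{-1}$; the $q$-Gamma identity~(\ref{qGamma}) together with $(q;q)_\infty=e^{\A(\e)}e^{o(1)}$ (Lemma~\ref{Aepslemma}), $(1-q)^{1-c}=\e^{1-c}e^{o(1)}$ and $\Gamma_q(c)\to\Gamma(c)$ yield
\begin{equation*}
\Pi(A_1,\dots,A_N;\rho)=\Bigl(e^{-nN\A(\e)}\,\e^{-nN+N\sum_i\alpha_i+n\sum_k a_k}\Bigr)\prod_{k=1}^N\prod_{i=1}^n\Gamma(\alpha_i+a_k)\,e^{o(1)}.
\end{equation*}

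The crux is the factor $Q_{\la^{(N)}}(\rho)$, treated via the torus scalar product $Q_{\la^{(N)}}(\rho)=\tfrac{1}{\langle P_{\la^{(N)}},P_{\la^{(N)}}\rangle_N'}\langle \Pi(\,\cdot\,;\hat\alpha),P_{\la^{(N)}}(\,\cdot\,)\rangle_N'$, exactly as in Lemma~\ref{Qlemma}. The norm equals $\prod_{i=1}^{N-1}(q^{\la^{(N)}_i-\la^{(N)}_{i+1}+1};q)_\infty^{-1}=e^{o(1)}$ as in Step~1 of Lemma~\ref{Qlemma}, since those gaps are unchanged. For the inner product, substitute $z_j=e^{\iota\e\nu_j}$: the measure $m_N^q(z)\prod dz_i/z_i$ contributes, up to $\e^{N^2}e^{N(N-1)\A(\e)}$, the factor $m_N(\nu)\prod d\nu_i$ as in Step~3 of Lemma~\ref{Qlemma}; after pulling the common shift $nm(\e)$ out of $\la^{(N)}$ via the Laurent index shift~(\ref{macLaurent}), $P_{\la^{(N)}}(z)$ equals $\e^{-N(N-1)/2}e^{-\frac{(N-1)(N+2)}{2}\A(\e)}\e^{-n\iota\sum_k\nu_k}\psi^{\e}_{\nu}(T)e^{o(1)}$ with $\psi^{\e}_{\nu}(T)\to\psi_{\nu}(T)$ by part~2 of Theorem~\ref{qwhitconvTHM}; and $\Pi(z;\hat\alpha)=\prod_k\prod_i(q^{\alpha_i-\iota\nu_k};q)_\infty^{-1}$ behaves, by the same $q$-Gamma computation with complex exponents, like $e^{-nN\A(\e)}\e^{-nN+N\sum_i\alpha_i}\e^{n\iota\sum_k\nu_k}\prod_k\prod_i\Gamma(\alpha_i-\iota\nu_k)e^{o(1)}$. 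The $\e^{\pm n\iota\sum\nu_k}$ factors cancel and, integrating over $\nu$, one obtains
\begin{equation*}
Q_{\la^{(N)}}(\rho)=\Bigl(e^{(-nN+\frac{(N-1)(N-2)}{2})\A(\e)}\,\e^{-nN+N\sum_i\alpha_i+\frac{N(N+1)}{2}}\Bigr)\,\theta_{\alpha,n}(T_{N,1},\dots,T_{N,N})\,e^{o(1)},
\end{equation*}
with $\theta_{\alpha,n}$ exactly the kernel of Definition~\ref{alphadefs}. Assembling the three factors with the Jacobian $\e^{-N(N+1)/2}$ and dividing by $\Pi$, a routine bookkeeping shows that every power of $\A(\e)$ and of $\e$ cancels while the $\Gamma(\alpha_i+a_k)$ factors survive in the denominator, leaving $\prod_{i=1}^n\prod_{k=1}^N\Gamma(\alpha_i+a_k)^{-1}\exp(\mathcal{F}_{\iota a}(T))\,\theta_{\alpha,n}(T_{N,1},\dots,T_{N,N})=\alphaW{a;\alpha,n}(\{T_{k,j}\})$, which is the claim.

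The principal obstacle — and the reason for the qualification ``modulo decay estimates'' (cf. Remark~\ref{starstar83}) — is justifying the two analytic steps in the $Q$-computation: convergence of the $\nu$-integral defining $\theta_{\alpha,n}$, and the interchange of $\e\to0$ with that integral, both uniformly in $T$ over compacts. In the Plancherel proof (Step~4 of Lemma~\ref{Qlemma}) this was immediate because $\Pi(z;\rho)$ carried a Gaussian factor $e^{-\tau\sum\nu_k^2/2}$ dominating the growth $|1/\Gamma_q(\iota(\nu_j-\nu_i))|\le c'e^{c|\nu_j-\nu_i|}$ of $m_N^q$. Here $\Pi(z;\hat\alpha)$ has no Gaussian, and the required spectral decay must be wrung out of the competition between the $q$-Gamma decay of the $nN$ factors $\Gamma_q(\alpha_i-\iota\nu_k)$, the growth of $m_N^q$, and decay of $\psi^{\e}_{\nu}$ in its index $\nu$; a crude triangle-inequality bound only closes for $n>2(N-1)$, whereas the statement is asserted for all $n\ge N$. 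Establishing the sharp uniform estimate needed here — a $q$-analogue of a Paley--Wiener bound for $\theta_{\alpha,n}$, in the spirit of Proposition~\ref{supexpthetadecay}, together with the companion bound needed to show $\alphaW{a;\alpha,n}$ integrates to one — is the genuine technical content, and it is this step that is carried out in \cite{BorCorRem}, removing the qualification.
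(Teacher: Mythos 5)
Your proposal reproduces the paper's argument essentially verbatim: the same three-lemma decomposition (skew-polynomial factor, normalization $\Pi$, and $Q_{\lambda^{(N)}}(\rho)$ via the torus scalar product with the same four-step analysis), the same uniform $q$-Gamma and $q$-factorial asymptotics from Corollary~\ref{qfactCor} and Lemma~\ref{Aepslemma}, and the same identification of the $\nu$-integral tail bound in Step~4 of the $Q$-lemma as the missing decay estimate flagged in Remark~\ref{starstar83}. (Your bookkeeping of the $\A(\e)$ and $\e$ powers is correct and in fact supplies the $e^{-nN\A(\e)}$ factor apparently dropped from the paper's statement of Lemma~\ref{Qlemmaalpha}, though there is a harmless sign typo in your expansion of $\Pi(z;\hat\alpha)$ — it should carry $\e^{-n\iota\sum_k\nu_k}$, which then cancels against $\overline{E_P}$ exactly as you say.)
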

\begin{proof}[Proof, modulo decay estimates]
The proof of this result follows along the same lines as the proof of Theorem~\ref{theorem26}. As such, we provide the main steps only, and forgo repeating arguments when they remain unchanged.

As before, recall that by definition
\begin{equation*}
\M_{asc,t=0}(A_1,\ldots, A_N;\rho)(\lambda^{(1)},\ldots,\lambda^{(N)})= \frac{P_{\lambda^{(1)}}(A_1)P_{\lambda^{(2)}/\lambda^{(1)}}(A_2)\cdots P_{\lambda^{(N)}/\lambda^{(N-1)}}(A_N) Q_{\lambda^{(N)}}(\rho)}{\Pi(A_1,\ldots, A_N;\rho)},
\end{equation*}
where $t=0$ in the Macdonald symmetric functions.

It suffices to control the convergence for $\{T_{k,i}\}$ varying in any given compact set -- thus for the rest of the proof fix a compact set for the $\{T_{k,i}\}$. Again we provide three lemmas which combine to the result. Recall also the notation $\A(\e) = -\frac{\pi^2}{6} \frac{1}{\e} -\frac{1}{2}\log\frac{\e}{2\pi}$.

Let us start by considering the skew Macdonald polynomials with $t=0$.
\begin{lemma}\label{Plemmaalpha}
Fix any compact subset $D\subset \R^{N(N+1)/2}$. Then
\begin{equation}\label{skewPprodeqnalpha}
P_{\lambda^{(1)}}(A_1)P_{\lambda^{(2)}/\lambda^{(1)}}(A_2)\cdots P_{\lambda^{(N)}/\lambda^{(N-1)}}(A_N) = e^{-\frac{(N-1)(N-2)}{2}\A(\e)}\e^{n\sum_{k=1}^{N}a_k} \mathcal{F}_{\iota a}(T) e^{o(1)}
\end{equation}
where the $o(1)$ error goes uniformly (with respect to $\{T_{k,i}\}_{1\leq i\leq k\leq N}\in D$) to zero as $\e$ goes to zero.
\end{lemma}
\begin{proof}
This is proved exactly as in Lemma~\ref{Skewlemma} by making the replacement $\tau= \e n \log\e^{-1}$.
\end{proof}

\begin{lemma}\label{Pilemmaalpha}
We have
\begin{equation*}
\Pi(A_1,\ldots,A_N;\rho) = \prod_{i=1}^n \prod_{k=1}^{N}\frac{\Gamma(\walpha_i + a_j)}{e^{\A(\e)}\e^{1-\walpha_i - a_k}} e^{o(1)}
\end{equation*}
where the $o(1)$ error goes to zero as $\e$ goes to zero.
\end{lemma}
\begin{proof}
Note that
\begin{equation*}
\Pi(A_1,\ldots,A_N;\rho) = \prod_{k=1}^{N} \prod_{i=1}^{n} \frac{1}{(\malpha_i A_k;q)_{\infty}}.
\end{equation*}
From the definition of the $q$-Gamma function and its convergence to the usual Gamma function as $q\to 1$, we have
\begin{equation}\label{PiqGammaeqn}
\frac{1}{(q^x;q)} = \frac{\Gamma(x)}{e^{\A(\e)} \e^{1-x}} e^{o(1)}
\end{equation}
where the $o(1)$ term goes to zero as $\e$ goes to zero. Recalling the scalings we have, and using this convergence, the lemma follows.
\end{proof}

\begin{lemma}[modulo decay estimate in Step 4]\label{Qlemmaalpha}
Fix any compact subset $D\subset \R^{N(N+1)/2}$. Then
\begin{equation*}
Q_{\lambda^{(N)}}(\rho) = \left(e^{\frac{(N-1)(N-2)}{2}\A(\e)} \e^{\frac{N(N+1)}{2}}\prod_{i=1}^{n}\prod_{k=1}^{N}\frac{1}{\e^{1-\alpha_i}}\right)\theta_{\walpha,n}(T_{N,1},\ldots,T_{N,N}) e^{o(1)}
\end{equation*}
where the $o(1)$ error goes uniformly (with respect to $\{T_{k,i}\}_{1\leq i\leq k\leq N}\in D$) to zero as $\e$ goes to zero.
\end{lemma}
\begin{proof}

We employ the torus scalar product, see Section~\ref{torusSec}, with respect to which the Macdonald polynomials are orthogonal (we keep $t=0$):
\begin{equation*}
\langle f,g\rangle'_N = \int_{\T^N} f(z) \overline{g(z)} m_N^{q}(z)\prod_{i=1}^{N} \frac{ dz_i}{z_i} , \qquad m_N^q(z) = \frac{1}{(2\pi \iota)^{N} N!}\prod_{i\neq j} (z_iz_j^{-1};q)_{\infty}.
\end{equation*}
Recalling the definition of $\Pi$ from equation (\ref{PIeqn}) we may write
\begin{equation*}
Q_{\lambda}(\rho) = \frac{1}{\langle P_\lambda,P_\lambda\rangle'_N} \langle \Pi(z_1,\ldots, z_N;\rho),P_{\lambda}(z_{1},\ldots, z_{N})\rangle'_N.
\end{equation*}

Let us break up the study of the asymptotics of $Q$ into a few steps. The first three steps follow easily, while the fourth requires a strong decay estimate which we presently do not give.

{\bf Step 1:} Just as in Lemma~\ref{Qlemma}
\begin{equation*}
\langle P_{\lambda^{(N)}},P_{\lambda^{(N)}}\rangle'_N = e^{o(1)},
\end{equation*}
where the $o(1)$ error goes to zero as $\e$ goes to zero.

The next three steps deal with the convergence of the integrand of the torus scalar product, and then the convergence of the integral itself. We introduce the change of variables $z_j= \exp\{\iota \e \nu_j\}$.

{\bf Step 2:} Fix a compact subset $V\subset \R^N$. Then
\begin{equation*}
\Pi(z_1,\ldots, z_N;\rho) = E_{\Pi} \prod_{i=1}^{n}\prod_{k=1}^{N} \Gamma(\walpha_i - \iota\nu_k)e^{o(1)}, \qquad E_{\Pi} = \prod_{i=1}^{n}\prod_{k=1}^{N}\frac{1}{e^{\A(\e)} \e^{1-\walpha_i + \iota\nu_j}},
\end{equation*}
and
\begin{equation*}
P_{\lambda^{(N)}}(z_1,\ldots, z_N) =E_{P}\psi_{\nu_1,\ldots, \nu_N}(T_{N,1},\ldots, T_{N,N}) e^{o(1)}, \qquad E_{P} =  \e^{-\frac{N(N-1)}{2}} e^{-\frac{(N-1)(N+2)}{2}\A(\e)}\e^{-n\iota\sum_{k=1}^{N} \nu_k},
\end{equation*}
where the $o(1)$ error goes uniformly (with respect to $\{T_{k,i}\}_{1\leq i\leq k\leq N}\in D$ and $\{\nu_i\}_{1\leq i \leq N}\in V$) to zero as $\e$ goes to zero.

{\bf Step 3:} Fix a compact subset $V\subset \R^N$. Then
\begin{equation*}
m_N^{q}(z)\prod_{i=1}^{N} \frac{dz_i}{z_i} = E_{m} m_N(\nu) \prod_{i=1}^{N} d\nu_i e^{o(1)}, \qquad E_m= \e^{N^2}e^{N(N-1)\A(\e)}
\end{equation*}
where the $o(1)$ error goes uniformly (with respect to $\{\nu\}_{1\leq N}\in V$) to zero as $\e$ goes to zero.

{\bf Step 4:} We have
\begin{equation}\label{PiPscalareqnalpha}
\langle \Pi(z_1,\ldots, z_N;\rho),P_{\lambda}(z_{1},\ldots, z_{N})\rangle'_N = \left(e^{\frac{(N-1)(N-2)}{2}\A(\e)} \e^{\frac{N(N+1)}{2}}\prod_{i=1}^{n}\prod_{k=1}^{N}\frac{1}{\e^{1-\alpha_i}}\right)\theta_{\walpha,n}(T_{N,1},\ldots,T_{N,N}) e^{o(1)}
\end{equation}
where the $o(1)$ error goes uniformly (with respect to $\{T_{k,i}\}_{1\leq i\leq k\leq N}\in D$) to zero as $\e$ goes to zero.

The proof of this step is involved and relies on precise asymptotic estimates involving the tails of the $q$-Whittaker functions. We presently forgo this estimate.

Steps 2 and 3 readily show that the integrand of the scalar product on the left-hand side of equation (\ref{PiPscalareqnalpha}) converges to the integrand of $\theta_{\alpha,n}$ on the right-hand side times $E_{\Pi}\overline{E_{P}} E_m$ (recall that we must take the complex conjugate of $P_{\lambda}$ and hence also $E_{P}$). These three terms combine to equal the prefactor of $\theta_{\alpha,n}$ above. This convergence, however, is only on compact sets of the integrand variables $\nu$. In order to conclude convergence of the integrals we must show suitable tail decay for large $\nu$.

In particular, the estimate we need to show is that if we define
\begin{equation*}
V_M = \{(z_1,\ldots, z_N): \forall 1\leq k\leq N, z_k=e^{\iota\e \nu_k} \textrm{ and } |\nu_k|>M\},
\end{equation*}
then
\begin{equation*}
\lim_{M\rightarrow \infty} \lim_{\e\rightarrow 0}\int_{\nu\in V_{M}} \frac{\Pi(z_1,\ldots, z_N;\rho)}{E_{\Pi}} \frac{\overline{P_{\lambda^{(N)}}(z_1,\ldots, z_N)}}{\overline{E_{P}}} \frac{m_N^q(z)}{E_m} \prod_{i=1}^{N} \frac{ dz_i}{z_i}=0
\end{equation*}
with the limits uniform with respect to $\{T_{k,i}\}_{1\leq i\leq k\leq N}\in D$. Such an estimate (which would complete the proof of this lemma) uses the Mellin Barnes integral representation for Whittaker functions in Section~\ref{WMellinBarnes} and an analogous representation for the $q$-Whittaker functions.

%
\end{proof}

We may combine the above lemmas, just as in the proof of Theorem~\ref{theorem26}, to complete the proof Theorem~\ref{alphalimitthm}, modulo the above decay estimates which we do not presently make.
\end{proof}

\subsection{Integral formulas for the $\alpha$-Whittaker process}\label{alphamoment}

Similarly to Section~\ref{whitintform}, the moment formulas for the $q$-Whittaker processes give rise to moment formulas for the $\alpha$-Whittaker processes. The proof of the following statements follows the same path as that for Propositions \ref{Proposition27}, \ref{Proposition28}, \ref{Proposition28FULLtzero} and use decay noted in Remark \ref{starstar83} in replacement of the decay estimate of Proposition~\ref{supexpthetadecay}.

\begin{proposition}[modulo decay assumption in Remark \ref{starstar83}]\label{Proposition27alpha}
For any $1\le r\le N$,
\begin{eqnarray*}
\lefteqn{\left\langle e^{-(T_{N,N}+T_{N,N-1}+\cdots+T_{N,N-r+1})}\right\rangle_{\alphaWM{a_1,\dots,a_N;\alpha_1,\ldots, \alpha_n}} =}\\
&&\frac{(-1)^{\frac{r(r+1)}2+Nr}}{(2\pi \iota)^rr!} \oint\cdots\oint \prod_{1\le k<l\le r} (w_k-w_l)^2\prod_{j=1}^r\left(\prod_{m=1}^N \frac{1}{w_j+a_m}\right)\left(\prod_{m=1}^{n} (\alpha_m - w_j)\right)dw_j\,,
\end{eqnarray*}
where the contours include all the poles of the integrand.
\end{proposition}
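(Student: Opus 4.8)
\textbf{Proof proposal for Proposition~\ref{Proposition27alpha}.}

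The plan is to derive this formula as the $q\to 1$ scaling limit of Proposition~\ref{qsumlambdaprop} (the $t=0$ moment formula for the ascending $q$-Whittaker process), using the scalings of Theorem~\ref{alphalimitthm}, in exactly the same spirit that Propositions~\ref{Proposition27}, \ref{Proposition28} and \ref{Proposition28FULLtzero} were obtained from their $q$-deformed counterparts. First I would take the pure alpha specialization in Proposition~\ref{qsumlambdaprop}: setting $\gamma=0$, $\beta_i=0$ and keeping the $\alpha_i$ nonzero, that proposition gives $\left\langle q^{\lambda_N+\cdots+\lambda_{N-r+1}}\right\rangle_{\MM_{t=0}(A_1,\ldots,A_N;\rho)}$ as a Vandermonde-squared contour integral with integrand $\prod_j\big(\prod_m \frac{A_m}{A_m-z_j}\big)\big(\prod_i(1-\alpha_i z_j)\big)\,z_j^{-r}\,dz_j$. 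Then, as in Remark following Proposition~\ref{Proposition28FULLtzero}, rather than trying to pass the weak-convergence statement of Theorem~\ref{alphalimitthm} through the (non-compactly supported) observable, I would either (a) develop the limiting identity directly at the level of Whittaker functions — i.e., take the $q\to 1$, $t=0$ limit of the Macdonald difference operator $D_N^r$ as was done in Section~\ref{sec7.5} to produce the Whittaker difference operator identity (\ref{Whitdiffeqn}), and then combine it with the spectral/torus-product representation of the $\alpha$-Whittaker density — or (b) carry out the change of variables $z_j = e^{-\e w_j}$ (equivalently $z_j \approx 1 - \e w_j$) inside the contour integral of Proposition~\ref{qsumlambdaprop} and track each factor.

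Carrying out route (b): under $q=e^{-\e}$, $\malpha_i = e^{-\walpha_i\e}$, $A_k=e^{-\e a_k}$, $z_j = e^{-\e w_j}$, we have $q^{\lambda_N+\cdots+\lambda_{N-r+1}}\to e^{-(T_{N,N}+\cdots+T_{N,N-r+1})}$ up to the explicit deterministic prefactors of Theorem~\ref{alphalimitthm}; $\prod_m \frac{A_m}{A_m-z_j}\to \prod_m \frac{1}{w_j+a_m}$ up to a power of $\e$; $\prod_i(1-\alpha_i z_j)\to \e^{n}\prod_i(\alpha_i - w_j)$ (this is the key factor that is absent in the Plancherel case and that replaces $e^{-\tau w_j}$); $z_j^{-r}dz_j \to (-\e)\,dw_j$ up to a constant; and the sign $(-1)^{r(r+1)/2}$ from Proposition~\ref{qsumlambdaprop} combines with the $(-1)^{Nr}$ coming from $\prod_m(-1)$ in $\frac{1}{w_j+a_m}$ and with the $(-1)^{Nr}$ already present (from $(-a_m)^{\cdots}$) to give the stated $(-1)^{r(r+1)/2+Nr}$. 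Matching all powers of $\e$ against the normalization prefactors in Theorem~\ref{alphalimitthm} (precisely the bookkeeping done in Lemmas~\ref{Plemmaalpha}, \ref{Pilemmaalpha}, \ref{Qlemmaalpha}) should make every spurious $\e$-power cancel, leaving the claimed formula. The contour, which in Proposition~\ref{qsumlambdaprop} encircles $\{A_1,\ldots,A_N\}$ and no other poles, scales to one encircling $\{-a_1,\ldots,-a_N\}$; since the factor $\prod_i(\alpha_i-w_j)$ is entire in $w_j$, no new poles are created and ``the contours include all the poles of the integrand'' is the correct limiting prescription.

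The main obstacle, as the statement's ``modulo decay assumption in Remark~\ref{starstar83}'' flag already indicates, is justifying that the limit may be taken under the integral/expectation. Because the $\alpha$-Whittaker measure is supported on all of $\R^{N(N+1)/2}$ and the observable $e^{-(T_{N,N}+\cdots)}$ is unbounded, one cannot simply invoke weak convergence; one needs a dominated-convergence argument requiring a uniform-in-$\e$ tail bound on the relevant $q$-Whittaker objects together with the super-exponential decay of $\theta_{\walpha,n}$ (the analogue of Proposition~\ref{supexpthetadecay}, which for the $\alpha$-Whittaker case is exactly the decay estimate deferred in Remark~\ref{starstar83} and in Step 4 of Lemma~\ref{Qlemmaalpha}). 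Granting that decay input, the argument is routine: fix a compact set in the $T$-variables, use Theorem~\ref{qwhitconvTHM}-type uniform bounds on $\psi^\e$ to dominate the integrand, apply dominated convergence to pass $\e\to 0$ term by term in the (now justified) moment identity, and identify the limit via the asymptotics above. I would therefore present the proof conditionally on the same decay estimate used throughout Section~\ref{alphawhitprocessSec}, deriving the formula by the $q\to 1$ degeneration of Proposition~\ref{qsumlambdaprop} and verifying the prefactor cancellation against Theorem~\ref{alphalimitthm}.
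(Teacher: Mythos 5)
The paper's actual argument is your route (a), not route (b).  Immediately before the statement the paper says the proof ``follows the same path as that for Propositions~\ref{Proposition27}, \ref{Proposition28}, \ref{Proposition28FULLtzero}'', and those are established in Section~\ref{sec7.5} by applying the Whittaker-level difference-operator identity~(\ref{Whitdiffeqn}) to the spectral (Baxter-kernel) decomposition of the measure --- a computation carried out entirely at $\e=0$, never by passing to the limit in a $q$-deformed contour-integral identity.  Your detailed route (b) does precisely what the paper disavows in the remark after Proposition~\ref{Proposition28FULLtzero}: ``Since the Whittaker measure is supported on all of $\R^N$, it is clear that the above formulas can not be proved from the weak convergence result''.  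This is not a stylistic quibble.  The decay assumption in Remark~\ref{starstar83} is the $\alpha$-analogue of Proposition~\ref{supexpthetadecay}, and in route (a) it is used for a very specific purpose: to justify integrating~(\ref{Whitdiffeqn}) in a weak sense against the $\alpha$-Whittaker density.  What route (b) would require instead --- a uniform-in-$\e$ bound on the full $q$-Whittaker density that dominates the unbounded observable $q^{\lambda_N+\cdots+\lambda_{N-r+1}}$ so that one can pass $\e\to 0$ inside the expectation --- is neither what Remark~\ref{starstar83} supplies nor what the part-1 bound of Theorem~\ref{qwhitconvTHM} gives (that bound controls only the $P$-polynomial, not the $Q_\lambda(\rho)/\Pi$ factor).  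So the difficulty you call ``the main obstacle'' is resolved by route (a) and genuinely not resolved by the inputs you cite for route (b); presenting the argument ``conditionally on the same decay estimate'' does not close route (b).

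There is also a bookkeeping error in your route (b).  With $A_m=e^{-\e a_m}$ and $z_j=e^{-\e w_j}$ one finds $A_m/(A_m-z_j)\to\bigl(\e(w_j-a_m)\bigr)^{-1}$, i.e.\ denominator $w_j-a_m$ (not $w_j+a_m$), and the contours, which encircle $\{A_m\}$ at the $q$-level, scale to contours around $\{a_m\}$, not $\{-a_m\}$.  The stated form with $w_j+a_m$ and contours around $\{-a_m\}$ appears only after a further substitution $w_j\mapsto -w_j$, which the paper makes explicit at the very end of the proof of Proposition~\ref{Proposition28FULLtzero} (``follows by taking $w^{(\alpha)}_i=-w_{\alpha,i}$'').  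Your sign accounting cannot be right as written: $(-1)^{r(r+1)/2}$ multiplied by two separate factors of $(-1)^{Nr}$ gives $(-1)^{r(r+1)/2}$, not $(-1)^{r(r+1)/2+Nr}$; the surviving $(-1)^{Nr}$ and the orientation flip come from the $w\mapsto -w$ change of variables you omit.  In short, route (a), which you name but do not develop, is the paper's proof and should be the one you carry out.
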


The limiting form of Proposition~\ref{prop8tzero} is the following:
\begin{proposition}[modulo decay assumption in Remark \ref{starstar83}]\label{Proposition28alpha}
For any $k \ge 1$,
\begin{eqnarray*}
\lefteqn{\left\langle e^{-kT_{N,N}}\right\rangle_{\alphaWM{a_1,\dots,a_N;\alpha_1,\ldots, \alpha_n}} =}  \\
&&\frac{(-1)^{k(N-1)}}{(2\pi \iota)^k} \oint\cdots\oint \prod_{1\le A<B\le k} \frac{w_A-w_B}{w_A-w_B+1}\prod_{j=1}^k \left(\prod_{m=1}^N \frac{1}{w_j+a_m}\right)\left(\prod_{m=1}^{n} (\alpha_i - w_j)\right) dw_j\,,
\end{eqnarray*}
where the $w_j$-contour contains $\{w_{j+1}-1,\cdots,w_k-1,-a_1,\cdots,-a_N\}$ and no other singularities for $j=1,\dots,k$.
\end{proposition}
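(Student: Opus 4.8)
The plan is to obtain this formula as the $\e\to 0$ limit of Proposition~\ref{prop8tzero} under the pure alpha scalings of Theorem~\ref{alphalimitthm}, exactly paralleling how Proposition~\ref{Proposition28} is extracted from Proposition~\ref{prop8tzero} under the Plancherel scalings. Concretely, I would start from the identity of Proposition~\ref{prop8tzero} with $\gamma=0$, $\beta_i\equiv 0$, specialization parameters $\alpha_i$ replaced by $\malpha_i = e^{-\walpha_i\e}$, and $a_j$ replaced by $A_j = e^{-\e a_j}$. The integrand in that proposition then carries a product $\prod_{i=1}^n (1-\malpha_i z_j)$ in place of the Plancherel exponential factor, and a product $\prod_{m=1}^N \frac{A_m}{A_m - z_j}$ as before. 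Under the change of variables $z_j = e^{\e w_j}$ (equivalently $z_j = q^{-w_j}$, matching the scaling $\lambda^{(k)}_j \sim -(k+1-2j)\e^{-1}\log\e^{-1}$ that localizes the relevant poles), one has $q^{k\lambda_N} \to e^{-kT_{N,N}}$ after the logarithmic recentering, $\frac{A_m}{A_m - z_j} \to \frac{1}{w_j + a_m}$ (up to a power of $\e$ that is absorbed by the recentering of $\lambda_N$), and $1-\malpha_i z_j \to \e(\alpha_i - w_j)$, again with the powers of $\e$ matching those generated by the GT-pattern recentering. The kernel factor $\frac{z_{\kappa_1} - z_{\kappa_2}}{z_{\kappa_1} - q z_{\kappa_2}}$ converges to $\frac{w_{\kappa_1} - w_{\kappa_2}}{w_{\kappa_1} - w_{\kappa_2} + 1}$ (note the sign: $z_A - qz_B \to \e(w_A - w_B + 1)$), and $\frac{dz_j}{z_j} \to \e\, dw_j$; collecting all prefactors, the total power of $\e$ and the signs $(-1)^k q^{k(k-1)/2} \to (-1)^k$, combined with the $(-1)^{kN}$ from the $N$ factors $\frac{1}{w_j + a_m}$ after sign reconciliation, assemble into $(-1)^{k(N-1)}$ as claimed.

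The first substantive step is to verify the bookkeeping of powers of $\e$ and of signs, so that the prefactor genuinely collapses to $(-1)^{k(N-1)}/(2\pi\iota)^k$ with no leftover $\e$-dependence; this is a routine but careful computation identical in spirit to Lemmas~\ref{Skewlemma}, \ref{Pilemma}, \ref{Qlemma}, and I would model it directly on the proof of Proposition~\ref{theorem26} and the remark preceding Proposition~\ref{Proposition28FULLtzero}. The second step is to check that the contours behave well under the limit: the $z_{\kappa}$-contours in Proposition~\ref{prop8tzero}, which enclose $\{q z_{\kappa+1},\ldots,q z_k, a_1,\ldots,a_N\}$, must transform under $z = e^{\e w}$ into contours enclosing $\{w_{\kappa+1} - 1,\ldots, w_k - 1, -a_1,\ldots,-a_N\}$ and no other singularities. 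The singularities $\alpha_i - w_j$ of the new integrand are zeros, not poles, so they impose no constraint, which is why the contour description in the target statement involves only the $\{w_{B} - 1\}$ and $\{-a_m\}$ poles. Finally, the third step is to pass the limit inside the (finite, $k$-fold) contour integral; since the integrands are rational and the contours are fixed compact curves avoiding the relevant poles uniformly in $\e$ small, dominated convergence on each contour suffices.

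The main obstacle, as flagged in Remark~\ref{starstar83} and in the statement's own ``modulo decay assumption'' caveat, is that the weak convergence in Theorem~\ref{alphalimitthm} is itself conditional on decay estimates for $\theta_{\walpha,n}$ that the paper does not establish, and moreover weak convergence alone does not deliver exponential-moment formulas because the $\alpha$-Whittaker measure is supported on all of $\R^N$ (the observables $e^{-kT_{N,N}}$ are unbounded). So rather than deducing the formula from Theorem~\ref{alphalimitthm} directly, the honest route is to take the $\e\to 0$ limit at the level of the algebraic identity of Proposition~\ref{prop8tzero}, using the tail bounds on $q$-Whittaker functions from Theorem~\ref{qwhitconvTHM} together with the decay of $\theta_{\walpha,n}$ noted in Remark~\ref{starstar83}, exactly as the proof of Proposition~\ref{Proposition28FULLtzero} uses Proposition~\ref{supexpthetadecay} in the Plancherel case. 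Thus the genuine hard part is supplying (or invoking) the uniform tail control on the pure alpha $q$-Whittaker objects needed to justify interchanging the limit with the expectation; granting that, the contour and prefactor manipulations above are straightforward degenerations of formulas already proved.
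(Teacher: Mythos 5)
Your proposal takes a genuinely different route from the paper, and the route you propose is precisely the one the paper explicitly declines to pursue. In the remark following Proposition~\ref{Proposition28FULLtzero} the authors write that since the Whittaker measure is supported on all of $\R^N$, ``it is clear that the above formulas can not be proved from the weak convergence result of Theorem~\ref{theorem26}''; instead they ``derive the formulas by developing an analogous (degeneration) theory for Whittaker processes as we have done for Macdonald symmetric functions,'' and that is what Section~\ref{sec7.5} does. The paper's actual argument for Proposition~\ref{Proposition28FULLtzero} --- and, by the remark in Section~\ref{alphamoment}, the intended argument for Proposition~\ref{Proposition28alpha} as well --- is Whittaker-intrinsic: express the marginal density via the spectral form of the Baxter operators, multiply by the exponential observable, apply the Whittaker difference operator eigenrelation (\ref{Whitdiffeqnunintegerate})--(\ref{Whitdiffeqn}) under the Sklyanin orthogonality, and then convert the resulting sums over index subsets into contour integrals via the Cauchy-residue lemma. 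The $\alpha$ case differs only in that the Gaussian factor $e^{-\tau\sum\nu_j^2/2}$ is replaced by $\prod_{i,k}\Gamma(\alpha_i-\iota\nu_k)$, so that the eigenvalue picked up under the integration is a product of $(\alpha_i-w_j)$ factors rather than an exponential; no $q\to 1$ limit is taken anywhere in the argument.

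The gap in your proposal is therefore not merely a technicality you have flagged and deferred. You correctly observe that the unboundedness of $e^{-kT_{N,N}}$ rules out deducing the formula from weak convergence, and you propose instead to take $\e\to 0$ ``at the level of the algebraic identity of Proposition~\ref{prop8tzero}.'' But an identity between an expectation and a contour integral is still an identity between two numbers; passing to the limit requires showing both sides converge, and the left-hand side is exactly the expectation of an unbounded observable that defeated the weak-convergence route. You gesture at ``tail bounds on $q$-Whittaker functions from Theorem~\ref{qwhitconvTHM} together with the decay of $\theta_{\walpha,n}$'' as supplying uniform integrability, but no such estimate is proved (or even stated) in the paper in the form you would need --- uniform in $\e$ as well as in the spatial variable --- and you do not sketch how to produce it. Moreover, the analogy you draw with Proposition~\ref{supexpthetadecay} is misplaced: in the paper, that decay estimate is used to justify applying the weak identity (\ref{Whitdiffeqn}) and the analytic continuation of Whittaker orthogonality \emph{within} the Whittaker-level computation, not to pass a $q\to 1$ limit through an expectation. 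So while your first two paragraphs of contour/prefactor bookkeeping are plausible and likely correct if the interchange were justified, the third paragraph acknowledges the load-bearing step without discharging it, and the load-bearing step is substantial enough that the paper's authors chose to avoid it entirely by arguing directly at the Whittaker level.
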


The limiting form of Proposition~\ref{prop8FULLtzero} is the following:
\begin{proposition}[modulo decay assumption in Remark \ref{starstar83}]\label{Proposition28FULLtzeroalpha}
Fix $k\geq 1$ and $r_{\kappa}\geq 1$ for $1\leq \kappa\leq k$. Then
\begin{eqnarray*}
\lefteqn{\left\langle \prod_{\kappa=1}^{k} e^{-(T_{N,N}+\cdots +T_{N,N-r_{\kappa}+1})}\right\rangle_{\alphaWM{a_1,\dots,a_N;\tau}} =}\\
&&\prod_{\kappa=1}^{k} \frac{(-1)^{r_{\kappa}(r_{\kappa}+1)/2 + Nr_{\kappa}}}{(2\pi \iota)^{r_{\kappa}} r_{\kappa}!} \oint \cdots \oint \prod_{1\leq \kappa_1 <\kappa_2\leq k} \left(\prod_{i=1}^{r_{\kappa_1}}\prod_{j=1}^{r_{\kappa_2}} \frac{w_{\kappa_1,i}-w_{\kappa_2,j}}{w_{\kappa_1,i}-w_{\kappa_2,j}+1}\right)\\
 &&\times \prod_{\kappa=1}^{k}\left( \left(\prod_{1\leq i<j\leq r_{\kappa}}(w_{\kappa,i}-w_{\kappa,j})^2\right) \left(\prod_{j=1}^{r_{\kappa}}\frac{
\prod_{m=1}^{n}(\alpha_n-w_{\kappa,j}) dw_{\kappa,j}}{(w_{\kappa,j}+a_1)\cdots (w_{\kappa,j}+a_N)}\right)\right)\,,
\end{eqnarray*}
where the $w_{\kappa_1,j}$-contour contains $\{w_{\kappa_2,i}-1\}$ for all $i\in \{1,\ldots,r_{\kappa_2}\}$ and $\kappa_2>\kappa_1$, as well as $\{-a_1,\ldots,-a_N\}$ and no other singularities.
\end{proposition}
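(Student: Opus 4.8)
The plan is to deduce Proposition~\ref{Proposition28FULLtzeroalpha} from the $q$-Whittaker analogue, Proposition~\ref{prop8FULLtzero}, by taking the $q\to 1$ scaling limit dictated by Theorem~\ref{alphalimitthm}, exactly as Proposition~\ref{Proposition28FULLtzero} was obtained from the Whittaker-process side. First I would recall that by Theorem~\ref{alphalimitthm} the $q$-Whittaker process $\M_{asc,t=0}(A_1,\dots,A_N;\rho)$ with pure alpha specialization $\rho$ determined by $\alpha=(\alpha_1,\dots,\alpha_n)$ and the scalings $q=e^{-\e}$, $\malpha_i=e^{-\alpha_i\e}$, $A_k=e^{-\e a_k}$, and $\lambda^{(k)}_j=n\e^{-1}\log\e^{-1}+(k+1-2j)\e^{-1}\log\e^{-1}+T_{k,j}\e^{-1}$, converges weakly to $\alphaW{a;\alpha,n}$. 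Under these scalings $q^{\lambda^{(N)}_N+\cdots+\lambda^{(N)}_{N-r_\kappa+1}}$ becomes, up to an explicit deterministic prefactor (a power of $\e$ and $\log\e^{-1}$ that cancels against the change in the contour integral normalization), $e^{-(T_{N,N}+\cdots+T_{N,N-r_\kappa+1})}$; this is the same bookkeeping as in the passage from Proposition~\ref{qsumlambdaprop} to Proposition~\ref{Proposition27}. So the left-hand side of Proposition~\ref{prop8FULLtzero} converges to the left-hand side of Proposition~\ref{Proposition28FULLtzeroalpha} (modulo the decay estimate of Remark~\ref{starstar83}, which justifies that the exponential observable is integrable against the limiting measure and that the convergence of moments is legitimate).

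Next I would carry out the corresponding asymptotics on the right-hand side of Proposition~\ref{prop8FULLtzero}. Setting $\gamma=\beta_i=0$ and $\alpha_i=\e^{-1}(1-\malpha_i)\to$ finite, the factor $\prod_{i\geq1}(1-\alpha_i z_{\kappa,j})$ in Proposition~\ref{prop8FULLtzero}, after the change of variables $z_{\kappa,j}=e^{-\e w_{\kappa,j}}$ rescaling the contour integration variables (so $z_{\kappa,j}\to 1$, $w_{\kappa,j}$ finite), converges to $\prod_{m=1}^{n}(\alpha_m-w_{\kappa,j})$ — this is the same computation that appears in the proof of Proposition~\ref{qlaplaceintegralform} and in the $\Pi$ asymptotics (\ref{PiqGammaeqn}) of Lemma~\ref{Pilemmaalpha}. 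The ratios $q\frac{z_{\kappa_1,i}-z_{\kappa_2,j}}{z_{\kappa_1,i}-qz_{\kappa_2,j}}$ converge to $\frac{w_{\kappa_1,i}-w_{\kappa_2,j}}{w_{\kappa_1,i}-w_{\kappa_2,j}+1}$ (wait: with the sign convention $z=e^{-\e w}$ one gets $\frac{w_{\kappa_1,i}-w_{\kappa_2,j}}{w_{\kappa_1,i}-w_{\kappa_2,j}-1}$ before the final substitution $w_{\kappa,j}\to -w_{\kappa,j}$ in the statement, at which point it becomes $+1$ and the pole constraint $qz_{\kappa_2,i}$ becomes $w_{\kappa_2,i}-1$); the factors $\frac{z_{\kappa,i}-z_{\kappa,j}}{-z_{\kappa,j}}\to \e(w_{\kappa,j}-w_{\kappa,i})$ combine over $i\neq j$ to produce the Vandermonde square $\prod_{i<j}(w_{\kappa,i}-w_{\kappa,j})^2$; the factors $\frac{-a_m}{z_{\kappa,j}-a_m}\to \frac{1}{w_{\kappa,j}+a_m}$; and $e^{(q-1)\gamma z_{\kappa,j}}=1$ since $\gamma=0$. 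One then tracks how the prefactors $(-1)^{-r_\kappa}$, $r_\kappa!$, $(2\pi\iota)^{r_\kappa}$, the Jacobian $dz_{\kappa,j}/z_{\kappa,j}=-\e\,dw_{\kappa,j}$, and the $\e$-powers from the limits above combine to match the claimed normalization $\prod_{\kappa=1}^{k}\frac{(-1)^{r_\kappa(r_\kappa+1)/2+Nr_\kappa}}{(2\pi\iota)^{r_\kappa}r_\kappa!}$, exactly paralleling the limit analysis in the proofs of Propositions~\ref{Proposition27} and \ref{Proposition28}. The only subtle point here is to verify that one can interchange the $\e\to 0$ limit with the (now $\e$-dependent) contour integrals; this follows by dominated convergence using uniform bounds on the integrands along fixed contours encircling $\{-a_1,\dots,-a_N\}$ and the shifted poles, since the integrands are rational and the polynomial factor $\prod_m(\alpha_m-w_{\kappa,j})$ contributes only polynomial growth against which the contour compactness suffices.

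I expect the main obstacle to be precisely the missing decay estimate flagged in Remark~\ref{starstar83} (the analogue for $\theta_{\alpha,n}$ of Proposition~\ref{supexpthetadecay}): without super-exponential decay of $\theta_{\alpha,n}(\ul{x}{N})$ one cannot rigorously justify that $\langle \prod_\kappa e^{-(T_{N,N}+\cdots)}\rangle_{\alphaWM{a;\alpha,n}}$ is finite, that the $\alpha$-Whittaker measure integrates to one, or that the weak convergence of Theorem~\ref{alphalimitthm} upgrades to convergence of these exponential moments. Accordingly I would state Proposition~\ref{Proposition28FULLtzeroalpha} ``modulo the decay assumption in Remark~\ref{starstar83}'' (as the paper already does), and structure the proof as: (i) invoke Theorem~\ref{alphalimitthm} for weak convergence; (ii) use the decay assumption together with an equiintegrability argument to pass from weak convergence to moment convergence; (iii) perform the scaling-limit computation on both sides of Proposition~\ref{prop8FULLtzero} as sketched above; (iv) match the deterministic prefactors and conclude. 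A fully self-contained alternative — which the paper elsewhere alludes to — would be to develop a direct ``Whittaker difference operator'' theory for the $\alpha$-Whittaker process parallel to Section~\ref{sec7.5}, using the identity (\ref{Whitdiffeqn}) and the spectral/Baxter-operator representations; that route bypasses the $q$-deformed decay estimates at the cost of substantially more work, and I would mention it only as a remark rather than pursue it.
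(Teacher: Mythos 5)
Your primary route --- passing to the $q\to 1$ limit in Proposition~\ref{prop8FULLtzero} and trying to upgrade the weak convergence of Theorem~\ref{alphalimitthm} to convergence of exponential moments --- is not the path the paper takes, and in fact the paper explicitly rejects this idea in the Plancherel case. In the remark immediately following Proposition~\ref{Proposition28FULLtzero}, the authors write that ``Since the Whittaker measure is supported on all of $\R^N$, it is clear that the above formulas can not be proved from the weak convergence result of Theorem~\ref{theorem26},'' and they instead derive Propositions~\ref{Proposition27}, \ref{Proposition28}, \ref{Proposition28FULLtzero} directly, via the Whittaker difference operator machinery of Section~\ref{sec7.5}: the eigenrelation (\ref{Whitdiffeqnunintegerate})/(\ref{Whitdiffeqn}), the spectral form (\ref{spectralBaxKer}) of the Baxter kernel, and the residue lemma that collapses a sum over $|I|=r$ into an $r$-fold contour integral. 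The sentence in Section~\ref{alphamoment} --- ``The proof of the following statements follows the same path as that for Propositions~\ref{Proposition27}, \ref{Proposition28}, \ref{Proposition28FULLtzero} and use decay noted in Remark~\ref{starstar83} in replacement of the decay estimate of Proposition~\ref{supexpthetadecay}'' --- means exactly this direct route, with $\theta_{\tau}$ replaced by $\theta_{\alpha,n}$ and the Gaussian $e^{-\tau\nu^2/2}$ replaced by the Gamma product $\prod_{i,k}\Gamma(\alpha_i-\iota\nu_k)$, and with the $\alpha$-analogue decay estimate standing in for Proposition~\ref{supexpthetadecay} to justify the integral interchanges. What you relegate to a final remark (``a fully self-contained alternative\ldots develop a direct Whittaker difference operator theory'') is in fact the paper's actual argument.

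There is also a genuine gap in your step (ii). The decay flagged in Remark~\ref{starstar83} concerns the \emph{limiting} density $\theta_{\alpha,n}$; it is what one feeds into the Section~\ref{sec7.5} calculation to license applying (\ref{Whitdiffeqn}) under the integral sign, exactly as Proposition~\ref{supexpthetadecay} is used in the proofs of Propositions~\ref{Proposition28FULLtzero} and \ref{ktimeintform}. It does not by itself give equiintegrability of $\prod_\kappa e^{-(T_{N,N}+\cdots)}$ against the \emph{prelimit} $q$-Whittaker measures, which is what you would need to upgrade the weak convergence of Theorem~\ref{alphalimitthm} to moment convergence. To make your route rigorous one would need a tail bound for the $q$-Whittaker measures that is uniform in $\e$ --- a strictly stronger and separately unproven statement. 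Your right-hand-side asymptotics (the factor-by-factor limit of the contour integrals) is plausible and parallels the computations in Propositions~\ref{qsumlambdaprop}--\ref{Proposition28} and (\ref{PiqGammaeqn}), but it would only reproduce the claimed answer once the left-hand-side convergence is actually secured, which your sketch leaves open in a way that the paper's chosen route does not.
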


A statement similar to Proposition~\ref{Proposition29} and Proposition~\ref{qlaplaceintegralform} can be found in \cite{COSZ}.

\subsection{Fredholm determinant formulas for the $\alpha$-Whittaker process}\label{alphaFred}

Let us first write the analog to Theorem~\ref{PlancherelfredThm} (which was done for Plancherel specializations) in the case of pure alpha specializations.

\begin{theorem}\label{purealphafredThm}
Fix $\rho$ a pure alpha (see Definition~\ref{specializationtype}) Macdonald nonnegative specialization determined by positive numbers $\hat\alpha=(\hat\alpha_1,\ldots,\hat\alpha_n)$. Fix $0<\delta<1$ and $A_1,\ldots, A_N$ such that $|A_i -1|\leq d$ for some constant $d <\frac{1-q^{\delta}}{1+q^{\delta}}$. Then for all $\zeta\in \C\setminus \Rplus$

\begin{equation*}
\left\langle \frac{1}{\left(\zeta q^{\lambda_N};q\right)_{\infty}}\right\rangle_{\MM_{t=0}(A_1,\ldots, A_N;\rho)} = \det(I+K_{\zeta})_{L^2(C_{A})}
\end{equation*}
where $C_A$ is a positively oriented circle $|w-1|=d$ and the operator $K_{\zeta}$ is defined in terms of its integral kernel
\begin{equation*}
K_{\zeta}(w,w') = \frac{1}{2\pi \iota}\int_{-\iota \infty + \delta}^{\iota \infty +\delta} \Gamma(-s)\Gamma(1+s)(-\zeta)^s g_{w,w'}(q^s)ds
\end{equation*}
where
\begin{equation*}
g_{w,w'}(q^s) = \frac{1}{q^s w - w'} \prod_{m=1}^{N} \frac{(q^{s}w/A_m;q)_{\infty}}{(w/A_m;q)_{\infty}} \prod_{i=1}^{n} \frac{(\hat\alpha_i w;q)_{\infty}}{(q^{s}\hat\alpha_i w;q)_{\infty}}.
\end{equation*}
The operator $K_{\zeta}$ is trace-class for all $\zeta\in \C\setminus \Rplus$.
\end{theorem}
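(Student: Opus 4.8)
The plan is to follow the proof of Theorem~\ref{PlancherelfredThm} essentially verbatim, tracking the single change that the Plancherel factor $\exp\{(q-1)\gamma w\}$ in $f$ is replaced by the polynomial factor $\prod_{i=1}^n(1-\hat\alpha_i w)$. One starts from Corollary~\ref{abgCor}, which is already stated for an arbitrary nonnegative specialization $\rho$: for a pure alpha $\rho$ determined by $\hat\alpha=(\hat\alpha_1,\ldots,\hat\alpha_n)$ it gives, for all $|\zeta|$ small enough,
\[
\left\langle \frac{1}{(\zeta q^{\lambda_N};q)_\infty}\right\rangle_{\MM_{t=0}(A_1,\ldots,A_N;\rho)} = \det(I+K)_{L^2(\Zgzero\times C_A)},
\]
with kernel $K(n_1,w_1;n_2,w_2) = \zeta^{n_1}f(w_1)f(qw_1)\cdots f(q^{n_1-1}w_1)/(q^{n_1}w_1-w_2)$ and $f(w) = \left(\prod_{m=1}^N \frac{A_m}{A_m-w}\right)\prod_{i=1}^n(1-\hat\alpha_i w)$. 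Here $C_A$ may be taken to be the circle $|w-1|=d$, since $f$ is holomorphic away from the simple poles at $w=A_m$ (its zeros at $1/\hat\alpha_i$ cause no trouble), and $f(q^nw)\to f(0)=1$ uniformly in $w\in C_A$, so the constant $C$ of Corollary~\ref{abgCor} exists; the telescoping identity $\prod_{k=0}^{n-1}(1-\hat\alpha_i q^k w) = (\hat\alpha_i w;q)_n = (\hat\alpha_i w;q)_\infty/(q^n\hat\alpha_i w;q)_\infty$ matches these finite products with the ratios of $q$-Pochhammer symbols appearing in the stated $g_{w,w'}(q^s)$. (Implicitly Proposition~\ref{prop8tzero} and~\ref{mukprop}, already proved for general $\rho$, are what make Corollary~\ref{abgCor} applicable here.)

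Next I would run the Mellin--Barnes collapse of the discrete variable exactly as in Steps 1--2 of the proof of Theorem~\ref{PlancherelfredThm}: writing $K(n_1,w_1;n_2,w_2) = \zeta^{n_1}g_{w_1,w_2}(q^{n_1})$ and applying Lemma~\ref{gammasumlemma} term by term in the Fredholm expansion, using $\Res{s=k}\Gamma(-s)\Gamma(1+s) = (-1)^{k+1}$ and the reflection formula $\Gamma(-s)\Gamma(1+s) = \pi/\sin(-\pi s)$ to obtain the $e^{-\pi|\Imag(s)|}$ decay that justifies the deformation onto the line $\Real(s)=\delta$. The one genuinely new point is the pole bookkeeping: besides the pole of $1/(q^sw-w')$, the function $g_{w,w'}(q^s)$ now also has poles coming from the factors $1/(q^s\hat\alpha_i w;q)_\infty$, and one must check that these lie outside the semicircular contours $C_k$ used to approximate $C_{1,2,\ldots}$ (while the numerator factors $(q^sw/A_m;q)_\infty$ introduce only zeros). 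Since $\Real(s)\geq\delta>0$ on the relevant contours, $|q^s\hat\alpha_i w|\leq q^\delta\hat\alpha_i|w|\leq q^\delta\hat\alpha_i(1+d)$; using the standing constraint $\hat\alpha_i A_m<1$ together with $|A_m-1|\leq d$, this is at most $q^\delta(1+d)/(1-d)$, which is $<1$ precisely because $d<\frac{1-q^\delta}{1+q^\delta}$. Hence $q^s\hat\alpha_i w$ never equals $q^{-m}$ for $m\geq 0$, the new denominators are pole-free and bounded below on $\{\Real(s)\geq\delta\}$, and the same inequality makes $\big|\prod_m\frac{(q^sw/A_m;q)_\infty}{(w/A_m;q)_\infty}\prod_i\frac{(\hat\alpha_iw;q)_\infty}{(q^s\hat\alpha_iw;q)_\infty}\big|$ uniformly bounded for $w,w'\in C_A$ and $s$ on the contour --- the analog of the estimate~(\ref{pochbound}) used in the Plancherel proof. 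The decay estimates for $\Gamma(-s)\Gamma(1+s)(-\zeta)^s$ along the arc and segment parts are unchanged.

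Finally I would establish the identity for all $\zeta\in\C\setminus\Rplus$ by analytic continuation, copying Step 3 of the proof of Theorem~\ref{PlancherelfredThm}: the left side is analytic off $\zeta\in q^{\Zleqzero}$ via the uniformly convergent expansion $\sum_n \PP(\lambda_N=n)/(\zeta q^n;q)_\infty$ (valid since $\lambda_N$ is a nonnegative-integer random variable under the pure alpha $q$-Whittaker measure); the right side is analytic on $\C\setminus\Rplus$ because the uniform bound above together with Hadamard's inequality shows the Fredholm series converges uniformly on compact subsets of $\C\setminus\Rplus$, and $K_\zeta$ is trace-class by Lemma~\ref{traceclasscrit} (the $s$-integral and its $w,w'$-derivatives are continuous, with exponential decay in $\Imag(s)$ unaffected by differentiation). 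Uniqueness of analytic continuation then promotes the identity from a small disk around $0$ to all of $\C\setminus\Rplus$. I expect the main obstacle to be exactly the pole and contour bookkeeping in the Mellin--Barnes step --- checking that the hypotheses $\hat\alpha_i A_m<1$ and $d<\frac{1-q^\delta}{1+q^\delta}$ are precisely what keep the new $q$-Pochhammer singularities away from the deformation contours and secure the uniform bounds --- while everything else is a routine transcription of the Plancherel argument.
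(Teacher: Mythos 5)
Your proposal is correct and follows exactly the route the paper intends; the paper's own proof of Theorem~\ref{purealphafredThm} is simply the statement that it is ``a straightforward modification'' of the proof of Theorem~\ref{PlancherelfredThm}, and your write-up is a faithful spelling out of that modification. In particular, you correctly identify the only non-cosmetic change --- the pole bookkeeping for the new denominators $(q^s\hat\alpha_i w;q)_\infty$ --- and correctly verify that the implicit constraint $\hat\alpha_i A_m<1$ combined with $|A_m-1|\le d$, $|w|\le 1+d$, $\Real(s)\ge\delta$, and $d<\frac{1-q^\delta}{1+q^\delta}$ keeps $|q^s\hat\alpha_i w|<1$, so no new poles are encountered on the deformation contours and the estimate analogous to (\ref{pochbound}) holds; the rest (Corollary~\ref{abgCor} for general $\rho$, Lemma~\ref{gammasumlemma}, the $e^{-\pi|\Imag(s)|}$ decay, Hadamard/trace-class, analytic continuation) transcribes verbatim.
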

\begin{proof}
The proof of this result is a straightforward modification of that of Theorem~\ref{PlancherelfredThm}.
\end{proof}

From this we can prove the analog of Theorem~\ref{NeilPolymerFredDetThm}.

\begin{theorem}[modulo decay estimates of Theorem~\ref{alphalimitthm}]\label{logGammaPolymerFredDetThm}
Recall the conditions of Definition~\ref{alphadefs}.
Fix $0<\delta_2<1$, and $\delta_1<\delta_2 /2$. Fix $a_1,\ldots, a_N$ such that $|a_i|<\delta_1$. Then
\begin{equation*}
\left\langle e^{-u e^{-T_{N}}}  \right\rangle_{\alphaWM{a_1,\ldots,a_N;\alpha_1,\ldots,\alpha_n}} = \det(I+ K_{u})_{L^2(C_a)}
\end{equation*}
where $C_a$ is a positively oriented contour containing $a_1,\ldots, a_N$ such that for all $v,v'\in C_a$, $|v-v'|<\delta_2$. The operator $K_u$ is defined in terms of its integral kernel

\begin{equation}\label{kvvprimealpha}
K_{u}(v,v') = \frac{1}{2\pi \iota}\int_{-\iota \infty + \delta_2}^{\iota \infty +\delta_2}ds \Gamma(-s)\Gamma(1+s) \left(\prod_{m=1}^{N}\frac{\Gamma(v-a_m)}{\Gamma(s+v-a_m)}\right) \left(\prod_{m=1}^{n} \frac{\Gamma(v+\alpha_m)}{\Gamma(s+v+\alpha_m)}\right)  \frac{ u^s}{v+s-v'}.
\end{equation}
\end{theorem}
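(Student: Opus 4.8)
The plan is to mirror exactly the proof of Theorem~\ref{NeilPolymerFredDetThm}, replacing the Plancherel specialization input with the pure alpha input. The starting point is Theorem~\ref{purealphafredThm}, which gives a Fredholm determinant formula for $\langle (\zeta q^{\lambda_N};q)_\infty^{-1}\rangle_{\MM_{t=0}(A_1,\ldots,A_N;\rho)}$ when $\rho$ is a pure alpha specialization determined by $\hat\alpha=(\hat\alpha_1,\ldots,\hat\alpha_n)$, together with Theorem~\ref{alphalimitthm} (modulo its decay estimates), which provides weak convergence of the $q$-Whittaker process under the pure alpha scalings to the $\alpha$-Whittaker process. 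As in the Plancherel case, the proof splits into two steps: Step~1 shows that the left-hand side of the formula in Theorem~\ref{purealphafredThm} converges, under the scalings of Theorem~\ref{alphalimitthm}, to $\langle e^{-ue^{-T_N}}\rangle_{\alphaWM{a_1,\ldots,a_N;\alpha_1,\ldots,\alpha_n}}$; Step~2 shows that the associated Fredholm determinant converges to $\det(I+K_u)_{L^2(C_a)}$ with $K_u$ as in \eqref{kvvprimealpha}.

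For Step~1, I would use the scalings $q=e^{-\epsilon}$, $\hat\alpha_i = e^{-\alpha_i\epsilon}$, $A_k=e^{-\epsilon a_k}$, and $w=q^v$, together with $\zeta = -\epsilon^N e^{n\log\epsilon^{-1}} u$ (the precise power of $\epsilon$ in $\zeta$ dictated by matching $\zeta q^{\lambda_N}$ to $-ue^{-T_N}\epsilon/(1-q)$ under $\lambda^{(N)}_N = n\epsilon^{-1}\log\epsilon^{-1} + (N-1)\epsilon^{-1}\log\epsilon^{-1} + T_N\epsilon^{-1}$). Then $\langle (\zeta q^{\lambda_N};q)_\infty^{-1}\rangle = \langle e_q(x_q)\rangle$ with $x_q = (1-q)^{-1}\zeta q^{\lambda_N}$, and since $e_q(x)\to e^x$ uniformly on $(-\infty,0)$, the function $e_q(x_q)$ converges uniformly in $T_N\in\R$ to $e^{-ue^{-T_N}}$. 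Combining this with the weak convergence from Theorem~\ref{alphalimitthm} and Lemma~\ref{problemma2} yields the claimed limit of the left-hand side. This step is essentially identical to Step~1 of the proof of Theorem~\ref{NeilPolymerFredDetThm} and requires no new idea.

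For Step~2, I would rewrite the kernel $K_\zeta$ of Theorem~\ref{purealphafredThm} after the substitution $w=q^v$, obtaining $K^q(v,v')=\frac{1}{2\pi\iota}\int_{-\iota\infty+\delta}^{\iota\infty+\delta} h^q(s)\,ds$ where $h^q(s)$ contains $\Gamma(-s)\Gamma(1+s)$, a factor $(-\zeta/(1-q)^N)^s$, the Jacobian $q^v\log q/(q^sq^v-q^{v'})$, the product $\prod_m \Gamma_q(\log_q(q^v/A_m))/\Gamma_q(\log_q(q^sq^v/A_m))$ coming from the $(q^sw/A_m;q)_\infty/(w/A_m;q)_\infty$ terms, and the new product $\prod_{i=1}^n (\hat\alpha_i w;q)_\infty/(q^s\hat\alpha_i w;q)_\infty = \prod_{i=1}^n \Gamma_q(\log_q(q^sq^v\hat\alpha_i))/\Gamma_q(\log_q(q^v\hat\alpha_i))$ up to powers of $(1-q)$. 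Using $\hat\alpha_i w = q^{v+\alpha_i}$ and the uniform convergence $\Gamma_q\to\Gamma$ on compacts away from the poles, the extra factor converges pointwise to $\prod_{i=1}^n \Gamma(v+\alpha_i)/\Gamma(s+v+\alpha_i)$, and after absorbing the powers of $\epsilon$ and $e^{n\log\epsilon^{-1}}$ into $u^s$, one recovers \eqref{kvvprimealpha}. To upgrade pointwise to Fredholm-determinant convergence one needs the three estimation lemmas analogous to Lemmas~\ref{uniflemma}, \ref{tailboundslemma}, \ref{fredtailemma}: uniform convergence of $h^q(s)$ on compact $s$-sets, a $q$-uniform bound on $\prod_m \Gamma_q(s+v-a_m)^{-1}\prod_i\Gamma_q(v+\alpha_i)/\Gamma_q(s+v+\alpha_i)$ times the remaining periodic-in-$\Imag(s)$ factors, and hence a $q$-uniform bound $|K^q(v,v')|\le C$. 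The tail bound lemma is where the pure alpha case differs slightly from Plancherel: there is no Gaussian $\exp\{\gamma q^v(q^s-1)\}$ factor providing decay in $\Imag(s)$, so the decay in the tail must come from the ratios of $\Gamma_q$ functions alone (the product $\prod_i\Gamma_q(v+\alpha_i)/\Gamma_q(s+v+\alpha_i)$ decays in $|\Imag(s)|$ by the same near-periodic reflection-formula estimate \eqref{365} already used), combined with the $\Gamma(-s)\Gamma(1+s)=\pi/\sin(-\pi s)$ factor which decays like $e^{-\pi|\Imag(s)|}$; one checks the product still decays exponentially. Once these bounds are in hand, Hadamard's inequality lets one truncate the Fredholm series, restrict the $s$-integrals to compacts, and pass to the limit term by term, exactly as in Theorem~\ref{NeilPolymerFredDetThm}.

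The main obstacle is the tail estimate in Step~2 in the absence of the Plancherel Gaussian factor: I would need to verify carefully that $\prod_{i=1}^n \Gamma_q(\log_q(q^sq^v\hat\alpha_i))/\Gamma_q(\log_q(q^v\hat\alpha_i))$, which is a product of $\Gamma_q$-ratios each of which is only quasi-periodic in $\Imag(s)$ with fundamental domain $[-\pi\epsilon^{-1},\pi\epsilon^{-1}]$, together with the $\prod_m \Gamma_q(s+v-a_m)^{-1}$ factor, provides enough decay (uniformly in $q\in(1/2,1)$ and $v,v'\in C_a$) for the contour integral defining $K^q$ to be bounded and for the Fredholm series to be summable. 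In the limiting kernel this is clear since $\prod_m\Gamma(v-a_m)/\Gamma(s+v-a_m)\cdot\prod_i\Gamma(v+\alpha_i)/\Gamma(s+v+\alpha_i)$ decays in $|\Imag(s)|$ like a product of powers of $|s|$ and the $\pi/\sin(-\pi s)$ factor decays exponentially; the work is to make this uniform in $q$ near $1$. This is a purely technical matter and I expect no genuinely new difficulty beyond what appears in the Plancherel proof, which is why the theorem is stated with the same ``modulo decay estimates'' caveat (inherited from Theorem~\ref{alphalimitthm}) as its companions, and the proof can be asserted to be ``a straightforward modification'' of that of Theorem~\ref{NeilPolymerFredDetThm}.
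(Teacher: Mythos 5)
Your proposal follows exactly the route the paper takes: the paper's entire proof consists of the sentence ``straightforward modification of Theorem~\ref{NeilPolymerFredDetThm}, relying on the weak convergence of Theorem~\ref{alphalimitthm} (modulo its decay estimates),'' and your two-step outline (Step~1 from Theorem~\ref{alphalimitthm} plus Lemma~\ref{problemma2}, Step~2 from Theorem~\ref{purealphafredThm} plus the three estimation lemmas) is the correct instantiation of that. Two small points, however, deserve attention.

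First, a harmless sign slip: under the scaling of Theorem~\ref{alphalimitthm} with $j=k=N$, one gets $\lambda^{(N)}_N = n\epsilon^{-1}\log\epsilon^{-1} + (1-N)\epsilon^{-1}\log\epsilon^{-1} + T_N\epsilon^{-1}$, not $+(N-1)$. Your value of $\zeta$ is nevertheless the right one (it was computed from the correct formula), so this does not propagate.

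Second, and more substantively, you correctly identify $\displaystyle \frac{(\hat\alpha_i w;q)_\infty}{(q^s\hat\alpha_i w;q)_\infty} = \frac{\Gamma_q(s+v+\alpha_i)}{\Gamma_q(v+\alpha_i)}$ up to a power of $(1-q)$, but then assert its $q\to 1$ limit is $\Gamma(v+\alpha_i)/\Gamma(s+v+\alpha_i)$, which inverts it. The correct limit is $\Gamma(s+v+\alpha_i)/\Gamma(v+\alpha_i)$, as one also sees directly from Proposition~\ref{Proposition28alpha}: the $k$-th moment has a factor $\prod_m(\alpha_m+w_j)$ (after $w\to -w$), whose telescoping product over $w, w+1,\ldots, w+s-1$ is $\Gamma(w+\alpha_m+s)/\Gamma(w+\alpha_m)$. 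This inversion is also present in the paper's equation \eqref{kvvprimealpha}, so you have likely transcribed a typo in the statement rather than made an independent error; but it does matter for your Step~2 tail estimate. As you have it, $\prod_i\Gamma_q(v+\alpha_i)/\Gamma_q(s+v+\alpha_i)$ does \emph{not} decay in $|\Imag(s)|$: estimate \eqref{365} gives only an exponential \emph{upper bound} on $1/\Gamma_q$, so the product as written grows like $e^{nc|\Imag(s)|}$ and, together with the similar growth of $\prod_m 1/\Gamma_q(s+v-a_m)$, overwhelms the $e^{-\pi|\Imag(s)|}$ decay of $\Gamma(-s)\Gamma(1+s)$ whenever $N+n\geq 3$. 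With the correct ratio $\Gamma(s+v+\alpha_i)/\Gamma(v+\alpha_i)$ the numerator $\Gamma_q(s+v+\alpha_i)$ decays, so $\prod_i$ contributes roughly $e^{-n\pi|\Imag(s)|/2}$; combined with $\prod_m\Gamma(v-a_m)/\Gamma(s+v-a_m)\sim e^{N\pi|\Imag(s)|/2}$ and $\pi/\sin(-\pi s)\sim e^{-\pi|\Imag(s)|}$, the integrand decays like $e^{-(1+(n-N)/2)\pi|\Imag(s)|}$, which is fine precisely because Definition~\ref{alphadefs} requires $n\geq N$. In other words, the structure of your argument is right — and it is exactly here that the assumption $n\geq N$ earns its keep — but the decay claim should be reattributed to the numerator $\Gamma_q(s+v+\alpha_i)$ rather than to the reciprocal.
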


\begin{proof}
The proof of this result is a straightforward modification of that of Theorem~\ref{NeilPolymerFredDetThm}. The proof relies on the weak convergence result of Theorem~\ref{alphalimitthm} which is proved modulo certain decay estimates.
\end{proof}

\chapter{Directed polymer in a random media}

\section{General background}\label{genback}

In this section we will focus on a class of models introduced first by Huse and Henley \cite{HuHe} which we will call {\it directed polymers in a random media}\index{directed polymers in a random media} (DPRM). Such polymers are directed in what is often referred to as a {\it time} direction, and then are free to configure themselves in the remaining $d$ {\it spatial} dimensions. The probability of a given configuration of the polymer is then given (relative to an underlying path measure on paths $\pi(\cdot)$) as a Radon Nikodym derivative which is often written as a {\it Boltzmann weight} \index{directed polymers in a random media!Boltzmann weight} involving a {\it Hamiltonian}\index{directed polymers in a random media!Hamiltonian} which assigns an energy to the path:
\begin{equation*}
dP_{Q}^{\beta}(\pi(\cdot)) = \frac{1}{Z^{\beta}_{Q}} \exp\{\beta H_{Q}(\pi(\cdot))\} dP_0(\pi(\cdot)).
\end{equation*}
In the above equation $dP_0$ represents the underlying path measure (which is independent of the Hamiltonian and its randomness). The parameter $\beta$ is known as the inverse temperature since modifying its value changes the balance between underlying path measure (entropy) and the energetic rewards presented by the disordered or random media in which the path lives. The term $H_{Q}$ represents the Hamiltonian which assigns an energy to a given path. The subscript $Q$ stands for {\it quenched}\index{directed polymers in a random media!quenched} which means that this $H_{Q}(\pi(\cdot))$ is actually a random function of the disorder $\omega$ which we think of as an element of a probability space. Finally, $Z^{\beta}_{Q}$ is the quenched partition function which is defined as necessary to normalize $dP_{Q}^{\beta}$ as a probability measure:
\begin{equation*}
Z^{\beta}_{Q}= \int \exp\{\beta H_{Q}(\pi(\cdot))\} dP_0(\pi(\cdot)).
\end{equation*}
The measure $dP_{Q}^{\beta}$ is a quenched polymer measure since it is still random with respect to the randomness of the Hamiltonian $H_{Q}$. This is to say that $dP_{Q}^{\beta}$ is also a function of the disorder $\omega$. We denote averages with respect to the disorder $\omega$ by $\langle \cdot \rangle$, so that $\langle Z^{\beta}_Q\rangle$ represents the averaged value of the partition function. We use $\PP$ for the probability measure for the disorder $\omega$ and denote the variance with respect to the disorder as $\var{\cdot}$.

At infinite temperature, $\beta=0$, and under standard hypotheses on $dP_0$ (i.e., i.i.d.\ finite variance increments) the measure $dP^{\beta}_{Q}(\pi(\cdot))$ rescales diffusively to that of a Brownian motion and thus the polymer is purely maximizing entropy. At zero temperature, $\beta=\infty$, the polymer measure concentrates on the path (or paths) $\pi$ which maximize the polymer energy $H_Q(\pi)$. A well studied challenge is to understand the effect of quenched disorder at positive $\beta$ on the behavior of a $dP^{\beta}_{Q}$-typical path of the free energy $F^\beta_{Q}:=\beta^{-1} \log (Z^{\beta}_Q)$. A rough description of the behaviour is given by the {\it transversal fluctuation exponent}\index{directed polymers in a random media!transversal fluctuation exponent}  $\xi$ and the {\it longitudinal fluctuation exponent}\index{directed polymers in a random media!longitudinal fluctuation exponent} $\chi$. There are many different ways these exponents have been defined, and it is not at all obvious that they exist for a typical polymer model -- though it is believed that they do. As $n$ goes to infinity, the first exponent describes the fluctuations of the endpoint of the path $\pi$: typically $|\pi(n)| \approx n^{\xi}$. The second exponent describes the fluctuations of the free energy: $\var{F_{\beta,Q}} \approx n^{2\chi}$. Assuming the existence of these exponents, in order to have a better understanding of the system it is of essential interest to understand the statistics for the properly scaled location of the endpoint and fluctuations of the free energy.


We will now focus entirely on Hamiltonians which take the form of a path integral through a space-time independent noise field. In the discrete setting of $dP_0$ as SSRW of length $n$, the noise field can be chosen as IID random variables $w_{i,x}$ and then $H_{Q}(\pi(\cdot)) = \sum_{i=0}^{n} w_{i,\pi(i)}$.

The first rigorous mathematical work on directed polymers was by Imbrie and Spencer \cite{IS} in 1988 where (by use of an elaborate expansion) they proved that in dimensions $d\ge 3$ and with small enough $\beta$, the walk is diffusive ($\xi=1/2$). Bolthausen \cite{Bolt} strengthened the result (under same same $d\ge 3$, $\beta$ small assumptions) to a central limit theorem for the endpoint of the walk. His work relied on the now fundamental observation that renormalized partition function (for $dP_0$ a SSRW of length $n$) $W_n=Z^{\beta}_{Q} /\langle Z^{\beta}_Q\rangle$ is a martingale.

By a zero-one law, the limit $W_\infty=\lim_{n\to \infty} W_n$ is either almost surely $0$ or almost surely positive. Since when $\beta=0$, the limit is 1, the term {\it strong disorder}\index{directed polymers in a random media!strong disorder} has come to refer to the case of $W_{\infty}=0$ since then the disordered noise has, indeed, a strong effect. The case $W_{\infty}>0$ is called {\it weak disorder}\index{directed polymers in a random media!weak disorder}.

There is a critical value $\beta_c$ such that weak disorder holds for $\beta<\beta_c$ and strong for $\beta>\beta_c$.  It is known that $\beta_c=0$ for $d\in\{1,2\}$ \cite{CSY} and $0<\beta_c\le\infty$ for $d\ge 3$. In $d\ge 3$ and weak disorder the walk converges to a Brownian motion, and the limiting diffusion matrix is the same as for standard random walk \cite{come-yosh-aop-06}.

On the other hand, in strong disorder it is known (see \cite{CSY}) that there exist (random) points at which the path $\pi$ has a positive probability (under $dP_{Q}^{\beta}$) of ending. This is certainly different behavior than that of a Brownian motion.

The behavior of directed polymer when restricted to $d=1$ has drawn significant attention and the scaling exponents $\xi,\chi$ and fluctuation statistics are believed to be universal with respect to the underlying path measure and underlying random Hamiltonian. Establishing such universality has proved extremely difficult (see \cite{SeppLog} for a review of the progress so far in this direction).

The KPZ universality belief is that in $d=1$, for all $\beta>0$ and all distributions for $w_{i,j}$ (up to certain conjectural conditions on finite moments) the exponents $\xi=2/3$ and $\chi=1/3$. A stronger form of this conjecture is that, up to centering and scaling, there exists a unique limit
\begin{equation*}
\lim_{\e\to 0} R_{\e} \frac{\log Z^{\beta}(t,x)}{\beta}
\end{equation*}
where $Z^{\beta}(t,x)$ is the point to point partition function of polymers ending at $x$ at time $t$. The operator $R_{\e}$ is the KPZ renormalization operator and acts on a space-time function as $(R_{\e} f)(t,x) = \e f(\e^{-3}t,\e^{-2}x)$ minus the necessary centering. This limit point is described in \cite{CQ2} where it is called the {\it KPZ renormalization fixed point}\index{directed polymers in a random media!KPZ renormalization fixed point}. Information about this fixed point (such as the fact that for a fixed $t$, it is spatially distributed as an Airy$_2$ process \cite{PS2}) has generally come from studying ground-state, or zero temperature models such as last passage percolation, TASEP or PNG (see the review \cite{ICreview}).

It has only been very recently that exactly solvable finite temperature polymer models have been discovered and analyzed. In the following sections we introduce these models, provide background as to what was previously known about their solvability and then demonstrate how the methods developed in this paper enhance the solvability.

\section{The O'Connell-Yor semi-discrete directed polymer}\label{OConmodel}
\index{O'Connell-Yor semi-discrete directed polymer}
\subsection{Definitions and results from \cite{OCon}}

\begin{definition}
An {\it up/right path}\index{O'Connell-Yor semi-discrete directed polymer!up/right path} in $\R\times \Z$ is an increasing path which either proceeds to the right or jumps up by one unit. For each sequence $0<s_1<\cdots<s_{N-1}<t$ we can associate an up/right path $\phi$ from $(0,1)$ to $(t,N)$ which jumps between the points $(s_i,i)$ and $(s_{i},i+1)$, for $i=1,\ldots, N-1$, and is continuous otherwise. Fix a real vector $a=(a_1,\ldots, a_N)$ and let $B(s) = (B_1(s),\ldots, B_N(s))$ for $s\geq 0$ be independent standard Brownian motions such that $B_i$ has drift $a_i$. Define the {\it energy} \index{O'Connell-Yor semi-discrete directed polymer!energy} of a path $\phi$ to be
\begin{equation*}
E(\phi) = B_1(s_1)+\left(B_2(s_2)-B_2(s_1)\right)+ \cdots + \left(B_N(t) - B_{N}(s_{N-1})\right).
\end{equation*}\glossary{$E(\phi)$}
Then the {\it semi-discrete directed polymer partition function}\index{O'Connell-Yor semi-discrete directed polymer!partition function} $\Zsd^{N}(t)$ is given by
\begin{equation*}
\Zsd^{N}(t) = \int e^{E(\phi)} d\phi,
\end{equation*}\glossary{$\Zsd^{N}(t)$}
where the integral is with respect to Lebesgue measure on the Euclidean set of all up/right paths $\phi$ (i.e., the simplex of jumping times $0<s_1<\cdots<s_{N-1}<t$). One can introduce the {\it hierarchy of partition functions}\index{O'Connell-Yor semi-discrete directed polymer!partition function hierarchy} $\Zsd^{N}_{n}(t)$ for $0\leq n\leq N$ via  $\Zsd^{N}_{0}(t)=1$ and for $n\geq 1$,
\begin{equation*}
\Zsd^{N}_{n}(t) = \int_{D_{n}(t)} e^{\sum_{i=1}^{n} E(\phi_i)} d\phi_1 \cdots d\phi_n,
\end{equation*}\glossary{$\Zsd^{N}_{n}(t)$}
where the integral is with respect to the Lebesgue measure on the Euclidean set $D_n(t)$ of all $n$-tuples of non-intersecting (disjoint) up/right paths with initial points $(0,1),\ldots, (0,n)$ and endpoints $(t,N-n+1),\ldots, (t,N)$.

The {\it hierarchy of free energies}\index{O'Connell-Yor semi-discrete directed polymer!free energy hierarchy} $\Fsd^{N}_{n}(t)$ for $1\leq n\leq N$ is defined via
\begin{equation*}
\Fsd^{N}_{n}(t) = \log\left(\frac{ \Zsd^{N}_{n}(t)}{ \Zsd^{N}_{n-1}(t)}\right).
\end{equation*}\glossary{$\Fsd^{N}_{n}(t)$}
\end{definition}

The triangular array
\begin{equation*}
\left\{\Fsd^{k}_{j}(\cdot)\right\}_{1\leq j\leq k \leq N}:[0,\infty) \to \R^{N(N+1)/2}
\end{equation*}
can be recognized as being almost surely (path-wise) equal to the trajectories of a certain diffusion Markov process. In order to state this result we define an auxiliary diffusion Markov process
\begin{equation*}
\left\{\Gsd^{k}_{j}(\cdot)\right\}_{1\leq j\leq k \leq N}:[0,\infty) \to \R^{N(N+1)/2}
\end{equation*}
recursively as follows: Let $d\Gsd^{1}_{1} = d\tilde{B}_1$ and, for $k=2,\ldots,N$
\begin{eqnarray}\label{nonsymdyn}
\nonumber d\Gsd^{k}_1 &=& d\Gsd^{k-1}_1 + e^{\Gsd^{k}_2 - \Gsd^{k-1}_1} dt\\
\nonumber d\Gsd^{k}_2 &=& d\Gsd^{k-1}_2 + \left(e^{\Gsd^{k}_3 - \Gsd^{k-1}_2} - e^{\Gsd^{k}_2 - \Gsd^{k-1}_1}\right) dt\\
&\vdots&\\
\nonumber d\Gsd^{k}_{k-1} &=& d\Gsd^{k-1}_{k-1} + \left(e^{\Gsd^{k}_k - \Gsd^{k-1}_{k-1}} - e^{\Gsd^{k}_{k-1} - \Gsd^{k-1}_{k-2}}\right) dt\\
\nonumber d\Gsd^{k}_k &=& d\tilde{B}_k  - e^{\Gsd^{k}_k - \Gsd^{k-1}_{k-1}} dt,
\end{eqnarray}
where the $\tilde{B}_{k}=-B_k$ and $B_k$ are the independent standard Brownian motions with drifts $a_k$ which serve as the inputs for the polymer model.

It is shown in \cite{OCon} that:

\begin{theorem}\label{OConthm}
Fix $N\geq 1$ and a vector of drifts $a=(a_1,\ldots, a_N)$, then:
\begin{enumerate}
\item If we define $\Gsd$ in terms of $\Fsd$ via
\begin{equation*}
\left\{\Fsd^{k}_j(\cdot)\right\}_{1\leq j\leq k\leq N}=\left\{-\Gsd^{k}_{k-j+1}(\cdot)\right\}_{1\leq j\leq k\leq N}
\end{equation*}
then, almost surely, $\Gsd$ satisfies the diffusion Markov process defined in (\ref{nonsymdyn}).
\item $\left\{\Gsd^N_n(\cdot)\right\}_{1\leq n\leq N}$ evolves as a diffusion Markov process in $\R^N$ (with respect to its own filtration) with infinitesimal generator given by
\begin{equation*}
\mathcal{L}_{a} = \tfrac{1}{2} \psi_{\iota  a}^{-1} \left(H-\sum_{i=1}^{N} a_i^2\right) \psi_{\iota a} = \tfrac{1}{2}\Delta + \nabla \log \psi_{\iota a} \cdot \nabla
\end{equation*}
where $\psi_{\iota a}(x) = \psi_{\iota a_1,\ldots, \iota a_N}(x_1,\ldots, x_N)$ is the Whittaker functions (see Section~\ref{Whittakerfunctiondefs} and note the difference between the given definition and that of \cite{OCon}), where $H$ is the quantum $\mathfrak{gl}_N$ Toda lattice Hamiltonian
\begin{equation*}
H=\Delta - 2\sum_{i=1}^{N-1} e^{x_{i+1}-x_i}.
\end{equation*}
The entrance law at time $t$ for this diffusion is given by the Whittaker measure $\WM{a;t}$ of  (\ref{WMdef}).

\item For each $t>0$, the conditional law of $\Gsd(t)=\left\{\Gsd^{k}_{j}(t)\right\}_{1\leq j\leq k \leq N}$ given
\begin{equation*}
\left\{\left\{\Gsd^{N}_n(s)\right\}_{1\leq n\leq N}:s\leq t, \left\{\Gsd^{N}_n(t)\right\}_{1\leq n\leq N} = x\right\}
\end{equation*}
is given by the density
\begin{equation*}
\psi_{\iota a}(x)^{-1} \exp({\mathcal{F}_{\iota a}(\Gsd(t))}).
\end{equation*}
\end{enumerate}
\end{theorem}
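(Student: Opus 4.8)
The plan is to reproduce O'Connell's argument, whose algebraic backbone is the fact that the triangular array $\{\Zsd^k_j(t)\}$ is the image of the driving Brownian motions $(B_1,\dots,B_N)$ under a Brownian version of the geometric (tropical) RSK correspondence; but the cleanest route to the diffusion statements is stochastic-analytic and proceeds in three stages: (i) a closed recursive system of stochastic differential equations for the partition-function hierarchy, (ii) an It\^o change of variables to the free energies $\Fsd^k_j$, and (iii) a Markov-functions (intertwining) argument that isolates the top row. For stage (i) I would first peel off the top level: conditioning on the times at which the $n$ disjoint up/right paths counted by $\Zsd^k_n(t)$ make their last jump onto level $k$ expresses $\Zsd^k_n(t)$ as a time-integral of $\Zsd^{k-1}_{\bullet}(s)$ against exponentials of the increment $B_k(t)-B_k(s)$; applying It\^o's formula to this representation, and carefully tracking the $\tfrac12$ It\^o correction produced by $e^{B_k(t)}$, yields a closed system in which $d\Zsd^k_n$ has a martingale part equal to $\Zsd^k_n$ times the sum of the $dB_\ell$ over the levels $\ell$ currently occupied by path endpoints, and a finite-variation part assembled out of nearest-neighbour ratios of the $\Zsd$'s. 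The base cases are $\Zsd^k_0\equiv 1$ and $d\Zsd^1_1=\Zsd^1_1\,dB_1$.

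\textbf{Passing to free energies.} Next I would set $\Fsd^k_j=\log(\Zsd^k_j/\Zsd^k_{j-1})$ and apply It\^o again. Because the martingale increments of $\Zsd^k_j$ and $\Zsd^k_{j-1}$ differ only by the single factor $dB_{k-j+1}$, the martingale part of $d\Fsd^k_j$ telescopes to $dB_{k-j+1}$; under the relabelling $\Fsd^k_j=-\Gsd^k_{k-j+1}$ this means $\Gsd^k_j$ is driven by $\tilde B_j:=-B_j$ for every $k\ge j$, so there are exactly the $N$ independent Brownian inputs appearing in (\ref{nonsymdyn}). The finite-variation part of $d\Fsd^k_j$ is $\Zsd^{k-1}_{j}/\Zsd^k_j-\Zsd^{k-1}_{j-1}/\Zsd^k_{j-1}$ plus It\^o corrections, and one checks that after the $\tfrac12$ terms from stage (i) cancel against those produced here, this collapses exactly to $e^{\Gsd^k_{j+1}-\Gsd^{k-1}_j}-e^{\Gsd^k_j-\Gsd^{k-1}_{j-1}}$, each exponential being precisely one of the nearest-neighbour ratios $\Zsd^{k-1}_m\Zsd^k_{m-1}/(\Zsd^{k-1}_{m-1}\Zsd^k_m)$. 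This establishes part (1). Verifying that every cross-term and It\^o correction conspires to produce the clean Toda-type drift, together with supplying the representation of $\Zsd^k_n$ that makes the martingale bookkeeping transparent, is the step I expect to be the main obstacle.

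\textbf{Markov functions for parts (2) and (3).} Let $\mathcal G$ be the generator of the full diffusion $\{\Gsd^k_j\}$ obtained in part (1), and let $\Lambda(x,\cdot)$ be the kernel on $\R^{N(N+1)/2}$, conditioned on the top row being $x$, with density $\psi_{\iota a}(x)^{-1}\exp(\mathcal F_{\iota a}(\cdot))$. This density is nonnegative and integrates to one precisely because Givental's recursive integral representation (\ref{recWhit}) writes $\psi_{\iota a}(x)$ as the integral of $\exp(\mathcal F_{\iota a})$ over the interior rows. I would then verify the intertwining $\mathcal G\Lambda=\Lambda\mathcal L_a$ with $\mathcal L_a=\tfrac12\psi_{\iota a}^{-1}(H-\sum a_i^2)\psi_{\iota a}$; concretely this reduces to the differential identity that conjugating $H$ by the Givental kernel and integrating out the interior variables reproduces $\mathcal G$, together with $H\psi_{\iota a}=(\sum a_i^2)\psi_{\iota a}$ (Whittaker functions are $\mathfrak{gl}_N$-Toda eigenfunctions, Section~\ref{Whittakerfunctiondefs}), the latter also giving the stated form $\mathcal L_a=\tfrac12\Delta+\nabla\log\psi_{\iota a}\cdot\nabla$. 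Since the full process, started from its own entrance law, projects under ``read off the top row'' compatibly with $\Lambda$, the Rogers--Pitman/Kurtz intertwining criterion yields both that $\{\Gsd^N_n\}$ is autonomously Markov with generator $\mathcal L_a$ (part (2)) and that the conditional law of the array given the top-row trajectory is $\Lambda$ (part (3)).

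\textbf{The entrance law.} Finally, to identify the time-$t$ law of $\{\Gsd^N_n\}$ as $\WM{a;t}$, I would expand the $H$-heat semigroup $e^{tH/2}$ in Whittaker functions using their orthogonality and completeness (Section~\ref{orthcom}), writing its kernel as an integral of $\psi_\nu\overline{\psi_\nu}\,e^{-t\sum\nu_j^2/2}$ against the Sklyanin measure; conjugating by $\psi_{\iota a}$ as in $\mathcal L_a$ then produces exactly the density (\ref{WMdef}) built from $\theta_\tau$ in (\ref{thetaformula}). That this is a probability measure is Proposition~\ref{equiv1}, and matching the deterministic blow-up plus Gaussian fluctuations of the $\Fsd^N_n(t)$ as $t\to0$ (read off from the small-time asymptotics of the defining path integrals) pins it down as the correct entrance law, completing the proof.
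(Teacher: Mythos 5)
The paper does not prove this theorem: it is stated immediately after the sentence ``It is shown in \cite{OCon} that'', so the result is imported wholesale from O'Connell's paper (Theorem~3.1 and Sections~8--9 of \cite{OCon}), and there is no in-paper argument for you to be compared against.

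As a reconstruction of O'Connell's argument, your outline has the right three ingredients: It\^o dynamics for the partition-function hierarchy $\Zsd^k_n$, the logarithmic change of variables to the free energies $\Fsd^k_j$ producing the coupled SDEs (\ref{nonsymdyn}), and a Rogers--Pitman/Kurtz intertwining of the full array with its top row via the Givental kernel, with the Toda eigenvalue relation $H\psi_{\iota a}=(\sum a_i^2)\psi_{\iota a}$ and Whittaker orthogonality/completeness identifying the generator $\mathcal L_a$ and the entrance law $\WM{a;t}$. Two caveats are worth naming explicitly. First, the step you yourself flag as the principal obstacle --- closing the SDE system for $\Zsd^k_n$ with $n>1$ via a ``last jump'' decomposition --- really is the technical core: the non-intersection constraint must be maintained under the conditioning, which is where a Karlin--McGregor/Lindstr\"om--Gessel--Viennot style argument (or, as in \cite{OCon}, the recursive structure of the continuous tropical RSK correspondence driven by $(B_1,\dots,B_N)$, which your sketch does not invoke) enters; without this the cancellation of It\^o corrections you describe is not established. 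Second, the intertwining criterion as you state it presupposes that the array is started from a law of the form $\Lambda(x,\cdot)$ for a fixed top row $x$, but here the initial condition is degenerate ($\Zsd^k_n(t)\to 0$ for $n\geq 1$ as $t\downarrow 0$), so one needs an entrance-law version of the Markov-functions theorem together with small-$t$ asymptotics identifying the limit with $\WM{a;t}$; your proposal gestures at this but does not carry it out. Neither gap is filled by the present paper, which simply cites \cite{OCon}.
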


\begin{remark}\label{whitsymrem}
It is useful to note the following symmetry: The transformation $T_{k,i}\leftrightarrow -T_{k,k+1-i}$ maps $\W{a;\tau}$ to $\W{-a;\tau}$ (the sign of $a_j$'s changes). This easily follows from the definition of $\W{a;\tau}$.
\end{remark}

\begin{corollary}\label{Fsdcor}
Fix $N\geq 1$ and a vector of drifts $a=(a_1,\ldots, a_N)$, then $\Fsd(t)=\left\{\Fsd^{k}_j(\cdot)\right\}_{1\leq j\leq k\leq N}$ is distributed according to the Whittaker process $\W{-a;t}$ defined in (\ref{Wdef}).
\end{corollary}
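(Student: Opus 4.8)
The plan is to obtain this as a direct consequence of Theorem~\ref{OConthm} together with the explicit densities (\ref{Wdef}) and (\ref{WMdef}). By part~1 of Theorem~\ref{OConthm}, $\Fsd$ is recovered from the diffusion $\Gsd$ of (\ref{nonsymdyn}) by the coordinate change $\Fsd^{k}_{j}=-\Gsd^{k}_{k-j+1}$; this is precisely the involution $T_{k,i}\leftrightarrow -T_{k,k+1-i}$ which, by Remark~\ref{whitsymrem}, pushes $\W{a;t}$ forward to $\W{-a;t}$. Hence the corollary reduces to the claim that, for each fixed $t>0$, the array $\Gsd(t)=\{\Gsd^{k}_{j}(t)\}_{1\le j\le k\le N}$ is distributed according to $\W{a;t}$.

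To establish that claim I would combine parts~2 and~3 of Theorem~\ref{OConthm}. Part~2 gives that the top level $\{\Gsd^{N}_{n}(t)\}_{1\le n\le N}$ has marginal density $\WM{a;t}$, while part~3 gives that, conditionally on $\{\Gsd^{N}_{n}(t)\}_{1\le n\le N}=x$, the full array $\Gsd(t)$ has density $\psi_{\iota a}(x)^{-1}\exp(\mathcal{F}_{\iota a}(\Gsd(t)))$. Multiplying the marginal by the conditional density, and inserting $\WM{a;t}(x)=e^{-t\sum_{j}a_j^2/2}\,\psi_{\iota a}(x)\,\theta_t(x)$ from (\ref{WMdef}), the factor $\psi_{\iota a}$ cancels and one is left with
\begin{equation*}
e^{-t\sum_{j=1}^{N}a_j^2/2}\exp\big(\mathcal{F}_{\iota a}(\Gsd(t))\big)\,\theta_t\big(\Gsd^{N}_{1}(t),\dots,\Gsd^{N}_{N}(t)\big),
\end{equation*}
which is exactly the density $\W{a;t}$ of (\ref{Wdef}). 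Thus $\Gsd(t)\sim\W{a;t}$, and the conclusion follows from the previous paragraph.

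The one point requiring care is the bookkeeping of coordinates: one must check that $\mathcal{F}_{\iota a}$ as it appears in part~3 of Theorem~\ref{OConthm}, evaluated on $\Gsd(t)$ in its natural indexing, agrees term by term with $\mathcal{F}_{\iota a}(T)$ in (\ref{Wdef}) (where the top row carries the label $N$), so that the cancellation of $\psi_{\iota a}$ is legitimate and the residual factor is genuinely the Whittaker-process density rather than a reindexed variant; this is built into O'Connell's construction but deserves an explicit verification. I do not expect analytic difficulties: all objects in play are honest Lebesgue densities, the measures $\W{\pm a;t}$ and $\WM{a;t}$ are already known to be probability measures (cf.\ Proposition~\ref{equiv1} and the remark following (\ref{WMdef})), and no decay estimate beyond those already available is needed. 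The argument yields the one-time marginal law $\W{-a;t}$ for each $t$; a process-level statement, if desired, would follow by additionally invoking the Markov property supplied by part~2 of Theorem~\ref{OConthm}.
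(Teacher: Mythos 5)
Your proof is correct and is exactly the argument the paper intends (the corollary is stated without explicit proof, immediately after Theorem~\ref{OConthm} and Remark~\ref{whitsymrem}, which are the only two ingredients you use): parts 2 and 3 of Theorem~\ref{OConthm} reassemble the joint density of $\Gsd(t)$ from the marginal $\WM{a;t}$ and the conditional $\psi_{\iota a}^{-1}\exp(\mathcal{F}_{\iota a})$, which is by inspection of (\ref{Wdef})--(\ref{WMdef}) the density $\W{a;t}$, and then part~1 together with Remark~\ref{whitsymrem} transports this to $\W{-a;t}$ for $\Fsd(t)$. Your closing remarks about the coordinate bookkeeping and the one-time-versus-process-level scope are appropriate caveats but do not indicate any gap.
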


\begin{remark}\label{probresrem}
As mentioned in the proof of Proposition~\ref{equiv1}, the above result provides probabilistic means to seeing that the Whittaker measure is a probability measure.
\end{remark}

\begin{remark}
As a corollary of the above result and the Bump-Stade identity O'Connell \cite{OCon} derived a integral formula for the Laplace transform of $\Zsd^{N}_{1}(t)$ (see Proposition~\ref{Proposition29}). In \cite{BorCorRem} we proved an identity which shows that this integral formula is equivalent to the Fredholm determinant formula we derived earlier (in the polymer context see in Section~\ref{restatedFredDet}).
\end{remark}

The dynamics given by the coupled stochastic ODEs in (\ref{nonsymdyn}) do not correspond to the Whittaker 2d-growth model given in Section~\ref{contlimit2dgrowth}. For instance, one readily sees that the dynamics of (\ref{nonsymdyn}) is driven by $N$ noises, whereas that of the Whittaker 2d-growth model is driven by $N(N+1)/2$ noises.
O'Connell does consider the Whittaker 2d-growth model and refers to it as a ``symmetric'' version of the dynamics of (\ref{nonsymdyn}) -- see Remark \ref{Whittaker2dgrowthprojections}. Restricted to the top level, both processes are Markovian with generator $\mathcal{L}$. Also, restricted to the edge of the triangular array, the dynamics of $\{T^{k}_{k}(\cdot)\}_{1\leq k\leq N}$ and of $\{\Gsd^{k}_{k}(\cdot)\}_{1\leq k\leq N}$ coincide and are both Markov processes. These edge dynamics correspond to the limit of the $q$-TASEP process.

\subsection{Integral formulas}\label{restatedIntForm}

The following result follows from Theorem~\ref{OConthm} and Proposition~\ref{Proposition27}.
\begin{proposition}\label{Proposition27OCon}
Fix $N\geq 1$ and a drift vector $a=(a_1,\ldots,a_N)$. Then for $1\leq r\leq N$ and $t\geq 0$
\begin{equation*}
\left\langle \Zsd^{N}_{r}(t) \right\rangle= \frac{(-1)^{\frac{r(r-1)}2} e^{rt/2}}{(2\pi \iota)^rr!} \oint\cdots\oint \prod_{1\le k<\ell\le r} (w_k-w_\ell)^2\prod_{j=1}^r\left(\prod_{m=1}^N \frac{1}{w_j-a_m}\right) {e^{t w_j}dw_j}\,,
\end{equation*}
where the contours include all the poles of the integrand.
\end{proposition}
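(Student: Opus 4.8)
The plan is to recognize $\langle\Zsd^N_r(t)\rangle$ as a joint exponential moment of the Whittaker measure that has already been evaluated in Proposition~\ref{Proposition27}, and then to massage the resulting contour integral into the stated form by a reflection of variables.

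First I would pass from partition functions to free energies. Since $\Zsd^N_0(t)=1$ and $\Fsd^N_j(t)=\log\bigl(\Zsd^N_j(t)/\Zsd^N_{j-1}(t)\bigr)$, telescoping the product gives
\[
\Zsd^N_r(t)=\prod_{j=1}^r\frac{\Zsd^N_j(t)}{\Zsd^N_{j-1}(t)}=\exp\Bigl(\sum_{j=1}^r\Fsd^N_j(t)\Bigr),
\]
so it is enough to compute $\bigl\langle \exp\bigl(\sum_{j=1}^r\Fsd^N_j(t)\bigr)\bigr\rangle$. Next I would identify the joint law of $(\Fsd^N_1(t),\dots,\Fsd^N_N(t))$ with a marginal of the Whittaker measure. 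By Theorem~\ref{OConthm}(1), $\Fsd^N_j=-\Gsd^N_{N-j+1}$, and by Theorem~\ref{OConthm}(2) the vector $(\Gsd^N_1(t),\dots,\Gsd^N_N(t))$ is distributed according to the Whittaker measure $\WM{a_1,\dots,a_N;t}$ (whose argument we denote $(T_{N,1},\dots,T_{N,N})$). Hence, jointly, $\sum_{j=1}^r\Fsd^N_j(t)=-\bigl(\Gsd^N_N(t)+\cdots+\Gsd^N_{N-r+1}(t)\bigr)$ has the law of $-(T_{N,N}+T_{N,N-1}+\cdots+T_{N,N-r+1})$ under $\WM{a_1,\dots,a_N;t}$. (The same identity can be obtained instead from Corollary~\ref{Fsdcor} together with the symmetry $T_{k,i}\leftrightarrow-T_{k,k+1-i}$ of Remark~\ref{whitsymrem}.) Therefore
\[
\langle\Zsd^N_r(t)\rangle=\bigl\langle e^{-(T_{N,N}+T_{N,N-1}+\cdots+T_{N,N-r+1})}\bigr\rangle_{\WM{a_1,\dots,a_N;t}},
\]
and Proposition~\ref{Proposition27} evaluates the right-hand side as
\[
\frac{(-1)^{\frac{r(r+1)}2+Nr}}{(2\pi\iota)^rr!}\,e^{rt/2}\oint\cdots\oint\prod_{1\le k<\ell\le r}(w_k-w_\ell)^2\prod_{j=1}^r\Bigl(\prod_{m=1}^N\frac1{w_j+a_m}\Bigr)e^{-tw_j}\,dw_j,
\]
with each contour enclosing the poles $w_j=-a_m$.

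Finally I would substitute $w_j\mapsto-w_j$ in each of the $r$ integrals. Point reflection through the origin is orientation-preserving, so a positively oriented contour around the $w_j=-a_m$ becomes a positively oriented contour around the $w_j=a_m$; this substitution costs one factor of $(-1)$ per variable from $dw_j\mapsto-dw_j$, the Vandermonde factor $\prod_{k<\ell}(w_k-w_\ell)^2$ is invariant, each $\prod_m(w_j+a_m)^{-1}$ becomes $(-1)^N\prod_m(w_j-a_m)^{-1}$, and $e^{-tw_j}\mapsto e^{tw_j}$. Collecting the signs gives the total factor $(-1)^{\frac{r(r+1)}2+Nr}\cdot(-1)^r\cdot(-1)^{rN}=(-1)^{\frac{r(r+1)}2+r}=(-1)^{\frac{r(r-1)}2}$ (the last equality because $\tfrac{r(r+1)}2+r-\tfrac{r(r-1)}2=2r$), while $e^{rt/2}$ and $\tfrac1{(2\pi\iota)^rr!}$ are untouched; this is exactly the asserted formula, with contours enclosing all poles $w_j=a_m$ of the integrand. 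The main obstacle here is purely bookkeeping — correctly threading the index reversal $\Fsd^N_j\leftrightarrow\Gsd^N_{N-j+1}\leftrightarrow T_{N,N-j+1}$ and the orientation/sign accounting in the substitution; there is no analytic content beyond what is already packaged in Proposition~\ref{Proposition27} and Theorem~\ref{OConthm}.
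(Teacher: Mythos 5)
Your argument is correct and is exactly what the paper intends: the paper states that this proposition "follows from Theorem~\ref{OConthm} and Proposition~\ref{Proposition27}," which is precisely your telescoping of $\Zsd^N_r$ into $\exp(\sum_j\Fsd^N_j)$, the identification of $(\Fsd^N_1,\dots,\Fsd^N_N)$ with the Whittaker-measure coordinates via $\Fsd^N_j=-\Gsd^N_{N-j+1}$, and the reflection $w_j\mapsto -w_j$ in the contour integral. The sign bookkeeping $(-1)^{\frac{r(r+1)}2+Nr}\cdot(-1)^r\cdot(-1)^{Nr}=(-1)^{\frac{r(r-1)}2}$ and the orientation-preserving nature of the reflection are both correct.
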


The following result follows from Theorem~\ref{OConthm} and Proposition~\ref{Proposition28}. We give an independent check of this result in Part \ref{replicassec} using the replica approach (see Proposition~\ref{discDBG}).
\begin{proposition}\label{Proposition28OCon}
Fix $N\geq 1$ and a drift vector $a=(a_1,\ldots,a_N)$. Then for $k\geq 1$ and $t\geq 0$
\begin{equation*}
\left\langle  \left(\Zsd^{N}_{1}(t)\right)^k    \right\rangle = \frac{e^{kt/2}}{(2\pi \iota)^k} \oint\cdots\oint \prod_{1\le A<B\le k} \frac{w_A-w_B}{w_A-w_B-1}\prod_{j=1}^k \left(\prod_{m=1}^N \frac{1}{w_j-a_m}\right) {e^{t w_j}dw_j}\,,
\end{equation*}
where the $w_{A}$ contour contains only the poles at $\{w_B+1\}$ for $B>A$ as well as $\{a_1,\ldots, a_N\}$.
\end{proposition}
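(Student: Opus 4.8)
The plan is to deduce this directly from O'Connell's identification of the polymer free energy hierarchy with the Whittaker process (Theorem~\ref{OConthm} and Corollary~\ref{Fsdcor}) together with the Whittaker-measure moment formula of Proposition~\ref{Proposition28}, the last step being only a change of variables in the contour integral. First I would invoke Corollary~\ref{Fsdcor}, which states that the array $\{\Fsd^k_j(t)\}_{1\le j\le k\le N}$ is distributed according to the Whittaker process $\W{-a;t}$; integrating out every coordinate except the top row leaves the Whittaker measure $\WM{-a;t}$. Tracking the index identification built into Theorem~\ref{OConthm} --- namely $\Fsd^k_j=-\Gsd^k_{k-j+1}$ together with the fact that $\{\Gsd^N_n(t)\}_{1\le n\le N}$ has entrance law $\WM{a;t}$ --- one reads off in particular that $\Fsd^N_1(t)=-\Gsd^N_N(t)$ is equal in law to $-T_{N,N}$ under $\WM{a;t}$ (equivalently, by the sign-flip symmetry of Remark~\ref{whitsymrem}, to $T_{N,1}$ under $\WM{-a;t}$). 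Since $\Fsd^N_1(t)=\log\bigl(\Zsd^N_1(t)/\Zsd^N_0(t)\bigr)=\log\Zsd^N_1(t)$, this gives the exact identity of (possibly infinite) quantities
\begin{equation*}
\bigl\langle (\Zsd^N_1(t))^k\bigr\rangle=\bigl\langle e^{k\Fsd^N_1(t)}\bigr\rangle=\bigl\langle e^{-kT_{N,N}}\bigr\rangle_{\WM{a_1,\dots,a_N;t}},
\end{equation*}
and Proposition~\ref{Proposition28} shows the right-hand side is finite and equal to an explicit $k$-fold contour integral.

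It then remains to rewrite that contour integral in the form claimed. I would perform the substitution $w_j\mapsto -w_j$ in all $k$ variables of the formula of Proposition~\ref{Proposition28} (with $\tau=t$). Under this substitution the Jacobian produces $(-1)^k$, each factor $1/(w_j+a_m)$ produces a sign so that $\prod_{j,m}1/(w_j+a_m)$ produces $(-1)^{kN}$, the cross term $\tfrac{w_A-w_B}{w_A-w_B+1}$ is carried to $\tfrac{w_A-w_B}{w_A-w_B-1}$ with no sign change, and $e^{-tw_j}$ becomes $e^{tw_j}$; the accumulated sign $(-1)^{k(N-1)}\cdot(-1)^k\cdot(-1)^{kN}=(-1)^{2kN}=1$ cancels the prefactor $(-1)^{k(N-1)}$ of Proposition~\ref{Proposition28}, leaving $e^{kt/2}/(2\pi\iota)^k$. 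Simultaneously the nested contour of Proposition~\ref{Proposition28} --- the $w_j$-contour encircling $\{w_{j+1}-1,\dots,w_k-1,-a_1,\dots,-a_N\}$ and nothing else --- is carried to a contour encircling exactly $\{w_{j+1}+1,\dots,w_k+1,a_1,\dots,a_N\}$, which is precisely the contour prescribed in the statement. This yields the asserted formula.

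This is essentially bookkeeping, so there is no genuine obstacle; the points that need care are (i) fixing the index and sign conventions so that $\Fsd^N_1(t)$ is matched with the correct Whittaker coordinate and the correct sign of the drift vector, and (ii) verifying that the rescaling $w_j\mapsto -w_j$ does not sweep the integration contours across any poles of the integrand --- both immediate once one uses the explicit nested-contour description rather than an abstract one. One should also recall that Proposition~\ref{Proposition28} itself rests (through Proposition~\ref{Proposition28FULLtzero}) on the super-exponential decay estimate of Proposition~\ref{supexpthetadecay}, so no additional analytic input is required at this stage.
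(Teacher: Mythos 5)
Your argument is correct and follows the same route the paper takes: the paper states that Proposition~\ref{Proposition28OCon} ``follows from Theorem~\ref{OConthm} and Proposition~\ref{Proposition28}'' and leaves the details implicit, which is exactly what you fill in. Your identification $\Fsd^N_1(t)\stackrel{d}{=}-T_{N,N}$ under $\WM{a;t}$, the substitution $w_j\mapsto -w_j$, and the sign cancellation $(-1)^{k(N-1)}\cdot(-1)^k\cdot(-1)^{kN}=1$ together with the contour bookkeeping are all accurate.
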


The following more general statement follows from Theorem~\ref{OConthm} and Proposition~\ref{Proposition28FULLtzero}.
\begin{proposition}\label{Proposition28FULLtzeroOCon}
Fix $N\geq 1$ and a drift vector $a=(a_1,\ldots,a_N)$. Then for $k\geq 1$, $r_{\alpha}\geq 1$ for $1\leq \alpha\leq k$, $N\geq N_1\geq N_2\geq \cdots \geq N_k$, and $t\geq 0$,
\begin{eqnarray*}
\lefteqn{\left\langle \prod_{\alpha=1}^{k} \Zsd^{N_\alpha}_{r_{\alpha}}(t)\right\rangle =}\\
&&\prod_{\alpha=1}^{k} \frac{(-1)^{r_{\alpha}(r_{\alpha}-1)/2 }}{(2\pi \iota)^{r_{\alpha}} r_{\alpha}!} \prod_{\alpha=1}^{k} e^{t r_{\alpha}/2} \oint \cdots \oint \prod_{1\leq \alpha <\beta\leq k} \left(\prod_{i=1}^{r_{\alpha}}\prod_{j=1}^{r_{\beta}} \frac{w_{\alpha,i}-w_{\beta,j}}{w_{\alpha,i}-w_{\beta,j}-1}\right)\\
 &&\times \prod_{\alpha=1}^{k}\left( \left(\prod_{1\leq i<j\leq r_{\alpha}}(w_{\alpha,i}-w_{\alpha,j})^2\right) \left(\prod_{j=1}^{r_{\alpha}}\frac{ e^{t w_{\alpha,j}}dw_{\alpha,j}}{(w_{\alpha,j}-a_1)\cdots (w_{\alpha,j}-a_{N_\alpha})}\right)\right)\,,
\end{eqnarray*}
where the $w_{\alpha,j}$-contour contains $\{w_{\beta,i}+1\}$ for all $i\in \{1,\ldots,r_{\beta}\}$ and $\beta>\alpha$, as well as $\{a_1,\ldots,a_N\}$ and no other singularities.
\end{proposition}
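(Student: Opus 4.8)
The plan is to deduce Proposition~\ref{Proposition28FULLtzeroOCon} from the identification of the polymer free energy hierarchy with the Whittaker process (Corollary~\ref{Fsdcor}), the Whittaker-level moment formula of Proposition~\ref{Proposition28FULLtzero}, and the reflection symmetry of Remark~\ref{whitsymrem}. First I would use the telescoping identity $\log \Zsd^{N_\alpha}_{r_\alpha}(t) = \sum_{j=1}^{r_\alpha} \Fsd^{N_\alpha}_j(t)$, which follows at once from $\Fsd^{N}_n(t) = \log(\Zsd^{N}_n(t)/\Zsd^{N}_{n-1}(t))$ and $\Zsd^{N}_0(t)=1$. Thus $\prod_{\alpha=1}^k \Zsd^{N_\alpha}_{r_\alpha}(t) = \exp\big(\sum_{\alpha=1}^k\sum_{j=1}^{r_\alpha}\Fsd^{N_\alpha}_j(t)\big)$; since by Corollary~\ref{Fsdcor} the triangular array $\{\Fsd^k_j(t)\}_{1\le j\le k\le N}$ is distributed as $\W{-a;t}$ (with $\Fsd^k_j(t)$ playing the role of $T_{k,j}$), we get $\big\langle\prod_\alpha \Zsd^{N_\alpha}_{r_\alpha}(t)\big\rangle = \big\langle \prod_{\alpha=1}^k e^{T_{N_\alpha,1}+\cdots+T_{N_\alpha,r_\alpha}}\big\rangle_{\W{-a;t}}$.

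Next, I would apply the involution $T_{k,i}\leftrightarrow -T_{k,k+1-i}$ to the whole array: by Remark~\ref{whitsymrem} it pushes $\W{-a;t}$ forward to $\W{a;t}$, and it sends $T_{N_\alpha,1}+\cdots+T_{N_\alpha,r_\alpha}$ to $-(T_{N_\alpha,N_\alpha}+\cdots+T_{N_\alpha,N_\alpha-r_\alpha+1})$. Hence the expectation above equals $\big\langle \prod_{\alpha=1}^k e^{-(T_{N_\alpha,N_\alpha}+\cdots+T_{N_\alpha,N_\alpha-r_\alpha+1})}\big\rangle_{\W{a;t}}$, which is precisely the quantity computed by Proposition~\ref{Proposition28FULLtzero} with $\tau=t$ (the hypotheses $N\ge N_1\ge\cdots\ge N_k$ and $1\le r_\alpha\le N_\alpha$ agree). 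Finally I would perform the substitution $w_{\alpha,j}\mapsto -w_{\alpha,j}$ in all the contour integrals of Proposition~\ref{Proposition28FULLtzero}: the factors $\tfrac{w_{\alpha,i}-w_{\beta,j}}{w_{\alpha,i}-w_{\beta,j}+1}$ and $(w_{\alpha,i}-w_{\alpha,j})^2$ are invariant, $e^{-tw_{\alpha,j}}\mapsto e^{tw_{\alpha,j}}$, the contour of $w_{\alpha,j}$ around $\{w_{\beta,i}-1\}\cup\{-a_m\}$ becomes the one around $\{w_{\beta,i}+1\}\cup\{a_m\}$ after relabelling, each of the $\prod_{m=1}^{N_\alpha}(w_{\alpha,j}+a_m)^{-1}$ contributes a factor $(-1)^{N_\alpha}$, and since $z\mapsto -z$ is orientation preserving while $dz\mapsto -dz$ each of the $\sum_\alpha r_\alpha$ variables contributes one additional sign. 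Collecting these, the per-$\alpha$ prefactor $(-1)^{r_\alpha(r_\alpha+1)/2+N_\alpha r_\alpha}$ of Proposition~\ref{Proposition28FULLtzero} turns into $(-1)^{r_\alpha(r_\alpha+1)/2+2N_\alpha r_\alpha+r_\alpha}=(-1)^{r_\alpha(r_\alpha-1)/2}$, while the $e^{\tau r_\alpha/2}$ and $(2\pi\iota)^{-r_\alpha}(r_\alpha!)^{-1}$ factors are unchanged; this is exactly the claimed formula.

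The argument is essentially a bookkeeping translation, so there is no deep obstacle; the points that require care are (i) applying Remark~\ref{whitsymrem} to the full process $\W{-a;t}$ before projecting onto the levels $N_1,\ldots,N_k$, so that the joint law of the relevant $T$-marginals is correctly matched with the hypotheses of Proposition~\ref{Proposition28FULLtzero}, and (ii) tracking all the signs through the substitution $w\mapsto -w$ together with the accompanying orientation and contour relabelling. One should also note that this substitution never drags a contour across a spurious pole, since it simply reflects the entire pole configuration and preserves the nesting structure of the contours. No analytic input is needed beyond the already-established Proposition~\ref{Proposition28FULLtzero} and Corollary~\ref{Fsdcor}.
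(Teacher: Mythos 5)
Your proposal takes exactly the paper's route: the paper states that the proposition ``follows from Theorem~\ref{OConthm} and Proposition~\ref{Proposition28FULLtzero}'' without spelling out the details, and your argument supplies precisely the missing bookkeeping — telescoping the free-energy hierarchy so that $\Zsd^{N_\alpha}_{r_\alpha}(t)=\exp\bigl(\sum_{j=1}^{r_\alpha}\Fsd^{N_\alpha}_j(t)\bigr)$, invoking Corollary~\ref{Fsdcor}, applying the reflection of Remark~\ref{whitsymrem}, and then substituting $w\mapsto -w$. The sign count is right: per group $\alpha$ the substitution contributes $(-1)^{N_\alpha r_\alpha+r_\alpha}$, which turns the prefactor $(-1)^{r_\alpha(r_\alpha+1)/2+N_\alpha r_\alpha}$ into $(-1)^{r_\alpha(r_\alpha-1)/2}$.

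One small inaccuracy worth fixing: you write that the cross factors $\frac{w_{\alpha,i}-w_{\beta,j}}{w_{\alpha,i}-w_{\beta,j}+1}$ are \emph{invariant} under $w\mapsto -w$. They are not; under this substitution
\[
\frac{w_{\alpha,i}-w_{\beta,j}}{w_{\alpha,i}-w_{\beta,j}+1}\;\longmapsto\;\frac{w_{\alpha,i}-w_{\beta,j}}{w_{\alpha,i}-w_{\beta,j}-1},
\]
i.e.\ the $+1$ in the denominator flips to $-1$. This change (far from being a problem) is exactly what is required to match the target formula, and indeed you arrive at the correct conclusion anyway; only the word ``invariant'' should be replaced by ``transform to the required $-1$-shifted form.'' The genuinely invariant pieces are the Vandermonde squares $(w_{\alpha,i}-w_{\alpha,j})^2$. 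With that one-word correction your proof is complete and coincides with the paper's intended argument.
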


The following result follows from Theorem~\ref{OConthm} and Proposition~\ref{ktimeintform}.
\begin{proposition}\label{ktimeintformOCon}
Fix $N\geq 1$ and a drift vector $a=(a_1,\ldots,a_N)$. Then for $k$ time moments $0\leq t_1\leq t_2\leq \cdots \leq t_k$,
\begin{equation*}
\left\langle \prod_{i=1}^{k} \Zsd^{N}_{1}(t_i) \right\rangle = \frac{e^{\sum_{i=1}^{k} t_i/2}}{(2\pi \iota)^k} \oint\cdots\oint \prod_{1\leq A<B\leq k} \frac{w_{A}-w_{B}}{w_{A}-w_{B}-1} \prod_{m=1}^{N} \left(\prod_{j=1}^k \frac{1}{w_j-a_m} \right) e^{t_j w_j} dw_j,
\end{equation*}
where the $w_{A}$ contour contains only the poles at $\{w_B+1\}$ for $B>A$ as well as $\{a_1,\ldots, a_N\}$.
\end{proposition}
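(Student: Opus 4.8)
The plan is to obtain Proposition~\ref{ktimeintformOCon} as a direct transfer of Proposition~\ref{ktimeintform} along the identification of the O'Connell--Yor free energy hierarchy with the Whittaker 2d-growth model supplied by Theorem~\ref{OConthm}. First I would recall the precise dictionary. Part (3) of Theorem~\ref{OConthm} together with Corollary~\ref{Fsdcor} and Remark~\ref{whitsymrem} tells us that $\Fsd(t)$, as a process in $t$, is distributed as the Whittaker process $\W{-a;t}$; in particular $\Fsd^{N}_{1}(t)=\Fsd^{N}_{N}(t)$ matched against the edge is $-\Gsd^{N}_{N}(t)$, and more importantly the projection $\{\Gsd^{N}_{n}(\cdot)\}_{1\le n\le N}$ is the same Markov diffusion with generator $\mathcal L_a$ whose entrance law is $\WM{a;t}$, which is exactly the Markov diffusion $\{T_{N,j}(\cdot)\}_{1\le j\le N}$ appearing in Proposition~\ref{ktimeintform} (see Remark~\ref{Whittaker2dgrowthprojections} and the identification of ``symmetric dynamics'' with the Whittaker 2d-growth model in Section~\ref{contlimit2dgrowth}). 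Under the coordinate correspondence $\Fsd^{N}_{1}(t)\leftrightarrow -\Gsd^{N}_{N}(t)\leftrightarrow -T_{N,N}(t)$, and using $\Zsd^{N}_{1}(t)=\exp(\Fsd^{N}_{1}(t))$, we get $\Zsd^{N}_{1}(t_i)=e^{-T_{N,N}(t_i)}$ in law \emph{jointly} over the times $t_i$, because the whole time-indexed laws agree, not merely the one-time marginals.

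Next I would invoke Proposition~\ref{ktimeintform} verbatim for the diffusion with drift $-a$ (so that its law is $\W{-a;t}$, matching $\Fsd$), giving
\begin{equation*}
\left\langle e^{-\sum_{i=1}^{k} T_{N,N}(t_i)}\right\rangle
= \frac{(-1)^{k(N-1)} e^{\sum_{i=1}^{k} t_i/2}}{(2\pi \iota)^k} \oint\cdots\oint \prod_{1\le A<B\le k}\frac{w_A-w_B}{w_A-w_B+1}\prod_{m=1}^N\prod_{j=1}^k \frac{1}{w_j-a_m}\prod_{j=1}^{k} e^{-t_j w_j}\,dw_j ,
\end{equation*}
where here the drift vector entering the formula is $-a$, so $-a_m$ in the original statement becomes $a_m$ and the $w_A$-contour encircles $\{w_B-1\}_{B>A}$ and $\{a_1,\dots,a_N\}$. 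Then I would perform the change of variables $w_j\mapsto -w_j$ in every integration variable: each factor $1/(w_j-a_m)$ becomes $-1/(w_j+a_m)$, contributing $(-1)^{kN}$; the cross factor $(w_A-w_B+1)/(w_A-w_B)$ in the denominator is inverted in sign in a way that produces, after reindexing, $(w_A-w_B)/(w_A-w_B-1)$; the exponential $e^{-t_j w_j}$ becomes $e^{t_j w_j}$; each $dw_j$ contributes a sign; and the contour orientation flips contributing another sign per variable. Carefully collecting the sign $(-1)^{k(N-1)}\cdot(-1)^{kN}\cdot(-1)^{k}\cdot(-1)^{k}$ and checking it equals $+1$ yields exactly the formula of Proposition~\ref{ktimeintformOCon}, with the $w_A$-contour now containing $\{w_B+1\}_{B>A}$ and $\{a_1,\dots,a_N\}$ as claimed. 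I would double-check the sign bookkeeping once more, since that is the only place an error can creep in.

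The main obstacle, such as it is, is not analytic but bookkeeping: one must make sure that the Markov \emph{process} identification (not just fixed-time identification) is legitimately being used, so that the joint expectation over several times $t_1\le\cdots\le t_k$ is genuinely governed by the $\W{-a;\cdot}$ evolution — this is exactly what Theorem~\ref{OConthm}(3) and Remark~\ref{Whittaker2dgrowthprojections} provide, since the top-level projection of the Whittaker 2d-growth model and O'Connell's hierarchy share the generator $\mathcal L$ and entrance law, hence share all finite-dimensional distributions in time. Once that is granted, Proposition~\ref{ktimeintform} does all the work and the remainder is the substitution $T_{N,N}\to -\Fsd^{N}_1$, $a\to-a$, $w\to-w$ described above. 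I would also remark (as the paper does for Propositions~\ref{Proposition27OCon}--\ref{Proposition28FULLtzeroOCon}) that no new decay estimates are needed here because Proposition~\ref{ktimeintform} is already proved unconditionally in Section~\ref{sec7.5} using the spectral representation of the Baxter operators.
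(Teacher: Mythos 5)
Your approach is exactly the paper's: the paper proves this result by citing Theorem~\ref{OConthm} together with Proposition~\ref{ktimeintform}, and your emphasis that the identification must hold as a Markov \emph{process} in time (via matching generator $\mathcal{L}_a$ and entrance law $\WM{a;\cdot}$ for $\{\Gsd^N_n(\cdot)\}$ and for the top-level projection of the growth model with drift $-a$) is precisely the right point to stress. However, your intermediate display does not actually match ``Proposition~\ref{ktimeintform} verbatim for drift $-a$'': as stated, that proposition already refers to the growth model with drift $-a$ and has $1/(w_j+a_m)$ with poles at $\{-a_1,\dots,-a_N\}$, not $1/(w_j-a_m)$ with poles at $\{a_1,\dots,a_N\}$ as you wrote; your displayed formula is what one would get after substituting $a\mapsto -a$, which corresponds to drift $+a$ and entrance law $\W{-a;\cdot}$, under which the relevant coordinate is $T_{N,1}$, not $T_{N,N}$. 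Starting instead from the proposition as written and substituting $w_j\mapsto -w_j$ does give the result: $1/(w_j+a_m)\to -1/(w_j-a_m)$ contributes $(-1)^{kN}$, the cross factor and exponential transform with no sign, and $dw_j\to -dw_j$ with $-C$ still positively oriented contributes $(-1)^k$ (there is no separate orientation flip, so you should not count a second $(-1)^k$), giving total sign $(-1)^{k(N-1)+kN+k}=(-1)^{2kN}=1$ and poles migrating to $\{w_B+1\}$ and $\{a_m\}$. You flagged the sign bookkeeping as the place to double-check, and indeed both your intermediate formula and your sign tally need correction, though the errors happen to cancel.
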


\subsection{Fredholm determinant formula}\label{restatedFredDet}

The following result follows from Theorem~\ref{NeilPolymerFredDetThm}, Corollary \ref{Fsdcor} and Remark \ref{whitsymrem}.

\begin{theorem}\label{OConFredDet}
Fix $N\geq 1$ and a drift vector $a=(a_1,\ldots,a_N)$. Fix $0<\delta_2<1$, and $\delta_1<\delta_2 /2$ such that $|a_i|<\delta_1$. Then for $t\geq 0$,
\begin{equation*}
\left\langle e^{-u \Zsd^{N}_{1}(t) }  \right\rangle = \det(I+ K_{u})_{L^2(C_{a})}
\end{equation*}
where $C_{a}$ is a positively oriented contour containing $a_1,\ldots, a_N$ and such that for all $v,v'\in C_{a}$, we have $|v-v'|<\delta_2$. The operator $K_u$ is defined in terms of its integral kernel
\begin{equation*}
K_{u}(v,v') = \frac{1}{2\pi \iota}\int_{-\iota \infty + \delta_2}^{\iota \infty +\delta_2}ds \Gamma(-s)\Gamma(1+s) \prod_{m=1}^{N}\frac{\Gamma(v-a_m)}{\Gamma(s+v-a_m)} \frac{ u^s e^{vt s+t s^2/2}}{v+s-v'}.
\end{equation*}
\end{theorem}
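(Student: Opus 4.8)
The plan is to translate the Whittaker-measure Fredholm determinant of Theorem~\ref{NeilPolymerFredDetThm} into a statement about the semi-discrete polymer partition function by invoking O'Connell's identification of the free energy hierarchy with the Whittaker process. First I would record the dictionary. Since $\Zsd^{N}_{0}(t)\equiv 1$, the definition of the free energy hierarchy gives $\Fsd^{N}_{1}(t)=\log\Zsd^{N}_{1}(t)$, i.e.\ $\Zsd^{N}_{1}(t)=e^{\Fsd^{N}_{1}(t)}$. By Corollary~\ref{Fsdcor} (which is O'Connell's Theorem~\ref{OConthm} combined with Remark~\ref{whitsymrem}), the triangular array $\Fsd(t)=\{\Fsd^{k}_{j}(t)\}_{1\le j\le k\le N}$ is, at each fixed $t$, distributed according to the Whittaker process $\W{-a;t}$. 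Applying the coordinate reflection $T_{k,i}\leftrightarrow -T_{k,k+1-i}$ of Remark~\ref{whitsymrem}, which sends $\W{-a;t}$ to $\W{a;t}$, together with the relation $\Fsd^{k}_{j}=-\Gsd^{k}_{k-j+1}$ from Theorem~\ref{OConthm}(1), one obtains $\Fsd^{N}_{1}(t)=-\Gsd^{N}_{N}(t)$ with $\{\Gsd^{N}_{n}(t)\}_{1\le n\le N}$ carrying the Whittaker measure $\WM{a;t}$. Identifying $\Gsd^{N}_{N}(t)$ with the coordinate $T_{N,N}$ appearing in the definition of the Whittaker measure, this says precisely that $\Fsd^{N}_{1}(t)$ is equal in law to $-T_{N,N}(t)$ for $T$ drawn from $\WM{a_1,\dots,a_N;t}$.

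Granting this, the conclusion is immediate: for any $u$ with $\Real(u)>0$,
\[
\left\langle e^{-u\Zsd^{N}_{1}(t)}\right\rangle
=\left\langle e^{-u e^{\Fsd^{N}_{1}(t)}}\right\rangle
=\left\langle e^{-u e^{-T_{N,N}}}\right\rangle_{\WM{a_1,\dots,a_N;t}},
\]
and the right-hand side is exactly the quantity evaluated in Theorem~\ref{NeilPolymerFredDetThm} with $\tau=t$. The hypotheses of that theorem, namely $0<\delta_2<1$, $\delta_1<\delta_2/2$ and $|a_i|<\delta_1$, coincide verbatim with those assumed in Theorem~\ref{OConFredDet}, so no additional work is needed to invoke it. Its conclusion is $\det(I+K_u)_{L^2(C_a)}$ with the kernel~(\ref{kvvprime}), which upon renaming $\tau$ to $t$ is literally the kernel in the statement, and the constraint $\Real(u)>0$ is inherited from Theorem~\ref{NeilPolymerFredDetThm}.

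The point worth stressing is that no genuinely new analysis is required here: all the analytic content, in particular the degeneration of the $q$-Whittaker Fredholm determinant of Theorem~\ref{PlancherelfredThm} to the Whittaker one via Theorem~\ref{theorem26} and the attendant tail estimates, is already packaged inside Theorem~\ref{NeilPolymerFredDetThm}, while the probabilistic input is Theorem~\ref{OConthm}. The only step demanding care is the bookkeeping of drift signs and the coordinate reflection: one must verify that the polymer drifts $(a_1,\dots,a_N)$ enter the Whittaker process with the orientation for which $\Fsd^{N}_{1}$ matches $-T_{N,N}$ rather than, say, $-T_{N,1}$, and the symmetry of Remark~\ref{whitsymrem} together with the explicit relation $\Fsd^{k}_{j}=-\Gsd^{k}_{k-j+1}$ is exactly the tool that resolves this. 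I would also note that only the single-time marginal law of $\Fsd(t)$ is used, so none of the diffusion dynamics~(\ref{nonsymdyn}) enters the argument; if instead one wanted joint-in-$t$ Laplace transforms, Proposition~\ref{ktimeintform} would replace Theorem~\ref{NeilPolymerFredDetThm} at the corresponding step.
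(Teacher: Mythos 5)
Your proof is correct and is precisely the route the paper takes: the paper states Theorem~\ref{OConFredDet} "follows from Theorem~\ref{NeilPolymerFredDetThm}, Corollary~\ref{Fsdcor} and Remark~\ref{whitsymrem}," and your argument carries out exactly that bookkeeping, correctly identifying $\Fsd^{N}_{1}(t)$ with $-T_{N,N}$ under $\WM{a;t}$ and then quoting Theorem~\ref{NeilPolymerFredDetThm} verbatim. Your remark that the hypothesis $\Real(u)>0$ is inherited from Theorem~\ref{NeilPolymerFredDetThm} is a fair observation worth keeping in mind, since it is left implicit in the statement of Theorem~\ref{OConFredDet}.
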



The following result follows from Theorem~\ref{OConthm} and Theorem~\ref{TWasymptoticskappaTHM}. Recall Definition~\ref{digammadef}.
\begin{theorem}
Fix drifts $a_i\equiv 0$ and for all $N$, set $t=N\kappa$. Then there exists $\kappa^*>0$ such that for $\kappa>\kappa^*$,
\begin{equation*}
\lim_{N\to \infty} \PP\left( \frac{ \Fsd^{N}_{1}(t) - N \bfk}{N^{1/3}}\leq r\right) = F_{{\rm GUE}}\left((\bgk / 2)^{-1/3}r\right).
\end{equation*}
\end{theorem}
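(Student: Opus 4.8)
The plan is to deduce the statement directly from O'Connell's representation of the polymer free energy hierarchy (Theorem~\ref{OConthm}) together with the Tracy--Widom asymptotics already established for the Whittaker measure (Theorem~\ref{TWasymptoticskappaTHM}); at this level the argument is essentially a change of name, and the excerpt already flags it as such.

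First I would identify the law of $\Fsd^{N}_{1}(t)$. By part (1) of Theorem~\ref{OConthm} one has $\Fsd^{N}_{1}(\cdot)=-\Gsd^{N}_{N}(\cdot)$ almost surely, and by part (2) the vector $(\Gsd^{N}_{1}(t),\dots,\Gsd^{N}_{N}(t))$ has entrance law $\WM{0,\ldots,0;t}$ when all drifts vanish (the drift sign is then irrelevant, so the symmetry of Remark~\ref{whitsymrem} plays no role); reading off the last coordinate gives
\[
\Fsd^{N}_{1}(t)\;\overset{d}{=}\;-T_{N,N}(t)\qquad\text{under }\WM{0,\ldots,0;t}.
\]
Equivalently, one could invoke Corollary~\ref{Fsdcor} (the array $\{\Fsd^{k}_{j}(t)\}$ is distributed as the Whittaker process $\W{0;t}$) and use Remark~\ref{whitsymrem} to relabel $T_{N,1}$ as $-T_{N,N}$. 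Either route produces the same marginal; the only point requiring care is the bookkeeping of the two index reversals — the one in $\Fsd^{k}_{j}=-\Gsd^{k}_{k-j+1}$ and the one $T_{k,i}\leftrightarrow -T_{k,k+1-i}$ of Remark~\ref{whitsymrem} — together with the accompanying signs.

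Setting $\tau=t=N\kappa$ and substituting this identification into Theorem~\ref{TWasymptoticskappaTHM}, which asserts precisely that there is $\kappa^{*}>0$ such that for all $\kappa>\kappa^{*}$
\[
\lim_{N\to\infty}\PP_{\WM{0,\ldots,0;\tau}}\!\left(\frac{-T_{N,N}(\tau)-N\bfk}{N^{1/3}}\le r\right)=F_{{\rm GUE}}\bigl((\bgk/2)^{-1/3}r\bigr),
\]
yields the claim verbatim. Consequently there is no genuinely new obstacle: the entire analytic content sits inside Theorem~\ref{TWasymptoticskappaTHM} (the steepest-descent analysis of the Fredholm determinant of Theorem~\ref{NeilPolymerFredDetThm}, organized around the exponent $G(z)=\log\Gamma(z)-\kappa z^{2}/2+\bfk z$ with its doubly-degenerate critical point $\btk$ from Definition~\ref{digammadef}, and the localization estimates that currently force $\kappa$ to be large) and inside Theorem~\ref{OConthm}. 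The hard part, if one wants the theorem for all $\kappa>0$ rather than $\kappa>\kappa^{*}$, is exactly the one noted in Remark~\ref{remkappa}: establishing negligibility of the Fredholm-series integrals away from the critical point in the small-$\kappa$ regime, which is carried out in \cite{BorCorFer}.
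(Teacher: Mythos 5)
Your proposal is correct and follows the paper's own route exactly: the paper simply states that this theorem "follows from Theorem~\ref{OConthm} and Theorem~\ref{TWasymptoticskappaTHM}," and your identification $\Fsd^{N}_{1}(t)\overset{d}{=}-T_{N,N}(t)$ under $\WM{0,\ldots,0;t}$ (via $\Fsd^{N}_{1}=-\Gsd^{N}_{N}$ from part (1) and the entrance law from part (2) of Theorem~\ref{OConthm}) is precisely the bridge needed before substituting into Theorem~\ref{TWasymptoticskappaTHM}. Your careful note that the two index reversals cancel and that the drift sign is immaterial at $a=0$ is the correct bookkeeping.
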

\begin{remark}
This result should hold for all $\kappa^*>0$. See Remark \ref{remkappa} for further developments achieve this.
\end{remark}

\subsection{Parabolic Anderson model}\label{PAMsec}

Consider an infinitesimal generator $L$ for a continuous time Markov process on $\Z$ with potential $V:\Rplus\times \Z\to \R$. Then, under fairly general conditions on $L$ and $V$, the initial value problem
\begin{equation*}
\partial_t u= L u - V u, \qquad u(0,y)=f(y),
\end{equation*}
has a unique solution which can be written via the Feynman-Kac representation\index{Feynman-Kac representation!discrete space} as
\begin{equation*}
u(t,x) = \EE_x\left[e^{-\int_0^t V(s,X(s))ds} f(X(t))\right],
\end{equation*}
where $X(\cdot)$ is a diffusion with infinitesimal generator $L$ and $\EE_x$ represents the expectation over trajectories of such a diffusion started from $x$ \cite{CarMol}.

This average over paths integrals can be thought of as a polymer (in the potential $V$) started at $x$, ending according to a potential given by $\log(f(y))$ and evolving according to the underlying path measure given by the Markov process with generator $L$. When $f(y)=\delta_{y=j}$ (the Kronecker delta) and $\tilde{L}$ is the time reversal of $L$, then there is an alternative representation as
\begin{equation*}
u(t,x) = \tilde{\EE}_{j}\left[e^{-\int_0^t \tilde{V}(s,\tilde{X}(s))ds} \delta(\tilde{X}(t)=x)\right],
\end{equation*}
where $\tilde{\EE}_{j}$ is the expectation of the time reversed process $\tilde{X}$ started at $j$ at time 0 in time reversed potential $\tilde{V}(s,i) = \tilde{V}(t-s,i)$. Often it is interesting to consider potentials $V$ which are random.

The O'Connell-Yor semi-discrete directed polymer partition function $\Zsd^{N}(t)$ with drift vector $\{a_1,\ldots, a_N\}$ fits into this framework. Let the time reversed generator $\tilde{L}$ be a Poisson jump process $\tilde{X}(t)$ which increases from $i$ to $i+1$ at rate one, let $f(y)=\delta(y=1)$, and let $V(s,i) = dW(s,i)+a_i$ where $dW(\cdot,i)$ are independent identically distributed 1-dimensional white noises (i.e., distributional derivatives of independent Brownian motions). Let the integral $\int_0^t V(s,\tilde{X}(s))ds$ be treated as the It\^{o} integral $\int_0^t dW(s,\tilde{X}(s))$ for a given realization of $\tilde{X}(s)$. Then the partition function $\Zsd^{N}(t)=e^{t} u(t,N)$. The reason for the $e^t$ factor comes from the fact that a Poisson point process with exactly $N-1$ jumps on the time interval $[0,t]$ is distributed according to the uniform density $e^{-t}t^{N-1}/(N-1)!$ on the simplex of jump times $0<s_1<\cdots<s_{N-1}<t$. This differs by the $e^t$ factor from the measure we integrated against when defining $\Zsd^{N}(t)$.

The symmetric {\it parabolic Anderson model}\index{O'Connell-Yor semi-discrete directed polymer!parabolic Anderson model} on $\Rplus\times \Z$ fits into the same setup but has a generator corresponding to that of a simple symmetric continuous time random walk. The analysis performed in \cite{OCon} does not seem to extend to this model or to any other generators $L$ than that of the Poisson jump process described above. However, in Part \ref{replicassec} we will demonstrate a new ansatz for computing polymer moments which applies to some degree to the symmetric parabolic Anderson model as well (cf. Remark \ref{PAMrem}).

\section{Log-gamma discrete directed polymer}\label{logsec}
\index{log-gamma discrete directed polymer}

We presently follow the notation of \cite{COSZ}.

\begin{definition}
Fix $N\geq 1$ and a semi-infinite matrix $d=(d_{ij}: i\ge 1,  1\leq j\leq N)$ of positive real weights $d_{ij}\in \R^{>0}$. For each $n\ge 1$  form  the  $n\times N$ matrix $d^{[1,n]}=(d_{ij}: 1\le i\le n,  1\leq j\leq N) $. For $1\leq \ell \leq k\le N$ let $\Pi_{n,k}^{\ell}$ denote the set of $\ell$-tuples $\pi=(\pi_1,\ldots, \pi_\ell)$ of non-intersecting lattice paths in $\Z^2$ such that, for $1\le r\le \ell$,  $\pi_{r}$ is a lattice path from $(1,r)$ to $(n,k+r-\ell)$. A {\it lattice path}\index{log-gamma discrete directed polymer!lattice path} only takes unit steps in the coordinate directions between nearest-neighbor lattice points of $\Z^2$ (i.e., up or right); non-intersecting means that paths do not touch. The {\it weight}\index{log-gamma discrete directed polymer!weight} of an $\ell$-tuple $\pi = (\pi_1,\ldots, \pi_\ell)$ of such paths is
\begin{equation*}
wt(\pi) = \prod_{r=1}^{\ell} \prod_{(i,j)\in \pi_{r}} d_{ij}.
\end{equation*}

Let us assume $n\geq N$ (otherwise some additional case must be taken -- see \cite{COSZ}). Then for $1\le \ell\le k\le N$, let
\begin{equation*}
\tau^{k}_{\ell}(n) = \sum_{\pi\in\Pi_{n,k}^{\ell}} wt(\pi).   \label{tau}
\end{equation*}
Define the {\it free energy hierarchy}\index{log-gamma discrete directed polymer!free energy hierarchy} $f(n)=\{f^{k}_{\ell}(n): 1\le \ell\le k\le N\}$  via
\begin{equation}\label{discfreedef}
f^{k}_{\ell}(n) = \log\left(\frac{\tau^{k}_{\ell}(n)}{\tau^{k}_{\ell-1}(n)}\right).
\end{equation}


We express the mapping  (\ref{discfreedef}) that defines $f(n)$ from $d^{[1,n]}$ as
\begin{equation}\label{PnN}
f(n)=P_{n,N}(d^{[1,n]}).
\end{equation}
This corresponds to the {\it tropical P tableaux} of the image of $d^{[1,n]}$ under A.N. Kirillov's {\it tropical Robinson-Schensted-Knuth correspondence} \cite{Kir}\index{log-gamma discrete directed polymer!tropical RSK correspondence}.
\end{definition}

\begin{definition} Let $\theta$ be a positive real. A random variable $X$ has {\it inverse-gamma distribution with parameter $\theta>0$}\index{inverse-gamma distribution}  if it is supported on the positive reals
where it has distribution
\begin{equation}\label{invgammadensity}
\PP(X\in dx) = \frac{1}{\Gamma(\theta)} x^{-\theta-1}\exp\left\{-\frac{1}{x}\right\} dx.
\end{equation}
We abbreviate this as $X\sim \Gamma^{-1}(\theta)$.
\end{definition}

\begin{definition}
An {\it inverse-gamma weight matrix}, with respect to a {\it parameter matrix} $\thetarc=(\thetarc_{i,j}>0:  i\ge 1, 1\le j\le N)$, is a matrix of positive weights $(d_{i,j}:  i\ge 1, 1\le j\le N)$ such that the entries are  independent random variables $d_{i,j}\sim \Gamma^{-1}(\thetarc_{i,j})$. We call a parameter matrix $\thetarc$ {\it solvable} if $\thetarc_{i,j} =  \thetar_i+ \thetac_j>0$ for real parameters $(\thetar_i: i\geq 1)$ and $(\thetac_j: 1\le j\le N)$.   In this case we also  refer to the associated weight matrix as solvable.
\label{def-d}\end{definition}

\begin{remark}
The weights $d_{i,j}$ can be considered as $e^{-\tilde{d}_{i,j}}$ where $\tilde{d}_{i,j}$ are distributed as log-gamma random variables. Since traditionally one identified the distribution of the weights of the Hamiltonian for a directed polymer this is the origin of the name (see \cite{SeppLog}).
\end{remark}


From now on we will fix that the weight matrix $d$ is an inverse-gamma weight matrix with a solvable parameter matrix. Analogously to the O'Connell-Yor semi-discrete polymer, the hierarchy $f(n)$ evolves as a Markov chain in $n$ with state space in $\R^{N(N+1)/2}$. An explicit construction of this Markov chain as a function of the weights $d$ is given in \cite{COSZ} (appealing to the recursive formulation of the tropical RSK correspondence which is given in \cite{NY}). We will not restate the kernel of this Markov chain on $\R^{N(N+1)/2}$, but only remark that it is not the same as the limiting Markov chain which corresponds to the scaling limit of the dynamics given in Proposition~\ref{prop15}. However, we have the following:

\begin{remark}
The marginal distribution of $\{f^{N}_{\ell}(n)\}_{1\leq \ell \leq N}$ is given by the $\alpha$-Whittaker measure $\alphaWM{a;\alpha,n}$ with $a_j=\theta_j$ for $1\leq j\leq N$ and $\alpha_i= \thetar_i$ for $1\leq i\leq n$. The formulas given in Section~\ref{alphamoment} provide formulas for the moments of certain terms in the hierarchy of free energies. The Fredholm determinant formula in Section~\ref{alphaFred} provides a formula for the expectation of $\exp(-se^{f^N_N(n)})$. It should be noted that this is not the single path free energy of the model, but rather the free energy for a dual single path polymer model. Remark \ref{firstlambdaremark}, however, provides the necessary formulas which when developed in the manner of this paper should yield a Fredholm determine for $f^{N}_1(n)$ as well.
\end{remark}

%
%
%
%
%
%
%
%
%
%
%
%
%
%
%
%
%
%

\section{Continuum directed random polymer and Kardar-Parisi-Zhang stochastic PDE}
\index{continuum directed random polymer}
\begin{definition}
O'Connell and Warren introduced the {\it partition function hierarchy}\index{continuum directed random polymer!partition function hierarchy} for the continuum directed random polymer (CDRP). It is a continuous function $\mathcal{Z}_{n}(T,X)$ of $(n,T,X)$ varying over $(\Zgzero,\Rplus,\R)$, which is formally written as \glossary{$\mathcal{Z}_{n}(T,X)$}
\begin{equation}\label{wick}
\mathcal{Z}_{n}(T,X)= p(T,X)^n \EE\left[:{\rm exp}:\, \left\{\sum_{i=1}^{n} \int_{0}^{T} \whitenoise(t,b_i(t)) dt \right\} \right]
\end{equation}
where $\EE$ is the expectation of the law on $n$ independent Brownian bridges $\{b_i\}_{i=1}^{n}$ starting at $0$ at time 0 and ending at $X$ at time $T$. The $:{\rm exp}:$ \glossary{$:{\rm exp}:$} is the {\it Wick exponential}\index{continuum directed random polymer!Wick exponential} and $p(T,X)=(2\pi)^{-1/2}e^{-X^2/T}$ is the standard heat kernel. Intuitively these path integrals represent energies of non-intersecting paths, and thus the expectation of their exponential represents the partition function for this multiple path directed polymer model. The Wick exponential must be carefully defined and one approach to doing this is via {\it Weiner-It\^{o} chaos series}: For $n\in \Zgzero$, $T\geq 0$ and $X\in \R$ define \cite{OConWar}
\begin{equation}\label{Zpartfunc}
\mathcal{Z}_{n}(T,X) = p(T,X)^n \sum_{k=0}^{\infty} \int_{\Delta_k(T)}\int_{\R^k} R_k^{(n)}\left((t_1,x_1),\ldots, (t_k,x_k)\right) \whitenoise(dt_1 dx_1)\cdots \whitenoise(dt_k dx_k),
\end{equation}
where $\Delta_k(T) = \{0<t_1<\cdots <t_k<T\}$, and $R_k^{(n)}$ is the $k$-point correlation function for a collection of $n$ non-intersecting Brownian bridges which all start at $0$ at time $0$ and end at $X$ at time $T$. For notational simplicity set $\mathcal{Z}_0(T,X)\equiv 1$. These series are convergent in $L^2(\whitenoise)$. For $n=1$ one readily observes that the above series satisfies the well-posed stochastic heat equation with multiplicative noise and delta function initial data:
\begin{equation}\label{SHE}
\partial_T \mathcal{Z}_1 = \tfrac{1}{2}\partial_X^2 \mathcal{Z}_1 -\mathcal{Z}_1\dot{\mathscr{W}}, \qquad \mathcal{Z}_1(0,X)=\delta_{X=0}.
\end{equation}
In light of this, the Wick exponential provides a generalization of the Feynman-Kac representation\index{Feynman-Kac representation!continuous space}. There are other equivalent approaches to defining the Wick exponential (see \cite{ICreview}).

The entire set of $\mathcal{Z}_{n}(T,X)$ are almost surely everywhere positive for $T>0$ fixed as one varies $n\in \Zgzero$ and $X\in \R$ \cite{CH2}. Hence it is justified to define, for each $T>0$ the {\it KPZ$_T$ line ensemble} which is a continuous $\Zgzero$-indexed line ensemble  $\mathcal{H}^T = \{\mathcal{H}^T_{n}\}_{n\in \Zgzero}: \Zgzero\times \R \to \R$ given by
\begin{equation*}
\mathcal{H}^T_{i}(X) = \log \left(\frac{\mathcal{Z}_n(T,X)}{\mathcal{Z}_{n-1}(T,X)}\right).
\end{equation*}\index{stochastic heat equation}\glossary{$\mathcal{H}^T_{i}(X)$}

Taking $n=1$, $\mathcal{H}^{T}_1(\cdot)$ is the {\it Hopf-Cole solution to the Kardar-Parisi-Zhang stochastic PDE}. Formally, the KPZ stochastic PDE is given by
\begin{equation}\label{KPZ}
\partial_T \mathcal{H}_1 = \tfrac{1}{2}\partial_X^2 \mathcal{H}_1 + \tfrac{1}{2}(\partial_X \mathcal{H}_1)^2 +\whitenoise.
\end{equation}\index{Kardar-Parisi-Zhang equation}
This is an ill-posed equation -- hence its interpretation as the logarithm of the stochastic heat equation. The delta initial data for the stochastic heat equation is called {\it narrow wedge} initial data for KPZ.\index{Kardar-Parisi-Zhang equation!narrow wedge initial data}
\end{definition}

The CDRP can be thought of in terms of the general directed polymer framework. The CDRP defined corresponds to an underlying path measure of Brownian bridge which starts at $0$ at time $0$ and ends at $X$ at time $T$, an inverse temperature $\beta=1$ ($\beta$ can be scaled into $T$ in fact) and a quenched Hamiltonian given by
\begin{equation*}
H_{Q}(\pi(\cdot)) = -\int_{0}^{T} \dot{\mathscr{W}}(t,b(t))dt.
\end{equation*}
The reason for the Wick exponential is due to the roughness of the potential as well as the Brownian path.

\subsection{CDRP as a polymer scaling limit}\label{CDRPlimitres}
The CDRP occurs as limits of discrete and semi-discrete polymers under what has been called {\it intermediate disorder} scaling \index{directed polymers in a random media!intermediate disorder}. This means that the inverse temperature should be scaled to zero in a critical way as the system size scales up. For the discrete directed polymer it was observe independently by Calabrese, Le Doussal and Rosso \cite{CDR} and by Alberts, Khanin and Quastel \cite{AKQ} that if one scaled $\beta$ as $n^{-1/4}$ and the end point of an $n$-step simple walk as $n^{1/2}y$ then the properly scaled partition function converges to CDRP partition function. Using convergence of discrete to continuum chaos series, \cite{AKQ2} provide a proof of this result that is universal with respect to the underlying IID random variables which form the random environment (subject to certain moment conditions).

Concerning the O'Connell-Yor semi-discrete directed polymer, a similar approach to that of \cite{AKQ2} is used in \cite{QM} to prove convergence of the entire partition function hierarchy. Define a scaling coefficient
\begin{equation}\label{scalingconst}
C(N,T,X) = \exp\left(N+ \frac{\sqrt{TN}+X}{2} + XT^{-1/2}N^{1/2}\right) (T^{1/2}N^{-1/2})^N.
\end{equation}
Then for any $n$ fixed, consider (for $N\geq n$), the scaled partition functions
\begin{equation*}
\mathcal{Z}^N_n(T,X)=\frac{\Zsd^{N}_{n}(\sqrt{TN}+X)}{C(N,T,X)^n}.
\end{equation*}
In the next section we will provide evidence that as $N\to \infty$, this rescaled polymer partition function hierarchy converges to that of the CDRP hierarchy $\mathcal{Z}_n(T,X)$. This evidence is at the level of showing that various moment formulas converge to their analogous CDRP formulas. A proof of the convergence of this collection of space-time stochastic processes will appear in forthcoming work \cite{QM}.


\subsection{Integral formulas for the CDRP}\label{CDRPintSec}

The convergence result of \cite{QM} mentioned in Section~\ref{CDRPlimitres} shows that the semi-discrete polymer partition function hierarchy converges weakly to that of the CDRP. This convergence does not imply convergence of moments (though one might attempt to strengthen it). Without reference to the convergence, it is fairly easy to see the scalings under which to take asymptotics of the moment expressions given in Section~\ref{restatedIntForm}. One can check, after the fact, that these agree with the intermediate disorder scaling of \cite{QM} which strongly suggests that these limits provide expressions for the moments of the CDRP hierarchy. By using replica methods we can confirm this fact for some of these formulas.

In what follows we specialized to zero drifts (i.e., $a_i\equiv 0$). Analogous formulas can be written down for general drifts.

\begin{proposition}\label{prop28CDRP}
Fix $T>0 $, $X\in \R$, and a drift vector $a=(a_1,\ldots,a_N)=(0,\ldots, 0)$. Then for $r\geq~1$,
\begin{equation}\label{eqn28CDRP}
\lim_{N\to \infty} \left\langle \frac{\Zsd^{N}_{r}(\sqrt{TN}+X)}{C(N,T,X)^r} \right\rangle=
\frac{1}{(2\pi \iota)^rr!} \int\cdots\int \prod_{k\neq \ell}^r (z_k-z_\ell)\prod_{j=1}^{r} e^{\frac{T}{2} z_j^2 + X z_j}dz_j,
\end{equation}
where the contours can all be taken to be $\iota \R$.
\end{proposition}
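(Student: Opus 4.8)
The plan is to start from the exact finite-$N$ formula for $\langle \Zsd^N_r(t)\rangle$ given in Proposition~\ref{Proposition27OCon} (with drift $a\equiv 0$ and $t = \sqrt{TN}+X$), namely
\begin{equation*}
\left\langle \Zsd^{N}_{r}(\sqrt{TN}+X) \right\rangle= \frac{(-1)^{\frac{r(r-1)}2} e^{r(\sqrt{TN}+X)/2}}{(2\pi \iota)^rr!} \oint\cdots\oint \prod_{1\le k<\ell\le r} (w_k-w_\ell)^2\prod_{j=1}^r \frac{e^{(\sqrt{TN}+X) w_j}}{w_j^N}\,dw_j,
\end{equation*}
where each $w_j$-contour is a small circle around $0$. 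First I would divide through by $C(N,T,X)^r$ and absorb the prefactor; the goal is to show the integral, after the right change of variables, converges to the right-hand side of \eqref{eqn28CDRP}. The natural substitution is a steepest-descent scaling around the critical point: write $w_j = z_j\sqrt{T/N}$ (possibly with an additional real shift to center the saddle), so that $w_j^{-N} = (T/N)^{-N/2} z_j^{-N}$, and $z_j^{-N} = \exp(-N\log z_j)$. The Vandermonde-squared factor $\prod_{k<\ell}(w_k-w_\ell)^2$ scales as $(T/N)^{r(r-1)/2}\prod_{k<\ell}(z_k-z_\ell)^2$, while $\prod dw_j = (T/N)^{r/2}\prod dz_j$; I expect the powers of $(T/N)$, together with the $e^{r(\sqrt{TN}+X)/2}$ and the $(T^{1/2}N^{-1/2})^N$ in $C(N,T,X)$, to combine into the clean prefactor $1/((2\pi\iota)^r r!)$ in the limit, so careful but routine bookkeeping of these powers is the first block of work.

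The analytic heart of the argument is the saddle-point analysis of $\exp\{(\sqrt{TN}+X)w_j - N\log w_j\}$ per variable. Setting $g(w) = \sqrt{TN}\,w - N\log w$ and completing the square, $g'(w) = \sqrt{TN} - N/w$ vanishes at $w_* = \sqrt{T/N}$; expanding $g$ to second order around $w_*$ gives a Gaussian of width $\sim N^{-1}\cdot$(something of order $T$), which under $w = z\sqrt{T/N}$ becomes exactly $e^{Tz^2/2}$ up to lower-order corrections, and the linear term $Xw$ becomes $Xz$ in the limit since $X\sqrt{T/N}\cdot(1/\sqrt{T/N})\cdot z\cdot$(scaling) $\to Xz$. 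So after rescaling, the integrand converges pointwise to $\prod_{k<\ell}(z_k-z_\ell)^2 \prod_j e^{Tz_j^2/2 + Xz_j}dz_j$ on the contour $\iota\R$; one then needs to note that $\prod_{k<\ell}(z_k-z_\ell)^2 = \prod_{k\neq\ell}(z_k-z_\ell)$ up to a sign that is absorbed into the $(-1)^{r(r-1)/2}$, matching the stated form. I would deform the small circles $|w_j|$ small so that they pass through $w_* = \sqrt{T/N}$ perpendicular to the real axis (a vertical line segment closed into a small circle is homotopic to the original contour since no poles other than $0$ are crossed — this is where an appeal to a contour-deformation lemma such as Lemma~\ref{TWprop1} is convenient), so that in the rescaled variable the contour is a bounded approximation to $\iota\R$ which then extends to all of $\iota\R$ as $N\to\infty$.

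The main obstacle I anticipate is the uniform integrability / tail control needed to upgrade pointwise convergence of the integrand to convergence of the integral. Away from the saddle $w_*$ along the (vertical) descent contour, one must show that $\mathrm{Re}\, g(w)$ decreases enough that the contribution is negligible; the subtlety is that $-N\log|w|$ grows only logarithmically, so one must choose the descent contour carefully (e.g.\ a vertical segment near $w_*$ joined to a circular arc of radius $\sqrt{T/N}$) and check that $\mathrm{Re}(\sqrt{TN}\,w - N\log w)$ attains its max at $w_*$ along the whole contour — this is a mild convexity estimate of the kind carried out in the proof of Theorem~\ref{TWasymptoticskappaTHM}. Given that estimate, dominated convergence (with a Gaussian-decay dominating function in the rescaled $z$ variable, valid uniformly for large $N$) closes the argument; the presence of only finitely many integration variables $r$ means there are no Fredholm-series convergence issues, so this is genuinely the only analytic point requiring care. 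I would also remark, as the excerpt does, that this steepest-descent scaling is exactly the intermediate-disorder scaling of \cite{QM}, which is consistent with the claim that the limit computes $\langle \mathcal{Z}_r(T,X)\rangle$, although the present proposition as stated only asserts convergence of the rescaled semi-discrete moments to the explicit contour integral and does not logically require that identification.
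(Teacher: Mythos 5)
Your high-level plan matches the paper's: start from the exact $r$-fold contour integral of Proposition~\ref{Proposition27OCon}, and extract the limit by steepest descent on $g(w) = tw - N\log w$. But two concrete details in your proposed change of variables are wrong, and together they derail the computation. First, the saddle: solving $g'(w) = \sqrt{TN} - N/w = 0$ gives $w_* = N/\sqrt{TN} = \sqrt{N/T}$ (the paper's $w_c = T^{-1/2}N^{1/2}$), not $\sqrt{T/N}$ as you wrote; you have the reciprocal. Second, and more seriously for the bookkeeping you emphasize, the correct move is the \emph{additive shift} $w_j = w_c + z_j$ with no rescaling at all. Since $g''(w_c) = N/w_c^2 = T$ is $O(1)$ in $N$, the fluctuation scale is $O(1)$ and the shifted integrand already converges pointwise to $e^{Tz^2/2 + Xz}$ (with $g'''(w_c) = -2T^{3/2}N^{-1/2} \to 0$ controlling the cubic error). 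Under this shift the Vandermonde $\prod_{k<\ell}(w_k-w_\ell)^2$ is \emph{unchanged} (it depends only on differences) and the Jacobian is $1$, so none of the $(T/N)$-power bookkeeping from Vandermonde rescaling and Jacobians that you flag as the ``first block of work'' actually appears. The only cancellation to check is that $e^{tw_c - N\log w_c}$ together with the $e^{rt/2}$ prefactor equals $C(N,T,X)^r$ — this is precisely what the paper's expansion verifies.

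Your proposed multiplicative substitution $w_j = z_j\sqrt{T/N}$ does not center the saddle: in the $z$-variable the critical point sits at $z_* = w_*/\sqrt{T/N} = N/T \to \infty$, and the factor $w_j^{-N} = (T/N)^{-N/2}z_j^{-N}$ has the \emph{wrong sign of exponent} to cancel against the $(T^{1/2}N^{-1/2})^N = (T/N)^{N/2}$ inside $C(N,T,X)$; their product gives a divergent $(T/N)^{-N}$ per variable rather than a cancellation. Even after fixing the reciprocal to $w_j = z_j\sqrt{N/T}$, one only centers the saddle at $z=1$ with Gaussian width $\sim N^{-1/2}$, so a \emph{second} rescaling $z = 1 + N^{-1/2}\sqrt{T}\,u$ is needed, bringing in further Vandermonde and Jacobian powers — composing the two steps recovers exactly the additive shift $w = w_c + u$. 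The rest of what you say — the sign identity relating $\prod_{k<\ell}(z_k-z_\ell)^2$ to $\prod_{k\neq\ell}(z_k-z_\ell)$ via $(-1)^{r(r-1)/2}$, and the tail/dominated-convergence argument along a steep descent contour with only finitely many variables — is on target and matches the paper. I'd suggest redoing the computation with the additive shift; it is considerably cleaner and you will find that the prefactor bookkeeping collapses to checking one scalar Taylor expansion per variable.
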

\begin{proof}
This follows from a simple asymptotic analysis of the formula of Proposition~\ref{Proposition27OCon}. Changing variables $w_j$ to $-w_j$ and replacing $\prod_{1\leq k\leq \ell \leq r} (w_k-w_\ell)^2$ by $\prod_{k\neq \ell}^r (w_k-w_\ell)$ clears the powers of $-1$. Under the above scaling, $t=\sqrt{TN}+X$ so that the  $e^{rt/2}$ term cancels the analogous term in the denominator of the left-hand side of (\ref{eqn28CDRP}). The function $e^{tw}/w^N$ which shows up in the integrand can be written as $e^{tw-N\log w}$. This function $tw-N\log w$ has a critical point at $N/t$. Let $w_c$ correspond to the critical point when $X=0$, i.e, $w_c= T^{-1/2} N^{1/2}$. Set $w=w_c+z$ and observe that
\begin{equation*}
e^{tw-N\log w} = e^{tw_c + tz-N\log(w_c+z)} = e^{tw_c - N\log w_c} e^{tz - N\log(1+z/w_c)}.
\end{equation*}
By plugging in the value of $t$ and $w_c$, and expanding $\log(1+z/w_c)$ around $z/w_c=0$ we find
\begin{eqnarray*}
e^{tw-N\log w} &=& e^{N+XT^{-1/2}N^{1/2}-N\log(T^{-1/2}N^{1/2})} e^{T^{1/2}N^{1/2} z + X z - N\tfrac{z}{w_c} + N\tfrac{z^2}{(w_c)^2}+ O(N^{-1/2})}\\
 &=& \frac{e^{N+XT^{-1/2}N^{1/2}}}{(T^{-1/2}N^{1/2})^N} e^{\frac{T}{2} z^2+X z + O(N^{-1/2})}.
\end{eqnarray*}
By deforming the integration contours so as to be $w_c+\iota \R$ and by standard estimates on the decay of $tw-N\log w$ away from the critical point along said contours, we find that in the $N\to \infty$ limit, we are left with $e^{\frac{T}{2}z^2+X z}$ (that is after canceling the diverging prefactor with the rest of the denominator on the left-hand side of (\ref{eqn28CDRP})). The only other term left to consider is the Vandermonde determinant, but since it involves differences of variables, it remains unchanged aside for having $z$'s substituted for $w$'s. This proves the limiting formula.
\end{proof}
\begin{remark}
The above limit result along with the scalings of Section~\ref{CDRPlimitres} suggest the following formula: For $T>0$ and $r\geq 1$,
\begin{equation*}
\langle \mathcal{Z}_r(T,X)\rangle =\frac{1}{(2\pi \iota)^rr!} \int\cdots\int \prod_{k\neq \ell}^r (z_k-z_\ell)\prod_{j=1}^{r} e^{\frac{T}{2} z_j^2+X z_j}dz_j,
\end{equation*}
where the contours can all be taken to be $\iota \R$.
\end{remark}

\begin{proposition}
Fix $T>0 $, $X\in \R$, and a drift vector $a=(a_1,\ldots,a_N)=(0,\ldots, 0)$. Then for $k\geq 1$,
\begin{equation*}
\lim_{N\to \infty} \left\langle  \frac{\left(\Zsd^{N}_{1}(\sqrt{TN}+X)\right)^k}{C(N,T,X)^k}\right\rangle =
\frac{1}{(2\pi \iota)^k} \int\cdots\int \prod_{1\le A<B\le k} \frac{z_A-z_B}{z_A-z_B-1}\prod_{j=1}^{k} e^{\frac{T}{2}z_j^2 + X z_j}dz_j,
\end{equation*}
where the $z_A$-contour is along $C_A+\iota \R$ for any $C_1>C_2+1>C_3+2>\cdots >C_k+(k-1)$.
\end{proposition}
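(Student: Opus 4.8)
The plan is to mirror the proof of Proposition~\ref{prop28CDRP} essentially verbatim, starting instead from the $k$-th moment formula of Proposition~\ref{Proposition28OCon} and performing the same saddle-point analysis on each of the $k$ integration variables simultaneously. First I would change variables $w_j \mapsto -w_j$ in Proposition~\ref{Proposition28OCon} (recall we have set $a_i \equiv 0$), which turns the product $\prod_{A<B}\frac{w_A - w_B}{w_A - w_B - 1}$ into $\prod_{A<B}\frac{w_A - w_B}{w_A - w_B + 1}$ and the contour condition ``$w_A$ contains $\{w_B + 1\}_{B>A}$ and $0$'' into ``$w_A$ contains $\{w_B - 1\}_{B > A}$ and $0$''; equivalently, after the substitution the relevant version already appears as Proposition~\ref{Proposition28} in the Whittaker notation. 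Then, exactly as in Proposition~\ref{prop28CDRP}, I would write the ``one-body'' factor as $e^{tw_j}/w_j^{N} = e^{tw_j - N\log w_j}$ with $t = \sqrt{TN} + X$, locate the critical point $w_c = T^{-1/2}N^{1/2}$ of $tw - N\log w$ (when $X = 0$), and set $w_j = w_c + z_j$. The Taylor expansion computed in the proof of Proposition~\ref{prop28CDRP} gives, for each $j$,
\begin{equation*}
e^{tw_j - N\log w_j} = \frac{e^{N + XT^{-1/2}N^{1/2}}}{(T^{-1/2}N^{1/2})^{N}}\, e^{\frac{T}{2}z_j^2 + Xz_j + O(N^{-1/2})},
\end{equation*}
and the $k$-fold product of the prefactors together with the $e^{kt/2}$ from the formula cancels exactly against $C(N,T,X)^k$ in the denominator on the left-hand side, where $C$ is the scaling constant of \eqref{scalingconst}.

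The next step is to handle the cross terms. After the substitution $w_j = w_c + z_j$, the factor $\frac{w_A - w_B}{w_A - w_B + 1}$ becomes $\frac{z_A - z_B}{z_A - z_B + 1}$ since the $w_c$ shift cancels in both numerator and denominator; this is already in the desired form (with a sign convention that I would track to match $z_A - z_B - 1$ as stated). Crucially, the pole of this factor at $z_A - z_B = -1$ (equivalently $w_A - w_B = -1$) must be respected by the contours: before the limit the $w_A$-contour encloses $\{w_B - 1\}_{B>A}$, and after recentering and rescaling this translates into the nested family of vertical lines $z_A$ along $C_A + \iota\mathbb{R}$ with $C_1 > C_2 + 1 > \cdots > C_k + (k-1)$ claimed in the statement. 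I would deform the original nested circular contours to these (slightly offset) vertical lines, checking — just as in Proposition~\ref{prop28CDRP} via decay of $\mathrm{Re}(tw - N\log w)$ away from the critical point along vertical contours — that no poles are crossed and the contributions from the tails vanish as $N \to \infty$.

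The only genuinely new bookkeeping relative to Proposition~\ref{prop28CDRP} is precisely this contour nesting: because the integrand has the first-order poles $z_A - z_B + 1 = 0$, one cannot simply push all contours onto the single line $\iota\mathbb{R}$, so the limiting contours inherit the staircase structure $C_1 > C_2 + 1 > \cdots$. I expect this — verifying that the deformation of the pre-limit nested contours to the claimed vertical staircase is pole-free and that the Gaussian ($e^{\frac{T}{2}z^2}$) decay along vertical lines controls the tails uniformly in $N$ — to be the main (though still routine) obstacle; everything else is the same steepest-descent estimate already carried out in the proof of Proposition~\ref{prop28CDRP}. Finally, I would remark, exactly parallel to the remark following Proposition~\ref{prop28CDRP}, that combined with the intermediate-disorder convergence of \cite{QM} this strongly suggests the CDRP moment formula
\begin{equation*}
\langle \mathcal{Z}(T,0)^k \rangle = \frac{1}{(2\pi \iota)^k} \oint\cdots\oint \prod_{1 \le A < B \le k} \frac{z_A - z_B}{z_A - z_B - 1} \prod_{j=1}^{k} e^{\frac{T}{2}z_j^2}\,dz_j,
\end{equation*}
and since $\mathcal{Z}(T,X)/p(T,X)$ is stationary in $X$ this also covers general $X$; this is the content of Proposition~\ref{Zkformpropintro} in the introduction.
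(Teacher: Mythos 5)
Your overall strategy — steepest descent applied to the nested contour formula of Proposition~\ref{Proposition28OCon}, in parallel to the Proposition~\ref{Proposition27OCon} $\to$ Proposition~\ref{prop28CDRP} derivation — is exactly the argument the paper has in mind (the paper's own proof is a single sentence pointing to precisely this). The contour accounting at the end (the staircase $C_1 > C_2 + 1 > \cdots > C_k + (k-1)$ inherited from the nesting of the $w_A$-contours around $\{w_B+1\}_{B>A}$, with Gaussian decay controlling the deformation) is also right.

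However, the opening change of variables $w_j \mapsto -w_j$ is a misstep that introduces the very sign mismatch you then wave away. Proposition~\ref{Proposition28OCon} is already stated in the polymer normalization and already carries the \emph{correct} cross factor $\prod_{A<B}\frac{w_A-w_B}{w_A-w_B-1}$; unlike Proposition~\ref{Proposition27OCon}, it has no $(-1)^{r(r-1)/2}$ prefactor that needs clearing. If you do perform $w_j\mapsto -w_j$: (i) the cross factor becomes $\frac{w_A-w_B}{w_A-w_B+1}$, which is \emph{not} equivalent to the target $\frac{z_A-z_B}{z_A-z_B-1}$ after a mere translation $w_j=w_c+z_j$ (translations cancel in the difference $w_A-w_B$, so the $+1$ never turns into a $-1$); (ii) the one-body factor becomes $(-1)^N e^{-tw_j}/w_j^N$, so the critical point of $-tw-N\log w$ sits at $-N/t$, i.e.\ on the negative real axis, not at $w_c=\sqrt{N/T}$ as you write; and (iii) you pick up a spurious $(-1)^{kN}$ that oscillates with $N$. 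The fix is simply to omit the flip: work directly with Proposition~\ref{Proposition28OCon} (with $a_m\equiv 0$), set $w_j=w_c+z_j$ with $w_c=N/t$, and the cross factor, the one-body Taylor expansion, and the $e^{kt/2}/C(N,T,X)^k$ cancellation all fall out with the stated signs and no residual $(-1)$'s. The remark at the end of your proposal about Proposition~\ref{prop28CDRP}'s analogue and the CDRP moment formula is correct and matches the paper's discussion in Remark~\ref{BCrem}.
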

\begin{proof}
This follows immediately from the same type of argument as used in the proof of Proposition~\ref{prop28CDRP} but with the role of Proposition~\ref{Proposition27OCon} replaced by Proposition~\ref{Proposition28OCon}. The only change is that the contours must be ordered correctly in the asymptotics, which introduces the ordering above.
\end{proof}
\begin{remark}\label{BCrem}
In Section~\ref{CDRPreplica} we will prove that the above asymptotics correctly gives the moments of the CDRP suggested by Section~\ref{CDRPlimitres}: For $T>0$,
\begin{equation}\label{Zkform}
\langle \mathcal{Z}_1(T,X)^k\rangle =
\frac{1}{(2\pi \iota)^k} \int\cdots\int \prod_{1\le A<B\le k} \frac{z_A-z_B}{z_A-z_B-1}\prod_{j=1}^{k} e^{\frac{T}{2}z_j^2+X z_j}dz_j,
\end{equation}
where the $z_A$-contour is along $C_A+\iota \R$ for any $C_1>C_2+1>C_3+2>\cdots >C_k+(k-1)$. Note that the above formula agrees with the fact that $\mathcal{Z}_n(T,X)/p(T,X)^n$ is a stationary process in $X$. The stationarity was observed for $n=1$ in \cite{ACQ} as a result of the invariance of space time white noise under affine shift. This argument extends to general $n\geq 1$.

Let us work out the $k=1$ and $k=2$ formulas explicitly. For $k=1$, the above formula gives $\langle \mathcal{Z}_1(T,0)\rangle = (2\pi T)^{-1/2}$ which matches $p(T,0)$ as on expects. When $k=2$ we have
\begin{eqnarray*}
\langle \mathcal{Z}_1(T,0)^2\rangle &=& \frac{1}{(2\pi \iota)^2} \int\int \frac{z_1-z_2}{z_1-z_2-1}e^{\frac{T}{2}(z_1^2+ z_2^2)}dz_1 dz_2\\
&=& \left(\frac{1}{2\pi \iota} \int_{\iota \R} e^{\frac{T}{2} z^2}dz\right)^2 + \frac{1}{(2\pi \iota)^2} \int\int \frac{1}{z_1-z_2-1}e^{\frac{T}{2}(z_1^2+ z_2^2)}dz_1 dz_2\\
&=& \frac{1}{2\pi T} + \int_{0}^{\infty} du e^u \left(\frac{1}{2\pi \iota} \int_{\iota \R} e^{\frac{T}{2} z^2 - uz} dz\right)^2 \\
&=& \frac{1}{2\pi T} + \frac{1}{(2\pi)^2} \frac{2\pi}{T} \int_{0}^{\infty} e^{u-\frac{u^2}{T}} du\\
&=& \frac{1}{2\pi T}\left(1+ \sqrt{\pi T} e^{\frac{T}{4}} \Phi(\sqrt{T/2})\right),
\end{eqnarray*}
where
\begin{equation*}
\Phi(s) = \frac{1}{\sqrt{2\pi}} \int_{-\infty}^{s} e^{-t^2/2}dt,
\end{equation*}
and where to get from the second to third lines we used that since $\Real(z_1-z_2-1)>0$  we may use the linearization formula
\begin{equation*}
\frac{1}{z_1-z_2-1} = \int_0^{\infty} e^{-u(z_1-z_2-1)}du.
\end{equation*}
This formula for $k=2$ matches formula (2.27) of \cite{BC} where it was rigorously proved via local time calculations.
\end{remark}

\begin{proposition}
Fix $T>0 $, $X\in \R$, and a drift vector $a=(a_1,\ldots,a_N)=(0,\ldots, 0)$. Then for $k\geq 1$, $r_{\alpha}\geq 1$ for $1\leq \alpha\leq k$, and $s_1\leq s_2\leq \cdots \leq s_k$,
\begin{eqnarray*}
\lefteqn{\lim_{N\to \infty} \left\langle \prod_{\alpha=1}^{k} \frac{\Zsd^{N_\alpha}_{r_{\alpha}}(\sqrt{TN}+X)}{C(N,T,X)^{r_{\alpha}}}\right\rangle =}\\
&&\prod_{\alpha=1}^{k} \frac{1}{(2\pi \iota)^{r_{\alpha}} r_{\alpha}!} \int \cdots \int \prod_{1\leq \alpha <\beta\leq k} \left(\prod_{i=1}^{r_{\alpha}}\prod_{j=1}^{r_{\beta}} \frac{(z_{\alpha,i}-s_{\alpha})-(z_{\beta,j}-s_{\beta})}{(z_{\alpha,i}-s_\alpha)-(z_{\beta,j}-s_{\beta})-1}\right)\\
 &&\times \prod_{\alpha=1}^{k}\left( \left(\prod_{i\neq j}^{r_{\alpha}}(z_{\alpha,i}-z_{\alpha,j})\right) \left(\prod_{j=1}^{r_{\alpha}}e^{\frac{T}{2} z_{\alpha,j}^2 + X z_{\alpha,j}}dz_{\alpha,j}\right)\right)\,,
\end{eqnarray*}
where $N_{\alpha}=(N^{1/2} - s_{\alpha}T^{1/2})^2$ and the $(z_{\alpha,j}-s_{\alpha})$-contour is along $C_{\alpha}+\iota \R$ for any $C_1>C_2+1>C_3+2>\cdots > C_k+(k-1)$, for all $j\in \{1,\ldots, r_\alpha\}$.
\end{proposition}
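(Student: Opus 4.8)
The plan is to carry out a steepest-descent (Laplace method) analysis of the exact finite-$N$ contour integral formula of Proposition~\ref{Proposition28FULLtzeroOCon}, extending the single-block argument used for Proposition~\ref{prop28CDRP} to the case of $k$ blocks of integration variables whose exponential factors are governed by the \emph{different} sizes $N_1,\dots,N_k$. Specializing Proposition~\ref{Proposition28FULLtzeroOCon} to $a_i\equiv 0$ and $t=\sqrt{TN}+X$, the integrand of $\langle\prod_\alpha \Zsd^{N_\alpha}_{r_\alpha}(\sqrt{TN}+X)\rangle$ factors, for each block $\alpha$, into a product over $j=1,\dots,r_\alpha$ of $e^{t w_{\alpha,j}}w_{\alpha,j}^{-N_\alpha}=\exp(t w_{\alpha,j}-N_\alpha\log w_{\alpha,j})$, the within-block Vandermonde $\prod_{1\le i<j\le r_\alpha}(w_{\alpha,i}-w_{\alpha,j})^2$ and the scalar $e^{t r_\alpha/2}$; the only coupling between blocks is through the bounded rational factors $\prod_{\alpha<\beta}\prod_{i,j}\frac{w_{\alpha,i}-w_{\beta,j}}{w_{\alpha,i}-w_{\beta,j}-1}$. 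This is precisely the structure handled in the proof of Proposition~\ref{prop28CDRP}, the genuinely new feature being that the exponent $t w-N_\alpha\log w$ has a block-dependent critical point.

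First I would localize. For each block $\alpha$ introduce $w_{\alpha,j}=w_c^{(\alpha)}+z_{\alpha,j}$ where $w_c^{(\alpha)}$ is the relevant critical point of $w\mapsto t w-N_\alpha\log w$, so that the Laplace expansion around the saddle yields $t w_{\alpha,j}-N_\alpha\log w_{\alpha,j}=(t w_c^{(\alpha)}-N_\alpha\log w_c^{(\alpha)})+\tfrac T2 z_{\alpha,j}^2+X z_{\alpha,j}+o(1)$, uniformly for $z_{\alpha,j}$ in a fixed window (the subleading cubic term is $O(N^{-1/2})$ since $N_\alpha=N+O(\sqrt N)$). The point of the precise value $N_\alpha=(N^{1/2}-s_\alpha T^{1/2})^2$ is that it is exactly the choice for which two cancellations occur: the constant prefactor produced by the $r_\alpha$ variables of the block, together with $e^{t r_\alpha/2}$, matches the normalization $C(N,T,X)^{r_\alpha}$ of~(\ref{scalingconst}); and the saddle locations are tuned so that the $O(1)$ separations $w_c^{(\alpha)}-w_c^{(\beta)}$ turn the coupling factors into $\frac{(z_{\alpha,i}-s_\alpha)-(z_{\beta,j}-s_\beta)}{(z_{\alpha,i}-s_\alpha)-(z_{\beta,j}-s_\beta)-1}$, while the within-block Vandermonde factors are invariant under the common shift. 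Verifying these two identities is a routine computation with elementary ($\Gamma$-free) functions, parallel to the prefactor bookkeeping at the end of the proof of Proposition~\ref{prop28CDRP} and of Proposition~\ref{Proposition27OCon}.

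The second ingredient is the contour engineering. Starting from the nested contours of Proposition~\ref{Proposition28FULLtzeroOCon} (the $w_{\alpha,j}$-contour encloses $\{a_m\}$ and $\{w_{\beta,i}+1:\beta>\alpha\}$), I would deform each $w_{\alpha,j}$-contour so that after rescaling it runs along $w_c^{(\alpha)}+\iota\R$, a genuine steepest-descent direction for $\Real(t w-N_\alpha\log w)$ at the saddle, along which one has the standard quadratic decay estimate away from $w_c^{(\alpha)}$; this is what licenses discarding the contribution outside an $O(1)$ window uniformly in $N$, exactly as in Proposition~\ref{prop28CDRP}. The hypothesis $s_1\le s_2\le\cdots\le s_k$ guarantees that $w_c^{(\alpha)}$ lies (asymptotically) far enough to the appropriate side of $w_c^{(\beta)}$ for $\alpha<\beta$ so that no pole of $\frac{1}{w_{\alpha,i}-w_{\beta,j}-1}$ is crossed during the deformation, and in the limit this ordering is precisely the condition $C_1>C_2+1>C_3+2>\cdots>C_k+(k-1)$ appearing in the statement. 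Once all integrals are confined to fixed windows, pointwise convergence of the (now uniformly bounded, rapidly decaying) integrands lets one pass to the limit term by term in the $k!\prod_\alpha r_\alpha!$ expansion of the Vandermonde/permanent structure and reassemble the blocks.

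The main obstacle is making these two steps mutually consistent: the steepest-descent contours for blocks with distinct $s_\alpha$ sit at locations $w_c^{(\alpha)}$ that differ only by $O(1)$ yet are all of order $\sqrt N$, while they must still respect the nesting $w_{\alpha,j}\text{-contour}\supset\{w_{\beta,i}+1\}$; and the subleading corrections to $w_c^{(\alpha)}$, to the Gaussian coefficient, and to the coupling factors must all be shown to be uniformly $o(1)$ on the rescaled windows. This bookkeeping is exactly what pins down the exponents in~(\ref{scalingconst}) and the form $N_\alpha=(N^{1/2}-s_\alpha T^{1/2})^2$. An alternative that sidesteps the uniform estimates is to verify, via the replica approach of Part~\ref{replicassec}, that the right-hand side solves the same hierarchy of coupled moment (delta Bose gas) equations as $\langle\prod_\alpha\mathcal Z_{r_\alpha}(T,\cdot)\rangle$; but since the exact finite-$N$ formula of Proposition~\ref{Proposition28FULLtzeroOCon} is already in hand, the direct saddle-point route is the natural one, following the proof of Proposition~\ref{prop28CDRP} mutatis mutandis.
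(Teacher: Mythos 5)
Your proposal takes essentially the same approach as the paper: the paper's own proof is the single sentence ``Follows by asymptotic analysis along the same lines as the above results using Proposition~\ref{Proposition28FULLtzeroOCon},'' and your plan --- specialize that proposition to $a_i\equiv 0$ and $t=\sqrt{TN}+X$, run a steepest-descent analysis block by block around the critical points $w_c^{(\alpha)}=\sqrt{N_\alpha/T}$ of $w\mapsto tw-N_\alpha\log w$, track the $O(1)$ saddle separations $w_c^{(\alpha)}-w_c^{(\beta)}\to s_\beta-s_\alpha$ to produce the shifted coupling factors and use the ordering $s_1\le\cdots\le s_k$ to justify the nested-contour deformation, exactly paralleling the proof of Proposition~\ref{prop28CDRP} --- is precisely that analysis.
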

\begin{proof}
Follows by asymptotic analysis along the same lines as the above results using Proposition~\ref{Proposition28FULLtzeroOCon}.
\end{proof}
\begin{remark}
The above limit result along with the scalings of Section~\ref{CDRPlimitres} suggest the following formula: Fix $T>0$. Then for $k\geq 1$, $r_{\alpha}\geq 1$ for $1\leq \alpha\leq k$, and $X_1\leq~X_2\leq~\cdots~\leq~X_k$,
\begin{eqnarray*}
&&\left\langle \prod_{\alpha=1}^{k} \mathcal{Z}_{r_\alpha}(T,X_\alpha)\right\rangle = \prod_{\alpha=1}^{k} \frac{1}{(2\pi \iota)^{r_{\alpha}} r_{\alpha}!} \int \cdots \int \prod_{1\leq \alpha <\beta\leq k} \left(\prod_{i=1}^{r_{\alpha}}\prod_{j=1}^{r_{\beta}} \frac{z_{\alpha,i}-z_{\beta,j}}{z_{\alpha,i}-z_{\beta,j}-1}\right)\\
&&\times \prod_{\alpha=1}^{k}\left( \left(\prod_{i\neq j}^{r_{\alpha}}(z_{\alpha,i}-z_{\alpha,j})\right) \left(\prod_{j=1}^{r_{\alpha}}e^{\frac{T}{2} z_{\alpha,j}^2 + X_{\alpha}z_{\alpha,j}}dz_{\alpha,j}\right)\right)\,,
\end{eqnarray*}
where the $z_{\alpha,j}$-contour is along $C_{\alpha}+\iota \R$ for any $C_1>C_2+1>C_3+2>\cdots > C_k+(k-1)$, for all $j\in \{1,\ldots, r_\alpha\}$. The moment is, of course, symmetric in the order of the $X_\alpha$'s -- the integral however, is not (due to the ordering of the contours in the integral).
\end{remark}

\begin{proposition}
Fix $T>0$ and a drift vector $a=(a_1,\ldots,a_N)=(0,\ldots, 0)$. Then for $k$ real numbers $X_1\leq X_2\leq \cdots \leq X_k$,
\begin{equation*}
\lim_{N\to \infty} \left\langle \prod_{i=1}^{k} \frac{\Zsd^{N}_{1}(\sqrt{TN} + X_i)}{C(N,T,X_i)} \right\rangle = \frac{1}{(2\pi \iota)^k} \int\cdots\int \prod_{1\leq A<B\leq k} \frac{z_{A}-z_{B}}{z_{A}-z_{B}-1} \prod_{j=1}^{k} e^{\frac{T}{2}z_j^2 + X_j z_j}dz_j,
\end{equation*}
where the $z_{A}$-contour is along $C_{A}+\iota\R$ for any $C_1>C_2+1>C_3+2>\cdots >C_k+(k-1)$.
\end{proposition}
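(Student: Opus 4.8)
The plan is to run the steepest-descent argument from the proof of Proposition~\ref{prop28CDRP}, but starting from the multi-time formula of Proposition~\ref{ktimeintformOCon} rather than the one-time moment formula. Taking $a_m\equiv 0$ and $t_i=\sqrt{TN}+X_i$, and noting that $X_1\le\cdots\le X_k$ forces $t_1\le\cdots\le t_k$, Proposition~\ref{ktimeintformOCon} gives
\begin{equation*}
\left\langle \prod_{i=1}^{k} \Zsd^{N}_{1}(\sqrt{TN}+X_i) \right\rangle = \frac{e^{\frac12\sum_{i=1}^{k} (\sqrt{TN}+X_i)}}{(2\pi \iota)^k} \oint\cdots\oint \prod_{1\leq A<B\leq k} \frac{w_{A}-w_{B}}{w_{A}-w_{B}-1} \prod_{j=1}^{k} \frac{e^{(\sqrt{TN}+X_j) w_j}}{w_j^{N}}\, dw_j ,
\end{equation*}
where the $w_A$-contour encircles $\{0\}$ and $\{w_B+1:B>A\}$ and nothing else.

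First I would make the saddle-point substitution. The exponent for the variable $w_j$ is $(\sqrt{TN}+X_j)w_j-N\log w_j$, whose critical point for $X_j=0$ is $w_c:=T^{-1/2}N^{1/2}$. Setting $w_j=w_c+z_j$ and expanding exactly as in the proof of Proposition~\ref{prop28CDRP} (using $\sqrt{TN}+X_j-N/w_c=X_j$ and $N/(2w_c^2)=T/2$) gives
\begin{equation*}
\frac{e^{(\sqrt{TN}+X_j)w_j}}{w_j^{N}} = \frac{e^{N+X_jT^{-1/2}N^{1/2}}}{(T^{-1/2}N^{1/2})^{N}}\; e^{\frac{T}{2}z_j^2+X_jz_j+O(N^{-1/2})},
\end{equation*}
the error being uniform for $z_j$ in compact sets. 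A direct computation shows that the product over $j$ of the prefactor $e^{N+X_jT^{-1/2}N^{1/2}}(T^{-1/2}N^{1/2})^{-N}$ times $e^{\frac12(\sqrt{TN}+X_j)}$ exactly cancels $\prod_j C(N,T,X_j)$, so that
\begin{equation*}
\left\langle \prod_{i=1}^{k} \frac{\Zsd^{N}_{1}(\sqrt{TN}+X_i)}{C(N,T,X_i)} \right\rangle = \frac{1}{(2\pi \iota)^k} \oint\cdots\oint \prod_{1\leq A<B\leq k} \frac{z_{A}-z_{B}}{z_{A}-z_{B}-1} \prod_{j=1}^{k} e^{\frac{T}{2}z_j^2+X_jz_j+O(N^{-1/2})}\, dz_j ,
\end{equation*}
the Cauchy cross-term being untouched since it depends only on the differences $w_A-w_B=z_A-z_B$, and the $z_j$-contours being the translates by $-w_c$ of the original $w_j$-contours.

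Next I would move the contours to vertical lines and pass to the limit. For $N$ large, closing each $w_A$-contour to the left (where $|w_A^{-N}e^{(\sqrt{TN}+X_j)w_A}|$ decays) shows the formula is unchanged if the $w_A$-contour is replaced by the line $w_c+C_A+\iota\R$ with $C_1>C_2+1>\cdots>C_k+(k-1)$: the poles lying to the left of this line are exactly $0$ and $\{w_B+1:B>A\}$, since $\Real(w_B+1)=w_c+C_B+1<w_c+C_A$ and $0<w_c+C_A$. In the $z$-variables these are the lines $C_A+\iota\R$. To conclude I would invoke dominated convergence: along $w_c+C_A+\iota\R$ one has the steep-descent bound
\begin{equation*}
\Real\big[(\sqrt{TN}+X_j)w_j-N\log w_j\big]-\Real\big[(\sqrt{TN}+X_j)w_c-N\log w_c\big]\le -c(\Imag z_j)^2+O(1),
\end{equation*}
uniformly in $N$ (near $w_c$ this is the quadratic approximation above, and for $|\Imag z_j|\gtrsim 1$ it follows from the elementary bounds on $\Real\log$ used in the proof of Theorem~\ref{TWasymptoticskappaTHM}), while $\prod_{A<B}|z_A-z_B|/|z_A-z_B-1|$ grows only polynomially and $|z_A-z_B-1|$ is bounded below because $\Real(z_A-z_B)=C_A-C_B>B-A\ge1$. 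This furnishes an $N$-independent Gaussian majorant, and the pointwise limit of the integrand is $\prod_{A<B}\frac{z_A-z_B}{z_A-z_B-1}\prod_j e^{\frac{T}{2}z_j^2+X_jz_j}$, which is the claimed identity.

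The main obstacle is the uniform-in-$N$ steep-descent estimate on the vertical contours: showing that $\Real\big[(\sqrt{TN}+X_j)w-N\log w\big]$, viewed along $w_c+C_A+\iota\R$, is dominated by its value at the real point $w_c+C_A$ minus a genuine quadratic penalty in $\Imag w$ with constants that do not degenerate as $N\to\infty$. Near $w_c$ this is the Taylor expansion, and the real work is the tail region $|\Imag w|\gtrsim 1$, handled exactly as in the lemmas preceding the proof of Theorem~\ref{TWasymptoticskappaTHM}. The cancellation of the divergent prefactors, the passage to vertical contours, and the polynomial control of the Vandermonde--Cauchy factor are all routine.
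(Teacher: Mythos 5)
Your proposal is correct and follows the same route as the paper: the paper's proof of this proposition is a one-line reference to performing the same steepest-descent analysis as Proposition~\ref{prop28CDRP}, starting from the multi-time formula of Proposition~\ref{ktimeintformOCon}. You supply the details — the saddle-point substitution $w_j=w_c+z_j$, the Taylor expansion, the prefactor cancellation against $\prod_j C(N,T,X_j)$, the deformation to vertical contours consistent with the ordering $C_1>C_2+1>\cdots$, and the dominated-convergence step — none of which deviates from the paper's intended argument.
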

\begin{proof}
Follows by asymptotic analysis along the same lines as the above results using Proposition~\ref{ktimeintformOCon}.
\end{proof}
\begin{remark}
In Section~\ref{CDRPreplica} we will prove that the above asymptotics correctly gives the moments of the CDRP suggested by Section~\ref{CDRPlimitres}: For $T>0$, and $X_1\leq X_2\leq \cdots \leq X_k$,
\begin{equation}\label{nptSHEform}
\left\langle \prod_{i=1}^{k} \mathcal{Z}_1(T,X_i) \right\rangle =\frac{1}{(2\pi \iota)^k} \int\cdots\int \prod_{1\leq A<B\leq k} \frac{z_{A}-z_{B}}{z_{A}-z_{B}-1} \prod_{j=1}^{k} e^{\frac{T}{2}z_j^2 + X_j z_j}dz_j,
\end{equation}
where the $z_A$-contour is along $C_A+\iota \R$ for any $C_1>C_2+1>C_3+2>\cdots> C_k+(k-1)$.

For example, when $k=2$ this formula leads to the spatial two point function
\begin{equation*}
\langle \mathcal{Z}_1(T,0)\mathcal{Z}_1(T,X)\rangle = \frac{1}{2\pi T} e^{-\frac{X^2}{2T}} \left( 1+ \sqrt{\pi T} e^{\frac{1}{4}(T^{1/2} - \frac{X}{T^{1/2}})^2} \Phi\big(\tfrac{1}{\sqrt{2}}(T^{1/2}-\tfrac{X}{T^{1/2}})\big)\right),
\end{equation*}
or equivalently,
\begin{equation*}
\langle \mathcal{Z}_1(T,0)\mathcal{Z}_1(T,X)\rangle - \langle \mathcal{Z}_1(T,0)\rangle \langle\mathcal{Z}_1(T,X)\rangle = \frac{1}{2\sqrt{\pi T}} e^{\frac{1}{4}(T^{1/2} - \frac{X}{T^{1/2}})^2} \Phi\big(\tfrac{1}{\sqrt{2}}(T^{1/2}-\tfrac{X}{T^{1/2}})\big).
\end{equation*}
\end{remark}

\subsection{Fredholm determinant formula for the CDRP}\label{CDRPfreddetSec}
We present a critical point asymptotic analysis of our Fredholm determinant formula. The critical scaling makes the analysis more involved and we do not attempt a rigorous proof presently but note that in \cite{BorCorFer} this derivation is rigorously performed.

\begin{formal}
Fix $T>0$ and a drift vector $a=(a_1,\ldots,a_N)=(0,\ldots, 0)$. Then, for $u=s \exp\{-N-\tfrac{1}{2} \sqrt{TN} - \tfrac{1}{2} N \log(T/N)\}$,
\begin{equation*}
\lim_{N\to\infty} \langle e^{-u\Zsd^{N}_{1}(\sqrt{TN})}\rangle  =\det(I- K_{se^{-T/24}})_{L^2(0,\infty)}
\end{equation*}
where the operator $K_s$ is defined via is integral kernel
\begin{equation*}
K_s(r,r') = \int_{-\infty}^{\infty} \frac{s}{s+e^{-\kappa_T t}} \Ai(t+r) \Ai(t+r') dt
\end{equation*}
where $\kappa_T= 2^{-1/3}T^{1/3}$.
\end{formal}
\begin{remark}
The above critical point derivation is made rigorous in \cite{BorCorFer}. Assuming the above result it follows rigorously from the results of \cite{QM} explained in Section~\ref{CDRPlimitres} that
\begin{equation*}
\langle e^{-s e^{T/24} \mathcal{Z}(T,0)} \rangle =\det(I- K_{s})_{L^2(0,\infty)}.
\end{equation*}
This Laplace transform can be inverted and doing so one recovers the exact solution to the KPZ equation which was simultaneously and independently discovered in \cite{ACQ,SaSp} and proved rigorously in \cite{ACQ}. Soon after, \cite{CDR} derived this same Laplace transform through a non-rigorous replica approach. From \cite{ACQ} one can easily provide a rigorous proof of the above stated formula. The $T/24$ which shows up is the law of large numbers term for the KPZ equation.
\end{remark}
\begin{proof}[Derivation]

It may be helpful to the reader to first study the proof of Theorem~\ref{TWasymptoticskappaTHM}, especially the critical point derivation which can be found at the beginning of the proof. The mine-field of poles (coming from the $\Gamma(-s)\Gamma(1+s)$ term) which one encountered in performing the asymptotics does not get scaled away. In the GUE asymptotics, only the pole at zero remained relevant in the scaling limit. In the present limit the poles at every integer contribute non-trivially in the limit as we will see.

The idea will be to manipulate the kernel of the Fredholm determinant into a form which is good for asymptotics and then to (n\"{a}ively) take those asymptotics, disregarding a plethora of mathematical issues (such as poles and tail decay bounds).

We will start from the formula given by Theorem~\ref{OConFredDet} with all $a_m=0$ and $u$ as specified in the statement of the result we seek to prove. The first manipulation is to change variables to set $w=v+s$. This changes the contour of integration for $s$ to a somewhat odd contour for $w$. As this is a formal derivation, we will disregard restrictions on how this contour can be deformed and hence not specify it.

Now recall the identity
\begin{equation*}
s^z\frac{\pi}{\sin(\pi z)} = \int_{-\infty}^{\infty} \frac{se^{zt}}{s+e^t}dt,
\end{equation*}
which, for instance, can be found in \cite{ACQ} page 19. Of course one must restrict the set of $z$ for which this holds, but again we proceed formally, so let us not. Using this we can rewrite our formula in terms of the kernel
\begin{equation*}
K(v,v') = \int \left(\int_{-\infty}^{\infty} \frac{se^{(w-v)t}}{s+e^{t}}dt\right) \frac{1}{w-v'} \frac{ F(v)}{F(w)} dw
\end{equation*}
where
\begin{equation*}
F(z) = (\Gamma(z))^N (u')^{-z} e^{-\frac{\sqrt{TN}}{2}z^2}.
\end{equation*}

Let $T=\beta^4$ in what follows. Perform the change of variables $v=N^{1/2}\tilde v$ and likewise for $v'$, $w$, and $t$. This yields, after taking into account the effect on the determinant $L^2$ space,
\begin{equation*}
\tilde K(\tilde v,\tilde v') = \int \left(\int_{-\infty}^{\infty} \frac{se^{(\tilde w-\tilde v)\tilde t}}{s+e^{N^{1/2}\tilde t}}d\tilde t\right) \frac{1}{\tilde w-\tilde v'}\frac{\exp\{N^{3/2} G(\tilde v)\}}{\exp\{N^{3/2}G(\tilde w)\}} d\tilde w,
\end{equation*}
where
\begin{equation*}
G(z) = N^{-1/2}\log\Gamma(N^{1/2} z)+z + z\tfrac{\beta^2}{2}N^{-1/2} +z\log(\beta^2 N^{-1/2}) - \tfrac{\beta^2}{2}z^2.
\end{equation*}

For $z$ near infinity we have the asymptotics
\begin{equation*}
\log\Gamma(z) = z\log z - z + \frac{1}{2}\left[-\log z + \log 2\pi\right] +t\frac{1}{12 z} +O(z^{-3}).
\end{equation*}
Plugging in this expansion to our formula for $G$ it follows that
\begin{eqnarray*}
G(z)&=& z\log z +z\log \beta^{2} - \tfrac{\beta^2}{2}z^2 + N^{-1/2} \left[-\tfrac{1}{2}\log z + \tfrac{\beta^2}{2}z\right] \\
&&+ N^{-1}\frac{1}{12z} + O(N^{-2})+N^{-1/2}(-\tfrac{1}{2}\log N^{1/2} +\log\sqrt{2\pi}).
\end{eqnarray*}

Observe that $z\log z + z\log \beta^2 - \tfrac{\beta^2}{2}z^2$ has a critical point at $z=\beta^{-2}$ at which the second derivative also disappears. Therefore we expand each grouping of terms in $G(z)$ around this point. Call all order 1 terms $G_1(z)$, all order $N^{-1/2}$ terms $G_2(z)$, and the $N^{-1}$ term $G_3(z)$.
Observe then that
\begin{eqnarray*}
G_1(z) &=& \frac{1}{2}\beta^{-2}  -\frac{\beta^4}{6}(z-\beta^{-2})^3 +O\big((z-\beta^{-2})^4\big),\\
G_2(z) &=& \log\beta +\frac{1}{2} + \frac{\beta^4}{4} (z-\beta^{-2})^2 + O\big((z-\beta^{-2})^3\big),\\
G_3(z) &=& \frac{\beta^2}{12} -\frac{\beta^4}{12} (z-\beta^{-2}) + O\big((z-\beta^{-2})^2\big).
\end{eqnarray*}

Now make the change of variables $\hat z = N^{1/2}(z-\beta^{-2})$ and we find
\begin{equation*}
G(z) = c + N^{-3/2}\hat G(\hat z)+ O(N^{-2}),
\end{equation*}
where
\begin{equation*}\hat G(\hat z) = -\frac{\beta^4}{12}\hat z+ \frac{\beta^4}{4}\hat z^2 -\frac{\beta^4}{6} \hat z^3.
\end{equation*}
The constant $c$ can be explicitly written, but is inconsequential as we ultimately have a ratio of $G$ functions and hence this cancels.

Returning to $\tilde K$ we perform the change of variables $\hat{w} =N^{1/2}(\tilde w - \beta^{-2})$ and likewise for $\hat v$ and $\hat v'$. This yields (after an additional change of variables in the $t$ integral)

\begin{equation*}
\hat K(\hat  v,\hat  v') = \int \left(\int_{-\infty}^{\infty} \frac{se^{(\hat  w-\hat  v)\hat  t}}{s+e^{\hat  t}}d\hat  t\right) \frac{1}{\hat  w-\hat  v'}\frac{\exp\{\hat G(\hat  v)\}}{\exp\{G(\hat w)\}} d\hat w.
\end{equation*}

Let us momentarily consider the contours. The $\hat{w}$ and $\hat{v}$ contours about should be such that $\Real(\hat w-\hat v)\in (0,1)$. In the limit we would like them to become infinite lines parallel to the imaginary axis, such that $\Real(\hat w-\hat v)\in (0,1)$.


Taking stock of what we have done so far, our Laplace transform is given by $\det(I+\hat K)$ with the contours and kernels as specified above. Let us no longer write the terms in the exponent which go to zero, and rewrite our kernel without any hats or tildes as
\begin{equation*}
\int \left(\int_{-\infty}^{\infty} \frac{se^{(  w-  v)  t}}{s+e^{ t}}d t\right) \frac{1}{  w-  v'}\frac{\exp\{ F(v)\}}{\exp\{F( w)\}} dw,
\end{equation*}
where
\begin{equation*}
F(z) = -\frac{\beta^4}{12} (2z^3-3z^2+z).
\end{equation*}
If we replace $w$ by $w+\tfrac{1}{2}$ and likewise for $v$ and $v'$ the formula for $F$ reduces to
\begin{equation*}
F(z+\tfrac{1}{2}) = -\tfrac{\beta^4}{24}(4 z^3-z)
\end{equation*}
so that we get
\begin{equation*}
\int \left(\int_{-\infty}^{\infty} \frac{se^{(  w-  v)\hat  t}}{s+e^{ t}}d t\right) \frac{1}{w-  v'}\frac{\exp\{ -\tfrac{\beta^4}{6} v^3 + \tfrac{\beta^4}{24} v\}}{\exp\{-\tfrac{\beta^4}{6} w^3 + \tfrac{\beta^4}{24} w\}} dw.
\end{equation*}
Since $\Real(w-v')>0$ we can substitute
\begin{equation*}
\frac{1}{w-v'} = \int_0^{\infty} e^{-r(w-v')}dr.
\end{equation*}
Inserting this in, we find that the kernel $\hat{K}$ is given by
\begin{equation*}
\int \left(\int_{-\infty}^{\infty} \frac{se^{(w- v)  t}}{s+e^{ t}}d t\right) \int_0^{\infty} e^{-r(w-v')}dr\frac{\exp\{ -\tfrac{\beta^4}{6} v^3 + \tfrac{\beta^4}{24} v\}}{\exp\{-\tfrac{\beta^4}{6} w^3 + \tfrac{\beta^4}{24} w\}} dw,
\end{equation*}

This can be factored as $\hat{K}=ABC$ where
\begin{equation*}
A:L^2(0,\infty)\rightarrow L^2(C_{v}), \qquad B: L^2(C_{w})\rightarrow L^2(0,\infty), \qquad C:L^2(C_{v})\rightarrow L^2(C_{w})
\end{equation*}
are integral operators given by their kernels
\begin{eqnarray*}
A(v,r) &=& e^{-rv}, \qquad B(r,w) = e^{rw},\\
C(w,v) &=& \int_{-\infty}^{\infty} dt\frac{s}{s+e^{ t}}\frac{\exp\{ -\tfrac{\beta^4}{6} v^3 + \tfrac{\beta^4}{24} v - tv + r' v\}}{\exp\{-\tfrac{\beta^4}{6} w^3 + \tfrac{\beta^4}{24} w - tw +rw\}}.
\end{eqnarray*}
The $C_{w}$ and $C_{v}$ represent the contours on which these act.

Now observe that $\det(I+ABC)=\det(I+BCA)$, where $BCA(r,r')$ acts on $L^2(0,\infty)$ with kernel
\begin{equation*}
\int_{-\infty}^{\infty} dt\frac{s}{s+e^{ t}} \left(\int_{C_v}dv \exp\{ -\tfrac{\beta^4}{6} v^3 + \tfrac{\beta^4}{24} v - tv + r' v\}\right)\left(\int_{\Gamma_w}dw \exp\{\tfrac{\beta^4}{6} w^3 - \tfrac{\beta^4}{24} w + tw - rw\}\right).
\end{equation*}
These two interior integrals can be expressed as Airy functions after a change of variables. After recalling that $T=\beta^4$, the resulting expression is exactly that of the kernel $K_{se^{-T/24}}(r,r')$.
\end{proof}

\subsection{CDRP directly from $q$-TASEP}\label{CDRPqTASEP}
We present a soft argument for why $q$-TASEP should be related to the stochastic heat equation.


Recall that that particles of $q$-TASEP evolve according to a Markov chain with the rate of increase of $x_N$ by 1 equal to $(1-q^{x_{N-1}-x_N-1})$. We assume that initially $x_1>x_2>\cdots$. Then the update rule will preserve the order. Consider the scaling
\begin{equation*}
q=e^{-\e},\qquad t=\e^{-2}\tau,\qquad x_k+k = \e^{-2}\tau -(k-1)\e^{-1} \log \e^{-1} - \e^{-1} y_k, \quad k\geq 1.
\end{equation*}
In a time interval $dt= \e^{-2} d\tau$ (small in the rescaled time $\tau$ but large in $q$-TASEP time $t$). Then the change in $x_N$ over the time interval $dt$ is equal to the jump rate
\begin{equation*}
1-q^{x_{N-1}-x_N-1} = 1- e^{-\e(\e^{-1}\log\e^{-1} + \e^{-1}(y_N-y_{N-1})-1)} = 1 -\e e^{y_{N-1}-y_N}
\end{equation*}
that remains approximately constant over $dt$, multiplied by the number of Poisson jumps over $dt$. The latter can be thought of as approximately $\e^{-2} d\tau + \e^{-1} (B_N(\tau+d\tau)-B_N(d\tau))$, where $B_N$ is a Brownian motion which comes from the limit of the centered Poisson jumps of the clock from particle $N$. Thus,
\begin{eqnarray*}
\e^{-2} d\tau - \e^{-1}\big(y_N(\tau+d\tau)-y_N(\tau)\big) = x_N(t+dt)-x_N(t) \\
= \big(1 -\e e^{y_{N-1}-y_N}\big) \big(\e^{-2} d\tau + \e^{-1} (B_N(\tau+d\tau)-B_N(d\tau))\big).
\end{eqnarray*}
Terms of order $\e^{-2}$ cancel, and collecting $\e^{-1}$ terms gives
\begin{equation*}
e^{y_N} \frac{ y_N(\tau+d\tau) - y_N(\tau)}{d\tau} = e^{y_{N-1}} e^{y_N} \frac{(B_N(\tau+d\tau)-B_N(d\tau))}{d\tau}
\end{equation*}
or after rearranging and taking $d\tau\to 0$,
\begin{equation*}
\frac{d}{d\tau} e^{y_N} = e^{y_{N-1}} - \dot{B}_N(\tau)  e^{y_N}.
\end{equation*}
Note that this equation is satisfied by the O'Connell-Yor partition function by setting $e^{y_N}(\tau)= \Zsd^{N}(\tau)$. If we pass to $\tilde{\Zsd}^N =e^{-\tau} \Zsd^N$, which means that $\tilde{\Zsd}^N$ is the expectation of Poisson point paths taking $N-1$ jumps from time 0 to $\tau$, then
\begin{equation*}
\frac{d}{d\tau} \tilde{\Zsd}^N = (\tilde{\Zsd}^{N-1}-\tilde{\Zsd}^N) - \dot{B}_N(\tau) \tilde{\Zsd}^N.
\end{equation*}
This is a discretized version of the stochastic heat equation with multiplicative noise, and a further limit reproduces the continuum stochastic heat equation.

This also suggests the existence of a microscropic Hopf-Cole transform for $q$-TASEP, similar to that of \cite{G} for ASEP. This transform and the analog to the work of \cite{BG,ACQ} will be the subject of a subsequent work.

At this point, however, we would like to figure out the scaling that takes $q$-TASEP to the stochastic heat equation. This can be done by studying under what scalings the integral representations for moments of $q$-TASEP converge to those for the stochastic heat equation. Recall that $x_N(t)=\lambda^{(N)}_N-N$ is the location of particle $N$ at time $t$ where initially for $N\geq 1$, $x_N(0)=-N$. Let us assume all $q$-TASEP speeds $a_k\equiv 1$.

\begin{proposition}
Consider the following scaling limit of $q$-TASEP. Let $q=e^{-\e}$ and $\tau:= \e^4 t^2/N$. Let $\e\to 0$, $t\to \infty$ and $N\to \infty$ so that $\e^{-2} \ll t\ll\e^{-3}$, $N\ll \e^{-2}$, and $\tau$ has a limit. Define $\ell^{(N)}_N(\tau)$ by
\begin{equation*}
\lambda^{(N)}_N(\tau) = t- \frac{\e^3 t^2}{\tau} - \frac{\e^3 t^2}{\tau}\left| \log\left(\frac{\e^3 t}{\tau}\right)\right| +\e^{-1}\log \e^{-1} + \e^{-1} \ell^{(N)}_N(\tau).
\end{equation*}
Then for any $k\geq 0$,
\begin{equation*}
\lim_{\e\to 0,t\to \infty,N\to \infty} \left\langle e^{-k \ell^{(N)}_N(\tau)}\right\rangle  = \frac{1}{(2\pi \iota)^k} \int \cdots \int \prod_{1\le A<B\le k} \frac{z_A-z_B}{z_A-z_B-1}\prod_{j=1}^k e^{\frac{\tau}{2} z_j^2}dz_j\,,
\end{equation*}
where the $z_A$-contour is along $C_A+\iota \R$ for any $C_1>C_2+1>C_3+2>\cdots >C_k+(k-1)$. Note that the right-hand side is as in Remark \ref{BCrem} or Proposition~\ref{ADBGprop} (with all $x$'s set to 0).
\end{proposition}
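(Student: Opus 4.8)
The plan is to start from the contour integral formula for $q$-TASEP $q$-moments, namely Proposition~\ref{prop8tzero} with all $a_i\equiv 1$ and $t=0$, which gives an exact formula for $\left\langle q^{k\lambda^{(N)}_N}\right\rangle_{\MM_{Pl}}$. Recall that under the $q$-Whittaker 2d-growth dynamics started from the zero GT pattern, after $q$-TASEP time $t$ the coordinate $\lambda^{(N)}_N$ is distributed according to $\MM_{t=0}(1,\dots,1;\rho)$ with $\rho$ the Plancherel specialization of parameter $\gamma=t$ (Proposition~\ref{prop16}). Thus
\begin{equation*}
\left\langle q^{k\lambda^{(N)}_N(t)}\right\rangle = \frac{(-1)^{k}q^{\frac{k(k-1)}2}}{(2\pi \iota)^k} \oint\cdots\oint \prod_{1\le A<B\le k} \frac{z_{A}-z_{B}}{z_{A}-qz_{B}} \prod_{j=1}^k \frac{1}{(1-z_j)^N}e^{(q-1)t z_j}\frac{dz_j}{z_j}\,,
\end{equation*}
with nested contours around $\{qz_{j+1},\dots,qz_k,1\}$ and avoiding $0$. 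Writing $q^{k\lambda^{(N)}_N(t)}=e^{-\e k\lambda^{(N)}_N(t)}$ and substituting the ansatz $\lambda^{(N)}_N(\tau)=t - \tfrac{\e^3 t^2}{\tau} - \tfrac{\e^3 t^2}{\tau}\big|\log(\tfrac{\e^3 t}{\tau})\big| + \e^{-1}\log\e^{-1} + \e^{-1}\ell^{(N)}_N(\tau)$, the quantity $\left\langle e^{-k\ell^{(N)}_N(\tau)}\right\rangle$ equals the right side above multiplied by the explicit deterministic prefactor $\exp\{\e k t - \e^2 k \tfrac{\e^3 t^2}{\tau}(\dots)\cdot(-1)\cdots\}$; the point of the peculiar shift is precisely to cancel the leading asymptotics of the integral, leaving a finite limit.

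Next I would perform the steepest descent / change of variables on each $z_j$ integral. The natural substitution is $z_j = 1/(1+w_j)$ (or equivalently expand around the point $z_j=1$ where the factor $(1-z_j)^{-N}$ blows up), after which $(1-z_j)^{-N}e^{(q-1)tz_j}$ becomes, up to the deterministic prefactor absorbed above, of the form $\exp\{N^{1/2}(\text{linear}) + \tfrac{\tau}{2} w_j^2 + o(1)\}$ once one rescales $w_j\mapsto w_j$ on the scale dictated by the hypotheses $\e^{-2}\ll t\ll\e^{-3}$ and $N\ll \e^{-2}$, with $\tau=\e^4 t^2/N$ fixed. Concretely: $\log(1-z_j)^{-N} = -N\log(1-z_j)$, and expanding $z_j$ near the relevant saddle one sees the cubic term is suppressed because $N\ll\e^{-2}$, so only the Gaussian $e^{\tau w_j^2/2}$ survives. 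Simultaneously, the Cauchy-type factor $\frac{z_A-z_B}{z_A-qz_B}$ with $q=e^{-\e}\to 1$ and $w$'s on the rescaled contour converges to $\frac{w_A-w_B}{w_A-w_B-1}$ exactly as in the intermediate-disorder computations of Section~\ref{CDRPintSec} (compare the proof of Proposition~\ref{prop28CDRP} and the remark following it), and $\frac{dz_j}{z_j}\to dw_j$. The linear terms $e^{N^{1/2}(\cdot)w_j}$ and the residual constants are exactly what the three explicit shift terms in the ansatz for $\lambda^{(N)}_N$ are engineered to kill, so that after taking $\e\to 0$ one is left with $\frac{1}{(2\pi\iota)^k}\int\cdots\int \prod_{A<B}\frac{z_A-z_B}{z_A-z_B-1}\prod_j e^{\frac{\tau}{2}z_j^2}dz_j$ on ordered vertical contours $C_A+\iota\R$ with $C_1>C_2+1>\cdots$, the ordering coming from tracking the nesting of the original $z$-contours through the substitution.

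The main obstacle, as in the companion derivations in this chapter (e.g. Section~\ref{CDRPfreddetSec} and the references to \cite{BorCorFer,QM}), will be the rigorous justification of the contour deformations and the tail estimates: one must show that after the change of variables the original small nested circles can be deformed to the vertical lines $C_A+\iota\R$ without crossing poles (the poles at $z_A=qz_B$ become, in the limit, the poles at $w_A-w_B=1$, which is why the strict ordering $C_A>C_B+1$ is needed), and that the contributions from the parts of the contours far from the saddle are negligible uniformly as $\e\to 0$, $t\to\infty$, $N\to\infty$ in the stated regime. Since $k$ is fixed and each integral is one-dimensional, standard Laplace-method bounds using the quadratic decay $e^{\tau w^2/2}$ along vertical contours suffice for the tails; the deformation past the $q$-shifted poles requires care but is the same bookkeeping already carried out for Proposition~\ref{Proposition28} and its $q\to 1$ limit. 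I would present the argument as a steepest-descent computation parallel to the proof of Proposition~\ref{prop28CDRP}, indicating that full rigor (uniformity of the triple limit) follows the lines of \cite{BorCorFer}, and noting that the resulting formula coincides with the CDRP moment formula of Remark~\ref{BCrem} and Proposition~\ref{ADBGpropintro} with spatial variables set to zero.
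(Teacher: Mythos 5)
Your overall strategy (start from Proposition~\ref{prop8tzero}, do a steepest-descent analysis, locate a Gaussian limit of the integrand, and read off the Cauchy factor in the $q\to1$ limit) matches the paper's. But there is a genuine gap in your identification of the saddle and in your account of what the shift in the ansatz is cancelling.

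You propose the substitution $z_j=1/(1+w_j)$, ``equivalently expand around $z_j=1$,'' and later speak of ``the relevant saddle'' without locating it. These are not the same thing, and neither is correct here. Writing $F(z)=(q-1)tz - N\log(1-z)$, the critical point is where $F'(z_c)=0$, i.e.\ $z_c = 1 - N/\bigl((1-q)t\bigr)$. In the stated regime one has
\[
1-z_c \;\approx\; \frac{N}{\e t} \;=\; \frac{\e^3 t}{\tau},
\]
and the assumption $\e^{-2}\ll t\ll \e^{-3}$ forces $\e\ll 1-z_c\ll 1$. So the saddle is strictly separated both from $1$ and from the scale $\e$ on which fluctuations live: it is a third, intermediate scale. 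The paper expands linearly around this $z_c$, via $w_j=\e^{-1}(z_j-z_c)$, and the ratio $\e/(1-z_c)=\tau/(\e^2 t)\to0$ is what justifies the Taylor expansion of $\log(1-z_j)$. Your Möbius substitution centred at $z_j=1$ misses this intermediate scale, and the expansion around $z=1$ does not produce a clean quadratic because $(1-z_j)^{-N}$ is singular there.

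A related symptom: you claim the exponent after rescaling has the form $\exp\{N^{1/2}(\text{linear})w_j + \tfrac{\tau}{2}w_j^2 + o(1)\}$ and that ``the linear terms\ldots are exactly what the three explicit shift terms in the ansatz for $\lambda^{(N)}_N$ are engineered to kill.'' This is not how the cancellation works, and in fact cannot work: the ansatz is a deterministic additive shift of $\lambda^{(N)}_N$, which multiplies $q^{k\lambda^{(N)}_N}$ by a deterministic prefactor that sits outside the integral; such a prefactor can cancel constants (i.e.\ $e^{kF(z_c)}$) but cannot cancel a $w_j$-dependent linear term inside the integrand. In the paper's proof, the linear term in the Taylor expansion vanishes automatically because $z_c$ is a genuine critical point, the quadratic term gives exactly $\tfrac{\tau}{2}w_j^2$ (one checks $\tfrac12 F''(z_c)\e^2 = \tau/2 + o(1)$), the cubic term is $O(\tau^2/(\e^2 t))\to 0$, and the constant $F(z_c)$ is computed explicitly and matched term-by-term to the three shift terms in the ansatz. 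Without locating $z_c$ and computing $F(z_c)$, you have no way to verify that the specific combination $t - \tfrac{\e^3 t^2}{\tau} - \tfrac{\e^3 t^2}{\tau}\bigl|\log(\tfrac{\e^3 t}{\tau})\bigr| + \e^{-1}\log\e^{-1}$ is the right centering, which is really the content of this proposition.

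The rest of your outline (the $q\to1$ limit of the Cauchy factor, the $\tfrac{dz_j}{z_j}\to dw_j$ normalization, and the deformation of nested contours to ordered vertical lines $C_A+\iota\R$) is consistent with the paper, modulo the sign change $w_j\mapsto -w_j$ that turns $w_A-w_B+1$ into $w_A-w_B-1$ in the denominator. To repair the argument, replace the substitution $z_j=1/(1+w_j)$ by the linear change of variables around the true saddle, compute $F(z_c)$ explicitly, and observe that the ansatz is precisely $\lambda^{(N)}_N = -\e^{-1}F(z_c) + \e^{-1}\log\e^{-1} + \e^{-1}\ell^{(N)}_N(\tau)$ up to $o(\e^{-1})$, the extra $\e^{-1}\log\e^{-1}$ coming from the Jacobian $\tfrac{dz_j}{z_j}\mapsto \e\,dw_j(1+o(1))$.
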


\begin{proof}
We start with Proposition~\ref{prop8tzero} which shows
\begin{equation*}
\langle q^{k\lambda^N_N(t)}\rangle = \frac{(-1)^{k}q^{\frac{k(k-1)}2}}{(2\pi \iota)^k} \int\cdots\int \prod_{1\le A<B\le k} \frac{z_A-z_B}{z_A-qz_B} \prod_{j=1}^k e^{F(z_j)}\frac{dz_j}{z_j},
\end{equation*}
where $z_j$-contour contains $\{qz_{j+1},\dots,qz_k,1\}$ and no other singularities, and where
\begin{equation*}
F(z) = (q-1)t z_j - N\log(1-z).
\end{equation*}
We will perform steepest descent analysis. The exponential function $F(z)$ has a critical point at $z_c$ such that $F'(z_c)=0$. This is solved by
\begin{equation*}
z_c = 1-\frac{N}{(1-q)t}.
\end{equation*}
With our assumptions we have
\begin{equation*}
1-z_c = \frac{N}{(1-q)t} \sim \frac{\e^4 t^2/\tau}{\e t} = \frac{ \e^{3} t}{\tau},
\end{equation*}
whence $\e\ll 1-z_c \ll 1$. We will deform the contours to form longer (in $\e$ scale) stretches of vertical lines near $z_c$ with distance of order $\e$ between the lines. The standard steepest descent type arguments allow to estimate away the contributions away from $z_c$. Let us do the change of variables
\begin{equation*}
w_j = \e^{-1} (z_j - z_c), \qquad j=1,\ldots, k.
\end{equation*}
Note that under this change, $\frac{dz_j}{z_j}$ goes to $\frac{\e dw_j}{\e w_j +z_c}$ which behaves like $\e dw_j$ for $\e$ small. This account for the $\e^{-1}\log\e^{-1}$ term in the scalings.
We have
\begin{equation*}
\frac{z_A-z_B}{z_A-qz_B} = \frac{\e(w_A-w_B)}{\e(w_A-w_B)+(1-q)(z_c+O(\e))} = \frac{w_A-w_B}{w_A-w_B + 1+o(1)},
\end{equation*}
\begin{eqnarray*}
F(z_j) - F(z_c) &=& \frac{1}{2} F''(z_c)(\e w_j)^2 + lot = \frac{1}{2} \frac{N\e^2}{(1-z_c)^2} w_j^2 + \textrm{lot}\\
&=& \frac{1}{2} \frac{ \e^4 t^2/\tau \cdot \e^2}{(\e^3t/\tau)^2} w_j^2 +lot = \frac{\tau}{2} w_j^2 + \textrm{lot},
\end{eqnarray*}
(where $\textrm{lot}$ represents lower order terms) and finally
\begin{eqnarray*}
F(z_c) &=& (q-1)t\left(1-\frac{N}{(1-q)t}\right) - N\log \left(\frac{N}{(1-q)t}\right) \\
       &=& (q-1)t+ Nt -N\log\left(\frac{N}{\e t}(1+O(\e))\right)\\
       &=& \left(-\e t + \frac{\e^4 t^2}{\tau} - \frac{\e^4 t^2}{\tau} \log\left(\frac{\e^3 t}{\tau}\right)\right)(1+O(\e)).
\end{eqnarray*}
Hence,
\begin{equation*}
q^{\lambda^{(N)}_N(t) -\e^{-1} \ell^{(N)}_{N}(\tau)} = q^{t-\frac{\e^3 t^2}{\tau} + \frac{\e^3 t^2}{\tau} \log(\frac{\e^3 t}{\tau})} = e^{-\e(t-\frac{\e^3t^2}{\tau} + \frac{\e^3 t^2}{\tau}\log(\frac{\e^3 t}{\tau}))} = e^{F(z_c)}(1+o(1)),
\end{equation*}
and substituting into the formula for $\langle q^{k\lambda^{(N)}_N}\rangle$ and changing the signs of the integration variables $\{w_j\}$, we obtain the desired relation.
\end{proof}

The above proposition provides the time and fluctuation scaling for $q$-TASEP to the stochastic heat equation, however it does not give the spatial scaling. For the stochastic heat equation we have joint moments, but not for $q$-TASEP, so a similar calculation can not presently be made. The spatial scaling should also come up naturally from the rescaling of the $q$-TASEP dynamics (after the microscopic Hopf-Cole transform) necessary to converge to the stochastic heat equation. We pursue this in a subsequent work.

\chapter{Replicas and quantum many body systems}\label{replicassec}

\section{Replica analysis of semi-discrete directed polymers}\label{replicaSemidisc}

\subsection{The many body system}
\index{many body system!discrete space}
Consider an infinitesimal generator $L$ for a continuous time autonomous (time-homogeneous) Markov process on $\Z$ and independent $\Z$-indexed one dimensional white noises $\{W(\cdot,i)\}_{i\in \Z}$ where the $\cdot$ represents time (which varies over $\Rplus$). We can define a directed polymer partition function $Z_{L,\beta}(t,x)$ as
\begin{equation*}
Z_{L,\beta}(t,x) = E_{\pi(0)=0} \left[\delta(\pi(t)=x) \exp\left\{\int_{0}^{t} \beta W(s,\pi(s))\right\}\right],
\end{equation*}
where $\delta$ is the Kronecker delta, and the expectation is with respect to the path measure on $\pi(\cdot)$ given by the trajectories of the continuous Markov process started at 0 with infinitesimal generator given by $L$. There is a polymer measure for which this is the partition function -- it is the reweighting of the above mentioned path measure with respect to the Boltzmann weights given by $\exp\left\{\int_{0}^{t} \beta W(s,\pi(s))\right\}$. In Section~\ref{PAMsec} we saw that, via the Feynman-Kacs representation and time reversal, this provides a solution to an initial value problem $\partial_t u = L^* u - Vu$ with $L^*$ the adjoint of $L$ (resulting from the time-reversal).

Define
\begin{equation}\label{barZ}
\bar{Z}_{L,\beta}(\vec{x};t) = \left\langle \prod_{i=1}^{k}Z_{L,\beta}(t,x_i)\right\rangle,
\end{equation}
where $\langle \cdot \rangle$ denotes the average with respect to the white noises.

Define $\Omega^k_{\vec{x};t}$ to be the set of all $k$ paths $\vec{\pi}=\{\pi_i(\cdot)\}_{i=1}^{k}$ which, at time $t$ are such that $\pi_i(t)=x_i$ and $\pi_i(0)=0$ for all $1\leq i\leq k$.
Denote $W(s,\vec{\pi}(s)) = \sum_{i=1}^{k} W(s,\pi_i(s))$.
Then
\begin{equation*}
\bar{Z}_{L,\beta}(\vec{x};t)  = \int_{\Omega^k_{\vec{x};t}} d\vec{\pi} \left\langle \exp\left\{\int_{0}^{t} \beta W(s,\vec{\pi}(s))\right\}\right\rangle.
\end{equation*}
The outer integral is with respect to the Lebesgue measure on $\Omega^k_{\vec{x};t}$.

\begin{remark}
Any finite range generator $L$ can be written (possibly, using time homothety) via its action on functions $f:\Z\to \R$ as
\begin{equation}\label{LStar}
Lf(x) = \sum_{i\in I} p_i f(x+i) - f(x),
\end{equation}
where $p_i>0$, $\sum_{i\in I} p_i=1$, and $I$ is a finite set such that $p_i=0$ for $i\notin I$. These $p_i$ represent the jump rates of the process from $x$ to $x+i$. The adjoint of $L$ is denoted by $L^*$ and given by
\begin{equation*}
L^*g(x) = \sum_{i\in I} p_i g(x-i) - g(x),
\end{equation*}
so that $(Lf,g)=(f,L^* g)$ where $(f,g) =\sum_{x\in \Z} f(x)g(x)$. The adjoint generator corresponds to running the process backwards and accounts for the reversal in the jump directions: $p_i$ is now the jump rates of the process from $x$ to $x-i$. The $k$-fold tensor product of the adjoint generator $L^*$ is defined by its action on functions $g:\Z^k\to \R$ via
\begin{equation*}
L^{*,k}g(x_1,\ldots, x_k) = \sum_{j=1}^{k} \sum_{i\in I} p_i g(x_1,\ldots, x_j-i,\ldots, x_k) - k g(x_1,\ldots, x_k).
\end{equation*}

The O'Connell-Yor semi-discrete directed polymer corresponds to the generator
\begin{equation}\label{OCongen}
Lf(x) = f(x+1) - f(x),
\end{equation}
whereas the parabolic Anderson model corresponds to the generator
\begin{equation}\label{PAMgeb}
Lf(x) = \tfrac{1}{2}f(x+1)+\tfrac{1}{2}f(x-1) - f(x).
\end{equation}
That is to say, in the first case, the continuous time random walk only increases at rate one, whereas in the second case, the continuous time random walk is simple and symmetric.
\end{remark}

\begin{remark}
The following result is not new (cf. \cite{CarMol} Theorem 2.3.2) but we include a proof below for completeness.
\end{remark}
\begin{proposition}\label{DiscreteMBP}
Assume $L$ is of the form (\ref{LStar}). For $k\geq 1$ and $\vec{x}\in \Z^k$, $\bar{Z}_{L,\beta}(\vec{x};t)$ is a solution to the following many-body system:
\begin{equation*}
\partial_t u(\vec{x};t) = H u(\vec{x};t), \qquad \textrm{where} \qquad H = L^{*,k} +\frac{1}{2}\beta^2 \sum_{a,b=1}^{k} \delta(x_a-x_b)
\end{equation*}
with $L^{*,k}$ is the k-fold tensor product of the adjoint of $L$, subject to the initial condition
\begin{equation*}
\bar{Z}_{L,\beta}(\vec{x};0) = \prod_{i=1}^{k} \delta(x_i=0),
\end{equation*}
and the symmetry condition that $u(\sigma \vec{x};t) = u(\vec{x};t)$ for any permutation $\sigma\in S_k$.
\end{proposition}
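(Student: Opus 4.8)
The plan is to derive the many-body evolution equation by a standard Feynman--Kac / chaos-expansion computation, exploiting the fact that the $k$-fold product of partition functions is an average of a Boltzmann weight over $k$ independent copies of the underlying walk. First I would write the Wick/chaos expansion of each $Z_{L,\beta}(t,x_i)$ by expanding the exponential $\exp\{\int_0^t \beta W(s,\pi_i(s))\,ds\}$ as a series of iterated It\^o integrals against the white noise $W(\cdot,\pi_i(\cdot))$. Taking the product over $i=1,\dots,k$ and then averaging over the noise, the key input is that the $\{W(\cdot,j)\}_{j\in\Z}$ are independent one-dimensional white noises, so $\langle W(s,y)W(s',y')\rangle = \delta(s-s')\,\mathbf{1}_{y=y'}$; all odd moments vanish and even moments are given by Wick's theorem (pairings). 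This collapses the product of the $k$ noise exponentials into a single exponential of the self-intersection functional $\tfrac12\beta^2\sum_{a,b=1}^k \int_0^t \mathbf{1}_{\pi_a(s)=\pi_b(s)}\,ds$, which is exactly the ``delta interaction'' term. Thus $\bar Z_{L,\beta}(\vec x;t)$ equals the expectation, over $k$ independent $L$-walks started at $0$, of $\exp\{\tfrac12\beta^2\sum_{a,b}\int_0^t \mathbf{1}_{\pi_a(s)=\pi_b(s)}\,ds\}$ conditioned to end at $\vec x$.

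Next I would turn this path-integral representation into a PDE by a Feynman--Kac argument together with time reversal, exactly as sketched in Section~\ref{PAMsec}. The generator of the $k$ independent walks (run forward) is $L^{(k)}$, the $k$-fold tensor sum of $L$; conditioning the endpoint and reversing time replaces $L^{(k)}$ by its adjoint $L^{*,k}$, which accounts for the appearance of $L^{*,k}$ rather than $L^{(k)}$ in the statement. The exponential weight of the self-intersection functional is precisely the potential term in a Feynman--Kac formula with potential $V(\vec x) = -\tfrac12\beta^2\sum_{a,b=1}^k \delta(x_a-x_b)$ (the diagonal $a=b$ terms contribute a deterministic constant shift that one absorbs, or keeps, consistently with the stated Hamiltonian $H = L^{*,k} + \tfrac12\beta^2\sum_{a,b}\delta(x_a-x_b)$). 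Differentiating in $t$ and using the Markov property / Duhamel formula then yields $\partial_t u = H u$. The initial condition $\bar Z_{L,\beta}(\vec x;0) = \prod_i \delta(x_i=0)$ is immediate from the definition since all paths start at $0$, and the symmetry $u(\sigma\vec x;t)=u(\vec x;t)$ follows because the $k$ copies of the walk are i.i.d., so relabeling the endpoints leaves the average unchanged.

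The main obstacle I expect is making the chaos-expansion / Wick-pairing step rigorous in continuous time: the white-noise exponential $\exp\{\int_0^t \beta W(s,\pi(s))\,ds\}$ is formal and must be interpreted via a limiting smoothing procedure or via the Wiener--It\^o chaos series, and one must justify interchanging the average over noise with the path integral and verify convergence of the resulting series (finite moments of the self-intersection local time of a discrete-space walk are fortunately easy to control, which is why the discrete-space setting is much more forgiving than the continuum one). Once the representation $\bar Z_{L,\beta}(\vec x;t) = E\big[\,e^{\frac12\beta^2\sum_{a,b}\int_0^t \mathbf{1}_{\pi_a(s)=\pi_b(s)}ds}\,\mathbf{1}_{\vec\pi(t)=\vec x}\,\big]$ is established, the PDE is a routine consequence of Feynman--Kac (see \cite{CarMol} Theorem~2.3.2, cited in the excerpt). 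I would therefore structure the proof as: (i) chaos expansion and noise averaging to obtain the self-intersection representation; (ii) identification of the potential and appeal to Feynman--Kac with time reversal to get $\partial_t u = Hu$; (iii) verification of the initial and symmetry conditions, which are both essentially definitional.
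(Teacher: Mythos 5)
Your proposal is correct, and it reaches the same equation by a genuinely different organization of the argument. The paper's proof works directly from the definition of $\bar{Z}_{L,\beta}$ by a one-step differential computation: it writes $\bar Z(\vec x;t+dt)-\bar Z(\vec x;t)$ as a sum of four contributions indexed by whether $\vec\pi$ sits at $\vec x$ at time $t$, $t+dt$, or both, and performs the Gaussian noise average only over the short interval $[t,t+dt]$ via the identity $\langle\exp\{\int_a^b\beta\ell W(s,x)\,ds\}\rangle = e^{\frac12\beta^2\ell^2(b-a)}$; the ``stay'' terms produce the potential $\tfrac12\beta^2\sum_{a,b}\delta(x_a-x_b)$ and the ``jump in/out'' terms produce $L^{*,k}$. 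You instead perform the Gaussian average globally first, obtaining the self-intersection local time representation $\bar Z_{L,\beta}(\vec x;t) = E\bigl[\exp\{\tfrac12\beta^2\sum_{a,b}\int_0^t \mathbf{1}_{\pi_a(s)=\pi_b(s)}\,ds\}\,\mathbf{1}_{\vec\pi(t)=\vec x}\bigr]$, and then invoke the forward Kolmogorov/Feynman--Kac equation for the endpoint density (which is where $L^{*,k}$ rather than $L^{(k)}$ enters). The two approaches rest on the same key Gaussian moment identity; yours is more modular and matches the standard route cited in \cite{CarMol}, while the paper's $dt$ argument is more elementary and self-contained, bypassing the need to justify the chaos/Wick expansion of the noise exponential that you correctly flag as the subtle step in your version. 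One small point worth tightening: ``conditioning the endpoint and reversing time'' is better phrased as passing from the backward to the forward Kolmogorov equation (Fokker--Planck), since $\bar Z(\vec x;t)$ is a weighted transition density in $\vec x$, not a genuine conditional expectation.
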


\begin{proof}

This result relies heavily on the fact that the random environment given by the white noises $W(\cdot,i)$ is Gaussian. It is the Gaussian nature of the noise which makes the interaction term $\sum_{a,b=1}^{K} \delta(x_a-x_b)$ only two-body (i.e., only involving pairs of two variables rather than more). The essence of the proof is to consider a small change in the time and to derive the differential effect of this $dt$ change on $\bar{Z}_{L,\beta}(\vec{x};t)$. Since the jumps of $\pi(\cdot)$ are determined by Poisson events at rates $p_i$, the probability of multiple jumps is sufficiently unlikely. This makes the analysis easy since we can separately consider the small $p_i dt$ probability of different jumps, without considering the effect of their joint occurrence.


The symmetry condition is clear. In order to prove the evolution equation, consider the differential increment of time $dt$ and for simplicity of notation we will now write (recalling the notation $\Omega^k_{\vec{x};t}$ introduced after (\ref{barZ}))
\begin{eqnarray*}
A&=&\{\Omega^k_{\vec{x};t+dt}\cap \Omega^k_{\vec{x};t}\}\\
B&=&\{\Omega^k_{\vec{x};t+dt}\cap (\Omega^k_{\vec{x};t})^{c}\}\\
C&=&\{(\Omega^k_{\vec{x};t+dt})^{c}\cap \Omega^k_{\vec{x};t}\}.
\end{eqnarray*}
The set $A$ represents paths which are at $\vec{x}$ at time $t$ and at time $t+dt$; set $B$ represents paths at $\vec{x}$ at time $t$ but not at time $t+dt$; and set $C$ represents paths at $\vec{x}$ at time $t+dt$ but not at time $t$. Observe then that
\begin{equation*}
\Omega^k_{\vec{x};t+dt} = A\cup B \qquad \textrm{and},\qquad \Omega^k_{\vec{x};t} = A\cup C.
\end{equation*}

Therefore
\begin{equation*}
\bar{Z}_{L,\beta}(\vec{x};t+dt) - \bar{Z}_{L,\beta}(\vec{x};t) = (1)+(2)-(3)-(4),
\end{equation*}
where
\begin{eqnarray*}
(1) &=& \int_{A}d\vec{\pi} \left\langle \exp\left\{\int_0^t \beta W(s,\vec{\pi}(s))ds + \int_t^{t+dt} \beta W(s,\vec{\pi}(s))ds \right\} \right\rangle\\
(2) &=& \int_{B}d\vec{\pi} \left\langle \exp\left\{\int_0^t \beta W(s,\vec{\pi}(s))ds + \int_t^{t+dt} \beta W(s,\vec{\pi}(s))ds \right\}  \right\rangle\\
(3) &=& \int_{A}d\vec{\pi} \left\langle \exp\left\{\int_0^t \beta W(s,\vec{\pi}(s))ds \right\}  \right\rangle\\
(4) &=& \int_{C}d\vec{\pi} \left\langle \exp\left\{\int_0^t \beta W(s,\vec{\pi}(s))ds \right\}  \right\rangle.
\end{eqnarray*}

Now we use the independence of the intervals of the white-noise as well as the fact that
\begin{equation*}
\left\langle \exp\left\{\int_{a}^{b}\beta \ell W(s,x)ds\right\}\right\rangle = e^{\frac{1}{2} \beta^2 \ell^2 (b-a)}.
\end{equation*}

The first term of the four above can be rewritten as
\begin{equation*}
(1) = \frac{\int_{A} d\vec{\pi}}{\int_{\Omega^k_{\vec{x};t}} d\vec{\pi}} \bar{Z}_{L,\beta}(\vec{x};t) \exp\left\{\tfrac{1}{2}\beta^2dt \sum_{a,b=1}^{k} \delta(x_a-x_b)\right\}.
\end{equation*}
The last part of the above expression deserves explanation: On the event $A$, all particles stay in the same place for the time $dt$. This means that the contribution from the exponential of the integral of the white noise from $t$ to $t+dt$ needs to take into account how many particles are at the same place. The contribution we get is the square of this number, summed over all clusters of particles. The above expression in terms of $\delta(x_a-x_b)$ is conveniently the same as this. This is why Gaussian noise produces two-body interactions, and no more.

Thus, up to an $O(dt^2)$ correction, the first term is
\begin{equation*}
(1) = \bar{Z}_{L,\beta}(\vec{x};t) \left(\frac{\int_{A} d\vec{\pi}}{\int_{\Omega^k_{\vec{x};t}} d\vec{\pi}} +\tfrac{1}{2}\beta^2 \sum_{a,b=1}^{k} \delta(x_a-x_b) dt + O(dt^2)\right).
\end{equation*}

Turn now to the third term and likewise express it as
\begin{equation*}
(3) = \bar{Z}_{L,\beta}(\vec{x};t) \left(\frac{\int_{A} d\vec{\pi}}{\int_{\Omega^k_{\vec{x};t}} d\vec{\pi}}\right).
\end{equation*}
Thus
\begin{equation*}
(1) - (3) = \left( \tfrac{1}{2}\beta^2 \sum_{a,b=1}^{k} \delta(x_a-x_b) dt + O(dt^2) \right) \bar{Z}_{L,\beta}(\vec{x};t).
\end{equation*}

The second term can be expressed as
\begin{equation*}
(2) =  \sum_{j=1}^{k}\sum_{i\in I} \frac{\int_{B_{j,-i}} d\vec{\pi}}{\int_{\Omega^k_{\vec{x};t}} d\vec{\pi}} \bar{Z}_{L,\beta}(\vec{x}_{j,-i};t) \exp\left\{\tfrac{1}{2}\beta^2dt \sum_{a,b=1}^{k} \delta(x_a-x_b)\right\} + O(dt^2),
\end{equation*}
where $\vec{x}_{j,i} = (x_1,\ldots, x_{j-1},x_{j}+i,x_{j+1},\ldots,x_k)$ and $B_{j,i}$ is the event that $\vec{\pi}(t) = \vec{x}_{j,i}$ and $\vec{\pi}(t+dt) = \vec{x}$ (which partitions $B$ up to an error of $O(dt^2)$ which comes from two jumps in the interval $dt$).
Now observe that
\begin{equation*}
\frac{\int_{B_{j,-i}} d\vec{\pi}}{\int_{\Omega^k_{\vec{x};t}} d\vec{\pi}} = p_i dt+O(dt^2)
\end{equation*}
due to the fact that up jumps from $x_j-i$ to $x_j$ occur at rate $p_i$ and in a short interval $dt$ by probability of more than one such jump is $O(dt^2)$.
Therefore we have
\begin{equation*}
(2) =  \sum_{j=1}^{k}\sum_{i\in I}p_i \bar{Z}_{L,\beta}(\vec{x}_{j,-i};t) dt + O(dt^2),
\end{equation*}
where the exponential term also disappeared since it was $1+O(dt)$. The minus sign in front of $i$ above is where the adjoint $L^*$ comes in.
Likewise one shows that
\begin{equation*}
(4) =  \sum_{i=1}^{k} \bar{Z}_{L,\beta}(\vec{x};t)dt + O(dt^2).
\end{equation*}

Combining $(1)+(2)-(3)-(4)$ shows that
\begin{equation*}
\frac{\bar{Z}_{L,\beta}(\vec{x};t+dt) - \bar{Z}_{L,\beta}(\vec{x};t)}{dt} = H\bar{Z}_{L,\beta}(\vec{x};t)+O(dt)\bar{Z}_{L,\beta}(\vec{x};t).
\end{equation*}
As $dt$ goes to zero (due to a priori boundedness of $\bar{Z}_{L,\beta}(\vec{x};t)$) and hence this converges to the desired relation $\partial_t \bar{Z}_{L,\beta}(\vec{x};t) = H\bar{Z}_{L,\beta}(\vec{x};t)$, This proves the proposition.
\end{proof}

\subsection{Solving the semi-discrete quantum many body system}

Proposition~\ref{DiscreteMBP} gives us a way to check the previously derived formula (\ref{Proposition28OCon}) for the $k$-th moment of the O'Connell-Yor semi-discrete directed polymer: $\left\langle  \prod_{i=1}^k \Zsd^{N_i}_{1}(t) \right\rangle$. Observe that for the generator $L$ in (\ref{OCongen}), and $\beta=1$
\begin{equation*}
\bar{Z}_{L,1}(\vec{N};t) = e^{-kt} \left\langle  \prod_{i=1}^k \Zsd^{N_i}_{1}(t) \right\rangle.
\end{equation*}
The reason for the $e^{-kt}$ factor is that in the definition of $\bar{Z}_{L,1}(\vec{N};t)$, the averaging is with respect to a Poisson process $\pi(\cdot)$ whereas in $\Zsd^{N_i}_{1}(t)$ the averaging is over the simplex of volume $t^{N-1}/(N-1)!$, not $e^{-t}t^{N-1}/(N-1)!$ as the Poisson process dictates.

Inspired by the form of Proposition~\ref{Proposition28OCon} one may write down an ansatz for the solution to the many body problem in Proposition~\ref{DiscreteMBP} which will give us a formula for $\bar{Z}_{L,1}(\vec{N};t)$.

\begin{proposition}\label{discDBG}
Fix $L$ as in (\ref{OCongen}) and $k\geq 1$. For $\vec{N}=(N_1,\ldots,N_k)$ such that $N_1\geq N_2\geq \cdots \geq N_k$, the following integral solves the many body problem given in Proposition~\ref{DiscreteMBP} but with initial condition given by delta functions at $1$ rather than 0:
\begin{equation}\label{ZbarL1}
\bar{Z}_{L,1}(\vec{N};t) = e^{-kt/2}\frac{1}{(2\pi \iota)^k} \int\cdots\int \prod_{1\le A<B\le k} \frac{w_A-w_B}{w_A-w_B-1}\prod_{j=1}^k \frac{e^{t w_j}}{w_j^{N_j}} dw_j\,,
\end{equation}
where the $w_j$-contour contains $\{w_{j+1}+1,\cdots,w_k+1,0\}$ and no other singularities for $j=1,\dots,k$.
\end{proposition}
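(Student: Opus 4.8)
The plan is to verify that the right-hand side of (\ref{ZbarL1}), which I will denote $u(\vec N;t)$, satisfies the three defining properties of the many-body system of Proposition~\ref{DiscreteMBP} with the initial data $\prod_i\delta(N_i=1)$ in place of $\prod_i\delta(N_i=0)$, and then to invoke uniqueness of solutions to this linear system to conclude the asserted equality $\bar Z_{L,1}(\vec N;t)=u(\vec N;t)$. I extend $u$ from the Weyl chamber $N_1\ge\cdots\ge N_k$ to all of $\Z^k$ by symmetry, so the symmetry condition holds by construction. For the generator $L$ of (\ref{OCongen}) one has $L^{*}g(x)=g(x-1)-g(x)$, hence $H=L^{*,k}+\tfrac12\sum_{a,b=1}^{k}\delta(x_a-x_b)=L^{*,k}+\tfrac k2+\sum_{a<b}\delta(x_a-x_b)$, and the only substantive things to check are the difference-differential equation $\partial_t u=Hu$ and the initial data.

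The first step is the free part. Writing the integrand of (\ref{ZbarL1}) as $f(\vec w;\vec N;t)=e^{-kt/2}\prod_{A<B}\tfrac{w_A-w_B}{w_A-w_B-1}\prod_j w_j^{-N_j}e^{tw_j}$, one notes that $L^{*}$ applied in the variable $N_j$ turns $w_j^{-N_j}$ into $(w_j-1)w_j^{-N_j}$, that $\partial_t$ turns $e^{tw_j}$ into $w_je^{tw_j}$, and that the rational prefactor depends on neither $\vec N$ nor $t$; a one-line computation then gives $\partial_t f=L^{*,k}f+\tfrac k2 f$ for each fixed $\vec w$. The contours in (\ref{ZbarL1}) are closed (so all integrals converge without further comment) and $t$-independent, so $\partial_t$ passes through the integral; using $\sum_j\int_\Gamma f(\cdot;\vec N;\cdot)=k\,u(\vec N;t)$ one obtains
\begin{equation*}
\partial_t u(\vec N;t)=\sum_{j=1}^{k}\int_{\Gamma}f(\vec w;\vec N-e_j;t)\,d\vec w-k\,u(\vec N;t)+\tfrac k2\,u(\vec N;t),
\end{equation*}
where $e_j$ is the $j$-th coordinate vector. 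Comparing this with $Hu=\sum_j u(\vec N-e_j;t)-k\,u+\tfrac k2 u+\sum_{a<b}\delta(N_a-N_b)u$, where $u(\vec N-e_j;t)$ denotes the symmetric extension, the equation $\partial_t u=Hu$ reduces to the identity $\sum_j\int_\Gamma f(\cdot;\vec N-e_j;\cdot)=\sum_j u(\mathrm{sort}(\vec N-e_j);t)+\sum_{a<b}\delta(N_a-N_b)\,u(\vec N;t)$.

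This identity is the crux. Since the nested-contour prescription depends only on $k$ and not on the values of $\vec N$, for each $j$ with $N_j>N_{j+1}$ the vector $\vec N-e_j$ is still in the Weyl chamber and $\int_\Gamma f(\cdot;\vec N-e_j;\cdot)=u(\vec N-e_j;t)$ exactly, with nothing left over. The nontrivial contributions come from indices $j$ with $N_j=N_{j+1}=:m$, where $\mathrm{sort}(\vec N-e_j)$ swaps the exponents of $w_j$ and $w_{j+1}$; for such a pair the associated defect is $\int_\Gamma f(\cdot;(\dots,m-1,m,\dots);\cdot)-\int_\Gamma f(\cdot;(\dots,m,m-1,\dots);\cdot)=\int_\Gamma w_j^{-m}w_{j+1}^{-m}(w_j-w_{j+1})(\cdots)\,d\vec w$, the factor $(\cdots)$ still containing $\tfrac{w_j-w_{j+1}}{w_j-w_{j+1}-1}$. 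Using the elementary identity $\tfrac{x^2}{x-1}=\tfrac{x}{x-1}+x$ with $x=w_j-w_{j+1}$, this defect equals $u(\vec N;t)$ plus $\int_\Gamma w_j^{-m}w_{j+1}^{-m}(w_j-w_{j+1})[\mathrm{rest}]\,d\vec w$, where $[\mathrm{rest}]$ is $(\cdots)$ with the factor $\tfrac{w_j-w_{j+1}}{w_j-w_{j+1}-1}$ deleted. Now $[\mathrm{rest}]$ is symmetric under $w_j\leftrightarrow w_{j+1}$ while $w_j-w_{j+1}$ is antisymmetric, and --- the key point --- once that factor is deleted the integrand has no pole at $w_j=w_{j+1}+1$, so the $w_j$- and $w_{j+1}$-contours may be deformed to coincide without crossing any singularity, whence the integral of an antisymmetric function over a pair of identical contours vanishes. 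Thus the defect of the pair $(j,j+1)$ is precisely $\delta(N_j-N_{j+1})u(\vec N;t)$. \emph{The main obstacle} is the bookkeeping when $\vec N$ contains a run of $p$ equal coordinates: there $\mathrm{sort}(\vec N-e_j)$ must be reached by transposing the lowered coordinate downward through the block, and one must check that summing the resulting copies of $u(\vec N;t)$ over the block produces exactly $\binom{p}{2}u(\vec N;t)$ --- the number of coincident pairs within the block --- and that no spurious residues enter in the intermediate transpositions (each of which is an instance of the same cancellation); granting this, the defect matches $\sum_{a<b}\delta(N_a-N_b)u$ term by term.

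Finally, for the initial data: at $t=0$ the integrand reduces to $\prod_{A<B}\tfrac{w_A-w_B}{w_A-w_B-1}\prod_j w_j^{-N_j}$, and evaluating the nested residues from $w_k$ inward --- the $w_k$-contour encircles only $0$, forcing $N_k=1$, after which each successive $w_j$-integral collapses in the same fashion because $N_j\ge1$ on the Weyl chamber --- yields $\prod_j\delta(N_j=1)$, which is the modified initial data. Together with the symmetry (built into the extension), all three conditions of Proposition~\ref{DiscreteMBP} hold with the shifted initial data, and uniqueness of the solution to this linear system with the given data completes the proof.
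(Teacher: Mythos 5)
Your treatment of the evolution equation is sound and amounts to a reorganization of the paper's argument: you telescope each defect $\int_\Gamma f(\cdot;\vec{N}-e_j;\cdot)-u(\mathrm{sort}(\vec{N}-e_j);t)$ into adjacent transpositions and handle each one via the identity $\tfrac{x^2}{x-1}=\tfrac{x}{x-1}+x$ with $x=w_p-w_{p+1}$, whereas the paper works group-by-group, applying its Lemma~\ref{switchcontours} to replace $w_1+\cdots+w_m$ by $mw_m+\tfrac{m(m-1)}{2}$ for a block of $m$ equal coordinates. Both routes rest on the same observation (deleting the $(p,p+1)$ pole lets the two contours coincide, after which the antisymmetric factor $w_p-w_{p+1}$ kills the integral), and the bookkeeping you sketch --- that a run of $p$ equal coordinates produces exactly $\tfrac{p(p-1)}{2}$ copies of $u(\vec{N};t)$ --- does work out.

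The initial-condition argument, however, has a real gap. You assert that the $w_k$-contour ``encircles only $0$, forcing $N_k=1$,'' but that is false: for any $N_k\ge 1$ the integrand has a pole of order $N_k$ at $w_k=0$, and the residue there is generically nonzero. Take $k=2$, $N_1=N_2=2$ at $t=0$: the $w_2$-residue at $0$ is $\tfrac{1}{(w_1-1)^2}$, which is certainly not zero --- the vanishing happens only after the $w_1$-integral, where the residues at $w_1=0$ and $w_1=1$ cancel exactly. The subsequent claim that ``each successive $w_j$-integral collapses in the same fashion'' thus rests on an unexamined cancellation. The missing input, which the paper supplies, is the constraint $N_1\le 1$: since at $t=0$ the integrand decays like $w_1^{-N_1}$ at infinity, the $w_1$-contour can be pushed to infinity and the integral vanishes unless $N_1\le 1$. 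Combined with $N_k\ge 1$ (needed for any pole at $w_k=0$ at all) and the ordering $N_1\ge\cdots\ge N_k$, one concludes $N_1=\cdots=N_k=1$. Without that separate $w_1$-argument, your nested-residue evaluation does not establish the initial condition.
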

\begin{remark}
This integral formula is only valid on the subspace where $N_1\geq N_2\geq \cdots \geq N_k$ and must be extended outside that subspace by symmetry. Its value when the $N_j$ are unordered does not give the polymer moment. By taking all of the $N_j=N$ we recover a proof of Proposition~\ref{Proposition28OCon}.
\end{remark}

\begin{remark}\label{PAMrem}
One might hope that the solution to the many body system which corresponds to the moments of the general nearest neighbor parabolic Anderson model (continuous time, discrete space) with delta function initial data can also be written in terms of the above ansatz. In \cite{BorCorPAM} this is accomplished for the second moment and thus yields an exact contour integral formula for the second moment.
\end{remark}

\begin{proof}
We need to check the initial condition and the evolution equation. Let us start with the initial condition. When $t=0$ the formula gives
\begin{equation*}
\bar{Z}_{L,1}(\vec{N};0) =\frac{1}{(2\pi \iota)^k} \int\cdots\int \prod_{1\le A<B\le k} \frac{w_A-w_B}{w_A-w_B-1}\prod_{j=1}^k \frac{1}{w_j^{N_j}} dw_j.
\end{equation*}
The contour for $w_1$ can be expanded to infinity without crossing any poles except possibly at infinity. If $N_1=1$ there is a first order pole at infinity for $w_1$, and for $N_1>1$ there is no pole. Thus if $N_1>1$, $\bar{Z}_{L,1}(\vec{N};0)=0$. This shows $N_1\leq 1$. Likewise, we can collapse $w_k$ to zero. Doing this we find the $\bar{Z}_{L,1}(\vec{N};0)=0$ unless $N_k\geq 1$. Combined this implies $N_1=\cdots =N_k=1$ gives the only nonzero contribution to $\bar{Z}_{L,1}$ (which can be checked to equal 1).

We now turn to the proof that the evolution equation is satisfied by this integral formula. Let us first assume all $N_j$ are distinct. Taking the time derivative of $\bar{Z}_{L,1}(\vec{N};t)$ we can bring the differentiation inside the integrals to show that
\begin{equation*}
\partial_t \bar{Z}_{L,1}(\vec{N};t) = e^{-kt/2}\frac{1}{(2\pi \iota)^k} \int\cdots\int \prod_{1\le A<B\le k} \frac{w_A-w_B}{w_A-w_B-1} \left(-\tfrac{k}{2} + w_1+\cdots + w_k\right)\prod_{j=1}^k \frac{e^{t w_j}}{w_j^{N_j}} dw_j.
\end{equation*}
On the other hand, one immediately checks that the effect of applying the discrete derivatives to $\bar{Z}_{L,1}(\vec{N};t)$ also brings down a factor of $w_j-1$ for each $j=1,\ldots, k$. Summing these factors up gives $w_1+\cdots +w_k - k$. From this we must subtract $\tfrac{1}{2}k$ due to the fact that the $k$ of the terms in the delta interaction are 1 (i.e., when $a=b$). Thus the effect of applying the Hamiltonian $H$ to $\bar{Z}_{L,1}(\vec{N};t)$ is to introduce a factor of $w_1+\cdots +w_k-\tfrac{k}{2}$ in the integral. This, however, matches with the formula we got from taking a time derivative, and hence implies the evolution equation holds.

When the $N_j$ are not all distinct, the argument becomes slightly more involved. Assume that the $N_j$ form several groups within which each $N_j$ has the same value. Each group can be treated similarly so let us consider the group of $N_1$: $N_1=N_2=\cdots = N_m=n$ for some $1\leq m\leq k$. We now make a basic observation:
\begin{lemma}\label{switchcontours}
For any $i=1,\ldots ,k-1$, and any function of $k$ variables $f(w_1,\ldots, w_k)$ such that $f(w_1,\ldots, w_i,w_{i+1},\ldots w_k)=f(w_1,\ldots, w_{i+1},w_{i},\ldots w_k)$ (i.e., symmetric in the $i$ variable),
\begin{equation*}
\int\cdots\int \prod_{1\le A<B\le k} \frac{w_A-w_B}{w_A-w_B-1}  (w_i-w_{i+1}-1) f(w_1,\ldots, w_k) \prod_{j=1}^k dw_j = 0,
\end{equation*}
where the $w_j$-contour contains $\{w_{j+1}+1,\cdots,w_k+1,0\}$ and no other singularities for $j=1,\dots,k$.
\end{lemma}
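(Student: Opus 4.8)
The plan is to exploit the explicit factor $(w_i-w_{i+1}-1)$, which was inserted precisely to cancel the denominator $(w_i-w_{i+1}-1)$ coming from the $(i,i+1)$-term of the product $\prod_{1\le A<B\le k}\frac{w_A-w_B}{w_A-w_B-1}$. After this cancellation the integrand takes the form $(w_i-w_{i+1})\cdot g(w_1,\ldots,w_k)$, where $g$ is the product of all the remaining factors $\frac{w_A-w_B}{w_A-w_B-1}$ with $\{A,B\}\ne\{i,i+1\}$ times $f$. Using the hypothesis that the contours enclose no singularities other than the listed ones, together with the symmetry of $f$ in $w_i,w_{i+1}$, one checks that $f$ is itself regular at $w_i=w_{i+1}+1$ (a pole there would force, by symmetry, a pole at $w_i=w_{i+1}-1$, which is an unlisted singularity inside the $w_i$-contour), so after cancellation the integrand genuinely has no pole at $w_i=w_{i+1}+1$.

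First I would deform the $w_i$-contour. By hypothesis the $w_i$-contour contains $\{w_{i+1}+1,\ldots,w_k+1,0\}$ while the $w_{i+1}$-contour contains $\{w_{i+2}+1,\ldots,w_k+1,0\}$, and the only singularity of the original integrand enclosed by the former but not the latter is $w_i=w_{i+1}+1$. Since that pole is now cancelled, the $w_i$-contour can be shrunk onto the $w_{i+1}$-contour without crossing any singularity; the nested structure of the remaining contours guarantees that at every stage of the homotopy the deformed $w_i$-contour still encircles exactly $\{w_{i+2}+1,\ldots,w_k+1,0\}$, and the constraints imposed by the outer variables $w_j$ with $j<i$ stay satisfied because shrinking $w_i$ only shrinks the set $\{w_i+1\}$ that those contours are required to enclose. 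Denote the resulting common contour by $C$, so that $w_i$ and $w_{i+1}$ are now integrated over the same $C$.

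The last step is antisymmetrization. One verifies directly that $g$ is invariant under the transposition $w_i\leftrightarrow w_{i+1}$: the factor $f$ is symmetric in these two variables by assumption, and the transposition merely permutes the remaining factors $\frac{w_A-w_B}{w_A-w_B-1}$ among themselves (a pair $(A,i)$ with $A<i$ is carried to $(A,i+1)$, a pair $(i,B)$ with $B>i+1$ to $(i+1,B)$, and pairs disjoint from $\{i,i+1\}$ are fixed). Since $(w_i-w_{i+1})$ is antisymmetric, the full integrand is antisymmetric in $w_i,w_{i+1}$; as both are integrated over the same contour $C$, relabelling the two dummy variables shows the integral equals its own negative, hence is $0$ (for each fixed value of the remaining $w_j$, and therefore after integrating those out as well). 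The only genuinely delicate point is the contour bookkeeping in the deformation step — verifying that every nesting constraint of the form "the $w_j$-contour contains $w_m+1$ for $m>j$'' is preserved throughout — but this is routine, and if one wishes to avoid it entirely one may take all the $w_j$-contours to be concentric circles with radii decreasing in sufficiently large steps, making the deformation manifest.
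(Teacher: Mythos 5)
Your proof is correct and follows essentially the same route as the paper: cancel the factor $(w_i-w_{i+1}-1)$ against the matching denominator, deform the $w_i$-contour onto the $w_{i+1}$-contour now that the pole at $w_{i+1}+1$ is gone, and conclude by antisymmetry under $w_i\leftrightarrow w_{i+1}$. One small caveat: the parenthetical justification that $f$ cannot have a pole at $w_i=w_{i+1}+1$ (arguing via a forced symmetric pole at $w_i=w_{i+1}-1$) is not airtight as stated — whether $w_{i+1}-1$ lies inside the $w_i$-contour isn't part of the hypothesis — but this is a side remark, and in the lemma's actual applications $f$ is analytic there, so the deformation is justified without it.
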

\begin{proof}
The $w_{i}-w_{i+1}-1$ factor cancels with one term in the denominator of the product over $A<B$. This kills the pole for $w_{i}$ associated with $w_{i+1}+1$. This means that we can deform the $w_i$ contour to the $w_{i+1}$ contour. We can then relabel to switch the labels of $w_{i}$ and $w_{i+1}$. This does not change the value of $f$ (due to its symmetry). On the other hand, due to the anti-symmetry of the Vandermonde determinant, this results in sign change and nothing more. The upshot of the asymmetry is that this shows that the integral we are considering is equal to its negative, and hence zero.
\end{proof}
Applying this lemma, it follows that multiplying the integrand by $w_{1}+\cdots + w_{m}$ has the same effect as multiplying it by
\begin{equation*}
(w_m+m-1) + (w_m + m-2) + \cdots + w_m = m w_m + \frac{m(m-1)}{2}.
\end{equation*}
Multiplication by $w_m$ is equivalent to reducing one of the number $N_1,\ldots, N_m$ by one. One the other hand, letting $\nabla_j f(N) = f(N-1)-f(N)$, one sees that applying $\nabla_j+1$ for $1\leq j \leq m$ also has the effect of reducing one of $N_1,\ldots, N_m$ by one.  Thus, multiplication of the integrand by $(w_1+\cdots + w_m)$ is equivalent to
\begin{equation*}
\left(\sum_{j=1}^{m} (\nabla_j +1) + \frac{m(m-1)}{2}\right) \bar{Z}_{L,1}(\vec{N};t).
\end{equation*}
In addition to the sum $w_1+\cdots + w_m$, differentiation with respect to $t$ also introduces an additional $-\tfrac{m}{2}$ (for each grouping of equal $N$'s). Thus we get
\begin{equation*}
\left(\sum_{j=1}^{m} \nabla_j + \frac{m^2}{2}\right) \bar{Z}_{L,1}(\vec{N};t).
\end{equation*}
Note that for a grouping of size $m$ of equal $N$'s, the term $\sum_{a,b} \delta(N_a-N_b) = m^2$. Thus, summing these over each grouping of each $N$'s we find that this is equivalent to $H\bar{Z}_{L,1}(\vec{N};t)$, as desired.
\end{proof}

\section{Delta Bose gas and the continuum directed polymer}\label{CDRPreplica}
\index{many body system!continuous space}
Let $\Weyl{N}=\{x_1<x_2<\cdots<x_N\}$ be the Weyl chamber. \glossary{$\Weyl{N}$}\index{Weyl chamber}

\begin{definition}
A function $u:\Weyl{N}\times \Rplus\to \R$ solves the delta Bose gas with coupling constant $\kappa\in \R$ and delta initial data if:
\begin{itemize}
\item For $x\in  \Weyl{N}$,
\begin{equation*}
\partial_t u = \tfrac{1}{2}\Delta u,
\end{equation*}
\item On the boundary of $\Weyl{N}$,
\begin{equation*}
(\partial_{x_{i}}-\partial_{x_{i+1}}-\kappa)u \big\vert_{x_{i+1}=x_{i}+0} = 0,
\end{equation*}
\item and for any $f\in L^{2}(\Weyl{N})\cap C_b(\overline{\Weyl{N}})$, as $t\to 0$
\begin{equation*}
N! \int_{\Weyl{N}} f(x) u(x;t) dx \to f(0).
\end{equation*}
\end{itemize}
\index{many body system!delta Bose gas}
When $\kappa>0$ this is called the attractive case, whereas when $\kappa<0$ this is the repulsive case. By scaling one can assume $\kappa=\pm 1$.
\end{definition}

Note that the boundary condition is often included in PDE so as to appear as
\begin{equation*}
\partial_t u = \tfrac{1}{2}\Delta u + \tfrac{1}{2} \kappa \sum_{i\neq j} \delta(x_i-x_j)u.
\end{equation*}

\begin{remark}\label{CDRPDBG}
It is widely accepted in the physics literature that the moments of the CDRP correspond to the solution of the attractive delta Bose gas. We will state this correspondence below, but note that we do not know of a rigorous mathematical treatment of this fact. One approach to such a rigorous proof would be to smooth out the white noise in space and then observe that the moments solve the Lieb-Liniger many body problem with smoothed delta potential. It then suffices to prove that the smoothed moments converge to the moments without smoothing and likewise for the solutions to the Lieb-Liniger many body problem. In the case $N=2$ this is treated in \cite{Alb}, Section I.3.2.

Fix $N\geq 1$ and fix $x=(x_1,\ldots, x_N)\in \R^N$. From the above discussion we expect that
\begin{equation*}
u(x;t) = \left\langle \prod_{i=1}^{N} \mathcal{Z}_1(t,x_i) \right\rangle
\end{equation*}
solves the attractive delta Bose gas with coupling constant $\kappa=1$ and delta initial data. This fact can be checked when $N=1,2$ from \cite{BC} as we did in Remark \ref{BCrem}.

From the moments of the CDRP one might hope to recover the polymer partition function's distribution function. However, the moments grow far too fast to uniquely characterize the distribution hence no such approach could be mathematically rigorous. The same issue occurs at the level of the O'Connell-Yor polymer. Nevertheless, by summing strongly divergent series coming from moment formulas, \cite{CDR} and \cite{Dot} were able to recover the formulas of \cite{ACQ,SaSp}. In some sense, $q$-TASEP provides a regularization of these models at which level the analogous series are convergent and the moment problems are well-posed.
\end{remark}

\begin{proposition}\label{ADBGprop}
Fix $N\geq 1$. The solution to the delta Bose gas with coupling constant $\kappa\in \R$ and delta initial data can be written as
\begin{equation}\label{ADBGeqn}
u(x;t) = \frac{1}{(2\pi \iota)^N} \int\cdots\int \prod_{1\leq A<B\leq N} \frac{z_A-z_B}{z_A-z_B-\kappa}e^{\frac{t}{2}\sum_{j=1}^{N} z_j^2 + \sum_{j=1}^{N}x_j z_j}\prod_{j=1}^{N} dz_j,
\end{equation}
where the $z_j$-contour is along $\alpha_j+\iota \R$ for any $\alpha_1>\alpha_2+\kappa>\alpha_3+2\kappa>\cdots >\alpha_N+(N-1)\kappa$.
\end{proposition}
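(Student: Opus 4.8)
The plan is to verify directly that the contour integral in \eqref{ADBGeqn} solves the three defining properties of the delta Bose gas: the free heat evolution in the interior of $\Weyl{N}$, the boundary (Bethe) condition on each wall $x_{i+1}=x_i+0$, and the delta initial data. This is a pure verification, so I would not invoke the probabilistic interpretation of Remark~\ref{CDRPDBG} at all; the proposition is a self-contained statement about the function defined by the right-hand side. By rescaling $z_j\mapsto z_j/\kappa$, $x_j\mapsto \kappa x_j$, $t\mapsto t/\kappa^2$ one may assume $\kappa=\pm1$ if convenient, but the argument goes through with general $\kappa$ just as easily, so I would keep $\kappa$ explicit.

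First I would check the heat equation. Since the $z_j$-contours are vertical lines and the integrand decays like a Gaussian in the imaginary directions of each $z_j$ (the $e^{\frac t2 z_j^2}$ factor dominates once $\Real z_j$ is fixed), differentiation under the integral sign is justified for $t>0$. Then $\partial_t u = \frac12\sum_j z_j^2 \cdot(\text{integrand})$ and $\tfrac12\Delta u = \tfrac12\sum_j z_j^2\cdot(\text{integrand})$, so $\partial_t u = \tfrac12\Delta u$ holds identically — no use of the contour positions is needed here.

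Next, the boundary condition. Apply $\partial_{x_i}-\partial_{x_{i+1}}-\kappa$ to $u$; this brings down the factor $(z_i-z_{i+1}-\kappa)$ into the integrand. That factor cancels exactly one denominator factor $(z_A-z_B-\kappa)$ with $A=i$, $B=i+1$. Now I want to evaluate on $x_{i+1}=x_i+0$. The key move, exactly as in Lemma~\ref{switchcontours} earlier in the paper, is to symmetrize in $z_i$ and $z_{i+1}$: on $x_{i+1}=x_i$ the exponential $e^{x_i z_i + x_{i+1}z_{i+1}}$ becomes symmetric under $z_i\leftrightarrow z_{i+1}$, as does $e^{\frac t2(z_i^2+z_{i+1}^2)}$; the remaining product of $\tfrac{z_A-z_B}{z_A-z_B-\kappa}$ factors (after the cancellation) together with the Vandermonde-type structure picks up a single sign under this swap. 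Here one needs the contour ordering $\alpha_i > \alpha_{i+1}+\kappa$: once the offending denominator $z_i-z_{i+1}-\kappa$ is gone, the $z_i$-contour can be deformed onto the $z_{i+1}$-contour without crossing any pole, so relabelling $z_i\leftrightarrow z_{i+1}$ is legitimate, and the integral equals its own negative, hence vanishes. I would write this carefully, tracking which denominator factors survive and confirming the lone sign change; this bookkeeping is the part most prone to error but is morally identical to Lemma~\ref{switchcontours}.

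Finally, the initial data. As $t\to0$ the Gaussian regularization disappears, so I would first push all contours out to a common vertical line, say $\Real z_j=\alpha$ for all $j$ with $\alpha$ large; the price of doing so is a sum of residue terms coming from the poles $z_A=z_B+\kappa$, and I would argue these residue contributions are lower-order (they involve strictly fewer integration variables, each still carrying a Gaussian, so after pairing with a test function $f$ they vanish as $t\to0$). On the common contour the integrand factorizes: $\prod_j\bigl(\tfrac{1}{2\pi\iota}\int e^{\frac t2 z_j^2 + x_j z_j}dz_j\bigr)$ times the bounded ratio $\prod_{A<B}\tfrac{z_A-z_B}{z_A-z_B-\kappa}$, and each one-dimensional integral is a shifted Gaussian heat kernel in $x_j$ of mass one, concentrating at $x_j=0$. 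Testing against $f\in L^2(\Weyl N)\cap C_b(\overline{\Weyl N})$, integrating over $\Weyl N$, using $N!\,\mathrm{vol}$-type symmetry and dominated convergence, gives $N!\int_{\Weyl N} f(x)u(x;t)\,dx\to f(0)$, using that $\prod_{A<B}\tfrac{z_A-z_B}{z_A-z_B-\kappa}\to 1$ as the contours go to infinity. The main obstacle I anticipate is making the $t\to0$ limit rigorous — in particular controlling the residue terms generated when collapsing the nested contours to a single line, and justifying the interchange of the $t\to0$ limit with the $x$-integration against $f$; the interior and boundary checks are essentially algebraic and should be routine.
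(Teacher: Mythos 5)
Your checks of the heat equation and of the boundary condition are correct and coincide with the paper's proof (both are immediate from differentiation under the integral and the cancellation argument of Lemma~\ref{switchcontours}). The gap is in the initial-data step, where your plan does not match the paper and, as stated, would not work.

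The central problem is the assertion that ``$\prod_{A<B}\tfrac{z_A-z_B}{z_A-z_B-\kappa}\to 1$ as the contours go to infinity.'' Once all $z_j$ lie on a \emph{common} vertical line $\Real z_j=\alpha$, the differences $z_A-z_B$ are purely imaginary and completely independent of $\alpha$; sending $\alpha\to\infty$ changes nothing about that ratio. It is bounded, but it is a genuinely $\alpha$-independent function of the imaginary parts that does not tend to $1$. As a consequence the integrand does not factorize into a product of one-dimensional Gaussian kernels, and the argument that each $z_j$-integral separately produces a heat kernel concentrating at $x_j=0$ does not apply. Note also that for $\kappa<0$ the contours may be taken coincident from the start, so there are no residue terms at all; the difficulty you attribute to residues is not actually where the issue lies.

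The paper circumvents this with a different device: expand the coupling factor multiplicatively as $\prod_{A<B}\bigl(1+\tfrac{\kappa}{z_A-z_B-\kappa}\bigr)$ and split the integral into $2^{\binom N2}$ terms indexed by subsets $K$. The $K=\varnothing$ term has no coupling, genuinely factorizes into one-dimensional Fourier inversions, and produces $f(0)$ exactly. For every nonempty $K$ the paper deforms to $\Real z_j=-t^{-1/2}v_j$ with $v=x/\|x\|$, rescales $w_j=t^{1/2}z_j$, and obtains the bound $|u^K(x;t)|\le C\,t^{|K|/2}t^{-N/2}e^{-t^{-1/2}\|x\|}$; integrating against $f$ over $\Weyl N$ then yields $O(t^{|K|/2})$, which vanishes because $|K|\ge 1$. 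That clean separation of the $f(0)$ contribution from the correction terms is the key idea missing from your proposal; without it, neither the ``factorization'' nor the ``$\to 1$'' claim can be repaired.
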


\begin{proof}
We will first check the PDE for $x=(x_1<x_2<\cdots<x_N)$ (i.e., $x\in \Weyl{N}$) and $t>0$, and then the boundary condition and finally the initial condition. For $x\in \Weyl{N}$ we may compute $\partial_t u(x;t)$ by bring the differentiation inside the $dz$ integrals (as is justified by the ample Gaussian decay). This has the effect of introducing a new multiplicative factor of $\tfrac{1}{2}\sum_{j=1}^{N} z_j^2$ in the integrand. Likewise we may compute $\tfrac{1}{2}\Delta u(x;t)$ by differentiating inside the integral which also leads to a factor of $\tfrac{1}{2}\sum_{j=1}^{N} z_j^2$ in the integrand. Since the two resulting expressions coincide, the PDE is satisfied.

In order to check the boundary condition observe that if we apply $(\partial_{x_{i}}-\partial_{x_{i+1}}-\kappa)$ to $u(x;t)$ this introduces a factor of $(z_i-z_{i+1}-\kappa)$ in the integrand and the remaining argument is as in the proof of Lemma~\ref{switchcontours} above.

The initial condition requires a little more work to prove. Fix a function $f\in L^{2}(\Weyl{N})\cap~C_b(\overline{\Weyl{N}})$, (and set $f(x)=f(\sigma x)$ for all permutations $\sigma\in S_N$) and assume $t>0$. Compute
\begin{equation*}
\int_{\R^N} u(x;t)f(x)dx =  \frac{1}{(2\pi \iota)^N} \int\cdots\int \prod_{1\le A<B\le N} \frac{z_A-z_B}{z_A-z_B-\kappa}e^{\frac{t}{2}\sum_{j=1}^{N} z_j^2}\hat{f}(-\iota z_1,\ldots,-\iota z_N)\prod_{j=1}^{N} dz_j,
\end{equation*}
where $\hat{f}(z_1,\ldots,z_N)$ is the Fourier transform
\begin{equation*}
\int_{\R^N} e^{\sum_{i=1}^{N}\iota x_i z_i}f(x_1,\ldots, x_N)\prod_{i=1}^{N} dx_i.
\end{equation*}

Observe that
\begin{equation}\label{prodZAB}
\prod_{1\le A<B\le N} \frac{z_A-z_B}{z_A-z_B-\kappa} = \prod_{1\le A<B\le N} \left(1 + \frac{\kappa}{z_A-z_B-\kappa}\right).
\end{equation}
This product can be expanded, and there is a single term which has no denominator and is just $1$. That term leads to
\begin{equation*}
\frac{1}{(2\pi \iota)^N} \int\cdots\int e^{\frac{t}{2}\sum_{j=1}^{N} z_j^2}\hat{f}(-\iota z_1,\ldots,-\iota z_N)\prod_{j=1}^{N} dz_j.
\end{equation*}
As there are no poles in $z_j$ we can deform all contours back to $\iota \R$ and then take $t=0$ which results in
\begin{equation*}
\frac{1}{(2\pi \iota)^N} \int\cdots\int \hat{f}(-\iota z_1,\ldots,-\iota z_N)\prod_{j=1}^{N} dz_j.
\end{equation*}
This is just the integral of the Fourier transform and therefore equals $f(0)$. Since we are trying to show that $\int_{\R^N} f(x) u(x;t) dx \to f(0)$ as $t\to 0$, it now suffices to prove that all of the other terms in the expansion of the product (\ref{prodZAB}) have contribution which goes to zero as $t\to 0$. We will do this for $u(x;t)$ directly (rather than its Fourier transform).

Besides the term $1$ dealt with above, the expansion of (\ref{prodZAB}) involves terms
\begin{equation*}
u^{K}(x;t) = \frac{1}{(2\pi \iota)^N} \int\cdots\int \prod_{(A,B)\in K} \frac{\kappa}{z_A-z_B-\kappa}e^{\frac{t}{2}\sum_{j=1}^{N} z_j^2 + \sum_{j=1}^{N}x_j z_j}\prod_{j=1}^{N} dz_j,
\end{equation*}
where $K$ is a non-empty subset of $\{(A,B): 1\leq A<B\leq N\}$ and $k=|K|$. Let $v=x/||x||$ be the unit vector in the direction of $x$. Deform the $z_j$-contours to lie along $-t^{-1/2}v_j + \iota\R$ (due to the ordering of coordinates of $x$ and the Gaussian decay near infinity, this deformation can be made as long as $t$ is small enough so that no poles are crossed in deforming). Change variables so that $w_j=t^{1/2}z_j$. Thus
\begin{equation*}
u^{K}(x;t) = \kappa^k t^{k/2} t^{-N/2} \frac{1}{(2\pi \iota)^N} \int\cdots\int \prod_{(A,B)\in K} \frac{1}{w_A-w_B-\kappa t^{1/2}}e^{\frac{1}{2}\sum_{j=1}^{N} w_j^2} e^{t^{-1/2}\sum_{j=1}^{N}x_j w_j}\prod_{j=1}^{N} dw_j.
\end{equation*}
Due to the choice of $w_j$-contours, $\Real(x\cdot w) = -||x||$ and so by taking absolute values we get
\begin{equation*}
\left|u^{K}(x;t)\right| \leq |\kappa|^k t^{k/2} t^{-N/2} e^{-t^{-1/2}||x||} \frac{1}{(2\pi \iota)^N} \int\cdots\int \prod_{(A,B)\in K} \left|\frac{1}{w_A-w_B-\kappa t^{1/2}}e^{\frac{1}{2}\sum_{j=1}^{N} w_j^2}\right| \prod_{j=1}^{N} dw_j.
\end{equation*}
Since the integral is now clearly convergent and independent of $x$ we get
\begin{equation}\label{ubound}
\left|u^{K}(x;t)\right| \leq  C t^{k/2} t^{-N/2} e^{-t^{-1/2}||x||}
\end{equation}
for some constant $C>0$.

For any function $f\in L^{2}(\Weyl{N})\cap C_b(\overline{\Weyl{N}})$ we may compute the integral against $u^{K}$. The conditions on $f$ imply that  $|f(x)|<M$ for some constant $M$ as $x$
varies in $\overline{\Weyl{N}}$. By the triangle inequality
\begin{equation*}
\left|\int_{\Weyl{N}} u^{K}(x;t) f(x)dx\right| \leq \int_{\Weyl{N}} \left|u^{K}(x;t)\right| M dx \leq C' t^{k/2}.
\end{equation*}
The second inequality follows by substituting (\ref{ubound}) and performing the change of variables $y_i=t^{-1/2} x_i$ in order to bounding the resulting integral (this change of variables results in the cancelation of the $t^{-N/2}$ term). Observe that since $k=|K|\geq 1$, as $t\to 0$ these terms disappear for all non-empty $K$. Since these terms account for the other terms in the expansion of the right-hand side of (\ref{prodZAB}) this shows that only the delta function coming from the empty $K$ plays a role as $t\to 0$. This completes the proof of the initial conditions.
\end{proof}

\begin{remark}
Inserting a symmetric factor $F(z_1,\ldots,z_N)$ into the integrand of the right-hand side of (\ref{ADBGeqn}) preserves the evolution equation and boundary condition, but leads to different initial conditions.
\end{remark}
\begin{remark}
One should note that the above proposition deals with both the $\kappa>0$ (attractive) and $\kappa<0$ (repulsive) delta Bose gas. Looking in the arguments of Section 4 of \cite{HeckOp} (where they were proving the Plancherel theory for the delta Bose gas) it is possible to extract the formula of the above proposition. This type of formula reveals a symmetry between the two cases which is not present in the Bethe Ansatz eigenfunctions.
\end{remark}

\index{many body system!Bethe ansatz}
An alternative and much earlier taken approach to solving the delta Bose gas is by demonstrating a complete basis of eigenfunctions (and normalizations) which diagonalize the Hamiltonian and respect the boundary condition. The eigenfunctions were written down in 1963 for the repulsive delta interaction by Lieb and Liniger \cite{LL} by Bethe ansatz. Completeness was proved by Dorlas \cite{Dorlas} on $[0,1]$ and by Heckman and Opdam \cite{HeckOp} and then recently by Prolhac and Spohn (using formulas of Tracy and Widom \cite{TWBose}) on $\R$ (as we are considering presently). For the attractive case, McGuire \cite{McGuire} wrote the eigenfunctions in terms of {\it string states} in 1964. As opposed to the repulsive case, the attractive case eigenfunctions are much more involved and are not limited to bound state eigenfunctions (hence a lack of symmetry with respect to the eigenfunctions). The norms of these states were not derived until 2007 in \cite{CalCaux} using ideas of algebraic Bethe ansatz. Dotsenko \cite{Dot} later worked these norms out very explicitly through combinatorial means. Completeness in the attractive case was shown by Oxford \cite{Oxford}, and then by Heckman and Opdam \cite{HeckOp}, and recently by Prolhac and Spohn \cite{ProSpoComp}.

The work \cite{LL,TWBose,Dot,CDR,ProSpoComp} provide formulas for the propagators (i.e., transition probabilities) for the delta Bose gas with general initial data. These formulas involve either summations over eigenstates or over the permutation group. In the repulsive case it is fairly easy to see how the formula of Proposition~\ref{ADBGprop} is recovered from these formulas.

\begin{remark}
The reason why the symmetry which is apparent in Proposition~\ref{ADBGprop} is lost in the eigenfunction expansion is due to the constraint on the contours. In the repulsive case $\kappa<0$ and the contours are given by having the $z_j$-contour along $\alpha_j+\iota \R$ for any $\alpha_1>\alpha_2+\kappa>\alpha_3+2\kappa>\cdots >\alpha_N+(N-1)\kappa$. The contours, therefore, can be taken to be all the same. It is a straight-forward calculation to turn the Bethe ansatz eigenfunction expansion into the contour integral formula we provide. The attractive case leads to contours which are not the same. In making the contours coincide we encounter a sizable number of poles which introduces many new terms which coincide with the fact that there are many other eigenfunctions coming from the Bethe ansatz in this case.
\end{remark}

For the attractive case, the following proposition (which is a degeneration of Proposition~\ref{mukprop} and can be proved similarly) provides a way to turn the moment formula of Proposition~\ref{ADBGprop} into the formula given explicitly in Dotsenko's work \cite{Dot}.

\begin{proposition}
For an entire function $f(z)$ and $k\geq 1$, setting
\begin{equation*}
\mu_k= \frac{1}{(2\pi \iota)^k} \int \cdots \int \prod_{1\leq A<B\leq k} \frac{z_A-z_B}{z_A-z_B-1} \frac{f(z_1)\cdots f(z_k)}{z_1\cdots z_k} dz_1\cdots dz_k,
\end{equation*}
we have
\begin{eqnarray*}
\mu_k&=& k! \sum_{\substack{\lambda\vdash k\\ \lambda=1^{m_1}2^{m_{2}}\cdots}} \frac{1}{m_1!m_2!\cdots} \\
 && \frac{1}{(2\pi \iota)^{\ell(\lambda)}} \int \cdots \int \det\left[\frac{1}{\lambda_i+w_i-w_j}\right]_{i,j=1}^{\ell(\lambda)} \prod_{j=1}^{\ell(\lambda)}  f(w_j)f(w_j+1)\cdots f(w_j+\lambda_j-1) dw_j,
\end{eqnarray*}
where the $z_j$-contour is along $\alpha_j+\iota \R$ for any $\alpha_1>\alpha_2+1>\alpha_3+2>\cdots >\alpha_k+(k-1)$
\end{proposition}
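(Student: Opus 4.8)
The plan is to prove this identity by exactly the residue-bookkeeping induction used for Proposition~\ref{mukprop}, of which the present statement is the $q\to1$ degeneration under the substitution $q=e^{-\e}$, $z_j=e^{\e w_j}$ (so that $\tfrac{z_A-z_B}{z_A-qz_B}\to\tfrac{w_A-w_B}{w_A-w_B+1}$, $k_q!\to k!$, and the nested circles around $1$ open into the nested vertical lines $\alpha_j+\iota\R$). Rather than controlling that limit with its tail and contour-deformation estimates I would carry out the parallel combinatorial argument directly, which is cleaner; throughout, ``entire function $f$'' is understood to carry enough decay along vertical lines (as does $e^{tz^2/2+xz}$, the case of interest) that every integral converges absolutely and contours may be shifted freely.

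First I would dispose of the base case $k=1$ by a direct residue computation, and then induct on $k$. For the inductive step, in the $k$-fold integral defining $\mu_k$ I would integrate out the variable $z_k$, whose contour $\alpha_k+\iota\R$ is the leftmost: shifting it to $-\infty$, or equivalently expanding into residues, one collects the poles $z_k=z_A-1$ for $A<k$ (from the factors $\tfrac{z_A-z_k}{z_A-z_k-1}$) together with the pole at $z_k=0$ (from $1/z_k$). Each such residue leaves a $(k-1)$-fold integral of the same shape, but with $f$ replaced by the shifted function $\tilde f_A(z)=f(z)\tfrac{z-(z_A-1)}{z-z_A}$ and one fewer pole-site, so the inductive hypothesis applies. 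Expanding that $(k-1)$-fold integral by the assumed formula and then regrouping the contributions according to how the merged original variables cluster reproduces the sum over partitions $\lambda\vdash k$: a block of size $\lambda_j$ records $\lambda_j$ of the original $z$'s collapsing successively along the chain $z\mapsto z-1$, which is precisely what turns a single $f(w_j)$ into $f(w_j)f(w_j+1)\cdots f(w_j+\lambda_j-1)$, while the determinant $\det[\tfrac{1}{\lambda_i+w_i-w_j}]$ bookkeeps the residual couplings between blocks. The passage from $1/(m_1!m_2!\cdots)$ weighting to the overall $k!$ is handled exactly as in Proposition~\ref{mukprop}, by trading a sum over ordered tuples of merge-sites for a sum over partitions.

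The main obstacle, as in the proof of Proposition~\ref{mukprop}, will be the final rational-function identity that closes the induction: once all $f$-factors and Cauchy-determinant structure are matched, one is left to verify something of the shape $\sum_{j}\lambda_j\prod_{\ell\neq j}\tfrac{w_j-w_\ell+\lambda_\ell}{w_j-w_\ell}=\lambda_1+\cdots+\lambda_{\ell(\lambda)}$, the additive analogue of the identity $\sum_j\tfrac{1}{b_j}\tfrac{\prod_{\ell}(b_j-b_\ell q^{\lambda_\ell})}{\prod_{\ell\neq j}(b_j-b_\ell)}=1-q^{\sum\lambda_i}$ used there. I would prove it, just as in that argument, by recognizing the left side as the sum of the finite residues of the rational function $z\mapsto\prod_{\ell}\tfrac{z-w_\ell+\lambda_\ell}{z-w_\ell}$ and the right side as minus its residue at infinity, so the identity is simply the vanishing of the total residue sum. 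The genuinely fiddly points are keeping track of the $z_k=0$ pole and the orientation/sign conventions for the vertical contours (this is where the ``$\alpha_1>\alpha_2+1>\cdots$'' ordering is used, to guarantee the right poles are crossed and no spurious ones), and checking that $\tilde f_A$ retains the decay needed at every stage. Finally, the advertised reduction of Proposition~\ref{ADBGprop} to Dotsenko's formula follows by specializing $f$ to the relevant Gaussian and reading the resulting right-hand side.
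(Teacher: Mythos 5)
Your proposal follows exactly the route the paper indicates: the paper states the proposition without proof, remarking only that it ``is a degeneration of Proposition~\ref{mukprop} and can be proved similarly,'' and your plan to run the same residue-bookkeeping induction (with the additive analogue $\sum_j\lambda_j\prod_{\ell\neq j}\tfrac{w_j-w_\ell+\lambda_\ell}{w_j-w_\ell}=\sum_i\lambda_i$ of the closing rational-function identity, proved via the vanishing of the total residue of $\prod_\ell\tfrac{z-w_\ell+\lambda_\ell}{z-w_\ell}$) is precisely that. One small slip in the sketch: the poles $z_k=z_A-1$ ($A<k$) from $\tfrac{z_A-z_k}{z_A-z_k-1}$ lie to the \emph{right} of the $z_k$-contour $\alpha_k+\iota\R$ (since $\alpha_A-1>\alpha_k$), so sending $z_k\to-\infty$ crosses none of them; the contours must instead be merged by moving lines a finite distance toward one another (e.g., shift $z_1$ leftward onto $z_2$'s line, pick up the residue at $z_1=z_2+1$, and iterate), which is the step you already flag as the place where the ordering $\alpha_1>\alpha_2+1>\cdots$ and orientation conventions have to be handled with care.
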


\begin{remark}\label{endansatzcomment}
The contour integral formulas used to solve the quantum many body systems in Proposition~\ref{ADBGprop} and Proposition~\ref{discDBG} should be understood as coming about from a more general contour integral ansatz. The correspondence is as follows. Consider a many body system which evolves in a Weyl chamber according to a certain PDE and has delta function initial condition. Assume it also has a boundary condition on the boundary of the Weyl chamber involving consecutively labeled particles. Then an ansatz for the solution is an $N$-dimensional contour integral in $z_1,\ldots, z_N$ which involves three terms. The first term is $\exp(z\cdot x)$; the second term is the exponential of the Fourier transform of the PDE generator applied to $z$; the third term is a product for $1\leq A<B\leq N$ of $(z_A-z_B)$ divided by a term which is chosen to cancel for $i$, $i+1$ when the corresponding boundary condition is checked.
\end{remark}

\clearpage
 \addcontentsline{toc}{chapter}{Index}
\printnotation
\printindex

\clearpage
 \addcontentsline{toc}{chapter}{References}

\end{document}